\definecolor{mediumblue}{rgb}{0.0, 0.0, 0.8}
\colorlet{darkgreen}{green!50!black}
\crefname{Definition}{Definition}{Definitions}
\crefname{Theorem}{Theorem}{Theorems}
\crefname{Proposition}{Proposition}{Propositions}
\crefname{Lemma}{Lemma}{Lemmas}
\crefname{Corollary}{Corollary}{Corollaries}
\crefname{Conjecture}{Conjecture}{Conjectures}
\crefname{section}{\S\hspace*{-0.12cm}}{\S\hspace*{-0.1cm}}
\crefname{subsection}{\S\hspace*{-0.12cm}}{\S\hspace*{-0.1cm}}
\crefname{Example}{Example}{Examples}
\Crefname{Definition}{Definition}{Definitions}
\Crefname{Theorem}{Theorem}{Theorems}
\Crefname{Proposition}{Proposition}{Propositions}
\Crefname{Lemma}{Lemma}{Lemmas}
\Crefname{Corollary}{Corollary}{Corollaries}
\Crefname{Conjecture}{Conjecture}{Conjectures}
\Crefname{section}{\S\hspace*{-0.12cm}}{\S\hspace*{-0.1cm}}
\Crefname{subsection}{\S\hspace*{-0.12cm}}{\S\hspace*{-0.1cm}}
\Crefname{Example}{Example}{Examples}
\definecolor{answercolor}{RGB}{0, 112, 48}
\numberwithin{equation}{section}
\def\theequation{\arabic{section}.\arabic{equation}}
\newtheorem{Proposition}[equation]{Proposition}
\newtheorem{Lemma}[equation]{Lemma}
\newtheorem{Theorem}[equation]{Theorem}
\newtheorem{Corollary}[equation]{Corollary}
\theoremstyle{definition}  
\newtheorem{Definition}[equation]{Definition}
\newtheorem{Remark}[equation]{Remark}
\newtheorem{Example}[equation]{Example}
\newtheorem{Conjecture}[equation]{Conjecture}
\newcommand\Comment[2][\relax]{\space\par\medskip\noindent%
   \fbox{\begin{minipage}{\textwidth}\textbf{Comment\ifx\relax#1\else---#1\fi}\newline%
        #2\end{minipage}}\medskip
}
\def\bi{\text{\boldmath$i$}}
\def\bj{\text{\boldmath$j$}}
\def\bm{\text{\boldmath$m$}}
\def\bk{\text{\boldmath$k$}}
\def\b1{\text{\boldmath$1$}}
\def\bb{\text{\boldmath$b$}}
\def\bla{\text{\boldmath$\lambda$}}
\def\pmod#1{\text{ }(\text{\rm mod } #1)\,}
\def\bijection{\overset{\sim}{\longrightarrow}}
\newcommand{\End}{\operatorname{End}}
\newcommand{\im}{\operatorname{im}}
\newcommand{\res}{\operatorname{res}}
\newcommand{\Std}{\operatorname{Std}}
\newcommand{\NEarrow}{\mathbin{\rotatebox[origin=c]{45}{$\Rightarrow$}}}
\newcommand{\Z}{\mathbb{Z}}
\newcommand{\N}{\mathcal{N}}
\def\phi{{\varphi}}
\newcommand\RoCK{\operatorname{RoCK}}
\newcommand{\la}{\lambda}
\newcommand{\remrib}{\mathcal{R}_{\brho}}
\newcommand{\addrib}{\mathcal{A}_{\brho}}
\renewcommand\geq\geqslant
\renewcommand\leq\leqslant
\renewcommand\succeq\succcurlyeq
\renewcommand\preceq\preccurlyeq
\renewcommand{\trianglerighteq}{\trianglerighteqslant}
\renewcommand{\trianglelefteq}{\trianglelefteqslant}
\DeclareRobustCommand\longtwoheadrightarrow
\newcommand\dhxrightarrow[2][]{%
  \mathrel{\ooalign{$\xrightarrow[#1\mkern4mu]{#2\mkern4mu}$\cr%
  \hidewidth$\rightarrow\mkern4mu$}}
}
\newcommand{\Ind}{{\mathrm {Ind}}}
\newcommand{\tr}{{\mathrm {tr}}}
\newcommand{\Res}{{\mathrm {Res}}}
\newcommand{\CC}{{\mathbb C}}
\newcommand{\asl}{\hat{\mathfrak{sl}}}
\newcommand{\Q}{{\mathbb Q}}
\newcommand{\RR}{{\mathbb R}}
\newcommand{\QQ}{{\mathbb Q}}
\newcommand{\ZZ}{{\mathbb Z}}
\newcommand{\blam}{\boldsymbol{\lambda}}
\newcommand{\balpha}{\boldsymbol{\alpha}}
\newcommand{\bkap}{\boldsymbol{\kappa}}
\newcommand{\bmu}{\boldsymbol{\mu}}
\newcommand{\bnu}{\boldsymbol{\nu}}
\newcommand{\brho}{\boldsymbol{\rho}}
\newcommand{\bxi}{\boldsymbol{\xi}}
\newcommand{\btau}{\boldsymbol{\tau}}
\newcommand{\bzeta}{\boldsymbol{\zeta}}
\newcommand{\zS}{\boldsymbol{\mathsf{S}}}
\def\fin{{\operatorname{fin}}}
\def\cont{{\operatorname{cont}}}
\def\b{\mathfrak{b}}
\def\k{\Bbbk}
\def\height{{\operatorname{ht}}}
\def\op{{\mathrm{op}}}
\def\re{{\mathrm{re}}}
\def\im{{\mathrm{im}\,}}
\def\B{{\mathcal B}}
\def\END{\operatorname{END}}
  \gdef\set#1{\mathinner{\lbrace\,{\mathcode`\|"8000%
  \let|\midvert #1}\,\rbrace}}
\def\midvert{\egroup\mid\bgroup}
\colorlet{darkgreen}{green!50!black}
\tikzset{dots/.style={very thick,loosely dotted},
         greendot/.style={fill,circle,color=darkgreen,inner sep=1.5pt,outer sep=0}
}
\def\greendot(#1,#2){\node[greendot] at(#1,#2){}}
\newenvironment{braid}{
  \begin{tikzpicture}[baseline=6mm,blue,line width=1pt, scale=0.4,
                      draw/.append style={rounded corners},
                      every node/.append style={font=\fontsize{5}{5}\selectfont}]%
  }{\end{tikzpicture}
}
\def\Grid(#1,#2){
  \draw[very thin,gray,step=2mm] (0,0)grid(#1,#2);
  \draw[very thin,darkgreen,step=10mm] (0,0)grid(#1,#2);
}
\newcommand\Tableau[2][\relax]{
  \begin{tikzpicture}[scale=0.5,draw/.append style={thick,black}]
    \ifx\relax#1\relax%
    \else 
      \foreach\box in {#1} { \filldraw[blue!30]\box+(-.5,-.5)rectangle++(.5,.5); }
    \fi
    \newcount\row\newcount\col
    \row=0
    \foreach \Row in {#2} {
       \col=1
       \foreach\k in \Row {
          \draw(\the\col,\the\row)+(-.5,-.5)rectangle++(.5,.5);
          \draw(\the\col,\the\row)node{\k};
          \global\advance\col by 1
       }
       \global\advance\row by -1
    }
  \end{tikzpicture}
}
\newcommand{\hackcenter}[1]{
 \xy (0,0)*{#1}; \endxy}
\newcommand\YoungDiagram[2][\relax]{
  \begin{tikzpicture}[scale=0.5,draw/.append style={thick,black}]
    \ifx\relax#1\relax%
    \else 
    \foreach\box in {#1} {
      \filldraw[blue!30]\box rectangle ++(1,1);
    }
    \fi
    \newcount\row
    \row=0
    \foreach \col in {#2} {
       \draw(1,\the\row)grid ++(\col,1);
       \global\advance\row by -1
    }
  \end{tikzpicture}
}
\colorlet{darkgreen}{green!50!black}
\tikzset{dots/.style={very thick,loosely dotted},
         greendot/.style={fill,circle,color=darkgreen,inner sep=1.5pt,outer sep=0},
         blackdot/.style={fill,circle,color=black,inner sep=2pt,outer sep=0},
         graydot/.style={fill,circle,color=gray,inner sep=1.1pt,outer sep=0},
         whitedot/.style={fill,circle,color=white,inner sep=3pt,outer sep=0},
}
\def\greendot(#1,#2){\node[greendot] at(#1,#2){}}
\def\blackdot(#1,#2){\node[blackdot] at(#1,#2){}}
\def\graydot(#1,#2){\node[graydot] at(#1,#2){}}
\def\whitedot(#1,#2){\node[whitedot] at(#1,#2){}}
\def\Grid(#1,#2){
  \draw[very thin,gray,step=2mm] (0,0)grid(#1,#2);
  \draw[very thin,darkgreen,step=10mm] (0,0)grid(#1,#2);
}
\theoremstyle{plain} 
\newcommand{\thistheoremname}{}
\newtheorem*{genericthm*}{\thistheoremname}
\newenvironment{namedthm*}[1]
  {\renewcommand{\thistheoremname}{#1}%
   \begin{genericthm*}}
  {\end{genericthm*}}
\begin{document}


\title[Skew Specht modules, RoCK blocks, and cuspidal systems]{{\bf A skew Specht perspective of RoCK blocks and\\ cuspidal systems for KLR algebras in affine type A}}

\author{\sc Robert Muth}
\address{Department of Mathematics and Computer Science\\ Duquesne University\\ Pittsburgh\\ PA, USA 15282}
\email{muthr@duq.edu}

\author{\sc Thomas Nicewicz}
\address{Department of Mathematics \\ Washington \& Jefferson College \\ Washington\\ PA, USA 15301}
\email{Tnicewicz@protonmail.com}

\author{\sc Liron Speyer}
\address{Okinawa Institute of Science and Technology \\ Okinawa \\ Japan 904-0495}
\email{liron.speyer@oist.jp}

\author{\sc Louise Sutton}
\address{Okinawa Institute of Science and Technology \\ Okinawa \\ Japan 904-0495}
\email{louise.sutton@oist.jp}

\begin{abstract}
Cuspidal systems parameterize KLR algebra representations via root partitions $\pi$, where simple modules $L(\pi)$ arise as heads of proper standard modules. Working in affine type A with an arbitrary convex preorder, we construct explicit skew diagrams $\zeta(\pi)$ such that the skew Specht module $S^{\zeta(\pi)}$ has simple head $L(\pi)$ and a filtration by proper standard modules. A key ingredient in this construction is the development of `core-truncation' functors, which take module categories of level one RoCK blocks to the category of imaginary semicuspidal KLR modules. Every simple imaginary semicuspidal module arises in the image of these functors.
 This result stems from an in-depth study of the combinatorial interplay between cuspidal systems and RoCK cyclotomic KLR algebras, in which we characterize core blocks and RoCK blocks in arbitrary level via cuspidal tiling properties of multipartitions in these blocks.
\end{abstract}

\maketitle

\section{Introduction}\label{sec:intro}

In this paper we study combinatorial and algebraic interactions between cuspidal systems for Khovanov--Lauda--Rouquier (KLR) algebras, RoCK cyclotomic KLR algebras and skew Specht modules. We apply these connections to construct simple, semicuspidal, and proper standard KLR modules as (quotients of) explicit skew Specht modules.

\subsection{Cuspidal systems for KLR algebras}
Fix \(e \in \ZZ_{>1}\), and let \(\Phi_+ = \Phi_+^\re \sqcup \Phi_+^\im\) be the positive root system of type \({\tt A}^{(1)}_{e-1}\), where \(I = \{\alpha_0, \ldots, \alpha_{e-1}\}\) is the set of simple roots, \(\Phi_+^\re\) is the set of real roots, and \(\Phi_+^\im = \{d \delta \mid d \in \ZZ_{>0}\}\) is the set of imaginary roots, with \(\delta = \alpha_0 + \cdots + \alpha_{e-1}\) being the null root. We additionally write \(\Psi := \Phi_+^\re \sqcup \{ \delta\}\) for the set of indivisible roots. This root system data is fundamental in the representation theory of the Kac--Moody Lie algebra \(\widehat{\mathfrak{sl}}_e(\CC)\) \cite{kac}.

For any field \(\k\) and \(\omega \in \ZZ_{\geq 0}I\), there is an associated KLR algebra \(R_\omega\) over \(\k\).
This family of algebras categorifies the positive part of the quantum group \(U_q(\widehat{\mathfrak{sl}}_e(\CC))\), with the product in \(U_q(\widehat{\mathfrak{sl}}_e(\CC))\) lifting to the induced tensor product of modules over KLR algebras, see \cite{kl09, Rouq}.
The representation theory of KLR algebras is studied via {\em cuspidal systems}, as in \cite{klesh14, McN17, km17a, TW}, 
which are associated with PBW bases for the quantum group \cite{BeckConBas}. 
 We will now briefly explain the setup.

Fix a convex preorder \(\succeq\) on \(\Phi_+\). 
For \(\omega \in \ZZ_{\geq 0}I\), a {\em Kostant partition} of \(\omega\) is a tuple of non-negative integers \(\boldsymbol{K} = (K_\beta)_{\beta \in \Psi}\) such that \(\sum_{\beta \in \Psi} K_\beta \beta = \omega\). If \(\beta_1 \succ \cdots \succ \beta_t\) are the members of \(\Psi\) such that \(K_{\beta_i} \neq 0\), then we write \(\boldsymbol{K}\) in the form
\(
\boldsymbol{K} = 
(\beta_1^{K_{\beta_1}} \mid \dots \mid \beta_t^{K_{\beta_t}}).
\)
The convex preorder \(\succeq\) induces a bilexicographic partial order \(\geq_{\textup{b}}\) on \(\Xi(\omega)\), the set of all Kostant partitions of \(\omega\).
We write \(\Pi(\omega)\) for the set of all {\em root partitions} of \(\omega\); these are pairs \(\pi = (\boldsymbol{K}, \bnu)\), where
\begin{align*}
\boldsymbol{K} &= (\beta_1^{K_{\beta_1}} \mid \dots \mid \beta_u^{K_{\beta_u}} \mid \delta^{K_\delta} \mid \beta_{u+1}^{K_{\beta_{u+1}}} \mid \dots \mid \beta_t^{K_{\beta_t}}) \in \Xi(\omega), \textup{and}\\
\bnu &= (\nu^{(1)} \mid \cdots \mid \nu^{(e-1)} )  \textup{ is an \((e-1)\)-multipartition of \(K_\delta\)}.
\end{align*}
We refine the bilexicographic partial order \(\geq_{\textup{b}}\) to a partial order \(\geq_{\textup{bd}}\) on \(\Pi(\omega)\) by incorporating the dominance order on \((e-1)\)-multipartitions. 

To each \(\beta \in \Phi_+^\re\), we associate a simple {\em cuspidal} \(R_\beta\)-module \(L(\beta)\), and to each \((e-1)\)-multipartition \(\bnu\) of \(d \in \ZZ_{>0}\), we associate a simple {\em semicuspidal} \(R_{d \delta}\)-module \(L(\bnu)\). Then, to each 
\(\pi \in \Pi(\omega)\), we associate a proper standard module \(\bar\Delta(\pi)\) which is an ordered induction product of these simple semicuspidal modules. The module \(\bar\Delta(\pi)\) has a self-dual simple head \(L(\pi)\), and 
\(
\{ L(\pi) \mid \pi \in \Pi(\omega)\}
\)
is a complete and irredundant set of simple \(R_\omega\)-modules up to isomorphism and grading shift.
To be precise, if \(\pi\) is as above,
then \(\bar\Delta(\pi)\) is (up to grading shift) the induction product of the following semicuspidal simple modules:
\begin{align*}
\bar\Delta(\pi) = L(\beta_1)^{\circ K_{\beta_1}} \circ \dots \circ L(\beta_u)^{\circ {K_{\beta_u}}} \circ
L( \bnu) \circ L(\beta_{u+1})^{\circ {K_{\beta_{u+1}}}} \circ \dots \circ L(\beta_t)^{\circ {K_{\beta_t}}}.
\end{align*}
In the literature, the semicuspidal modules are not presented directly -- rather, their existence is established via categorification (c.f.~\cite{klesh14,McN17}), or they are constructed through Morita equivalences with symmetric groups and Schur algebras (c.f.~\cite{km17,km17a}).
In this paper our primary goal is to use {\em skew Specht modules} to render a more direct and combinatorial-flavored description of semicuspidal and simple \(R_\omega\)-modules. 

\subsection{Semicuspidal skew Specht modules} 
A skew diagram \(\btau\) is a set of nodes that can arise as a set difference of multipartitions. 
Nodes in \(\btau\) have residues in \(\ZZ_e\), and the skew diagram \(\btau\) has content in \( \ZZ_{\geq 0}I\),0 see \cref{subsec:content}.
Associated to \(\btau\) of content \(\omega\) is a skew Specht module over \(R_\omega\), denoted \(\zS^{\btau}\) (see \cref{Spechtsec}). These are natural generalizations of Specht modules \(S^{\blam}\) and \(S^{\blam/\bmu}\) indexed by multipartitions \cite{KMR} and skew multipartitions \cite{muthskew}, respectively. Specht modules are key objects in the representation theory of {\em cyclotomic} KLR algebras, Hecke algebras and symmetric group algebras, via the connection between these algebras proved in \cite{bkisom}. The skew Specht module \(\zS^{\btau}\) is a concrete and highly combinatorial object -- it has a direct presentation via generators and relations, and an explicit basis indexed by standard \(\btau\)-tableaux.

We now explain how we use Specht modules to make semicuspidal representation theory of KLR algebras more concrete.
In \cite{muthtiling} it was shown that, for all \(\beta \in \Phi_+^\re\), there exists an explicit ribbon \(\zeta(\beta)\) of content \(\beta\) such that \(\zS^{\zeta(\beta)} \approx L(\beta)\) (we use \(\approx\) to signify isomorphism up to grading shift). We restate the construction of \(\zeta(\beta)\) in \cref{def:zetaribs}. Our first main result in this paper establishes an analogous result for the thornier {\em imaginary} simple semicuspidal modules.

To each \((e-1)\)-multipartition \(\bnu\) of \(d\), we construct an explicit skew diagram \(\zeta(\bnu)\) of content \(d\delta\) in \cref{def:zetanu}, and establish the following result, which shows that imaginary simple semicuspidal modules arise as heads of skew Specht modules, proving a conjecture made in \cite[\S8.7.3]{muthtiling}.

\begin{namedthm*}{Theorem A}\hypertarget{thm:A}
Let \(\bnu, \bmu\) be \((e-1)\)-multipartitions of \(d\). Then \(\zS^{\zeta(\bnu)}\) is an indecomposable semicuspidal \(R_{d \delta}\)-module, with simple semicuspidal head \(\textup{hd}(\zS^{\zeta(\bnu)}) \approx L(\bnu)\), and \([\zS^{\zeta(\bnu)}: L(\bmu)] = d^{\textup{RoCK}}_{\bnu, \bmu}\).
\end{namedthm*}

This appears as \cref{thm:Asimp} in the body of the paper. The decomposition numbers \(d^{\textup{RoCK}}_{\bnu, \bmu}\) above refer to certain well-studied decomposition numbers of Specht modules in (level one) RoCK blocks of defect \(d\); see (\ref{defdRoCKintro}). 

\begin{Example}\label{firstexintro}
In order to help visualize the constructions in this introduction, we will make use of the following ongoing example. 
Take \(e=4\), and the convex preorder \(\succeq\) on \(\Phi_+\) described in \cref{3coretileex}.
To each \(\beta \in \Phi_+^\re\), we associate the ribbon \(\zeta(\beta)\) of content \(\beta\) via the algorithm described in \cref{def:zetaribs} such that \( \zS^{\zeta(\beta)} \approx L(\beta) \). For instance, we have:
\begin{align*}
\zeta(\alpha_2 \hspace{-0.5mm}+\hspace{-0.5mm} \alpha_3 \hspace{-0.5mm}+\hspace{-0.5mm} \alpha_0) = 
{}
\hackcenter{
\begin{tikzpicture}[scale=0.29]
\draw[thick,fill=gray!30]  (0,0)--(0,2)--(2,2)--(2,1)--(1,1)--(1,0)--(0,0);
\node at (0.5,0.5){$\scriptstyle 2$};
\node at (0.5,1.5){$\scriptstyle 3$};
\node at (1.5,1.5){$\scriptstyle 0$};
\draw[thick,  gray, dotted] (0,1)--(1,1);
\draw[thick,  gray, dotted] (1,1)--(1,2);
\end{tikzpicture}
}
\quad
\zeta(2 \delta \hspace{-0.5mm} + \hspace{-0.5mm} \alpha_0  \hspace{-0.5mm}+ \hspace{-0.5mm} \alpha_1  \hspace{-0.5mm}+ \hspace{-0.5mm} \alpha_2) = 
{}
\hackcenter{
\begin{tikzpicture}[scale=0.29]
\draw[thick,fill=gray!30]  (0,0)--(0,2)--(1,2)--(1,3)--(2,3)--(2,4)--(3,4)--(3,5)--(7,5)--(7,4)--(4,4)--(4,3)--(3,3)--(3,2)--(2,2)--(2,1)--(1,1)--(1,0)--(0,0);
\node at (0.5,0.5){$\scriptstyle 0$};
\node at (0.5,1.5){$\scriptstyle 1$};
\node at (1.5,1.5){$\scriptstyle 2$};
\node at (1.5,2.5){$\scriptstyle 3$};
\node at (2.5,2.5){$\scriptstyle 0$};
\node at (2.5,3.5){$\scriptstyle 1$};
\node at (3.5,3.5){$\scriptstyle 2$};
\node at (3.5,4.5){$\scriptstyle 3$};
\node at (4.5,4.5){$\scriptstyle 0$};
\node at (5.5,4.5){$\scriptstyle 1$};
\node at (6.5,4.5){$\scriptstyle 2$};
\draw[thick,  gray, dotted] (0,1)--(1,1);
\draw[thick,  gray, dotted] (1,2)--(2,2);
\draw[thick,  gray, dotted] (2,3)--(3,3);
\draw[thick,  gray, dotted] (3,4)--(4,4);
\draw[thick,  gray, dotted] (1,1)--(1,2);
\draw[thick,  gray, dotted] (2,2)--(2,3);
\draw[thick,  gray, dotted] (3,3)--(3,4);
\draw[thick,  gray, dotted] (4,4)--(4,5);
\draw[thick,  gray, dotted] (5,4)--(5,5);
\draw[thick,  gray, dotted] (6,4)--(6,5);
\end{tikzpicture}
}
\quad
\zeta(\delta  \hspace{-0.5mm}+ \hspace{-0.5mm} \alpha_2  \hspace{-0.5mm}+ \hspace{-0.5mm} \alpha_3) = 
{}
\hackcenter{
\begin{tikzpicture}[scale=0.29]
\draw[thick,fill=gray!30]  (0,0)--(0,4)--(1,4)--(1,5)--(2,5)--(2,3)--(1,3)--(1,0)--(0,0);
\node at (0.5,0.5){$\scriptstyle 2$};
\node at (0.5,1.5){$\scriptstyle 3$};
\node at (0.5,2.5){$\scriptstyle 0$};
\node at (0.5,3.5){$\scriptstyle 1$};
\node at (1.5,3.5){$\scriptstyle 2$};
\node at (1.5,4.5){$\scriptstyle 3$};
\draw[thick,  gray, dotted] (0,1)--(1,1);
\draw[thick,  gray, dotted] (0,2)--(1,2);
\draw[thick,  gray, dotted] (0,3)--(1,3);
\draw[thick,  gray, dotted] (1,3)--(1,4);
\draw[thick,  gray, dotted] (1,4)--(2,4);
\end{tikzpicture}
}
\quad
\zeta(\delta  \hspace{-0.5mm}+ \hspace{-0.5mm} \alpha_1) = 
{}
\hackcenter{
\begin{tikzpicture}[scale=0.29]
\draw[thick,fill=gray!30]  (0,0)--(0,1)--(1,1)--(1,2)--(2,2)--(2,3)--(3,3)--(3,1)--(2,1)--(2,0)--(0,0);
\node at (0.5,0.5){$\scriptstyle 1$};
\node at (1.5,0.5){$\scriptstyle 2$};
\node at (1.5,1.5){$\scriptstyle 3$};
\node at (2.5,1.5){$\scriptstyle 0$};
\node at (2.5,2.5){$\scriptstyle 1$};
\draw[thick,  gray, dotted] (1,1)--(2,1);
\draw[thick,  gray, dotted] (2,2)--(3,2);
\draw[thick,  gray, dotted] (1,0)--(1,1);
\draw[thick,  gray, dotted] (2,1)--(2,2);
\end{tikzpicture}
}.
\end{align*}
Recalling that \(e-1=3\), to each \(i \in \{1,2,3\}\) we associate a distinct ribbon \(\zeta_i\) of content \(\delta\) via the algorithm described in \cref{def:zetaribs}:
\begin{align*}
{}
\zeta_1=
{}
\hackcenter{
\begin{tikzpicture}[scale=0.29]
\draw[thick,fill=green!30] (7+0,20+0)--(9+0,20+0)--(9+0,21+0)--(10+0,21+0)--(10+0,22+0)--(8+0,22+0)--(8+0,21+0)--(7+0,21+0)--(7+0,20+0);
\node at (7.5+0,20.5+0){$\scriptstyle 1$};
\node at (8.5+0,20.5+0){$\scriptstyle 2$};
\node at (8.5+0,21.5+0){$\scriptstyle 3$};
\node at (9.5+0,21.5+0){$\scriptstyle 0$};
\draw[thick, gray, dotted] (8+0,20+0)--(8+0,21+0);
\draw[thick, gray, dotted] (8+0,21+0)--(9+0,21+0);
\draw[thick, gray, dotted] (9+0,21+0)--(9+0,22+0);
\end{tikzpicture}
}
\qquad
\zeta_2=
\hackcenter{
\begin{tikzpicture}[scale=0.29]
\draw[thick,fill=red!30] (2+0,12+0)--(3+0,12+0)--(3+0,13+0)--(4+0,13+0)--(4+0,15+0)--(3+0,15+0)--(3+0,14+0)--(2+0,14+0)--(2+0,12+0);
\node at (2.5+0,12.5+0){$\scriptstyle 0$};
\node at (2.5+0,13.5+0){$\scriptstyle 1$};
\node at (3.5+0,13.5+0){$\scriptstyle 2$};
\node at (3.5+0,14.5+0){$\scriptstyle 3$};
\draw[thick, gray, dotted] (2+0,13+0)--(3+0,13+0);
\draw[thick, gray, dotted] (3+0,13+0)--(3+0,14+0);
\draw[thick, gray, dotted] (3+0,14+0)--(4+0,14+0);
\end{tikzpicture}
}
\qquad
\zeta_3=
\hackcenter{
\begin{tikzpicture}[scale=0.29]
\draw[thick,fill=cyan!30] (0+0,0+0)--(1+0,0+0)--(1+0,4+0)--(0+0,4+0)--(0+0,0+0);
\node at (0.5+0,0.5+0){$\scriptstyle 2$};
\node at (0.5+0,1.5+0){$\scriptstyle 3$};
\node at (0.5+0,2.5+0){$\scriptstyle 0$};
\node at (0.5+0,3.5+0){$\scriptstyle 1$};
\draw[thick, gray, dotted] (0+0,1+0)--(1+0,1+0);
\draw[thick, gray, dotted] (0+0,2+0)--(1+0,2+0);
\draw[thick, gray, dotted] (0+0,3+0)--(1+0,3+0);
\end{tikzpicture}
},
\end{align*}
and to any multipartition \(\bnu = (\nu^{(1)} \mid \nu^{(2)} \mid \nu^{(3)})\), we associate the skew diagram \(\zeta(\bnu)\) by dilating each of the nodes in \(\nu^{(i)}\) by the ribbon \(\zeta_i\) (see \cref{def:zetanu}). For instance, given the multipartition \(\bnu = ((3^2,1) \mid (2^2) \mid (2))\) of \(13\), we have:
\begin{align*}
\zeta(\bnu) = 
\zeta
\left(\;\;
{}
\hackcenter{
\begin{tikzpicture}[scale=0.29]
\draw[thick,fill=green!30] (0,0)--(0,3)--(3,3)--(3,1)--(1,1)--(1,0)--(0,0);
\draw[thick,  gray, ] (0,1)--(1,1);
\draw[thick,  gray, ] (0,2)--(3,2);
\draw[thick,  gray, ] (1,1)--(1,3);
\draw[thick,  gray, ] (2,1)--(2,3);
\end{tikzpicture}
}
\hackcenter{
\begin{tikzpicture}[scale=0.29]
\node[white] at (-0.2,0){{}};
\node[white] at (0.2,0){{}};
\draw[thin, gray,fill=gray!30]  (0,0)--(0,2);
\end{tikzpicture}
}
\hackcenter{
\begin{tikzpicture}[scale=0.29]
\draw[thick,fill=red!30] (0,0)--(0,2)--(2,2)--(2,0)--(0,0);
\draw[thick,  gray, ] (0,1)--(2,1);
\draw[thick,  gray, ] (1,0)--(1,2);
\draw[thick, white] (0,-1)--(1,-1);
\end{tikzpicture}
}
\hackcenter{
\begin{tikzpicture}[scale=0.29]
\node[white] at (-0.2,0){{}};
\node[white] at (0.2,0){{}};
\draw[thin, gray,fill=gray!30]  (0,0)--(0,2);
\end{tikzpicture}
}
\hackcenter{
\begin{tikzpicture}[scale=0.29]
\draw[thick,fill=cyan!30] (0,0)--(0,1)--(2,1)--(2,0)--(0,0);
\draw[thick,  gray, ] (1,0)--(1,1);
\draw[thick, white] (0,-2)--(1,-2);
\end{tikzpicture}
}
\;\;
\right)
=
\left(
\;\;\;
{}
\hackcenter{
\begin{tikzpicture}[scale=0.29]
\draw[thick,fill=green!30]  (0,0)--(0,1)--(1,1)--(1,2)--(2,2)--(2,3)--(3,3)--(3,4)--(4,4)--(4,5)--(5,5)--(5,6)--(8,6)--(8,7)--(11,7)--(11,8)--(13,8)--(13,7)--(12,7)--(12,6)--(11,6)--(11,5)--(10,5)--(10,4)--(7,4)--(7,3)--(4,3)--(4,2)--(3,2)--(3,1)--(2,1)--(2,0)--(0,0);
\node at (0.5,0.5){$\scriptstyle 1$};
\node at (1.5,0.5){$\scriptstyle 2$};
\node at (1.5,1.5){$\scriptstyle 3$};
\node at (2.5,1.5){$\scriptstyle 0$};
\node at (0.5+2,0.5+2){$\scriptstyle 1$};
\node at (1.5+2,0.5+2){$\scriptstyle 2$};
\node at (1.5+2,1.5+2){$\scriptstyle 3$};
\node at (2.5+2,1.5+2){$\scriptstyle 0$};
\node at (0.5+4,0.5+4){$\scriptstyle 1$};
\node at (1.5+4,0.5+4){$\scriptstyle 2$};
\node at (1.5+4,1.5+4){$\scriptstyle 3$};
\node at (2.5+4,1.5+4){$\scriptstyle 0$};
\node at (0.5+5,0.5+3){$\scriptstyle 1$};
\node at (1.5+5,0.5+3){$\scriptstyle 2$};
\node at (1.5+5,1.5+3){$\scriptstyle 3$};
\node at (2.5+5,1.5+3){$\scriptstyle 0$};
\node at (0.5+7,0.5+5){$\scriptstyle 1$};
\node at (1.5+7,0.5+5){$\scriptstyle 2$};
\node at (1.5+7,1.5+5){$\scriptstyle 3$};
\node at (2.5+7,1.5+5){$\scriptstyle 0$};
\node at (0.5+8,0.5+4){$\scriptstyle 1$};
\node at (1.5+8,0.5+4){$\scriptstyle 2$};
\node at (1.5+8,1.5+4){$\scriptstyle 3$};
\node at (2.5+8,1.5+4){$\scriptstyle 0$};
\node at (0.5+10,0.5+6){$\scriptstyle 1$};
\node at (1.5+10,0.5+6){$\scriptstyle 2$};
\node at (1.5+10,1.5+6){$\scriptstyle 3$};
\node at (2.5+10,1.5+6){$\scriptstyle 0$};
\draw[thick,  gray, dotted] (1,1)--(2,1);
\draw[thick,  gray, dotted] (2,2)--(3,2);
\draw[thick,  gray, dotted] (3,3)--(4,3);
\draw[thick,  gray, dotted] (4,4)--(7,4);
\draw[thick,  gray, dotted] (5,5)--(10,5);
\draw[thick,  gray, dotted] (8,6)--(11,6);
\draw[thick,  gray, dotted] (11,7)--(12,7);
\draw[thick,  gray, dotted] (1,0)--(1,1);
\draw[thick,  gray, dotted] (2,1)--(2,2);
\draw[thick,  gray, dotted] (3,2)--(3,3);
\draw[thick,  gray, dotted] (4,3)--(4,4);
\draw[thick,  gray, dotted] (5,3)--(5,5);
\draw[thick,  gray, dotted] (6,3)--(6,6);
\draw[thick,  gray, dotted] (7,4)--(7,6);
\draw[thick,  gray, dotted] (8,4)--(8,6);
\draw[thick,  gray, dotted] (9,4)--(9,7);
\draw[thick,  gray, dotted] (10,5)--(10,7);
\draw[thick,  gray, dotted] (11,6)--(11,7);
\draw[thick,  gray, dotted] (12,7)--(12,8);
\draw[thick] (0,0)--(0,1)--(1,1)--(1,2)--(3,2)--(3,1)--(2,1)--(2,0)--(0,0);
\draw[thick] (0+2.00,0+2.00)--(0+2.00,1+2.00)--(1+2.00,1+2.00)--(1+2.00,2+2.00)--(3+2.00,2+2.00)--(3+2.00,1+2.00)--(2+2.00,1+2.00)--(2+2.00,0+2.00)--(0+2.00,0+2.00);
\draw[thick] (0+4.00,0+4.00)--(0+4.00,1+4.00)--(1+4.00,1+4.00)--(1+4.00,2+4.00)--(3+4.00,2+4.00)--(3+4.00,1+4.00)--(2+4.00,1+4.00)--(2+4.00,0+4.00)--(0+4.00,0+4.00);
\draw[thick] (0+5.00,0+3.00)--(0+5.00,1+3.00)--(1+5.00,1+3.00)--(1+5.00,2+3.00)--(3+5.00,2+3.00)--(3+5.00,1+3.00)--(2+5.00,1+3.00)--(2+5.00,0+3.00)--(0+5.00,0+3.00);
\draw[thick] (0+8.00,0+4.00)--(0+8.00,1+4.00)--(1+8.00,1+4.00)--(1+8.00,2+4.00)--(3+8.00,2+4.00)--(3+8.00,1+4.00)--(2+8.00,1+4.00)--(2+8.00,0+4.00)--(0+8.00,0+4.00);
\draw[thick] (0+7.00,0+5.00)--(0+7.00,1+5.00)--(1+7.00,1+5.00)--(1+7.00,2+5.00)--(3+7.00,2+5.00)--(3+7.00,1+5.00)--(2+7.00,1+5.00)--(2+7.00,0+5.00)--(0+7.00,0+5.00);
\draw[thick] (0+10.00,0+6.00)--(0+10.00,1+6.00)--(1+10.00,1+6.00)--(1+10.00,2+6.00)--(3+10.00,2+6.00)--(3+10.00,1+6.00)--(2+10.00,1+6.00)--(2+10.00,0+6.00)--(0+10.00,0+6.00);
\end{tikzpicture}
}
\hackcenter{
\begin{tikzpicture}[scale=0.29]
\node[white] at (-0.2,0){{}};
\node[white] at (0.2,0){{}};
\draw[thin, gray,fill=gray!30]  (0,0)--(0,6);
\end{tikzpicture}
}
\hackcenter{
\begin{tikzpicture}[scale=0.29]
\draw[thick,fill=red!30]  (0,0)--(0,2)--(1,2)--(1,5)--(2,5)--(2,6)--(3,6)--(3,7)--(4,7)--(4,8)--(5,8)--(5,6)--(4,6)--(4,3)--(3,3)--(3,2)--(2,2)--(2,1)--(1,1)--(1,0)--(0,0);
\node at (0.5,0.5){$\scriptstyle 0$};
\node at (0.5,1.5){$\scriptstyle 1$};
\node at (1.5,1.5){$\scriptstyle 2$};
\node at (1.5,2.5){$\scriptstyle 3$};
\node at (0.5+2,0.5+2){$\scriptstyle 0$};
\node at (0.5+2,1.5+2){$\scriptstyle 1$};
\node at (1.5+2,1.5+2){$\scriptstyle 2$};
\node at (1.5+2,2.5+2){$\scriptstyle 3$};
\node at (0.5+1,0.5+3){$\scriptstyle 0$};
\node at (0.5+1,1.5+3){$\scriptstyle 1$};
\node at (1.5+1,1.5+3){$\scriptstyle 2$};
\node at (1.5+1,2.5+3){$\scriptstyle 3$};
\node at (0.5+3,0.5+5){$\scriptstyle 0$};
\node at (0.5+3,1.5+5){$\scriptstyle 1$};
\node at (1.5+3,1.5+5){$\scriptstyle 2$};
\node at (1.5+3,2.5+5){$\scriptstyle 3$};
\draw[thick,  gray, dotted] (0,1)--(1,1);
\draw[thick,  gray, dotted] (1,2)--(2,2);
\draw[thick,  gray, dotted] (1,3)--(3,3);
\draw[thick,  gray, dotted] (1,4)--(4,4);
\draw[thick,  gray, dotted] (2,5)--(4,5);
\draw[thick,  gray, dotted] (3,6)--(4,6);
\draw[thick,  gray, dotted] (4,7)--(5,7);
\draw[thick,  gray, dotted] (1,1)--(1,2);
\draw[thick,  gray, dotted] (2,2)--(2,5);
\draw[thick,  gray, dotted] (3,3)--(3,6);
\draw[thick,  gray, dotted] (4,6)--(4,7);
\draw[thick]  (0,0)--(0,2)--(1,2)--(1,3)--(2,3)--(2,1)--(1,1)--(1,0)--(0,0);
\draw[thick]  (0+2.0,0+2.0)--(0+2.0,2+2.0)--(1+2.0,2+2.0)--(1+2.0,3+2.0)--(2+2.0,3+2.0)--(2+2.0,1+2.0)--(1+2.0,1+2.0)--(1+2.0,0+2.0)--(0+2.0,0+2.0);
\draw[thick]  (0+1.0,0+3.0)--(0+1.0,2+3.0)--(1+1.0,2+3.0)--(1+1.0,3+3.0)--(2+1.0,3+3.0)--(2+1.0,1+3.0)--(1+1.0,1+3.0)--(1+1.0,0+3.0)--(0+1.0,0+3.0);
\draw[thick]  (0+3.0,0+5.0)--(0+3.0,2+5.0)--(1+3.0,2+5.0)--(1+3.0,3+5.0)--(2+3.0,3+5.0)--(2+3.0,1+5.0)--(1+3.0,1+5.0)--(1+3.0,0+5.0)--(0+3.0,0+5.0);
\end{tikzpicture}
}
\hackcenter{
\begin{tikzpicture}[scale=0.29]
\node[white] at (-0.2,0){{}};
\node[white] at (0.2,0){{}};
\draw[thin, gray,fill=gray!30]  (0,0)--(0,6);
\end{tikzpicture}
}
\hackcenter{
\begin{tikzpicture}[scale=0.29]
\draw[thick,fill=cyan!30]  (0,0)--(0,4)--(1,4)--(1,7)--(2,7)--(2,3)--(1,3)--(1,0)--(0,0);
\node at (0.5,0.5){$\scriptstyle 2$};
\node at (0.5,1.5){$\scriptstyle 3$};
\node at (0.5,2.5){$\scriptstyle 0$};
\node at (0.5,3.5){$\scriptstyle 1$};
\node at (1.5,3.5){$\scriptstyle 2$};
\node at (1.5,4.5){$\scriptstyle 3$};
\node at (1.5,5.5){$\scriptstyle 0$};
\node at (1.5,6.5){$\scriptstyle 1$};
\draw[thick,  gray, dotted] (0,1)--(1,1);
\draw[thick,  gray, dotted] (0,2)--(1,2);
\draw[thick,  gray, dotted] (0,3)--(1,3);
\draw[thick,  gray, dotted] (1,3)--(1,4);
\draw[thick,  gray, dotted] (1,4)--(2,4);
\draw[thick,  gray, dotted] (1,5)--(2,5);
\draw[thick,  gray, dotted] (1,6)--(2,6);
\draw[thick] (0,0)--(0,4)--(1,4)--(1,0)--(0,0);
\draw[thick] (0+1,0+3)--(0+1,4+3)--(1+1,4+3)--(1+1,0+3)--(0+1,0+3);
\end{tikzpicture}
}
\;\;\;
\right).
\end{align*}
Then \(\zS^{\zeta(\bnu)}\) is an indecomposable semicuspidal \(R_{13 \delta}\)-module with \(\textup{hd}(\zS^{\zeta(\bnu)}) \approx L(\bnu)\).
\end{Example}

\subsection{Specht covers of simple and proper standard modules}
More generally, for a root partition \(\pi = (\boldsymbol{K}, \bnu) \in \Pi(\omega)\), we construct an associated skew diagram \(\zeta(\pi)\) of content \(\omega\) in \cref{def:zetapi}. This is a concatenation of previously constructed semicuspidal skew diagrams with multiplicities determined by \(\boldsymbol{K}\):
\begin{align*}
\zeta(\pi) = \left(
\zeta(\beta_1)^{K_{\beta_1}} \mid \cdots \mid \zeta(\beta_u)^{K_{\beta_u}} \mid \zeta(\bnu) \mid  \zeta(\beta_{u+1})^{K_{\beta_{u+1}}} \mid \cdots \mid \zeta(\beta_t)^{K_{\beta_t}}
\right).
\end{align*}
In \cref{thm:klrsimples} we establish the following, showing that there exists a cuspidal-theoretic approach to KLR representation theory in which skew Specht modules play a role analogous to proper standard modules.

\begin{namedthm*}{Theorem B}\hypertarget{thm:B}
Let \(\pi = (\boldsymbol{K}, \bnu) \in \Pi(\omega)\). Then the following statements hold.
\begin{enumerate}
\item The skew Specht module \(\zS^{\zeta(\pi)}\) is indecomposable with simple head
\begin{align*} 
\textup{hd}(\zS^{\zeta(\pi)}) \approx L(\pi),
\end{align*} 
so that \(\{\textup{hd}(\zS^{\zeta(\pi)}) \mid \pi \in \Pi(\omega)\}\) gives a complete and irredundant set of simple \(R_\omega\)-modules up to grading shift.
\item For \(\sigma \in \Pi(\omega)\), we have \([\zS^{\zeta(\pi)} : L(\pi)] = 1\), and \([\zS^{\zeta(\pi)} : L(\sigma)] >0\) only if \(\sigma \leq_{\textup{bd}} \pi\). Moreover, for all root partitions of the form \((\boldsymbol{K}, \bmu) \in \Pi(\omega)\), we have \([\zS^{\zeta(\pi)} : L(\boldsymbol{K},\bmu)] = d^{\textup{RoCK}}_{\bnu, \bmu}\).
\item There exists a surjection \(\zS^{\zeta(\pi)} \twoheadrightarrow \bar{\Delta}(\pi)\), and \(\zS^{\zeta(\pi)}\) has a filtration by proper standard modules of the form \(\bar{\Delta}(\boldsymbol{K}, \bmu)\), where  \((\zS^{\zeta(\pi)} : \bar{\Delta}(\boldsymbol{K}, \bmu)) = d^{\textup{RoCK}}_{\bnu, \bmu}\). 
 \end{enumerate}
\end{namedthm*}

By comparison with the proper standard module \(\bar{\Delta}(\pi)\), the Specht module \(\zS^{\zeta(\pi)}\) has a direct presentation via generators and relations and an explicit combinatorial basis, which we believe will be useful in further combinatorial study of KLR representation theory. This point of view also provides the beginnings of a combinatorial link between the cuspidal system approach to KLR representation theory and the cellular approach to cyclotomic KLR representation theory (see for instance \S\ref{jamesintrosec}).
One may view the above parameterization of simple KLR modules via skew diagrams as an affine type generalization of the {\em Bernstein--Zelevinsky multisegment} parameterization of Hecke algebra representations \cite{vazirani02, Gur}.

\begin{Example}\label{secondexintro}
Continuing where we left off with \cref{firstexintro}, take for instance the root partition \(\pi \in \Pi(20\alpha_0 + 20 \alpha_1 + 22 \alpha_2 + 21 \alpha_3)\) defined as
\begin{align*}
\pi = \left(
\left(
\alpha_2 + \alpha_3 + \alpha_0 
\mid
 2\delta + \alpha_0 + \alpha_1 + \alpha_2
 \mid
 (\delta + \alpha_2 + \alpha_3)^2 
\mid
 \delta^{13}
 \mid
 \delta + \alpha_1 
 \right)
 ,
\left(
(3^2,1) \mid (2^2) \mid (2)
\right)
\right).
\end{align*}
Then, in consideration of the ribbons in \cref{firstexintro}, we have that
\begin{align*}
{}
\zeta(\pi) =
\left(
\hackcenter{
\begin{tikzpicture}[scale=0.29]
\draw[thick,fill=gray!30]  (0,0)--(0,2)--(2,2)--(2,1)--(1,1)--(1,0)--(0,0);
\node at (0.5,0.5){$\scriptstyle 2$};
\node at (0.5,1.5){$\scriptstyle 3$};
\node at (1.5,1.5){$\scriptstyle 0$};
\draw[thick,  gray, dotted] (0,1)--(1,1);
\draw[thick,  gray, dotted] (1,1)--(1,2);
\end{tikzpicture}
}
\hackcenter{
\begin{tikzpicture}[scale=0.29]
\node[white] at (-0.2,0){{}};
\node[white] at (0.2,0){{}};
\draw[thin, gray,fill=gray!30]  (0,0)--(0,6);
\end{tikzpicture}
}
\hackcenter{
\begin{tikzpicture}[scale=0.29]
\draw[thick,fill=gray!30]  (0,0)--(0,2)--(1,2)--(1,3)--(2,3)--(2,4)--(3,4)--(3,5)--(7,5)--(7,4)--(4,4)--(4,3)--(3,3)--(3,2)--(2,2)--(2,1)--(1,1)--(1,0)--(0,0);
\node at (0.5,0.5){$\scriptstyle 0$};
\node at (0.5,1.5){$\scriptstyle 1$};
\node at (1.5,1.5){$\scriptstyle 2$};
\node at (1.5,2.5){$\scriptstyle 3$};
\node at (2.5,2.5){$\scriptstyle 0$};
\node at (2.5,3.5){$\scriptstyle 1$};
\node at (3.5,3.5){$\scriptstyle 2$};
\node at (3.5,4.5){$\scriptstyle 3$};
\node at (4.5,4.5){$\scriptstyle 0$};
\node at (5.5,4.5){$\scriptstyle 1$};
\node at (6.5,4.5){$\scriptstyle 2$};
\draw[thick,  gray, dotted] (0,1)--(1,1);
\draw[thick,  gray, dotted] (1,2)--(2,2);
\draw[thick,  gray, dotted] (2,3)--(3,3);
\draw[thick,  gray, dotted] (3,4)--(4,4);
\draw[thick,  gray, dotted] (1,1)--(1,2);
\draw[thick,  gray, dotted] (2,2)--(2,3);
\draw[thick,  gray, dotted] (3,3)--(3,4);
\draw[thick,  gray, dotted] (4,4)--(4,5);
\draw[thick,  gray, dotted] (5,4)--(5,5);
\draw[thick,  gray, dotted] (6,4)--(6,5);
\end{tikzpicture}
}
\hackcenter{
\begin{tikzpicture}[scale=0.29]
\node[white] at (-0.2,0){{}};
\node[white] at (0.2,0){{}};
\draw[thin, gray,fill=gray!30]  (0,0)--(0,6);
\end{tikzpicture}
}
\hackcenter{
\begin{tikzpicture}[scale=0.29]
\draw[thick,fill=gray!30]  (0,0)--(0,4)--(1,4)--(1,5)--(2,5)--(2,3)--(1,3)--(1,0)--(0,0);
\node at (0.5,0.5){$\scriptstyle 2$};
\node at (0.5,1.5){$\scriptstyle 3$};
\node at (0.5,2.5){$\scriptstyle 0$};
\node at (0.5,3.5){$\scriptstyle 1$};
\node at (1.5,3.5){$\scriptstyle 2$};
\node at (1.5,4.5){$\scriptstyle 3$};
\draw[thick,  gray, dotted] (0,1)--(1,1);
\draw[thick,  gray, dotted] (0,2)--(1,2);
\draw[thick,  gray, dotted] (0,3)--(1,3);
\draw[thick,  gray, dotted] (1,3)--(1,4);
\draw[thick,  gray, dotted] (1,4)--(2,4);
\end{tikzpicture}
}
\hackcenter{
\begin{tikzpicture}[scale=0.29]
\node[white] at (-0.2,0){{}};
\node[white] at (0.2,0){{}};
\draw[thin, gray,fill=gray!30]  (0,0)--(0,6);
\end{tikzpicture}
}
\hackcenter{
\begin{tikzpicture}[scale=0.29]
\draw[thick,fill=gray!30]  (0,0)--(0,4)--(1,4)--(1,5)--(2,5)--(2,3)--(1,3)--(1,0)--(0,0);
\node at (0.5,0.5){$\scriptstyle 2$};
\node at (0.5,1.5){$\scriptstyle 3$};
\node at (0.5,2.5){$\scriptstyle 0$};
\node at (0.5,3.5){$\scriptstyle 1$};
\node at (1.5,3.5){$\scriptstyle 2$};
\node at (1.5,4.5){$\scriptstyle 3$};
\draw[thick,  gray, dotted] (0,1)--(1,1);
\draw[thick,  gray, dotted] (0,2)--(1,2);
\draw[thick,  gray, dotted] (0,3)--(1,3);
\draw[thick,  gray, dotted] (1,3)--(1,4);
\draw[thick,  gray, dotted] (1,4)--(2,4);
\end{tikzpicture}
}
\hackcenter{
\begin{tikzpicture}[scale=0.29]
\node[white] at (-0.2,0){{}};
\node[white] at (0.2,0){{}};
\draw[thin, gray,fill=gray!30]  (0,0)--(0,6);
\end{tikzpicture}
}
\hackcenter{
\begin{tikzpicture}[scale=0.29]
\draw[thick,fill=green!30]  (0,0)--(0,1)--(1,1)--(1,2)--(2,2)--(2,3)--(3,3)--(3,4)--(4,4)--(4,5)--(5,5)--(5,6)--(8,6)--(8,7)--(11,7)--(11,8)--(13,8)--(13,7)--(12,7)--(12,6)--(11,6)--(11,5)--(10,5)--(10,4)--(7,4)--(7,3)--(4,3)--(4,2)--(3,2)--(3,1)--(2,1)--(2,0)--(0,0);
\draw[thick] (0,0)--(0,1)--(1,1)--(1,2)--(3,2)--(3,1)--(2,1)--(2,0)--(0,0);
\draw[thick] (0+2.00,0+2.00)--(0+2.00,1+2.00)--(1+2.00,1+2.00)--(1+2.00,2+2.00)--(3+2.00,2+2.00)--(3+2.00,1+2.00)--(2+2.00,1+2.00)--(2+2.00,0+2.00)--(0+2.00,0+2.00);
\draw[thick] (0+4.00,0+4.00)--(0+4.00,1+4.00)--(1+4.00,1+4.00)--(1+4.00,2+4.00)--(3+4.00,2+4.00)--(3+4.00,1+4.00)--(2+4.00,1+4.00)--(2+4.00,0+4.00)--(0+4.00,0+4.00);
\draw[thick] (0+5.00,0+3.00)--(0+5.00,1+3.00)--(1+5.00,1+3.00)--(1+5.00,2+3.00)--(3+5.00,2+3.00)--(3+5.00,1+3.00)--(2+5.00,1+3.00)--(2+5.00,0+3.00)--(0+5.00,0+3.00);
\draw[thick] (0+8.00,0+4.00)--(0+8.00,1+4.00)--(1+8.00,1+4.00)--(1+8.00,2+4.00)--(3+8.00,2+4.00)--(3+8.00,1+4.00)--(2+8.00,1+4.00)--(2+8.00,0+4.00)--(0+8.00,0+4.00);
\draw[thick] (0+7.00,0+5.00)--(0+7.00,1+5.00)--(1+7.00,1+5.00)--(1+7.00,2+5.00)--(3+7.00,2+5.00)--(3+7.00,1+5.00)--(2+7.00,1+5.00)--(2+7.00,0+5.00)--(0+7.00,0+5.00);
\draw[thick] (0+10.00,0+6.00)--(0+10.00,1+6.00)--(1+10.00,1+6.00)--(1+10.00,2+6.00)--(3+10.00,2+6.00)--(3+10.00,1+6.00)--(2+10.00,1+6.00)--(2+10.00,0+6.00)--(0+10.00,0+6.00);
\node at (0.5,0.5){$\scriptstyle 1$};
\node at (1.5,0.5){$\scriptstyle 2$};
\node at (1.5,1.5){$\scriptstyle 3$};
\node at (2.5,1.5){$\scriptstyle 0$};
\node at (0.5+2,0.5+2){$\scriptstyle 1$};
\node at (1.5+2,0.5+2){$\scriptstyle 2$};
\node at (1.5+2,1.5+2){$\scriptstyle 3$};
\node at (2.5+2,1.5+2){$\scriptstyle 0$};
\node at (0.5+4,0.5+4){$\scriptstyle 1$};
\node at (1.5+4,0.5+4){$\scriptstyle 2$};
\node at (1.5+4,1.5+4){$\scriptstyle 3$};
\node at (2.5+4,1.5+4){$\scriptstyle 0$};
\node at (0.5+5,0.5+3){$\scriptstyle 1$};
\node at (1.5+5,0.5+3){$\scriptstyle 2$};
\node at (1.5+5,1.5+3){$\scriptstyle 3$};
\node at (2.5+5,1.5+3){$\scriptstyle 0$};
\node at (0.5+7,0.5+5){$\scriptstyle 1$};
\node at (1.5+7,0.5+5){$\scriptstyle 2$};
\node at (1.5+7,1.5+5){$\scriptstyle 3$};
\node at (2.5+7,1.5+5){$\scriptstyle 0$};
\node at (0.5+8,0.5+4){$\scriptstyle 1$};
\node at (1.5+8,0.5+4){$\scriptstyle 2$};
\node at (1.5+8,1.5+4){$\scriptstyle 3$};
\node at (2.5+8,1.5+4){$\scriptstyle 0$};
\node at (0.5+10,0.5+6){$\scriptstyle 1$};
\node at (1.5+10,0.5+6){$\scriptstyle 2$};
\node at (1.5+10,1.5+6){$\scriptstyle 3$};
\node at (2.5+10,1.5+6){$\scriptstyle 0$};
\draw[thick,  gray, dotted] (1,1)--(2,1);
\draw[thick,  gray, dotted] (2,2)--(3,2);
\draw[thick,  gray, dotted] (3,3)--(4,3);
\draw[thick,  gray, dotted] (4,4)--(7,4);
\draw[thick,  gray, dotted] (5,5)--(10,5);
\draw[thick,  gray, dotted] (8,6)--(11,6);
\draw[thick,  gray, dotted] (11,7)--(12,7);
\draw[thick,  gray, dotted] (1,0)--(1,1);
\draw[thick,  gray, dotted] (2,1)--(2,2);
\draw[thick,  gray, dotted] (3,2)--(3,3);
\draw[thick,  gray, dotted] (4,3)--(4,4);
\draw[thick,  gray, dotted] (5,3)--(5,5);
\draw[thick,  gray, dotted] (6,3)--(6,6);
\draw[thick,  gray, dotted] (7,4)--(7,6);
\draw[thick,  gray, dotted] (8,4)--(8,6);
\draw[thick,  gray, dotted] (9,4)--(9,7);
\draw[thick,  gray, dotted] (10,5)--(10,7);
\draw[thick,  gray, dotted] (11,6)--(11,7);
\draw[thick,  gray, dotted] (12,7)--(12,8);
\end{tikzpicture}
}
\hackcenter{
\begin{tikzpicture}[scale=0.29]
\node[white] at (-0.2,0){{}};
\node[white] at (0.2,0){{}};
\draw[thin, gray,fill=gray!30]  (0,0)--(0,6);
\end{tikzpicture}
}
\hackcenter{
\begin{tikzpicture}[scale=0.29]
\draw[thick,fill=red!30]  (0,0)--(0,2)--(1,2)--(1,5)--(2,5)--(2,6)--(3,6)--(3,7)--(4,7)--(4,8)--(5,8)--(5,6)--(4,6)--(4,3)--(3,3)--(3,2)--(2,2)--(2,1)--(1,1)--(1,0)--(0,0);
\draw[thick]  (0,0)--(0,2)--(1,2)--(1,3)--(2,3)--(2,1)--(1,1)--(1,0)--(0,0);
\draw[thick]  (0+2.0,0+2.0)--(0+2.0,2+2.0)--(1+2.0,2+2.0)--(1+2.0,3+2.0)--(2+2.0,3+2.0)--(2+2.0,1+2.0)--(1+2.0,1+2.0)--(1+2.0,0+2.0)--(0+2.0,0+2.0);
\draw[thick]  (0+1.0,0+3.0)--(0+1.0,2+3.0)--(1+1.0,2+3.0)--(1+1.0,3+3.0)--(2+1.0,3+3.0)--(2+1.0,1+3.0)--(1+1.0,1+3.0)--(1+1.0,0+3.0)--(0+1.0,0+3.0);
\draw[thick]  (0+3.0,0+5.0)--(0+3.0,2+5.0)--(1+3.0,2+5.0)--(1+3.0,3+5.0)--(2+3.0,3+5.0)--(2+3.0,1+5.0)--(1+3.0,1+5.0)--(1+3.0,0+5.0)--(0+3.0,0+5.0);
\node at (0.5,0.5){$\scriptstyle 0$};
\node at (0.5,1.5){$\scriptstyle 1$};
\node at (1.5,1.5){$\scriptstyle 2$};
\node at (1.5,2.5){$\scriptstyle 3$};
\node at (0.5+2,0.5+2){$\scriptstyle 0$};
\node at (0.5+2,1.5+2){$\scriptstyle 1$};
\node at (1.5+2,1.5+2){$\scriptstyle 2$};
\node at (1.5+2,2.5+2){$\scriptstyle 3$};
\node at (0.5+1,0.5+3){$\scriptstyle 0$};
\node at (0.5+1,1.5+3){$\scriptstyle 1$};
\node at (1.5+1,1.5+3){$\scriptstyle 2$};
\node at (1.5+1,2.5+3){$\scriptstyle 3$};
\node at (0.5+3,0.5+5){$\scriptstyle 0$};
\node at (0.5+3,1.5+5){$\scriptstyle 1$};
\node at (1.5+3,1.5+5){$\scriptstyle 2$};
\node at (1.5+3,2.5+5){$\scriptstyle 3$};
\draw[thick,  gray, dotted] (0,1)--(1,1);
\draw[thick,  gray, dotted] (1,2)--(2,2);
\draw[thick,  gray, dotted] (1,3)--(3,3);
\draw[thick,  gray, dotted] (1,4)--(4,4);
\draw[thick,  gray, dotted] (2,5)--(4,5);
\draw[thick,  gray, dotted] (3,6)--(4,6);
\draw[thick,  gray, dotted] (4,7)--(5,7);
\draw[thick,  gray, dotted] (1,1)--(1,2);
\draw[thick,  gray, dotted] (2,2)--(2,5);
\draw[thick,  gray, dotted] (3,3)--(3,6);
\draw[thick,  gray, dotted] (4,6)--(4,7);
\end{tikzpicture}
}
\hackcenter{
\begin{tikzpicture}[scale=0.29]
\node[white] at (-0.2,0){{}};
\node[white] at (0.2,0){{}};
\draw[thin, gray,fill=gray!30]  (0,0)--(0,6);
\end{tikzpicture}
}
\hackcenter{
\begin{tikzpicture}[scale=0.29]
\draw[thick,fill=cyan!30]  (0,0)--(0,4)--(1,4)--(1,7)--(2,7)--(2,3)--(1,3)--(1,0)--(0,0);
\draw[thick] (0,0)--(0,4)--(1,4)--(1,0)--(0,0);
\draw[thick] (0+1,0+3)--(0+1,4+3)--(1+1,4+3)--(1+1,0+3)--(0+1,0+3);
\node at (0.5,0.5){$\scriptstyle 2$};
\node at (0.5,1.5){$\scriptstyle 3$};
\node at (0.5,2.5){$\scriptstyle 0$};
\node at (0.5,3.5){$\scriptstyle 1$};
\node at (1.5,3.5){$\scriptstyle 2$};
\node at (1.5,4.5){$\scriptstyle 3$};
\node at (1.5,5.5){$\scriptstyle 0$};
\node at (1.5,6.5){$\scriptstyle 1$};
\draw[thick,  gray, dotted] (0,1)--(1,1);
\draw[thick,  gray, dotted] (0,2)--(1,2);
\draw[thick,  gray, dotted] (0,3)--(1,3);
\draw[thick,  gray, dotted] (1,3)--(1,4);
\draw[thick,  gray, dotted] (1,4)--(2,4);
\draw[thick,  gray, dotted] (1,5)--(2,5);
\draw[thick,  gray, dotted] (1,6)--(2,6);
\end{tikzpicture}
}
\hackcenter{
\begin{tikzpicture}[scale=0.29]
\node[white] at (-0.2,0){{}};
\node[white] at (0.2,0){{}};
\draw[thin, gray,fill=gray!30]  (0,0)--(0,6);
\end{tikzpicture}
}
\hackcenter{
\begin{tikzpicture}[scale=0.29]
\draw[thick,fill=gray!30]  (0,0)--(0,1)--(1,1)--(1,2)--(2,2)--(2,3)--(3,3)--(3,1)--(2,1)--(2,0)--(0,0);
\node at (0.5,0.5){$\scriptstyle 1$};
\node at (1.5,0.5){$\scriptstyle 2$};
\node at (1.5,1.5){$\scriptstyle 3$};
\node at (2.5,1.5){$\scriptstyle 0$};
\node at (2.5,2.5){$\scriptstyle 1$};
\draw[thick,  gray, dotted] (1,1)--(2,1);
\draw[thick,  gray, dotted] (2,2)--(3,2);
\draw[thick,  gray, dotted] (1,0)--(1,1);
\draw[thick,  gray, dotted] (2,1)--(2,2);
\end{tikzpicture}
}
\right).
\end{align*}
Then \(\zS^{\zeta(\pi)}\) is an indecomposable \(R_{20\alpha_0 + 20 \alpha_1 + 22 \alpha_2 + 21 \alpha_3}\textup{-module}\) with simple head \(\textup{hd}(\zS^{\zeta(\pi)}) \approx L(\pi)\).
\end{Example}

\subsection{Cuspidal ribbons and RoCK blocks}
The proofs of Theorems A and B come by way of a connection between semicuspidal modules and RoCK blocks.
RoCK blocks are `generic' cyclotomic KLR algebras that have played a key role in the modular representation theory of the symmetric group and the proof of Brou\'e's abelian defect conjecture in this setting. Initially defined by Rouquier \cite{R2}, their structure was studied up to Morita equivalence by Chuang--Kessar \cite{CK02} (in the abelian defect case) and Evseev--Kleshchev \cite{ek18} (in arbitrary defect). Recently RoCK blocks have been defined and studied for the higher-level cyclotomic KLR algebras by Lyle \cite{Lyle22}, working from a combinatorial perspective, and Webster \cite{websterScopes}, working from a Lie-theoretic perspective. 
{\em Core blocks} are the simplest types of RoCK blocks (see \cref{paramRoCK}), and their associated combinatorics have also been the focus of study by Fayers \cite{fay07core, fayerssimcore, fay06wts}, Lyle~\cite{Lyle24coreblocks} and Lyle--Ruff \cite{LyleRuff}, to name a few examples. 

In \cref{subsec:cuspsystems} we study interactions between the combinatorics of RoCK blocks, core blocks and {\em cuspidal ribbon tilings} of multipartitions in these blocks. For a {\em multicharge} \(\bkap = (\kappa_1, \dots, \kappa_\ell)\) of level \(\ell\),
every \(\ell\)-multipartition \(\blam\) has an associated content \(\textup{cont}(\blam) \in \ZZ_{\geq 0}I\). 
Given a choice of convex preorder \(\succeq\) on \(\Phi_+\), we have the associated cuspidal ribbons
\(
 \{ \zeta(\beta) \mid \beta \in \Phi_+^\re \} \cup \{\zeta_0, \ldots, \zeta_{e-1}\}
\),
as in \cref{firstexintro}.
It was shown in \cite{muthtiling} that every skew diagram \(\btau\) possesses a unique ordered tiling \(\Gamma_{\btau}\) by these ribbons, called a {\em cuspidal Kostant tiling}.  In \S\ref{cusptilesec}, we investigate the properties of cuspidal Kostant tilings in RoCK blocks and core blocks, and 
establish in \cref{thetarockblocktile,coretilingprop} the following result, which shows that these special blocks are distinguished combinatorially by their `biased' tilings.

\begin{namedthm*}{Theorem C}\hypertarget{thm:C}
A cyclotomic KLR algebra is a RoCK block (resp.~core block) if and only if there exists a convex preorder \(\succeq\) on \(\Phi_+\) such that the cuspidal Kostant tiling \(\Gamma_{\blam}\) of every multipartition \(\blam\) with \(\textup{cont}(\blam) = \beta\) consists only of tiles with content \(\succeq \delta\) (resp.~\(\succ \delta\)).
\end{namedthm*}

See \cref{3coretileex} and Figures~\ref{corebigtilings} and~\ref{exwithdeltaribsfig} for illustrative examples of cuspidal ribbon tilings of core and RoCK multipartitions. We remark that the `existence' statement of Theorem C has a constructive proof; we provide a construction of a good convex preorder associated to a RoCK block in \S\ref{cusptilesec}.
We consider RoCK multipartitions as the data of a multicore coupled with a skew diagram tiled by imaginary ribbons. In \cref{combimagsec} we investigate in detail the diagram and word combinatorics of imaginary ribbons in RoCK blocks.


\subsection{Skew cyclotomic KLR algebras}
For \(\omega, \beta \in \ZZ_{\geq 0}I\), we consider the representation theory of the `\(\omega\)-skew' cyclotomic quotient of \(R_\beta\), which we denote \(R_\beta^{\Lambda/\omega}\). This algebra arises as the homomorphic image of the composition
\begin{align*}
R_\beta \longhookrightarrow R_\omega \otimes R_\beta \longhookrightarrow R_{\omega + \beta} \dhxrightarrow{\pi^\Lambda} R_{\omega + \beta}^\Lambda,
\end{align*}
where \(\pi^\Lambda\) is the usual cyclotomic quotient map.
In \cref{wbcutthm} we establish some connections between this skew cyclotomic KLR algebra and idempotent truncations of \(R^{\Lambda}_{\omega + \beta}\). 
Of particular interest in this paper is the case where \(\omega\) admits exactly one multipartition \(\brho\) with \(\textup{cont}_{\bkap}(\brho) = \omega\). In this case, the algebra \(R^\Lambda_\omega\) is simple, and we call the multipartition \(\brho\) a \(\bkap\)-core.
In \cref{skewcell}, we prove the following result.

\begin{namedthm*}{Theorem D}\hypertarget{thm:D}
Let \(\omega, \beta \in \ZZ_{\geq 0}I\), and assume that \(\brho\) is a \(\bkap\)-core with \(\textup{cont}_{\bkap}(\brho) = \omega\).
Then the \(\omega\)-skew cyclotomic KLR algebra \(R_\beta^{\Lambda/\omega}\) is a graded cellular algebra, with cell modules given by the skew Specht modules \(\{S^{\blam/ \brho} \mid \blam \text{ an $\ell$-multipartition with } \textup{cont}_{\bkap}(\blam/\brho) = \beta\}\).
There is moreover an exact functor \(\mathcal{T}: R^{\Lambda}_{\omega + \beta}\textup{-mod} \to R_\beta^{\Lambda/\omega}\textup{-mod}\) which sends the Specht module \(S^{\blam}\) to the skew Specht module \(S^{\blam/\brho}\), and simple modules to simple modules (or zero). 
\end{namedthm*}

In \S\ref{cuspsyssec}, we apply Theorem D to the RoCK block setting.
Our next result shows that the representation theory of imaginary semicuspidal \(R_{d\delta}\)-modules can be studied via skew cyclotomic KLR algebras associated to RoCK blocks.

\begin{namedthm*}{Theorem E}\hypertarget{thm:E}
Fix a convex preorder \(\succeq\) on \(\Phi_+\), and let \(d \in \mathbb{Z}_{>0}\). There exists a level one charge \(\kappa\), core partition \(\rho\) with \(\cont(\rho) = \omega\), and RoCK block \(R^{\Lambda_{\kappa}}_{\omega + d \delta}\) such that the following holds.
\begin{enumerate}
\item The functor \(\mathcal{T}: R^{\Lambda_{\kappa}}_{\omega + d\delta}\textup{-mod} \to R^{{\Lambda_{\kappa}}/\omega}_{d\delta}\textup{-mod}\) of Theorem D is a Morita equivalence.
\item Each \((e-1)\)-multipartition \(\bnu\) of \(d\) may be associated with an \(e\)-restricted partition \(\lambda(\bnu)\) in the RoCK block \(R^{\Lambda_\kappa}_{\omega + d\delta}\) (whose \(e\)-quotient is a certain permutation of \(\bnu\)). We have \(\mathcal{T} S^{\lambda(\bnu)} \cong S^{\lambda(\bnu)/\rho} \approx \zS^{\zeta(\bnu)}\), and 
\begin{align}\label{introallcusps}
\{\textup{hd}(\zS^{\zeta(\bnu)}) \mid \bnu \text{ is an \((e-1)\)-multipartition of } d\}
\end{align}
is a complete and irredundant set of simple \(R^{{\Lambda_{\kappa}}/\omega}_{d\delta}\)-modules up to grading shift.
\item \(R^{{\Lambda_{\kappa}}/\omega}_{d\delta}\)-modules lift to imaginary semicuspidal \(R_{d\delta}\)-modules, and (\ref{introallcusps}) is a complete and irredundant set of simple imaginary semicuspidal \(R_{d\delta}\)-modules up to grading shift.
\end{enumerate}
\end{namedthm*}

This result appears as \cref{Morcut,semicuspthm} in the body of the paper; the reader should
see \cref{twoex} for illustrative examples. For the decomposition numbers in the RoCK block in Theorem E, we write
\begin{align}\label{defdRoCKintro}
d^{\textup{RoCK}}_{\bnu, \bmu} = [S^{\lambda(\bnu)} : \textup{hd}(S^{\lambda(\bmu)})].
\end{align}
The decomposition numbers in Theorems A and B arise as a direct result of this theorem. 
Importantly, these numbers are well-studied and have explicit formulae when \(\text{char}(\k) = 0\) or \(\text{char}(\k) > d\), see for instance \cite{jlm}. 


\subsection{A cuspidal `regularization' theorem for Specht modules}\label{jamesintrosec}
We end by briefly demonstrating an application of Theorem B to the representation theory of Specht modules.
James's celebrated regularization theorem \cite[Theorem A]{j76} gives, for any partition \(\lambda\), a purely combinatorial rule for describing another partition \(\bar{\lambda}\) such that 
\(
[S^{\lambda}: D^{\bar{\lambda}}] = 1
\)
and
\(\mu \trianglelefteq^D \bar{\lambda}\) whenever \([S^\lambda : D^{\mu}] \neq 0\). (Here \(D^{\mu}\) denotes \( \textup{hd}(S^{\mu})\) and \(\trianglelefteq^D\) is the dominance order on partitions.)
We establish the following analogous result for simple factors labeled by {\em root partitions}.
\begin{namedthm*}{Theorem F}\hypertarget{thm:F}
Let \(\btau\) be a multipartition of content \(\omega \in \ZZ_{\geq 0}I\), and assume that \(\pi \in \Pi(\omega)\) is a root partition such that \(\btau\) has a \(\zeta(\pi)\)-tiling.
Then there exists a nonzero homomorphism of \(R_\omega\)-modules \(\zS^{\zeta(\pi)} \to \zS^{\btau}\), and we have \([\zS^{\btau} : L(\pi)] = 1\) and \([\zS^{\btau} : L(\sigma)] > 0\) only if \(\sigma \leq_{\textup{bd}} \pi\).
In particular, if \(\btau = \tau\) is an \(e\)-restricted (level one) partition, the unique cuspidal Kostant tiling \(\Gamma_\tau\) defines such a root partition \(\pi \in \Pi(\omega)\). 
\end{namedthm*}
This appears as \cref{F1,F2} in the body of the paper.
Theorem F is a refined version of \cite[Theorem 8.5]{muthtiling}, which operates at the coarser level of Kostant partition labels. We remark that in general, the information provided by James's regularization theorem and this theorem are generally distinct and unrelated.
It would be advantageous to understand how the simple labels given by multipartitons coming from the {\em cellular} theory of  cyclotomic KLR algebras may be connected with the root partition labels of the cuspidal system theory of the KLR algebra. We view Theorem F as a step in this direction. 

\begin{Example}
Continuing with \(e=4\) and the convex preorder in \cref{3coretileex}, 
consider the partition \(\tau = (7^3,6,3)\) in the block \(R^{\Lambda_0}_{8 \alpha_0 + 8 \alpha_1 + 8 \alpha_2 + 6 \alpha_3}\). The unique cuspidal Kostant tiling \(\Gamma_\tau\) is shown below.
\begin{align*}
{}
\hackcenter{
\begin{tikzpicture}[scale=0.29]
\draw[thick,fill=gray!30]  (0,0)--(0,5)--(7,5)--(7,2)--(6,2)--(6,1)--(3,1)--(3,0)--(0,0);
\draw[thick,fill=green!30]  (1,0)--(3,0)--(3,1)--(4,1)--(4,2)--(5,2)--(5,3)--(6,3)--(6,4)--(4,4)--(4,3)--(3,3)--(3,2)--(2,2)--(2,1)--(1,1)--(1,0);
\node at (0.5,0.5){$\scriptstyle 0$};
\node at (1.5,0.5){$\scriptstyle 1$};
\node at (2.5,0.5){$\scriptstyle 2$};
\node at (0.5,1.5){$\scriptstyle 1$};
\node at (1.5,1.5){$\scriptstyle 2$};
\node at (2.5,1.5){$\scriptstyle 3$};
\node at (3.5,1.5){$\scriptstyle 0$};
\node at (4.5,1.5){$\scriptstyle 1$};
\node at (5.5,1.5){$\scriptstyle 2$};
\node at (0.5,2.5){$\scriptstyle 2$};
\node at (1.5,2.5){$\scriptstyle 3$};
\node at (2.5,2.5){$\scriptstyle 0$};
\node at (3.5,2.5){$\scriptstyle 1$};
\node at (4.5,2.5){$\scriptstyle 2$};
\node at (5.5,2.5){$\scriptstyle 3$};
\node at (6.5,2.5){$\scriptstyle 0$};
\node at (0.5,3.5){$\scriptstyle 3$};
\node at (1.5,3.5){$\scriptstyle 0$};
\node at (2.5,3.5){$\scriptstyle 1$};
\node at (3.5,3.5){$\scriptstyle 2$};
\node at (4.5,3.5){$\scriptstyle 3$};
\node at (5.5,3.5){$\scriptstyle 0$};
\node at (6.5,3.5){$\scriptstyle 1$};
\node at (0.5,4.5){$\scriptstyle 0$};
\node at (1.5,4.5){$\scriptstyle 1$};
\node at (2.5,4.5){$\scriptstyle 2$};
\node at (3.5,4.5){$\scriptstyle 3$};
\node at (4.5,4.5){$\scriptstyle 0$};
\node at (5.5,4.5){$\scriptstyle 1$};
\node at (6.5,4.5){$\scriptstyle 2$};
\draw[thick,  gray, dotted] (0,1)--(3,1);
\draw[thick,  gray, dotted] (0,2)--(6,2);
\draw[thick,  gray, dotted] (0,3)--(7,3);
\draw[thick,  gray, dotted] (0,4)--(7,4);
\draw[thick,  gray, dotted] (0,5)--(7,5);
\draw[thick,  gray, dotted] (1,0)--(1,5);
\draw[thick,  gray, dotted] (2,0)--(2,5);
\draw[thick,  gray, dotted] (3,1)--(3,5);
\draw[thick,  gray, dotted] (4,1)--(4,5);
\draw[thick,  gray, dotted] (5,1)--(5,5);
\draw[thick,  gray, dotted] (6,2)--(6,5);
\draw[thick,]  (0,0)--(0,5)--(7,5)--(7,2)--(6,2)--(6,1)--(3,1)--(3,0)--(0,0);
\draw[thick,] (0,0)--(0,2)--(1,2)--(1,3)--(2,3)--(2,4)--(3,4)--(3,5)--(7,5)--(7,4)--(4,4)--(4,3)--(3,3)--(3,2)--(2,2)--(2,1)--(1,1)--(1,0)--(0,0);
\draw[thick,]  (0,2)--(1,2)--(1,3)--(2,3)--(2,4)--(0,4)--(0,2);
\draw[thick,]  (0,4)--(1,4)--(1,5)--(0,5)--(0,4);
\draw[thick,]  (1,4)--(3,4)--(3,5)--(1,5)--(1,4);
\draw[thick,]  (4,1)--(6,1)--(6,2)--(7,2)--(7,4)--(6,4)--(6,3)--(5,3)--(5,2)--(4,2)--(4,1);
\draw[thick,]  (3,2)--(4,2);
\end{tikzpicture}
}
\end{align*}
Taking unions of same-content tiles defines a root partition \(\pi \in \Pi(8 \alpha_0 + 8 \alpha_1 + 8 \alpha_2 + 6 \alpha_3)\), where
\begin{align*}
\pi = \left(
\left(
\alpha_0
\mid
\alpha_1 + \alpha_2
 \mid
\alpha_2 + \alpha_3 + \alpha_0
\mid
2\delta + \alpha_0 + \alpha_1 + \alpha_2
 \mid
\delta^2 
\mid
\delta + \alpha_1
 \right)
 ,
\left(
(1^2) \mid  \varnothing \mid \varnothing
\right)
\right),
\end{align*}
and
\begin{align*}
{}
\zeta(\pi) =
\left(
\hackcenter{
\begin{tikzpicture}[scale=0.29]
\draw[thick,fill=gray!30]  (0,0)--(1,0)--(1,1)--(0,1)--(0,0);
\node at (0.5,0.5){$\scriptstyle 0$};
\draw[thick,  gray, dotted] (0,1)--(1,1);
\end{tikzpicture}
}
\hackcenter{
\begin{tikzpicture}[scale=0.29]
\node[white] at (-0.2,0){{}};
\node[white] at (0.2,0){{}};
\draw[thin, gray,fill=gray!30]  (0,0)--(0,6);
\end{tikzpicture}
}
\hackcenter{
\begin{tikzpicture}[scale=0.29]
\draw[thick,fill=gray!30]  (0,0)--(2,0)--(2,1)--(0,1)--(0,0);
\node at (0.5,0.5){$\scriptstyle 1$};
\node at (1.5,0.5){$\scriptstyle 2$};
\draw[thick,  gray, dotted] (1,0)--(1,1);
\end{tikzpicture}
}
\hackcenter{
\begin{tikzpicture}[scale=0.29]
\node[white] at (-0.2,0){{}};
\node[white] at (0.2,0){{}};
\draw[thin, gray,fill=gray!30]  (0,0)--(0,6);
\end{tikzpicture}
}
\hackcenter{
\begin{tikzpicture}[scale=0.29]
\draw[thick,fill=gray!30]  (0,0)--(0,2)--(2,2)--(2,1)--(1,1)--(1,0)--(0,0);
\node at (0.5,0.5){$\scriptstyle 2$};
\node at (0.5,1.5){$\scriptstyle 3$};
\node at (1.5,1.5){$\scriptstyle 0$};
\draw[thick,  gray, dotted] (0,1)--(1,1);
\draw[thick,  gray, dotted] (1,1)--(1,2);
\end{tikzpicture}
}
\hackcenter{
\begin{tikzpicture}[scale=0.29]
\node[white] at (-0.2,0){{}};
\node[white] at (0.2,0){{}};
\draw[thin, gray,fill=gray!30]  (0,0)--(0,6);
\end{tikzpicture}
}
\hackcenter{
\begin{tikzpicture}[scale=0.29]
\draw[thick,fill=gray!30]  (0,0)--(0,2)--(1,2)--(1,3)--(2,3)--(2,4)--(3,4)--(3,5)--(7,5)--(7,4)--(4,4)--(4,3)--(3,3)--(3,2)--(2,2)--(2,1)--(1,1)--(1,0)--(0,0);
\node at (0.5,0.5){$\scriptstyle 0$};
\node at (0.5,1.5){$\scriptstyle 1$};
\node at (1.5,1.5){$\scriptstyle 2$};
\node at (1.5,2.5){$\scriptstyle 3$};
\node at (2.5,2.5){$\scriptstyle 0$};
\node at (2.5,3.5){$\scriptstyle 1$};
\node at (3.5,3.5){$\scriptstyle 2$};
\node at (3.5,4.5){$\scriptstyle 3$};
\node at (4.5,4.5){$\scriptstyle 0$};
\node at (5.5,4.5){$\scriptstyle 1$};
\node at (6.5,4.5){$\scriptstyle 2$};
\draw[thick,  gray, dotted] (0,1)--(1,1);
\draw[thick,  gray, dotted] (1,2)--(2,2);
\draw[thick,  gray, dotted] (2,3)--(3,3);
\draw[thick,  gray, dotted] (3,4)--(4,4);
\draw[thick,  gray, dotted] (1,1)--(1,2);
\draw[thick,  gray, dotted] (2,2)--(2,3);
\draw[thick,  gray, dotted] (3,3)--(3,4);
\draw[thick,  gray, dotted] (4,4)--(4,5);
\draw[thick,  gray, dotted] (5,4)--(5,5);
\draw[thick,  gray, dotted] (6,4)--(6,5);
\end{tikzpicture}
}
\hackcenter{
\begin{tikzpicture}[scale=0.29]
\node[white] at (-0.2,0){{}};
\node[white] at (0.2,0){{}};
\draw[thin, gray,fill=gray!30]  (0,0)--(0,6);
\end{tikzpicture}
}
\hackcenter{
\begin{tikzpicture}[scale=0.29]
\draw[thick,fill=green!30] (1,0)--(3,0)--(3,1)--(4,1)--(4,2)--(5,2)--(5,3)--(6,3)--(6,4)--(4,4)--(4,3)--(3,3)--(3,2)--(2,2)--(2,1)--(1,1)--(1,0);
\draw[thick,] (3,2)--(4,2);
\node at (1.5,0.5){$\scriptstyle 1$};
\node at (2.5,0.5){$\scriptstyle 2$};
\node at (2.5,1.5){$\scriptstyle 3$};
\node at (3.5,1.5){$\scriptstyle 0$};
\node at (1.5+2,0.5+2){$\scriptstyle 1$};
\node at (2.5+2,0.5+2){$\scriptstyle 2$};
\node at (2.5+2,1.5+2){$\scriptstyle 3$};
\node at (3.5+2,1.5+2){$\scriptstyle 0$};
\draw[thick,  gray, dotted] (2,0)--(2,1);
\draw[thick,  gray, dotted] (3,1)--(3,2);
\draw[thick,  gray, dotted] (4,2)--(4,3);
\draw[thick,  gray, dotted] (5,3)--(5,4);
\draw[thick,  gray, dotted] (2,1)--(3,1);
\draw[thick,  gray, dotted] (3,2)--(4,2);
\draw[thick,  gray, dotted] (4,3)--(5,3);
\end{tikzpicture}
}
\hackcenter{
\begin{tikzpicture}[scale=0.29]
\node[white] at (-0.2,0){{}};
\node[white] at (0.2,0){{}};
\draw[thin, gray,fill=gray!30]  (0,0)--(0,6);
\end{tikzpicture}
}
\hackcenter{
\begin{tikzpicture}[scale=0.29]
\draw[thick,fill=gray!30]  (0,0)--(0,1)--(1,1)--(1,2)--(2,2)--(2,3)--(3,3)--(3,1)--(2,1)--(2,0)--(0,0);
\node at (0.5,0.5){$\scriptstyle 1$};
\node at (1.5,0.5){$\scriptstyle 2$};
\node at (1.5,1.5){$\scriptstyle 3$};
\node at (2.5,1.5){$\scriptstyle 0$};
\node at (2.5,2.5){$\scriptstyle 1$};
\draw[thick,  gray, dotted] (1,1)--(2,1);
\draw[thick,  gray, dotted] (2,2)--(3,2);
\draw[thick,  gray, dotted] (1,0)--(1,1);
\draw[thick,  gray, dotted] (2,1)--(2,2);
\end{tikzpicture}
}
\right).
\end{align*}
Then \(L(\pi)\) arises as a factor in \(\zS^\tau\) once, and \(\sigma \leq_{\textup{bd}} \pi\) for all other simple factors \(L(\sigma)\) of \(\zS^\tau\).
\end{Example}

\subsection{ArXiv Version}\label{SS:ArxivVersion} We relegate several of the more routine calculations to the \texttt{arXiv} version of the paper.  The reader interested in seeing these additional details can download the \LaTeX~source file from the \texttt{arXiv}.  Near the beginning of the file is a toggle which allows one to compile the paper with these calculations included.

\subsection{Acknowledgments} We thank Matt Fayers and Ben Webster for helpful contributions. In particular we thank Fayers for the idea behind the proof of \cref{bigmultiprop}(iv) which is drastically simpler than our original proof, and Webster for the idea behind the proof of \cref{addribbonlemma}, which we were previously stumped by.
We thank the referee for many helpful comments on a previous version of the paper.
The first author is partially supported by an AMS-Simons PUI Research Enhancement Grant.
The third author is partially supported by JSPS Kakenhi grant number 23K03043.
The fourth author is partially supported by JSPS Kakenhi grant number 23K12964.

\section{Root systems and multipartition combinatorics}
\subsection{Root systems}\label{posrootsec}
Throughout the paper, we now fix some choice of \(e \in \ZZ_{>1}\).
Associated to \(e\) is the affine root system of type \({\tt A}_{e-1}^{(1)}\), corresponding to the affine Dynkin diagram shown in Figure~\ref{fig:dynkin}
(see \cite[\S 4, Table Aff 1]{kac}). This root system plays a crucial role in the representation theory of the Kac--Moody algebra \(\widehat{\mathfrak{sl}}_e(\CC)\) (see \S\ref{slsec}) and its associated quantum group, as well as the modular representation theory of the symmetric group.
We describe the root system directly below.

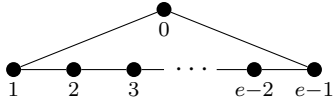
\begin{figure}[h]
\begin{align*}
{}
\hackcenter{
\begin{tikzpicture}[scale=0.4]
%
\coordinate (0) at (5,1);
\coordinate (1) at (0,-1);
\coordinate (2) at (2,-1);
\coordinate (3) at (4,-1);
\coordinate (4) at (6,-1);
\coordinate (5) at (8,-1);
\coordinate (6) at (10,-1);
\coordinate (0a) at (5,0.9);
\coordinate (1a) at (0,-1.1);
\coordinate (2a) at (2,-1.1);
\coordinate (3a) at (4,-1.1);
\coordinate (4a) at (6,-1.1);
\coordinate (5a) at (8,-1.1);
\coordinate (6a) at (10,-1.1);
\draw [thin, black,shorten <= 0.1cm, shorten >= 0.1cm]   (0) to (1);
\draw [thin, black,shorten <= 0.1cm, shorten >= 0.1cm]   (1) to (2);
\draw [thin, black,shorten <= 0.1cm, shorten >= 0.1cm]   (2) to (3);
\draw [thin, black,shorten <= 0.1cm, shorten >= 0.4cm]   (3) to (4);
\draw [thin, black,shorten <= 0.3cm, shorten >= 0.1cm]   (4) to (5);
\draw [thin, black,shorten <= 0.1cm, shorten >= 0.1cm]   (5) to (6);
\draw [thin, black,shorten <= 0.1cm, shorten >= 0.1cm]   (0) to (6);
\draw (0a) node[below]{$\scriptstyle{0}$};
\draw (1a) node[below]{$\scriptstyle{1}$};
\draw (2a) node[below]{$ \scriptstyle{2}$};
\draw (3a) node[below]{$\scriptstyle{3}$};
\draw (5a) node[below]{$ \scriptstyle{e-2}$};
\draw (6a) node[below]{$\scriptstyle{e-1}$};
\blackdot(5,1);
\blackdot(0,-1);
\blackdot(2,-1);
\blackdot(4,-1);
\blackdot(8,-1);
\blackdot(10,-1);
\draw(6,-1) node{$\cdots$};
\end{tikzpicture}
}
\end{align*}
\caption{Dynkin diagram of type \({\tt A}^{(1)}_{e-1}\)}
\label{fig:dynkin}       
\end{figure}

We write \(\ZZ_e := \ZZ/e\ZZ\), and we will indicate elements of \(\ZZ_e\) with barred integers, i.e. \(\overline t = t + e\ZZ\) for \(t \in \ZZ\), freely omitting bars for \(t \in [0,e-1]\) when the context is clear. We will also abuse notation somewhat, writing \(\overline{t} = x\) where \(x \in [0,e-1]\) and \(t \equiv x \pmod{e}\).
Let \(\ZZ I\) be the free \(\ZZ\)-module of rank \(e\), with basis \(I = \{\alpha_i \mid i \in \ZZ_e\}\). For \(\beta = \sum_{i \in \Z_e} c_i\alpha_i \in \ZZ_{\geq 0}I\), we write \(\height(\beta) := \sum_{i \in \Z_e} c_i \in \ZZ_{\geq 0}\) for the {\em height} of \(\beta\).
For any \(t \in \ZZ\), \(L \in \ZZ_{>0}\), we define \(\alpha(t,L) \in \ZZ_{\geq 0}I\) via
\begin{align*}
\alpha(t,L):= \alpha_{\bar t} + \alpha_{\overline{t+1}} + \dots + \alpha_{\overline{t+L-1}}.
\end{align*}
We have then that \(\height(\alpha(t,L)) = L\). The element \(\alpha(t,L)\) corresponds to the sum of simple roots in a anticlockwise path through \(L\) vertices in the Dynkin diagram in Figure~\ref{fig:dynkin} which begins at the vertex labeled \(\bar t\). Of particular importance is the {\em null root} \(\delta\) of height \(e\):
\begin{align*}
\delta := \alpha_{ 0} + \dots + \alpha_{e-1} = \alpha(t,e)\qquad (\textup{any }t \in \ZZ_e).
\end{align*}
For \(\beta, \beta' \in \ZZ_{\geq 0}I\), we will write \(\beta \subseteq \beta'\) provided that \(\beta' - \beta \in \ZZ_{\geq 0}I\).

We call an element of the form \(\bi = i_1 \dots i_m \in I^m\) a {\em word}, and for \(\omega \in \ZZ_{\geq 0}I\), we set
\begin{align*}
I^{\omega}:= \{ \bi \in I^{\height(\omega)} \mid i_1 + \dots + i_{\height(\omega)} = \omega\}.
\end{align*}
For convenience, we will freely associate the set \(I\) with \([0,e-1]\) when writing words; using \(3112\) to indicate the word \(\alpha_3 \alpha_1 \alpha_1 \alpha_2\), for instance.

\begin{Definition} \(\)
\begin{enumerate}
\item
We say \(\beta \in Q_+\) is a {\em positive root} if \(\beta = \alpha(t,L)\) for some \(t \in \ZZ, L \in \ZZ_{>0}\). We write \(\Phi_+\) for the set of all positive roots, so we have
\begin{align*}
\Phi_+ := \{ \alpha(t,L) \mid t \in \ZZ_e, L \in \ZZ_{>0}\} \subset \ZZ I.
\end{align*}
\item We say \(\beta \in \Phi_+\) is {\em real} if \(\overline{\height(\beta)} \neq \overline 0\). Writing \(\Phi_+^\re\) for the set of all real positive roots, we have
\begin{align*}
\Phi_+^\re :=  \{ \alpha(t,L) \mid t \in \ZZ_e, L \in \ZZ_{>0}, \overline L \neq \overline 0\} \subset \Phi_+.
\end{align*}
\item We say \(\beta \in \Phi_+\) is {\em imaginary} if \(\overline{\height(\beta)} = \overline 0\).  Writing \(\Phi_+^\im\) for the set of imaginary positive roots, we have
\begin{align}\label{allimag}
\Phi_+^\im :=  \{ \alpha(t,L) \mid t \in \ZZ_e, L \in \ZZ_{>0}, \overline L = \overline 0\} = \{m \delta \mid m \in\ZZ_{>0} \} \subset \Phi_+.
\end{align}
\item We say a positive root \(\beta\) is {\em divisible} if there exists \(\beta' \in \Phi_+\), \(m \in \ZZ_{>1}\) such that \(\beta= m \beta'\), and {\em indivisible} if not. Writing \(\Psi\) for the set of indivisible roots, we have
\begin{align}\label{PsiDef}
\Psi := \Phi_+^\re \sqcup \{ \delta\}.
\end{align}
\item We write \(I^\fin = \{\alpha_i \mid i \in \ZZ_e \backslash \{0\}\}\) and \(\Phi^\fin \subseteq \ZZ I^\fin\) for the underlying roots for finite type \({\tt A}_{e-1}\):
\begin{align*}
\Phi^\fin := \{ \pm \alpha(t, L) \mid t \in [1, e-1], L \in [ 1   , e-t]\}
\end{align*}
We write \(p:\Z I \to \Z I^\fin \cong \Z I/ \Z \delta\) for the \(\Z\)-module map given by modding out by \(\delta\), noting that \(p(\Phi_+) = \Phi^\fin \cup \{0\}\).
Specifically, \(p\) maps \(\alpha_0\) to \(-(\alpha_1 + \dots + \alpha_{e-1})\) and all other \(\alpha_i\) to \(\alpha_i\).
\end{enumerate}
\end{Definition}
We will assume the reader has a basic familiarity with the combinatorics of finite type \({\tt A}\) root systems, such as outlined for instance in \cite[\S III]{humLie}.

\subsection{Multipartition combinatorics}\label{subsec:mptns}
\subsubsection{Level one combinatorics}\label{levonedefs}
Let \(\N = \ZZ \times \ZZ\). We refer to elements of \(\N\) as {\em nodes}, and by convention, we visually represent nodes as boxes in a \(\ZZ \times \ZZ\) array, so that the node \((x,y)\) is a box in the \(x\)th row and \(y\)th column of the array. In this orientation, a positive increase in the \(x\) component corresponds to a southward move, and a positive increase in the \(y\) component corresponds to an eastward move. We write
\begin{align*}
(x,y) \searrow (x',y') \textup{ provided } x'\geq x \textup{ and } y' \geq y.
\end{align*}
We define the single-unit north, east, south and west translations of nodes, respectively, by setting 
\begin{align*}
{\tt N}(x,y) := (x-1,y), \qquad {\tt E}(x,y) := (x,y+1),\qquad {\tt S}(x,y) :=(x+1,y),\qquad {\tt W}(x,y)  := (x,y-1).
\end{align*}
We define a {\em residue} function \(\textup{res}: \N \to \ZZ_e\) on nodes by setting:
\begin{align*}
\textup{res}((x,y)) := \overline{y-x}.
\end{align*}

A {\em skew diagram} is a finite subset \(\tau \subseteq \N\) such that \((x,y) \searrow (x',y') \searrow (x'',y'')\) and \((x,y), (x'',y'') \in \tau\) implies \((x',y') \in \tau\). 

A {\em partition \(\lambda\) of {\em charge} \(\kappa \in \ZZ\)} is a skew diagram which is either empty, or contains the {\em corner} node \((1,\kappa+1)\), such that \((1,\kappa+1) \searrow (x,y)\) for all \((x,y) \in \lambda\). Pictorially, \(\lambda\) is a northwest-aligned array of boxes (often called a {\em Young diagram}) with the node \((1,\kappa + 1)\) at the northwest corner. Partitions of a given charge are in bijection with integer partitions; writing
\begin{align*}
\lambda_a := |\{(a,x) \mid x \in \ZZ_{\geq 0}\} \cap \lambda|
\end{align*}
for all \(a \in \mathbb{Z}_{>0}\), we have that \(\lambda_1 \geq \lambda_2 \geq \cdots\), and \(\sum \lambda_a = |\lambda|\). Thus we will often abuse notation and write \(\lambda = (\lambda_1, \lambda_2, \ldots)\). We collect repeated parts in partitions, writing $(4,2^3,1^2)$ as shorthand for the partition $(4,2,2,2,1,1)$, for example. We remark that the empty set \(\varnothing\) is a partition of charge \(\kappa\) for any \(\kappa \in \ZZ\). If we refer to a partition without mentioning a specific charge, we will take the charge to be \(0\) by default. 

For partitions \(\mu \subseteq \lambda\) of charge \(\kappa\), we set \(\lambda/\mu \subseteq \N\) to be the set difference \(\lambda \backslash \mu\), and call this a {\em skew partition of charge \(\kappa\)}. We remark that \(\lambda/\mu\) is a skew diagram, and consider the choice of \(\lambda\) and \(\mu\) to be part of the data of \(\lambda/\mu\). Every skew diagram can be realized as (some translation of) a skew partition. 

\subsubsection{Higher level combinatorics}\label{highlevdefs}

More generally, for a {\em level} \(\ell \in \mathbb{Z}_{>0}\), we write 
\begin{align*}
\N_\ell := \bigsqcup_{t \in [1,\ell]} \N = \N^{(1)} \sqcup \dots \sqcup \N^{(\ell)},
\end{align*}
labeling subsets and nodes in the constituent copies of \(\N\) in \(\N_\ell\) with parenthesized superscripts, so that \((x,y)^{(r)} \in \N^{(r)}\).

We extend the definitions above to higher levels as follows. We will say that \(\btau = (\tau^{(1)} \mid \cdots \mid \tau^{(\ell)})\) is a {\em skew \(\ell\)-diagram} provided that each component \(\tau^{(i)}\) is a skew diagram.

For a {\em multicharge} \(\bkap = (\kappa_1 \mid \cdots \mid \kappa_\ell) \in \ZZ^{\ell}\), we say that \(\blam = (\lambda^{(1)} \mid \dots \mid \lambda^{(\ell)})\) is an {\em \(\ell\)-multipartition of multicharge \(\bkap\)} provided that each component \(\lambda^{(i)}\) is a partition of charge \(\kappa_i\). We extend this to define {\em skew \(\ell\)-multipartitions of multicharge \(\bkap\)} in the obvious way. We will often omit the `\(\ell\)-' in our nomenclature, when the level is irrelevant or clear from context.
We will visually depict skew diagrams, multipartitions, and skew multipartitions in rows, with component labels increasing from left to right (see Figure~\ref{figmult1}).

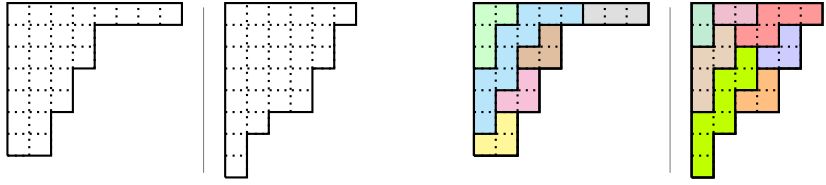
\begin{figure}
\begin{align*}
	{}
		\hackcenter{
			\begin{tikzpicture}[scale=0.29]
				\draw[thick, dotted] (0,1)--(1,1);
				\draw[thick, dotted] (0,2)--(2,2);
				\draw[thick, dotted] (0,3)--(2,3);
				\draw[thick, dotted] (0,4)--(3,4);
				\draw[thick, dotted] (0,5)--(3,5);
				\draw[thick, dotted] (0,6)--(4,6);
				\draw[thick, dotted] (0,7)--(4,7);
				\draw[thick, dotted] (1,1)--(1,8);
				\draw[thick, dotted] (2,1)--(2,8);
				\draw[thick, dotted] (3,5)--(3,8);
				\draw[thick, dotted] (4,5)--(4,8);
				\draw[thick, dotted] (5,7)--(5,8);
				\draw[thick, dotted] (6,7)--(6,8);
				\draw[thick, dotted] (7,7)--(7,8);
				\draw[thick]  (0,1)--(2,1)--(2,3)--(3,3)--(3,5)--(4,5)--(4,7)--(8,7)--(8,8)--(0,8)--(0,1);
				\phantom{\draw[thick,fill=blue] (0,0)--(0,1)--(1,1)--(1,0)--(0,0);}
			\end{tikzpicture}
		}
		\hackcenter{
			\begin{tikzpicture}[scale=0.29]
				\node[white] at (-0.5,0){{}};
				\node[white] at (0.5,0){{}};
				\draw[thin, gray,fill=gray!30]  (0,0)--(0,7.6);
				\draw[thin, white]  (0,7.6)--(0,8);
			\end{tikzpicture}
		}
		\hackcenter{
			\begin{tikzpicture}[scale=0.29]
				\draw[thick, dotted] (0,1)--(1,1);
				\draw[thick, dotted] (0,2)--(2,2);
				\draw[thick, dotted] (0,3)--(2,3);
				\draw[thick, dotted] (0,4)--(4,4);
				\draw[thick, dotted] (0,5)--(5,5);
				\draw[thick, dotted] (0,6)--(5,6);
				\draw[thick, dotted] (0,7)--(5,7);
				\draw[thick, dotted] (1,0)--(1,8);
				\draw[thick, dotted] (2,2)--(2,8);
				\draw[thick, dotted] (3,3)--(3,8);
				\draw[thick, dotted] (4,5)--(4,8);
				\draw[thick, dotted] (5,5)--(5,8);
				\draw[thick]  (0,0)--(1,0)--(1,2)--(2,2)--(2,3)--(4,3)--(4,5)--(5,5)--(5,7)--(6,7)--(6,8)--(0,8)--(0,0);
			\end{tikzpicture}
		}
	\qquad
	\qquad
		\hackcenter{
			\begin{tikzpicture}[scale=0.29]
				\draw[thick,fill=cyan!25]  (0,1)--(2,1)--(2,3)--(3,3)--(3,5)--(4,5)--(4,7)--(8,7)--(8,8)--(0,8)--(0,1);
				\draw[thick, fill=yellow!50] (0,1)--(2,1)--(2,3)--(1,3)--(1,2)--(0,2)--(0,1);
				\draw[thick, fill=magenta!30] (1,3)--(3,3)--(3,5)--(2,5)--(2,4)--(1,4)--(1,3);
				\draw[thick, fill=brown!50] (2,5)--(4,5)--(4,7)--(3,7)--(3,6)--(2,6)--(2,5);
				\draw[thick, fill=green!20] (0,5)--(1,5)--(1,7)--(2,7)--(2,8)--(0,8)--(0,5);
				\draw[thick,  fill=lightgray!50] (5,7)--(8,7)--(8,8)--(5,8)--(5,7);
				\draw[thick, dotted] (0,1)--(1,1);
				\draw[thick, dotted] (0,2)--(2,2);
				\draw[thick, dotted] (0,3)--(2,3);
				\draw[thick, dotted] (0,4)--(3,4);
				\draw[thick, dotted] (0,5)--(3,5);
				\draw[thick, dotted] (0,6)--(4,6);
				\draw[thick, dotted] (0,7)--(4,7);
				\draw[thick, dotted] (1,1)--(1,8);
				\draw[thick, dotted] (2,1)--(2,8);
				\draw[thick, dotted] (3,5)--(3,8);
				\draw[thick, dotted] (4,5)--(4,8);
				\draw[thick, dotted] (5,7)--(5,8);
				\draw[thick, dotted] (6,7)--(6,8);
				\draw[thick, dotted] (7,7)--(7,8);
				\draw[thick]  (0,1)--(2,1)--(2,3)--(3,3)--(3,5)--(4,5)--(4,7)--(8,7)--(8,8)--(0,8)--(0,1);
				\phantom{\draw[thick,fill=blue] (0,0)--(0,1)--(1,1)--(1,0)--(0,0);}
			\end{tikzpicture}
		}
		\hackcenter{
			\begin{tikzpicture}[scale=0.29]
				\node[white] at (-0.5,0){{}};
				\node[white] at (0.5,0){{}};
				\draw[thin, gray,fill=gray!30]  (0,0)--(0,7.6);
				\draw[thin, white]  (0,7.6)--(0,8);
			\end{tikzpicture}
		}
		\hackcenter{
			\begin{tikzpicture}[scale=0.29]
				\draw[thick, fill=red!40]  (0,0)--(1,0)--(1,2)--(2,2)--(2,3)--(4,3)--(4,5)--(5,5)--(5,7)--(6,7)--(6,8)--(0,8)--(0,0);
				\draw[thick, fill=lime]  (0,0)--(1,0)--(1,2)--(2,2)--(2,4)--(3,4)--(3,6)--(2,6)--(2,5)--(1,5)--(1,3)--(0,3)--(0,0);
				\draw[thick, fill=orange!50] (2,3)--(4,3)--(4,5)--(3,5)--(3,4)--(2,4)--(2,3);
				\draw[thick, fill=blue!20] (3,5)--(5,5)--(5,7)--(4,7)--(4,6)--(3,6)--(3,5);
				\draw[thick,  fill=brown!35] (0,3)--(1,3)--(1,5)--(2,5)--(2,7)--(1,7)--(1,6)--(0,6)--(0,3);
				\draw[thick, fill=blue!30!green!25] (0,6)--(1,6)--(1,8)--(0,8)--(0,6);
				\draw[thick, fill=purple!25] (1,7)--(3,7)--(3,8)--(1,8)--(1,7);
				\draw[thick, dotted] (0,1)--(1,1);
				\draw[thick, dotted] (0,2)--(2,2);
				\draw[thick, dotted] (0,3)--(2,3);
				\draw[thick, dotted] (0,4)--(4,4);
				\draw[thick, dotted] (0,5)--(5,5);
				\draw[thick, dotted] (0,6)--(5,6);
				\draw[thick, dotted] (0,7)--(5,7);
				\draw[thick, dotted] (1,0)--(1,8);
				\draw[thick, dotted] (2,2)--(2,8);
				\draw[thick, dotted] (3,3)--(3,8);
				\draw[thick, dotted] (4,5)--(4,8);
				\draw[thick, dotted] (5,5)--(5,8);
				\draw[thick]  (0,0)--(1,0)--(1,2)--(2,2)--(2,3)--(4,3)--(4,5)--(5,5)--(5,7)--(6,7)--(6,8)--(0,8)--(0,0);
			\end{tikzpicture}
		}
\end{align*}
\caption{On the left, a diagram corresponding to the 2-multipartition \(\blam = ((8,4^2,3^2,2^2)\mid(6,5^2,4^2,2,1^2))\). On the right, a ribbon tiling of \(\blam\).
}
\label{figmult1}      
\end{figure}

We denote by \(\trianglelefteq^D\) the usual dominance order on \(\ell\)-multipartitions:
\begin{align*}
\blam \trianglelefteq^D \bmu \qquad \iff \qquad \sum_{s=1}^{r-1} |\lambda^{(s)}| + \sum_{a=1}^t \lambda_a^{(r)} \leq \sum_{s=1}^{r-1} |\mu^{(s)}| + \sum_{a=1}^t \mu_a^{(r)}\qquad (\textup{for all } r \in [1,\ell],t \in \mathbb{Z}_{>0})
\end{align*}
so \(\blam \trianglelefteq^D \bmu\) indicates that \(\bmu\) may be obtained from \(\blam\) by progressively `moving nodes northeasterly or to an earlier component'.

For \(u = (x,y)^{(r)} \in \N^{(r)} \subseteq \N_\ell\) with \(x,y>0\), we set \(\textup{rect}_u = \{v \in \N^{(r)} \mid (1,1)^{(r)} \searrow v \searrow u\} \subseteq \N_\ell\); that is, \(\textup{rect}_u\) is the \(\ell\)-multipartition of multicharge \(0^\ell\) which is empty in every component save the one containing \(u\), and in that component it is a rectangular partition whose southeasternmost node is \(u\).

If \(\bxi, \bxi' \subseteq \N_\ell\) we write \(\bxi \NEarrow \bxi'\) if for every \(u' \in \bxi'\) and every \(u\in\bxi\), \(u'\) is either in an earlier component than \(u\), or is in the same component and is strictly northeast of \(u\).

\subsubsection{Ribbons} We say that a nonempty skew diagram \(\xi = \xi^{(r)} \subseteq \N^{(r)} \subseteq \N_\ell \) is a {\em ribbon} provided that \(\xi\) may be written 
\begin{align*}
\xi = \{(x_1,y_1)^{(r)}, \dots, (x_{|\xi|}, y_{|\xi|})^{(r)}\}, \textup{ with } (x_{a+1}, y_{a+1})^{(r)} \in \{(x_a -1, y_a)^{(r)}, (x_a, y_a +1)^{(r)}\}
\end{align*}
for all \(a \in [1,|\xi|-1]\). In other words, \(\xi\) is a nonempty connected skew diagram, contained within one component of \(\N_\ell\), containing no \(2 \times 2\) squares.

\subsection{Tilings and tableaux}\label{tiletabsec} Let \(\btau \subseteq \N_\ell\) be a skew diagram. We say that a set \(\Gamma\) of disjoint skew diagrams in \(\N_\ell\) is a {\em skew tiling} of \(\btau\) provided that \(\btau = \bigsqcup_{\bzeta \in \Gamma} \bzeta\). We refer to the constituent members of \(\Gamma\) as {\em tiles}. A {\em skew tableau} of \(\btau\) is the data \((\Gamma, {\tt t})\) of a skew tiling \(\Gamma\) of \(\btau\), together with a bijective ordering \({\tt t}: [1,|\Gamma|] \to \Gamma\), such that for all \(r \in [1,\ell]\), \(a,b \in [1,  |\Gamma|]\), we have
\begin{align*}
{\tt t}(a) \ni (x,y)^{(r)} \searrow (x',y')^{(r)}\in {\tt t}(b) \implies a \leq b.
\end{align*}
We may indicate the data of a skew tableau with the ordered tuple \(({\tt t}(1) , \dots , {\tt t}(|\Gamma|)) \in (\N_\ell)^{|\Gamma|}\). 

We say that a skew diagram \(\bmu\) is {\em {\tt SE}-removable} (resp. {\em {\tt NW}-removable}) for \(\btau\) provided that there exists a 2-part skew tableau \(({\tt t}(1), {\tt t}(2))\) for \(\btau\) such that \({\tt t}(2) = \bmu\) (resp. \({\tt t}(1) = \bmu\)). We say that \(\bmu\) is {\em {\tt SE}-addable} (resp. {\tt NW}-addable) for \(\btau\) provided that \(\bmu \cap \btau = \varnothing\) and \(\bmu \sqcup \btau\) is a skew diagram in which \(\bmu\) is {\tt SE}-removable (resp. {\tt NW}-removable).

A {\em ribbon tiling} (resp.~{\em ribbon tableau}) is a skew tiling (resp.~tableau) in which all tiles are ribbons (see Figure~\ref{figmult1} for an example). 
A {\em standard tableau} for \(\btau\) is a skew tableau in which all tiles consist of single nodes. Note that in this setting, we have \(\Gamma = \btau\), so we may omit the data of \(\Gamma\) and refer to \({\tt t}\) itself as the standard tableau. We write \(\Std(\btau)\) for the set of all standard tableaux of \(\btau\).
We write \({\tt t}^{\btau} \in \Std(\btau)\) for the {\em (row)-leading standard tableau} of \(\btau\); this is the standard tableau defined by stipulating that whenever \({\tt t}^{\btau}(a) = (x,y)^{(r)}\), \({\tt t}^{\btau}(b) = (x',y')^{(r')}\) for some \(a<b\), then 
\begin{align*}
r < r', \qquad \textup{or} \qquad r=r', x<x', \qquad \textup{or} \qquad r=r',x=x', y<y'.
\end{align*}
In other words, \({\tt t}^{\btau}\) labels the nodes of \(\btau\) left-to-right along the first row, the second row, and so on, of the first component, and then moving in like fashion through the second component, and so on.

If \({\tt T}\) is a standard tableau for a multipartition \(\bmu\), and \({\tt t}\) is a standard tableau for a skew multipartition \(\blam/\bmu\), then we define the standard tableau \({\tt Tt}\) for \(\blam\) by setting
\begin{align*}
{\tt Tt}(a) := 
\begin{cases}
{\tt T}(a) & \textup{if }a \in [1, |\bmu|];\\
{\tt t}(a-|\bmu|) & \textup{if }a \in [|\bmu| + 1, |\blam|],
\end{cases}
\end{align*}
for all \(a \in [1, |\blam|]\), i.e.~\({\tt Tt}\) first labels all the nodes of \(\bmu\) in the same order as \({\tt T}\), and then continues on, labelling all the nodes of \(\blam/\bmu\) in the same order as \({\tt t}\).

If \(\blam \subseteq \N_\ell\) is an \(\ell\)-multipartition, \({\tt T} \in \Std(\blam)\) and \(1 \leq n \leq |\blam|\), we write 
\begin{align*}
\textup{sh}^\downarrow_n({\tt T}) := \{{\tt T}(x) \mid x \in [1,n]\} \subseteq \N_{\ell}
\end{align*}
for the \(\ell\)-multipartition formed by the first \(n\) nodes in \({\tt T}\).

\subsubsection{\(\mathfrak{S}_n\)-action on tableaux}\label{subsubsec:Snaction}
Let \(\btau \in \N_\ell\) be a skew diagram, and \(|\btau| = n\). Let \(\textup{Tab}(\btau)\) be the set of bijections \({\tt t}:[1,n] \xrightarrow{\sim} \btau \), so that \(\Std(\btau) \subseteq\textup{Tab}(\btau)\). 
There is a left action of the symmetric group \(\mathfrak{S}_n\) on \(\textup{Tab}(\btau)\) given by \((\sigma {\tt t})(x) := {\tt t}(\sigma^{-1}x)\), for all \(x \in [1,n]\), \(\sigma \in \mathfrak{S}_n\), \({\tt t} \in \textup{Tab}(\btau)\). 
For \({\tt u} \in \textup{Tab}(\btau)\), we let \(w^{\tt u}\) denote the unique element in \(\mathfrak{S}_n\) such that \(w^{\tt u} {\tt t}^{\btau} = {\tt u}\).

\subsection{Content}\label{subsec:content}
For a finite set \(S \subseteq \N_\ell\), the {\em content} of \(S\) is defined by:
\begin{align*}
\cont(S): = \sum_{u \in S} \alpha_{\res(u)} \in \ZZ_{\geq 0}I.
\end{align*}
We note that when \(\xi\) is a ribbon, we have \(\cont(\xi) \in \Phi_+\).

Let \(\bkap = (\kappa_1 \mid \dots \mid \kappa_\ell) \in \ZZ^\ell\) be a multicharge of level \(\ell\).  For a fixed multipartition \(\brho \in \N_{\ell}\), we write
\begin{align*}
\Lambda^\ell(\omega) &:= \{ \textup{skew diagram } \btau \subseteq \N_\ell \mid \cont(\btau) = \omega\},\\
\Lambda_+^{\bkap}(\omega)&:= \{ \textup{multipartition } \blam \subset \N_\ell  \textup{ of multicharge } \bkap \mid \cont(\blam) = \omega\} \subseteq \Lambda^\ell(\omega),\\
\Lambda_{+/+}^{\bkap}(\omega) &:= \{ \textup{skew multipartition } \btau \subset \N_{\ell}  \textup{ of multicharge } \bkap \mid \cont(\btau) = \omega\} \supseteq \Lambda_+^{\bkap}(\omega),\\
\Lambda_{+/\brho}^{\bkap}(\omega) &:= \{ \textup{skew multipartition } \blam/\brho \subset \N_{\ell}  \textup{ of multicharge } \bkap \mid \cont(\blam/\brho) = \omega\} \subseteq \Lambda_{+/+}^{\bkap}(\omega).
\end{align*}
We also write \(\Lambda^{\bkap}_+ := \bigsqcup_{\omega \in \ZZ_{\geq 0}I} \Lambda_+^{\bkap}(\omega)\), and so forth. We call \(\Lambda_+^{\bkap}(\omega)\) a {\em block}. Multipartitions in these blocks correspond to cells of cyclotomic KLR algebras of type \({\tt A}^{(1)}_{e-1}\), or, equivalently, Ariki--Koike algebra blocks, justifying our use of the term, see \S\ref{subsec:cycKLR}.

\subsubsection{Similarity}\label{similaritysec}
Let \(\btau \in \Lambda^\ell(\omega)\) be a nonempty skew diagram. Assume that \(\btau\) consists of the nonempty connected components \(\btau = \phi_1 \sqcup \dots \sqcup \phi_m\), with
\begin{align*}
\phi_m \NEarrow \dots \NEarrow \phi_1.
\end{align*}
Then we define the {\em distillation} of \(\btau\) to be the skew diagram
\begin{align*}
\textup{dist}(\btau) :=  ( \textup{dist}(\btau)^{(1)} \mid \dots \mid  \textup{dist}(\btau)^{(m)}) = (\phi_1 \mid \dots \mid \phi_m)  \in \Lambda^m(\omega).
\end{align*}
For \(\btau \in \Lambda^\ell(\omega)\) and \(\btau' \in \Lambda^{\ell'}(\omega)\), we write \(\btau \sim \btau'\) provided that \(\textup{dist}(\btau), \textup{dist}(\btau') \in \Lambda^m(\omega)\) for some \(m\), and \(\textup{dist}(\btau')^{(i)}\) is a residue-preserving translation of \(\textup{dist}(\btau)^{(i)}\) for all \(i \in [1,m]\).

\subsubsection{Standard tableau words}\label{subsubsec:tabwords}
For \(\btau \in \Lambda^\ell(\omega)\), \({\tt t} \in \Std(\btau)\), we define the associated {\em tableau word} (or {\em residue sequence}, or {\em content sequence}) of \({\tt t}\):
\begin{align*}
\bi^{\tt t}:= \cont({\tt t}(1)) \dots \cont({\tt t}(|\btau|)) \in I^\omega.
\end{align*}
If \(\bi = \bi^{\tt t}\) for some \(\tt t \in \Std(\btau)\), then we say that {\em \(\bi\) is a word in \(\btau\)}.
We moreover define \(\bi^{\btau}:= \bi^{{\tt t}^{\btau}}\) for the word corresponding to the leading tableau \({\tt t }^{\btau}\) of \(\btau\).

\subsection{Separability}

\begin{Definition}\label{kapsepdef}
For a multicharge \(\bkap\), 
we say that the pair \((\omega, \beta) \in (\ZZ_{\geq 0}I)^2\) is \emph{\(\bkap\)-separable} provided that for every \(\brho \in \Lambda^{\bkap}_+(\omega)\) and \(0 \subsetneq \beta' \subseteq \beta\), there do not exist \(\bmu \subseteq \brho \subseteq \bnu \in \Lambda^{\bkap}_+\) such that \(\cont(\brho/ \bmu) = \cont(\bnu/\brho) = \beta'\). 
\end{Definition}

\begin{Example}
Let $e=4$ and $\bkap=(0,0)$. There are two bipartitions lying in $\Lambda^{\bkap}_+(\omega)$ with $\omega=2\alpha_0+2\alpha_1+\alpha_3$, namely $\brho=((2,1)\mid(2))$ and $\brho'=((2)\mid(2,1))$. Observe that $(\omega,\alpha_0+\alpha_1)$ is $\bkap$-separable, but $(\omega,\alpha_2+\alpha_3)$ is not. For instance,
\[
\bmu=
\
\hackcenter{
	\begin{tikzpicture}[scale=0.29]
		\draw[thick, dotted] (1,0)--(1,1);
		\draw[thick]  (0,0)--(2,0)--(2,1)--(0,1)--(0,0);
		\node at (0.5,0.5){$\scriptstyle 0$};
		\node at (1.5,0.5){$\scriptstyle 1$};
		\phantom{\node at (0.5,-0.5){$\scriptstyle 3$};}
	\end{tikzpicture}
}
\hackcenter{
	\begin{tikzpicture}[scale=0.29]
		\node[white] at (-0.2,0){{}};
		\node[white] at (0.2,0){{}};
		\draw[thin, gray,fill=gray!30]  (0,0)--(0,1.8);
		\draw[thin, white]  (0,1.8)--(0,2);
	\end{tikzpicture}
}
\hackcenter{
	\begin{tikzpicture}[scale=0.29]
		\draw[thick, dotted] (1,0)--(1,1);
		\draw[thick]  (0,0)--(2,0)--(2,1)--(0,1)--(0,0);
		\node at (0.5,0.5){$\scriptstyle 0$};
		\node at (1.5,0.5){$\scriptstyle 1$};
		\phantom{\node at (0.5,-0.5){$\scriptstyle 3$};}
	\end{tikzpicture}
}
\
\subset
\
\hackcenter{
	\begin{tikzpicture}[scale=0.29]
		\fill[fill=green!30](0,3)--(1,3)--(1,2)--(0,2)--(0,3);
		\draw[thick, dotted] (1,3)--(1,4);
		\draw[thick, dotted] (0,3)--(1,3);
		\draw[thick]  (0,2)--(1,2)--(1,3)--(2,3)--(2,4)--(0,4)--(0,2);
		\node at (0.5,3.5){$\scriptstyle 0$};
		\node at (1.5,3.5){$\scriptstyle 1$};
		\node at (0.5,2.5){$\scriptstyle 3$};
	\end{tikzpicture}
}
\hackcenter{
	\begin{tikzpicture}[scale=0.29]
		\node[white] at (-0.2,0){{}};
		\node[white] at (0.2,0){{}};
		\draw[thin, gray,fill=gray!30]  (0,0)--(0,1.8);
		\draw[thin, white]  (0,1.8)--(0,2);
	\end{tikzpicture}
}
\hackcenter{
	\begin{tikzpicture}[scale=0.29]
		\draw[thick, dotted] (1,0)--(1,1);
		\draw[thick]  (0,0)--(2,0)--(2,1)--(0,1)--(0,0);
		\node at (0.5,0.5){$\scriptstyle 0$};
		\node at (1.5,0.5){$\scriptstyle 1$};
		\phantom{\node at (0.5,-0.5){$\scriptstyle 3$};}
	\end{tikzpicture}
}
\
\subset
\
\hackcenter{
	\begin{tikzpicture}[scale=0.29]
		\fill[fill=green!30](0,3)--(1,3)--(1,2)--(0,2)--(0,3);
		\draw[thick, dotted] (1,3)--(1,4);
		\draw[thick, dotted] (0,3)--(1,3);
		\draw[thick]  (0,2)--(1,2)--(1,3)--(2,3)--(2,4)--(0,4)--(0,2);
		\node at (0.5,3.5){$\scriptstyle 0$};
		\node at (1.5,3.5){$\scriptstyle 1$};
		\node at (0.5,2.5){$\scriptstyle 3$};
	\end{tikzpicture}
}
\hackcenter{
	\begin{tikzpicture}[scale=0.29]
		\node[white] at (-0.2,0){{}};
		\node[white] at (0.2,0){{}};
		\draw[thin, gray,fill=gray!30]  (0,0)--(0,1.8);
		\draw[thin, white]  (0,1.8)--(0,2);
	\end{tikzpicture}
}
\hackcenter{
	\begin{tikzpicture}[scale=0.29]
		\fill[fill=green!30](0,3)--(1,3)--(1,2)--(0,2)--(0,3);
		\draw[thick, dotted] (1,3)--(1,4);
		\draw[thick, dotted] (0,3)--(1,3);
		\draw[thick]  (0,2)--(1,2)--(1,3)--(2,3)--(2,4)--(0,4)--(0,2);
		\node at (0.5,3.5){$\scriptstyle 0$};
		\node at (1.5,3.5){$\scriptstyle 1$};
		\node at (0.5,2.5){$\scriptstyle 3$};
	\end{tikzpicture}
}
\
=\bnu,
\]
where $\cont(\brho/\bmu)=\cont(\bnu/\brho)=\alpha_3$.
\end{Example}

\subsection{Defect}\label{subsec:def} 
Let \(\bkap = (\kappa_1 \mid \cdots \mid \kappa_\ell)\) be a multicharge of level \(\ell\).
Extrapolating slightly from \cite[\S2.1]{fay06wts} we define the {\em defect} \(\textup{def}_{\bkap}(\btau)\) for a skew diagram \(\btau \in \Lambda^{\ell}\) as follows, setting:
\begin{align*}
c_i(\btau) &:= \#\{u \in \btau \mid \res(u) = i\} \qquad (\textup{for }i \in \ZZ_e)
\\
\textup{def}_{\bkap}(\btau) &:= \sum_{r=1}^\ell c_{\overline{\kappa}_r}(\btau)
-
\frac{1}{2}\sum_{i \in \ZZ_e}  \left(c_i(\btau) - c_{i+1}(\btau)\right)^2.
\end{align*}
When \(\btau\) is a multipartition, it is shown in \cite[Corollary 3.9]{fay06wts} that \(\textup{def}_{\bkap}(\btau) \geq 0\), but this is not necessarily the case for skew diagrams in general. 

\subsection{Beta numbers}\label{cdefs1} A useful way to describe a multipartition \(\blam \in \Lambda^{\bkap}_+\) is in terms of its {\em \(\bkap\)-beta numbers}.
\begin{Definition}
Let \(\blam = (\lambda^{(1)}\mid \dots\mid \lambda^{(\ell)}) \in \Lambda^{\bkap}_+\). 
\begin{enumerate}
\item The {\em \(\bkap\)-beta numbers} for \(\blam\) with multicharge \(\bkap\) 
are defined by setting
\begin{align*}
\B(\blam, \bkap) := (\B^1(\blam, \bkap) \mid \B^2(\blam, \bkap) \mid \dots \mid \B^\ell(\blam, \bkap)),
\end{align*}
where
\begin{align*}
\B^r(\blam, \bkap) := \{ \overline{\kappa_r} + \lambda_a^{(r)} - a  \mid a \in \mathbb{Z}_{>0}\} \subset \Z.
\end{align*}
\item For \(r \in [1,\ell]\), \(i,j\in [0, e-1]\), we set:
\begin{align*}
\B^r_i(\blam, \bkap) &:= \{x \in \B^r(\blam, \bkap) \mid \bar x =  i\},\\
M^r_i(\blam,\bkap) &:= \max\{ b \in \B_i^r(\blam,\bkap)\},\\
h^r_{i,j}(\blam, \bkap) &:= \left\lfloor  \frac{M^r_{j}(\blam, \bkap) - M^r_i(\blam,\bkap)}{e}\right\rfloor.
\end{align*}\end{enumerate}
\end{Definition}

When it is clear from context, we will occasionally omit the \(\blam\) and \( \bkap\) from the above notation.
It is possible to recover \(\blam\) from the \(\bkap\)-beta numbers \(\B^r(\blam, \bkap)\) as follows. For all \(r \in [1,\ell]\), define \(m^r_a\) to be the \(a\)th greatest element of \(\B^r(\blam, \bkap)\). Then
\begin{align*}
\lambda_a^{(r)} = | \ZZ_{< m^r_a} \backslash \B^r(\blam, \bkap)|.
\end{align*}

\subsubsection{Bead and abacus representation of \(\bkap\)-beta numbers}
The \(\bkap\)-beta numbers for a multipartition \(\blam\) are most usefully visualized as an array of beads oriented along the number line, see Figure~\ref{figmult2} and \cref{multiex5}.
Readers may also be familiar with abacus configurations of beads, 
as explained, for instance, in \cite[\S2.2]{Lyle22}. 
Although we will not use abacus arguments in proofs in this paper, we will occasionally include additional abacus displays and explanations from this point of view for the benefit of readers accustomed to thinking about abaci.
We define the \emph{$e$-runner abacus} to be $e$ infinite runners from left to right, with marked positions strictly increasing in $\mathbb{Z}$ from left to right and top to bottom.
We set the $0$th position to be recorded on the leftmost runner, so that the runners correspond to the $e$-residues $\overline{0},\overline{1},\dots,\overline{e-1}$, from left to right.
An example of an $e$-runner abacus is given in Figure~\ref{abacus}.
The abacus display for \(\lambda^{(r)}\) is obtained from the set \(\B^r(\blam, \bkap)\) by placing a bead on the $e$-runner abacus for each beta number in \(\B^r(\blam, \bkap)\).
The \emph{abacus display} for \(\blam\) is the $\ell$-tuple of abacus displays for each component \(\lambda^{(r)}\).
An example is given in Figure~\ref{abacus}.


\begin{figure}[h]
\begin{align*}
	\begin{array}{c}
		{}\bla={}
	\hackcenter{
		\begin{tikzpicture}[scale=0.29]
			\draw[thick, dotted] (0,1)--(1,1);
			\draw[thick, dotted] (0,2)--(2,2);
			\draw[thick, dotted] (0,3)--(2,3);
			\draw[thick, dotted] (0,4)--(3,4);
			\draw[thick, dotted] (0,5)--(3,5);
			\draw[thick, dotted] (0,6)--(4,6);
			\draw[thick, dotted] (0,7)--(4,7);
			\draw[thick, dotted] (1,1)--(1,8);
			\draw[thick, dotted] (2,1)--(2,8);
			\draw[thick, dotted] (3,5)--(3,8);
			\draw[thick, dotted] (4,5)--(4,8);
			\draw[thick, dotted] (5,7)--(5,8);
			\draw[thick, dotted] (6,7)--(6,8);
			\draw[thick, dotted] (7,7)--(7,8);
			\node at (0.5,7.5){$\scriptstyle 2$};
			\node at (1.5,7.5){$\scriptstyle 0$};
			\node at (2.5,7.5){$\scriptstyle 1$};
			\node at (3.5,7.5){$\scriptstyle 2$};
			\node at (4.5,7.5){$\scriptstyle 0$};
			\node at (5.5,7.5){$\scriptstyle 1$};
			\node at (6.5,7.5){$\scriptstyle 2$};
			\node at (7.5,7.5){$\scriptstyle 0$};
			\node at (0.5,6.5){$\scriptstyle 1$};
			\node at (1.5,6.5){$\scriptstyle 2$};
			\node at (2.5,6.5){$\scriptstyle 0$};
			\node at (3.5,6.5){$\scriptstyle 1$};
			\node at (0.5,5.5){$\scriptstyle 0$};
			\node at (1.5,5.5){$\scriptstyle 1$};
			\node at (2.5,5.5){$\scriptstyle 2$};
			\node at (3.5,5.5){$\scriptstyle 0$};
			\node at (0.5,4.5){$\scriptstyle 2$};
			\node at (1.5,4.5){$\scriptstyle 0$};
			\node at (2.5,4.5){$\scriptstyle 1$};
			\node at (0.5,3.5){$\scriptstyle 1$};
			\node at (1.5,3.5){$\scriptstyle 2$};
			\node at (2.5,3.5){$\scriptstyle 0$};
			\node at (0.5,2.5){$\scriptstyle 0$};
			\node at (1.5,2.5){$\scriptstyle 1$};
			\node at (0.5,1.5){$\scriptstyle 2$};
			\node at (1.5,1.5){$\scriptstyle 0$};
			\draw[thick]  (0,1)--(2,1)--(2,3)--(3,3)--(3,5)--(4,5)--(4,7)--(8,7)--(8,8)--(0,8)--(0,1);
		\end{tikzpicture}
	}
	\\
	\\
	\\
	\hackcenter{
	\begin{tikzpicture}[scale=0.45]
		\draw[thick, lightgray, dotted] (10,0)--(11,0);
		\draw[thick, black, dotted] (-9.5,0)--(-10.5,0);
		\draw[thick, lightgray ] (-9.5,0)--(10,0);
		\blackdot(9,0);
		\blackdot(4,0);
		\blackdot(3,0);
		\blackdot(1,0);
		\blackdot(0,0);
		\blackdot(-2,0);
		\blackdot(-3,0);
		\blackdot(-6,0);
		\blackdot(-7,0);
		\blackdot(-8,0);
		\blackdot(-9,0);
		\node[below] at (-9,-0.2){$\scriptstyle \textup{-}9$};
		\node[below] at (-8,-0.2){$\scriptstyle \textup{-}8$};
		\node[below] at (-7,-0.2){$\scriptstyle \textup{-}7$};
		\node[below] at (-6,-0.2){$\scriptstyle \textup{-}6$};
		\node[below] at (-5,-0.2){$\scriptstyle \textup{-}5$};
		\node[below] at (-4,-0.2){$\scriptstyle \textup{-}4$};
		\node[below] at (-3,-0.2){$\scriptstyle \textup{-}3$};
		\node[below] at (-2,-0.2){$\scriptstyle \textup{-}2$};
		\node[below] at (-1,-0.2){$\scriptstyle \textup{-}1$};
		\node[below] at (0,-0.2){$\scriptstyle 0$};
		\node[below] at (1,-0.2){$\scriptstyle 1$};
		\node[below] at (2,-0.2){$\scriptstyle 2$};
		\node[below] at (3,-0.2){$\scriptstyle 3$};
		\node[below] at (4,-0.2){$\scriptstyle 4$};
		\node[below] at (5,-0.2){$\scriptstyle 5$};
		\node[below] at (6,-0.2){$\scriptstyle 6$};
		\node[below] at (7,-0.2){$\scriptstyle 7$};
		\node[below] at (8,-0.2){$\scriptstyle 8$};
		\node[below] at (9,-0.2){$\scriptstyle 9$};
		\node[below] at (10,-0.2){$\scriptstyle 10$};
	\end{tikzpicture}
	}
	\end{array}
	\qquad
	\hackcenter{
		\begin{tikzpicture}[scale=0.45]
			\draw[thick, lightgray] (0,0.5)--(0,-8.5);
			\draw[thick, lightgray] (1,0.5)--(1,-8.5);
			\draw[thick, lightgray] (2,0.5)--(2,-8.5);
			\draw[thick, lightgray, dotted] (0,-8.6)--(0,-9.2);
			\draw[thick, lightgray, dotted] (1,-8.6)--(1,-9.2);
			\draw[thick, lightgray, dotted] (2,-8.6)--(2,-9.2);
			\draw[thick, black, dotted] (0,0.5)--(0,1);
			\draw[thick, black, dotted] (1,0.5)--(1,1);
			\draw[thick, black, dotted] (2,0.5)--(2,1);
			\node[above] at (0,1){$\scriptstyle \overline{0}$};
			\node[above] at (1,1){$\scriptstyle \overline{1}$};
			\node[above] at (2,1){$\scriptstyle \overline{2}$};
			\whitedot(0,0);
			\whitedot(0,-1);
			\whitedot(0,-2);
			\whitedot(0,-3);
			\whitedot(0,-4);
			\whitedot(0,-5);
			\whitedot(0,-6);
			\whitedot(0,-7);
			\whitedot(0,-8);
			\node at (0,0){$\scriptstyle -12\phantom{-}$};
			\node at (0,-1){$\scriptstyle -9\phantom{-}$};
			\node at (0,-2){$\scriptstyle -6\phantom{-}$};
			\node at (0,-3){$\scriptstyle -3\phantom{-}$};
			\node at (0,-4){$\scriptstyle 0$};
			\node at (0,-5){$\scriptstyle 3$};
			\node at (0,-6){$\scriptstyle 6$};
			\node at (0,-7){$\scriptstyle 9$};
			\node at (0,-8){$\scriptstyle 12$};
			\whitedot(1,0);
			\whitedot(1,-1);
			\whitedot(1,-2);
			\whitedot(1,-3);
			\whitedot(1,-4);
			\whitedot(1,-5);
			\whitedot(1,-6);
			\whitedot(1,-7);
			\whitedot(1,-8);
			\node at (1,0){$\scriptstyle -11\phantom{-}$};
			\node at (1,-1){$\scriptstyle -8\phantom{-}$};
			\node at (1,-2){$\scriptstyle -5\phantom{-}$};
			\node at (1,-3){$\scriptstyle -2\phantom{-}$};
			\node at (1,-4){$\scriptstyle 1$};
			\node at (1,-5){$\scriptstyle 4$};
			\node at (1,-6){$\scriptstyle 7$};
			\node at (1,-7){$\scriptstyle 10$};
			\node at (1,-8){$\scriptstyle 13$};
			\whitedot(2,0);
			\whitedot(2,-1);
			\whitedot(2,-2);
			\whitedot(2,-3);
			\whitedot(2,-4);
			\whitedot(2,-5);
			\whitedot(2,-6);
			\whitedot(2,-7);
			\whitedot(2,-8);
			\node at (2,0){$\scriptstyle -10\phantom{-}$};
			\node at (2,-1){$\scriptstyle -7\phantom{-}$};
			\node at (2,-2){$\scriptstyle -4\phantom{-}$};
			\node at (2,-3){$\scriptstyle -1\phantom{-}$};
			\node at (2,-4){$\scriptstyle 2$};
			\node at (2,-5){$\scriptstyle 5$};
			\node at (2,-6){$\scriptstyle 8$};
			\node at (2,-7){$\scriptstyle 11$};
			\node at (2,-8){$\scriptstyle 14$};
		\end{tikzpicture}
	}
	\qquad
	\hackcenter{
		\begin{tikzpicture}[scale=0.45]
			\draw[thick, lightgray] (0,0.5)--(0,-8.5);
			\draw[thick, lightgray] (1,0.5)--(1,-8.5);
			\draw[thick, lightgray] (2,0.5)--(2,-8.5);
			\draw[thick, lightgray, dotted] (0,-8.6)--(0,-9.2);
			\draw[thick, lightgray, dotted] (1,-8.6)--(1,-9.2);
			\draw[thick, lightgray, dotted] (2,-8.6)--(2,-9.2);
			\draw[thick, black, dotted] (0,0.5)--(0,1);
			\draw[thick, black, dotted] (1,0.5)--(1,1);
			\draw[thick, black, dotted] (2,0.5)--(2,1);
			\node[above] at (0,1){$\scriptstyle \overline{0}$};
			\node[above] at (1,1){$\scriptstyle \overline{1}$};
			\node[above] at (2,1){$\scriptstyle \overline{2}$};
			\blackdot(0,0);
			\blackdot(0,-1);
			\blackdot(0,-2);
			\blackdot(0,-3);
			\blackdot(0,-4);
			\blackdot(0,-5);
			\draw[thick, lightgray] (-0.2,-6)--(0.2,-6);
			\blackdot(0,-7);
			\draw[thick, lightgray] (-0.2,-8)--(0.2,-8);
			\blackdot(1,0);
			\blackdot(1,-1);
			\draw[thick, lightgray] (0.8,-2)--(1.2,-2);
			\blackdot(1,-3);
			\blackdot(1,-4);
			\blackdot(1,-5);
			\draw[thick, lightgray] (0.8,-6)--(1.2,-6);
			\draw[thick, lightgray] (0.8,-7)--(1.2,-7);
			\draw[thick, lightgray] (0.8,-8)--(1.2,-8);
			\blackdot(2,0);
			\blackdot(2,-1);
			\draw[thick, lightgray] (1.8,-2)--(2.2,-2);
			\draw[thick, lightgray] (1.8,-3)--(2.2,-3);
			\draw[thick, lightgray] (1.8,-4)--(2.2,-4);
			\draw[thick, lightgray] (1.8,-5)--(2.2,-5);
			\draw[thick, lightgray] (1.8,-6)--(2.2,-6);
			\draw[thick, lightgray] (1.8,-7)--(2.2,-7);
			\draw[thick, lightgray] (1.8,-8)--(2.2,-8);
		\end{tikzpicture}
	}
\end{align*}
\caption{Let $e=3$, \(\bkap = \kappa_1 = 2\), and $\bla=\la^{(1)}=(8,4^2,3^2,2^2)$. On the top left, the Young diagram of $\bla$ is displayed, and on the bottom left, the set of beta numbers are displayed on the infinity runner. In the middle, the positions of a $3$-runner abacus are displayed. On the right, the abacus display for $\bla$ is given.
}
\label{abacus}      
\end{figure}
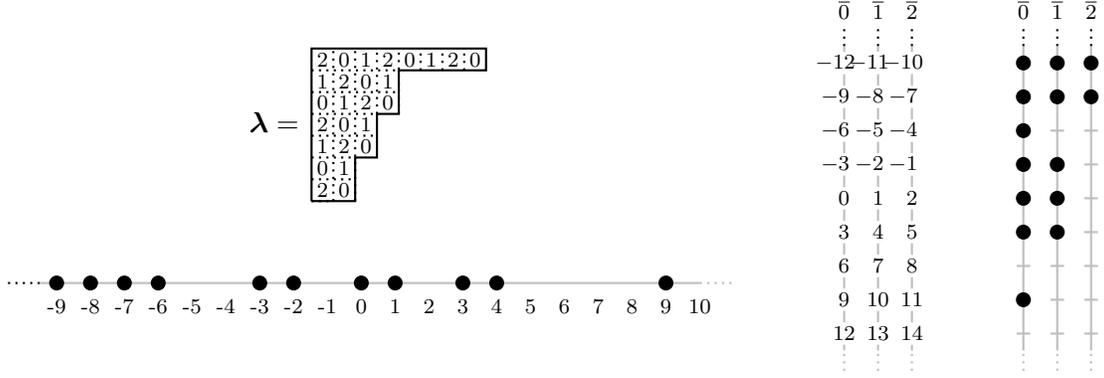

\subsubsection{Removable ribbons and beta numbers}\label{remribsexp}
Let \(\blam \subseteq \N_\ell\) be an \(\ell\)-multipartition. 
Removable ribbons for \(\blam\) are easily classified in terms of the \(\bkap\)-beta numbers for \(\lambda\). There is a bijection 
\begin{align*}
\{(x,y, r)\mid r \in [1,\ell], x \in \ZZ \backslash \B^r(\blam, \bkap), y \in  \B^r(\blam, \bkap), x < y\} \longleftrightarrow \{\textup{Removable ribbons for } \blam\}
\end{align*}
given by associating \((x,y,r)\) with the ribbon \(\blam/\bmu\), where \(\bmu\) is the multipartition with 
\begin{align*}
\B^s(\bmu, \bkap) = 
\begin{cases}
\B^r(\blam, \bkap)\backslash\{y\} \cup \{x\} & \textup{if }s=r;\\
\B^s(\blam, \bkap) & \textup{otherwise}.
\end{cases}
\end{align*}
 In other words, moving the bead \(y \in \B^r(\blam, \bkap)\) to the unoccupied lower position \(x\) is equivalent to removing the ribbon \(\blam/\bmu\) from \(\blam\). Moreover, the content of the ribbon \(\blam/\bmu\) is given by:
\begin{align*}
\cont(\blam/\bmu) = \alpha(x+1, y-x).
\end{align*}

\subsubsection{Addable ribbons and beta numbers}\label{addribsexp}
Let \(\blam \subseteq \N_\ell\) be an \(\ell\)-multipartition. 
Addable ribbons for \(\blam\) are also easily classified in terms of the \(\bkap\)-beta numbers for \(\lambda\). There is a bijection 
\begin{align*}
\{(x,y, r)\mid r \in [1,\ell], x \in \B^r(\blam, \bkap), y \in   \ZZ \backslash\B^r(\blam, \bkap), x < y\} \longleftrightarrow \{\textup{Addable ribbons for } \blam\}
\end{align*}
given by associating \((x,y,r)\) with the ribbon \(\bmu/\blam\), where \(\bmu\) is the multipartition with 
\begin{align*}
\B^s(\bmu, \bkap) = 
\begin{cases}
\B^r(\blam, \bkap)\backslash\{x\} \cup \{y\} & \textup{if }s=r;\\
\B^s(\blam, \bkap) & \textup{otherwise}.
\end{cases}
\end{align*}
 In other words, moving the bead \(x \in \B^r(\blam, \bkap)\) to the unoccupied greater position \(y\) is equivalent to adding the ribbon \(\bmu/\blam\) to \(\blam\). Moreover, the content of the ribbon \(\bmu/\blam\) is given by:
\begin{align*}
\cont(\bmu/\blam) = \alpha(x+1, y-x).
\end{align*}

\begin{figure}[h]
\begin{align*}
	{}
	\bla\ =\
				\hackcenter{
			\begin{tikzpicture}[scale=0.29]
				\fill[fill=white] (0,0)--(0,1)--(1,1)--(1,0)--(0,0);
				\fill[fill=green!30] (2,3)--(3,3)--(3,5)--(4,5)--(4,7)--(3,7)--(3,6)--(2,6)--(2,3);
				\draw[thick, dotted] (0,1)--(1,1);
				\draw[thick, dotted] (0,2)--(2,2);
				\draw[thick, dotted] (0,3)--(2,3);
				\draw[thick, dotted] (0,4)--(3,4);
				\draw[thick, dotted] (0,5)--(3,5);
				\draw[thick, dotted] (0,6)--(4,6);
				\draw[thick, dotted] (0,7)--(4,7);
				\draw[thick, dotted] (1,1)--(1,8);
				\draw[thick, dotted] (2,1)--(2,8);
				\draw[thick, dotted] (3,5)--(3,8);
				\draw[thick, dotted] (4,5)--(4,8);
				\draw[thick, dotted] (5,7)--(5,8);
				\draw[thick, dotted] (6,7)--(6,8);
				\draw[thick, dotted] (7,7)--(7,8);
				\node at (0.5,7.5){$\scriptstyle 2$};
				\node at (1.5,7.5){$\scriptstyle 0$};
				\node at (2.5,7.5){$\scriptstyle 1$};
				\node at (3.5,7.5){$\scriptstyle 2$};
				\node at (4.5,7.5){$\scriptstyle 0$};
				\node at (5.5,7.5){$\scriptstyle 1$};
				\node at (6.5,7.5){$\scriptstyle 2$};
				\node at (7.5,7.5){$\scriptstyle 0$};
				\node at (0.5,6.5){$\scriptstyle 1$};
				\node at (1.5,6.5){$\scriptstyle 2$};
				\node at (2.5,6.5){$\scriptstyle 0$};
				\node at (3.5,6.5){$\scriptstyle 1$};
				\node at (0.5,5.5){$\scriptstyle 0$};
				\node at (1.5,5.5){$\scriptstyle 1$};
				\node at (2.5,5.5){$\scriptstyle 2$};
				\node at (3.5,5.5){$\scriptstyle 0$};
				\node at (0.5,4.5){$\scriptstyle 2$};
				\node at (1.5,4.5){$\scriptstyle 0$};
				\node at (2.5,4.5){$\scriptstyle 1$};
				\node at (0.5,3.5){$\scriptstyle 1$};
				\node at (1.5,3.5){$\scriptstyle 2$};
				\node at (2.5,3.5){$\scriptstyle 0$};
				\node at (0.5,2.5){$\scriptstyle 0$};
				\node at (1.5,2.5){$\scriptstyle 1$};
				\node at (0.5,1.5){$\scriptstyle 2$};
				\node at (1.5,1.5){$\scriptstyle 0$};
				\draw[thick]  (0,1)--(2,1)--(2,3)--(3,3)--(3,5)--(4,5)--(4,7)--(8,7)--(8,8)--(0,8)--(0,1);
				\phantom{\draw[thick,fill=blue] (0,0)--(0,1)--(1,1)--(1,0)--(0,0);}
			\end{tikzpicture}
		}
		\hackcenter{
			\begin{tikzpicture}[scale=0.29]
				\node[white] at (-0.5,0){{}};
				\node[white] at (0.5,0){{}};
				\draw[thin, gray,fill=gray!30]  (0,0)--(0,7.6);
				\draw[thin, white]  (0,7.6)--(0,8);
			\end{tikzpicture}
		}
				\hackcenter{
			\begin{tikzpicture}[scale=0.29]
				\fill[fill=red!30] (1,1)--(2,1)--(3,1)--(3,2)--(5,2)--(5,5)--(4,5)--(4,3)--(2,3)--(2,2)--(1,2)--(1,1);
				\draw[thick, dotted] (0,1)--(1,1);
				\draw[thick, dotted] (0,2)--(2,2);
				\draw[thick, dotted] (0,3)--(2,3);
				\draw[thick, dotted] (0,4)--(4,4);
				\draw[thick, dotted] (0,5)--(5,5);
				\draw[thick, dotted] (0,6)--(5,6);
				\draw[thick, dotted] (0,7)--(5,7);
				\draw[thick, dotted] (1,0)--(1,8);
				\draw[thick, dotted] (2,2)--(2,8);
				\draw[thick, dotted] (3,3)--(3,8);
				\draw[thick, dotted] (4,5)--(4,8);
				\draw[thick, dotted] (5,5)--(5,8);
				\node at (0.5,7.5){$\scriptstyle 1$};
				\node at (1.5,7.5){$\scriptstyle 2$};
				\node at (2.5,7.5){$\scriptstyle 0$};
				\node at (3.5,7.5){$\scriptstyle 1$};
				\node at (4.5,7.5){$\scriptstyle 2$};
				\node at (5.5,7.5){$\scriptstyle 0$};
				\node at (0.5,6.5){$\scriptstyle 0$};
				\node at (1.5,6.5){$\scriptstyle 1$};
				\node at (2.5,6.5){$\scriptstyle 2$};
				\node at (3.5,6.5){$\scriptstyle 0$};
				\node at (4.5,6.5){$\scriptstyle 1$};
				\node at (0.5,5.5){$\scriptstyle 2$};
				\node at (1.5,5.5){$\scriptstyle 0$};
				\node at (2.5,5.5){$\scriptstyle 1$};
				\node at (3.5,5.5){$\scriptstyle 2$};
				\node at (4.5,5.5){$\scriptstyle 0$};
				\node at (0.5,4.5){$\scriptstyle 1$};
				\node at (1.5,4.5){$\scriptstyle 2$};
				\node at (2.5,4.5){$\scriptstyle 0$};
				\node at (3.5,4.5){$\scriptstyle 1$};
				\node at (4.5,4.5){$\scriptstyle 2$};
				\node at (0.5,3.5){$\scriptstyle 0$};
				\node at (1.5,3.5){$\scriptstyle 1$};
				\node at (2.5,3.5){$\scriptstyle 2$};
				\node at (3.5,3.5){$\scriptstyle 0$};
				\node at (4.5,3.5){$\scriptstyle 1$};
				\node at (0.5,2.5){$\scriptstyle 2$};
				\node at (1.5,2.5){$\scriptstyle 0$};
				\node at (2.5,2.5){$\scriptstyle 1$};
				\node at (3.5,2.5){$\scriptstyle 2$};
				\node at (4.5,2.5){$\scriptstyle 0$};
				\node at (0.5,1.5){$\scriptstyle 1$};
				\node at (1.5,1.5){$\scriptstyle 2$};
				\node at (2.5,1.5){$\scriptstyle 0$};
				\node at (0.5,0.5){$\scriptstyle 0$};
				\draw[thick]  (0,0)--(1,0)--(1,2)--(2,2)--(2,3)--(4,3)--(4,5)--(5,5)--(5,7)--(6,7)--(6,8)--(0,8)--(0,0);
			\end{tikzpicture}
		}
		\end{align*}
		\begin{align*}
		{}	
		\begin{array}{l}
		\hackcenter{
			\begin{tikzpicture}[scale=0.45]
				\node at (-13,0) {$\B^1(\blam, \bkap):$};
				\draw[thick, ->, green!70] (4,0) arc (0:180:2.5cm);
				\draw[thick, lightgray, dotted] (10,0)--(11,0);
				\draw[thick, black, dotted] (-9.5,0)--(-10.5,0);
				\draw[thick, lightgray ] (-9.5,0)--(10,0);
				\blackdot(9,0);
				\blackdot(4,0);
				\blackdot(3,0);
				\blackdot(1,0);
				\blackdot(0,0);
				\blackdot(-2,0);
				\blackdot(-3,0);
				\blackdot(-6,0);
				\blackdot(-7,0);
				\blackdot(-8,0);
				\blackdot(-9,0);
				\node[below] at (-9,-0.2){$\scriptstyle \textup{-}9$};
				\node[below] at (-8,-0.2){$\scriptstyle \textup{-}8$};
				\node[below] at (-7,-0.2){$\scriptstyle \textup{-}7$};
				\node[below] at (-6,-0.2){$\scriptstyle \textup{-}6$};
				\node[below] at (-5,-0.2){$\scriptstyle \textup{-}5$};
				\node[below] at (-4,-0.2){$\scriptstyle \textup{-}4$};
				\node[below] at (-3,-0.2){$\scriptstyle \textup{-}3$};
				\node[below] at (-2,-0.2){$\scriptstyle \textup{-}2$};
				\node[below] at (-1,-0.2){$\scriptstyle \textup{-}1$};
				\node[below] at (0,-0.2){$\scriptstyle 0$};
				\node[below] at (1,-0.2){$\scriptstyle 1$};
				\node[below] at (2,-0.2){$\scriptstyle 2$};
				\node[below] at (3,-0.2){$\scriptstyle 3$};
				\node[below] at (4,-0.2){$\scriptstyle 4$};
				\node[below] at (5,-0.2){$\scriptstyle 5$};
				\node[below] at (6,-0.2){$\scriptstyle 6$};
				\node[below] at (7,-0.2){$\scriptstyle 7$};
				\node[below] at (8,-0.2){$\scriptstyle 8$};
				\node[below] at (9,-0.2){$\scriptstyle 9$};
				\node[below] at (10,-0.2){$\scriptstyle 10$};
			\end{tikzpicture}
		}
		\\
		\\
		\hackcenter{
			\begin{tikzpicture}[scale=0.45]
				\node at (-13,0) {$\B^2(\blam, \bkap):$};
				\draw[thick, <-, red!70] (2,0) arc (0:180:3.5cm);
				\draw[thick, lightgray, dotted] (10,0)--(11,0);
				\draw[thick, black, dotted] (-9.5,0)--(-10.5,0);
				\draw[thick, lightgray ] (-9.5,0)--(10,0);
				\blackdot(6,0);
				\blackdot(4,0);
				\blackdot(3,0);
				\blackdot(1,0);
				\blackdot(0,0);
				\blackdot(-3,0);
				\blackdot(-5,0);
				\blackdot(-6,0);
				\blackdot(-8,0);
				\blackdot(-9,0);
				\node[below] at (-9,-0.2){$\scriptstyle \textup{-}9$};
				\node[below] at (-8,-0.2){$\scriptstyle \textup{-}8$};
				\node[below] at (-7,-0.2){$\scriptstyle \textup{-}7$};
				\node[below] at (-6,-0.2){$\scriptstyle \textup{-}6$};
				\node[below] at (-5,-0.2){$\scriptstyle \textup{-}5$};
				\node[below] at (-4,-0.2){$\scriptstyle \textup{-}4$};
				\node[below] at (-3,-0.2){$\scriptstyle \textup{-}3$};
				\node[below] at (-2,-0.2){$\scriptstyle \textup{-}2$};
				\node[below] at (-1,-0.2){$\scriptstyle \textup{-}1$};
				\node[below] at (0,-0.2){$\scriptstyle 0$};
				\node[below] at (1,-0.2){$\scriptstyle 1$};
				\node[below] at (2,-0.2){$\scriptstyle 2$};
				\node[below] at (3,-0.2){$\scriptstyle 3$};
				\node[below] at (4,-0.2){$\scriptstyle 4$};
				\node[below] at (5,-0.2){$\scriptstyle 5$};
				\node[below] at (6,-0.2){$\scriptstyle 6$};
				\node[below] at (7,-0.2){$\scriptstyle 7$};
				\node[below] at (8,-0.2){$\scriptstyle 8$};
				\node[below] at (9,-0.2){$\scriptstyle 9$};
				\node[below] at (10,-0.2){$\scriptstyle 10$};
			\end{tikzpicture}
		}
		\\
		\\
	\end{array}
\end{align*}	
\caption{On the top, 
the residues are shown for the 2-multipartition \(\blam = ((8,4^2,3^2,2^2)\mid(6,5^2,4^2,2,1^2))\) with \(e=3\) and multicharge \(\bkap=(2,1)\), considered in \cref{multiex5}. A removable ribbon and an addable ribbon are highlighted. The \(\bkap\)-beta numbers for \(\blam\) are depicted below, together with the bead moves corresponding to adding/removing the indicated ribbons.
}
\label{figmult2}      
\end{figure}
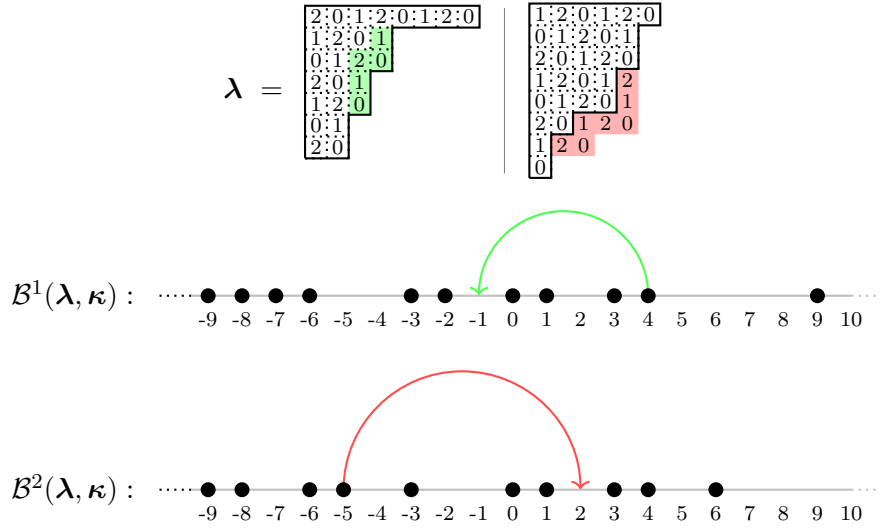

\begin{Example}\label{multiex5}
On the left of Figure~\ref{figmult2},
the residues are shown for the example:
\begin{align*}
e=3, \qquad \bkap = (2,1), \qquad \blam = ((8,4^2,3^2,2^2)\mid(6,5^2,4^2,2,1^2)),
\end{align*}
and the \(\bkap\)-beta numbers are displayed on the right.
We have content \(\textup{cont}(\blam)= 21 \alpha_0 + 17\alpha_1 + 16 \alpha_2\), defect \(\textup{def}_{\bkap}(\blam) = 12\), and leading row word:
\begin{align*}
\bi^{\blam} = 20120120 1201 0120 201 120 01 20 120120 01201 20120 1201 0120 20 1 0.
\end{align*}
The remaining associated data for \(\blam\) is as follows:
\begin{align*}
\begin{array}{llllll}
M_0^1(\blam, \bkap) = 9 & &M_1^1(\blam, \bkap) = 4 & &M_2^1(\blam, \bkap) = -7\\
\\
M_0^2(\blam, \bkap) = 6 & &M_1^2(\blam, \bkap) = 4 & &M_2^2(\blam, \bkap) = -10\\
\end{array}
\end{align*}
\begin{align*}
\begin{array}{llllll}
h^1_{0,1}(\blam, \bkap) = -2 & h^1_{0,2}(\blam, \bkap) = -6 & h^1_{1,2}(\blam, \bkap) = -4 & h^1_{1,0}(\blam, \bkap) =  1 & h^1_{2,0}(\blam, \bkap) = 5 & h^1_{2,1}(\blam, \bkap) = 3\\
\\
h^2_{0,1}(\blam, \bkap) = -1 & h^2_{0,2}(\blam, \bkap) = -6 & h^2_{1,2}(\blam, \bkap) = -5 & h^2_{1,0}(\blam, \bkap) =  0 & h^2_{2,0}(\blam, \bkap) = 5 & h^2_{2,1}(\blam, \bkap) = 4\\
\end{array}
\end{align*}
The highlighted green removable ribbon has content \(2\alpha_0 + 2 \alpha_1 + \alpha_2\). Removing this ribbon would correspond to moving the bead in position \(4\) in \(\B^1(\blam, \bkap)\) into the unoccupied \(-1\) position. The highlighted red addable ribbon has content \(2 \alpha_0 + 2 \alpha_1 + 3 \alpha_2\). Adding this ribbon would correspond to moving the bead in position \(-5\) in \(\B^2(\blam, \bkap)\) into the unoccupied \(2\) position.
\end{Example}

\subsubsection{Ribbons with real positive root content}

\begin{Lemma}\label{iteraddrem}
Let \(\beta \in \Phi_+^\re\), and let \(\blam \in \Lambda_+^{\bkap}\). Then:
\begin{enumerate}
\item It is possible to iteratively remove \(k\) ribbons of content \(\beta\) from \(\blam\) if and only if \(\blam\) has \(k\) distinct removable ribbons of content \(\beta\).
\item It is possible to iteratively add \(k\) ribbons of content \(\beta\) to \(\blam\) if and only if \(\blam\) has \(k\) distinct addable ribbons of content \(\beta\).
\item Only finitely many ribbons of content \(\beta\) may be iteratively added to \(\blam\).
\end{enumerate}
\end{Lemma}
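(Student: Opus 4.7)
The plan is to translate removability and addability of real-root-content ribbons into bead-move statements on the $\bkap$-beta-number abacus, and then exploit the fact that such moves strictly change residue when $\beta$ is real. Write $\beta = \alpha(t,L)$ with $\overline L \neq \overline 0$, and set $s := \overline{t-1}$, $s' := \overline{t+L-1}$, so that $s \neq s'$ in $\ZZ_e$. By \S\ref{remribsexp}, a removable ribbon of content $\beta$ in $\blam$ corresponds bijectively to a pair $(y,r)$ with $y \in \B^r$, $\bar y = s'$, $y - L \notin \B^r$, and its removal swaps $y$ for $y-L$ in $\B^r$; by \S\ref{addribsexp}, an addable ribbon is dually encoded by a pair $(x,r)$ with $x \in \B^r$, $\bar x = s$, $x+L \notin \B^r$, with addition swapping $x$ for $x+L$.

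For the forward direction in (i), given $k$ distinct removable ribbons encoded by bead positions $y_1, \ldots, y_k$, I would remove them in sequence and inductively verify that later moves are not spoiled by earlier ones: after removing the ribbon at $y_1$, any other $y_j$ is still a bead (as $y_j \neq y_1$) and $y_j - L$ is still vacant (as the unique newly-inserted bead $y_1 - L$ equals $y_j - L$ only if $y_j = y_1$). The forward direction of (ii) is parallel, with $x_i + L$ replacing $y_i - L$.

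The reverse implications of (i) and (ii) are the main obstacle. Suppose $k$ ribbons of content $\beta$ have been iteratively removed, via bead moves at positions $y_1, \ldots, y_k$; I must show these encode $k$ distinct removable ribbons already present in the original $\blam$. The beta set on component $r_i$ just before step $i$ equals the original $\B^{r_i}$ with $\{y_j : j < i,\ r_j = r_i\}$ deleted and $\{y_j - L : j < i,\ r_j = r_i\}$ inserted. The potentially bad scenario where $y_i$ is only present in the current $\B^{r_i}$ because $y_i = y_j - L$ for some $j < i$ is ruled out by residues: this would force $s' = \bar y_i = s' - \overline L$, contradicting $\overline L \neq \overline 0$. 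A symmetric residue check shows that $y_i - L$ cannot be a vacancy in the current $\B^{r_i}$ created by an earlier removal (which would require $y_i - L = y_j$ with $\bar y_j = s'$ but $\overline{y_i - L} = s$). Hence each $y_i$ already corresponds to a removable ribbon of $\blam$, and the $y_i$ are pairwise distinct. The argument for (ii) is parallel, swapping the roles of $s$ and $s'$ and of $y_i - L$ with $x_i + L$.

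For (iii), I would introduce the non-negative finite invariant
\[
H_{s'}(\blam) := \sum_{r=1}^\ell \#\bigl\{z \in \ZZ \mid \bar z = s',\ z \notin \B^r(\blam, \bkap)\bigr\},
\]
which is finite because each $\B^r$ contains all sufficiently negative integers. Every addition of a ribbon of content $\beta$ fills exactly one residue-$s'$ vacancy on its component, strictly decreasing $H_{s'}$ by one, so the process of iteratively adding ribbons terminates in at most $H_{s'}(\blam)$ steps.
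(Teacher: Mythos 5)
Your parts (i) and (ii) are correct and take essentially the paper's route: the paper encodes removable (resp.\ addable) ribbons of content $\beta$ by bead moves $y\mapsto y-L$ (resp.\ $x\mapsto x+L$) and asserts, as a single set equality, that one such move deletes exactly one pair from the set of encoding pairs and creates no new ones; your explicit residue checks (that $y_i=y_j-L$ or $y_i-L=y_j$ would force $\overline{L}=\overline{0}$) are precisely the verification of that equality, and the induction on $k$ is the same.

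Part (iii), however, contains a genuine error. The invariant $H_{s'}(\blam)=\sum_{r}\#\{z\in\ZZ\mid \bar z=s',\ z\notin\B^r(\blam,\bkap)\}$ is not finite: each $\B^r(\blam,\bkap)$ is bounded \emph{above} (its largest element is $\overline{\kappa_r}+\lambda_1^{(r)}-1$), so its complement contains all sufficiently large integers, and in particular infinitely many of residue $s'$. The fact that $\B^r$ contains all sufficiently negative integers only makes the set of negative vacancies finite. Hence the claimed non-negative, strictly decreasing integer invariant does not exist as stated, and your termination argument collapses. The repair is immediate and is how the paper argues: deduce (iii) from (ii). The number of addable ribbons of content $\beta$ in $\blam$ is finite, since such a ribbon corresponds to a pair $(x,r)$ with $x\in\B^r(\blam,\bkap)$, $\bar x=s$ and $x+L\notin\B^r(\blam,\bkap)$, and for $x$ sufficiently negative both $x$ and $x+L$ are beads while for $x$ sufficiently positive $x$ is not a bead; by (ii) one can therefore iteratively add at most that many ribbons.
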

\begin{proof}
We prove (i) first. Let \(\beta = \alpha(t+1,L)\). Then, in consideration of \S\ref{remribsexp}, the removable ribbons of content \(\beta\) for \(\blam\) correspond to pairs \((x,r)\) such that \(x \in \ZZ \backslash \B^r(\blam, \bkap)\), \(x+L \in \B^r(\blam, \bkap)\), and \(\bar x = t\). Note that, since \(\beta \in \Phi_+^\re\), we have that \(\overline{x+L} \neq t\).
Let \(\xi\) be a removable ribbon of content \(\beta\) for \(\blam\) which corresponds to the pair \((x',r')\). Then the beta numbers for \(\blam\backslash \xi\) are given by deleting \(x' + L\) from \( \B^r(\blam, \bkap)\) and replacing them with \(x'\). Then we have:
\begin{align*}
\{(x,r) \mid \bar x = t, & \,x \in \ZZ \backslash \B^r(\blam\backslash \xi, \bkap), x+L \in \B^r(\blam\backslash \xi, \bkap)\}\\
&= \{(x,r) \mid \bar x = t, x \in \ZZ \backslash \B^r(\blam, \bkap), x+L \in \B^r(\blam, \bkap)\}\backslash \{(x',r')\}.
\end{align*}
Thus \(\blam \backslash \xi\) has one less removable ribbon of content \(\beta\) than \(\blam\). The result follows by induction on \(k\).

The proof for (ii) proceeds similarly, by noting that if \(\xi\) is an addable ribbon of content \(\beta\) for \(\blam\), then \(\blam \sqcup \xi\) has one less addable ribbon of content \(\beta\) than \(\blam\), and again the claim follows by induction.
Finally, (iii) is an immediate corollary to (ii).
\end{proof}

\subsection{The Kac--Moody algebra \(\asl_e(\CC)\) and the Fock space} \label{slsec}
The combinatorics of multipartitions, ribbon tilings, and the root system \(\Phi_+\) interact directly in the action of the Kac--Moody algebra \(\asl_e(\CC)\) on Fock space. While we will not need the full details in this paper (and refer the reader to \cite{kac, LLT97} for more complete treatments), we briefly mention key features of this action which are combinatorially relevant.

We have the Cartan subalgebra \(\mathfrak{h} = \CC\{\alpha_i^\vee = E_{i,i} - E_{i+1,i+1} \mid i \in [1,e-1]\} \subseteq \mathfrak{sl}_e(\CC)\) and root vectors \(e_\beta = E_{i,j+1}, f_{\beta} = E_{j+1,i} \) associated to every positive root \(\beta = \alpha_i + \alpha_{i+1} + \dots + \alpha_j \in \Phi^\textup{fin}\).
We take the realization of \(\asl_e(\CC)\) given by 
\begin{align*}
\asl_e(\CC) = \left( \mathfrak{sl}_e(\CC) \otimes \CC[t^{\pm 1}]\right) \oplus \CC c \oplus \CC d,
\end{align*}
with commutator defined by:
\begin{align*}
[x \otimes t^a + \lambda c + \mu d, y \otimes t^b + \lambda' c + \mu ' d] =
[x,y] \otimes t^{a+b} + \mu b y \otimes t^b - \mu' a x \otimes t^a + a \delta_{a+b,0}(x,y)c,
\end{align*}
where \((x,y) = \tr(xy)\) is the (renormalized) Killing form on \(\mathfrak{sl}_e(\CC)\). The Cartan subalgebra \(\hat{\mathfrak{h}}\) for  \(\asl_e(\CC)\) has basis \(\{ \alpha_1^\vee, \dots, \alpha_{e-1}^\vee, c, d\}\), and we let \(\alpha_0^\vee := c - \alpha_1^\vee - \dots - \alpha_{e-1}^\vee\).  In \(\hat{\mathfrak{h}}^*\) we have the simple roots \(\alpha_0, \alpha_1, \dots, \alpha_{e-1}\), and fundamental weights \(\Lambda_0, \Lambda_1, \dots, \Lambda_{e-1}\), where:
\begin{align*}
\alpha_i(\alpha_j^\vee) = 2 \delta_{ij} - \delta_{i, \overline{j \pm 1}}, \quad \alpha_i(c) = 0, \quad \alpha_i(d) = \delta_{i,0}, \quad
\Lambda_i(\alpha_j^\vee) = \delta_{ij}, \quad \Lambda_i(c) = 1, \quad \Lambda_i(d) = 0.
\end{align*}

For \(\beta \in \Phi_+^\re\), the one-dimensional real root vector spaces are \(\asl_e(\CC)_\beta = \CC\{e_\beta\}\), \(\asl_e(\CC)_{-\beta} = \CC\{f_\beta\}\), where
\begin{align*}
e_{m \delta + \alpha} = e_\alpha \otimes t^m,
\qquad
f_{m \delta + \alpha} = f_\alpha \otimes t^{-m},
\qquad
e_{n \delta - \alpha} = f_\alpha \otimes t^n,
\qquad
f_{n \delta - \alpha} = e_\alpha \otimes t^{-n}
\end{align*}
for all \(\alpha \in \Phi_+^\fin, m \in \ZZ_{\geq 0}, n \in \ZZ_{>0}\). For any \(\beta \in \Phi_+^\re\), the triple \(\{e_\beta, f_\beta, [e_\beta, f_\beta]\}\) generates a subalgebra isomorphic to \(\mathfrak{sl}_2(\CC)\).  For \(m \neq 0\), we have the \((e-1)\)-dimensional imaginary root space
\(
\asl_e(\CC)_{m \delta} = \{ \alpha^\vee_i \otimes t^m \mid i \in [1,e-1]\}
\). This yields the root space decomposition 
\begin{align*}
\asl_e(\CC) = \bigoplus_{\beta \in \Phi_+} \asl_e(\CC)_\beta \oplus \hat{ \mathfrak{h}} \oplus \bigoplus_{\beta \in \Phi_+} \asl_e(\CC)_{-\beta}.
\end{align*}

\subsubsection{The Fock space}
Given a multicharge \(\bkap = (\kappa_1, \dots, \kappa_\ell)\) of level \(\ell\), let \(\Lambda = \Lambda_{\kappa_1} + \dots + \Lambda_{\kappa_\ell} \in \mathfrak{h}^*\). The associated Fock space is the \(\CC\)-vector space \(\CC \Lambda_+^{\bkap}\), with basis given by all multipartitions of multicharge \(\bkap\).
The Fock space is an \(\asl_e(\CC)\)-module of highest-weight \(\Lambda\). For \(\omega \in \Z_{\geq 0} I\), \(\CC \Lambda_+^{\bkap}(\omega)\) has weight \(\Lambda- \omega\) with respect to the action of \(\hat{\mathfrak{h}}\), so the weight space decomposition of the Fock space is given by \(\CC \Lambda^{\bkap}_+ = \bigoplus_{\omega \in \ZZ_{\geq 0}I} \CC \Lambda_+^{\bkap}(\omega)\). 

For \(\blam \in \Lambda_+^{\bkap}(\omega)\) and \(\beta \in \Phi_+^\re\), the action of real root vectors on the Fock space is given by:
\begin{align}\label{efaction}
f_\beta \cdot \blam = \sum_\xi (-1)^{\#\textup{columns}(\xi) + 1} \blam \sqcup \xi
\qquad
e_{\beta} \cdot \blam = \sum_\zeta (-1)^{\#\textup{columns}(\zeta) + 1} \blam \backslash \zeta,
\end{align}
where the sums range over all addable ribbons \(\xi\) (resp.~removable ribbons \(\zeta\)) of content \(\beta\) for \(\blam\). While the action of \(\asl_e(\CC)\) is usually described in terms of the simple roots \(\alpha_i\), the action (\ref{efaction}) follows immediately from the {\em semi-infinite wedge} construction of Fock space, see for instance \cite{LLT97}.

\begin{Lemma}\label{addribbonlemma}
Let \(\beta \in \Phi_+\), and \(\Lambda^{\bkap}_+(\omega), \Lambda^{\bkap}_+(\omega + \beta) \neq \varnothing\). Then there exists \(\bmu \in \Lambda^{\bkap}_+(\omega + \beta)\) with a removable ribbon \(\xi \subseteq \bmu\) such that \(\cont(\xi) = \beta\).
\end{Lemma}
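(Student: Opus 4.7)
The plan is to split on whether $\beta$ is imaginary or real.

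\textbf{Imaginary case ($\beta = m\delta$).} This case is immediate from bead manipulation, and in fact does not need the hypothesis that $\Lambda^{\bkap}_+(\omega+\beta) \neq \varnothing$. Take any $\brho \in \Lambda^{\bkap}_+(\omega)$, pick any component $r$, and let $x$ be the largest element of $\B^r(\brho, \bkap)$. Then $x + me > x$ is unoccupied, so by the correspondence of \S\ref{addribsexp} the bead move $x \mapsto x + me$ produces an addable ribbon $\xi$ for $\brho$ of content $\alpha(x+1, me) = m\delta$. The multipartition $\bmu := \brho \sqcup \xi \in \Lambda^{\bkap}_+(\omega + m\delta)$ then has $\xi$ as a removable ribbon of content $m\delta$.

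\textbf{Real case ($\beta \in \Phi_+^\re$).} Here I would use the integrable $\asl_e(\CC)$-action on the Fock space $\CC \Lambda^{\bkap}_+$ from \S\ref{slsec}. The subalgebra $\mathfrak{s} := \langle e_\beta, f_\beta, \beta^\vee\rangle \cong \mathfrak{sl}_2$ acts integrably, so the Fock space decomposes as an $\mathfrak{s}$-module into a direct sum of finite-dimensional irreducibles $V(n)$. Setting $w := (\Lambda - \omega)(\beta^\vee)$, the spaces $V_\omega := \CC \Lambda^{\bkap}_+(\omega)$ and $V_{\omega+\beta}$ lie inside the $w$- and $(w-2)$-weight spaces of the Fock space, respectively.

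The key observation is the following fact for integrable $\mathfrak{sl}_2$-modules: if both $V_w$ and $V_{w-2}$ are nonzero, then $f : V_w \to V_{w-2}$ is nonzero. For contradiction assume $f = 0$. On each summand $V(n)$ the induced map $V(n)_w \to V(n)_{w-2}$ must vanish, which by $\mathfrak{sl}_2$-theory forces either $V(n)_w = 0$ (so $n < |w|$) or $V(n)_w \neq 0$ with $w = -n$ (so $V(n)_{w-2} = 0$ and $n < |w-2|$). Combined with $V_w \neq 0$ and $V_{w-2} \neq 0$, this yields integers $n,n'$ with $|w| \leq n < |w-2|$ and $|w-2| \leq n' < |w|$, which is impossible.

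Hence $f_\beta : V_\omega \to V_{\omega+\beta}$ is nonzero, so some basis element $\brho \in \Lambda^{\bkap}_+(\omega)$ satisfies $f_\beta \cdot \brho \neq 0$. By~(\ref{efaction}), $f_\beta \cdot \brho$ is a signed sum of distinct basis vectors $\brho \sqcup \xi$ indexed by the addable ribbons $\xi$ of content $\beta$, so $\brho$ admits at least one such $\xi$. Then $\bmu := \brho \sqcup \xi \in \Lambda^{\bkap}_+(\omega+\beta)$ has $\xi$ as a removable ribbon of content $\beta$.

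The main obstacle is the real-root case: there is no transparent bead-number manipulation that produces the required $\bmu$, and one genuinely needs the integrable $\mathfrak{s}$-action on Fock space to force the existence. This is consistent with the authors' acknowledgment that the idea behind this lemma is due to Webster.
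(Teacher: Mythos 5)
Your imaginary case is the same one-line argument as the paper's (the paper appends a horizontal strip of $me$ nodes to the first row; your bead move $x\mapsto x+me$ is the same operation), and your real case rests on the same underlying tool, the $\mathfrak{sl}_2$-triple $\{e_\beta,f_\beta,[e_\beta,f_\beta]\}$ acting on Fock space via (\ref{efaction}). The packaging differs: you invoke complete reducibility of the Fock space under $\mathfrak{s}$ and a clean weight-space lemma (``two adjacent nonzero weight spaces force $f\neq 0$''), whereas the paper avoids complete reducibility altogether and instead derives a contradiction from the finite-dimensionality of $\bigoplus_{n\leq 0}\CC\Lambda_+^{\bkap}(\omega+n\beta)$ (forcing a non-positive lowest weight) together with the local nilpotence of $f_\beta$ supplied by \cref{iteraddrem}(iii). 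Your weight-space lemma is correct, and your appeal to integrability is legitimate here because local nilpotence of $f_\beta$ and $e_\beta$ is available combinatorially ($f_\beta$ by \cref{iteraddrem}(iii), $e_\beta$ because a multipartition has finitely many nodes), after which the standard $\mathfrak{sl}_2$ complete reducibility lemma applies; it would be worth saying this explicitly rather than asserting integrability.

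One imprecision needs fixing. The $w$-weight space of the whole Fock space under $\mathfrak{s}$ is $\bigoplus_{\omega'}\CC\Lambda_+^{\bkap}(\omega')$ over \emph{all} $\omega'$ with $(\Lambda-\omega')([e_\beta,f_\beta])=w$, and this is genuinely larger than $\CC\Lambda_+^{\bkap}(\omega)$ (e.g.\ $\omega$ and $\omega+\delta$ give the same $\mathfrak{sl}_2$-weight). So the conclusion ``$f$ is nonzero on $V_w$'' only tells you that some $\brho$ of \emph{some} content $\omega'$ admits an addable $\beta$-ribbon, which is not what you want. The fix is to first restrict to the $\mathfrak{s}$-submodule $\bigoplus_{n\in\ZZ}\CC\Lambda_+^{\bkap}(\omega+n\beta)$, in which the weight spaces are exactly the individual content spaces $\CC\Lambda_+^{\bkap}(\omega+n\beta)$ in distinct weights $w-2n$; applied there, your lemma gives precisely $f_\beta\colon\CC\Lambda_+^{\bkap}(\omega)\to\CC\Lambda_+^{\bkap}(\omega+\beta)$ nonzero. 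With that one-line restriction inserted, the argument is complete.
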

\begin{proof}
If \(\beta  \in \Phi_+^{\textup{im}}\), then \(\beta = m\delta\) for some \(m>1\). Then for any \(\blam \in \Lambda^{\bkap}_+(\omega)\), adding a row \(\xi\) of \(me\) nodes to the top row of \(\mu^{(1)}\) yields a multipartition \(\bmu \in \Lambda^{\bkap}_+(\omega + \beta)\), as required.

Now assume \(\beta   \in \Phi_+^{\textup{re}}\), so \(\beta = m\delta + \alpha\) for some \(\alpha \in \Phi^\textup{fin}\). By way of contradiction, assume that for every \(\bmu \in \Lambda_+^{\bkap}(\omega + \beta)\) there is no removable ribbon \(\xi \subseteq \bmu\) such that \(\cont(\xi) = \beta\). Then it follows from consideration of the Fock space action (\ref{efaction}), that \(f_\beta \cdot \CC \Lambda_+^{\bkap}(\omega) = 0\) and \(e_\beta \cdot \CC\Lambda_+^{\bkap}(\omega + \beta) = 0\). Identifying \( \{ e_\beta, f_\beta, [e_\beta, f_\beta]\}\) with the copy of \(\mathfrak{sl}_2(\CC)\) these elements span, we have that the subspaces
\begin{align}\label{tworeps}
\bigoplus_{n \in \ZZ_{\leq 0}} \CC \Lambda_+^{\bkap}(\omega + n\beta) \qquad \textup{and} \qquad
\bigoplus_{n \in \ZZ_{> 0}} \CC \Lambda_+^{\bkap}(\omega +n \beta)
\end{align}
of the Fock space are \(\mathfrak{sl}_2(\CC)\)-modules, where the space \(\CC \Lambda_+^{\bkap}(\omega + n\beta)\) has \(\mathfrak{sl}_2(\CC)\)-weight
\begin{align*}
(\Lambda - \omega + n \beta)([e_\beta, f_\beta])=
(\Lambda - \omega + mn\delta +  n \alpha)(\alpha^\vee + mc) = (\Lambda- \omega)(\alpha^\vee) + m \ell + 2n.
\end{align*}
Note that the representation on the left in (\ref{tworeps}) is finite-dimensional, with nonzero lowest weight space \(\CC \Lambda_+^{\bkap}(\omega)\). Hence by \(\mathfrak{sl}_2(\CC)\) representation theory, it must be that this lowest weight \((\Lambda- \omega)(\alpha^\vee) + m \ell\) of this module is non-positive. Thus, letting \(\blam \in \CC\Lambda_+^{\bkap}(\omega + \beta)\), it follows that \(\CC\{ f_\beta^k \cdot \blam \mid k \in \ZZ_{\geq 0}\}\) is an \(\mathfrak{sl}_2(\CC)\)-module whose highest weight is the negative integer \((\Lambda- \omega)(\alpha^\vee) + m \ell -2\), and hence \(f_\beta^k \cdot \blam \neq 0\) for all \(k \in \ZZ_{\geq 0}\). But, on the other hand, it follows from consideration of (\ref{efaction}) and \cref{iteraddrem}(iii) that \(f_\beta^k \cdot \blam = 0\) for \(k\gg 0\), yielding the desired contradiction.
\end{proof}

\section{Multicores and RoCK multipartitions}\label{basiccoreRoCKsec}

\subsection{Special multipartitions} 
We now define some multipartitions with special properties. Throughout this section we fix a level \(\ell \in \mathbb{Z}_{>0}\), and a multicharge \(\bkap = (\kappa_1, \dots, \kappa_\ell)\). We will call a permutation \(\theta = (\theta_1, \dots, \theta_e)\) of \([0,e-1]\) a {\em residue permutation}.
\begin{Definition}\label{specialdefs}
Let \(\omega \in \ZZ_{\geq 0}I\), and \(\blam \in \Lambda_+^{\bkap}(\omega)\).
\begin{enumerate}
\item We say that \(\blam\) is a {\em \(\bkap\)-core} if \(\blam\) is the {\em only} \(\ell\)-multipartition of multicharge \(\bkap\) with its content, i.e.
\begin{align*}
\Lambda^{\bkap}_+(\omega) = \{ \blam\}.
\end{align*}
\item We say that \(\blam\) is a {\em multicore} provided that \(\blam\) has no removable ribbons of size \(e\), i.e.~\(x \in \B^r(\blam, \bkap)\) implies \(x-e \in \B^r(\blam, \bkap)\).
We say that \(\Lambda_+^{\bkap}(\omega)\) is a {\em core block} if every multipartition in \( \Lambda_+^{\bkap}(\omega)\) is a multicore.
\item For a residue permutation \(\theta\), we say that \(\blam\) is {\em  \((\bkap,\theta)\)-RoCK} provided that whenever \(y \in \B^r(\blam, \bkap)\), \(x \notin \B^r(\blam, \bkap)\), for some \(x<y\) with \(\overline{x} = \theta_a\), \( \overline{y} = \theta_b\), we have \(a \leq b\).
We say that \(\Lambda^{\bkap}_+(\omega)\) is a {\em \(\theta\)-RoCK block} if every multipartition in \(\Lambda^{\bkap}_+(\omega)\) is \((\bkap,\theta)\)-\(\RoCK\).
We say that \(\Lambda^{\bkap}_+(\omega)\) is a {\em RoCK block} if it is a \(\theta\)-RoCK block for some \(\theta\).
\end{enumerate}
\end{Definition}

\begin{Remark}
As is common in the literature, we abuse notation somewhat in using the term `block' above to refer to the set of multipartitions \(\Lambda_+^{\bkap}(\omega)\), for the following reason. The set \(\Lambda_+^{\bkap}(\omega)\) is associated with the cellular structure of a cyclotomic KLR algebra `block' (in the sense of being an indecomposable algebra), and important properties and connections between these algebras are directly correlated with the combinatorics of the set \(\Lambda_+^{\bkap}(\omega)\), see \cref{subsec:rockblocks,cycKLRsec}.
\end{Remark}

\begin{Remark}\label{rem:l1core}
When \(\ell=1\), it is well known that a partition \(\lambda\) is a \(\kappa\)-core if and only if it is an \(e\)-core. Hence in higher levels, \(\blam\) is a multicore when each constituent partition \(\lambda^{(r)}\) is a \(\kappa_r\)-core, for \(r \in [1,\ell]\). 
\end{Remark}

\begin{figure}[h]
\begin{align*}
	{}
	\begin{array}{l}
		\brho\ =\
		\hackcenter{
			\begin{tikzpicture}[scale=0.29]
				%
				\draw[thick, gray, dotted] (0,1)--(1,1);
				\draw[thick,  gray, dotted] (0,2)--(1,2);
				\draw[thick,  gray, dotted] (0,3)--(1,3);
				\draw[thick,  gray, dotted] (0,4)--(2,4);
				\draw[thick,  gray, dotted] (0,5)--(2,5);
				\draw[thick,  gray, dotted] (0,6)--(2,6);
				\draw[thick,  gray, dotted] (0,7)--(4,7);
				\draw[thick,  gray, dotted] (0,8)--(5,8);
				\draw[thick,  gray, dotted] (0,9)--(6,9);
				\draw[thick,  gray, dotted] (0,10)--(7,10);
				\draw[thick,  gray, dotted] (0,11)--(8,11);
				\draw[thick,  gray, dotted] (0,12)--(10,12);
				\draw[thick,  gray, dotted] (1,13)--(1,0);
				\draw[thick,  gray, dotted] (2,13)--(2,3);
				\draw[thick,  gray, dotted] (3,13)--(3,6);
				\draw[thick,  gray, dotted] (4,13)--(4,7);
				\draw[thick,  gray, dotted] (5,13)--(5,8);
				\draw[thick,  gray, dotted] (6,13)--(6,9);
				\draw[thick, gray,  dotted] (7,13)--(7,10);
				\draw[thick,  gray, dotted] (8,13)--(8,11);
				\draw[thick,  gray, dotted] (9,13)--(9,11);
				\draw[thick,  gray, dotted] (10,13)--(10,11);
				\draw[thick, gray,  dotted] (11,13)--(11,12);
				\draw[thick,  gray, dotted] (12,13)--(12,12);
				\node at (0.5,12.5){$\scriptstyle 2$};
				\node at (1.5,12.5){$\scriptstyle 3$};
				\node at (2.5,12.5){$\scriptstyle 0$};
				\node at (3.5,12.5){$\scriptstyle 1$};
				\node at (4.5,12.5){$\scriptstyle 2$};
				\node at (5.5,12.5){$\scriptstyle 3$};
				\node at (6.5,12.5){$\scriptstyle 0$};
				\node at (7.5,12.5){$\scriptstyle 1$};
				\node at (8.5,12.5){$\scriptstyle 2$};
				\node at (9.5,12.5){$\scriptstyle 3$};
				\node at (10.5,12.5){$\scriptstyle 0$};
				\node at (11.5,12.5){$\scriptstyle 1$};
				\node at (12.5,12.5){$\scriptstyle 2$};
				\node at (0.5,11.5){$\scriptstyle 1$};
				\node at (1.5,11.5){$\scriptstyle 2$};
				\node at (2.5,11.5){$\scriptstyle 3$};
				\node at (3.5,11.5){$\scriptstyle 0$};
				\node at (4.5,11.5){$\scriptstyle 1$};
				\node at (5.5,11.5){$\scriptstyle 2$};
				\node at (6.5,11.5){$\scriptstyle 3$};
				\node at (7.5,11.5){$\scriptstyle 0$};
				\node at (8.5,11.5){$\scriptstyle 1$};
				\node at (9.5,11.5){$\scriptstyle 2$};
				\node at (0.5,10.5){$\scriptstyle 0$};
				\node at (1.5,10.5){$\scriptstyle 1$};
				\node at (2.5,10.5){$\scriptstyle 2$};
				\node at (3.5,10.5){$\scriptstyle 3$};
				\node at (4.5,10.5){$\scriptstyle 0$};
				\node at (5.5,10.5){$\scriptstyle 1$};
				\node at (6.5,10.5){$\scriptstyle 2$};
				\node at (0.5,9.5){$\scriptstyle 3$};
				\node at (1.5,9.5){$\scriptstyle 0$};
				\node at (2.5,9.5){$\scriptstyle 1$};
				\node at (3.5,9.5){$\scriptstyle 2$};
				\node at (4.5,9.5){$\scriptstyle 3$};
				\node at (5.5,9.5){$\scriptstyle 0$};
				\node at (0.5,8.5){$\scriptstyle 2$};
				\node at (1.5,8.5){$\scriptstyle 3$};
				\node at (2.5,8.5){$\scriptstyle 0$};
				\node at (3.5,8.5){$\scriptstyle 1$};
				\node at (4.5,8.5){$\scriptstyle 2$};
				\node at (0.5,7.5){$\scriptstyle 1$};
				\node at (1.5,7.5){$\scriptstyle 2$};
				\node at (2.5,7.5){$\scriptstyle 3$};
				\node at (3.5,7.5){$\scriptstyle 0$};
				\node at (0.5,6.5){$\scriptstyle 0$};
				\node at (1.5,6.5){$\scriptstyle 1$};
				\node at (2.5,6.5){$\scriptstyle 2$};
				\node at (0.5,5.5){$\scriptstyle 3$};
				\node at (1.5,5.5){$\scriptstyle 0$};
				\node at (0.5,4.5){$\scriptstyle 2$};
				\node at (1.5,4.5){$\scriptstyle 3$};
				\node at (0.5,3.5){$\scriptstyle 1$};
				\node at (1.5,3.5){$\scriptstyle 2$};
				\node at (0.5,2.5){$\scriptstyle 0$};
				\node at (0.5,1.5){$\scriptstyle 3$};
				\node at (0.5,0.5){$\scriptstyle 2$};
				\draw[thick]  (0,0)--(1,0)--(1,3)--(2,3)--(2,6)--(3,6)--(3,7)--(4,7)--(4,8)--(5,8)--(5,9)--(6,9)--(6,10)--(7,10)--(7,11)--(10,11)--(10,12)--(13,12)--(13,13)--(0,13)--(0,0);
			\end{tikzpicture}
		}
		\hackcenter{
			\begin{tikzpicture}[scale=0.29]
				\node[white] at (-0.5,0){{}};
				\node[white] at (0.5,0){{}};
				\draw[thin, gray,fill=gray!30]  (0,0)--(0,12.6);
				\draw[thin, white]  (0,12.6)--(0,13);
			\end{tikzpicture}
		}
			\hackcenter{
			\begin{tikzpicture}[scale=0.29]
							\phantom{
					\draw[thick,fill=blue]  (0,0)--(0,-2)--(1,-2)--(1,0)--(0,0);
				}
				\draw[thick, gray, dotted] (0,1)--(1,1);
				\draw[thick,  gray, dotted] (0,2)--(1,2);
				\draw[thick,  gray, dotted] (0,3)--(1,3);
				\draw[thick,  gray, dotted] (0,4)--(2,4);
				\draw[thick,  gray, dotted] (0,5)--(2,5);
				\draw[thick,  gray, dotted] (0,6)--(2,6);
				\draw[thick,  gray, dotted] (0,7)--(4,7);
				\draw[thick,  gray, dotted] (0,8)--(5,8);
				\draw[thick,  gray, dotted] (0,9)--(6,9);
				\draw[thick,  gray, dotted] (0,10)--(8,10);
				\draw[thick,  gray, dotted] (0,11)--(8,11);
				\draw[thick,  gray, dotted] (1,11)--(1,0);
				\draw[thick,  gray, dotted] (2,11)--(2,3);
				\draw[thick,  gray, dotted] (3,11)--(3,6);
				\draw[thick,  gray, dotted] (4,11)--(4,7);
				\draw[thick,  gray, dotted] (5,11)--(5,8);
				\draw[thick,  gray, dotted] (6,11)--(6,9);
				\draw[thick, gray,  dotted] (7,11)--(7,9);
				\draw[thick,  gray, dotted] (8,11)--(8,9);
				\draw[thick,  gray, dotted] (9,11)--(9,10);
				\draw[thick,  gray, dotted] (10,11)--(10,10);
				\draw[thick, gray,  dotted] (11,11)--(11,10);
				\node at (0.5,10.5){$\scriptstyle 0$};
				\node at (1.5,10.5){$\scriptstyle 1$};
				\node at (2.5,10.5){$\scriptstyle 2$};
				\node at (3.5,10.5){$\scriptstyle 3$};
				\node at (4.5,10.5){$\scriptstyle 0$};
				\node at (5.5,10.5){$\scriptstyle 1$};
				\node at (6.5,10.5){$\scriptstyle 2$};
				\node at (7.5,10.5){$\scriptstyle 3$};
				\node at (8.5,10.5){$\scriptstyle 0$};
				\node at (9.5,10.5){$\scriptstyle 1$};
				\node at (10.5,10.5){$\scriptstyle 2$};
				\node at (0.5,9.5){$\scriptstyle 3$};
				\node at (1.5,9.5){$\scriptstyle 0$};
				\node at (2.5,9.5){$\scriptstyle 1$};
				\node at (3.5,9.5){$\scriptstyle 2$};
				\node at (4.5,9.5){$\scriptstyle 3$};
				\node at (5.5,9.5){$\scriptstyle 0$};
				\node at (6.5,9.5){$\scriptstyle 1$};
				\node at (7.5,9.5){$\scriptstyle 2$};
				\node at (0.5,8.5){$\scriptstyle 2$};
				\node at (1.5,8.5){$\scriptstyle 3$};
				\node at (2.5,8.5){$\scriptstyle 0$};
				\node at (3.5,8.5){$\scriptstyle 1$};
				\node at (4.5,8.5){$\scriptstyle 2$};
				\node at (0.5,7.5){$\scriptstyle 1$};
				\node at (1.5,7.5){$\scriptstyle 2$};
				\node at (2.5,7.5){$\scriptstyle 3$};
				\node at (3.5,7.5){$\scriptstyle 0$};
				\node at (0.5,6.5){$\scriptstyle 0$};
				\node at (1.5,6.5){$\scriptstyle 1$};
				\node at (2.5,6.5){$\scriptstyle 2$};
				\node at (0.5,5.5){$\scriptstyle 3$};
				\node at (1.5,5.5){$\scriptstyle 0$};
				\node at (0.5,4.5){$\scriptstyle 2$};
				\node at (1.5,4.5){$\scriptstyle 3$};
				\node at (0.5,3.5){$\scriptstyle 1$};
				\node at (1.5,3.5){$\scriptstyle 2$};
				\node at (0.5,2.5){$\scriptstyle 0$};
				\node at (0.5,1.5){$\scriptstyle 3$};
				\node at (0.5,0.5){$\scriptstyle 2$};
				\draw[thick]  (0,0)--(1,0)--(1,3)--(2,3)--(2,6)--(3,6)--(3,7)--(4,7)--(4,8)--(5,8)--(5,9)--(6,9)--(7,9)--(8,9)--(8,10)--(11,10)--(11,11)--(0,11)--(0,0);
			\end{tikzpicture}
		}
		\\
		\\
		\bzeta\ = \
		\hackcenter{
			\begin{tikzpicture}[scale=0.29]
				\draw[thick, gray, dotted] (0,1)--(1,1);
				\draw[thick,  gray, dotted] (0,2)--(1,2);
				\draw[thick,  gray, dotted] (0,3)--(1,3);
				\draw[thick,  gray, dotted] (0,4)--(2,4);
				\draw[thick,  gray, dotted] (0,5)--(2,5);
				\draw[thick,  gray, dotted] (0,6)--(4,6);
				\draw[thick,  gray, dotted] (0,7)--(5,7);
				\draw[thick,  gray, dotted] (0,8)--(6,8);
				\draw[thick,  gray, dotted] (0,9)--(9,9);
				\draw[thick,  gray, dotted] (0,10)--(10,10);
				\draw[thick,  gray, dotted] (0,11)--(11,11);
				\draw[thick,  gray, dotted] (0,12)--(12,12);
				\draw[thick,  gray, dotted] (1,13)--(1,0);
				\draw[thick,  gray, dotted] (2,13)--(2,3);
				\draw[thick,  gray, dotted] (3,13)--(3,5);
				\draw[thick,  gray, dotted] (4,13)--(4,6);
				\draw[thick,  gray, dotted] (5,13)--(5,7);
				\draw[thick,  gray, dotted] (6,13)--(6,8);
				\draw[thick, gray,  dotted] (7,13)--(7,8);
				\draw[thick,  gray, dotted] (8,13)--(8,8);
				\draw[thick,  gray, dotted] (9,13)--(9,8);
				\draw[thick,  gray, dotted] (10,13)--(10,9);
				\draw[thick, gray,  dotted] (11,13)--(11,11);
				\draw[thick,  gray, dotted] (12,13)--(12,12);
				\node at (0.5,12.5){$\scriptstyle 2$};
				\node at (1.5,12.5){$\scriptstyle 3$};
				\node at (2.5,12.5){$\scriptstyle 0$};
				\node at (3.5,12.5){$\scriptstyle 1$};
				\node at (4.5,12.5){$\scriptstyle 2$};
				\node at (5.5,12.5){$\scriptstyle 3$};
				\node at (6.5,12.5){$\scriptstyle 0$};
				\node at (7.5,12.5){$\scriptstyle 1$};
				\node at (8.5,12.5){$\scriptstyle 2$};
				\node at (9.5,12.5){$\scriptstyle 3$};
				\node at (10.5,12.5){$\scriptstyle 0$};
				\node at (11.5,12.5){$\scriptstyle 1$};
				\node at (12.5,12.5){$\scriptstyle 2$};
				\node at (0.5,11.5){$\scriptstyle 1$};
				\node at (1.5,11.5){$\scriptstyle 2$};
				\node at (2.5,11.5){$\scriptstyle 3$};
				\node at (3.5,11.5){$\scriptstyle 0$};
				\node at (4.5,11.5){$\scriptstyle 1$};
				\node at (5.5,11.5){$\scriptstyle 2$};
				\node at (6.5,11.5){$\scriptstyle 3$};
				\node at (7.5,11.5){$\scriptstyle 0$};
				\node at (8.5,11.5){$\scriptstyle 1$};
				\node at (9.5,11.5){$\scriptstyle 2$};
				\node at (10.5,11.5){$\scriptstyle 3$};
				\node at (11.5,11.5){$\scriptstyle 0$};
				\node at (0.5,10.5){$\scriptstyle 0$};
				\node at (1.5,10.5){$\scriptstyle 1$};
				\node at (2.5,10.5){$\scriptstyle 2$};
				\node at (3.5,10.5){$\scriptstyle 3$};
				\node at (4.5,10.5){$\scriptstyle 0$};
				\node at (5.5,10.5){$\scriptstyle 1$};
				\node at (6.5,10.5){$\scriptstyle 2$};
				\node at (7.5,10.5){$\scriptstyle 3$};
				\node at (8.5,10.5){$\scriptstyle 0$};
				\node at (9.5,10.5){$\scriptstyle 1$};
				\node at (10.5,10.5){$\scriptstyle 2$};
				\node at (0.5,9.5){$\scriptstyle 3$};
				\node at (1.5,9.5){$\scriptstyle 0$};
				\node at (2.5,9.5){$\scriptstyle 1$};
				\node at (3.5,9.5){$\scriptstyle 2$};
				\node at (4.5,9.5){$\scriptstyle 3$};
				\node at (5.5,9.5){$\scriptstyle 0$};
				\node at (6.5,9.5){$\scriptstyle 1$};
				\node at (7.5,9.5){$\scriptstyle 2$};
				\node at (8.5,9.5){$\scriptstyle 3$};
				\node at (9.5,9.5){$\scriptstyle 0$};
				\node at (0.5,8.5){$\scriptstyle 2$};
				\node at (1.5,8.5){$\scriptstyle 3$};
				\node at (2.5,8.5){$\scriptstyle 0$};
				\node at (3.5,8.5){$\scriptstyle 1$};
				\node at (4.5,8.5){$\scriptstyle 2$};
				\node at (5.5,8.5){$\scriptstyle 3$};
				\node at (6.5,8.5){$\scriptstyle 0$};
				\node at (7.5,8.5){$\scriptstyle 1$};
				\node at (8.5,8.5){$\scriptstyle 2$};
				\node at (0.5,7.5){$\scriptstyle 1$};
				\node at (1.5,7.5){$\scriptstyle 2$};
				\node at (2.5,7.5){$\scriptstyle 3$};
				\node at (3.5,7.5){$\scriptstyle 0$};
				\node at (4.5,7.5){$\scriptstyle 1$};
				\node at (5.5,7.5){$\scriptstyle 2$};
				\node at (0.5,6.5){$\scriptstyle 0$};
				\node at (1.5,6.5){$\scriptstyle 1$};
				\node at (2.5,6.5){$\scriptstyle 2$};
				\node at (3.5,6.5){$\scriptstyle 3$};
				\node at (4.5,6.5){$\scriptstyle 0$};
				\node at (0.5,5.5){$\scriptstyle 3$};
				\node at (1.5,5.5){$\scriptstyle 0$};
				\node at (2.5,5.5){$\scriptstyle 1$};
				\node at (3.5,5.5){$\scriptstyle 2$};
				\node at (0.5,4.5){$\scriptstyle 2$};
				\node at (1.5,4.5){$\scriptstyle 3$};
				\node at (0.5,3.5){$\scriptstyle 1$};
				\node at (1.5,3.5){$\scriptstyle 2$};
				\node at (0.5,2.5){$\scriptstyle 0$};
				\node at (0.5,1.5){$\scriptstyle 3$};
				\node at (0.5,0.5){$\scriptstyle 2$};
				\draw[thick]  (0,0)--(1,0)--(1,3)--(2,3)--(2,5)--(4,5)--(4,6)--(5,6)--(5,7)--(6,7)--(6,8)--(9,8)--(9,9)--(10,9)--(10,10)--(11,10)--(11,11)--(12,11)--(12,12)--(13,12)--(13,13)--(0,13)--(0,0);
			\end{tikzpicture}
		}
		\hackcenter{
			\begin{tikzpicture}[scale=0.29]
				\node[white] at (-0.5,0){{}};
				\node[white] at (0.5,0){{}};
				\draw[thin, gray,fill=gray!30]  (0,0)--(0,12.6);
				\draw[thin, white]  (0,12.6)--(0,13);
			\end{tikzpicture}
		}
		\hackcenter{
			\begin{tikzpicture}[scale=0.29]
								\phantom{
					\draw[thick,fill=blue]  (0,0)--(0,-2)--(1,-2)--(1,0)--(0,0);
				}
				\draw[thick, gray, dotted] (0,1)--(1,1);
				\draw[thick,  gray, dotted] (0,2)--(2,2);
				\draw[thick,  gray, dotted] (0,3)--(3,3);
				\draw[thick,  gray, dotted] (0,4)--(3,4);
				\draw[thick,  gray, dotted] (0,5)--(3,5);
				\draw[thick,  gray, dotted] (0,6)--(4,6);
				\draw[thick,  gray, dotted] (0,7)--(5,7);
				\draw[thick,  gray, dotted] (0,8)--(6,8);
				\draw[thick,  gray, dotted] (0,9)--(7,9);
				\draw[thick,  gray, dotted] (0,10)--(12,10);
				\draw[thick,  gray, dotted] (0,11)--(15,11);
				\draw[thick,  gray, dotted] (1,11)--(1,0);
				\draw[thick,  gray, dotted] (2,11)--(2,2);
				\draw[thick,  gray, dotted] (3,11)--(3,5);
				\draw[thick,  gray, dotted] (4,11)--(4,7);
				\draw[thick,  gray, dotted] (5,11)--(5,7);
				\draw[thick,  gray, dotted] (6,11)--(6,8);
				\draw[thick, gray,  dotted] (7,11)--(7,8);
				\draw[thick,  gray, dotted] (8,11)--(8,9);
				\draw[thick,  gray, dotted] (9,11)--(9,9);
				\draw[thick,  gray, dotted] (10,11)--(10,9);
				\draw[thick, gray,  dotted] (11,11)--(11,9);
				\draw[thick, gray,  dotted] (12,11)--(12,9);
				\draw[thick, gray,  dotted] (13,11)--(13,10);
				\draw[thick, gray,  dotted] (14,11)--(14,10);
				\node at (0.5,10.5){$\scriptstyle 0$};
				\node at (1.5,10.5){$\scriptstyle 1$};
				\node at (2.5,10.5){$\scriptstyle 2$};
				\node at (3.5,10.5){$\scriptstyle 3$};
				\node at (4.5,10.5){$\scriptstyle 0$};
				\node at (5.5,10.5){$\scriptstyle 1$};
				\node at (6.5,10.5){$\scriptstyle 2$};
				\node at (7.5,10.5){$\scriptstyle 3$};
				\node at (8.5,10.5){$\scriptstyle 0$};
				\node at (9.5,10.5){$\scriptstyle 1$};
				\node at (10.5,10.5){$\scriptstyle 2$};
				\node at (11.5,10.5){$\scriptstyle 3$};
				\node at (12.5,10.5){$\scriptstyle 0$};
				\node at (13.5,10.5){$\scriptstyle 1$};
				\node at (14.5,10.5){$\scriptstyle 2$};
				\node at (0.5,9.5){$\scriptstyle 3$};
				\node at (1.5,9.5){$\scriptstyle 0$};
				\node at (2.5,9.5){$\scriptstyle 1$};
				\node at (3.5,9.5){$\scriptstyle 2$};
				\node at (4.5,9.5){$\scriptstyle 3$};
				\node at (5.5,9.5){$\scriptstyle 0$};
				\node at (6.5,9.5){$\scriptstyle 1$};
				\node at (7.5,9.5){$\scriptstyle 2$};
				\node at (8.5,9.5){$\scriptstyle 3$};
				\node at (9.5,9.5){$\scriptstyle 0$};
				\node at (10.5,9.5){$\scriptstyle 1$};
				\node at (11.5,9.5){$\scriptstyle 2$};
				\node at (0.5,8.5){$\scriptstyle 2$};
				\node at (1.5,8.5){$\scriptstyle 3$};
				\node at (2.5,8.5){$\scriptstyle 0$};
				\node at (3.5,8.5){$\scriptstyle 1$};
				\node at (4.5,8.5){$\scriptstyle 2$};
				\node at (5.5,8.5){$\scriptstyle 3$};
				\node at (6.5,8.5){$\scriptstyle 0$};
				\node at (0.5,7.5){$\scriptstyle 1$};
				\node at (1.5,7.5){$\scriptstyle 2$};
				\node at (2.5,7.5){$\scriptstyle 3$};
				\node at (3.5,7.5){$\scriptstyle 0$};
				\node at (4.5,7.5){$\scriptstyle 1$};
				\node at (5.5,7.5){$\scriptstyle 2$};
				\node at (0.5,6.5){$\scriptstyle 0$};
				\node at (1.5,6.5){$\scriptstyle 1$};
				\node at (2.5,6.5){$\scriptstyle 2$};
				\node at (3.5,6.5){$\scriptstyle 3$};
				\node at (0.5,5.5){$\scriptstyle 3$};
				\node at (1.5,5.5){$\scriptstyle 0$};
				\node at (2.5,5.5){$\scriptstyle 1$};
				\node at (3.5,5.5){$\scriptstyle 2$};
				\node at (0.5,4.5){$\scriptstyle 2$};
				\node at (1.5,4.5){$\scriptstyle 3$};
				\node at (2.5,4.5){$\scriptstyle 0$};
				\node at (0.5,3.5){$\scriptstyle 1$};
				\node at (1.5,3.5){$\scriptstyle 2$};
				\node at (2.5,3.5){$\scriptstyle 3$};
				\node at (0.5,2.5){$\scriptstyle 0$};
				\node at (1.5,2.5){$\scriptstyle 1$};
				\node at (2.5,2.5){$\scriptstyle 2$};
				\node at (0.5,1.5){$\scriptstyle 3$};
				\node at (1.5,1.5){$\scriptstyle 0$};
				\node at (0.5,0.5){$\scriptstyle 2$};
				\draw[thick]  (0,0)--(1,0)--(1,1)--(2,1)--(2,2)--(3,2)--(3,5)--(4,5)--(4,7)--(6,7)--(6,8)--(7,8)--(7,9)--(12,9)--(12,10)--(15,10)--(15,11)--(0,11)--(0,0);
			\end{tikzpicture}
		}
	\end{array}
\end{align*}
\caption{
Two 2-multipartitions \( \brho\) and \(\bzeta\), for \(e=4\) and multicharge \(\bkap=(2,0)\), considered in \cref{ExBigRoCKnot}.
}
\label{fig123}      
\end{figure}
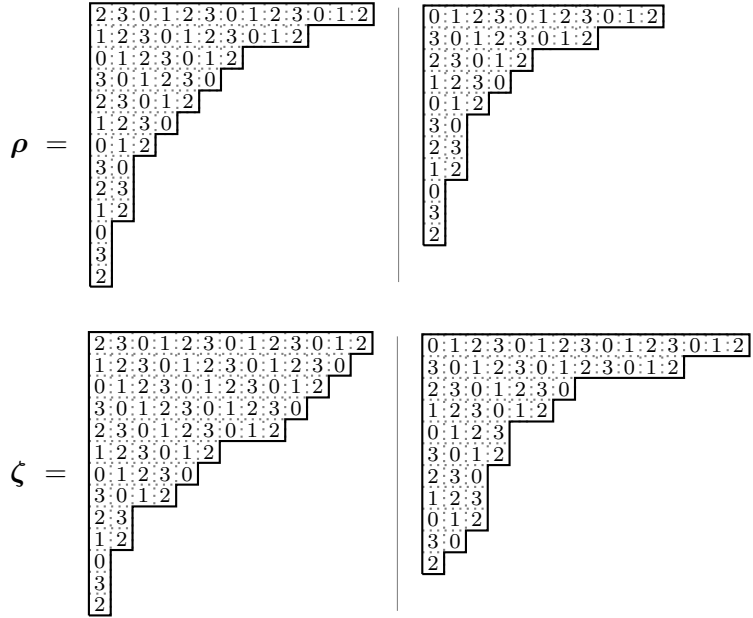

\begin{Example}\label{ExBigRoCKnot}
Let \(e=4\), \(\bkap = (2,0)\), and take the residue permutation \(\theta = (1,3,0,2)\).
In Figure~\ref{fig123}, the multipartitions
\begin{align*}
\brho &= ((13,10,7,6,5,4,3,2^3,1^3)\mid (11,8,5,4,3,2^3,1^3))\\
\bzeta &= ((13,12,11,10,9,6,5,4,2^2,1^3)\mid(15, 12,7,6,4^2,3^3,2,1))
\end{align*}
are shown. The \(\bkap\)-beta numbers for \(\brho\) are:
\begin{align*}
{}
\B^1(\brho, \bkap) &=
\hackcenter{
\begin{tikzpicture}[scale=0.46]
\draw[thick, white] (0,0.8);
\draw[thick, lightgray, dotted] (14.5,0)--(15.5,0);
\draw[thick, black, dotted] (-18.5,0)--(-17.5,0);
\draw[thick, lightgray ] (-17.5,0)--(14.5,0);
\blackdot(14,0);
\blackdot(10,0);
\blackdot(6,0);
\blackdot(4,0);
\blackdot(2,0);
\blackdot(0,0);
\blackdot(-2,0);
\blackdot(-4,0);
\blackdot(-5,0);
\blackdot(-6,0);
\blackdot(-8,0);
\blackdot(-9,0);
\blackdot(-10,0);
\blackdot(-12,0);
\blackdot(-13,0);
\blackdot(-14,0);
\blackdot(-15,0);
\blackdot(-16,0);
\blackdot(-17,0);
\node[below] at (-17,-0.2){$\scriptstyle \textup{-}17$};
\node[below] at (-16,-0.2){$\scriptstyle \textup{-}16$};
\node[below] at (-15,-0.2){$\scriptstyle \textup{-}15$};
\node[below] at (-14,-0.2){$\scriptstyle \textup{-}14$};
\node[below] at (-13,-0.2){$\scriptstyle \textup{-}13$};
\node[below] at (-12,-0.2){$\scriptstyle \textup{-}12$};
\node[below] at (-11,-0.2){$\scriptstyle \textup{-}11$};
\node[below] at (-10,-0.2){$\scriptstyle \textup{-}10$};
\node[below] at (-9,-0.2){$\scriptstyle \textup{-}9$};
\node[below] at (-8,-0.2){$\scriptstyle \textup{-}8$};
\node[below] at (-7,-0.2){$\scriptstyle \textup{-}7$};
\node[below] at (-6,-0.2){$\scriptstyle \textup{-}6$};
\node[below] at (-5,-0.2){$\scriptstyle \textup{-}5$};
\node[below] at (-4,-0.2){$\scriptstyle \textup{-}4$};
\node[below] at (-3,-0.2){$\scriptstyle \textup{-}3$};
\node[below] at (-2,-0.2){$\scriptstyle \textup{-}2$};
\node[below] at (-1,-0.2){$\scriptstyle \textup{-}1$};
\node[below] at (0,-0.2){$\scriptstyle 0$};
\node[below] at (1,-0.2){$\scriptstyle 1$};
\node[below] at (2,-0.2){$\scriptstyle 2$};
\node[below] at (3,-0.2){$\scriptstyle 3$};
\node[below] at (4,-0.2){$\scriptstyle 4$};
\node[below] at (5,-0.2){$\scriptstyle 5$};
\node[below] at (6,-0.2){$\scriptstyle 6$};
\node[below] at (7,-0.2){$\scriptstyle 7$};
\node[below] at (8,-0.2){$\scriptstyle 8$};
\node[below] at (9,-0.2){$\scriptstyle 9$};
\node[below] at (10,-0.2){$\scriptstyle 10$};
\node[below] at (11,-0.2){$\scriptstyle 11$};
\node[below] at (12,-0.2){$\scriptstyle 12$};
\node[below] at (13,-0.2){$\scriptstyle 13$};
\node[below] at (14,-0.2){$\scriptstyle 14$};
\end{tikzpicture}
}\\
{}
\B^2(\brho, \bkap) &=
\hackcenter{
\begin{tikzpicture}[scale=0.46]
\draw[thick, white] (0,0.8);
\draw[thick, lightgray, dotted] (14.5,0)--(15.5,0);
\draw[thick, black, dotted] (-18.5,0)--(-17.5,0);
\draw[thick, lightgray ] (-17.5,0)--(14.5,0);
\blackdot(10,0);
\blackdot(6,0);
\blackdot(2,0);
\blackdot(0,0);
\blackdot(-2,0);
\blackdot(-4,0);
\blackdot(-5,0);
\blackdot(-6,0);
\blackdot(-8,0);
\blackdot(-9,0);
\blackdot(-10,0);
\blackdot(-12,0);
\blackdot(-13,0);
\blackdot(-14,0);
\blackdot(-15,0);
\blackdot(-16,0);
\blackdot(-17,0);
\node[below] at (-17,-0.2){$\scriptstyle \textup{-}17$};
\node[below] at (-16,-0.2){$\scriptstyle \textup{-}16$};
\node[below] at (-15,-0.2){$\scriptstyle \textup{-}15$};
\node[below] at (-14,-0.2){$\scriptstyle \textup{-}14$};
\node[below] at (-13,-0.2){$\scriptstyle \textup{-}13$};
\node[below] at (-12,-0.2){$\scriptstyle \textup{-}12$};
\node[below] at (-11,-0.2){$\scriptstyle \textup{-}11$};
\node[below] at (-10,-0.2){$\scriptstyle \textup{-}10$};
\node[below] at (-9,-0.2){$\scriptstyle \textup{-}9$};
\node[below] at (-8,-0.2){$\scriptstyle \textup{-}8$};
\node[below] at (-7,-0.2){$\scriptstyle \textup{-}7$};
\node[below] at (-6,-0.2){$\scriptstyle \textup{-}6$};
\node[below] at (-5,-0.2){$\scriptstyle \textup{-}5$};
\node[below] at (-4,-0.2){$\scriptstyle \textup{-}4$};
\node[below] at (-3,-0.2){$\scriptstyle \textup{-}3$};
\node[below] at (-2,-0.2){$\scriptstyle \textup{-}2$};
\node[below] at (-1,-0.2){$\scriptstyle \textup{-}1$};
\node[below] at (0,-0.2){$\scriptstyle 0$};
\node[below] at (1,-0.2){$\scriptstyle 1$};
\node[below] at (2,-0.2){$\scriptstyle 2$};
\node[below] at (3,-0.2){$\scriptstyle 3$};
\node[below] at (4,-0.2){$\scriptstyle 4$};
\node[below] at (5,-0.2){$\scriptstyle 5$};
\node[below] at (6,-0.2){$\scriptstyle 6$};
\node[below] at (7,-0.2){$\scriptstyle 7$};
\node[below] at (8,-0.2){$\scriptstyle 8$};
\node[below] at (9,-0.2){$\scriptstyle 9$};
\node[below] at (10,-0.2){$\scriptstyle 10$};
\node[below] at (11,-0.2){$\scriptstyle 11$};
\node[below] at (12,-0.2){$\scriptstyle 12$};
\node[below] at (13,-0.2){$\scriptstyle 13$};
\node[below] at (14,-0.2){$\scriptstyle 14$};
\end{tikzpicture}
}
\end{align*}
The \(\bkap\)-beta numbers for \(\bzeta\) are:
\begin{align*}
{}
\B^1(\bzeta, \bkap) &=
\hackcenter{
\begin{tikzpicture}[scale=0.46]
\draw[thick, white] (0,0.8);
\draw[thick, lightgray, dotted] (14.5,0)--(15.5,0);
\draw[thick, black, dotted] (-18.5,0)--(-17.5,0);
\draw[thick, lightgray ] (-17.5,0)--(14.5,0);
\blackdot(14,0);
\blackdot(12,0);
\blackdot(10,0);
\blackdot(8,0);
\blackdot(6,0);
\blackdot(2,0);
\blackdot(0,0);
\blackdot(-2,0);
\blackdot(-5,0);
\blackdot(-6,0);
\blackdot(-8,0);
\blackdot(-9,0);
\blackdot(-10,0);
\blackdot(-12,0);
\blackdot(-13,0);
\blackdot(-14,0);
\blackdot(-15,0);
\blackdot(-16,0);
\blackdot(-17,0);
\node[below] at (-17,-0.2){$\scriptstyle \textup{-}17$};
\node[below] at (-16,-0.2){$\scriptstyle \textup{-}16$};
\node[below] at (-15,-0.2){$\scriptstyle \textup{-}15$};
\node[below] at (-14,-0.2){$\scriptstyle \textup{-}14$};
\node[below] at (-13,-0.2){$\scriptstyle \textup{-}13$};
\node[below] at (-12,-0.2){$\scriptstyle \textup{-}12$};
\node[below] at (-11,-0.2){$\scriptstyle \textup{-}11$};
\node[below] at (-10,-0.2){$\scriptstyle \textup{-}10$};
\node[below] at (-9,-0.2){$\scriptstyle \textup{-}9$};
\node[below] at (-8,-0.2){$\scriptstyle \textup{-}8$};
\node[below] at (-7,-0.2){$\scriptstyle \textup{-}7$};
\node[below] at (-6,-0.2){$\scriptstyle \textup{-}6$};
\node[below] at (-5,-0.2){$\scriptstyle \textup{-}5$};
\node[below] at (-4,-0.2){$\scriptstyle \textup{-}4$};
\node[below] at (-3,-0.2){$\scriptstyle \textup{-}3$};
\node[below] at (-2,-0.2){$\scriptstyle \textup{-}2$};
\node[below] at (-1,-0.2){$\scriptstyle \textup{-}1$};
\node[below] at (0,-0.2){$\scriptstyle 0$};
\node[below] at (1,-0.2){$\scriptstyle 1$};
\node[below] at (2,-0.2){$\scriptstyle 2$};
\node[below] at (3,-0.2){$\scriptstyle 3$};
\node[below] at (4,-0.2){$\scriptstyle 4$};
\node[below] at (5,-0.2){$\scriptstyle 5$};
\node[below] at (6,-0.2){$\scriptstyle 6$};
\node[below] at (7,-0.2){$\scriptstyle 7$};
\node[below] at (8,-0.2){$\scriptstyle 8$};
\node[below] at (9,-0.2){$\scriptstyle 9$};
\node[below] at (10,-0.2){$\scriptstyle 10$};
\node[below] at (11,-0.2){$\scriptstyle 11$};
\node[below] at (12,-0.2){$\scriptstyle 12$};
\node[below] at (13,-0.2){$\scriptstyle 13$};
\node[below] at (14,-0.2){$\scriptstyle 14$};
\end{tikzpicture}
}\\
{}
\B^2(\bzeta, \bkap) &=
\hackcenter{
\begin{tikzpicture}[scale=0.46]
\draw[thick, white] (0,0.8);
\draw[thick, lightgray, dotted] (14.5,0)--(15.5,0);
\draw[thick, black, dotted] (-18.5,0)--(-17.5,0);
\draw[thick, lightgray ] (-17.5,0)--(14.5,0);
\blackdot(14,0);
\blackdot(10,0);
\blackdot(4,0);
\blackdot(2,0);
\blackdot(-1,0);
\blackdot(-2,0);
\blackdot(-4,0);
\blackdot(-5,0);
\blackdot(-6,0);
\blackdot(-8,0);
\blackdot(-10,0);
\blackdot(-12,0);
\blackdot(-13,0);
\blackdot(-14,0);
\blackdot(-15,0);
\blackdot(-16,0);
\blackdot(-17,0);
\node[below] at (-17,-0.2){$\scriptstyle \textup{-}17$};
\node[below] at (-16,-0.2){$\scriptstyle \textup{-}16$};
\node[below] at (-15,-0.2){$\scriptstyle \textup{-}15$};
\node[below] at (-14,-0.2){$\scriptstyle \textup{-}14$};
\node[below] at (-13,-0.2){$\scriptstyle \textup{-}13$};
\node[below] at (-12,-0.2){$\scriptstyle \textup{-}12$};
\node[below] at (-11,-0.2){$\scriptstyle \textup{-}11$};
\node[below] at (-10,-0.2){$\scriptstyle \textup{-}10$};
\node[below] at (-9,-0.2){$\scriptstyle \textup{-}9$};
\node[below] at (-8,-0.2){$\scriptstyle \textup{-}8$};
\node[below] at (-7,-0.2){$\scriptstyle \textup{-}7$};
\node[below] at (-6,-0.2){$\scriptstyle \textup{-}6$};
\node[below] at (-5,-0.2){$\scriptstyle \textup{-}5$};
\node[below] at (-4,-0.2){$\scriptstyle \textup{-}4$};
\node[below] at (-3,-0.2){$\scriptstyle \textup{-}3$};
\node[below] at (-2,-0.2){$\scriptstyle \textup{-}2$};
\node[below] at (-1,-0.2){$\scriptstyle \textup{-}1$};
\node[below] at (0,-0.2){$\scriptstyle 0$};
\node[below] at (1,-0.2){$\scriptstyle 1$};
\node[below] at (2,-0.2){$\scriptstyle 2$};
\node[below] at (3,-0.2){$\scriptstyle 3$};
\node[below] at (4,-0.2){$\scriptstyle 4$};
\node[below] at (5,-0.2){$\scriptstyle 5$};
\node[below] at (6,-0.2){$\scriptstyle 6$};
\node[below] at (7,-0.2){$\scriptstyle 7$};
\node[below] at (8,-0.2){$\scriptstyle 8$};
\node[below] at (9,-0.2){$\scriptstyle 9$};
\node[below] at (10,-0.2){$\scriptstyle 10$};
\node[below] at (11,-0.2){$\scriptstyle 11$};
\node[below] at (12,-0.2){$\scriptstyle 12$};
\node[below] at (13,-0.2){$\scriptstyle 13$};
\node[below] at (14,-0.2){$\scriptstyle 14$};
\end{tikzpicture}
}
\end{align*}
It is straightforward to check from these \(\bkap\)-beta numbers that \(\blam\) and \(\bzeta\) are \((\bkap, \theta)\)-RoCK multipartitions. It also may be seen that \(\brho\) satisfies \cref{bigmultiprop}(iii) and thus is a \(\bkap\)-core, and so 
\begin{align*}
\{\brho\} = \Lambda^{\bkap}_+(24 \alpha_0 + 22 \alpha_1 + 29 \alpha_2 + 22 \alpha_3)
\end{align*}
is a \(\theta\)-RoCK block. 

On the other hand, take
\begin{align*}
\btau = ((13,12,11,10,9,6,5,4,3^3,1^2)\mid(15,8,7,6,4^2,3^3,2,1)),
\end{align*}
so the \(\bkap\)-beta numbers for \(\btau\) are:
\begin{align*}
{}
\B^1(\btau, \bkap) &=
\hackcenter{
\begin{tikzpicture}[scale=0.46]
\draw[thick, white] (0,0.8);
\draw[thick, lightgray, dotted] (14.5,0)--(15.5,0);
\draw[thick, black, dotted] (-18.5,0)--(-17.5,0);
\draw[thick, lightgray ] (-17.5,0)--(14.5,0);
\blackdot(14,0);
\blackdot(12,0);
\blackdot(10,0);
\blackdot(8,0);
\blackdot(6,0);
\blackdot(2,0);
\blackdot(0,0);
\blackdot(-2,0);
\blackdot(-4,0);
\blackdot(-5,0);
\blackdot(-6,0);
\blackdot(-9,0);
\blackdot(-10,0);
\blackdot(-12,0);
\blackdot(-13,0);
\blackdot(-14,0);
\blackdot(-15,0);
\blackdot(-16,0);
\blackdot(-17,0);
\node[below] at (-17,-0.2){$\scriptstyle \textup{-}17$};
\node[below] at (-16,-0.2){$\scriptstyle \textup{-}16$};
\node[below] at (-15,-0.2){$\scriptstyle \textup{-}15$};
\node[below] at (-14,-0.2){$\scriptstyle \textup{-}14$};
\node[below] at (-13,-0.2){$\scriptstyle \textup{-}13$};
\node[below] at (-12,-0.2){$\scriptstyle \textup{-}12$};
\node[below] at (-11,-0.2){$\scriptstyle \textup{-}11$};
\node[below] at (-10,-0.2){$\scriptstyle \textup{-}10$};
\node[below] at (-9,-0.2){$\scriptstyle \textup{-}9$};
\node[below] at (-8,-0.2){$\scriptstyle \textup{-}8$};
\node[below] at (-7,-0.2){$\scriptstyle \textup{-}7$};
\node[below] at (-6,-0.2){$\scriptstyle \textup{-}6$};
\node[below] at (-5,-0.2){$\scriptstyle \textup{-}5$};
\node[below] at (-4,-0.2){$\scriptstyle \textup{-}4$};
\node[below] at (-3,-0.2){$\scriptstyle \textup{-}3$};
\node[below] at (-2,-0.2){$\scriptstyle \textup{-}2$};
\node[below] at (-1,-0.2){$\scriptstyle \textup{-}1$};
\node[below] at (0,-0.2){$\scriptstyle 0$};
\node[below] at (1,-0.2){$\scriptstyle 1$};
\node[below] at (2,-0.2){$\scriptstyle 2$};
\node[below] at (3,-0.2){$\scriptstyle 3$};
\node[below] at (4,-0.2){$\scriptstyle 4$};
\node[below] at (5,-0.2){$\scriptstyle 5$};
\node[below] at (6,-0.2){$\scriptstyle 6$};
\node[below] at (7,-0.2){$\scriptstyle 7$};
\node[below] at (8,-0.2){$\scriptstyle 8$};
\node[below] at (9,-0.2){$\scriptstyle 9$};
\node[below] at (10,-0.2){$\scriptstyle 10$};
\node[below] at (11,-0.2){$\scriptstyle 11$};
\node[below] at (12,-0.2){$\scriptstyle 12$};
\node[below] at (13,-0.2){$\scriptstyle 13$};
\node[below] at (14,-0.2){$\scriptstyle 14$};
\end{tikzpicture}
}\\
{}
\B^2(\btau, \bkap) &=
\hackcenter{
\begin{tikzpicture}[scale=0.46]
\draw[thick, white] (0,0.8);
\draw[thick, lightgray, dotted] (14.5,0)--(15.5,0);
\draw[thick, black, dotted] (-18.5,0)--(-17.5,0);
\draw[thick, lightgray ] (-17.5,0)--(14.5,0);
\blackdot(14,0);
\blackdot(6,0);
\blackdot(4,0);
\blackdot(2,0);
\blackdot(-1,0);
\blackdot(-2,0);
\blackdot(-4,0);
\blackdot(-5,0);
\blackdot(-6,0);
\blackdot(-8,0);
\blackdot(-10,0);
\blackdot(-12,0);
\blackdot(-13,0);
\blackdot(-14,0);
\blackdot(-15,0);
\blackdot(-16,0);
\blackdot(-17,0);
\node[below] at (-17,-0.2){$\scriptstyle \textup{-}17$};
\node[below] at (-16,-0.2){$\scriptstyle \textup{-}16$};
\node[below] at (-15,-0.2){$\scriptstyle \textup{-}15$};
\node[below] at (-14,-0.2){$\scriptstyle \textup{-}14$};
\node[below] at (-13,-0.2){$\scriptstyle \textup{-}13$};
\node[below] at (-12,-0.2){$\scriptstyle \textup{-}12$};
\node[below] at (-11,-0.2){$\scriptstyle \textup{-}11$};
\node[below] at (-10,-0.2){$\scriptstyle \textup{-}10$};
\node[below] at (-9,-0.2){$\scriptstyle \textup{-}9$};
\node[below] at (-8,-0.2){$\scriptstyle \textup{-}8$};
\node[below] at (-7,-0.2){$\scriptstyle \textup{-}7$};
\node[below] at (-6,-0.2){$\scriptstyle \textup{-}6$};
\node[below] at (-5,-0.2){$\scriptstyle \textup{-}5$};
\node[below] at (-4,-0.2){$\scriptstyle \textup{-}4$};
\node[below] at (-3,-0.2){$\scriptstyle \textup{-}3$};
\node[below] at (-2,-0.2){$\scriptstyle \textup{-}2$};
\node[below] at (-1,-0.2){$\scriptstyle \textup{-}1$};
\node[below] at (0,-0.2){$\scriptstyle 0$};
\node[below] at (1,-0.2){$\scriptstyle 1$};
\node[below] at (2,-0.2){$\scriptstyle 2$};
\node[below] at (3,-0.2){$\scriptstyle 3$};
\node[below] at (4,-0.2){$\scriptstyle 4$};
\node[below] at (5,-0.2){$\scriptstyle 5$};
\node[below] at (6,-0.2){$\scriptstyle 6$};
\node[below] at (7,-0.2){$\scriptstyle 7$};
\node[below] at (8,-0.2){$\scriptstyle 8$};
\node[below] at (9,-0.2){$\scriptstyle 9$};
\node[below] at (10,-0.2){$\scriptstyle 10$};
\node[below] at (11,-0.2){$\scriptstyle 11$};
\node[below] at (12,-0.2){$\scriptstyle 12$};
\node[below] at (13,-0.2){$\scriptstyle 13$};
\node[below] at (14,-0.2){$\scriptstyle 14$};
\end{tikzpicture}
}
\end{align*}
Note that \(-5 \in \B^1(\btau, \bkap)\), and \(-8 \notin \B^1(\btau, \bkap)\). Since \(\overline{-5} = 3 = \theta_2\) and \(\overline{-8} = 0 = \theta_3\), \(\btau\) fails the condition of \cref{specialdefs}(iii) and hence is not a \((\bkap, \theta)\)-RoCK multipartition. Noting that 
\begin{align*}
\bzeta, \btau \in \Lambda^{\bkap}_+(34 \alpha_0 + 32 \alpha_1 + 39 \alpha_2 + 32 \alpha_3),
\end{align*}
it follows that this block is {\em not} \(\theta\)-RoCK, although it does contain the \((\bkap, \theta)\)-RoCK multipartition \(\bzeta\).
\end{Example}

\subsection{RoCK blocks}\label{subsec:rockblocks}
We now explain the connection between the RoCK blocks defined above, and the RoCK/Rouquier blocks defined in the literature.
It is straightforward to check that, if \(\theta = (0,1,\dots, e-1)\) is the trivial residue permutation, then the condition of a block \(\Lambda^{\bkap}_+(\omega)\) (or equivalently, the associated cyclotomic KLR algebra) being a \(\theta\)-RoCK block is equivalent to \(\Lambda^{\bkap}_+(\omega)\) being {\em Rouquier} in the sense of \cite{CK02} (in level one), and \cite{Lyle22} (in higher levels).
Webster \cite{websterScopes} has explained that, for arbitrary levels, Rouquier blocks belong to a large Scopes equivalence class (see \cite{scopes, d'ascopes}) of blocks called {\em RoCK blocks}.
Below we briefly explain that these RoCK blocks (in the sense of Webster) are exactly our RoCK blocks (in the sense of Definition~\ref{specialdefs}), justifying our use of the moniker, see \cref{RoCKthetaeq}.

\begin{Definition}\label{specpars}
Let \(\blam \in \Lambda_+^{\bkap}\), and \(i \in \ZZ_e\). We define a new multipartition \(\Psi_i \blam \in \Lambda_+^{\bkap}\) by setting:
\begin{align*}
\B^r_j(\Psi_i \blam, \bkap) =
\begin{cases}
\B^r_j( \blam, \bkap) & \textup{if }j \neq i, i+1;\\
\B^r_{i+1}( \blam, \bkap) -1 &\textup{if } j=i;\\
\B^r_{i}( \blam, \bkap) +1 &\textup{if } j=i+1,
\end{cases}
\end{align*}
for all \(r \in [1,\ell]\), \(j \in \ZZ_e\).
Here, by \(\B^r_{i+1}( \blam, \bkap) -1\), we mean the set of beta numbers obtained from \(\B^r_{i+1}( \blam, \bkap)\) by subtracting $1$ from each element, and similarly for \(\B^r_{i}( \blam, \bkap) +1\).
In effect, \(\Psi_i \blam\) is achieved by removing all removable nodes of residue \(i+1\) from \(\blam\), whilst adding all addable nodes of residue \(i+1\). We note that \(\Psi_i \blam\) does not necessarily have the same content as \(\blam\).
\end{Definition}

The following result follows from a \(\bkap\)-beta number reinterpretation of \cite[Lemma 3.2, \S4.2.1]{websterScopes}.

\begin{Proposition}\label{Psiblock}
Let \(\Lambda^{\bkap}_+(\omega)\) be a block, and assume that \(i \in \ZZ_e\) is such that for all \(\blam \in \Lambda^{\bkap}_+(\omega)\) and \(r \in [1,\ell]\), \(x \in \B^r_{i+1}(\blam, \bkap)\) implies \(x-1 \in \B^r_{i}(\blam, \bkap)\). Then
\begin{align*}
\Psi_i \Lambda^{\bkap}_+(\omega) = \{ \Psi_i \blam \mid \blam \in \Lambda^{\bkap}_+(\omega)\} = \Lambda^{\bkap}_+(\omega') 
\end{align*}
for some block \(\Lambda^{\bkap}_+(\omega') \).
\end{Proposition}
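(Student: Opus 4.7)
The plan is to verify, as a beta-number reformulation of the Scopes-type bijection of \cite[Lemma~3.2, \S4.2.1]{websterScopes}, that (a) $\Psi_i\blam$ is a well-defined $\ell$-multipartition; (b) $\cont(\Psi_i\blam)$ depends only on $\omega$; and (c) $\Psi_i$ is a bijection onto the claimed block $\Lambda^{\bkap}_+(\omega')$.

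In the abacus picture of \S\ref{cdefs1}, $\Psi_i$ exchanges runners $i$ and $i+1$ while preserving row indices. The hypothesis $x \in \B^r_{i+1}(\blam,\bkap) \Rightarrow x-1 \in \B^r_i(\blam,\bkap)$ says every runner-$(i+1)$ bead sits immediately above a runner-$i$ bead; equivalently (see \S\ref{remribsexp}), no $\blam \in \Lambda^{\bkap}_+(\omega)$ admits a removable $(i+1)$-node. This ensures that the swapped beta set $\B^r(\Psi_i\blam,\bkap)$ is bounded above and has full negative tail, so $\Psi_i\blam$ is a genuine $\ell$-multipartition of multicharge $\bkap$, establishing~(a). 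For~(b), the Fock space action of \S\ref{slsec} translates the hypothesis into $e_{\alpha_{i+1}} \cdot \CC\Lambda^{\bkap}_+(\omega) = 0$, making $\CC\Lambda^{\bkap}_+(\omega)$ the highest-weight space of an $\mathfrak{sl}_2$-string for the triple $\{e_{\alpha_{i+1}}, f_{\alpha_{i+1}}, \alpha_{i+1}^\vee\}$ of weight $m := (\Lambda - \omega)(\alpha_{i+1}^\vee) \in \ZZ_{\geq 0}$. A direct bead-level calculation shows the runner swap always produces a multipartition in the block $\Lambda^{\bkap}_+(\omega')$ with $\omega' := \omega + m\alpha_{i+1}$, a value determined by $\omega$ alone.

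Finally for~(c), $\Psi_i$ is clearly an involution on the set of all multipartitions of multicharge $\bkap$ (two runner swaps cancel), so it restricts to an injection $\Lambda^{\bkap}_+(\omega) \hookrightarrow \Lambda^{\bkap}_+(\omega')$. Surjectivity reduces to showing that every $\bmu \in \Lambda^{\bkap}_+(\omega')$ satisfies the dual condition $x \in \B^r_i(\bmu,\bkap) \Rightarrow x+1 \in \B^r_{i+1}(\bmu,\bkap)$ (no addable $(i+1)$-node), so that $\Psi_i\bmu$ is well-defined and, by~(b) applied with roles reversed, returns to $\Lambda^{\bkap}_+(\omega)$; then $\bmu = \Psi_i(\Psi_i\bmu)$ lies in the image. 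This dual block-level condition is the classical ``Scopes partner'' of the original hypothesis, which can be verified either by an explicit abacus argument or by noting that $\Lambda^{\bkap}_+(\omega')$ sits at the bottom of the $\mathfrak{sl}_2$-submodule of Fock space generated by $\CC\Lambda^{\bkap}_+(\omega)$, where $f_{\alpha_{i+1}}$ vanishes. The main technical obstacle is the content calculation underlying~(b), which requires tracking how the runner swap affects the residues of the constituent nodes; once this is in hand, the remainder of the argument is formal, resting on the involutivity of $\Psi_i$ and the $\mathfrak{sl}_2$-theory attached to $\alpha_{i+1}$.
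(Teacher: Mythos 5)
Your argument is correct, and it is essentially a self-contained write-up of the argument the paper outsources: the paper offers no proof of its own here beyond citing \cite[Lemma 3.2, \S4.2.1]{websterScopes}, whose content is exactly the $\mathfrak{sl}_2$-string/wall-crossing mechanism you describe. Two small points. First, step (a) needs no hypothesis at all: for any $\blam$, each residue class of the swapped set is still bounded above and cofinite below, and a bead move from $x$ to $x\pm 1$ never changes the charge, so $\Psi_i\blam$ is always a multipartition of multicharge $\bkap$ (consistent with the paper defining $\Psi_i$ unconditionally in \cref{specpars}). Second, the one step you should make explicit is why $\CC\Lambda^{\bkap}_+(\omega')$ is \emph{exactly} the bottom of the $\mathfrak{sl}_2$-submodule generated by $\CC\Lambda^{\bkap}_+(\omega)$ rather than merely containing it: since the Fock space is integrable, Weyl-group symmetry of weight multiplicities gives $\dim F_{\Lambda-\omega-m\alpha_{i+1}}=\dim F_{s_{\alpha_{i+1}}(\Lambda-\omega)}=\dim F_{\Lambda-\omega}$, and $f_{i+1}^{m}$ is injective on a highest-weight space of weight $m$, so $f_{i+1}^{m}\CC\Lambda^{\bkap}_+(\omega)=F_{\Lambda-\omega'}$ and hence $f_{i+1}$ kills the whole target weight space; as $f_{i+1}\bmu$ is a sum of distinct basis vectors with coefficient $+1$ (one per addable $(i+1)$-node), this forces the dual ``no addable $(i+1)$-node'' condition and your surjectivity argument closes. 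With that inserted, the proof is complete.
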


We write \(\Lambda^{\bkap}_+(\omega) \sim_i \Lambda^{\bkap}_+(\omega')\) to indicate the relationship between blocks under $\Psi_i$ described in \cref{Psiblock}. In view of \S\ref{remribsexp}, the hypotheses of \cref{Psiblock} apply precisely when there are no removable nodes of residue \(i+1\) in any multipartition in the block \(\Lambda^{\bkap}_+(\omega)\).

\begin{Lemma}\label{transtheta}
Suppose \(\omega, \omega' \in  \ZZ_{\geq 0}I\) and \(\Lambda^{\bkap}_+(\omega) \sim_i \Lambda^{\bkap}_+(\omega')\). Let \(\theta\) be a residue permutation such that \(\theta_a = i\) and \(\theta_b = i+1\). Let \((a,b) \in \mathfrak{S}_e\). 
Then:
\begin{enumerate}
\item If \(\Lambda^{\bkap}_+(\omega)\) is \(\theta\)-RoCK and \(a<b\), then \(\Lambda^{\bkap}_+(\omega)\) is \(((a,b)\cdot \theta)\)-RoCK as well.
\item If \(\Lambda^{\bkap}_+(\omega)\) is \(\theta\)-RoCK and \(a>b\), then \(\Lambda^{\bkap}_+(\omega')\) is \(((a,b)\cdot \theta)\)-RoCK.
\item If \(\Lambda^{\bkap}_+(\omega')\) is \(\theta\)-RoCK, then \(\Lambda^{\bkap}_+(\omega)\) is \(((a,b)\cdot \theta)\)-RoCK.
\end{enumerate}
\end{Lemma}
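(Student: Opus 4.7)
The plan is to establish all three claims by case analysis on which runners appear in a hypothetical pair $(x, y)$, with $x$ empty, $y$ a bead, and $x < y$, witnessing a failure of the relevant RoCK condition. Throughout, I will use the description of $\Psi_i$ from \cref{specpars}: writing $\bmu := \Psi_i\blam$ for $\blam \in \Lambda^{\bkap}_+(\omega)$, beta numbers on runners outside $\{i, i+1\}$ agree in $\blam$ and $\bmu$, while $\B^r_i(\bmu) = \B^r_{i+1}(\blam) - 1$ and $\B^r_{i+1}(\bmu) = \B^r_i(\blam) + 1$.

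For (ii) and (iii), the approach will be to translate a hypothetical violating pair in the target block to its counterpart in the source block via these identities, shifting the positions on runners $i, i+1$ by $\pm 1$. The case analysis splits by how many of the runners of $x$ and $y$ lie in $\{i, i+1\}$: if neither, the two RoCK conditions agree and the claim is immediate; if exactly one, the $\pm 1$ shift preserves the strict inequality $x < y$ because the other runner's residue differs from $i$ and $i+1$, and the RoCK condition transfers intact; if both, one direction is immediate from the assumption on $a$ versus $b$ and the other translates to a $\theta$-RoCK violation in the source block, yielding a contradiction. The hypothesis of \cref{Psiblock} will be invoked in (iii) for the last sub-case when $a < b$, to guarantee that the shifted positions $x + 1$ and $y - 1$ remain strictly ordered.

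Part (i) is the most delicate, since both $\theta$-RoCK and $(a, b)\theta$-RoCK are asserted of the same block. In general, passing from $\theta$ to $(a, b)\theta$ reverses the RoCK condition on the pair $\{i, i+1\}$ as well as on each pair $\{i, j\}$ and $\{i+1, j\}$ with $j = \theta_c$ for $a < c < b$. For the $\{i, i+1\}$ pair, I will argue by contradiction via iteration: given bead $y$ on runner $i+1$ and empty $x < y$ on runner $i$, \cref{Psiblock}'s hypothesis produces a bead at $y - 1$ on runner $i$, and if $y - e$ were empty on runner $i+1$ then the $\theta$-RoCK condition on $\{i, i+1\}$ (with $a < b$) would be violated by the pair consisting of the bead at $y - 1$ on runner $i$ and the empty at $y - e$ on runner $i+1$; hence $y - e$ is a bead on runner $i+1$, and iterating, every $y - 1 - ke$ for $k \geq 0$ is a bead on runner $i$, contradicting the existence of $x$, which by residue arithmetic must itself be of that form. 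For the mixed pairs $\{i, j\}$ and $\{i+1, j\}$ with $a < c < b$, a shorter single-step argument using $\theta$-RoCK on the pair $\{j, i+1\}$ or $\{i, j\}$ together with \cref{Psiblock}'s hypothesis will suffice, without iteration.

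The hard part will be executing the iterative argument in part (i) cleanly, particularly tracking residue arithmetic modulo $e$ consistently across the iterative step; the arguments for (ii), (iii), and the mixed-pair sub-cases of (i) are essentially routine bookkeeping once the translation via $\Psi_i$ is set up.
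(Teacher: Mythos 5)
Your proposal is correct, and for parts (ii) and (iii) it is essentially the paper's own argument: translate a hypothetical violating pair through the $\pm1$ shifts defining $\Psi_i$, note that when exactly one residue lies in $\{i,i+1\}$ the shifted pair stays strictly ordered because the residues differ, and in the both-runners sub-case of (iii) with $a<b$ use the hypothesis of \cref{Psiblock} to exclude $x=y-1$ (your handling of the $a>b$ ordering there directly is in fact slightly cleaner than the paper's preliminary reduction to $a<b$). The only genuine divergence is in part (i): the paper first proves a global structural fact — under the standing hypotheses, runners $i$ and $i+1$ of every $\blam$ are gapless below their maxima and $M^r_i\in\{M^r_{i+1}-1,\,M^r_{i+1}+e-1\}$ — and then runs a five-case analysis against that, whereas you correctly identify exactly which residue pairs have their RoCK constraint reversed by the transposition ($\{i,i+1\}$ and the mixed pairs with $a<c<b$) and rule each out locally; your iterative argument for the $\{i,i+1\}$ pair is precisely a local re-derivation of the paper's gaplessness statement restricted to positions below $y$, and your single-step arguments for the mixed pairs correspond to the paper's cases (3) and (5). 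Both routes rest on the same two facts ($\theta$-RoCK forbids a gap on runner $i+1$ below a bead on runner $i$ when $a<b$; the hypothesis of \cref{Psiblock} places a bead at $p-1$ on runner $i$ for every bead at $p$ on runner $i+1$), so the difference is one of organization rather than substance: the paper's structural lemma is reusable, while your version keeps everything inside the contradiction and avoids stating it.
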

\begin{proof}
It is straightforward, albeit tedious, to check that these implications follow from \cref{specpars}(iii) and the hypotheses of \cref{Psiblock}. Full details can be found in the \texttt{arXiv} version of the paper as explained in \S\ref{SS:ArxivVersion}.
\end{proof}

\begin{answer}
Details: \\

(i)
Throughout the proof, we suppress \((\blam, \bkap)\) in our notation for beta numbers and \(M^r_i(\blam, \bkap)\) whenever it is clear from context.
Assume \(\Lambda^{\bkap}_+(\omega)\) is \(\theta\)-RoCK, and \(a<b\). Set \(\theta' = (a,b) \cdot \theta\). We now show that \(\Lambda^{\bkap}_+(\omega)\) is \(\theta'\)-RoCK as well.
Then \(\theta'_a = \theta_b\), \(\theta'_b = \theta_a\), and \(\theta_c = \theta'_c\) for all \(c \neq a,b\).
Assume by way of contradiction that there exist \(\la \in \Lambda^{\bkap}_+(\omega)\), \(y \in \B^r\), and \(x \notin \B^r\), where \(x<y\), \(\bar x = \theta'_z\), and \(\bar y = \theta'_w\), with \(z>w\). Since \( \Lambda^{\bkap}_+(\omega)\) is \(\theta\)-RoCK and \(\theta_c = \theta'_c\) for all \(c \neq a,b\), it must be that \(\{z,w\} \cap \{a,b\} \neq \varnothing\). 
We note that it follows from the \(\theta\)-RoCK conditions and the hypotheses of \cref{Psiblock} that 
\begin{align}\label{Mstuff1}
M^r_i & \in \{M^r_{i+1} - 1, M^r_{i+1} + e -1\}, \qquad
\B^r_i = M_r^i - e\ZZ_{\geq 0} \qquad
\B^r_{i+1} = M^r_{i+1} - e\ZZ_{\geq 0}.
\end{align}

There are only a few possibilities to consider.

(1) Say \(z=b, a = w\). Then \(\bar x = \theta'_z = \theta'_b = \theta_a\) and \(\bar y = \theta'_w = \theta'_a = \theta_b\). Then \(M_i^r < x <y \leq M_{i+1}^r\), so \(M_i^r < M_{i+1}^r - 1\), a contradiction of (\ref{Mstuff1}).

(2) Say \(z= a\), \(w \neq b\). Then \(\bar x = \theta'_z = \theta'_a = \theta_b\), and \(\bar y = \theta'_w = \theta_w\) and \(b>a>w\). This contradicts the fact that \(\Lambda_+^{\bkap}(\omega)\) is \(\theta\)-RoCK.

(3) Say \(z = b\), \(w \neq a\). Then \(\bar x = \theta'_z = \theta'_b = \theta_a\), and \(\bar y = \theta'_w = \theta_w\). Since \(x \notin \B^r_i\), we know \(x-e \geq M^r_i\). Since \(b=z>w\), it must be that \(t \in \B_{i+1}^r\) for all \(\overline{t} = i+1\) with \(t<y\). But then \(x+1 \in \B^r_{i+1}\), so \(M^r_{i+1} \geq x+1 \geq M_i^r + e + 1\). Then \(M_i^r \leq M^r_{i+1} - e - 1\), which contradicts (\ref{Mstuff1}).

(4) Say \(z \neq a\), \(w = b\). Then \(\bar x = \theta'_z = \theta_z\) and \(\bar y = \theta'_w = \theta'_b = \theta_a\). Since \(z>w = b > a\), this contradicts the fact that \(\Lambda_+^{\bkap}(\omega)\) is \(\theta\)-RoCK.

(5) Say \(z \neq b\), \(w = a\). Then \(z>w = a\). Then \(\bar x = \theta'_z = \theta_z\) and \(\bar y = \theta'_w = \theta'_a = \theta_b\). Since \(y \in \B^r_{i+1}\), we know \(M_{i+1}^r \geq y\). Since \(z>a\), it must be that \(t \notin \B^r_i\) for all \( \bar t = i\), \(t>x\). Then \(M_i^r < x < y \leq M^r_{i+1}\). Then \(M_i^r < M_{i+1}^r - 1\), which contradicts (\ref{Mstuff1}).

Thus, in any case, we have that \(\Lambda^{\bkap}_+(\omega)\) is \(\theta'\)-RoCK.

(ii) Now assume \(\Lambda^{\bkap}_+(\omega)\) is \(\theta\)-RoCK, \(\theta_a = i\), \(\theta_b = i+1\), and \(b < a\). We show that \(\Psi_i \Lambda^{\bkap}_+(\omega)\) is \(((a, b) \cdot \theta)\)-RoCK. Take \(\theta' = (a, b) \cdot \theta\). Then \(\theta'_a = \theta_b = i+1\) and \(\theta'_b = \theta_a = i\). Consider some \(\bmu = \Psi_i \blam\). Assume by way of contradiction that there exist \(y \in \B^r(\bmu, \bkap)\) and \(x \notin \B^r(\bmu,\bkap)\), where \(\bar x = \theta'_z\) and \(\bar y = \theta'_w\), with \(z>w\). Again, it must be that \(\{z,w\} \cap \{a,b\} \neq \varnothing\).

Cases to consider:

(1) Say \(z=a\), \(w = b\). Then \(\bar x = \theta'_a = \theta_b = i+1\) and \(\bar y = \theta'_b = \theta_a = i\). Then \(x=x'+1\) and \(y = y'-1\) for some \(x' \notin \B^r(\blam,\bkap)\) and \(y' \in \B^r(\blam,\bkap)\). Then \(\overline {x'} = i = \theta_a\) and \(\overline {y'} = i+1 = \theta_b\). But \(x'<y'\), which contradicts that \(\Lambda^{\bkap}_+(\omega)\) is \(\theta\)-RoCK.

(2) Say \(z=a\), \(w \neq b\). Then \(\bar x = \theta'_a = \theta_b = i+1\) and \(\bar y = \theta'_w = \theta_w\). Then \(a=z>w\). We have \(x = x' +1\) for some \(x' \notin \B^r(\blam,\bkap)\) with \(\overline {x'} = i = \theta_a\). But since \(x'<y \in \B^r(\blam,\bkap)\), this contradicts that \(\Lambda^{\bkap}_+(\omega)\) is \(\theta\)-RoCK.

(3) Say \(z = b\), \(w \neq a\). Then \(a> b = z>w \). Then \(\bar x = \theta'_b = \theta_a = i\) and \(\bar y = \theta'_w = \theta_w \neq i, i+1\). Note then that \(x + 1 < y\). We have \(x = x' -1\) for some \(x' \notin \B^r(\blam,\bkap)\) with \(\overline{x'} = i+1 = \theta_b\). But since \(x'<y \in \B^r(\blam,\bkap)\), this contradicts that \(\Lambda^{\bkap}_+(\omega)\) is \(\theta\)-RoCK.

(4) Say \(z \neq a\), \(w = b\). Then \(z> w = b\). Then \(\bar x = \theta'_z = \theta_z\) and \(\bar y = \theta'_w = \theta'_b = \theta_a = i\). Then \(y = y'-1\) for some \(y' \in \B^r(\blam)\), where \(\overline{y'} = i+1 = \theta_b\). Then \( y' > x \notin \B^r(\blam,\bkap)\), which contradicts that \(\Lambda^{\bkap}_+(\omega)\) is \(\theta\)-RoCK.
 
(5) Say \(z \neq b\), \(w =a\). Then \(z>w = a>b\). Then \(\bar x = \theta'_z = \theta_z \neq i, i+1\) and \(\bar y = \theta'_w = \theta'_a = \theta_b = i+1\).  Note then that \(x+1< y\). Then \(y = y'+1\) for some \(y' \in \B^r(\blam,\bkap)\), where \(\overline{y'} = i = \theta_a\). But \(  y' > x \notin \B^r(\blam,\bkap) \), which contradicts that \(\Lambda^{\bkap}_+(\omega)\) is \(\theta\)-RoCK.
 
Therefore \(\Lambda^{\bkap}_+(\omega')\) is \(\theta'\)-RoCK, completing the proof.

 (iii) Assume that \(\Lambda^{\bkap}_+(\omega')\) is \(\theta\)-RoCK. 
If \(\Psi_i \blam \neq \blam\) for some \(\blam \in \Lambda_+^{\bkap}(\omega)\), then there is some \(x \in \B^r_i(\blam, \bkap)\) such that \(x +1 \notin \B^r_{i+1}(\blam, \bkap)\). Therefore \(x \notin \B^r_i(\Psi_i \blam, \bkap)\) and \(x+1 \in \B^r_{i+1}(\Psi_i \blam, \bkap)\). It follows then, since \(\Lambda^{\bkap}_+(\omega')\) is \(\theta\)-RoCK, that \(a<b\). On the other hand, if \( \Psi_i \blam = \blam\) for all \(\blam \in \Lambda_+^{\bkap}(\omega)\), it follows that \(\Lambda^{\bkap}_+(\omega) = \Psi_i \Lambda_+^{\bkap}(\omega) = \Lambda^{\bkap}_+(\omega')\) is \(\theta\)-RoCK. If \(a>b\), we have then that \(\Lambda^{\bkap}_+(\omega)\) is \((a,b) \cdot \theta\)-RoCK as well (following the argument in the first part).

Therefore we may assume that \(a<b\).
We show that \(\Lambda_+^{\bkap}(\omega)\) is \(\theta'\)-RoCK.
 
Take \(\theta' = (a, b) \cdot \theta\). Then \(\theta'_a = \theta_b = i+1\) and \(\theta'_b = \theta_a = i\). Consider some \(\blam \in \Lambda^{\bkap}_+(\omega)\). Assume by way of contradiction that there exist \(x<y\) with \(y \in \B^r(\blam,\bkap)\) and \(x \notin \B^r(\blam,\bkap)\), where \(\bar x = \theta'_z\) and \(\bar y = \theta'_w\), with \(z>w\). Because \(\Psi_i \blam\) is \((\bkap, \theta)\)-RoCK, it must be that \(\{z,w\} \cap \{a,b\} \neq \varnothing\).
 
 We consider cases:
 
 (a) Say \(z=b\), \(w = a\). Then \(\bar x = \theta'_z = \theta'_b = \theta_a = i\) and \(\bar y = \theta'_w =\theta'_a = \theta_b = i+1\). Then there exists some \(y' \in \B_{i+1}^r(\blam, \bkap)\), \(x' \notin \B_i^r(\blam, \bkap)\) such that \(c=y'-x' >0\) is minimal. If \(y' \neq x'+1\), this implies that \(x'+1 \notin \B^r_{i+1}(\Psi_i\blam, \bkap)\), and \(y'-1 \in \B^r_i(\Psi_i\blam, \bkap)\), with \(y'-1> x'+1\). Since \(\overline{y' -1} = i = \theta_a\) and \(\overline{x+1} = i+1 = \theta_b\), this contradicts that \(\Psi_i \blam\) is \((\bkap, \theta)\)-RoCK. On the other hand, if \(y' = x'+1\), we have \(y' \in \B^r_{i+1}(\blam, \bkap)\) yet \(y' -1 \notin \B_i^r(\blam, \bkap)\), which contradicts the hypotheses of \cref{Psiblock}.
 
 (b) Say \(z=a\), \(w \neq b\). Then \(\bar x = \theta'_a = \theta_b = i+1\) and \(\bar y = \theta'_w = \theta_w \neq i, i+1\). Then \(b>a=z>w\). We have \(x' = x-1 \notin \B^r_i(\Psi_i \blam, \bkap)\) and \(y \in \B^r(\Psi_i \blam, \bkap)\) with \(x'<y\), and \(\overline{x'} = \theta_a\), \(\bar y = \theta_w\). This contradicts that \(\Psi_i \blam\) is \((\bkap, \theta)\)-RoCK.
 
 (c) Say \(z = b\), \(w \neq a\). Then \(b = z>w \). Then \(\bar x = \theta'_b = \theta_a = i\) and \(\bar y = \theta'_w = \theta_w \neq i, i+1\). Then \(x' = x+1 \notin \B^r_{i+1}(\Psi_i \blam, \bkap)\) and \(y \in \B^r(\Psi_i \blam, \bkap)\) with \(x'<y\), and \(\overline{x'} = \theta_b\), \(\bar y = \theta_w\). This contradicts that \(\Psi_i \blam\) is \((\bkap, \theta)\)-RoCK.
 
  (d) Say \(z \neq a\), \(w = b\). Then \(z> w = b\). Then \(\bar x = \theta'_z = \theta_z\) and \(\bar y = \theta'_w = \theta'_b = \theta_a = i\). Then \(y' = y+1 \in \B^r_{i+1}(\Psi_i \blam, \bkap)\) and \(x \notin \B^r(\Psi_i \blam, \bkap)\) with \(x<y'\) and \(\bar x = \theta'_z = \theta_z\) and \(\overline{y'} = i+1 = \theta_b\). This contradicts that \(\Psi_i \blam\) is \((\bkap, \theta)\)-RoCK.
  
   (e) Say \(z \neq b\), \(w =a\). Then \(z>w = a\). Then \(\bar x = \theta'_z = \theta_z \neq i, i+1\) and \(\bar y = \theta'_w = \theta'_a = \theta_b = i+1\).  Then \(y' = y-1 \in \B^r_{i}(\Psi_i \blam, \bkap)\) and  \(x \notin \B^r(\Psi_i \blam, \bkap)\) with \(x < y'\) and \(\bar x = \theta'_z = \theta_z\) and \(\overline{y'} = i = \theta_a\). This contradicts that \(\Psi_i \blam\) is \((\bkap, \theta)\)-RoCK. 
   
  As all cases lead to contradiction, it must be that no such \(x,y\) exist, and therefore \(\Lambda_+^{\bkap}(\omega)\) is \(\theta'\)-RoCK.
   
   \qed
   \\
\end{answer}

The operator \(\Psi_i\) defines a Morita equivalence between cyclotomic KLR algebras associated with these combinatorial blocks, see \cite{websterScopes}. The equivalence relation on blocks generated by the relations \(\{\sim_i \mid i \in \ZZ_e\}\) is called {\em Scopes equivalence}. 
It is explained in \cite[\S4.2.1, Proposition~4.6]{websterScopes} that a block \(\Lambda^{\bkap}_+(\omega)\) is RoCK (in Webster's sense) if and only if it is Scopes equivalent to a Rouquier block. Thus, in view of \cref{transtheta} and the fact that Rouquier blocks are exactly the \((0,1,\dots, e-1)\)-RoCK blocks, we have the following.

\begin{Corollary}\label{RoCKthetaeq}
A block \(\Lambda_+^{\bkap}(\omega)\) is Scopes equivalent to a Rouquier block if and only if \(\Lambda_+^{\bkap}(\omega)\) is \(\theta\)-RoCK for some residue permutation \(\theta\), that is, if and only if it is a RoCK block in the sense of \cref{specialdefs}.
\end{Corollary}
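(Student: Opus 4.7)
The plan is to prove the equivalence (Scopes equivalent to a Rouquier block $\Leftrightarrow$ $\theta$-RoCK for some residue permutation $\theta$) by tracking residue permutations through Scopes equivalence chains using \cref{transtheta}; the second equivalence in the corollary is immediate from the definition. The key initial observation is that a Rouquier block is exactly a $(0, 1, \ldots, e-1)$-RoCK block in the sense of \cref{specialdefs}, as noted just before the corollary statement.

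For the forward direction, I would induct on the length of a Scopes equivalence chain from $\Lambda^{\bkap}_+(\omega)$ to a Rouquier block $R$, with base case the fact that $R$ is $(0, \ldots, e-1)$-RoCK. For the inductive step, suppose a block $B$ in the chain is $\theta$-RoCK and is adjacent under $\sim_i$ to a block $B'$, and set $\theta_a = i$, $\theta_b = i+1$. If $B \sim_i B'$ and $a > b$, then \cref{transtheta}(ii) gives that $B'$ is $((a,b) \cdot \theta)$-RoCK. If $B \sim_i B'$ and $a < b$, first apply \cref{transtheta}(i) to see that $B$ is $((a,b) \cdot \theta)$-RoCK (in which the residues $i, i+1$ now sit at positions $b, a$ respectively, with the relative order reversed), and then apply (ii) to conclude $B'$ is $\theta$-RoCK. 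If instead $B' \sim_i B$, apply \cref{transtheta}(iii) to obtain that $B'$ is $((a,b) \cdot \theta)$-RoCK. In all cases $B'$ is $\theta''$-RoCK for some residue permutation $\theta''$, completing the induction.

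For the reverse direction, suppose $\Lambda^{\bkap}_+(\omega)$ is $\theta$-RoCK. I would perform a bubble sort on $\theta$, with each swap of adjacent residues $i, i+1$ corresponding to a single Scopes equivalence step, until reaching $\theta = (0, 1, \ldots, e-1)$. If $\theta$ is not identity, there exists some $i$ with $\theta_a = i$, $\theta_b = i+1$, and $a > b$. The $\theta$-RoCK property forces the hypothesis of \cref{Psiblock}: if $x \in \B^r_{i+1}(\blam, \bkap)$ for some $\blam$ in the block, taking $y := x$ and $x' := x-1$ with $\overline{y} = i+1 = \theta_b$ and $\overline{x'} = i = \theta_a$ would, in the case $x - 1 \notin \B^r(\blam, \bkap)$, contradict the defining RoCK inequality $a \leq b$, so $x - 1 \in \B^r_i$. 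Hence \cref{Psiblock} produces $\Lambda^{\bkap}_+(\omega')$ with $\Lambda^{\bkap}_+(\omega) \sim_i \Lambda^{\bkap}_+(\omega')$, and by \cref{transtheta}(ii) the new block is $((a,b) \cdot \theta)$-RoCK, a permutation with strictly fewer inversions than $\theta$. Iterating terminates in finitely many steps at a $(0, 1, \ldots, e-1)$-RoCK block, i.e.\ a Rouquier block, establishing the desired Scopes equivalence. The main technical point is the verification that the hypothesis of \cref{Psiblock} holds at each bubble-sort step, which as shown falls out immediately from the $\theta$-RoCK inequality applied in the $a > b$ configuration.
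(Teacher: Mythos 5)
Your proof is correct and follows essentially the same route as the paper, which simply invokes \cref{transtheta} together with the identification of Rouquier blocks as the $(0,1,\dots,e-1)$-RoCK blocks; you have merely made explicit the induction along the Scopes chain and the bubble-sort on $\theta$ (including the correct verification that the $\theta$-RoCK condition with $a>b$ forces the hypothesis of \cref{Psiblock}) that the paper leaves implicit.
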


\subsection{Some fundamental results on multicores}
Whether or not a block \(\Lambda^{\bkap}_+(\omega)\) is a core block can be completely determined by considering one of its constituent multipartitions.
As in \cref{specpars}, the sum of an integer \(m\) and a set of beta numbers \(\B(\rho, i) = \B^1(\rho, i)\) denotes the set of beta numbers obtained from \(\B(\rho, i)\) by adding \(m\) to each element.

\begin{Lemma}\label{FayersBase}
Let \(\omega \in \ZZ_{\geq 0}I\) and \(\blam \in \Lambda^{\bkap}_+(\omega)\). Then \(\Lambda^{\bkap}_+(\omega)\) is a core block if and only if there exists some \(i \in [0,e-1]\) and (level one) \(e\)-core partition \(\rho \in \Lambda^i_+\) such that
\begin{align*}
\left \lfloor \frac{j-i}{e} \right \rfloor e + \B(\rho, i) \subseteq \B^r(\blam, \bkap) \subseteq \left \lfloor \frac{j-i + e}{e} \right \rfloor e + \B(\rho, i)
\end{align*}
for all \(j \in [0,e-1]\), \(r \in [1,\ell]\).
\end{Lemma}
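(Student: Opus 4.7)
The plan is to translate the sandwich inclusion into a runner-by-runner condition on the abacus, then use Fayers-style hub arguments to handle both directions. Observe that $\lfloor (j-i)/e \rfloor e + \B(\rho, i)$ and $\lfloor (j-i+e)/e \rfloor e + \B(\rho, i)$ are $e$-translates of $\B(\rho, i)$ and so preserve residues; thus the inclusion has genuine content only when read residue by residue. Writing $s_j := \lfloor (j-i)/e \rfloor$, the sandwich on runner $j$ asserts that $\B^r(\blam, \bkap)$ contains every bead of residue $j$ at position $\leq M_j(\rho, i) + s_j e$ and no bead of residue $j$ at position $> M_j(\rho, i) + (s_j+1)e$; equivalently, $\B^r(\blam,\bkap)$ is flush on runner $j$ with top bead in the two-element set $\{M_j(\rho, i) + s_j e,\ M_j(\rho, i) + (s_j+1)e\}$.

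For the direction ($\Leftarrow$), the flushness extracted from the sandwich makes each $\lambda^{(r)}$ a level-one $e$-core, so by \cref{rem:l1core} $\blam$ is a multicore. To promote multicoreness from $\blam$ to the whole block, I would observe that the two-position window forces the top beads $M^r_j(\blam, \bkap)$ across components to lie in a set of at most two consecutive $e$-spaced values on each runner. Converting this constraint into hub language shows that the hub $(\delta_0, \ldots, \delta_{e-1})$ of $\omega$ takes at most two consecutive values, at which point Fayers's hub criterion for core blocks forces every $\bmu \in \Lambda^{\bkap}_+(\omega)$ to be a multicore. Hence the block is a core block.

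For the direction ($\Rightarrow$), assume the block is a core block, so $\blam$ itself is a multicore with flush beta-number sets and top beads $M^r_j(\blam, \bkap)$. The main obstacle is proving the key combinatorial fact that on each runner $j$ the top beads $\{M^r_j(\blam, \bkap)\}_{r=1}^\ell$ lie in an interval of width at most $e$. The argument is by contradiction: if the spread on some runner $j$ exceeded $e$, I would produce a multipartition in the block by swapping beads, moving one bead down by $e$ on runner $j$ in some component (removing an $e$-ribbon, contributing $-\delta$ to content) and compensating by an upward move of a bead on another runner and component (adding an $e$-ribbon, contributing $+\delta$); the composition preserves content so lands in the block, yet admits a removable $e$-ribbon, violating the core block hypothesis. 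Once this width-$e$ bound is available, I would select $i \in [0, e-1]$ guided by the residue pattern of the $M^r_j(\blam, \bkap)$ and define $\rho$ as the level-one $e$-core at charge $i$ whose top bead on runner $j$ equals $\min_r M^r_j(\blam, \bkap) - s_j e$. The lower inclusion then holds by construction and the upper inclusion by the width-$e$ bound; the remaining technical point is to verify that this prescribed top-bead data is realized by a genuine partition at charge $i$, which requires a careful choice of $i$ and possibly a uniform $e$-shift of $\rho$ within its Scopes class.
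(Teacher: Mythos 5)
The paper's own proof of \cref{FayersBase} is a one-line citation of Fayers's beta-number characterisation of core blocks, \cite[Theorem 3.1]{fay07core}; the lemma is essentially a restatement of that theorem, so you are in effect reproving Fayers's result from scratch. Your opening observation --- that the inclusions only have content when read runner by runner, and that they force each runner of each component to be flush with top bead confined to a two-element window --- is correct and is the right way to parse the statement. But both directions of your argument have genuine gaps. In the direction \((\Leftarrow)\), the appeal to a ``hub criterion'' cannot work even in principle: the hub is unchanged when \(\omega\) is replaced by \(\omega+\delta\), whereas being a core block is destroyed by that replacement. Concretely, for \(\ell=1\), \(e=2\), the block of content \(\delta\) containing \((2)\) and \((1,1)\) has the same hub as the core block of the empty partition but is not a core block, so no condition on the hub alone can detect core blocks. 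What is actually needed is to propagate the sandwich condition from the given \(\blam\) to every \(\bmu\in\Lambda^{\bkap}_+(\omega)\) (equivalently, to show \(\Lambda^{\bkap}_+(\omega-\delta)=\varnothing\)); this is the substance of Fayers's argument and your sketch does not supply it.

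In the direction \((\Rightarrow)\), the content-preserving swap you describe cannot be performed: by the definition of a multicore, \(x\in\B^r(\blam,\bkap)\) implies \(x-e\in\B^r(\blam,\bkap)\), so there is never a bead with an empty position \(e\) below it, and the move ``down by \(e\) on runner \(j\)'' has no legal target in any component of any multipartition of a core block. Producing a non-multicore of the same content when the spread exceeds \(e\) instead requires a pair of real-root bead moves (between different runners) whose removed contents sum to \(\delta\), and arranging the necessary empty and occupied positions is precisely the technical heart of Fayers's proof. Finally, even granting the per-runner width-\(e\) bound, you still owe the argument that the data \(\min_r M^r_j(\blam,\bkap)\), adjusted by the shifts \(\lfloor(j-i)/e\rfloor e\), assembles into a single \(e\)-core \(\rho\) at a charge \(i\in[0,e-1]\); you flag this but do not resolve it. Given all of this, the efficient route is the paper's: cite \cite[Theorem 3.1]{fay07core} directly.
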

\begin{proof}
Follows from \cite[Theorem 3.1]{fay07core}.
\end{proof}

\begin{Proposition}\label{bigmultiprop}
Let \(\omega \in \ZZ_{\geq 0}I\), and let \(\brho \in \Lambda_+^{\bkap}(\omega)\). The following are equivalent:
\begin{enumerate}
\item The multipartition \(\brho\) is a \(\bkap\)-core.
\item We have \(\textup{def}(\brho) = 0\).
\item We have 
\begin{align*}
\B^r(\brho, \bkap) \subseteq 
\B^{s}(\brho, \bkap) \subseteq
e+\B^r(\brho,\bkap) ,
\end{align*}
for all \(r,s \in [1,\ell]\) with \(\overline{\kappa}_r \leq \overline{\kappa}_{s}\).
\item There do not exist \(\bmu, \blam \in \Lambda_+^{\bkap}\) with \(\bmu \subsetneq \brho \subsetneq \blam\) and \(
\cont(\brho/ \bmu) = \cont( \blam/ \brho)
\).
\end{enumerate}
\end{Proposition}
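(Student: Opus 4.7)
The approach is to cycle (iii) $\Rightarrow$ (i) $\Rightarrow$ (ii) $\Rightarrow$ (iii), handling (i) $\Leftrightarrow$ (iv) separately. The key observation driving (iii) $\Rightarrow$ (i) is that specializing to $s = r$ in (iii) yields $\B^r(\brho,\bkap) \subseteq e + \B^r(\brho,\bkap)$, i.e., $y \in \B^r \Rightarrow y - e \in \B^r$, so each $\rho^{(r)}$ is a level-one $e$-core and $\brho$ is a multicore by \cref{rem:l1core}. A multicore is determined by its maximal beta numbers $M^r_i$, and the cross-component inclusions in (iii) translate into the rigidity $M^r_i \leq M^s_i \leq M^r_i + e$ whenever $\overline{\kappa}_r \leq \overline{\kappa}_s$. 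Since $\omega$ determines the residue counts $c_i(\brho)$, and these are expressible as linear functions of the $M^r_i$'s for a multicore, the rigidity admits a unique solution; any $\brho' \in \Lambda^{\bkap}_+(\omega)$ must therefore equal $\brho$.

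For (i) $\Rightarrow$ (ii) and (ii) $\Rightarrow$ (iii): the defect $\textup{def}_{\bkap}(\brho)$ depends only on $(\omega, \bkap)$, so it is a block invariant. The formula $\textup{def}_{\bkap}(\brho) = \sum_r c_{\overline{\kappa}_r} - \tfrac12 \sum_i (c_i - c_{i+1})^2$ can be re-expressed via the $M^r_i$-data of any $\brho$ in the block. The equation $\textup{def}_{\bkap}(\brho) = 0$ forces both the multicore property and the pairwise inclusions of (iii); conversely, under (iii) one expands both sums and observes cancellation to obtain $\textup{def}_{\bkap}(\brho) = 0$. Combined with (iii) $\Rightarrow$ (i), this closes the cycle.

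For (i) $\Leftrightarrow$ (iv), the $\neg$(i) $\Rightarrow \neg$(iv) direction is direct: given $\brho' \neq \brho$ with $\cont(\brho') = \cont(\brho)$, define $\bmu := \brho \cap \brho'$ and $\blam := \brho \cup \brho'$ componentwise (row-wise min and max of partitions). Both are multipartitions of multicharge $\bkap$ since min/max of weakly decreasing sequences remain weakly decreasing, and the inclusions $\bmu \subsetneq \brho \subsetneq \blam$ are strict because $\brho \neq \brho'$ of equal total content forces strict inequality in both directions. Set-theoretic inclusion-exclusion on residue-$i$ cells gives $c_i(\bmu) + c_i(\blam) = c_i(\brho) + c_i(\brho')$, which combined with $\cont(\brho) = \cont(\brho')$ yields $\cont(\brho/\bmu) = \cont(\blam/\brho)$. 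For the converse (i) $\Rightarrow$ (iv), I invoke the already-established (iii): a pair $(\bmu, \blam)$ as in (iv) would require downward bead movements from $\brho$ to $\bmu$ and upward movements from $\brho$ to $\blam$ whose contents match in each residue class, and this is incompatible with the rigidity $M^r_i \leq M^s_i \leq M^r_i + e$ unless $\beta = 0$.

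The principal obstacle is the rigidity analysis: both (iii) $\Rightarrow$ (i) and the (iii)-based direction of (i) $\Rightarrow$ (iv) require a careful dictionary between beta numbers, residue counts, and the contents of ribbon-type skew diagrams, and one must confirm that the tight inclusions admit only one consistent multipartition or preclude compatible swap data. The defect computation (ii) $\Leftrightarrow$ (iii) is conceptually routine but requires careful bookkeeping of the $c_i$'s in terms of the $(M^r_i)$-data.
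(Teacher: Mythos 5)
There is a genuine gap, and it is structural. Your intended cycle is (iii) $\Rightarrow$ (i) $\Rightarrow$ (ii) $\Rightarrow$ (iii), but no implication \emph{out of} (i) is ever argued: what you actually sketch is (ii) $\Leftrightarrow$ (iii) and (iii) $\Rightarrow$ (i), plus (iv) $\Rightarrow$ (i). Your proposed route for (i) $\Rightarrow$ (iv) "invokes the already-established (iii)", but (i) $\Rightarrow$ (iii) is nowhere established, so that step is circular and the equivalence never closes. Moreover, the two implications you do assert carry the entire mathematical content of the statement and are not proved. In (iii) $\Rightarrow$ (i), knowing that the constrained system in the $M^r_i$'s has a unique solution does not rule out a competing $\brho' \in \Lambda_+^{\bkap}(\omega)$, because $\brho'$ has no reason a priori to be a multicore or to satisfy the inclusions of (iii); uniqueness in the block is exactly what you are trying to prove. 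Likewise, "$\textup{def} = 0$ forces the multicore property and the inclusions" is the hard direction of Fayers's weight-zero/core-block theorems; it is a real theorem, not bookkeeping. The paper simply cites Fayers for (i) $\Leftrightarrow$ (ii) and (i) $\Leftrightarrow$ (iii) rather than reproving them.

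What the paper does prove is the (iv) part, and there your $\neg$(i) $\Rightarrow$ $\neg$(iv) construction ($\bmu = \brho \cap \brho'$, $\blam = \brho \cup \brho'$, with the inclusion--exclusion content count) matches the paper's argument exactly and is correct. For the converse the paper proves (ii) $\Rightarrow$ (iv) via the identity
\begin{align*}
2\,\textup{def}(\brho) = \textup{def}(\blam) + \textup{def}(\bmu) + \sum_{i \in \ZZ_e}\bigl(c_i(\brho/\bmu) - c_{i+1}(\brho/\bmu)\bigr)^2,
\end{align*}
together with nonnegativity of defect for multipartitions; this cleanly yields $\textup{def}(\brho) > 0$ whenever such $\bmu, \blam$ exist, and avoids the bead-movement analysis you gesture at ("incompatible with the rigidity unless $\beta = 0$"), which you would otherwise have to carry out in detail. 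To repair your write-up with minimal effort: either cite Fayers for (i) $\Leftrightarrow$ (ii) $\Leftrightarrow$ (iii) as the paper does, or supply genuine proofs of (ii) $\Rightarrow$ (iii) and of uniqueness in (iii) $\Rightarrow$ (i); and replace the circular (i) $\Rightarrow$ (iv) step by the defect identity above (giving (ii) $\Rightarrow$ (iv)).
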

\begin{proof}
The statement (i) \(\iff\) (ii) is given by \cite[Theorem 4.1]{fay06wts}. The statement (i) \(\iff\) (iii) is a slight paraphrase of \cite[Proposition 2.7(2)]{fayerssimcore}. It remains to consider (iv).

((iv) \(\implies\) (i)) Assume \(\brho\) is not a \(\bkap\)-core. Then there exists another \(\bkap\)-multipartition \(\bnu \neq \brho\) such that \(\cont(\bnu) = \cont(\brho)\). Setting \(\bmu = \brho \cap \bnu\), \(\blam = \brho \cup \bnu\), we have \(\bmu \subsetneq \brho \subsetneq \blam\), but
\begin{align*}
\cont(\blam/\brho) &= \cont(\brho \cup \bnu) - \cont(\brho)
=
\cont(\brho) + \cont(\bnu) - \cont(\brho \cap \bnu) - \cont(\brho)\\
&= \cont(\bnu) - \cont(\brho \cap \bnu) =
\cont(\brho) - \cont(\brho \cap \bnu) = \cont(\brho/\bmu).
\end{align*}

((ii) \(\implies\) (iv)) Assume there exist \(\bmu, \blam \in \Lambda_+^{\bkap}\) with \(\bmu \subsetneq \brho \subsetneq \blam\). Recall then that the defects of \(\blam, \bmu, \brho\) are non-negative. A short calculation shows that
 \begin{align*}
 2 \textup{def}(\brho) &=\textup{def}(\blam) + \textup{def}(\bmu) +\sum_{i \in \ZZ_e}  \left(c_i(\brho/\bmu) - c_{i+1}(\brho/\bmu)\right)^2.
 \end{align*}
If the sum \(\sum_{i \in \ZZ_e}  \left(c_i(\brho/\bmu) - c_{i+1}(\brho/\bmu)\right)^2\) above is positive, it follows that \(\textup{def}(\brho)\) is positive. On the other hand, if it is zero, it would follow immediately that \(\textup{def}(\brho/\bmu)\) is positive, and that \(\textup{def}(\brho) = \textup{def}(\bmu) + \textup{def}(\brho/\bmu)\), again guaranteeing that \(\textup{def}(\brho)\) is positive.
\end{proof}

\begin{Corollary}\label{kapcoremulti}
If \(\brho\) is a \(\bkap\)-core, then \(\brho\) is a multicore.
\end{Corollary}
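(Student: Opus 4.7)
The plan is to prove the contrapositive using the characterization of $\bkap$-cores in \cref{bigmultiprop}(iv). Specifically, I will show that if $\brho$ is not a multicore, then there exist $\bmu \subsetneq \brho \subsetneq \blam$ in $\Lambda_+^{\bkap}$ with $\cont(\brho/\bmu) = \cont(\blam/\brho)$, so by (the contrapositive of) part (iv) of \cref{bigmultiprop}, $\brho$ is not a $\bkap$-core.

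First I would observe that every ribbon of size $e$ has content $\delta$. This follows immediately from the definition of a ribbon in \S2.2.3: if $\xi = \{u_1, \ldots, u_e\}$ is a ribbon with $u_{a+1} \in \{{\tt N}u_a, {\tt E}u_a\}$, then a direct computation shows $\res(u_{a+1}) = \res(u_a) + \overline{1}$, so the residues of the $e$ cells of $\xi$ are $\overline{i}, \overline{i+1}, \ldots, \overline{i+e-1}$ for some $i$, and hence $\cont(\xi) = \delta$.

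Next, assuming $\brho$ is not a multicore, pick a removable ribbon $\xi \subseteq \brho$ with $|\xi| = e$, and set $\bmu := \brho \setminus \xi \in \Lambda_+^{\bkap}$; by the observation above, $\cont(\brho/\bmu) = \delta$. To produce the required $\blam$, I construct a multipartition that properly contains $\brho$ by attaching an addable ribbon of content $\delta$. The simplest choice is to extend the first row of (say) the first component: define $\blam$ by $\lambda^{(r)} := \rho^{(r)}$ for $r \geq 2$ and $\lambda^{(1)} := (\rho^{(1)}_1 + e, \rho^{(1)}_2, \rho^{(1)}_3, \ldots)$. Then $\blam/\brho$ is the horizontal strip consisting of the nodes $(1, \rho^{(1)}_1 + 1)^{(1)}, \ldots, (1, \rho^{(1)}_1 + e)^{(1)}$, which is a ribbon of size $e$ and therefore of content $\delta$. (Alternatively, one could invoke \cref{addribbonlemma} with $\beta = \delta$, since $\Lambda_+^{\bkap}(\cont(\brho) + \delta)$ is nonempty by the construction just given; but the direct construction is simpler and avoids any appeal to the Fock space machinery.)

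With $\bmu \subsetneq \brho \subsetneq \blam$ and $\cont(\brho/\bmu) = \cont(\blam/\brho) = \delta$, \cref{bigmultiprop}(iv) is violated, so $\brho$ is not a $\bkap$-core. There is no substantive obstacle here; the argument is essentially a direct application of the equivalences already proved in \cref{bigmultiprop}, and the only care needed is the residue computation showing that size-$e$ ribbons carry content $\delta$.
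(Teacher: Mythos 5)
Your proof is correct, but it takes a different route from the paper. The paper's proof is a one-liner from \cref{bigmultiprop}(iii): taking $r=s$ there gives $\B^r(\brho,\bkap)\subseteq e+\B^r(\brho,\bkap)$, i.e.\ $x\in\B^r$ implies $x-e\in\B^r$, which is verbatim the multicore condition in \cref{specialdefs}(ii). You instead argue via the contrapositive of \cref{bigmultiprop}(iv): a removable $e$-ribbon has content $\delta$ (your residue computation is right), and you exhibit an addable $e$-ribbon of content $\delta$ by extending the first row, so the pair $\bmu\subsetneq\brho\subsetneq\blam$ with $\cont(\brho/\bmu)=\cont(\blam/\brho)=\delta$ contradicts (iv). This is valid — the direction of the proposition you need, (i)$\implies$(iv), is established in the paper via (i)$\iff$(ii) and (ii)$\implies$(iv) — and it has the merit of being purely diagrammatic, with no appeal to beta numbers; the trade-off is that it is longer and leans on the defect/Fock-space-adjacent part of the proposition rather than the immediate combinatorial characterization (iii). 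Your explicit construction of $\blam$ is also preferable to invoking \cref{addribbonlemma}, as you note, since that lemma's proof uses $\mathfrak{sl}_2$ representation theory.
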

\begin{proof}
Follows immediately from \cref{bigmultiprop}(iii).
\end{proof}

We note that the converse to this corollary is false in general, unlike the level 1 situation in \cref{rem:l1core}.
Any multipartition of positive defect is not a \(\bkap\)-core, but such multipartitions may be multicores. In fact, any core block of positive defect consists of multiple multicores, so that none of them are \(\bkap\)-cores.

\begin{Corollary}\label{coreksep}
If  \(\omega = \cont(\brho)\) for some \(\bkap\)-core \(\brho\), then the pair \((\omega, \beta)\) is \(\bkap\)-separable for any \(\beta \in \ZZ_{\geq 0}I\).
\end{Corollary}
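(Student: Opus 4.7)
My plan is to observe that this corollary is essentially an immediate packaging of the equivalence (i) $\iff$ (iv) from \cref{bigmultiprop} together with the definition of a $\bkap$-core.

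First I would unpack the definitions. Since $\brho$ is a $\bkap$-core with $\cont(\brho) = \omega$, by \cref{specialdefs}(i) the block $\Lambda^{\bkap}_+(\omega)$ contains only the single multipartition $\brho$. To verify $\bkap$-separability of $(\omega, \beta)$ via \cref{kapsepdef}, the universal quantifier over $\brho' \in \Lambda^{\bkap}_+(\omega)$ therefore reduces to the single case $\brho' = \brho$. Thus I only need to rule out the existence of $\bmu \subseteq \brho \subseteq \bnu \in \Lambda^{\bkap}_+$ with $\cont(\brho/\bmu) = \cont(\bnu/\brho) = \beta'$ for any fixed $0 \subsetneq \beta' \subseteq \beta$.

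Next I would handle the requirement that the inclusions be strict. Since $\beta' \neq 0$, we have $\cont(\brho/\bmu) = \beta' \neq 0$, which forces $\bmu \subsetneq \brho$; similarly $\cont(\bnu/\brho) = \beta' \neq 0$ forces $\brho \subsetneq \bnu$. Hence any hypothetical witnesses to the failure of separability would give strict inclusions $\bmu \subsetneq \brho \subsetneq \bnu$ in $\Lambda^{\bkap}_+$ with $\cont(\brho/\bmu) = \cont(\bnu/\brho)$.

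Finally I would invoke \cref{bigmultiprop}: the implication (i) $\implies$ (iv), applied to the $\bkap$-core $\brho$, asserts precisely that no such $\bmu, \bnu$ exist. This contradiction completes the verification of $\bkap$-separability, and since $\beta \in \ZZ_{\geq 0}I$ was arbitrary, the corollary follows. There is no real obstacle here; the entire content of the statement is already packaged into the equivalence $(\text{i}) \iff (\text{iv})$ of \cref{bigmultiprop}, and the proof amounts to translating between the two formulations.
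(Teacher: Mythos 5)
Your proof is correct and matches the paper's approach exactly: the paper simply cites the equivalence (i) $\iff$ (iv) of \cref{bigmultiprop}, and your write-up fills in the same routine translation (the singleton block reduces the quantifier to $\brho$ itself, and $\beta' \neq 0$ forces the inclusions to be strict).
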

\begin{proof}
Follows immediately from \cref{bigmultiprop}(i),(iv).
\end{proof}

\begin{Lemma}\label{onediff}
If \(\brho\) is a \(\bkap\)-core, then 
\(
|h_{i,j}^r(\brho, \bkap) - h_{i,j}^s(\brho, \bkap)| \leq 1
\)
for all \(i,j \in [0,e-1]\) and \(r, s \in [1,\ell]\).
\end{Lemma}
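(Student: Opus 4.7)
The plan is to use \cref{bigmultiprop}(iii) applied residue-by-residue. Since $\brho$ is a $\bkap$-core, it is in particular a multicore by \cref{kapcoremulti}, so for every $r \in [1,\ell]$ and $i \in [0,e-1]$ we have the explicit description
\[
\B^r_i(\brho,\bkap) \;=\; \{M^r_i, \, M^r_i - e, \, M^r_i - 2e, \, \dots \}.
\]
Without loss of generality assume $\overline{\kappa}_r \leq \overline{\kappa}_s$. The inclusion $\B^r(\brho,\bkap) \subseteq \B^s(\brho,\bkap) \subseteq e + \B^r(\brho,\bkap)$ from \cref{bigmultiprop}(iii) restricts on each residue class to $\B^r_i \subseteq \B^s_i \subseteq e + \B^r_i$. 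Intersecting these with the explicit descriptions of the sets, I would deduce that $M^s_i - M^r_i \in \{0,e\}$ for every $i$.

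Writing $M^s_i = M^r_i + \epsilon_i e$ with $\epsilon_i \in \{0,1\}$, the computation becomes immediate:
\[
h^s_{i,j}(\brho,\bkap)
= \left\lfloor \frac{M^s_j - M^s_i}{e} \right\rfloor
= \left\lfloor \frac{M^r_j - M^r_i + (\epsilon_j - \epsilon_i)e}{e} \right\rfloor
= h^r_{i,j}(\brho,\bkap) + (\epsilon_j - \epsilon_i),
\]
where in the last equality we used that $(\epsilon_j - \epsilon_i)e$ is an integer multiple of $e$, so it factors out of the floor. Since $\epsilon_j - \epsilon_i \in \{-1,0,1\}$, this yields the desired bound $|h^r_{i,j}(\brho,\bkap) - h^s_{i,j}(\brho,\bkap)| \leq 1$.

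There is no substantive obstacle: the only minor subtlety is keeping track of which direction of inclusion requires the hypothesis $\overline{\kappa}_r \leq \overline{\kappa}_s$, which is used to invoke \cref{bigmultiprop}(iii) with the roles of $r$ and $s$ correctly ordered. The general case then follows by symmetry of the claim in $r$ and $s$.
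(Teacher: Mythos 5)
Your proof is correct and follows essentially the same route as the paper: both hinge on \cref{bigmultiprop}(iii) giving $M^r_i \leq M^s_i \leq M^r_i + e$ for each residue $i$, from which the bound on the difference of the $h$'s follows by a floor computation. The only (harmless) difference is that you additionally invoke the multicore structure to pin down $M^s_i - M^r_i \in \{0,e\}$ exactly, yielding the slightly sharper identity $h^s_{i,j} = h^r_{i,j} + (\epsilon_j - \epsilon_i)$, whereas the paper works directly with the two inequalities.
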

\begin{proof}
Assume without loss of generality that \(\bar \kappa_r \leq \bar \kappa_s\). Then \(\B^r(\brho,\bkap) \subseteq \B^s(\brho,\bkap) \subseteq e+\B^r(\brho,\bkap)\). Therefore \(M_i^r(\brho,\bkap) \leq M_i^s (\brho,\bkap)\leq M_i^r(\brho,\bkap) +e\) and \(M_j^r(\brho,\bkap) \leq M_j^s(\brho,\bkap) \leq M_j^r(\brho,\bkap) +e\). Thus
\begin{align*}
M_j^s(\brho,\bkap) - M_i^s(\brho,\bkap) \leq M^r_j (\brho,\bkap)- M^r_i(\brho,\bkap) + e, 
\end{align*}
and
\begin{align*}
M_j^r(\brho,\bkap) - M_i^r(\brho,\bkap)  \leq M^s_j (\brho,\bkap)- M^s_i(\brho,\bkap) +e,
\end{align*}
and from this it follows that
\begin{align*}
h_{i,j}^s(\brho,\bkap) \leq h_{i,j}^r(\brho,\bkap) + 1,
\qquad
\textup{and}
\qquad
h_{i,j}^r(\brho,\bkap) \leq h_{i,j}^s(\brho,\bkap) + 1,
\end{align*}
yielding the result.
\end{proof}

\begin{Lemma}\label{RoCKcore}
Let \(\brho\) be a multicore, and \(\theta\) a residue permutation. Then  \(\brho\) is \((\bkap,\theta)\)-RoCK if and only if 
\begin{align}\label{kcohdef}
h^r_{\theta_a, \theta_b}(\brho, \bkap) \geq -1
\end{align}
for all \(r \in [1,\ell]\), \(1 \leq a < b \leq e\).
\end{Lemma}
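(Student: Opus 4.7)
The plan is to unpack the definitions and exploit the multicore condition to rephrase everything in terms of the single quantity $M^r_i(\brho,\bkap)$ for each residue $i$. The key preliminary observation is that, for a multicore $\brho$, the multicore condition ``$x\in\B^r$ implies $x-e\in\B^r$'' means that for each $r\in[1,\ell]$ and $i\in\ZZ_e$, we have $\B^r_i(\brho,\bkap)=\{M^r_i - ke\mid k\in\ZZ_{\geq0}\}$. In particular, an integer $x$ with $\bar x=i$ lies in $\B^r(\brho,\bkap)$ if and only if $x\leq M^r_i$.

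Next I would translate the RoCK condition using this observation. The $(\bkap,\theta)$-RoCK condition fails precisely when for some $r$ there exist $x<y$ with $x\notin\B^r$, $y\in\B^r$, $\bar x=\theta_c$, $\bar y=\theta_d$ and $c>d$; after the reindexing $a=d$, $b=c$ this is the assertion that for some $a<b$ there exist $x<y$ with $\bar x=\theta_b$, $x\notin\B^r$, $\bar y=\theta_a$, $y\in\B^r$. By the preliminary observation, these conditions on $x$ and $y$ are equivalent to $x>M^r_{\theta_b}$ and $y\leq M^r_{\theta_a}$, with $\bar x=\theta_b$ and $\bar y=\theta_a$.

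The minimal admissible $x$ (with $\bar x=\theta_b$ and $x>M^r_{\theta_b}$) is $M^r_{\theta_b}+e$, and the maximal admissible $y$ is $M^r_{\theta_a}$. Thus a violating pair exists for this $(a,b)$ iff $M^r_{\theta_b}+e<M^r_{\theta_a}$, that is, $M^r_{\theta_a}-M^r_{\theta_b}>e$. Consequently $\brho$ is $(\bkap,\theta)$-RoCK iff
\[
M^r_{\theta_b}(\brho,\bkap)-M^r_{\theta_a}(\brho,\bkap)\geq -e
\qquad\text{for all }r\in[1,\ell]\text{ and }1\leq a<b\leq e.
\]
Finally I would convert this to the stated inequality: since $\theta_a\neq\theta_b$, the difference $M^r_{\theta_b}-M^r_{\theta_a}$ is not divisible by $e$, so $\lfloor(M^r_{\theta_b}-M^r_{\theta_a})/e\rfloor\geq -1$ is equivalent to $M^r_{\theta_b}-M^r_{\theta_a}\geq -e$. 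This yields (\ref{kcohdef}), completing the equivalence.

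There is essentially no obstacle in this argument beyond careful bookkeeping: the slight awkwardness is managing the index swap between the definition of $(\bkap,\theta)$-RoCK (which names the residues $\theta_a$ and $\theta_b$ in the order in which they appear on the number line, forcing $a\leq b$) and the statement of the lemma (which ranges over $a<b$), but once the reindexing is performed the chain of equivalences is immediate.
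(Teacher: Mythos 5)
Your proof is correct and takes essentially the same approach as the paper's: both reduce everything, via the multicore identity $\B^r_i=\{M^r_i-ke\mid k\geq0\}$, to comparing $M^r_{\theta_a}$ and $M^r_{\theta_b}$, with the extremal witnesses $x=M^r_{\theta_b}+e$, $y=M^r_{\theta_a}$. The only difference is presentational — you run a single chain of equivalences where the paper argues the two implications separately.
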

\begin{proof}
\((\implies)\) Assume condition (\ref{kcohdef}) does not hold. Then there exists \(r \in [1,\ell]\) and \(1 \leq a<b \leq e\) such that \(h^r_{\theta_a, \theta_b}(\brho, \bkap) < -1\). It follows then that \(M^r_{\theta_a}(\brho, \bkap) >M^r_{\theta_b}(\brho, \bkap) +e\). Then, taking \(y = M^r_{\theta_a}(\brho, \bkap) \), \(x = M^r_{\theta_b}(\brho, \bkap)  + e\), we have \(y>x\), \(\overline{x} = \theta_b\), \(\overline{y} = \theta_a\), yet \(b>a\), so \(\brho\) is not \((\bkap,\theta)\)-RoCK.

\((\impliedby)\) Assume condition (\ref{kcohdef}) holds, and that \(y \in \B^r(\brho, \bkap)\), \(x \notin \B^r(\brho, \bkap)\), for some \(x<y\) with \(\overline{x} = \theta_a\), \( \overline{y} = \theta_b\). As \(\brho\) is a multicore, we have \(M^r_{\theta_a}(\brho, \bkap)  +e \leq x <y\leq M^r_{\theta_b}(\brho, \bkap) \). If \(a >b\), then 
\begin{align*}
h_{\theta_b, \theta_a}^r(\brho, \bkap) = 
\left \lfloor
\frac{M^r_{\theta_a}(\brho, \bkap)  - M^r_{\theta_b}(\brho, \bkap) }{e}
\right\rfloor
<
\left \lfloor
\frac{(M^r_{\theta_b}(\brho, \bkap) -e) - M^r_{\theta_b}(\brho, \bkap) }{e}
\right\rfloor
=-1,
\end{align*}
a contradiction of (\ref{kcohdef}). Thus \(a \leq b\), and so \(\brho\) is \((\bkap,\theta)\)-RoCK.
\end{proof}

\begin{Proposition}\label{multicoreblockRoCK}
Let \(\omega \in \ZZ_{\geq 0}I\). Then \(\Lambda_+^{\bkap}(\omega)\) is a core block if and only if there exists a residue permutation \(\theta\) such that every \(\brho \in \Lambda_+^{\bkap}(\omega)\) is a \((\bkap, \theta)\)-RoCK multicore. 
\end{Proposition}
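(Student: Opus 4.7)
The reverse direction is immediate from the definitions: if every $\brho \in \Lambda_+^{\bkap}(\omega)$ is a $(\bkap, \theta)$-RoCK multicore, then in particular every element is a multicore, so $\Lambda_+^{\bkap}(\omega)$ is a core block.

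For the forward direction, the plan is to apply \cref{FayersBase} to produce a reference level-one $e$-core $\rho$ and an index $i \in [0,e-1]$ such that, for every $\blam$ in the block, the set $\B^r(\blam, \bkap)$ is sandwiched between two shifts of $\B(\rho, i)$ that differ by $e$. Writing $m_t := \max \B_t(\rho, i)$ for $t \in [0,e-1]$, define the residue permutation $\theta$ by requiring $m_{\theta_1} < m_{\theta_2} < \cdots < m_{\theta_e}$; the $m_t$ are distinct since they lie in distinct residue classes modulo $e$, so such a $\theta$ exists and is unique. The goal is then to check, via \cref{RoCKcore}, that every $\blam \in \Lambda_+^{\bkap}(\omega)$ is $(\bkap, \theta)$-RoCK with respect to this permutation.

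For fixed $\blam$ and $r$, the multicore property combined with Fayers' sandwich will force $M_t^r(\blam, \bkap) = m_t + c_{t,r} \cdot e$, where each $c_{t,r}$ takes one of two adjacent integer values $k_r$ or $k_r+1$ determined only by $i$ and $\overline{\kappa_r}$. Consequently, for $1 \leq a < b \leq e$,
\[
h_{\theta_a,\theta_b}^r(\blam, \bkap) \;=\; (c_{\theta_b,r} - c_{\theta_a,r}) \,+\, \left\lfloor \frac{m_{\theta_b} - m_{\theta_a}}{e} \right\rfloor \;\geq\; -1 + 0 \;=\; -1,
\]
using $m_{\theta_b} > m_{\theta_a}$ for the floor term and the two-value constraint on $c_{t,r}$ for the leading difference. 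Applying \cref{RoCKcore} will then yield the result.

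The main obstacle is the opening step: translating \cref{FayersBase} into the clean form asserted above, namely that each $M_t^r(\blam, \bkap)$ lies in the two-element set $\{m_t + k_r e,\, m_t + (k_r+1)e\}$ for a single integer $k_r$ depending only on $r$ (and not on $t$). Once that bookkeeping is settled, the verification of the RoCK condition reduces to the arithmetic inequality above, uniformly in $\blam$ and $r$, so no further case analysis is required.
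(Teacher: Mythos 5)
Your proposal is correct and follows essentially the same route as the paper: the paper also deduces from \cref{FayersBase} a sequence $(\alpha_0,\dots,\alpha_{e-1})$ (playing the role of your $m_t=\max\B_t(\rho,i)$) with $M_i^r-M_j^r=\alpha_i-\alpha_j+c_{ij}e$, $c_{ij}\in\{-1,0,1\}$, orders $\theta$ by increasing $\alpha$, and verifies $h^r_{\theta_a,\theta_b}\geq -1$ via \cref{RoCKcore}. The bookkeeping step you flag as the main obstacle is handled in the paper exactly as you outline, so no further work is needed.
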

\begin{proof}
The `if' direction is immediate from definitions.
For the `only if' direction, assume \(\Lambda_+^{\bkap}(\omega)\) is a core block. 
By \cref{FayersBase}, there exists a sequence \(\balpha = (\alpha_0, \dots, \alpha_{e-1}) \in \ZZ^e\) such that for all \(r \in [1,\ell]\), \(i,j \in [0,e-1]\), and \(\blam \in \Lambda_+^{\bkap}(\rho)\), we have \(M_i^r(\blam, \bkap) - M_j^r(\blam, \bkap) = \alpha_i - \alpha_j + c_{ij}e\), where \(c_{ij} \in \{-1,0,1\}\). Choose a residue permutation \(\theta = (\theta_1, \dots, \theta_e)\) such that \(\alpha_{\theta_a} \leq \alpha_{\theta_b}\) for all \(a<b\). 
Now consider a multicore \(\brho \in \Lambda_+^{\bkap}(\omega)\). For \(r \in [1,\ell]\), \(1 \leq a < b \leq e\), we have:
\begin{align*}
h^r_{\theta_a, \theta_b}(\brho, \bkap) &=
\left \lfloor
\frac{M_{\theta_b}^r(\brho, \bkap) - M_{\theta_a}^r(\brho, \bkap)   }{e}
\right\rfloor
= \left \lfloor
\frac{
\alpha_{\theta_b} - \alpha_{\theta_a} + c_{\theta_b, \theta_a}e
}{e}
\right \rfloor
\geq
\left \lfloor
\frac{0+c_{\theta_b, \theta_a}e}{e}
\right \rfloor
= c_{\theta_b, \theta_a} \geq -1.
\end{align*}
Hence the multicore \(\brho\) is \((\bkap, \theta)\)-RoCK by \cref{RoCKcore}. 
\end{proof}

\cref{kapcoremulti,multicoreblockRoCK} imply the following result.
\begin{Corollary}\label{kapcorerock}
If \(\brho\) is a \(\bkap\)-core, then \(\brho\) is a \((\bkap, \theta)\)-RoCK multicore for some residue permutation \(\theta\).
\end{Corollary}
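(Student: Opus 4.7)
The plan is to deduce this immediately from Corollaries~\ref{kapcoremulti} and \ref{multicoreblockRoCK}, which are stated just above. The key observation is that being a $\bkap$-core is by definition a very restrictive condition on the block: if $\brho$ is a $\bkap$-core and $\omega = \cont(\brho)$, then $\Lambda^{\bkap}_+(\omega) = \{\brho\}$, so the block containing $\brho$ is a singleton.

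First I would invoke Corollary~\ref{kapcoremulti} to conclude that $\brho$ is a multicore. Since $\Lambda^{\bkap}_+(\omega)$ consists of the single multicore $\brho$, the block $\Lambda^{\bkap}_+(\omega)$ is trivially a core block in the sense of Definition~\ref{specialdefs}(ii). Then I would apply Proposition~\ref{multicoreblockRoCK} to produce a residue permutation $\theta$ such that every element of the block is a $(\bkap, \theta)$-RoCK multicore; since $\brho$ is the sole element, this yields the claim.

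There is essentially no obstacle here: the result is a purely formal combination of the prior two corollaries, and no further computation with beta numbers or defect is needed. The only thing to remark on is that the proof is a one-liner because the setup work has already been done in establishing Proposition~\ref{bigmultiprop} and the Fayers-based Lemma~\ref{FayersBase} that feed into Proposition~\ref{multicoreblockRoCK}.
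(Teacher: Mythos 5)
Your proof is correct and matches the paper's own derivation, which simply states that Corollary~\ref{kapcoremulti} and Proposition~\ref{multicoreblockRoCK} imply the result. The observation that the singleton block $\{\brho\}$ is a core block once $\brho$ is known to be a multicore is exactly the intended glue between the two ingredients.
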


\subsection{Reindexing and root systems}\label{reindsec}

Fix a multicharge \(\bkap\) of level \(\ell\), and a residue permutation \(\theta = (\theta_1, \dots, \theta_e)\). Let \(\brho \in \Lambda^{\bkap}_+\) be a multicore, and recall the notation of \S\ref{cdefs1}.
Then, for \(t \in [1,e-1]\), set
\begin{align*}
\gamma_t^\theta &:= \alpha( \theta_t + 1, \overline{ \theta_{t+1} - \theta_t}) = \alpha_{\overline{\theta_t +1}} +  \alpha_{\overline{\theta_t +2}} + \dots + \alpha_{\overline{\theta_{t+1}}};\\
h^{r,\theta}_t(\brho, \bkap) &:=h_{\theta_t, \theta_{t+1}}^r(\brho, \bkap).
\end{align*}
More generally, for \(1 \leq a\leq b \leq e-1\), set 
\begin{align}
\gamma^\theta_{[a,b]} &:= \gamma^\theta_a + \gamma^\theta_{a+1} + \dots + \gamma_b^\theta \label{gamseq}\\
h^{r,\theta}_{[a,b]}(\brho, \bkap)&:=h_a^{r,\theta}(\brho, \bkap) + h_{a+1}^{r,\theta}(\brho, \bkap) + \dots + h_b^{r,\theta} (\brho, \bkap)\label{hrseq}\\
h^{\max, \theta}_{[a,b]}(\brho, \bkap) &:= \max\{h^{r,\theta}_{[a,b]}(\brho, \bkap) \mid r \in [1,\ell]\}\\
h^{\min, \theta}_{[a,b]}(\brho, \bkap)&:= \min\{h^{r,\theta}_{[a,b]}(\brho, \bkap) \mid r \in [1,\ell]\}.
\end{align}
For \(a \leq b\), it is straightforward to check that
\begin{align}
\gamma_{[a,b]}^\theta &= \alpha(\theta_a + 1, \overline{\theta_{b+1} - \theta_a}) + \left \lfloor  \frac{1}{e} \sum_{k = a}^b \overline{ \theta_{k+1} - \theta_k} \right \rfloor \delta \label{gammaconvert}\\
h^{r,\theta}_{[a,b]}(\brho, \bkap) &= h^r_{\theta_a, \theta_{b+1}}(\brho, \bkap) - \left \lfloor  \frac{1}{e} \sum_{k = a}^b \overline{ \theta_{k+1} - \theta_k} \right \rfloor. \label{hconvert}
\end{align}


\begin{Definition}
Let \(\brho \in \Lambda^{\bkap}_+\) be a multicore, and \(\theta\) be a residue permutation.
We define the {\em \(\theta\)-capacity} \(\textup{cap}^\theta_\delta(\brho, \bkap)\) of \(\brho\) as follows:
\begin{align*}
\textup{cap}^\theta_\delta(\brho, \bkap) := \min\{ h_t^{r, \theta}(\brho, \bkap) \mid t \in [1, e-1], r \in [1,\ell] \} +1.
\end{align*}
Note that when \(\brho\) is \((\bkap, \theta)\)-RoCK, we have \(\textup{cap}^\theta_\delta( \brho, \bkap) \geq 0\) by \cref{RoCKcore}. 

For \(\omega \in \ZZ_{\geq 0}I\) such that \(\Lambda_+^{\bkap}(\omega)\) is a core block, we set
\begin{align*}
\textup{cap}^\theta_\delta(\omega, \bkap) := \min \{\textup{cap}^\theta_\delta(\brho, \bkap) \mid \brho \in \Lambda^{\bkap}_+(\omega)\}.
\end{align*}
\end{Definition}

\begin{Lemma}
If \(\brho \in \Lambda^{\bkap}_+\) is a multicore, then there is at most one residue permutation \(\theta\) such that \(\textup{cap}_\delta^\theta( \brho, \bkap)>0\).
\end{Lemma}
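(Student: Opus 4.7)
The plan is to unpack the definition of $\textup{cap}^\theta_\delta$ and reduce the hypothesis to a monotonicity statement on the quantities $M^r_i(\brho, \bkap)$, which will pin $\theta$ down uniquely (if it exists).

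First, I would observe that $\textup{cap}^\theta_\delta(\brho, \bkap) > 0$ is equivalent to the condition $h^{r,\theta}_t(\brho, \bkap) \geq 0$ for every $t \in [1,e-1]$ and every $r \in [1,\ell]$. Unwrapping this using $h^{r,\theta}_t(\brho, \bkap) = h^r_{\theta_t, \theta_{t+1}}(\brho,\bkap) = \lfloor (M^r_{\theta_{t+1}}(\brho, \bkap) - M^r_{\theta_t}(\brho, \bkap))/e \rfloor$, the key point is that because $\theta$ is a permutation of $[0, e-1]$, the residues $\theta_t$ and $\theta_{t+1}$ are distinct elements of $\ZZ_e$, so $M^r_{\theta_{t+1}}(\brho, \bkap) - M^r_{\theta_t}(\brho, \bkap)$ is not divisible by $e$. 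Consequently the floor is $\geq 0$ if and only if $M^r_{\theta_{t+1}}(\brho, \bkap) > M^r_{\theta_t}(\brho, \bkap)$.

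Second, I would use this to show uniqueness. Fix any $r \in [1, \ell]$. Since the residues $0, 1, \ldots, e-1$ are distinct modulo $e$, the integers $M^r_0(\brho, \bkap), \ldots, M^r_{e-1}(\brho, \bkap)$ are pairwise distinct. There is therefore a unique permutation $\sigma$ of $[0,e-1]$ for which $M^r_{\sigma(1)}(\brho, \bkap) < M^r_{\sigma(2)}(\brho, \bkap) < \cdots < M^r_{\sigma(e)}(\brho, \bkap)$. If $\textup{cap}^\theta_\delta(\brho, \bkap) > 0$, then by the previous paragraph $\theta = \sigma$. In particular, any two residue permutations of positive $\theta$-capacity must agree, which is precisely the claim.

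There is essentially no obstacle here beyond noting that two distinct residues yield a difference not divisible by $e$; the rest is pure bookkeeping on the definition. (The existence of such a $\theta$ is a separate matter — it would require the sorting permutation to be \emph{the same} across all $r$, i.e.~a condition comparing the components of the multicore — but the lemma only asserts uniqueness and so this need not be addressed.)
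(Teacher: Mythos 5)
Your proof is correct. It rests on the same key fact as the paper's argument, namely that for distinct residues $i\neq j$ one has $h^r_{i,j}(\brho,\bkap)\geq 0$ if and only if $M^r_i(\brho,\bkap)<M^r_j(\brho,\bkap)$ (because $M^r_i\equiv i\pmod e$ forces the difference to be indivisible by $e$), but you package it differently. The paper argues by contradiction: given two distinct permutations of positive capacity, it finds a pair $\{i,j\}$ ordered oppositely by the two, telescopes the consecutive conditions $h^{r,\theta}_t\geq 0$ through the quantities $h^{r,\theta}_{[a,b]}$ and the conversion formula to conclude $h^r_{i,j}\geq 0$ and $h^r_{j,i}\geq 0$ simultaneously, which is impossible since these floors sum to $-1$. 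You instead observe that positive capacity forces the strict chain $M^r_{\theta_1}<M^r_{\theta_2}<\cdots<M^r_{\theta_e}$ for each $r$, so $\theta$ must be the unique permutation sorting the (pairwise distinct) values $M^r_0,\ldots,M^r_{e-1}$. Your route is slightly more elementary — it bypasses the $h^{r,\theta}_{[a,b]}$ telescoping entirely and uses only transitivity of $<$ — and it has the mild bonus of exhibiting the unique candidate explicitly as the sorting permutation of the $M^r_i$, together with the correct remark that existence is exactly the condition that this sorting permutation be independent of $r$. Neither proof actually uses the multicore hypothesis beyond the fact that the capacity is defined in that setting.
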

\begin{proof}
Assume that \(\theta\), \(\theta'\) are distinct residue permutations such that \(\textup{cap}_\delta^\theta(\brho, \bkap) >0\) and \(\textup{cap}_\delta^{\theta'}(\brho, \bkap) >0\). Then there exists \(i<j\) such that \(a<b\), \(\theta_a =i, \theta_b = j\), and \(a'<b'\), \(\theta'_{a'} = j, \theta'_{b'} = i\). Then, since \(h_t^{r, \theta}, h_t^{r, \theta'} >0\) for all \(r \in [1,\ell], t \in [1,e]\), we have by (\ref{hconvert}) that 
\begin{align*}
	h^r_{i,j} = h^r_{\theta_a, \theta_b} \geq h_{[a,b-1]}^{r,\theta}  >0
	\qquad
	\textup{and}
	\qquad
	h^r_{j,i} = h^r_{\theta'_{a'}, \theta'_{b'}} \geq h_{[a',b'-1]}^{r,\theta'}  >0,
\end{align*}	
reaching a contradiction.
\end{proof}

Recall the `mod \(\delta\)' map \(p:\Z I \to \Z I^\fin \cong \Z I/ \Z \delta\).

\begin{Lemma}\label{posfin}
The elements \(P_+^\theta:=\{p(\gamma^\theta_{[a,b]}) \mid 1 \leq a \leq b \leq e-1\}\) comprise a positive root system in \(\Phi^\fin\), with base \(\Delta^\theta:= \{p(\gamma^\theta_a) \mid 1 \leq a \leq e-1\}\).
\end{Lemma}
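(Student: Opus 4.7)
The plan is to realize $\Phi^\fin$ concretely as the type $\mathtt{A}_{e-1}$ root system in $\mathbb{R}^e/\langle \varepsilon_1+\cdots+\varepsilon_e\rangle$, with roots $\{\varepsilon_i-\varepsilon_j\mid i\neq j\in[1,e]\}$, standard positive system $\{\varepsilon_i-\varepsilon_j\mid i<j\}$, and simple roots $\{\varepsilon_i-\varepsilon_{i+1}\mid i\in[1,e-1]\}$. Under this identification, the quotient map $p:\ZZ I\to\ZZ I^\fin\cong \ZZ I/\ZZ\delta$ is determined by $p(\alpha_i)=\varepsilon_i-\varepsilon_{i+1}$ for $i\in[1,e-1]$ and $p(\alpha_0)=\varepsilon_e-\varepsilon_1$; uniformly, $p(\alpha_{\bar i}) = \varepsilon_{r(\bar i)} - \varepsilon_{r(\overline{i+1})}$, where $r:\ZZ_e\to[1,e]$ is the bijection with $r(\bar 0)=e$ and $r(\bar j)=j$ for $j\in[1,e-1]$.

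First I would verify, by telescoping, that
\[
p(\gamma^\theta_{[a,b]}) = \varepsilon_{\theta_a+1} - \varepsilon_{\theta_{b+1}+1}.
\]
The key observation is that for $\theta_t\in[0,e-1]$ we have $\theta_t+1\in[1,e]$, and this value is precisely the representative in $[1,e]$ of the residue $\overline{\theta_t+1}$; in particular $r(\overline{\theta_t+1})=\theta_t+1$ in all cases, including $\theta_t=e-1$ where both sides equal $e$. Each summand $\gamma^\theta_t = \alpha_{\overline{\theta_t+1}}+\cdots+\alpha_{\overline{\theta_{t+1}}}$ therefore maps under $p$ to the telescoped difference $\varepsilon_{\theta_t+1}-\varepsilon_{\theta_{t+1}+1}$, and summing over $t\in[a,b]$ via \eqref{gamseq} yields the stated identity.

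Next, I would define the permutation $\sigma\in\mathfrak{S}_e$ on $[1,e]$ by $\sigma(a):=\theta_a+1$; this is a bijection since $\theta$ is a permutation of $[0,e-1]$. The previous step then gives
\[
P_+^\theta = \{\varepsilon_{\sigma(a)}-\varepsilon_{\sigma(c)} \mid 1\leq a<c\leq e\},
\]
which is the image under $\sigma$ of the standard positive system $\{\varepsilon_i-\varepsilon_j\mid i<j\}$ when $\sigma$ acts on $\mathbb{R}^e$ by coordinate permutation. Since the Weyl group $\mathfrak{S}_e$ of type $\mathtt{A}_{e-1}$ carries positive root systems to positive root systems and bases to bases, $P_+^\theta$ is a positive root system in $\Phi^\fin$ with base
\[
\{\varepsilon_{\sigma(a)}-\varepsilon_{\sigma(a+1)} \mid a\in[1,e-1]\} = \{p(\gamma^\theta_a) \mid a\in[1,e-1]\} = \Delta^\theta.
\]

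The only real subtlety I anticipate is the cyclic-residue bookkeeping in the first step, accounting for the asymmetric treatment of $\alpha_0$ under $p$ as residues wrap around; once the telescoping identity is carefully established, the rest is the standard fact that any Weyl-translate of the standard positive system of a type $\mathtt{A}$ root system is again a positive system whose base is the image of the original simple roots.
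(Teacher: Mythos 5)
Your proof is correct, and it takes a genuinely different route from the paper's. The paper argues abstractly: it first checks that \(P_+^\theta\) is a \emph{closed} subset of \(\Phi^\fin\) (via a multi-case verification relegated to the arXiv version), then shows \(P_+^\theta \cap -P_+^\theta = \varnothing\), and finally invokes the extension theorem for closed subsets together with the cardinality count \(|P_+^\theta| = e(e-1)/2 = |\Phi^\fin|/2\) to conclude that \(P_+^\theta\) is already a full positive system. Your argument instead works in the explicit \(\varepsilon\)-realization of type \({\tt A}_{e-1}\): the telescoping identity \(p(\gamma^\theta_{[a,b]}) = \varepsilon_{\theta_a+1} - \varepsilon_{\theta_{b+1}+1}\) (which I have checked, including the wrap-around case \(\theta_t = e-1\) where \(r(\overline{\theta_t+1}) = r(\bar 0) = e = \theta_t+1\)) exhibits \(P_+^\theta\) as the image of the standard positive system under the coordinate permutation \(\sigma(a) = \theta_a + 1\), i.e.\ under a Weyl group element. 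This buys you two things the paper's proof does not: it eliminates the case analysis entirely, and it identifies the explicit element of \(\mathfrak{S}_e\) whose translate of the standard positive system is \(P_+^\theta\), from which the base claim is immediate rather than a separate verification. The paper's approach has the mild advantage of not depending on a choice of realization of \(\Phi^\fin\), but since the paper already fixes the standard realization in \S\ref{slsec}, this is not a real cost. Both proofs are complete; yours is the more economical.
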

\begin{proof}
First, we note that \(P_+^\theta\) is a {\em closed} subset of \(\Phi^\fin\), in the sense that whenever \(\beta_1, \beta_2 \in P_+^\theta\) are such that \(\beta_1 + \beta_2 \in \Phi^\fin\), it follows that \(\beta_1 + \beta_2 \in P_+^\theta\). This is a straightforward check, and full details can be found in the \texttt{arXiv} version of the paper as explained in \S\ref{SS:ArxivVersion}.

\begin{answer}
Details: Assume \(p(\gamma^\theta_{[a,b]}) + p(\gamma^\theta_{[c,d]}) \in \Phi^\fin\), for some \(1 \leq a \leq b \leq e-1\) and \(1 \leq c \leq d \leq e-1\). Without loss of generality, we may reduce our consideration to three cases:

{\em Case 1.} Assume \(\theta_a < \theta_{b+1}\) and \(\theta_c < \theta_{d+1}\). Then by (\ref{gammaconvert}) we have that 
\begin{align*}
p(\gamma^\theta_{[a,b]}) = \alpha_{\theta_a + 1} + \dots + \alpha_{\theta_{b+1}}
\qquad
\textup{and}
\qquad
p(\gamma^\theta_{[c,d]}) = \alpha_{\theta_c + 1} + \dots + \alpha_{\theta_{d+1}}
\end{align*}
Since the sum of these is a root in \(\Phi^\fin\), it is necessary that either \(b+1 = c\) or \(d+1 = a\). Without loss of generality we assume that \(b+1 = c\). Then we have \(a \leq b < b+1 = c \leq d\), and \(\theta_a < \theta_{b+1} = \theta_c < \theta_{d+1}\), so it follows that
\begin{align*}
p(\gamma^\theta_{[a,b]}) + p(\gamma^\theta_{[c,d]}) =  \alpha_{\theta_a + 1} + \dots+ \alpha_{\theta_{d+1}} = p(\gamma^\theta_{[a,d]}) \in P^\theta_+,
\end{align*}
as desired.

{\em Case 2.} Assume \(\theta_a > \theta_{b+1}\) and \(\theta_c > \theta_{d+1}\). Then by (\ref{gammaconvert}) we have that 
\begin{align*}
p(\gamma^\theta_{[a,b]}) = - \alpha_{\theta_{b+1} + 1} - \dots - \alpha_{\theta_a}
\qquad
\textup{and}
\qquad
p(\gamma^\theta_{[a,b]}) = - \alpha_{\theta_{d+1} + 1} - \dots - \alpha_{\theta_c}
\end{align*}
Since the sum of these is a root in \(\Phi^\fin\), it is necessary that either \(b+1 = c\) or \(d+1 = a\). Without loss of generality we assume that \(b+1 = c\). Then we have \(a \leq b < b+1 = c \leq d\), and \(\theta_a > \theta_{b+1} = \theta_c > \theta_{d+1}\), so it follows that
\begin{align*}
p(\gamma^\theta_{[a,b]}) + p(\gamma^\theta_{[c,d]}) =  
-\alpha_{\theta_{d+1} + 1} - \dots - \alpha_{\theta_a}
 = p(\gamma^\theta_{[a,d]}) \in P^\theta_+,
\end{align*}
as desired.

{\em Case 3.} Assume \(\theta_a < \theta_{b+1}\) and \(\theta_c > \theta_{d+1}\). Then by (\ref{gammaconvert}) we have that 
\begin{align*}
p(\gamma^\theta_{[a,b]}) = \alpha_{\theta_a + 1} + \dots + \alpha_{\theta_{b+1}}
\qquad
\textup{and}
\qquad
p(\gamma^\theta_{[a,b]}) = - \alpha_{\theta_{d+1} + 1} - \dots - \alpha_{\theta_c}
\end{align*}
Since the sum of these is a root in \(\Phi^\fin\), it is necessary that one of the following situations hold:
\begin{itemize}
\item \(b+1 = c\) and \(\theta_{d+1}> \theta_a\), or;
\item \(b+1 = c\) and \(\theta_{d+1}< \theta_a\), or;
\item \(d+1 =a\) and \(\theta_{c} < \theta_{b+1}\), or;
\item \(d+1 =a\) and \(\theta_{c} > \theta_{b+1}\).
\end{itemize}
In all cases, the arguments are similar. Let us examine only the first. Assume that \(b+1 = c\) and \(\theta_{d+1}> \theta_a\). Then we have \(a \leq b < b+1 = c \leq d\), and \(\theta_a < \theta_{d+1}\). Then we have:
\begin{align*}
p(\gamma^\theta_{[a,b]}) + p(\gamma^\theta_{[c,d]}) =  \alpha_{\theta_a + 1} + \dots+ \alpha_{\theta_{d+1}} = p(\gamma^\theta_{[a,d]}) \in P^\theta_+,
\end{align*}
as desired.
\end{answer}

Now, if \(\Phi_+^\theta \cap -\Phi_+^\theta \neq \varnothing\), there must exist 
\(1 \leq a \leq b \leq e-1\), \(1 \leq c \leq d \leq e-1\), such that \(p(\gamma^\theta_{[a,b]})+p(\gamma^\theta_{[c,d]})= 0\). For this to occur, we must have \(\gamma^\theta_{[a,b]}+ \gamma^\theta_{[c,d]}= m\delta\) for some \(m >0\). Then, by (\ref{gammaconvert}), we must have that
\begin{align*}
\alpha(\theta_a+1, \overline{\theta_{b+1}-\theta_a}) + \alpha(\theta_c+1, \overline{\theta_{d+1}-\theta_c}) = \delta.
\end{align*}
which implies that \(c = b+1\), \(d=a-1\). As \(a\leq b\) and \(c \leq d\), this is a contradiction. Thus \(\Phi_+^\theta \cap -\Phi_+^\theta = \varnothing\). 

Since \(\Phi_+^\theta\) is closed as a subset of \(\Phi^\fin\), with \(\Phi_+^\theta \cap -\Phi_+^\theta = \varnothing\), it follows that \(\Phi_+^\theta\) may be extended to a positive root system for \(\Phi^\fin\) (see for instance \cite[\S4]{BH}). But by inspection we have \(|\Phi_+^\theta| = e(e-1)/2 = |\Phi^\fin|/2\), so it must be that \(P_+^\theta\) already constitutes a positive root system in \(\Phi^\fin\). 

Finally, note that \(|\Delta^\theta| = e-1\), and by (\ref{gamseq}), 
we have \(
p(\gamma^\theta_{[a,b]}) = p(\gamma^\theta_a) + p(\gamma^\theta_{a+1}) + \dots +p(\gamma^\theta_b)
\) for all \(1 \leq a \leq b \leq e-1\), so \(\Delta^\theta\) constitutes a base for \(P_+^\theta\).
\end{proof}

\begin{Lemma}
Every element of \(\ZZ I\) can be written uniquely as
\begin{align*}
k \delta + m_1 \gamma_1^\theta +  \dots + m_{e-1} \gamma_{e-1}^\theta,
\end{align*}
for some \(k, m_1, \dots, m_{e-1} \in \ZZ\).
\end{Lemma}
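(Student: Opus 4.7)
The plan is to reduce the statement to the previous lemma via the short exact sequence
\[
0 \longrightarrow \ZZ\delta \longrightarrow \ZZ I \xrightarrow{\;p\;} \ZZ I^\fin \longrightarrow 0,
\]
noting that $\ker(p) = \ZZ\delta$ by the very definition of $p$ as reduction modulo $\delta$.

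First I would invoke \cref{posfin} to see that $\Delta^\theta = \{p(\gamma_1^\theta), \ldots, p(\gamma_{e-1}^\theta)\}$ is a base for the positive root system $P_+^\theta$ inside $\Phi^\fin$. Since $\Phi^\fin$ is the finite type $\mathtt{A}_{e-1}$ root system and $\ZZ I^\fin$ is the corresponding root lattice (a free abelian group of rank $e-1$), standard root system theory tells us that any base of $\Phi^\fin$ is a $\ZZ$-basis of $\ZZ I^\fin$. Thus $\{p(\gamma_1^\theta), \ldots, p(\gamma_{e-1}^\theta)\}$ is a $\ZZ$-basis of $\ZZ I^\fin$.

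Next, given any $\alpha \in \ZZ I$, the image $p(\alpha)$ can be written uniquely as $\sum_{t=1}^{e-1} m_t\, p(\gamma_t^\theta)$ for integers $m_t$. Then $\alpha - \sum_{t=1}^{e-1} m_t\, \gamma_t^\theta$ lies in $\ker(p) = \ZZ\delta$, so it equals $k\delta$ for a unique $k \in \ZZ$, yielding existence of the desired expression. For uniqueness, if $k\delta + \sum m_t \gamma_t^\theta = k'\delta + \sum m'_t \gamma_t^\theta$, applying $p$ and using linear independence of $\Delta^\theta$ in $\ZZ I^\fin$ gives $m_t = m'_t$ for all $t$, after which $k\delta = k'\delta$ forces $k = k'$.

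There is essentially no obstacle here once \cref{posfin} is in hand: the argument is the standard observation that a splitting of $\ZZ I \twoheadrightarrow \ZZ I^\fin$ together with a basis downstairs and a generator of the kernel upstairs produces a basis of $\ZZ I$. No further combinatorial input on $\theta$ or $\brho$ is needed.
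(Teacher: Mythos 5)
Your argument is correct and is essentially identical to the paper's own proof: both invoke \cref{posfin} to conclude that $\Delta^\theta$ is a $\ZZ$-basis of $\ZZ I^\fin$, then lift along $p$ using $\ker(p)=\ZZ\delta$ for existence, and apply $p$ plus linear independence for uniqueness. The short-exact-sequence framing is just a cleaner packaging of the same reasoning.
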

\begin{proof}
Let \(\beta \in \ZZ I\). 
As \(\Delta^\theta = \{p(\gamma^\theta_a) \mid 1 \leq a \leq e-1\}\) is a base for \(P_+^\theta \subseteq \Phi^\fin\) by \cref{posfin}, it follows that \(\Delta^\theta\) is a \(\ZZ\)-basis for \(\Z I^\fin = p(\Z I)\).
Then \(p( \beta) = m_1 p( \gamma^\theta_1 )+ \dots + m_{e-1} p( \gamma^\theta_{e-1})  \) for some \(m_1, \dots, m_{e-1} \in \ZZ\). Then \(\beta = k \delta + m_1 \gamma^\theta_1 +  \dots + m_{e-1} \gamma^\theta_{e-1}\) for some \(k \in \ZZ\), establishing existence.

For uniqueness, assume that
\begin{align*}
k \delta + m_1 \gamma^\theta_1 +  \dots + m_{e-1} \gamma^\theta_{e-1} = k' \delta + m_1' \gamma^\theta_1 +  \dots + m_{e-1}' \gamma^\theta_{e-1}
\end{align*}
for some \(k, k', m_1, m_1', \dots, m_{e-1}, m_{e-1}' \in \ZZ\). Then 
\begin{align*}
m_1 p( \gamma^\theta_1) + \dots + m_{e-1}p( \gamma^\theta_{e-1}) =  m_1' p(\gamma^\theta_1)+ \dots + m_{e-1}' p(\gamma^\theta_{e-1}),
\end{align*}
which implies that \(m_a = m_a'\) for all \(a \in [1,e-1]\). Thus \(k \delta = k' \delta\), which implies that \(k = k'\), establishing uniqueness.
\end{proof}

\subsection{Removable and addable ribbons}\label{subsec:remaddribs}
Continuing on with our fixed choice of level \(\ell\), \(\ell\)-multicharge \(\bkap\), multicore \(\brho \in \Lambda^{\bkap}_+\), and residue permutation \(\theta\),
we now define subsets of the root lattice \(\ZZ I\) related to the removable and addable ribbons for \(\brho\):
\begin{align*}
\remrib^\theta &:= \{k \delta + \gamma^\theta_{[a,b]} \mid 1 \leq a \leq b \leq e-1, k < h^{\max,\theta}_{[a,b]}(\brho, \bkap) \}\\
\addrib^\theta &:=  \{k \delta + \gamma^\theta_{[a,b]} \mid 1 \leq a \leq b \leq e-1, k > h^{\min,\theta}_{[a,b]}(\brho, \bkap)\}\\
&\hspace{10mm}
\cup \{k \delta \mid k >0\} \cup \{k \delta - \gamma^\theta_{[a,b]} \mid 1 \leq a \leq b \leq e-1, k  > 0\}.
\end{align*}

\begin{Lemma}\label{reminR}
Let \(\brho \in \Lambda_+^{\bkap}\) be a \((\bkap,\theta)\)-RoCK multicore.
\begin{enumerate}
\item
If \(\xi\) is a removable ribbon for \(\brho\), then \(\cont(\xi) \in \remrib^\theta\). 
\item 
If \(\xi\) is an addable ribbon for \(\brho\), then \(\cont(\xi) \in \addrib^\theta\). 
\end{enumerate}
\end{Lemma}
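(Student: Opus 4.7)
The plan is to reduce both parts to explicit bead-move computations via the parametrizations in \S\ref{remribsexp} and \S\ref{addribsexp}, and then to repackage the resulting contents into the $\gamma^\theta$-language of \S\ref{reindsec} using the conversion identities (\ref{gammaconvert}) and (\ref{hconvert}). The multicore hypothesis will give $\B^r_i(\brho,\bkap) = M^r_i - e\ZZ_{\geq 0}$, and this together with the $(\bkap,\theta)$-RoCK condition will supply the exact inequalities needed.

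For (i), I would fix a removable ribbon $\xi$ and let $(x,y,r)$ be the corresponding bead datum, so $y \in \B^r(\brho,\bkap)$, $x \notin \B^r(\brho,\bkap)$, $x<y$, and $\cont(\xi) = \alpha(x+1, y-x)$. Setting $\theta_{a'} = \bar x$ and $\theta_{b'+1} = \bar y$, the multicore condition $\B^r_{\bar y} = M^r_{\bar y} - e\ZZ_{\geq 0}$ immediately rules out $\bar x = \bar y$, so $a' \ne b'+1$; the $(\bkap,\theta)$-RoCK hypothesis then forces $a' \leq b'$. Writing $y-x = Ne + M$ with $M = \overline{\theta_{b'+1} - \theta_{a'}} \in [1, e-1]$, formula (\ref{gammaconvert}) yields
\[
\cont(\xi) = N\delta + \alpha(x+1, M) = (N - m)\delta + \gamma^\theta_{[a',b']},
\]
where $m = \lfloor \tfrac{1}{e}\sum_{k=a'}^{b'} \overline{\theta_{k+1} - \theta_k}\rfloor$. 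The crux is then the bound $N \leq h^r_{\theta_{a'},\theta_{b'+1}} - 1$. This comes from the multicore-based inequalities $x \geq M^r_{\theta_{a'}} + e$ and $y \leq M^r_{\theta_{b'+1}}$, which imply $y-x \leq D - e$ for $D := M^r_{\theta_{b'+1}} - M^r_{\theta_{a'}}$; writing $D = qe + r_0$ with $0 \leq r_0 < e$, a brief case split on $r_0 \geq M$ versus $r_0 < M$ shows $N \leq q - 1 = h^r_{\theta_{a'},\theta_{b'+1}} - 1$. Combining with (\ref{hconvert}), we obtain $N - m < h^{r,\theta}_{[a',b']} \leq h^{\max,\theta}_{[a',b']}(\brho,\bkap)$, proving $\cont(\xi) \in \remrib^\theta$.

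For (ii), fix an addable ribbon with bead datum $(x,y,r)$, so $x \in \B^r$, $y \notin \B^r$, $x < y$. Set $\theta_a = \bar x$ and $\theta_b = \bar y$ and split into three cases. If $a = b$, then $y - x$ is a positive multiple of $e$, giving $\cont(\xi) = N\delta$ with $N \geq 1$, which lies in $\addrib^\theta$. If $a < b$, the same style of rewriting yields $\cont(\xi) = (N - m)\delta + \gamma^\theta_{[a,b-1]}$; now multicore-ness supplies $x \leq M^r_{\theta_a}$ and $y \geq M^r_{\theta_b} + e$, whence $y - x \geq D + e$ and a symmetric floor-function argument produces $N \geq h^r_{\theta_a, \theta_b} + 1$, so $N - m > h^{r,\theta}_{[a,b-1]} \geq h^{\min,\theta}_{[a,b-1]}(\brho,\bkap)$. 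Finally, if $a > b$, I would invoke the identity $\alpha(\theta_a + 1, \overline{\theta_b - \theta_a}) = \delta - \alpha(\theta_b + 1, \overline{\theta_a - \theta_b})$, valid because $\overline{\theta_b - \theta_a} + \overline{\theta_a - \theta_b} = e$ (neither residue being $0$). This rewrites $\cont(\xi) = (N + 1 + m')\delta - \gamma^\theta_{[b, a-1]}$ for the analogous $m'$, and since $N, m' \geq 0$, this lies in the $\{k\delta - \gamma^\theta_{[a,b]} \mid k > 0\}$ portion of $\addrib^\theta$.

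The main obstacle is the bookkeeping: one must carefully track whether $\cont(\xi)$ should be written with a $+\gamma^\theta_{[\cdot,\cdot]}$ piece or $-\gamma^\theta_{[\cdot,\cdot]}$ piece (and with which indices), depending on the relative order of $a$ and $b$, and then reconcile the resulting floor-function arithmetic with the definitions of $h^{r,\theta}_{[a,b]}$ via (\ref{hconvert}). No deeper machinery is needed; the argument is almost entirely integer-combinatorial, leaning on the residue-bead description of $(\bkap,\theta)$-RoCK multicores and the indivisible-root identities of \S\ref{reindsec}. The RoCK hypothesis is used essentially only in part (i), to guarantee $a' \leq b'$; part (ii) uses only that $\brho$ is a multicore.
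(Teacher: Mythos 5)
Your proposal is correct and follows essentially the same route as the paper's proof: the bead-move parametrizations of removable/addable ribbons, the multicore inequalities $M^r_{\theta_a}+e\leq x<y\leq M^r_{\theta_{b+1}}$ (resp.\ $x\leq M^r_{\theta_a}$, $y\geq M^r_{\theta_{b+1}}+e$), the conversion to $\gamma^\theta_{[a,b]}+k\delta$ via (\ref{gammaconvert}) and (\ref{hconvert}), and the same three-case split in part (ii). The only (welcome) difference is that you make explicit why $\bar x\neq\bar y$ in part (i), which the paper leaves implicit.
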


\begin{proof}
As we are working with fixed \(\brho, \bkap\), we will omit the notation \((\brho, \bkap)\) throughout the proof, for the sake of space.

(i) 
Let \(\xi\) be a removable ribbon in the \(r\)th component of \(\brho\). Then, as explained in \S\ref{remribsexp}, \(\xi\) corresponds to some \(x<y\), \(\bar x = \theta_a\), \(\bar y = \theta_{b+1}\), with \(x \notin \B^r_{\theta_a}\), \(y \in \B^r_{\theta_{b+1}}\), for some \(a \leq b\), since \(\brho\) is \((\bkap,\theta)\)-RoCK. Since \(\brho\) is a multicore
we have \(M^r_{\theta_a} + e \leq x < y \leq M^r_{\theta_{b+1}}\).

We have
\begin{align*}
\cont(\xi) &= \alpha(x+1, y-x) = \alpha(x+1, \overline{y-x}) + \left\lfloor  \frac{y-x}{e} \right\rfloor \delta\\
&= \alpha(\theta_a + 1, \overline{ \theta_{b+1} - \theta_a}) + \left\lfloor  \frac{y-x}{e} \right\rfloor \delta\\
&= \gamma^\theta_{[a,b]} + \left( 
 \left\lfloor  \frac{y-x}{e} \right\rfloor - 
  \left \lfloor  \frac{1}{e} \sum_{k = a}^b \overline{ \theta_{k+1} - \theta_k} \right \rfloor 
\right)
\delta, \stepcounter{equation}\tag{\theequation}\label{gammadeltaeq}
\end{align*}
where the last equality follows from (\ref{gammaconvert}).

We also have
\begin{align*}
 \left\lfloor  \frac{y-x}{e} \right\rfloor
  \leq 
  \left \lfloor
  \frac{M^r_{\theta_{b+1}} - M^r_{\theta_a} - e}{e}  \right \rfloor = h^r_{\theta_a, \theta_{b+1}} -1
  =
  h^{r,\theta}_{[a,b]} +  \left \lfloor  \frac{1}{e} \sum_{k = a}^b \overline{ \theta_{k+1} - \theta_k} \right \rfloor  -1,
\end{align*}
where the last equality follows from (\ref{hconvert}).

Therefore we have \(
\cont(\xi) = k \delta + \gamma^\theta_{[a,b]},
\)
where \(k \leq h^{r, \theta}_{[a,b]} - 1 < h^{\max, \theta}_{[a,b]}\), so \(\cont(\xi) \in \remrib^\theta\), as desired.

(ii) 
Now let \(\xi\) be an addable ribbon in the \(r\)th component of \(\brho\).
Then, as explained in \S\ref{addribsexp}, \(\xi\) corresponds to some \(x<y\), \(\bar x = \theta_a\), \(\bar y = \theta_{b+1}\), with \(x \in \B^r_{\theta_a}\), \(y \notin \B^r_{\theta_{b+1}}\), for some \(a,b\). Then we have \(x \leq M^r_{\theta_a}\) and \(M^r_{\theta_{b+1}} + e \leq y\) since \(\brho\) is a multicore. We consider three cases:

{\em (1) \(b = a-1\)}. Then \(\cont(\xi) = \left( \frac{y-x}{e}\right) \delta \in \addrib^\theta\).

{\em (2) \(b < a-1\)}. Then 
\begin{align*}
\cont(\xi) &= \alpha(\theta_a + 1, \overline{\theta_{b+1} - \theta_a}) + \left \lfloor \frac{y-x}{e} \right \rfloor \delta\\
&= \delta - \alpha(\theta_{b+1} + 1, \overline{\theta_a - \theta_{b+1}}) + \left \lfloor \frac{y-x}{e} \right \rfloor \delta\\
&= \left(  \left \lfloor  \frac{1}{e} \sum_{k = b+1}^{a-1} \overline{ \theta_{k+1} - \theta_k} \right \rfloor + \left \lfloor \frac{y-x}{e} \right \rfloor + 1 \right) \delta - \gamma^\theta_{[b+1,a-1]} \in \addrib^\theta. \stepcounter{equation}\tag{\theequation}\label{gammadeltaeq2}
\end{align*}

{\em (3) \(b > a-1\)}. Then, as in part (i), we have
\begin{align*}
\cont(\xi) =\gamma^\theta_{[a,b]} + \left( 
 \left\lfloor  \frac{y-x}{e} \right\rfloor - 
  \left \lfloor  \frac{1}{e} \sum_{k = a}^b \overline{ \theta_{k+1} - \theta_k} \right \rfloor  \right) \delta. 
\end{align*}
We also have
\begin{align*}
\left \lfloor
\frac{y-x}{e}
\right \rfloor
 \geq \left \lfloor
 \frac{M^r_{\theta_{b+1}} - M^r_{\theta_a} + e}{e}
 \right \rfloor
 =h^r_{\theta_a, \theta_{b+1}} + 1
 =
 h^{r,\theta}_{[a,b]} +  \left \lfloor  \frac{1}{e} \sum_{k = a}^b \overline{ \theta_{k+1} - \theta_k} \right \rfloor + 1.
\end{align*}
So it follows that \(\cont(\xi) = k\delta + \gamma^\theta_{[a,b]}\), where \(k \geq h^{r,\theta}_{[a,b]} + 1 > h^{\min, \theta}_{[a,b]}\), and thus \(\cont(\xi) \in \addrib^\theta\).
\end{proof}

\section{Cuspidal tilings}\label{cusptilesec}

\subsection{Convex preorders}\label{subsec:convexpreorders}
A {\em convex preorder} on \(\Phi_+\) is a binary relation \(\succeq\) on \(\Phi_+\) which, for all \(\beta, \gamma, \nu \in \Phi_+\) satisfies the following:
\begin{enumerate}
\item \(\beta \succeq \beta\) (reflexivity);
\item \(\beta \succeq \gamma\) and \(\gamma \succeq \nu\) imply \(\beta \succeq \nu\) (transitivity);
\item \(\beta \succeq \gamma\) or \(\gamma \succeq \beta\) (totality);
\item \(\beta \succeq \gamma\) and \(\beta + \gamma \in \Phi_+\) imply \(\beta \succeq \beta  + \gamma \succeq \gamma\) (convexity);
\item \(\beta \succeq \gamma\) and \(\gamma \succeq \beta\) if and only if \(\beta = \gamma\) or \(\beta, \gamma \in \Phi_+^\im\) (imaginary equivalence).
\end{enumerate}
We write \(\beta \succ \gamma\) if \(\beta \succeq \gamma\) and \(\gamma \not \succeq \beta\). Then (iii) and (v) together imply that \(\succ\) restricts to a total order on \(\Psi = \Phi_+^\re \sqcup \{ \delta\}\). We also write \(\beta \approx \gamma\) if \(\beta \succeq \gamma\) and \(\gamma \succeq \beta\), so (v) and (\ref{allimag}) imply that \(\beta \approx \gamma\), \(\beta \neq \gamma\) if and only if \(\beta = m \delta\), \(\gamma = m'\delta\), for some \(m \neq m' \in \mathbb{Z}_{>0}\). 

Given a convex preorder \(\succeq\), we write
\begin{align*}
\Phi_{\succ \delta} := \{ \beta \in \Phi_+ \mid \beta \succ \delta\};
\qquad
\Phi_{\prec \delta} := \{ \beta \in \Phi_+ \mid \beta \prec \delta\}.
\end{align*}
It follows from these definitions that \(\Phi_+^\re = \Phi_{\succ \delta} \sqcup \Phi_{\prec \delta}\).

Convex preorders are known to exist for any choice of \(e\), and a classification and general method of construction of convex preorders is given in \cite{Ito}. The proposition below, lightly paraphrased from \cite[Example 2.14(ii)]{affineMVpoly} and \cite[Example 3.5]{McN17}, provides a way to generate convex preorders on \(\Phi_+\).

\begin{Proposition}\label{genconvex} 
Let \((V, \geq)\) be a totally ordered \(\Q\)-vector space. Let \(h:\ZZ I \to V\) be a \(\ZZ\)-linear map such that \(\beta \mapsto  \frac{h(\beta)}{\textup{ht}(\beta)} \) is injective on \(\Psi \subseteq \ZZ I\). Then the relation 
\begin{align*}
\beta \succeq \gamma \iff \frac{h(\beta)}{\textup{ht}(\beta)} \geq \frac{h(\gamma)}{\textup{ht}(\gamma)}
\end{align*}
defines a convex preorder on \(\Phi_+\).
\end{Proposition}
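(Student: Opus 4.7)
The proof plan is to verify the five convex preorder axioms in turn, with the substantive work concentrated on (iv) convexity and (v) imaginary equivalence; the others will drop out immediately from the total order on $V$.

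First I would dispatch reflexivity, transitivity, and totality. Writing $r(\beta):=h(\beta)/\textup{ht}(\beta)\in V$ (well-defined since heights of positive roots are positive integers, and $V$ is a $\Q$-vector space), the relation $\beta\succeq\gamma\iff r(\beta)\geq r(\gamma)$ is the pullback along $r$ of the total order on $V$, so (i)--(iii) are immediate.

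For convexity (iv), suppose $\beta\succeq\gamma$ and $\beta+\gamma\in\Phi_+$. Let $b=\textup{ht}(\beta)$ and $c=\textup{ht}(\gamma)$, both positive integers, and note $\textup{ht}(\beta+\gamma)=b+c$ and $h(\beta+\gamma)=h(\beta)+h(\gamma)$ by $\ZZ$-linearity. The hypothesis reads $h(\beta)/b\geq h(\gamma)/c$, equivalently $c\,h(\beta)\geq b\,h(\gamma)$ (scaling by the positive rational $bc$ preserves the order on $V$). Adding $b\,h(\beta)$ to both sides and dividing by the positive integer $b(b+c)$ gives $h(\beta)/b\geq (h(\beta)+h(\gamma))/(b+c)$, i.e.\ $\beta\succeq\beta+\gamma$; the other mediant inequality $\beta+\gamma\succeq\gamma$ is symmetric. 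This is the standard mediant argument, carried out in $V$ rather than $\Q$.

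Finally for imaginary equivalence (v), the key observation is that for any imaginary root $m\delta$ one has $r(m\delta)=mh(\delta)/(me)=h(\delta)/e=r(\delta)$, so all imaginary roots are equivalent to one another and to $\delta\in\Psi$. Conversely, suppose $\beta\approx\gamma$, i.e.\ $r(\beta)=r(\gamma)$. If $\beta,\gamma\in\Phi_+^{\textup{re}}\subseteq\Psi$, injectivity of $r$ on $\Psi$ forces $\beta=\gamma$. If exactly one, say $\beta$, is real while $\gamma=m\delta$ is imaginary, then $r(\beta)=r(\gamma)=r(\delta)$ with $\beta,\delta\in\Psi$, so injectivity on $\Psi$ gives $\beta=\delta$, contradicting that $\beta$ is real. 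The remaining case, both imaginary, is handled by the opening observation. This completes the verification.

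The main (mild) obstacle is being careful about dividing by integers in the totally ordered $\Q$-vector space $V$ in step (iv) to legitimize the mediant inequality; no step is deep, but writing clean inequalities in $V$ (as opposed to $\Q$) requires clearing denominators before manipulating. Everything else is bookkeeping using the injectivity hypothesis and the fact that $\delta$ is the unique element of $\Psi$ whose positive integer multiples are all imaginary.
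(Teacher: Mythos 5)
Your proof is correct. The paper itself gives no proof of this proposition (it is quoted from the cited references of McNamara and of Baumann--Kamnitzer--Tingley), and your argument — pulling back the total order on $V$ for axioms (i)--(iii), the mediant inequality after clearing denominators for convexity, and the observation that $r(m\delta)=r(\delta)$ combined with injectivity of $r$ on $\Psi$ for imaginary equivalence — is exactly the standard verification one would find there. The only implicit assumption worth flagging is that "totally ordered $\Q$-vector space" means the order is compatible with addition and positive scalar multiplication (which your step (iv) uses when multiplying by $bc$ and dividing by $b(b+c)$); this is the intended reading, since the statement is false for an arbitrary total order on the underlying set.
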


\begin{Proposition}\label{fintoaff}
Let \(P_+ \subseteq \Phi^\fin\) be a positive root system for \(\Phi^\fin\), and let \(P_- = -P_+\). There exists a convex preorder \(\succeq\) on \(\Phi_+\) such that
\begin{align}\label{exconvgen}
\Phi_{\succ \delta} = \{ \beta \in \Phi_+ \mid p(\beta) \in P_+\}
\qquad
\textup{and}
\qquad
\Phi_{\prec \delta} = \{ \beta \in \Phi_+ \mid p(\beta) \in P_-\}.
\end{align}
\end{Proposition}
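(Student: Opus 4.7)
The plan is to apply \cref{genconvex} with a carefully chosen pair $(V, h)$. Since $P_+$ is a positive root system for the finite type root system $\Phi^\fin$, it admits a base $\Delta = \{\beta_1,\dots,\beta_{e-1}\} \subseteq P_+$. Fix positive reals $s_1,\dots,s_{e-1}$ that are linearly independent over $\QQ$, and let $f: \ZZ I^\fin \to \RR$ be the $\ZZ$-linear functional defined by $f(\beta_j) := s_j$. Then $f(\alpha) > 0$ for every $\alpha \in P_+$ (as $\alpha$ is a nonnegative integer combination of the $\beta_j$, not all zero), and correspondingly $f(\alpha) < 0$ for every $\alpha \in P_-$.

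Let $V := \QQ \oplus \RR$ equipped with the lexicographic order, viewed as a totally ordered $\QQ$-vector space, and define $h : \ZZ I \to V$ by
\[
h(\beta) := \bigl(\height(\beta),\, f(p(\beta))\bigr).
\]
For every $\beta \in \Phi_+$ we have $h(\beta)/\height(\beta) = \bigl(1,\, f(p(\beta))/\height(\beta)\bigr)$; in particular $h(\delta)/\height(\delta) = (1,0)$ since $p(\delta)=0$. In the lex order this makes $\beta \succ \delta$ iff $f(p(\beta)) > 0$ iff $p(\beta) \in P_+$, and $\beta \prec \delta$ iff $p(\beta) \in P_-$, establishing~(\ref{exconvgen}). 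Every imaginary root $m\delta$ also maps to $(1,0)$, consistent with axiom (v) of a convex preorder.

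What remains is the injectivity hypothesis of \cref{genconvex} on $\Psi = \Phi_+^\re \sqcup \{\delta\}$. The element $\delta$ is uniquely characterized in $\Psi$ by $f(p(\beta)) = 0$, so no real root competes with it. For two real positive roots $\beta = \alpha + n\delta$ and $\beta' = \alpha' + n'\delta$ with $\alpha, \alpha' \in \Phi^\fin$ and $n, n' \in \ZZ_{\geq 0}$, the equality $f(\alpha)/\height(\beta) = f(\alpha')/\height(\beta')$ cross-multiplies to a relation $\sum_j \bigl[c_j \height(\beta') - c'_j \height(\beta)\bigr] s_j = 0$, where $c_j, c'_j \in \ZZ$ are the $\beta_j$-coordinates of $\alpha, \alpha'$. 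The $\QQ$-linear independence of the $s_j$ forces $c_j = \lambda c'_j$ for all $j$ with common ratio $\lambda = \height(\beta)/\height(\beta') > 0$, i.e.~$\alpha = \lambda \alpha'$ in $\QQ I^\fin$. Since $\Phi^\fin$ is simply-laced of type ${\tt A}_{e-1}$, any two roots proportional by a positive rational must coincide, so $\lambda = 1$, $\alpha = \alpha'$, and then matching heights gives $n = n'$. The only delicate step is this injectivity check, but it is a routine generic-choice argument on the $s_j$; I anticipate no real obstacle before \cref{genconvex} produces the desired convex preorder $\succeq$ satisfying~(\ref{exconvgen}).
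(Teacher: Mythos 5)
Your proof is correct and follows essentially the same route as the paper: both instantiate \cref{genconvex} with a lexicographically ordered target and a $\ZZ$-linear map whose "separating" coordinate is positive on roots projecting to $P_+$, negative on those projecting to $P_-$, and zero on $\ZZ\delta$. The only substantive difference is that you verify the injectivity hypothesis of \cref{genconvex} explicitly (via the generic choice of $\QQ$-independent $s_j$ and the fact that proportional roots in a reduced system coincide), whereas the paper handles this implicitly through its geometric embedding of $\Phi^\fin$ in $\RR^{e-1}$; your extra care here is welcome rather than a deviation.
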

\begin{proof}
We may orient the root system \(\Phi^\fin\) in \(\RR^{e-1}\) such that the hyperplane \((0, x_2, \dots, x_{e-1})\) separates \(P_+\) (assumed to be positive in the first component) from \(P_-\) (assumed to be negative in the first component). Extend this embedding of \(I^\fin\) in \(\RR^{e-1}\) to a \(\ZZ\)-linear map \(h:\ZZ I \to \RR^{e-1}\) with \(\delta \mapsto 0\). Take the lexicographic total order on \(\RR^{e-1}\), and define a convex preorder \(\succeq\) as in \cref{genconvex}. Note then that the first component of \(h(\beta)\) is positive when \(p(\beta) \in P_+\), negative when \(p(\beta) \in P_-\), and zero when \(\beta= m\delta\) for \(m \in \ZZ_{>0}\), which establishes (\ref{exconvgen}).
\end{proof}

\subsection{Special convex preorders}
Fix a choice of \(\brho \in \Lambda^{\bkap}_+\) and residue permutation \(\theta\). Recall the accompanying data defined in \S\ref{reindsec}. 

\begin{Definition}\label{realizethetadef}
Let \(\theta\) be a residue permutation. We say that a convex preorder \(\succeq\) on \(\Phi_+\) {\em realizes} \(\theta\) provided that the following hold.
\begin{align}
\Phi_{\succ \delta} &= \{ k \delta + \gamma^\theta_{[a,b]} \mid 1 \leq a \leq b \leq e-1, k \in \ZZ\} \cap \Phi_+\label{gdel}\\
\Phi_{\prec \delta} &= \{ k \delta - \gamma^\theta_{[a,b]}  \mid 1 \leq a \leq b \leq e-1, k \in \ZZ\} \cap \Phi_+\label{ldel}
\end{align}
\end{Definition}

\begin{Lemma}\label{realex}
For any residue permutation \(\theta\), there exists a convex preorder \(\succeq\) on \(\Phi_+\) which realizes \(\theta\).
\end{Lemma}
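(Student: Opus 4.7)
The plan is to apply the two preceding results in sequence. By \cref{posfin}, the set $P_+^\theta = \{p(\gamma^\theta_{[a,b]}) \mid 1 \leq a \leq b \leq e-1\}$ is a positive root system for $\Phi^\fin$. By \cref{fintoaff}, there then exists a convex preorder $\succeq$ on $\Phi_+$ such that
\begin{align*}
\Phi_{\succ \delta} = \{\beta \in \Phi_+ \mid p(\beta) \in P_+^\theta\}, \qquad \Phi_{\prec \delta} = \{\beta \in \Phi_+ \mid p(\beta) \in -P_+^\theta\}.
\end{align*}
I would take this $\succeq$ as the candidate preorder and verify that it realizes $\theta$ in the sense of \cref{realizethetadef}.

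The verification amounts to a short bookkeeping check using the $\Z$-module map $p : \Z I \to \Z I^\fin \cong \Z I / \Z\delta$. For the equality (\ref{gdel}), note that $\beta \in \Phi_{\succ \delta}$ if and only if $p(\beta) = p(\gamma^\theta_{[a,b]})$ for some $1 \leq a \leq b \leq e-1$, which is equivalent to $\beta - \gamma^\theta_{[a,b]} \in \Z\delta$, i.e., $\beta = k\delta + \gamma^\theta_{[a,b]}$ for some $k \in \Z$. The equality (\ref{ldel}) follows similarly by negation.

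Since both steps invoke already-established results, the proof should be just a few lines, with no genuine obstacle; the only substantive work has been absorbed into \cref{posfin} and \cref{fintoaff}.
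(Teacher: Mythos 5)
Your proposal is correct and is exactly the paper's argument: the proof in the paper simply cites \cref{posfin} and \cref{fintoaff}, with the translation between $p(\beta)\in P_+^\theta$ and $\beta = k\delta + \gamma^\theta_{[a,b]}$ left implicit. Your added bookkeeping check is valid and fills in precisely that implicit step.
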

\begin{proof}
Follows from \cref{posfin,fintoaff}.
\end{proof}

\begin{Lemma}\label{realizelem}
Every convex preorder \(\succeq\) realizes a unique residue permutation \(\theta\).
\end{Lemma}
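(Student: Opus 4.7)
\emph{Proof plan.} The approach is to extract from $\succeq$ a positive root system in $\Phi^\fin$ via the map $p:\Z I\to \Z I^\fin$, and then identify $\theta$ as the unique residue permutation whose associated positive system $P^\theta_+$ from \cref{posfin} agrees with it.

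First I would prove the stability property that, for $\beta\in\Phi_+^\re$ and $m\in\Z$ with $\beta+m\delta\in\Phi_+^\re$, one has $\beta\succ\delta$ if and only if $\beta+m\delta\succ\delta$. This follows from convexity applied to the pair $\{\beta,m\delta\}$ (when $m>0$) combined with the imaginary equivalence $m\delta\approx\delta$; the $m<0$ case reduces to this by writing $\beta = (\beta+m\delta)+(-m)\delta$. Setting $P_+ := p(\Phi_{\succ\delta})\subseteq \Phi^\fin$, this stability yields $p^{-1}(P_+)\cap\Phi_+^\re = \Phi_{\succ\delta}$ and $P_+\cup p(\Phi_{\prec\delta}) = \Phi^\fin$. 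I would then verify $P_+$ is a positive root system in $\Phi^\fin$ in two steps: showing $P_+\cap(-P_+)=\varnothing$ (representatives $\beta_1,\beta_2\in\Phi_{\succ\delta}$ with $p(\beta_1)=-p(\beta_2)$ would force $\beta_1+\beta_2\in\Z_{>0}\delta$, contradicting convexity together with $k\delta\approx\delta$), and showing closure under addition (given $\alpha_1,\alpha_2\in P_+$ with $\alpha_1+\alpha_2\in\Phi^\fin$, choose representatives $\beta_i=m_i\delta+\alpha_i\in\Phi_{\succ\delta}$ with $m_i$ large enough that $\beta_1+\beta_2\in\Phi_+^\re$, and apply convexity). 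This also forces $p(\Phi_{\prec\delta})=-P_+$.

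Next I would establish that the assignment $\theta\mapsto P^\theta_+$ is a bijection between residue permutations and positive root systems in $\Phi^\fin$. Both sets have cardinality $e!$ (the latter by the standard classification of positive systems in type ${\tt A}_{e-1}$ via the Weyl group $\mathfrak{S}_e$), so injectivity suffices. The ordered tuple $(\theta_1,\ldots,\theta_e)$ can be reconstructed from the base $\Delta^\theta=\{p(\gamma^\theta_t)\}$: the explicit formula for $p(\gamma^\theta_t)$ exhibits $\Delta^\theta$ as a type-$A$ Dynkin chain whose vertices are labelled by $\Z_e$, and the endpoints $\theta_1+1,\theta_e+1$ are distinguished by the positive-system structure, after which the remaining $\theta_i$ are read off consecutively. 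Letting $\theta$ be the unique residue permutation with $P^\theta_+ = P_+$, the right-hand side of \eqref{gdel} coincides with $p^{-1}(P^\theta_+)\cap \Phi_+^\re$ (since any $\beta\in\Phi_+^\re$ with $p(\beta)=p(\gamma^\theta_{[a,b]})$ differs from $\gamma^\theta_{[a,b]}$ by a $\Z$-multiple of $\delta$), and this equals $\Phi_{\succ\delta}$ by the stability property; \eqref{ldel} is analogous. Uniqueness of $\theta$ is immediate from injectivity of $\theta\mapsto P^\theta_+$, since any residue permutation realized by $\succeq$ must yield $P^\theta_+ = P_+$.

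\textbf{The main obstacle} I anticipate is the injectivity step: while \cref{posfin} supplies the map and its well-definedness, pinning down the ordered tuple $(\theta_1,\ldots,\theta_e)$ from the unordered base $\Delta^\theta$ requires a careful argument distinguishing the two endpoints of the type-$A$ Dynkin chain using the positive-system data.
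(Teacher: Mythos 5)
Your proposal is correct, but it takes a more structural route than the paper. The paper works directly on the set of residues: for each pair $i<j$ it observes that exactly one of $\alpha(i+1,j-i)$ and $\delta-\alpha(i+1,j-i)$ lies in $\Phi_{\succ\delta}$, uses this to define a relation $<^*$ on $[0,e-1]$, checks transitivity by an eight-case convexity argument, reads off $\theta$ from the resulting total order, and then verifies \eqref{gdel}--\eqref{ldel} and uniqueness by hand. You instead push everything through $p$ into $\Phi^\fin$: the stability of $\Phi_{\succ\delta}$ under $\delta$-shifts and the closure/disjointness checks show $p(\Phi_{\succ\delta})$ is a positive system, and then the injectivity of $\theta\mapsto P^\theta_+$ together with the count $e!$ on both sides produces the unique $\theta$. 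Your transitivity-free packaging buys a cleaner conceptual statement (it is essentially the converse of the paper's \cref{fintoaff}/\cref{realex} construction), at the cost of invoking the classification of positive systems in type ${\tt A}_{e-1}$; the paper's version is self-contained but case-heavy. One remark: the "main obstacle" you flag — recovering the ordered tuple from the unordered base $\Delta^\theta$ — is not actually an obstacle. You do not need to locate the endpoints of the Dynkin chain: by \eqref{gammaconvert}, $p(\gamma^\theta_{[a,b]})=p(\alpha(\theta_a+1,\overline{\theta_{b+1}-\theta_a}))$, so membership of $p(\alpha(i+1,\overline{j-i}))$ in $P^\theta_+$ is equivalent to $i$ preceding $j$ in $\theta$ (its negative is $p(\alpha(j+1,\overline{i-j}))$, and a positive system contains exactly one of the pair). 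Thus $P^\theta_+$ directly encodes the total order on $[0,e-1]$, and injectivity is immediate.
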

\begin{proof}
For all \(0 \leq i < j \leq e-1\), note that, by convexity, exactly one of \( \alpha(i+1, j-i)\) or \( \delta - \alpha(i+1, j-i)\) is in \(\Phi_{\succ \delta}\), and the other must be in \(\Phi_{\prec \delta}\). Thus we may define a connected, irreflexive order \(<^*\) on \([0,e-1]\) by setting
\begin{align*}
i <^* j \textup{ if } \alpha(i+1, j-i) \in \Phi_{\succ \delta}
\qquad
\textup{and}
\qquad
i >^* j \textup{ if } \delta - \alpha(i+1, j-i) \in \Phi_{\succ \delta}.
 \end{align*}
 for all \(0 \leq i < j \leq e-1\). 
 
Now we must check that \(<^*\) is transitive.
This is a straightforward check, and full details can be found in the \texttt{arXiv} version of the paper as explained in \S\ref{SS:ArxivVersion}. 
 
\begin{answer}
Details: Assume that \(i<^* j <^*k\).
We consider eight possible cases, based on the relative magnitude of \(i,j,k\).

\begin{enumerate}
\item If \(i < j<k\), then \( \alpha(i+1,j-i),  \alpha(j+1, k-j) \in \Phi_{\succ \delta}\), so by convexity, we have that
 \begin{align*}
 \alpha(i+1,j-i) + \alpha(j+1, k-j) = \alpha(i+1, k-i) \in \Phi_{\succ \delta},
 \end{align*}
 and thus \(i<^* k\).
 \item If \(i < k < j\), then \( \alpha(i+1,j-i), \delta- \alpha(k+1, j-k) \in \Phi_{\succ \delta}\), so by convexity we have that 
 \begin{align*}
  \alpha(i+1,j-i) + \delta- \alpha(k+1, j-k) = \delta + \alpha(i+1, k-i)\in \Phi_{\succ \delta},
 \end{align*}
 and thus by convexity again \( \alpha(i+1, k-i)\in \Phi_{\succ \delta} \), so \(i<^*k\).
 \item If \(k < i < j\), then \( \alpha(i+1,j-i),   
  \delta- \alpha(k+1, j-k)
 \in \Phi_{\succ \delta}\), so by convexity we have that 
 \begin{align*}
  \alpha(i+1,j-i) + \delta - \alpha(k+1, j-k) = \delta - \alpha(k+1, i-k)\in \Phi_{\succ \delta},
 \end{align*}
 and thus \(i<^*k\).
 \item If \(j<i<k\), then \(\delta - \alpha(j+1,i-j), \alpha(j+1, k-j) \in \Phi_{\succ \delta}\), so by convexity we have that
 \begin{align*}
 \delta - \alpha(j+1,i-j) +  \alpha(j+1, k-j) = \delta + \alpha(i+1, k-i) \in \Phi_{\succ \delta},
 \end{align*}
 and thus by convexity again \( \alpha(i+1, k-i)\in \Phi_{\succ \delta} \), so \(i<^*k\).
 \item If \(j<k<i\), then \(\delta - \alpha(j+1,i-j),  \alpha(j+1, k-j) \in \Phi_{\succ \delta}\), so by convexity we have that
 \begin{align*}
 \delta - \alpha(j+1,i-j) +   \alpha(j+1, k-j) = \delta  - \alpha(k+1, i-k)  \in \Phi_{\succ \delta},
 \end{align*}
 and thus \(i<^* k\).
 \item If \(k<j<i\), then \(\delta - \alpha(j+1,i-j),  \delta- \alpha(k+1, j-k)  \in \Phi_{\succ \delta}\), so by convexity we have that
 \begin{align*}
 \delta - \alpha(j+1,i-j) +   \delta- \alpha(k+1, j-k) = 2\delta - \alpha(k+1, i-k) \in \Phi_{\succ \delta},
 \end{align*}
 and therefore \(\delta - \alpha(k+1, i-k)\), so \(i<^* k\).
 \item If \(i = k < j\), then \( \alpha(i+1,j-i), \delta- \alpha(i+1, j-i) \in \Phi_{\succ \delta}\), and thus by convexity \(\delta \in \Phi_{\succ \delta}\), a contradiction, so this case cannot occur.
 \item 
 If \(j< i = k\), then \(  \delta - \alpha(j+1, i-j),  \alpha(j+1, i-j) \in \Phi_{\succ \delta}\), and thus by convexity \(\delta \in \Phi_{\succ \delta}\), a contradiction, so this case cannot occur.
\end{enumerate}
Thus, in every possible case, we have \(i^* < k^*\), establishing transitivity.
\end{answer}

As \(<^*\) is connected, irreflexive, and transitive, it is a strict total order, and thus defines a residue permutation \(\theta = (\theta_1, \dots, \theta_e)\), where \(\theta_1 <^* \cdots <^* \theta_e\). 

Let \(1 \leq a \leq b \leq e-1\). Then \(\theta_a <^* \theta_{b+1}\). If \(\theta_a < \theta_{b+1}\), we have that \(\alpha(\theta_a + 1, \overline{\theta_{b+1} - \theta_a}) \in \Phi_{\succ \delta}\). On the other hand, if \(\theta_{b+1} < \theta_a\), we have that \(\delta - \alpha(\theta_{b+1} + 1, \overline{\theta_a - \theta_{b+1}}) =\alpha(\theta_a + 1, \overline{\theta_{b+1} - \theta_a}) \in \Phi_{\succ \delta}\), so in any case we have that \(\alpha(\theta_a + 1, \overline{\theta_{b+1} - \theta_a}) \in \Phi_{\succ \delta}\).
Then by convexity and (\ref{gammaconvert}), we have that
\begin{align*}
\gamma^\theta_{[a,b]} = \alpha(\theta_a + 1, \overline{\theta_{b+1} - \theta_a}) + k \delta \in \Phi_{\succ \delta}
\end{align*}
 for some \(k \in \ZZ_{\geq 0}\). Therefore \(\succeq\) realizes \(\theta\). 
 
 For uniqueness, assume that \(\succeq\) is a convex preorder on \(\Phi_+\) which realizes distinct residue permutations \(\theta\) and \(\theta'\). Then without loss of generality, there is some \(i,j \in [0,e-1]\), \(i<j\), such that \(\theta_a = i\), \(\theta_{b+1} = j\), for some \(a\leq b\), and \(\theta'_{b'+1} = i\), \(\theta'_{a'} = j\), for some \(a' \leq b'\). Then, since \(\succeq\) realizes \(\theta\) and \(\theta'\), we have by 
 (\ref{gammaconvert}) that
 \begin{align*}
 \alpha(i + 1, \overline{j-i}) + k\delta \in \Phi_{\succ \delta}
 \qquad
 \textup{and}
 \qquad
 \alpha(j+1, \overline{i-j}) + k'\delta \in \Phi_{\succ \delta}
 \end{align*}
 for some \(k,k' \in \ZZ_{\geq 0}\). But then the sum of these roots, a multiple of \(\delta\), is in \(\Phi_{\succ \delta}\) as well by convexity, a contradiction. This completes the proof.
\end{proof}

\begin{Remark}
There are many different convex preorders realise the same residue permutation.
Indeed, there are infinitely many convex preorders on \(\Phi_+\) if \(e>2\), but only \(e!\) many residue permutations.
\end{Remark}

\subsection{Cuspidal ribbon tableaux}\label{subsec:cuspribtabs}
Recall the definition of tilings and tableaux from \S\ref{tiletabsec}. Following \cite{muthtiling} we define the following special tilings for multipartitions. 

\begin{Definition}\label{cuspdef}
Let \(\succeq\) be a convex preorder on \(\Phi_+\), and let \(\btau \in \Lambda^\ell(\omega)\) for some \(\omega \in \Phi_+\).
We say that the skew diagram \(\btau\) is {\em cuspidal} provided that, for every two-tile skew tableau \((\bnu_1, \bnu_2)\) of \(\btau\), the following conditions hold:
\begin{enumerate}
\item There exist \(\beta_1, \dots, \beta_k \in \Phi_+\) such that \(\cont(\bnu_1) = \beta_1 + \dots + \beta_k\) and \(\omega \succ \beta_i\) for all \(i \in [1,k]\), and;
\item There exist \(\zeta_1, \dots, \zeta_m \in \Phi_+\) such that \(\cont(\bnu_2) = \zeta_1 + \dots + \zeta_m\) and \(\zeta_i \succ \omega\) for all \(i \in [1,m]\).
\end{enumerate}
We say that \(\btau\) is \emph{semicuspidal} if the above conditions hold after weakening the \(\omega \succ \beta_i\) and \(\zeta_i \succ \omega\) conditions to \(\omega \succeq \beta_i\) and \(\zeta_i \succeq \omega\), respectively.
\end{Definition}

It is shown in \cite[Theorem~6.13]{muthtiling} that every cuspidal diagram is necessarily a ribbon. 
We say that a skew tableau \((\Gamma, {\tt t})\) of \(\btau \in \Lambda^\ell\) is a {\em cuspidal Kostant} tableau provided that each tile in \(\Gamma\) is a cuspidal ribbon, and \(\cont({\tt t}(a)) \succeq \cont({\tt t}(b))\) for all \(1 \leq a \leq b \leq [1, |\Gamma|]\). We say a tiling \(\Gamma\) is a {\em cuspidal Kostant} tiling for \(\btau\) provided there exists a tile ordering \({\tt t}\) such that \((\Gamma, {\tt t})\) is a cuspidal Kostant tableau.

\begin{Theorem}\label{tilethm}\cite[Theorem 6.14]{muthtiling}
Let \(\succeq\) be a convex preorder on \(\Phi_+\), and let \(\btau \in \Lambda^{\ell}\). Then:
\begin{enumerate}
\item There exists a unique cuspidal Kostant tiling \(\Gamma_{\btau}\) for \(\btau\).
\item Let \(\xi\) be a \({\tt SE}\)-removable ribbon in \(\btau\) such that \(\cont(\xi) \in \Psi\) and \(\cont(\xi)\) is \(\succeq\)-minimal. Then \(\xi \in \Gamma_{\btau}\) and \(\cont(\zeta) \succeq \cont(\xi) \) for all \(\zeta \in \Gamma_{\btau}\). Thus the unique cuspidal Kostant tiling \(\Gamma_{\btau}\) may be constructed by iteratively removing \(\succeq\)-minimal \({\tt SE}\)-removable ribbons.
\item Let \(\xi\) be a \({\tt NW}\)-removable ribbon in \(\btau\) such that such that \(\cont(\xi) \in \Psi\) and \(\cont(\xi)\) is \(\succeq\)-maximal. Then \(\xi \in \Gamma_{\btau}\) and \(\cont(\xi) \succeq \cont(\zeta) \) for all \(\zeta \in \Gamma_{\btau}\).  Thus the unique cuspidal Kostant tiling \(\Gamma_{\btau}\) may be constructed by iteratively removing \(\succeq\)-maximal \({\tt NW}\)-removable ribbons.
\end{enumerate}
\end{Theorem}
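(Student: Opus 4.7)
The plan is to prove all three parts together by strong induction on \(|\btau|\), with (ii) driving both existence and uniqueness in (i). The base case \(\btau = \varnothing\) is trivial. For the inductive step, I would first establish that \(\btau\) possesses at least one \({\tt SE}\)-removable ribbon with content in \(\Psi = \Phi_+^{\re} \sqcup \{\delta\}\). A direct construction suffices: starting at a southeasternmost node of \(\btau\) and growing a ribbon of length \(L\) northwesterly along the boundary, one may stop at any \(L\) for which \(\bar L \in \ZZ_e\) attains a prescribed value, so one can always arrange for \(L = e\) (yielding content \(\delta\)) or for \(L < e\) coprime to residue issues (yielding content in \(\Phi_+^{\re}\)). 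Among all such \({\tt SE}\)-removable ribbons with content in \(\Psi\), choose \(\xi\) of \(\succeq\)-minimal content, which is well-defined by totality of \(\succ\) on \(\Psi\).

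The heart of the proof --- and the main obstacle --- is showing that this \(\xi\) must appear as a tile in every cuspidal Kostant tiling \(\Gamma\) of \(\btau\). Let \(\eta\) be the last tile of \(\Gamma\) in the cuspidal order. Then \(\eta\) is \({\tt SE}\)-removable in \(\btau\), is cuspidal and hence a ribbon with \(\cont(\eta) \in \Psi\) (invoking \cite[Theorem~6.13]{muthtiling}), and \(\succeq\)-minimality of \(\xi\) forces \(\cont(\eta) \succeq \cont(\xi)\). If \(\eta = \xi\) we are done. Otherwise \(\xi\) overlaps nontrivially with a collection of later tiles \(\eta = \eta_k, \eta_{k-1}, \dots, \eta_{k-m}\) of \(\Gamma\). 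The plan is to use convexity of \(\succeq\) together with the ribbon structure to derive a contradiction: partition the sum \(\sum_{i} \cont(\eta_i)\) as \(\cont(\xi)\) plus the content of the union of the \(\eta_i\) lying outside \(\xi\), and apply convexity iteratively to the resulting root decompositions to produce some \(\cont(\eta_i) \prec \cont(\xi)\), contradicting the minimality of \(\xi\). This rigidity step, leveraging both convexity and the single-ribbon geometry of cuspidal tiles, is what I expect to be most delicate; carefully tracking how cuspidal ribbons can interlock is where the real combinatorial work lies.

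Once \(\xi \in \Gamma\) is established, the remainder is routine: \(\Gamma \setminus \{\xi\}\) is a cuspidal Kostant tiling of \(\btau \setminus \xi\), which is unique by induction, yielding existence and uniqueness in (i) and the first half of (ii); iterating the construction gives \(\cont(\zeta) \succeq \cont(\xi)\) for all \(\zeta \in \Gamma_\btau\), completing (ii). Part (iii) then follows by applying (ii) to the opposite convex preorder \(\preceq\) --- which is again a convex preorder under our axioms, since convexity (iv) is symmetric under reversal --- and traversing the tableau order of \(\Gamma_\btau\) in reverse, which interchanges \({\tt SE}\)-removable with \({\tt NW}\)-removable and \(\succeq\)-minimal with \(\succeq\)-maximal.
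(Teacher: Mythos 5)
First, note that the paper does not actually prove this statement: it is imported verbatim as \cite[Theorem 6.14]{muthtiling}, so the only ``proof'' in the paper is the citation. Your proposal must therefore stand on its own, and as written it has a genuine gap precisely where you flag one. The claim that a \(\succeq\)-minimal \({\tt SE}\)-removable ribbon \(\xi\) with \(\cont(\xi)\in\Psi\) lies in \emph{every} cuspidal Kostant tiling \(\Gamma\) is the entire content of the theorem, and you only describe a ``plan'' to derive a contradiction from convexity without executing it. The argument that actually works is not the one you sketch (comparing \(\xi\) with the last tile \(\eta\) and the tiles ``outside'' \(\xi\)); rather, one decomposes \(\xi=\bigsqcup_{\gamma\in\Gamma}(\xi\cap\gamma)\), observes that since \(\xi\) is \({\tt SE}\)-removable in \(\btau\) each nonempty \(\xi\cap\gamma\) is \({\tt SE}\)-removable in the cuspidal ribbon \(\gamma\), and applies Definition~\ref{cuspdef}(ii) to write \(\cont(\xi\cap\gamma)\) as a sum of roots \(\succ\cont(\gamma)\) whenever \(\xi\cap\gamma\subsetneq\gamma\). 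Combined with \(\cont(\gamma)\succeq\cont(\xi)\) (which itself needs the last-tile/minimality comparison), this expresses \(\cont(\xi)\in\Psi\) as a sum of positive roots each \(\succeq\cont(\xi)\); convexity plus indivisibility then forces a single summand and hence \(\xi=\gamma\) for one tile \(\gamma\). Without this (or an equivalent) argument you have not proved uniqueness, and the ``routine'' remainder is also not routine: existence requires showing that the greedy minimal ribbon is itself a \emph{cuspidal} ribbon and that successively removed ribbons have weakly \(\succeq\)-increasing contents, neither of which you address.

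Separately, your reduction of (iii) to (ii) via the opposite preorder fails as stated. Reversing \(\succeq\) does give another convex preorder, but the cuspidality condition in Definition~\ref{cuspdef} ties the \(\prec\) and \(\succ\) constraints to the geometric \({\tt NW}\)/\({\tt SE}\) roles of \(\bnu_1,\bnu_2\); reversing the order alone therefore produces a \emph{different} set of cuspidal ribbons and a different tiling, not the tiling \(\Gamma_{\btau}\) of parts (i)--(ii). To make a symmetry argument work you would also have to rotate the diagram by \(180^\circ\) (which twists residues by a Dynkin automorphism), or more simply run the mirror-image of the argument for (ii) directly, using the \(\bnu_1\)-half of the cuspidality condition and \(\succeq\)-maximality in place of the \(\bnu_2\)-half and minimality.
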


\subsection{Core blocks and RoCK blocks via cuspidal tilings}\label{subsec:cusptilingsofblocks}
In this section we explain that core blocks and RoCK blocks can be described in terms of cuspidal tiling data for their constituent multipartitions.

For \(\omega \in \ZZ_{\geq 0}I\), we define
\begin{align*}
\Lambda^{\bkap}_+(\omega)_{\succ \delta} &:= \{ \blam \in \Lambda^{\bkap}_+(\omega) \mid \cont(\zeta) \succ \delta \textup{ for all }\zeta \in \Gamma_{\blam}\},\\
\Lambda^{\bkap}_+(\omega)_{\succeq \delta} &:= \{ \blam \in \Lambda^{\bkap}_+(\omega) \mid \cont(\zeta) \succeq \delta\textup{ for all }\zeta \in \Gamma_{\blam}\},\\
\Lambda^{\bkap}_+(\omega)_{\approx \delta} &:= \{ \blam \in \Lambda^{\bkap}_+(\omega) \mid \cont(\zeta) = \delta\textup{ for all }\zeta \in \Gamma_{\blam}\},
\end{align*}
and similarly define corresponding subsets of \(\Lambda_{+/+}^{\bkap}(\omega)\), \(\Lambda_{+/\brho}^{\bkap}(\omega)\) as well.

\begin{Proposition}\label{RoCKsemipar}
Let \(\omega \in \ZZ_{\geq 0}I\) and \(\blam \in \Lambda^{\bkap}_+(\omega)\). Let \(\theta\) be a residue permutation, and \(\succeq\) be a convex preorder on \(\Phi_+\) which realizes \(\theta\). Then \(\blam\) is a \((\bkap, \theta)\)-RoCK multipartition if and only if \(\blam \in \Lambda_+^{\bkap}(\omega)_{\succeq \delta}\).
\end{Proposition}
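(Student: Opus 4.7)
The approach is to characterize the \((\bkap,\theta)\)-RoCK property via the contents of the removable ribbons of \(\blam\), and then invoke \cref{tilethm}(ii) to transfer this information to the cuspidal tiles of \(\Gamma_{\blam}\). The central claim, call it \((\star)\), is that \(\blam\) is \((\bkap,\theta)\)-RoCK if and only if every removable ribbon \(\xi\) of \(\blam\) satisfies \(\cont(\xi) \succeq \delta\).

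To establish \((\star)\), recall from \S\ref{remribsexp} that removable ribbons of \(\blam\) correspond to triples \((x,y,r)\) with \(x<y\), \(x \notin \B^r(\blam,\bkap)\), and \(y \in \B^r(\blam,\bkap)\), and that the associated ribbon has content \(\alpha(x+1,y-x)\). Writing \(\overline{x}=\theta_a\) and \(\overline{y}=\theta_b\), a computation mirroring the proof of \cref{reminR} and using (\ref{gammaconvert}) gives: \(\cont(\xi) = k\delta + \gamma^\theta_{[a,b-1]}\) for some \(k \in \ZZ\) when \(a<b\); \(\cont(\xi) = k\delta\) for some \(k \geq 1\) when \(a=b\); and \(\cont(\xi) = k\delta - \gamma^\theta_{[b,a-1]}\) for some \(k \in \ZZ\) when \(a>b\). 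The realization conditions (\ref{gdel}) and (\ref{ldel}) then place \(\cont(\xi)\) in \(\Phi_{\succ\delta}\), in \(\Phi_+^\im\), or in \(\Phi_{\prec\delta}\), respectively. Since the \((\bkap,\theta)\)-RoCK condition of \cref{specialdefs}(iii) is precisely the prohibition of the third case, \((\star)\) follows.

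Granting \((\star)\), the proposition is a short argument. For the forward direction, assume \(\blam\) is \((\bkap,\theta)\)-RoCK; the case \(\blam = \varnothing\) is trivial, so assume \(\blam \ne \varnothing\). Any corner of \(\blam\) is a single-node SE-removable ribbon with content in \(\Psi\), so a \(\succeq\)-minimal such SE-removable ribbon \(\xi_0\) exists. By \cref{tilethm}(ii), \(\xi_0 \in \Gamma_{\blam}\) and \(\cont(\zeta) \succeq \cont(\xi_0)\) for every \(\zeta \in \Gamma_{\blam}\); by \((\star)\), \(\cont(\xi_0) \succeq \delta\), whence \(\blam \in \Lambda^{\bkap}_+(\omega)_{\succeq\delta}\). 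For the backward direction, suppose \(\blam\) is not \((\bkap,\theta)\)-RoCK; by \((\star)\) there is a removable ribbon \(\xi\) with \(\cont(\xi) \prec \delta\), necessarily lying in \(\Phi_+^\re \subseteq \Psi\). Applying \cref{tilethm}(ii) once more, the \(\succeq\)-minimal SE-removable ribbon of content in \(\Psi\) is a tile of \(\Gamma_{\blam}\) whose content is \(\preceq \cont(\xi) \prec \delta\), contradicting \(\blam \in \Lambda^{\bkap}_+(\omega)_{\succeq\delta}\).

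The main obstacle is the bookkeeping in the content computation underlying \((\star)\). There is a subtle off-by-one shift between the indexing of \(\theta\) in \cref{specialdefs}(iii) (where \(\overline{y}=\theta_b\)) and in \cref{reminR} (where \(\overline{y}=\theta_{b+1}\)), and one must correctly absorb the floor contribution \(\lfloor (1/e)\sum \overline{\theta_{k+1}-\theta_k}\rfloor\,\delta\) arising in (\ref{gammaconvert}) into the coefficient of \(\delta\); the case \(a>b\) further requires the identity \(\alpha(\theta_a+1,\overline{\theta_b-\theta_a}) + \alpha(\theta_b+1,\overline{\theta_a-\theta_b}) = \delta\). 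Once these routine verifications are in place, the proposition reduces to two applications of the cuspidal tiling theorem.
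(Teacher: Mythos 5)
Your proposal is correct and follows essentially the same route as the paper: both directions reduce to computing the content of a removable ribbon from its beta-number data via (\ref{gammaconvert}) (as in \cref{reminR} and equations (\ref{gammadeltaeq}), (\ref{gammadeltaeq2})), identifying the \((\bkap,\theta)\)-RoCK condition with the absence of removable ribbons of content in \(\Phi_{\prec\delta}\), and then invoking \cref{tilethm}(ii). The only cosmetic difference is that you isolate this as a standalone equivalence \((\star)\) before applying the tiling theorem, whereas the paper interleaves the computation with each implication.
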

\begin{proof}
\((\implies)\) Assume \(\blam\) is \((\bkap, \theta)\)-RoCK. Let \(\xi\) be a removable ribbon for \(\blam\). Then \(\xi\) corresponds to some \(x<y\) with \(y \in \B^r(\blam, \bkap)\), \(x \notin \B^r(\blam, \bkap)\), with \(\overline{x} = \theta_a\) and \(\overline{y} = \theta_{b+1}\), for some \(a,b+1 \in [1,e]\). Moreover, since \(\blam\) is \((\bkap, \theta)\)-RoCK, we must have  \(a \leq b+1\). If \(a=b+1\), then \(\cont(\xi)\) is a multiple of \(\delta\). Assume \(a\leq b\). Then, as in 
(\ref{gammadeltaeq}) we have
\(
\cont(\xi) = \gamma^\theta_{[a,b]} + m\delta,
\)
for some \(m \in \ZZ\), which implies that \(\cont(\xi) \in \Phi_{\succ \delta}\). Thus, in any case, we have \(\cont(\xi) \in \Phi_{\succeq \delta}\). It follows then from \cref{tilethm}(ii) that \(\blam \in \Lambda_+^{\bkap}(\omega)_{\succeq \delta}\).

\((\impliedby)\) Assume \(\blam\) is not \((\bkap, \theta)\)-RoCK. Then there exists some \(x<y\) with \(y \in \B^r(\blam, \bkap)\), \(x \notin \B^r(\blam, \bkap)\), with \(\overline{x} = \theta_a\) and \(\overline{y} = \theta_{b+1}\), such that \(a > b+1\) for some \(a,b+1 \in [1,e]\). This data corresponds to a removable ribbon \(\xi\) in \(\blam\) such that (as in (\ref{gammadeltaeq2}))
\(\cont(\xi) = m\delta - \gamma^\theta_{[b+1,a-1]}\), which implies that \(\cont(\xi) \in \Phi_{\prec \delta}\). It follows then from \cref{tilethm}(ii) that \(\blam \notin \Lambda_+^{\bkap}(\omega)_{\succeq \delta}\).
\end{proof}

\begin{Corollary}\label{thetarockblocktile}
Let \(\omega \in \ZZ_{\geq 0}I\), let \(\theta\) be a residue permutation, and \(\succeq\) be a convex preorder on \(\Phi_+\) which realizes \(\theta\). Then \(\Lambda_+^{\bkap}(\omega)\) is a \(\theta\)-RoCK block if and only if \(\Lambda_+^{\bkap}(\omega) = \Lambda_+^{\bkap}(\omega)_{\succeq \delta}\). 
\end{Corollary}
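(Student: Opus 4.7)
The plan is to deduce this directly from the preceding \cref{RoCKsemipar}, which already does the real work on a per-multipartition basis. The definition of a \(\theta\)-RoCK block is exactly the statement that every \(\blam \in \Lambda^{\bkap}_+(\omega)\) is \((\bkap, \theta)\)-RoCK, so I would simply quantify \cref{RoCKsemipar} over the block.

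In more detail, first I would recall that by construction \(\Lambda_+^{\bkap}(\omega)_{\succeq \delta} \subseteq \Lambda_+^{\bkap}(\omega)\), so the equality \(\Lambda_+^{\bkap}(\omega) = \Lambda_+^{\bkap}(\omega)_{\succeq \delta}\) is equivalent to the containment \(\Lambda_+^{\bkap}(\omega) \subseteq \Lambda_+^{\bkap}(\omega)_{\succeq \delta}\), i.e.\ to the assertion that every \(\blam \in \Lambda_+^{\bkap}(\omega)\) lies in \(\Lambda_+^{\bkap}(\omega)_{\succeq \delta}\). Next, I would apply \cref{RoCKsemipar} (which requires precisely the hypothesis that \(\succeq\) realises \(\theta\)) to rewrite this membership condition as the condition that \(\blam\) is \((\bkap,\theta)\)-RoCK. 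Chaining these equivalences yields
\[
\Lambda_+^{\bkap}(\omega) = \Lambda_+^{\bkap}(\omega)_{\succeq \delta} \iff \text{every } \blam \in \Lambda_+^{\bkap}(\omega) \text{ is } (\bkap,\theta)\text{-RoCK} \iff \Lambda_+^{\bkap}(\omega) \text{ is } \theta\text{-RoCK},
\]
where the final equivalence is just the definition in \cref{specialdefs}(iii).

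There is no real obstacle here: the corollary is essentially a packaging of \cref{RoCKsemipar} at the level of blocks rather than individual multipartitions. The only thing to be slightly careful about is citing the correct direction of the trivial containment \(\Lambda_+^{\bkap}(\omega)_{\succeq \delta} \subseteq \Lambda_+^{\bkap}(\omega)\) (immediate from the definition of the former as a subset) so that set equality reduces to a one-sided universal statement, which is exactly what \cref{RoCKsemipar} provides.
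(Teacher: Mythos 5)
Your proposal is correct and matches the paper's treatment: the paper gives no separate proof of \cref{thetarockblocktile}, presenting it as an immediate consequence of \cref{RoCKsemipar} combined with the definition of a \(\theta\)-RoCK block in \cref{specialdefs}(iii), which is precisely the quantification argument you spell out.
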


Together with \cref{RoCKthetaeq,realex,realizelem} this immediately implies

\begin{Corollary}
Let \(\omega \in \ZZ_{\geq 0}I\). Then \(\Lambda_+^{\bkap}(\omega)\) is a RoCK block if and only if \(\Lambda_+^{\bkap}(\omega) = \Lambda_+^{\bkap}(\omega)_{\succeq \delta}\) for some convex preorder \(\succeq\) on \(\Phi_+\).
\end{Corollary}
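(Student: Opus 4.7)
The corollary is a direct combination of the results established immediately before it, so my plan is simply to assemble those pieces. The key observation is that \cref{thetarockblocktile} already gives the equivalence at the level of a fixed residue permutation \(\theta\) and a convex preorder \(\succeq\) realizing \(\theta\); what remains is to match the quantifier structures ``RoCK block'' (existence of some \(\theta\)) with ``\(\Lambda_+^{\bkap}(\omega) = \Lambda_+^{\bkap}(\omega)_{\succeq \delta}\)'' (existence of some \(\succeq\)), using \cref{realex} and \cref{realizelem} as the bridge between residue permutations and convex preorders.

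For the forward direction I will assume \(\Lambda_+^{\bkap}(\omega)\) is a RoCK block. By \cref{specialdefs}(iii) (or equivalently \cref{RoCKthetaeq}) there exists a residue permutation \(\theta\) such that \(\Lambda_+^{\bkap}(\omega)\) is a \(\theta\)-RoCK block. \cref{realex} produces a convex preorder \(\succeq\) on \(\Phi_+\) realizing \(\theta\), and then \cref{thetarockblocktile} gives \(\Lambda_+^{\bkap}(\omega) = \Lambda_+^{\bkap}(\omega)_{\succeq \delta}\), yielding the desired \(\succeq\).

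For the reverse direction I will assume that \(\Lambda_+^{\bkap}(\omega) = \Lambda_+^{\bkap}(\omega)_{\succeq \delta}\) for some convex preorder \(\succeq\). By \cref{realizelem}, \(\succeq\) realizes a (unique) residue permutation \(\theta\). Applying \cref{thetarockblocktile} in the other direction, we conclude \(\Lambda_+^{\bkap}(\omega)\) is a \(\theta\)-RoCK block, and in particular a RoCK block. There is no serious obstacle in this proof: each step is an invocation of a named result, and the only thing to keep straight is the bookkeeping between the two quantifiers. The proof will therefore be only a few lines long.
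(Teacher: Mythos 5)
Your proposal is correct and is essentially the paper's own argument: the paper derives this corollary immediately from \cref{thetarockblocktile} together with \cref{RoCKthetaeq}, \cref{realex}, and \cref{realizelem}, exactly the bookkeeping between residue permutations and convex preorders that you carry out. No gaps.
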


\begin{Proposition}\label{RoCKmultitiling}
Let \(\omega \in \ZZ_{\geq 0}I\) and \(\brho \in \Lambda^{\bkap}_+(\omega)\). Let \(\theta\) be a residue permutation, and \(\succeq\) be a convex preorder on \(\Phi_+\) which realizes \(\theta\). Then \(\brho\) is a \((\bkap, \theta)\)-RoCK multicore if and only if \(\brho \in \Lambda_+^{\bkap}(\omega)_{\succ \delta}\).
\end{Proposition}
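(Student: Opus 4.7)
The plan is to deduce this from the previous Proposition \ref{RoCKsemipar} together with the fact that a multicore is precisely a multipartition with no removable ribbon of content $\delta$ (since by \S\ref{remribsexp} a removable ribbon corresponding to beads $x<y$ has content $\alpha(x+1,y-x)$, and this equals $\delta$ exactly when $y-x=e$). So the task is to upgrade the $\succeq \delta$ condition of \cref{RoCKsemipar} to the strict $\succ \delta$ condition precisely when $\brho$ carries no size-$e$ removable ribbon; both implications will be controlled by \cref{tilethm}(ii).

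For the $(\implies)$ direction, assume $\brho$ is a $(\bkap,\theta)$-RoCK multicore. By \cref{RoCKsemipar}, $\brho \in \Lambda^{\bkap}_+(\omega)_{\succeq \delta}$, so every tile in $\Gamma_{\brho}$ has content in $\Psi \cap \Phi_{\succeq \delta} = \{\delta\} \sqcup (\Phi_+^{\textup{re}} \cap \Phi_{\succ \delta})$. Suppose for contradiction that some tile of $\Gamma_{\brho}$ has content $\delta$. Then $\delta$ is the $\succeq$-minimum content of tiles in $\Gamma_{\brho}$. Apply \cref{tilethm}(ii): a $\succeq$-minimal \texttt{SE}-removable ribbon $\xi$ of $\brho$ with $\cont(\xi) \in \Psi$ lies in $\Gamma_{\brho}$, and $\cont(\zeta) \succeq \cont(\xi)$ for all $\zeta \in \Gamma_{\brho}$. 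Combined with the minimality observation above, this forces $\cont(\xi) = \delta$, so $\xi$ is a removable ribbon of $\brho$ of size $e$, contradicting the multicore hypothesis. Hence every tile of $\Gamma_{\brho}$ has content strictly $\succ \delta$, i.e.~$\brho \in \Lambda^{\bkap}_+(\omega)_{\succ \delta}$.

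For the $(\impliedby)$ direction, assume $\brho \in \Lambda^{\bkap}_+(\omega)_{\succ \delta}$. Then $\brho \in \Lambda^{\bkap}_+(\omega)_{\succeq \delta}$, and \cref{RoCKsemipar} gives that $\brho$ is $(\bkap,\theta)$-RoCK. Suppose for contradiction that $\brho$ is not a multicore; then $\brho$ admits a removable ribbon of size $e$, necessarily of content $\delta \in \Psi$. Let $\xi$ be a \texttt{SE}-removable ribbon of $\brho$ with $\cont(\xi) \in \Psi$ and $\succeq$-minimal content (one exists, e.g.~the size-$e$ one). By \cref{tilethm}(ii), $\xi \in \Gamma_{\brho}$, and since $\brho \in \Lambda^{\bkap}_+(\omega)_{\succ \delta}$ we must have $\cont(\xi) \succ \delta$; but $\cont(\xi) \preceq \delta$ by minimality, a contradiction. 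Hence $\brho$ is a multicore, completing the proof.

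No step here requires significant new input beyond \cref{RoCKsemipar} and \cref{tilethm}(ii); the only subtle point is the compatibility between the $\succeq$-minimum among tile contents and the $\succeq$-minimum produced by \cref{tilethm}(ii), which is immediate from the fact that tile contents lie in $\Psi$ and $\delta$ is the unique $\succeq$-minimum of $\Psi \cap \Phi_{\succeq \delta}$.
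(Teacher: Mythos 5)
Your proof is correct. The backward direction is essentially identical to the paper's (the paper phrases it contrapositively: if $\brho$ is not RoCK then \cref{RoCKsemipar} already gives $\brho\notin\Lambda_+^{\bkap}(\omega)_{\succeq\delta}$, and if $\brho$ is not a multicore then the removable $e$-ribbon together with \cref{tilethm}(ii) gives $\brho\notin\Lambda_+^{\bkap}(\omega)_{\succ\delta}$). The forward direction differs in which input it leans on: the paper invokes \cref{reminR}(i) together with (\ref{gdel}) to see directly that \emph{every} removable ribbon of a $(\bkap,\theta)$-RoCK multicore has content in $\remrib^{\theta}\subseteq\Phi_{\succ\delta}$, and then applies \cref{tilethm}(ii) once; you instead first pass through \cref{RoCKsemipar} to get the weaker $\succeq\delta$ statement and then upgrade it by the minimality clause of \cref{tilethm}(ii), using that a tile of content $\delta$ would force the $\succeq$-minimal \texttt{SE}-removable ribbon to be a removable $e$-ribbon. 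Your route avoids re-examining the beta-number description of removable ribbons (i.e.\ \cref{reminR}) at the cost of the small compatibility argument between tile-content minima and removable-ribbon minima, which you handle correctly since tile contents lie in $\Psi$ and $\delta$ is the unique imaginary element of $\Psi$. Both arguments are equally short and rest on the same external pillar, \cref{tilethm}(ii).
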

\begin{proof}
\((\implies)\) Assume \(\brho\) is a \((\bkap, \theta)\)-RoCK multicore. Then \(\cont(\xi) \succ \delta\) for every removable ribbon \(\xi\) in \(\brho\), by \cref{reminR} and (\ref{gdel}). It follows then from \cref{tilethm}(ii) that \(\brho \in \Lambda_+^{\bkap}(\omega)_{\succ \delta}\).

\((\impliedby)\) Assume \(\brho\) is not a \((\bkap, \theta)\)-RoCK multicore. 
If \(\brho\) is not \((\bkap, \theta)\)-RoCK, then \(\brho \notin \Lambda_+^{\bkap}(\omega)_{\succ\delta} \subseteq \Lambda_+^{\bkap}(\omega)_{\succeq\delta}\) by \cref{RoCKsemipar}. If \(\brho\) is not a multicore, then \(\brho\) has a removable \(e\)-ribbon of content \(\delta\). Then by \cref{tilethm}(ii) we again have that \(\blam \notin \Lambda_+^{\bkap}(\omega)_{\succ \delta}\). 
\end{proof}

\begin{Corollary}\label{coretilingprop}
Let \(\omega \in \ZZ_{\geq 0}I\). Then \(\Lambda_+^{\bkap}(\omega)\) is a core block if and only if \(\Lambda_+^{\bkap}(\omega) = \Lambda_+^{\bkap}(\omega)_{\succ \delta}\) for some convex preorder \(\succeq\) on \(\Phi_+\).
\end{Corollary}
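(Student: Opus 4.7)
The corollary will follow almost immediately by assembling the results already proved in this section, namely \cref{RoCKmultitiling}, together with \cref{multicoreblockRoCK,realex,realizelem}. My plan is as follows.

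For the forward direction, suppose that $\Lambda_+^{\bkap}(\omega)$ is a core block. By \cref{multicoreblockRoCK}, there exists a residue permutation $\theta$ such that every $\brho \in \Lambda_+^{\bkap}(\omega)$ is a $(\bkap,\theta)$-RoCK multicore. Invoking \cref{realex}, I can choose a convex preorder $\succeq$ on $\Phi_+$ that realizes $\theta$. Applying \cref{RoCKmultitiling} to each $\brho \in \Lambda_+^{\bkap}(\omega)$ then gives $\brho \in \Lambda_+^{\bkap}(\omega)_{\succ \delta}$, yielding the inclusion $\Lambda_+^{\bkap}(\omega) \subseteq \Lambda_+^{\bkap}(\omega)_{\succ \delta}$. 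The reverse inclusion holds by definition, so equality follows.

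For the reverse direction, suppose $\Lambda_+^{\bkap}(\omega) = \Lambda_+^{\bkap}(\omega)_{\succ \delta}$ for some convex preorder $\succeq$. By \cref{realizelem}, $\succeq$ realizes a unique residue permutation $\theta$. For any $\brho \in \Lambda_+^{\bkap}(\omega)$, the hypothesis places $\brho \in \Lambda_+^{\bkap}(\omega)_{\succ \delta}$, so by \cref{RoCKmultitiling}, $\brho$ is a $(\bkap,\theta)$-RoCK multicore; in particular, $\brho$ is a multicore. Hence every multipartition in $\Lambda_+^{\bkap}(\omega)$ is a multicore, i.e.\ $\Lambda_+^{\bkap}(\omega)$ is a core block by \cref{specialdefs}(ii).

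There is no real obstacle here: all the substantive combinatorial work has been carried out in the multicore--RoCK dictionary (\cref{multicoreblockRoCK,RoCKcore,reminR}) and in the existence/uniqueness statements for realizing convex preorders (\cref{realex,realizelem}). The only minor point to be careful about is that in the forward direction one must pick a single residue permutation $\theta$ which works simultaneously for all $\brho \in \Lambda_+^{\bkap}(\omega)$, but this is precisely what \cref{multicoreblockRoCK} supplies. The proof should thus take only a few lines.
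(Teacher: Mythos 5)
Your proof is correct and uses exactly the same ingredients as the paper, which deduces the corollary from \cref{multicoreblockRoCK,RoCKmultitiling} together with \cref{realex,realizelem}. Your write-up simply makes explicit the two directions that the paper leaves to the reader.
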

\begin{proof}
This follows from \cref{multicoreblockRoCK,RoCKmultitiling}, as well as \cref{realex,realizelem}.
\end{proof}

\cref{thetarockblocktile,coretilingprop} combine to give Theorem~\hyperlink{thm:C}{C} in the introduction.

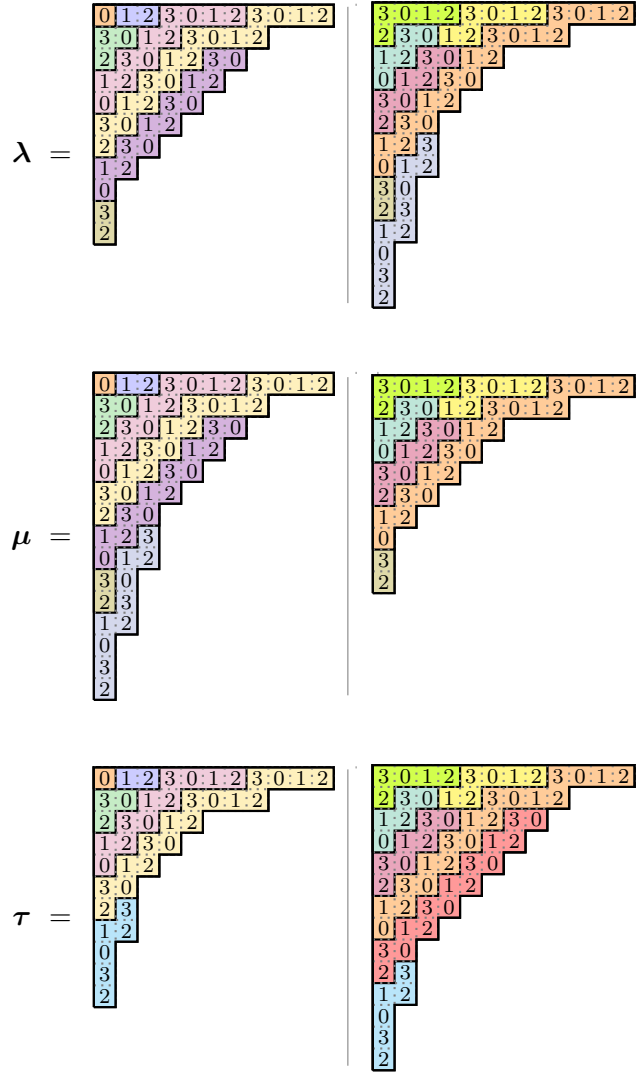
\begin{figure}[h]
\begin{align*}
	{}
		\bla\ = \
		\hackcenter{
			\begin{tikzpicture}[scale=0.29]
				\draw[thick,fill=olive!30]  (-1,1)--(0,1)--(0,4)--(1,4)--(1,5)--(2,5)--(2,6)--(3,6)--(3,7)--(4,7)--(4,8)--(5,8)--(5,9)--(6,9)--(6,10)--(7,10)--(7,11)--(10,11)--(10,12)--(-1,12)--(-1,1);
				\draw[thick,fill=blue!10!violet!30]  (-1,3)--(0,3)--(0,4)--(1,4)--(1,5)--(2,5)--(2,6)--(3,6)--(3,7)--(4,7)--(4,8)--(5,8)--(5,9)--(6,9)--(6,10)--(7,10)--(7,11)--(10,11)--(10,12)--(-1,12)--(-1,3);
				\draw[thick,fill=yellow!70!orange!30]  (-1,5)--(0,5)--(0,6)--(1,6)--(1,7)--(2,7)--(2,8)--(3,8)--(3,9)--(4,9)--(4,10)--(7,10)--(7,11)--(10,11)--(10,12)--(-1,12)--(-1,5);
				\draw[thick,fill=red!70!blue!20]  (-1,7)--(0,7)--(0,8)--(1,8)--(1,9)--(2,9)--(2,10)--(3,10)--(3,11)--(6,11)--(6,12)--(-1,12)--(-1,7);
				\draw[thick,fill=green!50!gray!30]  (-1,9)--(0,9)--(0,10)--(1,10)--(1,11)--(2,11)--(2,12)--(-1,12)--(-1,9);
				\draw[thick,fill=blue!20]  (-1,11)--(2,11)--(2,12)--(-1,12)--(-1,11);
				\draw[thick,fill=orange!40]  (-1,11)--(0,11)--(0,12)--(-1,12)--(-1,11);
				\draw[thick,  gray, dotted] (-1,2)--(0,2);
				\draw[thick,  gray, dotted] (-1,3)--(0,3);
				\draw[thick,  gray, dotted] (-1,4)--(0,4);
				\draw[thick,  gray, dotted] (-1,5)--(1,5);
				\draw[thick,  gray, dotted] (-1,6)--(2,6);
				\draw[thick,  gray, dotted] (-1,7)--(4,7);
				\draw[thick,  gray, dotted] (-1,8)--(5,8);
				\draw[thick,  gray, dotted] (-1,9)--(6,9);
				\draw[thick,  gray, dotted] (-1,10)--(7,10);
				\draw[thick,  gray, dotted] (-1,11)--(8,11);
				\draw[thick,  gray, dotted] (-1,12)--(10,12);
				\draw[thick,  gray, dotted] (0,12)--(0,1);
				\draw[thick,  gray, dotted] (1,12)--(1,4);
				\draw[thick,  gray, dotted] (2,12)--(2,5);
				\draw[thick,  gray, dotted] (3,12)--(3,6);
				\draw[thick,  gray, dotted] (4,12)--(4,7);
				\draw[thick,  gray, dotted] (5,12)--(5,8);
				\draw[thick,  gray, dotted] (6,12)--(6,9);
				\draw[thick, gray,  dotted] (7,12)--(7,10);
				\draw[thick,  gray, dotted] (8,12)--(8,11);
				\draw[thick,  gray, dotted] (9,12)--(9,11);
				\draw[thick,  gray, dotted] (10,12)--(10,11);
				\draw[thick, gray,  dotted] (11,12)--(11,12);
				\draw[thick,  gray, dotted] (12,12)--(12,12);
				\node at (-0.5,11.5){$\scriptstyle 0$};
				\node at (0.5,11.5){$\scriptstyle 1$};
				\node at (1.5,11.5){$\scriptstyle 2$};
				\node at (2.5,11.5){$\scriptstyle 3$};
				\node at (3.5,11.5){$\scriptstyle 0$};
				\node at (4.5,11.5){$\scriptstyle 1$};
				\node at (5.5,11.5){$\scriptstyle 2$};
				\node at (6.5,11.5){$\scriptstyle 3$};
				\node at (7.5,11.5){$\scriptstyle 0$};
				\node at (8.5,11.5){$\scriptstyle 1$};
				\node at (9.5,11.5){$\scriptstyle 2$};
				\node at (-0.5,10.5){$\scriptstyle 3$};
				\node at (0.5,10.5){$\scriptstyle 0$};
				\node at (1.5,10.5){$\scriptstyle 1$};
				\node at (2.5,10.5){$\scriptstyle 2$};
				\node at (3.5,10.5){$\scriptstyle 3$};
				\node at (4.5,10.5){$\scriptstyle 0$};
				\node at (5.5,10.5){$\scriptstyle 1$};
				\node at (6.5,10.5){$\scriptstyle 2$};
				\node at (-0.5,9.5){$\scriptstyle 2$};
				\node at (0.5,9.5){$\scriptstyle 3$};
				\node at (1.5,9.5){$\scriptstyle 0$};
				\node at (2.5,9.5){$\scriptstyle 1$};
				\node at (3.5,9.5){$\scriptstyle 2$};
				\node at (4.5,9.5){$\scriptstyle 3$};
				\node at (5.5,9.5){$\scriptstyle 0$};
				\node at (-0.5,8.5){$\scriptstyle 1$};
				\node at (0.5,8.5){$\scriptstyle 2$};
				\node at (1.5,8.5){$\scriptstyle 3$};
				\node at (2.5,8.5){$\scriptstyle 0$};
				\node at (3.5,8.5){$\scriptstyle 1$};
				\node at (4.5,8.5){$\scriptstyle 2$};
				\node at (-0.5,7.5){$\scriptstyle 0$};
				\node at (0.5,7.5){$\scriptstyle 1$};
				\node at (1.5,7.5){$\scriptstyle 2$};
				\node at (2.5,7.5){$\scriptstyle 3$};
				\node at (3.5,7.5){$\scriptstyle 0$};
				\node at (-0.5,6.5){$\scriptstyle 3$};
				\node at (0.5,6.5){$\scriptstyle 0$};
				\node at (1.5,6.5){$\scriptstyle 1$};
				\node at (2.5,6.5){$\scriptstyle 2$};
				\node at (-0.5,5.5){$\scriptstyle 2$};
				\node at (0.5,5.5){$\scriptstyle 3$};
				\node at (1.5,5.5){$\scriptstyle 0$};
				\node at (-0.5,4.5){$\scriptstyle 1$};
				\node at (0.5,4.5){$\scriptstyle 2$};
				\node at (-0.5,3.5){$\scriptstyle 0$};
				\node at (-0.5,2.5){$\scriptstyle 3$};
				\node at (-0.5,1.5){$\scriptstyle 2$};
				\draw[thick]  (-1,1)--(0,1)--(0,4)--(1,4)--(1,5)--(2,5)--(2,6)--(3,6)--(3,7)--(4,7)--(4,8)--(5,8)--(5,9)--(6,9)--(6,10)--(7,10)--(7,11)--(10,11)--(10,12)--(-1,12)--(-1,1);
				%
				%
												\phantom{
					\draw[thick,fill=blue]  (-1,-2)--(-1,1)--(1,1)--(1,-2)--(-1,-2);
				}
			\end{tikzpicture}
		}
		\hspace{-0.68cm}
		{}
		\hackcenter{
			\begin{tikzpicture}[scale=0.29]
				\node[white] at (-0.5,0){{}};
				\node[white] at (0.5,0){{}};
				\draw[thin, gray,fill=gray!30]  (0,0)--(0,13.6);
				\draw[thin, white]  (0,13.6)--(0,14);
			\end{tikzpicture}
		}
		{}
		\hackcenter{
			\begin{tikzpicture}[scale=0.29]
				\draw[thick,fill=cyan!25!blue!15] (-1,-3)--(0,-3)--(0,0)--(1,0)--(1,3)--(2,3)--(2,6)--(3,6)--(3,7)--(4,7)--(4,8)--(5,8)--(5,9)--(6,9)--(7,9)--(8,9)--(8,10)--(11,10)--(11,11)--(-1,11)--(-1,-3);
				\draw[thick,fill=olive!30] (-1,1)--(0,1)--(0,4)--(1,4)--(1,5)--(2,5)--(2,6)--(3,6)--(3,7)--(4,7)--(4,8)--(5,8)--(5,9)--(6,9)--(7,9)--(8,9)--(8,10)--(11,10)--(11,11)--(-1,11)--(-1,-1);
				\draw[thick,fill=orange!40] (-1,3)--(0,3)--(0,4)--(1,4)--(1,5)--(2,5)--(2,6)--(3,6)--(3,7)--(4,7)--(4,8)--(5,8)--(5,9)--(6,9)--(7,9)--(8,9)--(8,10)--(11,10)--(11,11)--(-1,11)--(-1,-3);
				\draw[thick,fill=purple!35] (-1,5)--(0,5)--(0,6)--(1,6)--(1,7)--(2,7)--(2,8)--(3,8)--(3,9)--(4,9)--(4,10)--(7,10)--(7,11)--(-1,11)--(-1,5);
				\draw[thick,fill=yellow!60] (-1,7)--(0,7)--(0,8)--(1,8)--(1,9)--(4,9)--(4,10)--(7,10)--(7,11)--(-1,11)--(-1,7);
				\draw[thick,fill=blue!40!green!25] (-1,7)--(0,7)--(0,8)--(1,8)--(1,9)--(2,9)--(2,10)--(3,10)--(3,11)--(-1,11)--(-1,7);
				\draw[thick,fill=lime!70] (-1,9)--(0,9)--(0,10)--(3,10)--(3,11)--(-1,11)--(-1,9);
				\draw[thick, gray, dotted] (-1,-2)--(0,-2);
				\draw[thick, gray, dotted] (-1,-1)--(0,-1);
				\draw[thick, gray, dotted] (-1,0)--(0,0);
				\draw[thick, gray, dotted] (-1,1)--(1,1);
				\draw[thick,  gray, dotted] (-1,2)--(1,2);
				\draw[thick,  gray, dotted] (-1,3)--(1,3);
				\draw[thick,  gray, dotted] (-1,4)--(2,4);
				\draw[thick,  gray, dotted] (-1,5)--(2,5);
				\draw[thick,  gray, dotted] (-1,6)--(2,6);
				\draw[thick,  gray, dotted] (-1,7)--(4,7);
				\draw[thick,  gray, dotted] (-1,8)--(5,8);
				\draw[thick,  gray, dotted] (-1,9)--(6,9);
				\draw[thick,  gray, dotted] (-1,10)--(7,10);
				\draw[thick,  gray, dotted] (-1,11)--(8,11);
				\draw[thick,  gray, dotted] (0,11)--(0,0);
				\draw[thick,  gray, dotted] (1,11)--(1,0);
				\draw[thick,  gray, dotted] (2,11)--(2,3);
				\draw[thick,  gray, dotted] (3,11)--(3,6);
				\draw[thick,  gray, dotted] (4,11)--(4,7);
				\draw[thick,  gray, dotted] (5,11)--(5,8);
				\draw[thick,  gray, dotted] (6,11)--(6,9);
				\draw[thick, gray,  dotted] (7,11)--(7,9);
				\draw[thick,  gray, dotted] (8,11)--(8,9);
				\draw[thick,  gray, dotted] (9,11)--(9,10);
				\draw[thick,  gray, dotted] (10,11)--(10,10);
				\draw[thick, gray,  dotted] (11,11)--(11,10);
				\node at (-0.5,10.5){$\scriptstyle 3$};
				\node at (0.5,10.5){$\scriptstyle 0$};
				\node at (1.5,10.5){$\scriptstyle 1$};
				\node at (2.5,10.5){$\scriptstyle 2$};
				\node at (3.5,10.5){$\scriptstyle 3$};
				\node at (4.5,10.5){$\scriptstyle 0$};
				\node at (5.5,10.5){$\scriptstyle 1$};
				\node at (6.5,10.5){$\scriptstyle 2$};
				\node at (7.5,10.5){$\scriptstyle 3$};
				\node at (8.5,10.5){$\scriptstyle 0$};
				\node at (9.5,10.5){$\scriptstyle 1$};
				\node at (10.5,10.5){$\scriptstyle 2$};
				\node at (-0.5,9.5){$\scriptstyle 2$};
				\node at (0.5,9.5){$\scriptstyle 3$};
				\node at (1.5,9.5){$\scriptstyle 0$};
				\node at (2.5,9.5){$\scriptstyle 1$};
				\node at (3.5,9.5){$\scriptstyle 2$};
				\node at (4.5,9.5){$\scriptstyle 3$};
				\node at (5.5,9.5){$\scriptstyle 0$};
				\node at (6.5,9.5){$\scriptstyle 1$};
				\node at (7.5,9.5){$\scriptstyle 2$};
				\node at (-0.5,8.5){$\scriptstyle 1$};
				\node at (0.5,8.5){$\scriptstyle 2$};
				\node at (1.5,8.5){$\scriptstyle 3$};
				\node at (2.5,8.5){$\scriptstyle 0$};
				\node at (3.5,8.5){$\scriptstyle 1$};
				\node at (4.5,8.5){$\scriptstyle 2$};
				\node at (-0.5,7.5){$\scriptstyle 0$};
				\node at (0.5,7.5){$\scriptstyle 1$};
				\node at (1.5,7.5){$\scriptstyle 2$};
				\node at (2.5,7.5){$\scriptstyle 3$};
				\node at (3.5,7.5){$\scriptstyle 0$};
				\node at (-0.5,6.5){$\scriptstyle 3$};
				\node at (0.5,6.5){$\scriptstyle 0$};
				\node at (1.5,6.5){$\scriptstyle 1$};
				\node at (2.5,6.5){$\scriptstyle 2$};
				\node at (-0.5,5.5){$\scriptstyle 2$};
				\node at (0.5,5.5){$\scriptstyle 3$};
				\node at (1.5,5.5){$\scriptstyle 0$};
				\node at (-0.5,4.5){$\scriptstyle 1$};
				\node at (0.5,4.5){$\scriptstyle 2$};
				\node at (1.5,4.5){$\scriptstyle 3$};
				\node at (-0.5,3.5){$\scriptstyle 0$};
				\node at (0.5,3.5){$\scriptstyle 1$};
				\node at (1.5,3.5){$\scriptstyle 2$};
				\node at (-0.5,2.5){$\scriptstyle 3$};
				\node at (0.5,2.5){$\scriptstyle 0$};
				\node at (-0.5,1.5){$\scriptstyle 2$};
				\node at (0.5,1.5){$\scriptstyle 3$};
				\node at (-0.5,0.5){$\scriptstyle 1$};
				\node at (0.5,0.5){$\scriptstyle 2$};
				\node at (-0.5,-0.5){$\scriptstyle 0$};
				\node at (-0.5,-1.5){$\scriptstyle 3$};
				\node at (-0.5,-2.5){$\scriptstyle 2$};
				\draw[thick]  (-1,-3)--(0,-3)--(0,0)--(1,0)--(1,3)--(2,3)--(2,6)--(3,6)--(3,7)--(4,7)--(4,8)--(5,8)--(5,9)--(6,9)--(7,9)--(8,9)--(8,10)--(11,10)--(11,11)--(-1,11)--(-1,-3);
			\end{tikzpicture}
		}
		\\
		\\
		{}
		\bmu\ =\
		\hackcenter{
			\begin{tikzpicture}[scale=0.29]
				\draw[thick,fill=cyan!25!blue!15] (-1,-3)--(0,-3)--(0,0)--(1,0)--(1,3)--(2,3)--(2,6)--(3,6)--(3,7)--(4,7)--(4,8)--(5,8)--(5,9)--(6,9)--(6,10)--(7,10)--(7,11)--(10,11)--(10,12)--(-1,12)--(-1,-3);
				\draw[thick,fill=olive!30]  (-1,1)--(0,1)--(0,4)--(1,4)--(1,5)--(2,5)--(2,6)--(3,6)--(3,7)--(4,7)--(4,8)--(5,8)--(5,9)--(6,9)--(6,10)--(7,10)--(7,11)--(10,11)--(10,12)--(-1,12)--(-1,1);
				\draw[thick,fill=blue!10!violet!30]  (-1,3)--(0,3)--(0,4)--(1,4)--(1,5)--(2,5)--(2,6)--(3,6)--(3,7)--(4,7)--(4,8)--(5,8)--(5,9)--(6,9)--(6,10)--(7,10)--(7,11)--(10,11)--(10,12)--(-1,12)--(-1,3);
				\draw[thick,fill=yellow!70!orange!30]  (-1,5)--(0,5)--(0,6)--(1,6)--(1,7)--(2,7)--(2,8)--(3,8)--(3,9)--(4,9)--(4,10)--(7,10)--(7,11)--(10,11)--(10,12)--(-1,12)--(-1,5);
				\draw[thick,fill=red!70!blue!20]  (-1,7)--(0,7)--(0,8)--(1,8)--(1,9)--(2,9)--(2,10)--(3,10)--(3,11)--(6,11)--(6,12)--(-1,12)--(-1,7);
				\draw[thick,fill=green!50!gray!30]  (-1,9)--(0,9)--(0,10)--(1,10)--(1,11)--(2,11)--(2,12)--(-1,12)--(-1,9);
				\draw[thick,fill=blue!20]  (-1,11)--(2,11)--(2,12)--(-1,12)--(-1,11);
				\draw[thick,fill=orange!40]  (-1,11)--(0,11)--(0,12)--(-1,12)--(-1,11);
				\draw[thick,  gray, dotted] (-1,-2)--(0,-2);
				\draw[thick,  gray, dotted] (-1,-1)--(0,-1);
				\draw[thick,  gray, dotted] (-1,0)--(0,0);
				\draw[thick,  gray, dotted] (-1,1)--(1,1);
				\draw[thick,  gray, dotted] (-1,2)--(1,2);
				\draw[thick,  gray, dotted] (-1,3)--(1,3);
				\draw[thick,  gray, dotted] (-1,4)--(2,4);
				\draw[thick,  gray, dotted] (-1,5)--(2,5);
				\draw[thick,  gray, dotted] (-1,6)--(2,6);
				\draw[thick,  gray, dotted] (-1,7)--(4,7);
				\draw[thick,  gray, dotted] (-1,8)--(5,8);
				\draw[thick,  gray, dotted] (-1,9)--(6,9);
				\draw[thick,  gray, dotted] (-1,10)--(7,10);
				\draw[thick,  gray, dotted] (-1,11)--(8,11);
				\draw[thick,  gray, dotted] (-1,12)--(10,12);
				\draw[thick,  gray, dotted] (0,12)--(0,-2);
				\draw[thick,  gray, dotted] (1,12)--(1,1);
				\draw[thick,  gray, dotted] (2,12)--(2,5);
				\draw[thick,  gray, dotted] (3,12)--(3,6);
				\draw[thick,  gray, dotted] (4,12)--(4,7);
				\draw[thick,  gray, dotted] (5,12)--(5,8);
				\draw[thick,  gray, dotted] (6,12)--(6,9);
				\draw[thick, gray,  dotted] (7,12)--(7,10);
				\draw[thick,  gray, dotted] (8,12)--(8,11);
				\draw[thick,  gray, dotted] (9,12)--(9,11);
				\draw[thick,  gray, dotted] (10,12)--(10,11);
				\draw[thick, gray,  dotted] (11,12)--(11,12);
				\draw[thick,  gray, dotted] (12,12)--(12,12);
				\node at (-0.5,11.5){$\scriptstyle 0$};
				\node at (0.5,11.5){$\scriptstyle 1$};
				\node at (1.5,11.5){$\scriptstyle 2$};
				\node at (2.5,11.5){$\scriptstyle 3$};
				\node at (3.5,11.5){$\scriptstyle 0$};
				\node at (4.5,11.5){$\scriptstyle 1$};
				\node at (5.5,11.5){$\scriptstyle 2$};
				\node at (6.5,11.5){$\scriptstyle 3$};
				\node at (7.5,11.5){$\scriptstyle 0$};
				\node at (8.5,11.5){$\scriptstyle 1$};
				\node at (9.5,11.5){$\scriptstyle 2$};
				\node at (-0.5,10.5){$\scriptstyle 3$};
				\node at (0.5,10.5){$\scriptstyle 0$};
				\node at (1.5,10.5){$\scriptstyle 1$};
				\node at (2.5,10.5){$\scriptstyle 2$};
				\node at (3.5,10.5){$\scriptstyle 3$};
				\node at (4.5,10.5){$\scriptstyle 0$};
				\node at (5.5,10.5){$\scriptstyle 1$};
				\node at (6.5,10.5){$\scriptstyle 2$};
				\node at (-0.5,9.5){$\scriptstyle 2$};
				\node at (0.5,9.5){$\scriptstyle 3$};
				\node at (1.5,9.5){$\scriptstyle 0$};
				\node at (2.5,9.5){$\scriptstyle 1$};
				\node at (3.5,9.5){$\scriptstyle 2$};
				\node at (4.5,9.5){$\scriptstyle 3$};
				\node at (5.5,9.5){$\scriptstyle 0$};
				\node at (-0.5,8.5){$\scriptstyle 1$};
				\node at (0.5,8.5){$\scriptstyle 2$};
				\node at (1.5,8.5){$\scriptstyle 3$};
				\node at (2.5,8.5){$\scriptstyle 0$};
				\node at (3.5,8.5){$\scriptstyle 1$};
				\node at (4.5,8.5){$\scriptstyle 2$};
				\node at (-0.5,7.5){$\scriptstyle 0$};
				\node at (0.5,7.5){$\scriptstyle 1$};
				\node at (1.5,7.5){$\scriptstyle 2$};
				\node at (2.5,7.5){$\scriptstyle 3$};
				\node at (3.5,7.5){$\scriptstyle 0$};
				\node at (-0.5,6.5){$\scriptstyle 3$};
				\node at (0.5,6.5){$\scriptstyle 0$};
				\node at (1.5,6.5){$\scriptstyle 1$};
				\node at (2.5,6.5){$\scriptstyle 2$};
				\node at (-0.5,5.5){$\scriptstyle 2$};
				\node at (0.5,5.5){$\scriptstyle 3$};
				\node at (1.5,5.5){$\scriptstyle 0$};
				\node at (-0.5,4.5){$\scriptstyle 1$};
				\node at (0.5,4.5){$\scriptstyle 2$};
				\node at (1.5,4.5){$\scriptstyle 3$};
				\node at (-0.5,3.5){$\scriptstyle 0$};
				\node at (0.5,3.5){$\scriptstyle 1$};
				\node at (1.5,3.5){$\scriptstyle 2$};
				\node at (-0.5,2.5){$\scriptstyle 3$};
				\node at (0.5,2.5){$\scriptstyle 0$};
				\node at (-0.5,1.5){$\scriptstyle 2$};
				\node at (0.5,1.5){$\scriptstyle 3$};
				\node at (-0.5,0.5){$\scriptstyle 1$};
				\node at (0.5,0.5){$\scriptstyle 2$};
				\node at (-0.5,-0.5){$\scriptstyle 0$};
				\node at (-0.5,-1.5){$\scriptstyle 3$};
				\node at (-0.5,-2.5){$\scriptstyle 2$};
				\draw[thick]  (-1,-3)--(0,-3)--(0,0)--(1,0)--(1,3)--(2,3)--(2,6)--(3,6)--(3,7)--(4,7)--(4,8)--(5,8)--(5,9)--(6,9)--(6,10)--(7,10)--(7,11)--(10,11)--(10,12)--(-1,12)--(-1,-3);
				%
			\end{tikzpicture}
		}
		{}
		\hspace{-0.68cm}
		\hackcenter{
			\begin{tikzpicture}[scale=0.29]
				\node[white] at (-0.5,0){{}};
				\node[white] at (0.5,0){{}};
				\draw[thin, gray,fill=gray!30]  (0,0)--(0,14.6);
				\draw[thin, white]  (0,14.6)--(0,15);
			\end{tikzpicture}
		}
		{}
		\hackcenter{
			\begin{tikzpicture}[scale=0.29]
				\draw[thick,fill=olive!30] (-1,1)--(0,1)--(0,4)--(1,4)--(1,5)--(2,5)--(2,6)--(3,6)--(3,7)--(4,7)--(4,8)--(5,8)--(5,9)--(6,9)--(7,9)--(8,9)--(8,10)--(11,10)--(11,11)--(-1,11)--(-1,1);
				\draw[thick,fill=orange!40] (-1,3)--(0,3)--(0,4)--(1,4)--(1,5)--(2,5)--(2,6)--(3,6)--(3,7)--(4,7)--(4,8)--(5,8)--(5,9)--(6,9)--(7,9)--(8,9)--(8,10)--(11,10)--(11,11)--(-1,11)--(-1,3);
				\draw[thick,fill=purple!35] (-1,5)--(0,5)--(0,6)--(1,6)--(1,7)--(2,7)--(2,8)--(3,8)--(3,9)--(4,9)--(4,10)--(7,10)--(7,11)--(-1,11)--(-1,5);
				\draw[thick,fill=yellow!60] (-1,7)--(0,7)--(0,8)--(1,8)--(1,9)--(4,9)--(4,10)--(7,10)--(7,11)--(-1,11)--(-1,7);
				\draw[thick,fill=blue!40!green!25] (-1,7)--(0,7)--(0,8)--(1,8)--(1,9)--(2,9)--(2,10)--(3,10)--(3,11)--(-1,11)--(-1,7);
				\draw[thick,fill=lime!70] (-1,9)--(0,9)--(0,10)--(3,10)--(3,11)--(-1,11)--(-1,9);
				\draw[thick, gray, dotted] (-1,1)--(0,1);
				\draw[thick,  gray, dotted] (-1,2)--(0,2);
				\draw[thick,  gray, dotted] (-1,3)--(0,3);
				\draw[thick,  gray, dotted] (-1,4)--(1,4);
				\draw[thick,  gray, dotted] (-1,5)--(2,5);
				\draw[thick,  gray, dotted] (-1,6)--(2,6);
				\draw[thick,  gray, dotted] (-1,7)--(4,7);
				\draw[thick,  gray, dotted] (-1,8)--(5,8);
				\draw[thick,  gray, dotted] (-1,9)--(6,9);
				\draw[thick,  gray, dotted] (-1,10)--(7,10);
				\draw[thick,  gray, dotted] (-1,11)--(8,11);
				\draw[thick,  gray, dotted] (0,11)--(0,1);
				\draw[thick,  gray, dotted] (1,11)--(1,5);
				\draw[thick,  gray, dotted] (2,11)--(2,6);
				\draw[thick,  gray, dotted] (3,11)--(3,6);
				\draw[thick,  gray, dotted] (4,11)--(4,7);
				\draw[thick,  gray, dotted] (5,11)--(5,8);
				\draw[thick,  gray, dotted] (6,11)--(6,9);
				\draw[thick, gray,  dotted] (7,11)--(7,9);
				\draw[thick,  gray, dotted] (8,11)--(8,9);
				\draw[thick,  gray, dotted] (9,11)--(9,10);
				\draw[thick,  gray, dotted] (10,11)--(10,10);
				\draw[thick, gray,  dotted] (11,11)--(11,10);
				\node at (-0.5,10.5){$\scriptstyle 3$};
				\node at (0.5,10.5){$\scriptstyle 0$};
				\node at (1.5,10.5){$\scriptstyle 1$};
				\node at (2.5,10.5){$\scriptstyle 2$};
				\node at (3.5,10.5){$\scriptstyle 3$};
				\node at (4.5,10.5){$\scriptstyle 0$};
				\node at (5.5,10.5){$\scriptstyle 1$};
				\node at (6.5,10.5){$\scriptstyle 2$};
				\node at (7.5,10.5){$\scriptstyle 3$};
				\node at (8.5,10.5){$\scriptstyle 0$};
				\node at (9.5,10.5){$\scriptstyle 1$};
				\node at (10.5,10.5){$\scriptstyle 2$};
				\node at (-0.5,9.5){$\scriptstyle 2$};
				\node at (0.5,9.5){$\scriptstyle 3$};
				\node at (1.5,9.5){$\scriptstyle 0$};
				\node at (2.5,9.5){$\scriptstyle 1$};
				\node at (3.5,9.5){$\scriptstyle 2$};
				\node at (4.5,9.5){$\scriptstyle 3$};
				\node at (5.5,9.5){$\scriptstyle 0$};
				\node at (6.5,9.5){$\scriptstyle 1$};
				\node at (7.5,9.5){$\scriptstyle 2$};
				\node at (-0.5,8.5){$\scriptstyle 1$};
				\node at (0.5,8.5){$\scriptstyle 2$};
				\node at (1.5,8.5){$\scriptstyle 3$};
				\node at (2.5,8.5){$\scriptstyle 0$};
				\node at (3.5,8.5){$\scriptstyle 1$};
				\node at (4.5,8.5){$\scriptstyle 2$};
				\node at (-0.5,7.5){$\scriptstyle 0$};
				\node at (0.5,7.5){$\scriptstyle 1$};
				\node at (1.5,7.5){$\scriptstyle 2$};
				\node at (2.5,7.5){$\scriptstyle 3$};
				\node at (3.5,7.5){$\scriptstyle 0$};
				\node at (-0.5,6.5){$\scriptstyle 3$};
				\node at (0.5,6.5){$\scriptstyle 0$};
				\node at (1.5,6.5){$\scriptstyle 1$};
				\node at (2.5,6.5){$\scriptstyle 2$};
				\node at (-0.5,5.5){$\scriptstyle 2$};
				\node at (0.5,5.5){$\scriptstyle 3$};
				\node at (1.5,5.5){$\scriptstyle 0$};
				\node at (-0.5,4.5){$\scriptstyle 1$};
				\node at (0.5,4.5){$\scriptstyle 2$};
				\node at (-0.5,3.5){$\scriptstyle 0$};
				\node at (-0.5,2.5){$\scriptstyle 3$};
				\node at (-0.5,1.5){$\scriptstyle 2$};
				\draw[thick]  (-1,1)--(0,1)--(0,4)--(1,4)--(1,5)--(2,5)--(2,6)--(3,6)--(3,7)--(4,7)--(4,8)--(5,8)--(5,9)--(6,9)--(7,9)--(8,9)--(8,10)--(11,10)--(11,11)--(-1,11)--(-1,1);
								\phantom{
					\draw[thick,fill=blue]  (-1,-4)--(-1,1)--(1,1)--(1,-2)--(-1,-4);
				}
			\end{tikzpicture}
			}
			\\
			\\
		{}	
		\btau\ = \
		\hackcenter{
			\begin{tikzpicture}[scale=0.29]
				\draw[thick,fill=cyan!25]  (-1,1)--(0,1)--(0,4)--(1,4)--(1,6)--(1,7)--(2,7)--(2,8)--(3,8)--(3,9)--(4,9)--(4,10)--(7,10)--(7,11)--(10,11)--(10,12)--(-1,12)--(-1,1);
				\draw[thick,fill=yellow!70!orange!30]  (-1,5)--(0,5)--(0,6)--(1,6)--(1,7)--(2,7)--(2,8)--(3,8)--(3,9)--(4,9)--(4,10)--(7,10)--(7,11)--(10,11)--(10,12)--(-1,12)--(-1,5);
				\draw[thick,fill=red!70!blue!20]  (-1,7)--(0,7)--(0,8)--(1,8)--(1,9)--(2,9)--(2,10)--(3,10)--(3,11)--(6,11)--(6,12)--(-1,12)--(-1,7);
				\draw[thick,fill=green!50!gray!30]  (-1,9)--(0,9)--(0,10)--(1,10)--(1,11)--(2,11)--(2,12)--(-1,12)--(-1,9);
				\draw[thick,fill=blue!20]  (-1,11)--(2,11)--(2,12)--(-1,12)--(-1,11);
				\draw[thick,fill=orange!40]  (-1,11)--(0,11)--(0,12)--(-1,12)--(-1,11);
				\draw[thick,  gray, dotted] (-1,2)--(0,2);
				\draw[thick,  gray, dotted] (-1,3)--(0,3);
				\draw[thick,  gray, dotted] (-1,4)--(0,4);
				\draw[thick,  gray, dotted] (-1,5)--(1,5);
				\draw[thick,  gray, dotted] (-1,6)--(1,6);
				\draw[thick,  gray, dotted] (-1,7)--(2,7);
				\draw[thick,  gray, dotted] (-1,8)--(3,8);
				\draw[thick,  gray, dotted] (-1,9)--(4,9);
				\draw[thick,  gray, dotted] (-1,10)--(7,10);
				\draw[thick,  gray, dotted] (-1,11)--(8,11);
				\draw[thick,  gray, dotted] (-1,12)--(10,12);
				\draw[thick,  gray, dotted] (0,12)--(0,1);
				\draw[thick,  gray, dotted] (1,12)--(1,4);
				\draw[thick,  gray, dotted] (2,12)--(2,7);
				\draw[thick,  gray, dotted] (3,12)--(3,8);
				\draw[thick,  gray, dotted] (4,12)--(4,9);
				\draw[thick,  gray, dotted] (5,12)--(5,10);
				\draw[thick,  gray, dotted] (6,12)--(6,10);
				\draw[thick, gray,  dotted] (7,12)--(7,10);
				\draw[thick,  gray, dotted] (8,12)--(8,11);
				\draw[thick,  gray, dotted] (9,12)--(9,11);
				\draw[thick,  gray, dotted] (10,12)--(10,11);
				\draw[thick, gray,  dotted] (11,12)--(11,12);
				\draw[thick,  gray, dotted] (12,12)--(12,12);
				\node at (-0.5,11.5){$\scriptstyle 0$};
				\node at (0.5,11.5){$\scriptstyle 1$};
				\node at (1.5,11.5){$\scriptstyle 2$};
				\node at (2.5,11.5){$\scriptstyle 3$};
				\node at (3.5,11.5){$\scriptstyle 0$};
				\node at (4.5,11.5){$\scriptstyle 1$};
				\node at (5.5,11.5){$\scriptstyle 2$};
				\node at (6.5,11.5){$\scriptstyle 3$};
				\node at (7.5,11.5){$\scriptstyle 0$};
				\node at (8.5,11.5){$\scriptstyle 1$};
				\node at (9.5,11.5){$\scriptstyle 2$};
				\node at (-0.5,10.5){$\scriptstyle 3$};
				\node at (0.5,10.5){$\scriptstyle 0$};
				\node at (1.5,10.5){$\scriptstyle 1$};
				\node at (2.5,10.5){$\scriptstyle 2$};
				\node at (3.5,10.5){$\scriptstyle 3$};
				\node at (4.5,10.5){$\scriptstyle 0$};
				\node at (5.5,10.5){$\scriptstyle 1$};
				\node at (6.5,10.5){$\scriptstyle 2$};
				\node at (-0.5,9.5){$\scriptstyle 2$};
				\node at (0.5,9.5){$\scriptstyle 3$};
				\node at (1.5,9.5){$\scriptstyle 0$};
				\node at (2.5,9.5){$\scriptstyle 1$};
				\node at (3.5,9.5){$\scriptstyle 2$};
				\node at (-0.5,8.5){$\scriptstyle 1$};
				\node at (0.5,8.5){$\scriptstyle 2$};
				\node at (1.5,8.5){$\scriptstyle 3$};
				\node at (2.5,8.5){$\scriptstyle 0$};
				\node at (-0.5,7.5){$\scriptstyle 0$};
				\node at (0.5,7.5){$\scriptstyle 1$};
				\node at (1.5,7.5){$\scriptstyle 2$};
				\node at (-0.5,6.5){$\scriptstyle 3$};
				\node at (0.5,6.5){$\scriptstyle 0$};
				\node at (-0.5,5.5){$\scriptstyle 2$};
				\node at (0.5,5.5){$\scriptstyle 3$};
				\node at (-0.5,4.5){$\scriptstyle 1$};
				\node at (0.5,4.5){$\scriptstyle 2$};
				\node at (-0.5,3.5){$\scriptstyle 0$};
				\node at (-0.5,2.5){$\scriptstyle 3$};
				\node at (-0.5,1.5){$\scriptstyle 2$};
				\draw[thick]  (-1,1)--(0,1)--(0,4)--(1,4)--(1,6)--(1,7)--(2,7)--(2,8)--(3,8)--(3,9)--(4,9)--(4,10)--(7,10)--(7,11)--(10,11)--(10,12)--(-1,12)--(-1,1);
				%
				%
								\phantom{
					\draw[thick,fill=blue]  (-1,-2)--(-1,1)--(1,1)--(1,-2)--(-1,-2);
				}
			\end{tikzpicture}
		}
	\hspace{-0.68cm}
		\hackcenter{
			\begin{tikzpicture}[scale=0.29]
				\node[white] at (-0.5,0){{}};
				\node[white] at (0.5,0){{}};
				\draw[thin, gray,fill=gray!30]  (0,0)--(0,13.6);
				\draw[thin, white]  (0,13.6)--(0,14);
			\end{tikzpicture}
		}
		\hackcenter{
			\begin{tikzpicture}[scale=0.29]
				\draw[thick,fill=cyan!25] (-1,-3)--(0,-3)--(0,0)--(1,0)--(1,3)--(2,3)--(2,4)--(3,4)--(3,5)--(4,5)--(4,6)--(5,6)--(5,7)--(6,7)--(6,8)--(7,8)--(7,9)--(8,9)--(8,10)--(11,10)--(11,11)--(-1,11)--(-1,-3);
				\draw[thick,fill=red!40] (-1,1)--(0,1)--(0,2)--(1,2)--(1,3)--(2,3)--(2,4)--(3,4)--(3,5)--(4,5)--(4,6)--(5,6)--(5,7)--(6,7)--(6,8)--(7,8)--(7,9)--(8,9)--(8,10)--(11,10)--(11,11)--(-1,11)--(-1,-3);
				\draw[thick,fill=orange!40] (-1,3)--(0,3)--(0,4)--(1,4)--(1,5)--(2,5)--(2,6)--(3,6)--(3,7)--(4,7)--(4,8)--(5,8)--(5,9)--(6,9)--(7,9)--(8,9)--(8,10)--(11,10)--(11,11)--(-1,11)--(-1,-3);
				\draw[thick,fill=purple!35] (-1,5)--(0,5)--(0,6)--(1,6)--(1,7)--(2,7)--(2,8)--(3,8)--(3,9)--(4,9)--(4,10)--(7,10)--(7,11)--(-1,11)--(-1,5);
				\draw[thick,fill=yellow!60] (-1,7)--(0,7)--(0,8)--(1,8)--(1,9)--(4,9)--(4,10)--(7,10)--(7,11)--(-1,11)--(-1,7);
				\draw[thick,fill=blue!40!green!25] (-1,7)--(0,7)--(0,8)--(1,8)--(1,9)--(2,9)--(2,10)--(3,10)--(3,11)--(-1,11)--(-1,7);
				\draw[thick,fill=lime!70] (-1,9)--(0,9)--(0,10)--(3,10)--(3,11)--(-1,11)--(-1,9);
				\draw[thick, gray, dotted] (-1,-2)--(0,-2);
				\draw[thick, gray, dotted] (-1,-1)--(0,-1);
				\draw[thick, gray, dotted] (-1,0)--(0,0);
				\draw[thick, gray, dotted] (-1,1)--(1,1);
				\draw[thick,  gray, dotted] (-1,2)--(1,2);
				\draw[thick,  gray, dotted] (-1,3)--(1,3);
				\draw[thick,  gray, dotted] (-1,4)--(2,4);
				\draw[thick,  gray, dotted] (-1,5)--(3,5);
				\draw[thick,  gray, dotted] (-1,6)--(4,6);
				\draw[thick,  gray, dotted] (-1,7)--(5,7);
				\draw[thick,  gray, dotted] (-1,8)--(6,8);
				\draw[thick,  gray, dotted] (-1,9)--(7,9);
				\draw[thick,  gray, dotted] (-1,10)--(7,10);
				\draw[thick,  gray, dotted] (-1,11)--(8,11);
				\draw[thick,  gray, dotted] (0,11)--(0,0);
				\draw[thick,  gray, dotted] (1,11)--(1,0);
				\draw[thick,  gray, dotted] (2,11)--(2,3);
				\draw[thick,  gray, dotted] (3,11)--(3,5);
				\draw[thick,  gray, dotted] (4,11)--(4,6);
				\draw[thick,  gray, dotted] (5,11)--(5,7);
				\draw[thick,  gray, dotted] (6,11)--(6,8);
				\draw[thick, gray,  dotted] (7,11)--(7,9);
				\draw[thick,  gray, dotted] (8,11)--(8,9);
				\draw[thick,  gray, dotted] (9,11)--(9,10);
				\draw[thick,  gray, dotted] (10,11)--(10,10);
				\draw[thick, gray,  dotted] (11,11)--(11,10);
				\node at (-0.5,10.5){$\scriptstyle 3$};
				\node at (0.5,10.5){$\scriptstyle 0$};
				\node at (1.5,10.5){$\scriptstyle 1$};
				\node at (2.5,10.5){$\scriptstyle 2$};
				\node at (3.5,10.5){$\scriptstyle 3$};
				\node at (4.5,10.5){$\scriptstyle 0$};
				\node at (5.5,10.5){$\scriptstyle 1$};
				\node at (6.5,10.5){$\scriptstyle 2$};
				\node at (7.5,10.5){$\scriptstyle 3$};
				\node at (8.5,10.5){$\scriptstyle 0$};
				\node at (9.5,10.5){$\scriptstyle 1$};
				\node at (10.5,10.5){$\scriptstyle 2$};
				\node at (-0.5,9.5){$\scriptstyle 2$};
				\node at (0.5,9.5){$\scriptstyle 3$};
				\node at (1.5,9.5){$\scriptstyle 0$};
				\node at (2.5,9.5){$\scriptstyle 1$};
				\node at (3.5,9.5){$\scriptstyle 2$};
				\node at (4.5,9.5){$\scriptstyle 3$};
				\node at (5.5,9.5){$\scriptstyle 0$};
				\node at (6.5,9.5){$\scriptstyle 1$};
				\node at (7.5,9.5){$\scriptstyle 2$};
				\node at (-0.5,8.5){$\scriptstyle 1$};
				\node at (0.5,8.5){$\scriptstyle 2$};
				\node at (1.5,8.5){$\scriptstyle 3$};
				\node at (2.5,8.5){$\scriptstyle 0$};
				\node at (3.5,8.5){$\scriptstyle 1$};
				\node at (4.5,8.5){$\scriptstyle 2$};
				\node at (5.5,8.5){$\scriptstyle 3$};
				\node at (6.5,8.5){$\scriptstyle 0$};
				\node at (-0.5,7.5){$\scriptstyle 0$};
				\node at (0.5,7.5){$\scriptstyle 1$};
				\node at (1.5,7.5){$\scriptstyle 2$};
				\node at (2.5,7.5){$\scriptstyle 3$};
				\node at (3.5,7.5){$\scriptstyle 0$};
				\node at (4.5,7.5){$\scriptstyle 1$};
				\node at (5.5,7.5){$\scriptstyle 2$};
				\node at (-0.5,6.5){$\scriptstyle 3$};
				\node at (0.5,6.5){$\scriptstyle 0$};
				\node at (1.5,6.5){$\scriptstyle 1$};
				\node at (2.5,6.5){$\scriptstyle 2$};
				\node at (3.5,6.5){$\scriptstyle 3$};
				\node at (4.5,6.5){$\scriptstyle 0$};
				\node at (-0.5,5.5){$\scriptstyle 2$};
				\node at (0.5,5.5){$\scriptstyle 3$};
				\node at (1.5,5.5){$\scriptstyle 0$};
				\node at (2.5,5.5){$\scriptstyle 1$};
				\node at (3.5,5.5){$\scriptstyle 2$};
				\node at (-0.5,4.5){$\scriptstyle 1$};
				\node at (0.5,4.5){$\scriptstyle 2$};
				\node at (1.5,4.5){$\scriptstyle 3$};
				\node at (2.5,4.5){$\scriptstyle 0$};
				\node at (-0.5,3.5){$\scriptstyle 0$};
				\node at (0.5,3.5){$\scriptstyle 1$};
				\node at (1.5,3.5){$\scriptstyle 2$};
				\node at (-0.5,2.5){$\scriptstyle 3$};
				\node at (0.5,2.5){$\scriptstyle 0$};
				\node at (-0.5,1.5){$\scriptstyle 2$};
				\node at (0.5,1.5){$\scriptstyle 3$};
				\node at (-0.5,0.5){$\scriptstyle 1$};
				\node at (0.5,0.5){$\scriptstyle 2$};
				\node at (-0.5,-0.5){$\scriptstyle 0$};
				\node at (-0.5,-1.5){$\scriptstyle 3$};
				\node at (-0.5,-2.5){$\scriptstyle 2$};
				\draw[thick]  (-1,-3)--(0,-3)--(0,0)--(1,0)--(1,3)--(2,3)--(2,4)--(3,4)--(3,5)--(4,5)--(4,6)--(5,6)--(5,7)--(6,7)--(6,8)--(7,8)--(7,9)--(8,9)--(8,10)--(11,10)--(11,11)--(-1,11)--(-1,-3);
			\end{tikzpicture}
		}
\end{align*}	
\caption{
The three 2-multicores \( \blam, \bmu, \btau\) in the core block \(\Lambda^{\bkap}_+( 26 \alpha_0 + 23 \alpha_1 + 30 \alpha_2 + 24 \alpha_3)\) for \(e=4\) and multicharge \(\bkap=(0,3)\), considered in \cref{3coretileex}. The cuspidal Kostant tilings associated to the convex preorder \(\succeq\) defined in that example are highlighted. 
}
\label{corebigtilings}      
\end{figure}

\begin{figure}[h]
\begin{align*}
{}
\bzeta\ =\ 
\hackcenter{
\begin{tikzpicture}[scale=0.29]
\draw[thick,fill=cyan!25] (0,0)--(1,0)--(1,3)--(2,3)--(2,6)--(3,6)--(3,7)--(4,7)--(4,8)--(5,8)--(5,9)--(6,9)--(6,10)--(7,10)--(7,11)--(10,11)--(10,12)--(13,12)--(13,13)--(0,13)--(0,0);
\draw[thick, fill=yellow!40!orange!50] (0,4)--(1,4)--(1,5)--(2,5)--(2,6)--(3,6)--(3,7)--(4,7)--(4,8)--(5,8)--(5,9)--(6,9)--(6,10)--(7,10)--(7,11)--(10,11)--(10,12)--(13,12)--(13,13)--(0,13)--(0,2);
\draw[thick, fill=brown!35!green!30] (0,6)--(1,6)--(1,7)--(2,7)--(2,8)--(3,8)--(3,9)--(4,9)--(4,10)--(7,10)--(7,11)--(10,11)--(10,12)--(13,12)--(13,13)--(0,13)--(0,2);
\draw[thick, fill=purple!25] (0,6)--(1,6)--(1,7)--(2,7)--(2,8)--(3,8)--(3,9)--(4,9)--(4,10)--(5,10)--(5,11)--(6,11)--(6,12)--(9,12)--(9,13)--(0,13)--(0,6);
\draw[thick, fill=yellow!60] (0,8)--(1,8)--(1,9)--(2,9)--(2,10)--(3,10)--(3,11)--(4,11)--(6,11)--(6,12)--(9,12)--(9,13)--(0,13)--(0,8);
\draw[thick, fill=blue!40!green!25] (0,8)--(1,8)--(1,9)--(2,9)--(2,10)--(3,10)--(3,11)--(4,11)--(4,12)--(5,12)--(5,13)--(0,13)--(0,8);
\draw[thick, fill=red!30] (0,8)--(1,8)--(1,11)--(2,11)--(2,12)--(5,12)--(5,13)--(0,13)--(0,8);
\draw[thick, fill=cyan!35] (0,12)--(1,12)--(1,13)--(0,13)--(0,12);
\draw[thick,red, fill=lightgray!30] (2,5)--(4,5)--(4,6)--(5,6)--(5,7)--(3,7)--(3,6)--(2,6)--(2,5);
\draw[thick,red, fill=lightgray!30](4,7)--(6,7)--(6,8)--(7,8)--(7,9)--(5,9)--(5,8)--(4,8)--(4,7);
\draw[thick,red, fill=lightgray!30] (6,9)--(8,9)--(8,10)--(9,10)--(9,11)--(7,11)--(7,10)--(6,10)--(6,9);
\draw[thick,red, fill=lightgray!30] (7,8)--(9,8)--(9,9)--(10,9)--(10,10)--(8,10)--(8,9)--(7,9)--(7,8);
\draw[thick,red, fill=lightgray!30] (9,10)--(11,10)--(11,11)--(12,11)--(12,12)--(10,12)--(10,11)--(9,11)--(9,10);
\draw[thick, gray, dotted] (0,1)--(1,1);
\draw[thick,  gray, dotted] (0,2)--(1,2);
\draw[thick,  gray, dotted] (0,3)--(1,3);
\draw[thick,  gray, dotted] (0,4)--(2,4);
\draw[thick,  gray, dotted] (0,5)--(2,5);
\draw[thick,  gray, dotted] (0,6)--(4,6);
\draw[thick,  gray, dotted] (0,7)--(5,7);
\draw[thick,  gray, dotted] (0,8)--(6,8);
\draw[thick,  gray, dotted] (0,9)--(9,9);
\draw[thick,  gray, dotted] (0,10)--(10,10);
\draw[thick,  gray, dotted] (0,11)--(11,11);
\draw[thick,  gray, dotted] (0,12)--(12,12);
\draw[thick,  gray, dotted] (1,13)--(1,0);
\draw[thick,  gray, dotted] (2,13)--(2,3);
\draw[thick,  gray, dotted] (3,13)--(3,5);
\draw[thick,  gray, dotted] (4,13)--(4,6);
\draw[thick,  gray, dotted] (5,13)--(5,7);
\draw[thick,  gray, dotted] (6,13)--(6,8);
\draw[thick, gray,  dotted] (7,13)--(7,8);
\draw[thick,  gray, dotted] (8,13)--(8,8);
\draw[thick,  gray, dotted] (9,13)--(9,8);
\draw[thick,  gray, dotted] (10,13)--(10,9);
\draw[thick, gray,  dotted] (11,13)--(11,11);
\draw[thick,  gray, dotted] (12,13)--(12,12);
\node at (0.5,12.5){$\scriptstyle 2$};
\node at (1.5,12.5){$\scriptstyle 3$};
\node at (2.5,12.5){$\scriptstyle 0$};
\node at (3.5,12.5){$\scriptstyle 1$};
\node at (4.5,12.5){$\scriptstyle 2$};
\node at (5.5,12.5){$\scriptstyle 3$};
\node at (6.5,12.5){$\scriptstyle 0$};
\node at (7.5,12.5){$\scriptstyle 1$};
\node at (8.5,12.5){$\scriptstyle 2$};
\node at (9.5,12.5){$\scriptstyle 3$};
\node at (10.5,12.5){$\scriptstyle 0$};
\node at (11.5,12.5){$\scriptstyle 1$};
\node at (12.5,12.5){$\scriptstyle 2$};
\node at (0.5,11.5){$\scriptstyle 1$};
\node at (1.5,11.5){$\scriptstyle 2$};
\node at (2.5,11.5){$\scriptstyle 3$};
\node at (3.5,11.5){$\scriptstyle 0$};
\node at (4.5,11.5){$\scriptstyle 1$};
\node at (5.5,11.5){$\scriptstyle 2$};
\node at (6.5,11.5){$\scriptstyle 3$};
\node at (7.5,11.5){$\scriptstyle 0$};
\node at (8.5,11.5){$\scriptstyle 1$};
\node at (9.5,11.5){$\scriptstyle 2$};
\node at (10.5,11.5){$\scriptstyle 3$};
\node at (11.5,11.5){$\scriptstyle 0$};
\node at (0.5,10.5){$\scriptstyle 0$};
\node at (1.5,10.5){$\scriptstyle 1$};
\node at (2.5,10.5){$\scriptstyle 2$};
\node at (3.5,10.5){$\scriptstyle 3$};
\node at (4.5,10.5){$\scriptstyle 0$};
\node at (5.5,10.5){$\scriptstyle 1$};
\node at (6.5,10.5){$\scriptstyle 2$};
\node at (7.5,10.5){$\scriptstyle 3$};
\node at (8.5,10.5){$\scriptstyle 0$};
\node at (9.5,10.5){$\scriptstyle 1$};
\node at (10.5,10.5){$\scriptstyle 2$};
\node at (0.5,9.5){$\scriptstyle 3$};
\node at (1.5,9.5){$\scriptstyle 0$};
\node at (2.5,9.5){$\scriptstyle 1$};
\node at (3.5,9.5){$\scriptstyle 2$};
\node at (4.5,9.5){$\scriptstyle 3$};
\node at (5.5,9.5){$\scriptstyle 0$};
\node at (6.5,9.5){$\scriptstyle 1$};
\node at (7.5,9.5){$\scriptstyle 2$};
\node at (8.5,9.5){$\scriptstyle 3$};
\node at (9.5,9.5){$\scriptstyle 0$};
\node at (0.5,8.5){$\scriptstyle 2$};
\node at (1.5,8.5){$\scriptstyle 3$};
\node at (2.5,8.5){$\scriptstyle 0$};
\node at (3.5,8.5){$\scriptstyle 1$};
\node at (4.5,8.5){$\scriptstyle 2$};
\node at (5.5,8.5){$\scriptstyle 3$};
\node at (6.5,8.5){$\scriptstyle 0$};
\node at (7.5,8.5){$\scriptstyle 1$};
\node at (8.5,8.5){$\scriptstyle 2$};
\node at (0.5,7.5){$\scriptstyle 1$};
\node at (1.5,7.5){$\scriptstyle 2$};
\node at (2.5,7.5){$\scriptstyle 3$};
\node at (3.5,7.5){$\scriptstyle 0$};
\node at (4.5,7.5){$\scriptstyle 1$};
\node at (5.5,7.5){$\scriptstyle 2$};
\node at (0.5,6.5){$\scriptstyle 0$};
\node at (1.5,6.5){$\scriptstyle 1$};
\node at (2.5,6.5){$\scriptstyle 2$};
\node at (3.5,6.5){$\scriptstyle 3$};
\node at (4.5,6.5){$\scriptstyle 0$};
\node at (0.5,5.5){$\scriptstyle 3$};
\node at (1.5,5.5){$\scriptstyle 0$};
\node at (2.5,5.5){$\scriptstyle 1$};
\node at (3.5,5.5){$\scriptstyle 2$};
\node at (0.5,4.5){$\scriptstyle 2$};
\node at (1.5,4.5){$\scriptstyle 3$};
\node at (0.5,3.5){$\scriptstyle 1$};
\node at (1.5,3.5){$\scriptstyle 2$};
\node at (0.5,2.5){$\scriptstyle 0$};
\node at (0.5,1.5){$\scriptstyle 3$};
\node at (0.5,0.5){$\scriptstyle 2$};
\draw[ thick]  (0,0)--(1,0)--(1,3)--(2,3)--(2,5)--(4,5)--(4,6)--(5,6)--(5,7)--(6,7)--(6,8)--(9,8)--(9,9)--(10,9)--(10,10)--(11,10)--(11,11)--(12,11)--(12,12)--(13,12)--(13,13)--(0,13)--(0,0);
\draw[ thick,black](2,5)--(4,5)--(4,6)--(5,6)--(5,7)--(3,7)--(3,6)--(2,6)--(2,5);
\draw[  thick,black] (4,7)--(6,7)--(6,8)--(7,8)--(7,9)--(5,9)--(5,8)--(4,8)--(4,7);
\draw[  thick,black] (6,9)--(8,9)--(8,10)--(9,10)--(9,11)--(7,11)--(7,10)--(6,10)--(6,9);
\draw[  thick,black] (7,8)--(9,8)--(9,9)--(10,9)--(10,10)--(8,10)--(8,9)--(7,9)--(7,8);
\draw[  thick,black] (9,10)--(11,10)--(11,11)--(12,11)--(12,12)--(10,12)--(10,11)--(9,11)--(9,10);
%
\end{tikzpicture}
}
{}
\hackcenter{
	\begin{tikzpicture}[scale=0.29]
		\node[white] at (-0.5,0){{}};
		\node[white] at (0.5,0){{}};
		\draw[thin, gray,fill=gray!30]  (0,0)--(0,12.6);
		\draw[thin, white]  (0,12.6)--(0,13);
	\end{tikzpicture}
}
{}
\hackcenter{
\begin{tikzpicture}[scale=0.29]
\draw[thick,fill=cyan!25] (0,0)--(1,0)--(1,3)--(2,3)--(2,6)--(3,6)--(3,7)--(4,7)--(4,8)--(5,8)--(5,9)--(6,9)--(7,9)--(8,9)--(8,10)--(11,10)--(11,11)--(0,11)--(0,0);
\draw[thick,fill=yellow!70!orange!30] (0,4)--(1,4)--(1,5)--(2,5)--(2,6)--(3,6)--(3,7)--(4,7)--(4,8)--(5,8)--(5,9)--(6,9)--(7,9)--(8,9)--(8,10)--(11,10)--(11,11)--(0,11)--(0,0);
\draw[thick,fill=red!70!blue!20] (0,6)--(1,6)--(1,7)--(2,7)--(2,8)--(3,8)--(3,9)--(4,9)--(4,10)--(7,10)--(7,11)--(0,11)--(0,6);
\draw[thick,fill=green!50!gray!30] (0,8)--(1,8)--(1,9)--(2,9)--(2,10)--(3,10)--(3,11)--(0,11)--(0,8);
\draw[thick,fill=blue!20] (0,10)--(3,10)--(3,11)--(0,11)--(0,10);
\draw[thick,fill=orange!40] (0,10)--(1,10)--(1,11)--(0,11)--(0,10);
\draw[thick,red, fill=lightgray!30] (11,10)--(15,10)--(15,11)--(11,11)--(11,10);
\draw[thick,red, fill=lightgray!30]   (8,9)--(12,9)--(12,10)--(8,10)--(8,9);
\draw[thick,red, fill=lightgray!30]  (4,7)--(6,7)--(6,8)--(7,8)--(7,9)--(5,9)--(5,8)--(4,8)--(4,7);
\draw[thick,red, fill=lightgray!30]   (2,4)--(3,4)--(3,5)--(4,5)--(4,7)--(3,7)--(3,6)--(2,6)--(2,4);
\draw[thick,red, fill=lightgray!30]   (1,1)--(2,1)--(2,2)--(3,2)--(3,4)--(2,4)--(2,3)--(1,3)--(1,1);
\draw[thick, gray, dotted] (0,1)--(1,1);
\draw[thick,  gray, dotted] (0,2)--(2,2);
\draw[thick,  gray, dotted] (0,3)--(3,3);
\draw[thick,  gray, dotted] (0,4)--(3,4);
\draw[thick,  gray, dotted] (0,5)--(3,5);
\draw[thick,  gray, dotted] (0,6)--(4,6);
\draw[thick,  gray, dotted] (0,7)--(5,7);
\draw[thick,  gray, dotted] (0,8)--(6,8);
\draw[thick,  gray, dotted] (0,9)--(7,9);
\draw[thick,  gray, dotted] (0,10)--(12,10);
\draw[thick,  gray, dotted] (0,11)--(15,11);
\draw[thick,  gray, dotted] (1,11)--(1,0);
\draw[thick,  gray, dotted] (2,11)--(2,2);
\draw[thick,  gray, dotted] (3,11)--(3,5);
\draw[thick,  gray, dotted] (4,11)--(4,7);
\draw[thick,  gray, dotted] (5,11)--(5,7);
\draw[thick,  gray, dotted] (6,11)--(6,8);
\draw[thick, gray,  dotted] (7,11)--(7,8);
\draw[thick,  gray, dotted] (8,11)--(8,9);
\draw[thick,  gray, dotted] (9,11)--(9,9);
\draw[thick,  gray, dotted] (10,11)--(10,9);
\draw[thick, gray,  dotted] (11,11)--(11,9);
\draw[thick, gray,  dotted] (12,11)--(12,9);
\draw[thick, gray,  dotted] (13,11)--(13,10);
\draw[thick, gray,  dotted] (14,11)--(14,10);
\node at (0.5,10.5){$\scriptstyle 0$};
\node at (1.5,10.5){$\scriptstyle 1$};
\node at (2.5,10.5){$\scriptstyle 2$};
\node at (3.5,10.5){$\scriptstyle 3$};
\node at (4.5,10.5){$\scriptstyle 0$};
\node at (5.5,10.5){$\scriptstyle 1$};
\node at (6.5,10.5){$\scriptstyle 2$};
\node at (7.5,10.5){$\scriptstyle 3$};
\node at (8.5,10.5){$\scriptstyle 0$};
\node at (9.5,10.5){$\scriptstyle 1$};
\node at (10.5,10.5){$\scriptstyle 2$};
\node at (11.5,10.5){$\scriptstyle 3$};
\node at (12.5,10.5){$\scriptstyle 0$};
\node at (13.5,10.5){$\scriptstyle 1$};
\node at (14.5,10.5){$\scriptstyle 2$};
\node at (0.5,9.5){$\scriptstyle 3$};
\node at (1.5,9.5){$\scriptstyle 0$};
\node at (2.5,9.5){$\scriptstyle 1$};
\node at (3.5,9.5){$\scriptstyle 2$};
\node at (4.5,9.5){$\scriptstyle 3$};
\node at (5.5,9.5){$\scriptstyle 0$};
\node at (6.5,9.5){$\scriptstyle 1$};
\node at (7.5,9.5){$\scriptstyle 2$};
\node at (8.5,9.5){$\scriptstyle 3$};
\node at (9.5,9.5){$\scriptstyle 0$};
\node at (10.5,9.5){$\scriptstyle 1$};
\node at (11.5,9.5){$\scriptstyle 2$};
\node at (0.5,8.5){$\scriptstyle 2$};
\node at (1.5,8.5){$\scriptstyle 3$};
\node at (2.5,8.5){$\scriptstyle 0$};
\node at (3.5,8.5){$\scriptstyle 1$};
\node at (4.5,8.5){$\scriptstyle 2$};
\node at (5.5,8.5){$\scriptstyle 3$};
\node at (6.5,8.5){$\scriptstyle 0$};
\node at (0.5,7.5){$\scriptstyle 1$};
\node at (1.5,7.5){$\scriptstyle 2$};
\node at (2.5,7.5){$\scriptstyle 3$};
\node at (3.5,7.5){$\scriptstyle 0$};
\node at (4.5,7.5){$\scriptstyle 1$};
\node at (5.5,7.5){$\scriptstyle 2$};
\node at (0.5,6.5){$\scriptstyle 0$};
\node at (1.5,6.5){$\scriptstyle 1$};
\node at (2.5,6.5){$\scriptstyle 2$};
\node at (3.5,6.5){$\scriptstyle 3$};
\node at (0.5,5.5){$\scriptstyle 3$};
\node at (1.5,5.5){$\scriptstyle 0$};
\node at (2.5,5.5){$\scriptstyle 1$};
\node at (3.5,5.5){$\scriptstyle 2$};
\node at (0.5,4.5){$\scriptstyle 2$};
\node at (1.5,4.5){$\scriptstyle 3$};
\node at (2.5,4.5){$\scriptstyle 0$};
\node at (0.5,3.5){$\scriptstyle 1$};
\node at (1.5,3.5){$\scriptstyle 2$};
\node at (2.5,3.5){$\scriptstyle 3$};
\node at (0.5,2.5){$\scriptstyle 0$};
\node at (1.5,2.5){$\scriptstyle 1$};
\node at (2.5,2.5){$\scriptstyle 2$};
\node at (0.5,1.5){$\scriptstyle 3$};
\node at (1.5,1.5){$\scriptstyle 0$};
\node at (0.5,0.5){$\scriptstyle 2$};
\draw[ thick,black] (11,10)--(15,10)--(15,11)--(11,11)--(11,10);
\draw[ thick,black]  (8,9)--(12,9)--(12,10)--(8,10)--(8,9);
\draw[ thick,black] (4,7)--(6,7)--(6,8)--(7,8)--(7,9)--(5,9)--(5,8)--(4,8)--(4,7);
\draw[ thick,black]  (2,4)--(3,4)--(3,5)--(4,5)--(4,7)--(3,7)--(3,6)--(2,6)--(2,4);
\draw[ thick,black]  (1,1)--(2,1)--(2,2)--(3,2)--(3,4)--(2,4)--(2,3)--(1,3)--(1,1);
\draw[thick]  (0,0)--(1,0)--(1,1)--(2,1)--(2,2)--(3,2)--(3,5)--(4,5)--(4,7)--(6,7)--(6,8)--(7,8)--(7,9)--(12,9)--(12,10)--(15,10)--(15,11)--(0,11)--(0,0);
\phantom{
\draw[fill=blue]
(0,0)--(0,-2)--(1,-2)--(1,0)--(0,0);
}
\end{tikzpicture}
}
\end{align*}
\caption{
The cuspidal Kostant tiling for the \((\bkap, \theta)\)-RoCK multipartition \(\bzeta\) considered in \cref{ExBigRoCKnot,3coretileex}.
}
\label{exwithdeltaribsfig}      
\end{figure}

\begin{Example}\label{3coretileex}
Let \(e=4\), \(\bkap = (0,3)\) and \(\omega = 26 \alpha_0 + 23 \alpha_1 + 30 \alpha_2 + 24 \alpha_3\). Consider the core block \(\Lambda^{\bkap}_+(\omega)\). The three members: 
\begin{align*}
\blam &= ((11,8,7,6,5,4,3,2,1^3)\mid(12,9,6,5,4,3^3,2^3,1^3));\\
\bmu &= ((11,8,7,6,5,4,3^3,2^3,1^3)\mid(12,9,6,5,4,3,2,1^3));\\
\btau&= ((11,8,5,4,3,2^3,1^3)\mid(12,9,8,7,6,5,4,3,2^3,1^3)),
\end{align*}
 of this block are displayed in Figure~\ref{corebigtilings}. The \(\bkap\)-beta numbers for \(\blam\) are:
\begin{align*}
{}
\B^1(\blam, \bkap) &=
\hackcenter{
\begin{tikzpicture}[scale=0.46]
\draw[thick, white] (0,0.8);
\draw[thick, lightgray, dotted] (14.5,0)--(15.5,0);
\draw[thick, black, dotted] (-18.5,0)--(-17.5,0);
\draw[thick, lightgray ] (-17.5,0)--(14.5,0);
\blackdot(10,0);
\blackdot(6,0);
\blackdot(4,0);
\blackdot(2,0);
\blackdot(0,0);
\blackdot(-2,0);
\blackdot(-4,0);
\blackdot(-6,0);
\blackdot(-8,0);
\blackdot(-9,0);
\blackdot(-10,0);
\blackdot(-12,0);
\blackdot(-13,0);
\blackdot(-14,0);
\blackdot(-15,0);
\blackdot(-16,0);
\blackdot(-17,0);
\node[below] at (-17,-0.2){$\scriptstyle \textup{-}17$};
\node[below] at (-16,-0.2){$\scriptstyle \textup{-}16$};
\node[below] at (-15,-0.2){$\scriptstyle \textup{-}15$};
\node[below] at (-14,-0.2){$\scriptstyle \textup{-}14$};
\node[below] at (-13,-0.2){$\scriptstyle \textup{-}13$};
\node[below] at (-12,-0.2){$\scriptstyle \textup{-}12$};
\node[below] at (-11,-0.2){$\scriptstyle \textup{-}11$};
\node[below] at (-10,-0.2){$\scriptstyle \textup{-}10$};
\node[below] at (-9,-0.2){$\scriptstyle \textup{-}9$};
\node[below] at (-8,-0.2){$\scriptstyle \textup{-}8$};
\node[below] at (-7,-0.2){$\scriptstyle \textup{-}7$};
\node[below] at (-6,-0.2){$\scriptstyle \textup{-}6$};
\node[below] at (-5,-0.2){$\scriptstyle \textup{-}5$};
\node[below] at (-4,-0.2){$\scriptstyle \textup{-}4$};
\node[below] at (-3,-0.2){$\scriptstyle \textup{-}3$};
\node[below] at (-2,-0.2){$\scriptstyle \textup{-}2$};
\node[below] at (-1,-0.2){$\scriptstyle \textup{-}1$};
\node[below] at (0,-0.2){$\scriptstyle 0$};
\node[below] at (1,-0.2){$\scriptstyle 1$};
\node[below] at (2,-0.2){$\scriptstyle 2$};
\node[below] at (3,-0.2){$\scriptstyle 3$};
\node[below] at (4,-0.2){$\scriptstyle 4$};
\node[below] at (5,-0.2){$\scriptstyle 5$};
\node[below] at (6,-0.2){$\scriptstyle 6$};
\node[below] at (7,-0.2){$\scriptstyle 7$};
\node[below] at (8,-0.2){$\scriptstyle 8$};
\node[below] at (9,-0.2){$\scriptstyle 9$};
\node[below] at (10,-0.2){$\scriptstyle 10$};
\node[below] at (11,-0.2){$\scriptstyle 11$};
\node[below] at (12,-0.2){$\scriptstyle 12$};
\node[below] at (13,-0.2){$\scriptstyle 13$};
\node[below] at (14,-0.2){$\scriptstyle 14$};
\end{tikzpicture}
}\\
{}
\B^2(\blam, \bkap) &=
\hackcenter{
\begin{tikzpicture}[scale=0.46]
\draw[thick, white] (0,0.8);
\draw[thick, lightgray, dotted] (14.5,0)--(15.5,0);
\draw[thick, black, dotted] (-18.5,0)--(-17.5,0);
\draw[thick, lightgray ] (-17.5,0)--(14.5,0);
\blackdot(14,0);
\blackdot(10,0);
\blackdot(6,0);
\blackdot(4,0);
\blackdot(2,0);
\blackdot(0,0);
\blackdot(-1,0);
\blackdot(-2,0);
\blackdot(-4,0);
\blackdot(-5,0);
\blackdot(-6,0);
\blackdot(-8,0);
\blackdot(-9,0);
\blackdot(-10,0);
\blackdot(-12,0);
\blackdot(-13,0);
\blackdot(-14,0);
\blackdot(-15,0);
\blackdot(-16,0);
\blackdot(-17,0);
\node[below] at (-17,-0.2){$\scriptstyle \textup{-}17$};
\node[below] at (-16,-0.2){$\scriptstyle \textup{-}16$};
\node[below] at (-15,-0.2){$\scriptstyle \textup{-}15$};
\node[below] at (-14,-0.2){$\scriptstyle \textup{-}14$};
\node[below] at (-13,-0.2){$\scriptstyle \textup{-}13$};
\node[below] at (-12,-0.2){$\scriptstyle \textup{-}12$};
\node[below] at (-11,-0.2){$\scriptstyle \textup{-}11$};
\node[below] at (-10,-0.2){$\scriptstyle \textup{-}10$};
\node[below] at (-9,-0.2){$\scriptstyle \textup{-}9$};
\node[below] at (-8,-0.2){$\scriptstyle \textup{-}8$};
\node[below] at (-7,-0.2){$\scriptstyle \textup{-}7$};
\node[below] at (-6,-0.2){$\scriptstyle \textup{-}6$};
\node[below] at (-5,-0.2){$\scriptstyle \textup{-}5$};
\node[below] at (-4,-0.2){$\scriptstyle \textup{-}4$};
\node[below] at (-3,-0.2){$\scriptstyle \textup{-}3$};
\node[below] at (-2,-0.2){$\scriptstyle \textup{-}2$};
\node[below] at (-1,-0.2){$\scriptstyle \textup{-}1$};
\node[below] at (0,-0.2){$\scriptstyle 0$};
\node[below] at (1,-0.2){$\scriptstyle 1$};
\node[below] at (2,-0.2){$\scriptstyle 2$};
\node[below] at (3,-0.2){$\scriptstyle 3$};
\node[below] at (4,-0.2){$\scriptstyle 4$};
\node[below] at (5,-0.2){$\scriptstyle 5$};
\node[below] at (6,-0.2){$\scriptstyle 6$};
\node[below] at (7,-0.2){$\scriptstyle 7$};
\node[below] at (8,-0.2){$\scriptstyle 8$};
\node[below] at (9,-0.2){$\scriptstyle 9$};
\node[below] at (10,-0.2){$\scriptstyle 10$};
\node[below] at (11,-0.2){$\scriptstyle 11$};
\node[below] at (12,-0.2){$\scriptstyle 12$};
\node[below] at (13,-0.2){$\scriptstyle 13$};
\node[below] at (14,-0.2){$\scriptstyle 14$};
\end{tikzpicture}
}
\end{align*}
The \(\bkap\)-beta numbers for \(\bmu\) are:
\begin{align*}
{}
\B^1(\bmu, \bkap) &=
\hackcenter{
\begin{tikzpicture}[scale=0.46]
\draw[thick, white] (0,0.8);
\draw[thick, lightgray, dotted] (14.5,0)--(15.5,0);
\draw[thick, black, dotted] (-18.5,0)--(-17.5,0);
\draw[thick, lightgray ] (-17.5,0)--(14.5,0);
\blackdot(10,0);
\blackdot(6,0);
\blackdot(4,0);
\blackdot(2,0);
\blackdot(0,0);
\blackdot(-2,0);
\blackdot(-4,0);
\blackdot(-5,0);
\blackdot(-6,0);
\blackdot(-8,0);
\blackdot(-9,0);
\blackdot(-10,0);
\blackdot(-12,0);
\blackdot(-13,0);
\blackdot(-14,0);
\blackdot(-16,0);
\blackdot(-17,0);
\node[below] at (-17,-0.2){$\scriptstyle \textup{-}17$};
\node[below] at (-16,-0.2){$\scriptstyle \textup{-}16$};
\node[below] at (-15,-0.2){$\scriptstyle \textup{-}15$};
\node[below] at (-14,-0.2){$\scriptstyle \textup{-}14$};
\node[below] at (-13,-0.2){$\scriptstyle \textup{-}13$};
\node[below] at (-12,-0.2){$\scriptstyle \textup{-}12$};
\node[below] at (-11,-0.2){$\scriptstyle \textup{-}11$};
\node[below] at (-10,-0.2){$\scriptstyle \textup{-}10$};
\node[below] at (-9,-0.2){$\scriptstyle \textup{-}9$};
\node[below] at (-8,-0.2){$\scriptstyle \textup{-}8$};
\node[below] at (-7,-0.2){$\scriptstyle \textup{-}7$};
\node[below] at (-6,-0.2){$\scriptstyle \textup{-}6$};
\node[below] at (-5,-0.2){$\scriptstyle \textup{-}5$};
\node[below] at (-4,-0.2){$\scriptstyle \textup{-}4$};
\node[below] at (-3,-0.2){$\scriptstyle \textup{-}3$};
\node[below] at (-2,-0.2){$\scriptstyle \textup{-}2$};
\node[below] at (-1,-0.2){$\scriptstyle \textup{-}1$};
\node[below] at (0,-0.2){$\scriptstyle 0$};
\node[below] at (1,-0.2){$\scriptstyle 1$};
\node[below] at (2,-0.2){$\scriptstyle 2$};
\node[below] at (3,-0.2){$\scriptstyle 3$};
\node[below] at (4,-0.2){$\scriptstyle 4$};
\node[below] at (5,-0.2){$\scriptstyle 5$};
\node[below] at (6,-0.2){$\scriptstyle 6$};
\node[below] at (7,-0.2){$\scriptstyle 7$};
\node[below] at (8,-0.2){$\scriptstyle 8$};
\node[below] at (9,-0.2){$\scriptstyle 9$};
\node[below] at (10,-0.2){$\scriptstyle 10$};
\node[below] at (11,-0.2){$\scriptstyle 11$};
\node[below] at (12,-0.2){$\scriptstyle 12$};
\node[below] at (13,-0.2){$\scriptstyle 13$};
\node[below] at (14,-0.2){$\scriptstyle 14$};
\end{tikzpicture}
}\\
{}
\B^2(\bmu, \bkap) &=
\hackcenter{
\begin{tikzpicture}[scale=0.46]
\draw[thick, white] (0,0.8);
\draw[thick, lightgray, dotted] (14.5,0)--(15.5,0);
\draw[thick, black, dotted] (-18.5,0)--(-17.5,0);
\draw[thick, lightgray ] (-17.5,0)--(14.5,0);
\blackdot(14,0);
\blackdot(10,0);
\blackdot(6,0);
\blackdot(4,0);
\blackdot(2,0);
\blackdot(0,0);
\blackdot(-2,0);
\blackdot(-4,0);
\blackdot(-5,0);
\blackdot(-6,0);
\blackdot(-8,0);
\blackdot(-9,0);
\blackdot(-10,0);
\blackdot(-11,0);
\blackdot(-12,0);
\blackdot(-13,0);
\blackdot(-14,0);
\blackdot(-15,0);
\blackdot(-16,0);
\blackdot(-17,0);
\node[below] at (-17,-0.2){$\scriptstyle \textup{-}17$};
\node[below] at (-16,-0.2){$\scriptstyle \textup{-}16$};
\node[below] at (-15,-0.2){$\scriptstyle \textup{-}15$};
\node[below] at (-14,-0.2){$\scriptstyle \textup{-}14$};
\node[below] at (-13,-0.2){$\scriptstyle \textup{-}13$};
\node[below] at (-12,-0.2){$\scriptstyle \textup{-}12$};
\node[below] at (-11,-0.2){$\scriptstyle \textup{-}11$};
\node[below] at (-10,-0.2){$\scriptstyle \textup{-}10$};
\node[below] at (-9,-0.2){$\scriptstyle \textup{-}9$};
\node[below] at (-8,-0.2){$\scriptstyle \textup{-}8$};
\node[below] at (-7,-0.2){$\scriptstyle \textup{-}7$};
\node[below] at (-6,-0.2){$\scriptstyle \textup{-}6$};
\node[below] at (-5,-0.2){$\scriptstyle \textup{-}5$};
\node[below] at (-4,-0.2){$\scriptstyle \textup{-}4$};
\node[below] at (-3,-0.2){$\scriptstyle \textup{-}3$};
\node[below] at (-2,-0.2){$\scriptstyle \textup{-}2$};
\node[below] at (-1,-0.2){$\scriptstyle \textup{-}1$};
\node[below] at (0,-0.2){$\scriptstyle 0$};
\node[below] at (1,-0.2){$\scriptstyle 1$};
\node[below] at (2,-0.2){$\scriptstyle 2$};
\node[below] at (3,-0.2){$\scriptstyle 3$};
\node[below] at (4,-0.2){$\scriptstyle 4$};
\node[below] at (5,-0.2){$\scriptstyle 5$};
\node[below] at (6,-0.2){$\scriptstyle 6$};
\node[below] at (7,-0.2){$\scriptstyle 7$};
\node[below] at (8,-0.2){$\scriptstyle 8$};
\node[below] at (9,-0.2){$\scriptstyle 9$};
\node[below] at (10,-0.2){$\scriptstyle 10$};
\node[below] at (11,-0.2){$\scriptstyle 11$};
\node[below] at (12,-0.2){$\scriptstyle 12$};
\node[below] at (13,-0.2){$\scriptstyle 13$};
\node[below] at (14,-0.2){$\scriptstyle 14$};
\end{tikzpicture}
}
\end{align*}
The \(\bkap\)-beta numbers for \(\btau\) are:
\begin{align*}
{}
\B^1(\btau, \bkap) &=
\hackcenter{
\begin{tikzpicture}[scale=0.46]
\draw[thick, white] (0,0.8);
\draw[thick, lightgray, dotted] (14.5,0)--(15.5,0);
\draw[thick, black, dotted] (-18.5,0)--(-17.5,0);
\draw[thick, lightgray ] (-17.5,0)--(14.5,0);
\blackdot(10,0);
\blackdot(6,0);
\blackdot(2,0);
\blackdot(0,0);
\blackdot(-2,0);
\blackdot(-4,0);
\blackdot(-5,0);
\blackdot(-6,0);
\blackdot(-8,0);
\blackdot(-9,0);
\blackdot(-10,0);
\blackdot(-12,0);
\blackdot(-13,0);
\blackdot(-14,0);
\blackdot(-15,0);
\blackdot(-16,0);
\blackdot(-17,0);
\node[below] at (-17,-0.2){$\scriptstyle \textup{-}17$};
\node[below] at (-16,-0.2){$\scriptstyle \textup{-}16$};
\node[below] at (-15,-0.2){$\scriptstyle \textup{-}15$};
\node[below] at (-14,-0.2){$\scriptstyle \textup{-}14$};
\node[below] at (-13,-0.2){$\scriptstyle \textup{-}13$};
\node[below] at (-12,-0.2){$\scriptstyle \textup{-}12$};
\node[below] at (-11,-0.2){$\scriptstyle \textup{-}11$};
\node[below] at (-10,-0.2){$\scriptstyle \textup{-}10$};
\node[below] at (-9,-0.2){$\scriptstyle \textup{-}9$};
\node[below] at (-8,-0.2){$\scriptstyle \textup{-}8$};
\node[below] at (-7,-0.2){$\scriptstyle \textup{-}7$};
\node[below] at (-6,-0.2){$\scriptstyle \textup{-}6$};
\node[below] at (-5,-0.2){$\scriptstyle \textup{-}5$};
\node[below] at (-4,-0.2){$\scriptstyle \textup{-}4$};
\node[below] at (-3,-0.2){$\scriptstyle \textup{-}3$};
\node[below] at (-2,-0.2){$\scriptstyle \textup{-}2$};
\node[below] at (-1,-0.2){$\scriptstyle \textup{-}1$};
\node[below] at (0,-0.2){$\scriptstyle 0$};
\node[below] at (1,-0.2){$\scriptstyle 1$};
\node[below] at (2,-0.2){$\scriptstyle 2$};
\node[below] at (3,-0.2){$\scriptstyle 3$};
\node[below] at (4,-0.2){$\scriptstyle 4$};
\node[below] at (5,-0.2){$\scriptstyle 5$};
\node[below] at (6,-0.2){$\scriptstyle 6$};
\node[below] at (7,-0.2){$\scriptstyle 7$};
\node[below] at (8,-0.2){$\scriptstyle 8$};
\node[below] at (9,-0.2){$\scriptstyle 9$};
\node[below] at (10,-0.2){$\scriptstyle 10$};
\node[below] at (11,-0.2){$\scriptstyle 11$};
\node[below] at (12,-0.2){$\scriptstyle 12$};
\node[below] at (13,-0.2){$\scriptstyle 13$};
\node[below] at (14,-0.2){$\scriptstyle 14$};
\end{tikzpicture}
}\\
{}
\B^2(\btau, \bkap) &=
\hackcenter{
\begin{tikzpicture}[scale=0.46]
\draw[thick, white] (0,0.8);
\draw[thick, lightgray, dotted] (14.5,0)--(15.5,0);
\draw[thick, black, dotted] (-18.5,0)--(-17.5,0);
\draw[thick, lightgray ] (-17.5,0)--(14.5,0);
\blackdot(14,0);
\blackdot(10,0);
\blackdot(8,0);
\blackdot(6,0);
\blackdot(4,0);
\blackdot(2,0);
\blackdot(0,0);
\blackdot(-2,0);
\blackdot(-4,0);
\blackdot(-5,0);
\blackdot(-6,0);
\blackdot(-8,0);
\blackdot(-9,0);
\blackdot(-10,0);
\blackdot(-12,0);
\blackdot(-13,0);
\blackdot(-14,0);
\blackdot(-15,0);
\blackdot(-16,0);
\blackdot(-17,0);
\node[below] at (-17,-0.2){$\scriptstyle \textup{-}17$};
\node[below] at (-16,-0.2){$\scriptstyle \textup{-}16$};
\node[below] at (-15,-0.2){$\scriptstyle \textup{-}15$};
\node[below] at (-14,-0.2){$\scriptstyle \textup{-}14$};
\node[below] at (-13,-0.2){$\scriptstyle \textup{-}13$};
\node[below] at (-12,-0.2){$\scriptstyle \textup{-}12$};
\node[below] at (-11,-0.2){$\scriptstyle \textup{-}11$};
\node[below] at (-10,-0.2){$\scriptstyle \textup{-}10$};
\node[below] at (-9,-0.2){$\scriptstyle \textup{-}9$};
\node[below] at (-8,-0.2){$\scriptstyle \textup{-}8$};
\node[below] at (-7,-0.2){$\scriptstyle \textup{-}7$};
\node[below] at (-6,-0.2){$\scriptstyle \textup{-}6$};
\node[below] at (-5,-0.2){$\scriptstyle \textup{-}5$};
\node[below] at (-4,-0.2){$\scriptstyle \textup{-}4$};
\node[below] at (-3,-0.2){$\scriptstyle \textup{-}3$};
\node[below] at (-2,-0.2){$\scriptstyle \textup{-}2$};
\node[below] at (-1,-0.2){$\scriptstyle \textup{-}1$};
\node[below] at (0,-0.2){$\scriptstyle 0$};
\node[below] at (1,-0.2){$\scriptstyle 1$};
\node[below] at (2,-0.2){$\scriptstyle 2$};
\node[below] at (3,-0.2){$\scriptstyle 3$};
\node[below] at (4,-0.2){$\scriptstyle 4$};
\node[below] at (5,-0.2){$\scriptstyle 5$};
\node[below] at (6,-0.2){$\scriptstyle 6$};
\node[below] at (7,-0.2){$\scriptstyle 7$};
\node[below] at (8,-0.2){$\scriptstyle 8$};
\node[below] at (9,-0.2){$\scriptstyle 9$};
\node[below] at (10,-0.2){$\scriptstyle 10$};
\node[below] at (11,-0.2){$\scriptstyle 11$};
\node[below] at (12,-0.2){$\scriptstyle 12$};
\node[below] at (13,-0.2){$\scriptstyle 13$};
\node[below] at (14,-0.2){$\scriptstyle 14$};
\end{tikzpicture}
}
\end{align*}

Taking the residue permutation \(\theta = (1,3,0,2)\), it is straightforward to check from the \(\bkap\)-beta numbers that \(\blam, \bmu, \btau\) are \((\bkap, \theta)\)-RoCK multicores, and hence \(\Lambda^{\bkap}(\omega)\) is a \(\theta\)-RoCK core block. We have
\begin{align*}
P^\theta_+ = \{\alpha_2 + \alpha_3, -\alpha_1, \alpha_2, -\alpha_1 - \alpha_2 - \alpha_3,  - \alpha_3, \alpha_1 + \alpha_2\},
\qquad
\Delta^\theta = \{\alpha_2 + \alpha_3, -\alpha_1 - \alpha_2 - \alpha_3, \alpha_1 + \alpha_2\}.
\end{align*}
Following \cref{genconvex}, we may define a convex preorder \(\succeq\) on \(\Phi_+\) by letting \(h: \Z I \to \Q^2\) be the \(\ZZ\)-linear map given by setting
\begin{align*}
h(\alpha_0) = (1,0),
\qquad
h(\alpha_1) = (-1,-1),
\qquad
h(\alpha_2) = (2,1),
\qquad
h(\alpha_3) = (-2,0),
\end{align*}
and taking the usual total lexicographic order on \(\QQ^2\):
\begin{align*}
(x,y) \geq (x',y') \qquad \iff \qquad x>x' \textup{ or } x=x' \textup{ and }y\geq y',
\end{align*}
for all \((x,y), (x',y') \in \QQ^2\). For \(\beta, \gamma \in \Phi_+\), we then set 
\begin{align*}
\beta \succeq \gamma \qquad \iff \qquad
\frac{h(\beta)}{\height(\beta)} \geq \frac{h(\gamma)}{\height(\gamma)}.
\end{align*}
By construction, the convex preorder \(\succeq\) realizes \(\theta\). The associated cuspidal Kostant tilings for each multipartition in \(\Lambda^{\bkap}_+(\omega)\) are shown in Figure~\ref{corebigtilings}. Note in particular that for all ribbons \(\xi\) in these tilings, we have
\(
p(\cont(\xi)) \in P_+^\theta
\), so \(\cont(\xi) \succ \delta\), and thus \(\Lambda^{\bkap}_+(\omega) = \Lambda^{\bkap}_+(\omega)_{\succ \delta}\), 
 in accordance with \cref{RoCKmultitiling}.
 
Taking \(\bkap = (2,0)\), in Figure~\ref{exwithdeltaribsfig}, the cuspidal Kostant tiling for the \((\bkap,\theta)\)-RoCK multipartition 
 \begin{align*}
 \bzeta &= ((13,12,11,10,9,6,5,4,2^2,1^3)\mid(15, 12,7,6,4^2,3^3,2,1))
 \end{align*}
 considered in \cref{ExBigRoCKnot} is shown. Note that for all ribbons \(\xi\) in this tiling, we have
\(
p(\cont(\xi)) \in P_+^\theta \sqcup \{0\}
\), and so so \(\cont(\xi) \succeq \delta\),
 in accordance with \cref{RoCKsemipar}.
\end{Example}

\subsection{RoCK blocks from core blocks}

\begin{Proposition}\label{combprop2}
Let \(\omega, \eta \in \ZZ_{\geq 0}I\), \(\blam \in \Lambda^{\bkap}_+\), and \(\brho \in \Lambda^{\bkap}_+(\omega)\) be a \((\bkap, \theta)\)-RoCK multicore. Let  \(\succeq\) be a convex preorder on \(\Phi_+\) which realizes \(\theta\).
\begin{enumerate} 
\item If \(\height(\eta) \leq e \cdot \textup{cap}^\theta_\delta(\brho, \bkap)\) and  \(\blam/\brho \in \Lambda^{\bkap}_{+/\brho}(\eta)\), then \(\blam/\brho \in \Lambda^{\bkap}_{+/\brho}(\eta)_{\preceq \delta}\).
\item If \(d \leq \textup{cap}^\theta_\delta(\brho, \bkap)\) and  \(\blam/\brho \in \Lambda^{\bkap}_{+/\brho}(d \delta)\), then \(\blam \in \Lambda^{\bkap}_+(\omega + d \delta)_{\succeq \delta}\) and \(\blam/\brho \in \Lambda^{\bkap}_{+/\brho}(d \delta)_{\approx \delta}\).
\end{enumerate}
\end{Proposition}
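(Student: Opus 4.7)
The plan for (i) is to invoke \cref{tilethm}(iii): it suffices to show that every ${\tt NW}$-removable ribbon $\xi$ of $\blam/\brho$ with $\cont(\xi) \in \Psi$ satisfies $\cont(\xi) \preceq \delta$, since then the $\succeq$-maximum such $\xi$ bounds all tile contents of $\Gamma_{\blam/\brho}$. First I would verify that if $\xi$ is ${\tt NW}$-removable from $\blam/\brho$ then $\brho \cup \xi$ is ${\tt NW}$-closed in $\blam$ (and hence a multipartition) using that $\brho$ is itself ${\tt NW}$-closed, so $\xi$ is an addable ribbon for $\brho$; then \cref{reminR}(ii) gives $\cont(\xi) \in \addrib^\theta$. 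Supposing for contradiction that $\cont(\xi) \succ \delta$, the definition of $\addrib^\theta$ together with (\ref{gdel}) forces $\cont(\xi) = k\delta + \gamma^\theta_{[a,b]}$ for some $1 \leq a \leq b \leq e-1$ with $k \geq h^{\min,\theta}_{[a,b]} + 1$.

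The heart of (i) is then an elementary height estimate. From
\[
h^{\min,\theta}_{[a,b]} = \min_r \sum_{t=a}^b h^{r,\theta}_t \;\geq\; \sum_{t=a}^b \min_r h^{r,\theta}_t \;\geq\; (b-a+1)\bigl(\textup{cap}^\theta_\delta(\brho, \bkap) - 1\bigr)
\]
together with $\height(\gamma^\theta_{[a,b]}) \geq 1$, one computes
\[
|\xi| = ke + \height(\gamma^\theta_{[a,b]}) \;\geq\; e \cdot \textup{cap}^\theta_\delta(\brho, \bkap) + (b-a)\bigl(\textup{cap}^\theta_\delta(\brho, \bkap) - 1\bigr) e + \height(\gamma^\theta_{[a,b]}) \;>\; e \cdot \textup{cap}^\theta_\delta(\brho, \bkap),
\]
under $\textup{cap}^\theta_\delta(\brho, \bkap) \geq 1$ (the case $\textup{cap}^\theta_\delta(\brho, \bkap) = 0$ forces $\eta = 0$ and is vacuous). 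Since $|\xi| \leq |\blam/\brho| = \height(\eta) \leq e \cdot \textup{cap}^\theta_\delta(\brho, \bkap)$, this is a contradiction, proving (i).

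For (ii), applying (i) with $\eta = d\delta$ yields $\blam/\brho \in \Lambda^{\bkap}_{+/\brho}(d\delta)_{\preceq \delta}$. To upgrade $\preceq \delta$ to $\approx \delta$, I would decompose each tile content as imaginary plus real and note that every real root $\prec \delta$ has $p$-image in the positive root system $P^\theta_-$ by (\ref{ldel}) and \cref{posfin}. Since the tile contents sum to $d\delta$, the $p$-images of the real parts sum to $0$; but a sum of nonzero elements of a positive root system cannot vanish, so no real tile can occur, giving $\blam/\brho \in \Lambda^{\bkap}_{+/\brho}(d\delta)_{\approx \delta}$. For the final claim, set $\Gamma := \Gamma_{\brho} \sqcup \Gamma_{\blam/\brho}$, regarded as a tiling of $\blam$: by \cref{RoCKmultitiling}, every tile of $\Gamma_{\brho}$ has content $\succ \delta$. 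Placing all $\Gamma_{\brho}$ tiles before all $\Gamma_{\blam/\brho}$ tiles yields an ordering that is both $\succeq$-decreasing in content and valid as a skew tableau (since $\brho$ being ${\tt NW}$-closed in $\blam$ means no node of $\blam/\brho$ lies ${\tt NW}$ of any node of $\brho$), so $\Gamma$ is a cuspidal Kostant tiling of $\blam$. Uniqueness in \cref{tilethm}(i) then identifies $\Gamma = \Gamma_{\blam}$, so every tile of $\Gamma_{\blam}$ has content $\succeq \delta$.

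The hard part will be the careful verification of the ${\tt NW}$-removability $\Rightarrow$ addability correspondence used at the outset of (i); everything else, including the height estimate and the gluing argument in (ii), should follow smoothly once this is in place.
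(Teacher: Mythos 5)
Your proposal is correct and follows essentially the same route as the paper: identify ${\tt NW}$-removable ribbons of $\blam/\brho$ with addable ribbons for $\brho$, use \cref{reminR}(ii) and a height bound to exclude contents in $\Phi_{\succ\delta}$, and conclude via \cref{tilethm}(iii); part (ii) is then the same content-summation argument plus gluing $\Gamma_{\brho}\cup\Gamma_{\blam/\brho}$. The only cosmetic differences are that your height estimate is a slightly more elaborate (but equivalent) version of the paper's bound $\textup{cap}^\theta_\delta(\brho,\bkap)\leq h^{\min,\theta}_{[a,b]}+1$, and that you prove directly (via positivity of $P^\theta_+$) the "sum of tile contents equals $d\delta$ forces all tiles imaginary" step, which the paper outsources to a cited lemma of \cite{muthtiling}.
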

\begin{proof}
Let \(\xi\) be an addable ribbon for \(\brho\), and assume that \(|\xi| \leq e \cdot \textup{cap}^\theta_\delta(\brho, \bkap)\). Then \(\cont(\xi) \in \addrib^\theta\) by \cref{reminR}.
Note that as \(\xi\) is nonempty, we must have \(\textup{cap}^\theta_\delta(\brho, \bkap) > 0\), so \(h^{r,\theta}_t \geq 0\) for all \(t\). Thus \(\textup{cap}^\theta_\delta(\brho,\bkap) \leq h^{\min,\theta}_{[a,b]}+1\) for all \(1 \leq a \leq b \leq e-1\).
If \(\cont(\xi) = k \delta + \gamma^\theta_{[a,b]}\), then \(k \geq h^{\min,\theta}_{[a,b]}+1\), so that \(k \geq \textup{cap}^\theta_\delta(\brho,\bkap)\), but then \( \height(\xi) = e k + \height(\gamma^\theta_{[a,b]}) > e \cdot \textup{cap}^\theta_\delta(\brho, \bkap)\).
Therefore \(\cont(\xi) \in \addrib^\theta\) must be of the form \(k\delta\) or \(k \delta - \gamma^\theta_{[a,b]}\) for some  \(1 \leq a \leq b \leq e-1\), \(k \in \ZZ_{>0}\). Thus \(\cont(\xi) \in \Phi_{\preceq \delta}\) by (\ref{ldel}).

Now, assume that \(\height(\beta) \leq e \cdot \textup{cap}^\theta_\delta(\brho, \bkap)\) and  \(\blam/\brho \in \Lambda^{\bkap}_{+/\brho}(\beta)\). Letting \(\xi\) be a \(\succeq\)-maximal \({\tt NW}\)-removable ribbon in \(\blam/\brho\) with \(\cont(\xi) \in \Psi\), it must also be that \(\xi\) is an addable ribbon for \(\brho\), and so by the above paragraph, \(\cont(\xi) \in \Phi_{\preceq \delta}\), since \(|\xi| \leq |\blam/\brho| = \height(\beta) \leq e \cdot \textup{cap}^\theta_\delta(\brho, \bkap)\). Thus by \cref{tilethm}(iii), we have that \(\blam/\brho \in \Lambda^{\bkap}_{+/\brho}(\beta)_{\preceq \delta}\), establishing (i).

For (ii), note that \(\blam/\brho \in \Lambda^{\bkap}_{+/\brho}(d\delta)_{\preceq \delta}\) by (i). Now, since \(d \delta = \sum_{\zeta \in \Gamma_{\blam/\brho}} \cont(\zeta)\), we have by \cite[Lemma 3.5(ii)]{muthtiling} that \(\cont(\zeta) = \delta\) for all \(\zeta \in \Gamma_{\blam / \brho}\), so it follows that \(\blam / \brho \in \Lambda^{\bkap}_{+/\brho}(\beta)_{\approx \delta}\). Moreover, we have that \(\brho \in \Lambda^{\bkap}_+(\omega)_{\succ \delta}\) by \cref{RoCKmultitiling}, so it follows that \(\Gamma_{\brho} \cup \Gamma_{\blam / \brho}\) is a cuspidal Kostant tiling for \(\blam\), and thus \(\blam \in \Lambda_+^{\bkap}(\omega + d\delta)_{\succeq \delta}\).  
\end{proof}

\begin{Lemma}\label{sepofcoreblock}
If \(\Lambda_+^{\bkap}(\omega)\) is a \(\theta\)-RoCK core block with \(\textup{ht}(\beta) \leq e \cdot \textup{cap}_\delta^\theta(\omega, \bkap)\), then \((\omega, \beta)\) is \(\bkap\)-separable.
\end{Lemma}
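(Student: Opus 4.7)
The plan is to argue by contradiction. Suppose $(\omega, \beta)$ is not $\bkap$-separable, so there exist $\brho \in \Lambda^{\bkap}_+(\omega)$, $0 \subsetneq \beta' \subseteq \beta$, and $\bmu \subsetneq \brho \subsetneq \bnu \in \Lambda^{\bkap}_+$ with $\cont(\brho/\bmu) = \cont(\bnu/\brho) = \beta'$. By \cref{realex}, fix a convex preorder $\succeq$ on $\Phi_+$ that realizes $\theta$, so that (\ref{gdel}) and (\ref{ldel}) apply. Because $\Lambda^{\bkap}_+(\omega)$ is a $\theta$-RoCK core block, $\brho$ is a $(\bkap,\theta)$-RoCK multicore, and $\height(\beta') \leq \height(\beta) \leq e \cdot \textup{cap}^\theta_\delta(\omega, \bkap) \leq e \cdot \textup{cap}^\theta_\delta(\brho, \bkap)$ by definition of the latter.

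The central step is to control the cuspidal Kostant tilings $\Gamma_{\bnu/\brho}$ and $\Gamma_{\brho/\bmu}$ on opposite sides of $\delta$. For $\bnu/\brho$, \cref{combprop2}(i) immediately yields that every tile has content $\preceq \delta$. For $\brho/\bmu$, the main obstacle is that iteratively removing ribbons from $\brho$ need not preserve the multicore property, so \cref{reminR}(i) does not propagate directly to intermediate diagrams. The remedy is to extract only a single $\succeq$-minimal ${\tt SE}$-removable ribbon $\xi$ of $\brho/\bmu$ with $\cont(\xi) \in \Psi$; such a $\xi$ exists since any ${\tt SE}$-corner node of $\brho/\bmu$ furnishes a single-node ribbon with content in $I \subseteq \Psi$. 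A routine verification, using that $\bmu$ is a multipartition, shows $(\brho \setminus \xi, \xi)$ is a valid two-tile skew tableau of $\brho$, so $\xi$ is ${\tt SE}$-removable from $\brho$ itself. Then \cref{reminR}(i) together with (\ref{gdel}) gives $\cont(\xi) \in \remrib^\theta \subseteq \Phi_{\succ \delta}$, and \cref{tilethm}(ii) propagates this at a stroke: every tile of $\Gamma_{\brho/\bmu}$ has content $\succeq \cont(\xi) \succ \delta$.

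The contradiction now follows from the two presentations $\beta' = \sum_{\xi \in \Gamma_{\brho/\bmu}} \cont(\xi)$, with each summand in $\Phi_{\succ \delta}$, and $\beta' = \sum_{\zeta \in \Gamma_{\bnu/\brho}} \cont(\zeta)$, with each summand in $\Phi_{\preceq \delta}$. Applying the projection $p: \ZZ I \to \ZZ I^\fin$ modulo $\delta$, and invoking (\ref{gdel}), (\ref{ldel}), together with \cref{posfin}, the first expresses $p(\beta')$ as a non-empty non-negative integer combination of the base $\Delta^\theta$ of the positive root system $P_+^\theta$, while the second expresses it as a non-positive integer combination of $\Delta^\theta$. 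Since the positive and non-positive cones in $\ZZ\Delta^\theta$ meet only at $0$, the first sum must be empty, contradicting $\brho/\bmu \neq \varnothing$.
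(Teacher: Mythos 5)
Your proof is correct, and its overall architecture coincides with the paper's: argue by contradiction, express $p(\beta')$ both as a nonzero element of $\ZZ_{\geq 0}P_+^\theta$ (from $\brho/\bmu$) and as an element of $\ZZ_{\geq 0}P_-^\theta$ (from $\bnu/\brho$, via \cref{combprop2}(i) exactly as you do), and conclude from $\ZZ_{\geq 0}P_+^\theta \cap \ZZ_{\geq 0}P_-^\theta = \{0\}$. The one place where you genuinely diverge is the $\brho/\bmu$ half. The paper does not touch $\Gamma_{\brho/\bmu}$ at all: it invokes \cref{RoCKmultitiling} to get $\brho \in \Lambda^{\bkap}_+(\omega)_{\succ\delta}$, writes $\brho/\bmu = \bigsqcup_{\gamma\in\Gamma_{\brho}} (\brho/\bmu)\cap\gamma$, and applies condition (ii) of \cref{cuspdef} to each cuspidal tile $\gamma$ to express $\cont((\brho/\bmu)\cap\gamma)$ as a sum of roots $\succ \cont(\gamma) \succ \delta$ — no removability in $\brho$ itself is needed, and the pieces need not be ribbons. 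You instead control $\Gamma_{\brho/\bmu}$ directly by selecting a $\succeq$-minimal ${\tt SE}$-removable ribbon of $\brho/\bmu$ with content in $\Psi$, promoting it to a removable ribbon of the multicore $\brho$ (your "routine verification" is indeed routine: $\bmu$ being a multipartition forces no node of $\xi$ to lie northwest of a node of $\bmu$, so $\brho\setminus\xi$ is again a multipartition), and then combining \cref{reminR}(i), (\ref{gdel}) and \cref{tilethm}(ii). Both routes are valid; the paper's is shorter because the cuspidality of the tiles of $\Gamma_{\brho}$ already packages the needed inequality, whereas yours is more symmetric with the treatment of $\bnu/\brho$ and makes the role of \cref{tilethm}(ii) explicit. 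One cosmetic point: $\remrib^\theta$ is a subset of $\ZZ I$ rather than of $\Phi_+$, so the inclusion you want is $\remrib^\theta \cap \Phi_+ \subseteq \Phi_{\succ\delta}$, which suffices since $\cont(\xi)\in\Phi_+$ for a ribbon $\xi$.
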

\begin{proof}
By way of contradiction assume that \(\brho \in \Lambda^{\bkap}_+(\omega)\) and that \(\bmu, \bnu\) such as in \cref{kapsepdef} exist. Let \(\succeq\) be a convex preorder which realizes \(\theta\). 
By \cref{RoCKmultitiling}, we have that \(\brho \in \Lambda^{\bkap}_+(\omega)_{\succ \delta}\). Let
\begin{align*}
\Xi = \{(\brho/\bmu) \cap \gamma \neq \varnothing \mid \gamma \in \Gamma_{\brho}\}.
\end{align*}
We have then that \(\brho/\bmu = \bigsqcup_{\xi \in \Xi} \xi\). For each \(\xi = (\brho/\bmu) \cap \gamma \in \Xi\), note that \(\xi\) is a {\tt SE}-removable ribbon in the cuspidal ribbon \(\gamma\). Hence by cuspidality of \(\gamma\), we have that \(\cont(\xi)\) may be written as a sum of positive roots \(\succ \cont(\gamma) \succ \delta\). Therefore \(\beta' = \cont(\brho/\bmu)\) may be written as a sum of positive roots \( \succ \delta\), and thus \(0 \neq p(\beta') \in \ZZ_{\geq 0} P^\theta_+\). 

On the other hand, by \cref{combprop2}(i) we have that \(\bnu/\brho \in \Lambda^{\bkap}_{+/\brho}(\beta')_{\preceq \delta}\). Letting
\begin{align*}
\Xi' = \{(\bnu/\brho) \cap \gamma \neq 0 \mid \gamma \in \Gamma_{\bnu/\brho}\},
\end{align*}
we have that \(\bnu /\brho = \bigsqcup_{\xi \in \Xi} \xi\), and (similarly to the previous paragraph) each \(\cont(\xi)\) is a sum of positive roots \(\preceq \delta\) by the cuspidality of \(\xi\). Hence \(\beta'\) may be written as a sum of positive roots  \(\preceq \delta\), and thus \(p(\beta') \in \ZZ_{\geq 0} P^\theta_-\). 

From the previous two paragraphs, we have \(0 \neq p(\beta') \in  \ZZ_{\geq 0} P^\theta_+ \cap  \ZZ_{\geq 0} P^\theta_- = \{0\}\), which gives the desired contradiction.
\end{proof}

\begin{Theorem}\label{coreplus}
Let \(\omega \in \Phi_+\), and let \(\theta\) be a residue permutation.
A block \(\Lambda^{\bkap}_+(\omega)\) is \(\theta\)-RoCK if and only if there exists a \(\theta\)-RoCK core block \(\Lambda^{\bkap}_+(\eta)\) and \(d \leq \textup{cap}^\theta_\delta(\eta, \bkap)\) such that \(\omega = \eta + d \delta\).
\end{Theorem}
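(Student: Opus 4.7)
The plan is to prove both directions using the cuspidal tiling characterisation of \cref{thetarockblocktile}, the technical \cref{combprop2}, and beta-number manipulations from \S\ref{subsec:remaddribs}. Throughout I fix a convex preorder $\succeq$ realising $\theta$ (available by \cref{realex}). One ingredient I expect to need is that, for a $\theta$-RoCK block $\Lambda^{\bkap}_+(\omega)$, the number of $\delta$-tiles in the cuspidal tiling $\Gamma_\blam$ is constant across $\blam \in \Lambda^{\bkap}_+(\omega)$; equivalently, the multicore content reached by iteratively removing $e$-ribbons from any $\blam$ is independent of $\blam$. This rigidity is known for RoCK blocks through the Morita-equivalence classifications of Lyle~\cite{Lyle22} and Webster~\cite{websterScopes}; it can also be verified directly using the cuspidal tiling structure (observing that $\delta$-tiles always appear as a ``suffix'' of the Kostant tableau in the RoCK setting).

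For the $(\impliedby)$ direction, given any $\blam \in \Lambda^{\bkap}_+(\omega)$, I iteratively remove $e$-ribbons to reach a multicore $\brho$. The common value of the multicore content forces $\brho \in \Lambda^{\bkap}_+(\eta)$. Since $\Lambda^{\bkap}_+(\eta)$ is a $\theta$-RoCK core block, $\brho$ is a $(\bkap,\theta)$-RoCK multicore, and $\blam/\brho \in \Lambda^{\bkap}_{+/\brho}(d\delta)$ with $d \leq \textup{cap}^\theta_\delta(\brho, \bkap)$. \cref{combprop2}(ii) then yields $\blam \in \Lambda^{\bkap}_+(\omega)_{\succeq \delta}$; hence $\Lambda^{\bkap}_+(\omega) = \Lambda^{\bkap}_+(\omega)_{\succeq \delta}$, and \cref{thetarockblocktile} finishes this direction.

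For the $(\implies)$ direction, I pick any $\blam \in \Lambda^{\bkap}_+(\omega)$ and remove the $\delta$-tiles appearing at the end of $\Gamma_\blam$ (valid since $\delta$ is $\succeq$-minimal among cuspidal contents in the RoCK setting), producing a multicore $\brho \in \Lambda^{\bkap}_+(\omega - d\delta)$; set $\eta := \omega - d\delta$. Invariance of $d$ across the block forces every element of $\Lambda^{\bkap}_+(\eta)$ to be a multicore, so $\Lambda^{\bkap}_+(\eta)$ is a core block. That $\Lambda^{\bkap}_+(\eta)$ is $\theta$-RoCK follows by a lifting contradiction: any non-$(\bkap,\theta)$-RoCK $\brho' \in \Lambda^{\bkap}_+(\eta)$, witnessed by beta positions $x < y$ with $\bar x = \theta_a$, $\bar y = \theta_b$, $a > b$, lifts to a non-$(\bkap,\theta)$-RoCK $\blam' \in \Lambda^{\bkap}_+(\omega)$ via $d$ bead-moves of the form ``shift the top bead of runner $\theta_b$ up by $e$''. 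These preserve the bad pair: $x$ is untouched (since $\bar x \neq \theta_b$), while $y$ either stays in $\B^r$ or harmlessly shifts within its own runner. Finally, if $d > \textup{cap}^\theta_\delta(\brho', \bkap)$ for some $\brho' \in \Lambda^{\bkap}_+(\eta)$, then by \S\ref{subsec:remaddribs} one finds $r, t$ with $M^r_{\theta_{t+1}}(\brho', \bkap) - M^r_{\theta_t}(\brho', \bkap) < e(d-1)$; moving the bead at $M^r_{\theta_t}$ up by $ed$ manufactures $\blam' \in \Lambda^{\bkap}_+(\omega)$ with the new bad pair $(M^r_{\theta_{t+1}} + e,\, M^r_{\theta_t} + ed)$ violating the $(\bkap,\theta)$-RoCK condition, contradicting the hypothesis on $\Lambda^{\bkap}_+(\omega)$.

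The main obstacle is establishing the invariance of the $\delta$-weight (equivalently, the uniqueness of the multicore content $\eta$ attached to a $\theta$-RoCK block); I would record this as a preliminary lemma, citing the appropriate block-theoretic results from~\cite{Lyle22, websterScopes}. The remaining bead-movement bookkeeping is elementary but requires tracking residues carefully to ensure no collision with the beta positions witnessing non-$(\bkap,\theta)$-RoCK-ness.
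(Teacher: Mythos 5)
Your $(\implies)$ direction is essentially sound and close to the paper's: the paper also extracts $\eta$ as the content of a minimal-size multicore obtained by stripping $e$-ribbons, shows $\Lambda^{\bkap}_+(\eta)$ is a $\theta$-RoCK core block (it does this via cuspidal tilings and \cref{tilethm} rather than your bead-lifting argument, which avoids the residue bookkeeping you flag), and proves the capacity bound by exactly the bead-move contradiction you describe. The problem is the $(\impliedby)$ direction, where your argument has a genuine gap. You reduce everything to the claim that every $\blam \in \Lambda^{\bkap}_+(\omega)$ has its multicore lying in $\Lambda^{\bkap}_+(\eta)$, justified by an ``invariance of the multicore content'' lemma. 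But that invariance is false for arbitrary blocks in level $\ell>1$ (e.g.\ with $e=2$, $\bkap=(0,1)$, the block $\Lambda^{\bkap}_+(\alpha_0+\alpha_1)$ contains $((2)\mid\varnothing)$ with empty multicore and $((1)\mid(1))$, which is its own multicore of content $\delta$), and your proposed source for it --- the Lyle/Webster classification of RoCK blocks --- presupposes that $\Lambda^{\bkap}_+(\omega)$ is RoCK, which is precisely the conclusion you are trying to reach in this direction. Nor does the uniqueness statement of \cref{paramRoCK} rescue you: a multicore can perfectly well lie in a non-core block, so knowing $\brho$ is a multicore of content $\omega-d'\delta$ does not force $d'=d$.

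This missing step is the real content of the $(\impliedby)$ direction, and the paper's mechanism for it is quite different from anything in your outline: one assumes some $\bmu\in\Lambda^{\bkap}_+(\omega)$ is not $(\bkap,\theta)$-RoCK, takes a $\succeq$-minimal removable ribbon $\xi$ of content $\beta\prec\delta$, and splits into cases. When $\height(\beta)>de$ one peels off the northeastern $de$ nodes to land back in $\Lambda^{\bkap}_+(\eta)$ and contradict that its members are RoCK multicores; when $\height(\beta)<de$ one observes $\bmu\setminus\xi\in\Lambda^{\bkap}_+(\eta+(d\delta-\beta))$ and invokes \cref{addribbonlemma} (the $\mathfrak{sl}_2$/Fock-space argument) to produce a multipartition in $\Lambda^{\bkap}_+(\eta)$ with an addable ribbon of content $d\delta-\beta$ of height $< e\cdot\textup{cap}^\theta_\delta(\eta,\bkap)$, contradicting \cref{combprop2}(i). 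Without \cref{addribbonlemma} or some substitute for it, your proposal cannot exclude the multipartitions of content $\omega$ whose multicores fall outside $\Lambda^{\bkap}_+(\eta)$, so the $(\impliedby)$ direction does not close.
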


\begin{proof}
Let \(\succeq\) be a convex preorder on \(\Phi_+\) which realizes \(\theta\).

\((\implies)\) Assume \(\Lambda^{\bkap}_+(\omega)\) is \(\theta\)-RoCK. Let \(X\) be the set of multicores realized by removing all \(e\)-ribbons from the multipartitions in \(\Lambda^{\bkap}_+(\omega)\). Let \(\blam\) be an element of \(X\) of minimal size, and set \(\eta:=\cont(\blam)\). Then \(\omega = \eta + d \delta\) for some \(d \geq 0\). For any \(\bmu \in 
\Lambda^{\bkap}_+(\eta)\), note that we may append a row ribbon of length \(de\) to the top row of \(\bmu\) to yield a multipartition \(\bmu' \in \Lambda^{\bkap}_+(\omega)\). Hence \(\bmu \in X\) as we may remove \(d\) row \(e\)-ribbons \(\xi_1, \dots, \xi_d\) of content \(\delta\) from \(\bmu'\) to form \(\bmu\), and the existence of any additional removable \(e\)-ribbons in \(\bmu\) would contradict the minimality of \(\blam\). By \cref{thetarockblocktile}, \(\bmu' \in \Lambda^{\bkap}_+(\omega)_{\succeq \delta}\), so it follows from \cref{tilethm}(ii) that \(\xi_1, \dots, \xi_d \in \Gamma_{\bmu'}\), and that \(\bmu\) has cuspidal Kostant tiling \(\Gamma_{\bmu} = \Gamma_{\bmu'}\backslash \{\xi_1, \dots, \xi_d\}\). Then, since \(\bmu\) has no removable ribbons of content \(\delta\), it follows that \(\bmu \in \Lambda^{\bkap}_+(\eta)_{\succ \delta}\). Hence by \cref{RoCKmultitiling,coretilingprop}, we have that \(\Lambda^{\bkap}_+(\eta)\) is a \(\theta\)-RoCK core block.

Now by way of contradiction assume that \(d> \textup{cap}^\theta_\delta(\eta, \bkap)\). Then for some \(\blam \in \Lambda^{\bkap}_+(\eta, \bkap), r,t\) we have 
\begin{align*}
\textup{cap}^\theta_\delta(\blam, \bkap) = h^{r, \theta}_t(\blam, \bkap) +1 =  \left \lfloor
\frac{M^r_{\theta_{t+1}}(\blam, \bkap) - M^r_{\theta_t}(\blam, \bkap) + e}{e} \right\rfloor 
\end{align*}
which implies that \(M^r_{\theta_{t+1}}(\blam, \bkap) < M^r_{\theta_{t}}(\blam, \bkap) + e \cdot \textup{cap}^\theta_\delta(\eta, \bkap)\). Now, let \(\btau \in \Lambda^{\bkap}(\omega)\) be the multipartition whose beta numbers are given by deleting \(M_{\theta_t}^r(\blam, \bkap)\) from \(\B^r(\blam, \bkap)\) and replacing it with \(M_{\theta_t}^r + de\). This has the effect of adding a ribbon of content \(d\delta\) to \(\blam\). Then we have \(M_{\theta_t}^r(\blam, \bkap) + de \in \B^r(\btau, \bkap)\) and
\begin{align*}
M_{\theta_t}^r(\blam, \bkap) + de -  \overline{ \theta_t - \theta_{t+1}} \geq M_{\theta_t}^r(\blam, \bkap) + (\textup{cap}^\theta_\delta(\eta, \bkap) + 1 )e -  \overline{ \theta_t - \theta_{t+1}} > M^r_{\theta_{t+1}}(\blam, \bkap) = M^r_{\theta_{t+1}}(\btau,\bkap),
\end{align*}
so \(M_{\theta_t}^r(\blam, \bkap) + de -  \overline{ \theta_t - \theta_{t+1}}  \notin \B^r(\btau, \bkap)\). Since \(\overline{M_{\theta_t}^r(\blam, \bkap) + de } = \theta_t\) and \(\overline{M_{\theta_t}^r(\blam, \bkap) + de -  \overline{ \theta_t - \theta_{t+1}}} = \theta_{t+1}\), this contradicts \cref{specialdefs}(iii) and the fact that \(\Lambda^{\bkap}_+(\omega)\) is \(\theta\)-RoCK. Hence \(d\leq \textup{cap}^\theta_\delta(\eta, \bkap)\).

\((\impliedby)\) Now assume that \(\Lambda_+^{\bkap}(\eta)\) is a \(\theta\)-RoCK core block, and  \(d \leq \textup{cap}^\theta_\delta(\eta, \bkap)\). Let \(\omega = \eta + d \delta\). By way of contradiction, assume there is some \(\bmu \in \Lambda^{\bkap}_+(\omega)\) which is not \((\bkap, \theta)\)-RoCK. 

Let \(\xi\) be a \(\succeq\)-minimal removable ribbon for \(\bmu\). Then by \cref{RoCKsemipar,tilethm}(ii),  \(\cont(\xi) \prec \delta\), so \(\cont(\xi) = \beta \in \Phi_+^\re\), and \(\height(\beta)\) is not a multiple of \(e\).
First assume that \(\height(\beta)>de\). Decompose \(\xi\) into ribbons \(\xi = \xi' \sqcup \xi''\), where \(\xi''\) consists of the northeasternmost \(de\) nodes of \(\xi\). Then one of \(\xi'\) or \(\xi''\) must be a removable ribbon in \(\bmu\). However, by cuspidality we have 
\begin{align*}
 \cont(\xi'') = d\delta \succ \cont(\xi) = \beta \succ \cont(\xi') = \beta - d \delta.
\end{align*}
Thus by \(\succeq\)-minimality of \(\xi\) in \(\bmu\), \(\xi'\) cannot be a removable ribbon in \(\bmu\), so it must be that \(\xi''\) is a removable ribbon in \(\bmu\). But then \(\bmu\backslash\xi'' \in \Lambda^{\bkap}_+(\eta)\), and \(\xi'\) is a removable ribbon in \(\bmu \backslash \xi''\). But \(\cont(\xi') = \beta - d \delta \prec \delta\), so by \cref{RoCKmultitiling,tilethm}(ii), this contradicts that \(\bmu \backslash \xi''\) is a \((\bkap, \theta)\)-RoCK multicore.

Now we assume \(\height(\beta)< de\). Then \(\bmu \backslash \xi \in \Lambda^{\bkap}_+(\eta + (d\delta - \beta))\). Note by convexity and \cref{RoCKsemipar} that \( \Phi_+^\re \ni d \delta - \beta \succ \delta\). Then by \cref{addribbonlemma} there exists a multipartition \(\blam \in \Lambda^{\bkap}_+(\eta)\) with an addable ribbon of content \(d\delta - \beta\). But since \(\height(d\delta - \beta) < e \cdot \textup{cap}^\theta_{\delta}(\eta, \bkap) \leq e \cdot \textup{cap}^\theta_{\delta}(\blam, \bkap) \), this contradicts \cref{combprop2}(i). 
\end{proof}

\begin{Remark}\label{paramRoCK}
The preceding theorem shows that RoCK blocks of multicharge \(\bkap\) may be parameterized:
\begin{align*}
\{\Lambda_+^{\bkap}(\eta + d\delta) \mid \Lambda_+^{\bkap}(\eta) \textup{ a \(\theta\)-RoCK core block}, d \leq \textup{cap}_\delta^\theta(\eta, \bkap) \textup{ for some residue permutation } \theta\}.
\end{align*}
For any  \(\omega \in \mathbb{Z}_{\geq 0}I\), there is at most one pair $(\eta,d)$ such that $\omega = \eta + d\delta$ and $\Lambda_+^{\bkap}(\eta)$ is a core block. It follows that each RoCK block in the set thus corresponds to a unique $\theta$-RoCK core block $\Lambda_+^{\bkap}(\eta)$.
\end{Remark}

\section{Imaginary semicuspidal diagrams and words}\label{combimagsec}
In this section we give a detailed investigation of the combinatorics of imaginary skew diagrams and their associated words.
Throughout, we fix a residue permutation \(\theta\), a \((\bkap, \theta)\)-RoCK multicore \(\brho \in \Lambda_+^{\bkap}(\omega)\), and \( d \leq \textup{cap}_\delta^\theta(\brho, \bkap)\).
We will define a higher-level analogue of \(e\)-quotients via beta numbers, and use these to determine skew diagrams that will play a key role in our main results later.
For each \( t \in [0,e-1], r \in [1,\ell]\), let \(\nu^{(t,r)}\) be a partition, and name the \(e\ell\)-multipartition
\[\bnu = (\nu^{(0,1)}\mid \dots\mid \nu^{(e-1,1)}\mid
\nu^{(0,2)}\mid \dots\mid \nu^{(e-1,2)}\mid
\dots
\mid
\nu^{(0,\ell)} \mid \cdots \mid \nu^{(e-1,\ell)}).\]
We write \(\Lambda^{(e, \ell)}_+(d)\) for the set of all such \(e \ell\)-multipartitions with
\begin{align*}
|\bnu| = \sum_{\substack{t \in [0,e-1] \\ r \in [1,\ell]}} |\nu^{(t,r)}| = d.
\end{align*}
Given \(\bnu \in \Lambda^{(e, \ell)}_+(d)\) we define a new skew multipartition \(\bnu^\theta_{\brho} \in \Lambda^{\bkap}_{+/\brho}( d \delta)\) via \(\bkap\)-beta numbers as follows. For \(\blam \in \Lambda^{\bkap}_+\), \(i \in [0,e-1]\), \(r \in [1,\ell]\), \(q \in \mathbb{Z}_{>0}\), set \(m^r_i(\blam, \bkap)_q \in \mathbb{Z}_{>0}\) to be the \(q\)th largest element in \(\B^r_i(\blam, \bkap)\). 
Then, define \(\bnu^\theta_{\brho} \in \Lambda^{\bkap}_{+/\brho}(d \delta)\) such that
\begin{align}\label{mmove}
m^r_{\theta_{e-t}}(\brho \sqcup \bnu^\theta_{\brho}, \bkap)_q = m^r_{\theta_{e-t}}(\brho, \bkap)_q + e\nu^{(t,r)}_q
\end{align}
for all \(t \in [0,e-1]\), \(r \in [1,\ell]\), \(q \in \mathbb{Z}_{>0}\). 
For \(u \in \bnu\), set
\begin{align*}
u^{\theta}_{\brho} := (\textup{rect}_u)^\theta_{\brho}/ (\textup{rect}_u \backslash \{u\})^\theta_{\brho} \subseteq \bnu^\theta_{\brho}.
\end{align*}
It follows from definitions that \(u^\theta_{\brho}\) is an \(e\)-ribbon for every \(u \in \bnu\).

\begin{Remark}
From the abacus point of view, \(\brho \sqcup \bnu_{\brho}^\theta\) is achieved by moving the \(q\)th lowest bead on the \(\theta_{e-t}\)-runner in the \(r\)th component of \(\brho\) down \(\nu_q^{(t,r)}\) spots, so \( \bnu\) is just a (reordered) higher-level analogue of the \(e\)-quotient for \(\brho \sqcup \bnu_{\brho}^\theta\) (see for instance \cite[\S2]{Lyle22}).
\end{Remark}

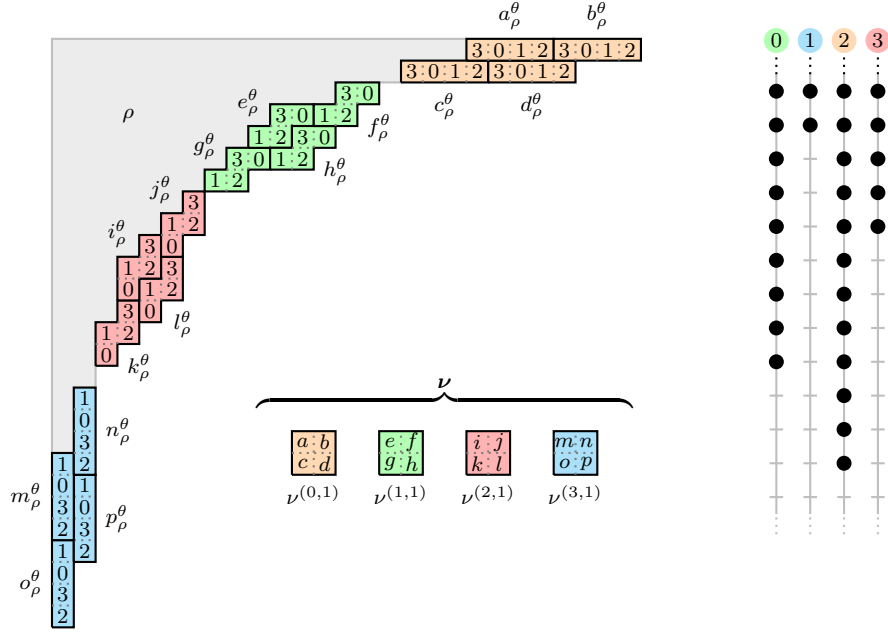
\begin{figure}[h]
\begin{align*}
{}
\hackcenter{
\begin{tikzpicture}[scale=0.29]
\draw[thick,lightgray,fill=lightgray!30] (0,8)--(1,8)--(1,11)--(2,11)--(2,14)--(3,14)--(3,17)--(4,17)--(4,18)--(5,18)--(5,19)--(6,19)--(6,20)--(7,20)--(7,21)--(8,21)--(8,22)--(9,22)--(9,23)--(10,23)--(10,24)--(13,24)--(13,25)--(16,25)--(16,26)--(19,26)--(19,27)--(0,27)--(0,8);
\draw[thick,fill=cyan!30] (0+0,0+0)--(1+0,0+0)--(1+0,4+0)--(0+0,4+0)--(0+0,0+0);
\node at (0.5+0,0.5+0){$\scriptstyle 2$};
\node at (0.5+0,1.5+0){$\scriptstyle 3$};
\node at (0.5+0,2.5+0){$\scriptstyle 0$};
\node at (0.5+0,3.5+0){$\scriptstyle 1$};
\draw[thick, gray, dotted] (0+0,1+0)--(1+0,1+0);
\draw[thick, gray, dotted] (0+0,2+0)--(1+0,2+0);
\draw[thick, gray, dotted] (0+0,3+0)--(1+0,3+0);
\draw[thick,fill=cyan!30] (0+1,0+3)--(1+1,0+3)--(1+1,4+3)--(0+1,4+3)--(0+1,0+3);
\node at (0.5+1,0.5+3){$\scriptstyle 2$};
\node at (0.5+1,1.5+3){$\scriptstyle 3$};
\node at (0.5+1,2.5+3){$\scriptstyle 0$};
\node at (0.5+1,3.5+3){$\scriptstyle 1$};
\draw[thick, gray, dotted] (0+1,1+3)--(1+1,1+3);
\draw[thick, gray, dotted] (0+1,2+3)--(1+1,2+3);
\draw[thick, gray, dotted] (0+1,3+3)--(1+1,3+3);
\draw[thick,fill=cyan!30] (0+0,0+4)--(1+0,0+4)--(1+0,4+4)--(0+0,4+4)--(0+0,0+4);
\node at (0.5+0,0.5+4){$\scriptstyle 2$};
\node at (0.5+0,1.5+4){$\scriptstyle 3$};
\node at (0.5+0,2.5+4){$\scriptstyle 0$};
\node at (0.5+0,3.5+4){$\scriptstyle 1$};
\draw[thick, gray, dotted] (0+0,1+4)--(1+0,1+4);
\draw[thick, gray, dotted] (0+0,2+4)--(1+0,2+4);
\draw[thick, gray, dotted] (0+0,3+4)--(1+0,3+4);
\draw[thick,fill=cyan!30] (0+1,0+7)--(1+1,0+7)--(1+1,4+7)--(0+1,4+7)--(0+1,0+7);
\node at (0.5+1,0.5+7){$\scriptstyle 2$};
\node at (0.5+1,1.5+7){$\scriptstyle 3$};
\node at (0.5+1,2.5+7){$\scriptstyle 0$};
\node at (0.5+1,3.5+7){$\scriptstyle 1$};
\draw[thick, gray, dotted] (0+1,1+7)--(1+1,1+7);
\draw[thick, gray, dotted] (0+1,2+7)--(1+1,2+7);
\draw[thick, gray, dotted] (0+1,3+7)--(1+1,3+7);
\draw[thick,fill=red!30] (2+0,12+0)--(3+0,12+0)--(3+0,13+0)--(4+0,13+0)--(4+0,15+0)--(3+0,15+0)--(3+0,14+0)--(2+0,14+0)--(2+0,12+0);
\node at (2.5+0,12.5+0){$\scriptstyle 0$};
\node at (2.5+0,13.5+0){$\scriptstyle 1$};
\node at (3.5+0,13.5+0){$\scriptstyle 2$};
\node at (3.5+0,14.5+0){$\scriptstyle 3$};
\draw[thick, gray, dotted] (2+0,13+0)--(3+0,13+0);
\draw[thick, gray, dotted] (3+0,13+0)--(3+0,14+0);
\draw[thick, gray, dotted] (3+0,14+0)--(4+0,14+0);
\draw[thick,fill=red!30] (2+2,12+2)--(3+2,12+2)--(3+2,13+2)--(4+2,13+2)--(4+2,15+2)--(3+2,15+2)--(3+2,14+2)--(2+2,14+2)--(2+2,12+2);
\node at (2.5+2,12.5+2){$\scriptstyle 0$};
\node at (2.5+2,13.5+2){$\scriptstyle 1$};
\node at (3.5+2,13.5+2){$\scriptstyle 2$};
\node at (3.5+2,14.5+2){$\scriptstyle 3$};
\draw[thick, gray, dotted] (2+2,13+2)--(3+2,13+2);
\draw[thick, gray, dotted] (3+2,13+2)--(3+2,14+2);
\draw[thick, gray, dotted] (3+2,14+2)--(4+2,14+2);
\draw[thick,fill=red!30] (2+1,12+3)--(3+1,12+3)--(3+1,13+3)--(4+1,13+3)--(4+1,15+3)--(3+1,15+3)--(3+1,14+3)--(2+1,14+3)--(2+1,12+3);
\node at (2.5+1,12.5+3){$\scriptstyle 0$};
\node at (2.5+1,13.5+3){$\scriptstyle 1$};
\node at (3.5+1,13.5+3){$\scriptstyle 2$};
\node at (3.5+1,14.5+3){$\scriptstyle 3$};
\draw[thick, gray, dotted] (2+1,13+3)--(3+1,13+3);
\draw[thick, gray, dotted] (3+1,13+3)--(3+1,14+3);
\draw[thick, gray, dotted] (3+1,14+3)--(4+1,14+3);
\draw[thick,fill=red!30] (2+3,12+5)--(3+3,12+5)--(3+3,13+5)--(4+3,13+5)--(4+3,15+5)--(3+3,15+5)--(3+3,14+5)--(2+3,14+5)--(2+3,12+5);
\node at (2.5+3,12.5+5){$\scriptstyle 0$};
\node at (2.5+3,13.5+5){$\scriptstyle 1$};
\node at (3.5+3,13.5+5){$\scriptstyle 2$};
\node at (3.5+3,14.5+5){$\scriptstyle 3$};
\draw[thick, gray, dotted] (2+3,13+5)--(3+3,13+5);
\draw[thick, gray, dotted] (3+3,13+5)--(3+3,14+5);
\draw[thick, gray, dotted] (3+3,14+5)--(4+3,14+5);
\draw[thick,fill=green!30] (7+0,20+0)--(9+0,20+0)--(9+0,21+0)--(10+0,21+0)--(10+0,22+0)--(8+0,22+0)--(8+0,21+0)--(7+0,21+0)--(7+0,20+0);
\node at (7.5+0,20.5+0){$\scriptstyle 1$};
\node at (8.5+0,20.5+0){$\scriptstyle 2$};
\node at (8.5+0,21.5+0){$\scriptstyle 3$};
\node at (9.5+0,21.5+0){$\scriptstyle 0$};
\draw[thick, gray, dotted] (8+0,20+0)--(8+0,21+0);
\draw[thick, gray, dotted] (8+0,21+0)--(9+0,21+0);
\draw[thick, gray, dotted] (9+0,21+0)--(9+0,22+0);
\draw[thick,fill=green!30] (7+2,20+2)--(9+2,20+2)--(9+2,21+2)--(10+2,21+2)--(10+2,22+2)--(8+2,22+2)--(8+2,21+2)--(7+2,21+2)--(7+2,20+2);
\node at (7.5+2,20.5+2){$\scriptstyle 1$};
\node at (8.5+2,20.5+2){$\scriptstyle 2$};
\node at (8.5+2,21.5+2){$\scriptstyle 3$};
\node at (9.5+2,21.5+2){$\scriptstyle 0$};
\draw[thick, gray, dotted] (8+2,20+2)--(8+2,21+2);
\draw[thick, gray, dotted] (8+2,21+2)--(9+2,21+2);
\draw[thick, gray, dotted] (9+2,21+2)--(9+2,22+2);
\draw[thick,fill=green!30] (7+3,20+1)--(9+3,20+1)--(9+3,21+1)--(10+3,21+1)--(10+3,22+1)--(8+3,22+1)--(8+3,21+1)--(7+3,21+1)--(7+3,20+1);
\node at (7.5+3,20.5+1){$\scriptstyle 1$};
\node at (8.5+3,20.5+1){$\scriptstyle 2$};
\node at (8.5+3,21.5+1){$\scriptstyle 3$};
\node at (9.5+3,21.5+1){$\scriptstyle 0$};
\draw[thick, gray, dotted] (8+3,20+1)--(8+3,21+1);
\draw[thick, gray, dotted] (8+3,21+1)--(9+3,21+1);
\draw[thick, gray, dotted] (9+3,21+1)--(9+3,22+1);
\draw[thick,fill=green!30] (7+5,20+3)--(9+5,20+3)--(9+5,21+3)--(10+5,21+3)--(10+5,22+3)--(8+5,22+3)--(8+5,21+3)--(7+5,21+3)--(7+5,20+3);
\node at (7.5+5,20.5+3){$\scriptstyle 1$};
\node at (8.5+5,20.5+3){$\scriptstyle 2$};
\node at (8.5+5,21.5+3){$\scriptstyle 3$};
\node at (9.5+5,21.5+3){$\scriptstyle 0$};
\draw[thick, gray, dotted] (8+5,20+3)--(8+5,21+3);
\draw[thick, gray, dotted] (8+5,21+3)--(9+5,21+3);
\draw[thick, gray, dotted] (9+5,21+3)--(9+5,22+3);
\draw[thick,fill=orange!30] (16+0,25+0)--(20+0,25+0)--(20+0,26+0)--(16+0,26+0)--(16+0,25+0);
\node at (16.5+0,25.5+0){$\scriptstyle 3$};
\node at (17.5+0,25.5+0){$\scriptstyle 0$};
\node at (18.5+0,25.5+0){$\scriptstyle 1$};
\node at (19.5+0,25.5+0){$\scriptstyle 2$};
\draw[thick, gray, dotted] (17+0,25+0)--(17+0,26+0);
\draw[thick, gray, dotted] (18+0,25+0)--(18+0,26+0);
\draw[thick, gray, dotted] (19+0,25+0)--(19+0,26+0);
\draw[thick,fill=orange!30] (16+4,25+0)--(20+4,25+0)--(20+4,26+0)--(16+4,26+0)--(16+4,25+0);
\node at (16.5+4,25.5+0){$\scriptstyle 3$};
\node at (17.5+4,25.5+0){$\scriptstyle 0$};
\node at (18.5+4,25.5+0){$\scriptstyle 1$};
\node at (19.5+4,25.5+0){$\scriptstyle 2$};
\draw[thick, gray, dotted] (17+4,25+0)--(17+4,26+0);
\draw[thick, gray, dotted] (18+4,25+0)--(18+4,26+0);
\draw[thick, gray, dotted] (19+4,25+0)--(19+4,26+0);
\draw[thick,fill=orange!30] (16+3,25+1)--(20+3,25+1)--(20+3,26+1)--(16+3,26+1)--(16+3,25+1);
\node at (16.5+3,25.5+1){$\scriptstyle 3$};
\node at (17.5+3,25.5+1){$\scriptstyle 0$};
\node at (18.5+3,25.5+1){$\scriptstyle 1$};
\node at (19.5+3,25.5+1){$\scriptstyle 2$};
\draw[thick, gray, dotted] (17+3,25+1)--(17+3,26+1);
\draw[thick, gray, dotted] (18+3,25+1)--(18+3,26+1);
\draw[thick, gray, dotted] (19+3,25+1)--(19+3,26+1);
\draw[thick,fill=orange!30] (16+7,25+1)--(20+7,25+1)--(20+7,26+1)--(16+7,26+1)--(16+7,25+1);
\node at (16.5+7,25.5+1){$\scriptstyle 3$};
\node at (17.5+7,25.5+1){$\scriptstyle 0$};
\node at (18.5+7,25.5+1){$\scriptstyle 1$};
\node at (19.5+7,25.5+1){$\scriptstyle 2$};
\draw[thick, gray, dotted] (17+7,25+1)--(17+7,26+1);
\draw[thick, gray, dotted] (18+7,25+1)--(18+7,26+1);
\draw[thick, gray, dotted] (19+7,25+1)--(19+7,26+1);
\node at (3.5,23.5){$\scriptstyle \rho$};
\node[left] at (0,2){$\scriptstyle o^\theta_\rho$};
\node[left] at (0,6){$\scriptstyle m^\theta_\rho$};
\node[right] at (2,5){$\scriptstyle p^\theta_\rho$};
\node[right] at (2,9){$\scriptstyle n^\theta_\rho$};
\node[below] at (4,13){$\scriptstyle k^\theta_\rho$};
\node[below] at (6,15){$\scriptstyle l^\theta_\rho$};
\node[above] at (3,17){$\scriptstyle i^\theta_\rho$};
\node[above] at (5,19){$\scriptstyle j^\theta_\rho$};
\node[above] at (7,21){$\scriptstyle g^\theta_\rho$};
\node[above] at (9,23){$\scriptstyle e^\theta_\rho$};
\node[below] at (13,22){$\scriptstyle h^\theta_\rho$};
\node[below] at (15,24){$\scriptstyle f^\theta_\rho$};
\node[below] at (18,25){$\scriptstyle c^\theta_\rho$};
\node[below] at (22,25){$\scriptstyle d^\theta_\rho$};
\node[above] at (21,27){$\scriptstyle a^\theta_\rho$};
\node[above] at (25,27){$\scriptstyle b^\theta_\rho$};
\draw[thick,,fill=orange!30] (10+1,7)--(12+1,7)--(12+1,9)--(10+1,9)--(10+1,7);
\draw[thick,,fill=green!30] (14+1,7)--(16+1,7)--(16+1,9)--(14+1,9)--(14+1,7);
\draw[thick,,fill=red!30] (18+1,7)--(20+1,7)--(20+1,9)--(18+1,9)--(18+1,7);
\draw[thick,,fill=cyan!30] (22+1,7)--(24+1,7)--(24+1,9)--(22+1,9)--(22+1,7);
\draw[thick, gray, dotted] (12,7)--(12,9);
\draw[thick, gray, dotted] (11,8)--(13,8);
\draw[thick, gray, dotted] (16,7)--(16,9);
\draw[thick, gray, dotted] (15,8)--(17,8);
\draw[thick, gray, dotted] (20,7)--(20,9);
\draw[thick, gray, dotted] (19,8)--(21,8);
\draw[thick, gray, dotted] (24,7)--(24,9);
\draw[thick, gray, dotted] (23,8)--(25,8);
%
\node at (11.5,8.5){$\scriptstyle a$};
\node at (12.5,8.5){$\scriptstyle b$};
\node at (11.5,7.5){$\scriptstyle c$};
\node at (12.5,7.5){$\scriptstyle d$};
\node at (15.5,8.5){$\scriptstyle e$};
\node at (16.5,8.5){$\scriptstyle f$};
\node at (15.5,7.5){$\scriptstyle g$};
\node at (16.5,7.5){$\scriptstyle h$};
\node at (19.5,8.5){$\scriptstyle i$};
\node at (20.5,8.5){$\scriptstyle j$};
\node at (19.5,7.5){$\scriptstyle k$};
\node at (20.5,7.5){$\scriptstyle l$};
\node at (23.5,8.5){$\scriptstyle m$};
\node at (24.5,8.5){$\scriptstyle n$};
\node at (23.5,7.5){$\scriptstyle o$};
\node at (24.5,7.5){$\scriptstyle p$};
%
\node[below] at (12,7){$\scriptstyle \nu^{(0,1)}$};
\node[below] at (16,7){$\scriptstyle \nu^{(1,1)}$};
\node[below] at (20,7){$\scriptstyle \nu^{(2,1)}$};
\node[below] at (24,7){$\scriptstyle \nu^{(3,1)}$};
\node[above] at (18,9.5){$\scriptstyle  \overbrace{\hspace{50mm}}$};
\node[above] at (18,10.5){$\scriptstyle \bnu$};
\end{tikzpicture}
\qquad
\qquad
\begin{tikzpicture}[scale=0.45]
	\draw[thick, lightgray] (0,0.5)--(0,-12.5);
	\draw[thick, lightgray] (1,0.5)--(1,-12.5);
	\draw[thick, lightgray] (2,0.5)--(2,-12.5);
	\draw[thick, lightgray] (3,0.5)--(3,-12.5);
	\draw[thick, lightgray, dotted] (0,-12.6)--(0,-13.2);
	\draw[thick, lightgray, dotted] (1,-12.6)--(1,-13.2);
	\draw[thick, lightgray, dotted] (2,-12.6)--(2,-13.2);
	\draw[thick, lightgray, dotted] (3,-12.6)--(3,-13.2);
	\draw[thick, black, dotted] (0,0.5)--(0,1);
	\draw[thick, black, dotted] (1,0.5)--(1,1);
	\draw[thick, black, dotted] (2,0.5)--(2,1);
	\draw[thick, black, dotted] (3,0.5)--(3,1);
	\filldraw[green!30] (0,1.5) circle (10pt);
	\filldraw[cyan!30] (1,1.5) circle (10pt);
	\filldraw[orange!30] (2,1.5) circle (10pt);
	\filldraw[red!30] (3,1.5) circle (10pt);
	\node[above] at (0,1){$\scriptstyle 0$};
	\node[above] at (1,1){$\scriptstyle 1$};
	\node[above] at (2,1){$\scriptstyle 2$};
	\node[above] at (3,1){$\scriptstyle 3$};
	\blackdot(0,0);
	\blackdot(0,-1);
	\blackdot(0,-2);
	\blackdot(0,-3);
	\blackdot(0,-4);
	\blackdot(0,-5);
	\blackdot(0,-6);
	\blackdot(0,-7);
	\blackdot(0,-8);
	\draw[thick, lightgray] (-0.2,-9)--(0.2,-9);
	\draw[thick, lightgray] (-0.2,-10)--(0.2,-10);
	\draw[thick, lightgray] (-0.2,-11)--(0.2,-11);
	\draw[thick, lightgray] (-0.2,-12)--(0.2,-12);
	\blackdot(1,0);
	\blackdot(1,-1);
	\draw[thick, lightgray] (0.8,-2)--(1.2,-2);
	\draw[thick, lightgray] (0.8,-3)--(1.2,-3);
	\draw[thick, lightgray] (0.8,-4)--(1.2,-4);
	\draw[thick, lightgray] (0.8,-5)--(1.2,-5);
	\draw[thick, lightgray] (0.8,-6)--(1.2,-6);
	\draw[thick, lightgray] (0.8,-7)--(1.2,-7);
	\draw[thick, lightgray] (0.8,-8)--(1.2,-8);
	\draw[thick, lightgray] (0.8,-9)--(1.2,-9);
	\draw[thick, lightgray] (0.8,-10)--(1.2,-10);
	\draw[thick, lightgray] (0.8,-11)--(1.2,-11);
	\draw[thick, lightgray] (0.8,-12)--(1.2,-12);
	\blackdot(2,0);
	\blackdot(2,-1);
	\blackdot(2,-2);
	\blackdot(2,-3);
	\blackdot(2,-4);
	\blackdot(2,-5);
	\blackdot(2,-6);
	\blackdot(2,-7);
	\blackdot(2,-8);
	\blackdot(2,-9);
	\blackdot(2,-10);
	\blackdot(2,-11);
	\draw[thick, lightgray] (1.8,-12)--(2.2,-12);
	\blackdot(3,0);
	\blackdot(3,-1);
	\blackdot(3,-2);
	\blackdot(3,-3);
	\blackdot(3,-4);
	\draw[thick, lightgray] (2.8,-5)--(3.2,-5);
	\draw[thick, lightgray] (2.8,-6)--(3.2,-6);
	\draw[thick, lightgray] (2.8,-7)--(3.2,-7);
	\draw[thick, lightgray] (2.8,-8)--(3.2,-8);
	\draw[thick, lightgray] (2.8,-9)--(3.2,-9);
	\draw[thick, lightgray] (2.8,-10)--(3.2,-10);
	\draw[thick, lightgray] (2.8,-11)--(3.2,-11);
	\draw[thick, lightgray] (2.8,-12)--(3.2,-12);
	\phantom{
		\draw (0,-13)--(0,-16);
	}
\end{tikzpicture}
}
\end{align*}
\caption{
The arrangement of \(e\)-ribbons in \(\bnu^{\theta}_{\rho}\) corresponding to some nodes in \(\bnu\), for the case \(e=4\), \(\bkap = \kappa_1 = 0\), \(\brho = \rho^{(1)} = (19,16,13,10,9,8,7,6,5,4,3^3,2^3,1^3)\), \(\theta = (1,3,0,2)\), considered in \cref{imagnodes}, together with the abacus diagram for $\rho$ on the right.
}
\label{imagnodesarr}      
\end{figure}


\begin{Example}\label{imagnodes}
Take \(e=4\), \(\ell = 1\), \(\bkap = \kappa_1 = 0\), and consider the \(\bkap\)-core
\begin{align*}
\brho = \rho = (19,16,13,10,9,8,7,6,5,4,3^3,2^3,1^3) \in \Lambda^{\bkap}_+.
\end{align*}
Letting \(\theta = (1,3,0,2)\), we have that \(\rho\) is \((\bkap, \theta)\)-RoCK, with capacity \(\textup{cap}_\delta^\theta(\rho) = 4\). In Figure~\ref{imagnodesarr} we depict \(\rho\), together with some of the \(e\)-ribbons \(u^\theta_{\rho}\) associated with nodes \(u \in \bnu \in \Lambda^{(e,\ell)}\).
Note that moving a bead down on a particular coloured runner on the abacus diagram corresponds to adding the associated coloured skew diagram to $\rho$. Refer to Figure~\ref{twoexpic} for examples of skew diagrams corresponding to specific $e$-quotients $\bnu$ that can be read off from the abacus displays given therein.
\end{Example}

\begin{Lemma}\label{standkost}
Let \(\theta\) be a residue permutation, \(\brho \in \Lambda_+^{\bkap}(\omega)\) a \((\bkap, \theta)\)-RoCK multicore, and \( d \leq \textup{cap}_\delta^\theta(\brho, \bkap)\).
The unique cuspidal Kostant tiling for \(\bnu^\theta_{\brho}\) is given by
\begin{align*}
\Gamma_{\bnu^\theta_{\brho}} = \{u^\theta_{\brho} \mid u \in \bnu\},
\end{align*}
and any standard tableau \({\tt T} \in \Std(\bnu)\) defines a skew tableau \((\Gamma_{\bnu^\theta_{\brho}}, {\tt T}^\theta_{\brho})\) for \(\bnu^\theta_{\brho}\), where \({\tt T}^\theta_{\brho}(k) := ({\tt T}(k))^\theta_{\brho}\).
\end{Lemma}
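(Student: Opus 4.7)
The plan is to prove both claims via uniqueness of cuspidal Kostant tilings (\cref{tilethm}(i)), by exhibiting a cuspidal Kostant tableau structure on $\Gamma' := \{u^\theta_{\brho} : u \in \bnu\}$, with the ordering provided by any ${\tt T} \in \Std(\bnu)$.

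First, I would verify that $\Gamma'$ forms a disjoint tiling of $\bnu^\theta_{\brho}$ by $e$-ribbons of content $\delta$. By (\ref{mmove}) and the definition of $u^\theta_{\brho}$, each $u^\theta_{\brho}$ is the $e$-rim hook added by a single unit downward step of a bead on the abacus, hence an $e$-ribbon of content $\delta$; moreover, building up $\bnu^\theta_{\brho}$ from $\brho$ according to (\ref{mmove}) proceeds by $d$ such unit steps, so $\Gamma'$ is a disjoint cover with $|\Gamma'| = d$. By \cref{combprop2}(ii), $\bnu^\theta_{\brho} \in \Lambda^{\bkap}_{+/\brho}(d\delta)_{\approx\delta}$, so the cuspidal Kostant tiling likewise consists of $d$ tiles of content $\delta$; the $\succeq$-decreasing content requirement for a cuspidal Kostant tableau is therefore automatic for any ordering of $\Gamma'$.

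It now suffices to show (a) cuspidality of each $u^\theta_{\brho}$, and (b) that for any ${\tt T} \in \Std(\bnu)$, the assignment ${\tt T}^\theta_{\brho}(k) := ({\tt T}(k))^\theta_{\brho}$ yields a valid skew tableau on $\Gamma'$. For (a), I would note that along the SW-to-NE path of $u^\theta_{\brho}$ the residues are consecutive integers, so any NW-closed subset of nodes (corresponding to a 2-tile split) has content equal to $\alpha(s, k)$ for some $s \in \ZZ_e$ and $1 \leq k \leq e-1$, a positive real root. Using the characterization of $\Phi_{\prec\delta}$ from \cref{realizethetadef} together with the identity (\ref{gammaconvert}), I would check that under the RoCK structure every such NW-closed piece has content in $\Phi_{\prec\delta}$; by convexity the complementary SE-closed piece then has content in $\Phi_{\succ\delta}$. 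For (b), the combinatorial crux is to establish that for $u \in \nu^{(t_1, r)}$ and $v \in \nu^{(t_2, r)}$, some node of $u^\theta_{\brho}$ is strictly NW of some node of $v^\theta_{\brho}$ if and only if $(t_1, r) = (t_2, r)$ and $u \searrow v$ in $\nu^{(t_1, r)}$.

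The main obstacle is this structural claim, particularly the non-interleaving of ribbons arising from bead moves on different runners of the same component. This is where the RoCK capacity hypothesis $d \leq \textup{cap}^\theta_\delta(\brho, \bkap)$ plays its essential role: the equivalent bound $h^{r,\theta}_t(\brho, \bkap) \geq d - 1$ for all $t, r$ ensures that the beads of $\brho$ on different runners are spaced sufficiently far apart in the $\theta$-order that the $e$-ribbons produced by up to $d$ unit bead moves on each runner occupy disjoint diagonal strips of the partition and never $\searrow$-interleave. I would make this precise by explicit beta-number computation locating the NW-most and SE-most nodes of each $u^\theta_{\brho}$, and checking that for bead moves on distinct runners within the same component the corresponding row--column intervals are $\searrow$-incomparable, while for successive moves on the same runner (respectively same $\nu^{(t,r)}$) the $\searrow$-relation mirrors the one on $\bnu$. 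Once (a) and (b) are in hand, $(\Gamma', {\tt T}^\theta_{\brho})$ is a cuspidal Kostant tableau for every ${\tt T} \in \Std(\bnu)$, so $\Gamma' = \Gamma_{\bnu^\theta_{\brho}}$ by \cref{tilethm}(i).
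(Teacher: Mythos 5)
There is a genuine gap, and it sits exactly at the point you treat as immediate: the claim that $\Gamma' = \{u^\theta_{\brho} \mid u \in \bnu\}$ is a disjoint cover of $\bnu^\theta_{\brho}$ ``because building up $\bnu^\theta_{\brho}$ proceeds by $d$ unit steps.'' Each $u^\theta_{\brho}$ is defined as a set difference relative to the configuration $(\textup{rect}_u\setminus\{u\})^\theta_{\brho}$, whereas the $e$-ribbon added at the $k$-th step of an actual build-up of $\bnu^\theta_{\brho}$ is $(\textup{sh}^{\downarrow}_{k}{\tt T})^\theta_{\brho}/(\textup{sh}^{\downarrow}_{k-1}{\tt T})^\theta_{\brho}$, computed relative to an intermediate configuration that is in general not of the form $(\textup{rect}_u\setminus\{u\})^\theta_{\brho}$. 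Since both the shape and the position of the rim hook produced by a one-step bead move depend on the locations of all the other beads, identifying these two ribbons is precisely the content of the lemma, not a consequence of (\ref{mmove}). The paper's proof is structured around exactly this point: it first observes that the successive differences along ${\tt T}$ automatically form a skew tableau by $\delta$-ribbons (a nested chain of skew multipartitions needs no interleaving analysis), identifies that tiling with $\Gamma_{\bnu^\theta_{\brho}}$ via \cref{combprop2}(ii) and \cref{tilethm}(ii), and only then proves by induction --- using an auxiliary tableau ending at a different removable node together with the injectivity of $\bnu\mapsto\bnu^\theta_{\brho}$ --- that the $k$-th difference equals $({\tt T}(k))^\theta_{\brho}$.

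Your remaining program (verifying cuspidality of each tile and pairwise $\searrow$-comparing the ribbons by locating their extreme nodes through beta numbers) could in principle be pushed through and would then also deliver disjointness, but as written it is only a sketch whose computations amount to re-deriving \cref{allaboutshapes} from scratch --- and note that in the paper \cref{allaboutshapes} is proved \emph{after}, and using, the present lemma, so it cannot be invoked here. The cuspidality check is also unnecessary: once one has any tiling of $\bnu^\theta_{\brho}$ by iteratively \texttt{SE}-removable ribbons of content $\delta$, \cref{combprop2}(ii) and \cref{tilethm}(ii) force each tile into the unique (automatically cuspidal) Kostant tiling, so cuspidality comes for free.
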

\begin{proof}
Let \({\tt T} \in \Std(\bnu)\). Then, by the construction of \(\bnu^\theta_{\brho}\), there is a skew tableau \({\tt t}\) for \(\bnu^\theta_{\brho}\), with tiles \(\Gamma_{\tt t} := \{{\tt t}(1), \dots, {\tt t}(|\bnu|)\}\), given by
\begin{align}\label{half1}
{\tt t}(k) =(\textup{sh}^{\downarrow}_{k}{\tt T})^\theta_{\brho}/(\textup{sh}^{\downarrow}_{k-1}{\tt T})^\theta_{\brho}.
\end{align}
Note that the tiles of \(\Gamma_{\tt t}\) are ribbons of content \(\delta\). Thus by \cref{tilethm}(ii) and \cref{combprop2}(ii), it follows that \(\Gamma_{\tt t} = \Gamma_{\bnu^\theta_{\brho}}\) is the unique cuspidal Kostant tiling of \(\bnu^\theta_{\brho}\).

We now show that \({\tt t}(k) = ({\tt T}(k))^\theta_{\brho}\) for all \(1 \leq k \leq |\bnu|\). We go by induction on \(|\bnu|\), the base case \(|\bnu| = 0\) being trivial. Assume that \(|\bnu| >0\). 
For all \(1 \leq k < |\bnu|\), we note that the induction assumption applied to \(\textup{sh}^\downarrow_{|\bnu|-1}{\tt T}\) implies that \({\tt t}(k) =({\tt T}(k))^\theta_{\brho}\), so it remains to show, letting \(u = {\tt T}(|\bnu|)\), that \({\tt t}(|\bnu|) =u^\theta_{\brho}\).

If \(\bnu = \textup{rect}_{u}\), then we have \({\tt t}(|\bnu|) = u^\theta_{\brho}\) by definition, so assume now that \(\bnu \neq \textup{rect}_{u}\). As \(u = {\tt T}(|\bnu|)\), it follows that \(u\) is removable in \(\bnu\), and since \(\bnu \neq \textup{rect}_{u}\), there must be another removable node \(v = {\tt T}(m) \in \bnu\), where \(m < |\bnu|\), and \(v \neq u\). Let \({\tt X} \in \Std(\bnu)\) be defined by taking:
\begin{align*}
{\tt X}(k) =
\begin{cases}
{\tt T}(k) & \textup{if } 1 \leq k < m;\\
{\tt T}(k+1) & \textup{if } m \leq k \leq |\bnu| -1;\\
v & \textup{if } k=|\bnu|.
\end{cases} 
\end{align*}
In essence \({\tt X}\) arises from \({\tt T}\) by moving \(v\) to the end of the tableau (so that \( {\tt X}(|\bnu| -1) = u\)). We thus may define a skew tableau \((\Gamma_{\bnu^\theta_{\brho}}, {\tt x})\) for \(\bnu^\theta_{\brho}\), where \({\tt x}\) is defined from \({\tt X}\) as in (\ref{half1}).

Again, by the induction assumption we have that \({\tt x}(k) = ({\tt X}(k))^\theta_{\brho}\) for all \(1 \leq k < |\bnu|\). 
Therefore it follows that
\begin{align*}
\Gamma_{\bnu^\theta_{\brho}} = \{{\tt t}(|\bnu|)\} \sqcup \{ w^\theta_{\brho} \mid w \in \bnu \backslash \{u\}\} = \{{\tt x}(|\bnu|)\} \sqcup \{ w^\theta_{\brho} \mid w \in \bnu \backslash \{v\}\},
\end{align*}
and thus
\begin{align*}
{\tt t}(|\bnu|) \in \{{\tt x}(|\bnu|), u^\theta_{\brho}\}.
\end{align*}
If \({\tt t}(|\bnu|) = {\tt x}(|\bnu|)\), this would imply that \((\textup{sh}^\downarrow_{|\bnu|-1}{\tt T})^\theta_{\brho} = (\textup{sh}^\downarrow_{|\bnu|-1}{\tt X})^\theta_{\brho} \) even though \(\textup{sh}^\downarrow_{|\bnu|-1}{\tt T} \neq \textup{sh}^\downarrow_{|\bnu|-1}{\tt X} \), which is easily seen to contradict the definition of \(\bnu^\theta_{\brho}\). Therefore \({\tt t}(|\bnu|) =u^\theta_{\brho}\), as desired.
\end{proof}

\begin{Proposition}\label{resirrlist}
Let \(\theta\) be a residue permutation, \(\brho \in \Lambda_+^{\bkap}(\omega)\) a \((\bkap, \theta)\)-RoCK multicore, and \( d \leq \textup{cap}_\delta^\theta(\brho, \bkap)\).
We have an irredundant list
\begin{align*}
\Lambda^{\bkap}_{+/\brho}(d\delta) = \{\bnu_{\brho}^\theta \mid \bnu \in \Lambda_+^{(e,\ell)}(d)\}.
\end{align*}
\end{Proposition}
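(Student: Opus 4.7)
The plan is to establish that the assignment $\bnu \mapsto \bnu_{\brho}^\theta$ defines a bijection from $\Lambda_+^{(e,\ell)}(d)$ to $\Lambda^{\bkap}_{+/\brho}(d\delta)$. First, both the fact that the map lands in $\Lambda^{\bkap}_{+/\brho}(d\delta)$ and injectivity are essentially bookkeeping: by (\ref{mmove}) the defining formula is invertible, in the sense that one can recover
$$\nu_q^{(t,r)} = \frac{m^r_{\theta_{e-t}}(\brho \sqcup \bnu^\theta_{\brho}, \bkap)_q - m^r_{\theta_{e-t}}(\brho, \bkap)_q}{e}$$
from the $\bkap$-beta numbers, so distinct $\bnu$ produce distinct skew multipartitions; and the content of $\bnu_{\brho}^\theta$ is $d\delta$ since moving the $q$-th bead upward by $e\nu_q^{(t,r)}$ positions on runner $\theta_{e-t}$ adds $\nu_q^{(t,r)}$ ribbons each of content $\delta$, contributing a total of $|\bnu|\delta = d\delta$.

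For surjectivity, I would take an arbitrary $\blam/\brho \in \Lambda^{\bkap}_{+/\brho}(d\delta)$ and show it equals $\bnu_{\brho}^\theta$ for some $\bnu$. By \cref{combprop2}(ii), since $d \leq \textup{cap}^\theta_\delta(\brho, \bkap)$, we have $\blam/\brho \in \Lambda^{\bkap}_{+/\brho}(d\delta)_{\approx \delta}$, i.e., the cuspidal Kostant tiling $\Gamma_{\blam/\brho}$ consists of $d$ ribbons, each of content $\delta$ and thus each an $e$-ribbon. By \cref{tilethm}(iii), these ribbons can be added to $\brho$ one at a time in an order compatible with the tableau structure. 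From the abacus viewpoint of \S\ref{addribsexp}, adding a $\delta$-ribbon corresponds precisely to moving a single bead up by $e$ positions on a single runner in a single component, so the $\bkap$-beta numbers of $\blam$ are obtained from those of $\brho$ by shifting beads upward \emph{within their own runners}.

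Now I would exploit the multicore property of $\brho$: for each $r \in [1,\ell]$ and $i \in [0,e-1]$, the beads of $\brho$ on runner $i$ in component $r$ occupy exactly the positions $\{M_i^r(\brho,\bkap) - ke \mid k \in \ZZ_{\geq 0}\}$. After the upward shifts, the beads of $\blam$ on the same runner occupy positions of the form $\{M_i^r(\brho,\bkap) - ke + ea_{k+1} \mid k \in \ZZ_{\geq 0}\}$ for some $a_1, a_2, \ldots \in \ZZ_{\geq 0}$ which are eventually zero. For these to form a strictly decreasing sequence of valid beta numbers one needs $a_q \geq a_{q+1}$ for all $q$, so $(a_q)_q$ is automatically a partition; setting $\nu^{(t,r)} := (a_q)_q$ with $i = \theta_{e-t}$ produces an $\bnu \in \Lambda_+^{(e,\ell)}$ satisfying (\ref{mmove}), hence $\bnu_{\brho}^\theta = \blam/\brho$. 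Counting nodes, $|\bnu|$ equals the total number of ribbons added, which is $d$, so $\bnu \in \Lambda_+^{(e,\ell)}(d)$.

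The potential obstacle is verifying that one may genuinely add the $\delta$-ribbons from $\Gamma_{\blam/\brho}$ as a sequence of within-runner bead moves—that is, the intermediate stages are valid partitions. This is furnished by \cref{tilethm}(iii) together with the observation that each $\delta$-ribbon is automatically an addable $e$-ribbon (equivalently, a legal single-runner bead move) thanks to \cref{combprop2}(ii). Once this is in place, the weak decrease of $(a_q)_q$—equivalently, the partition structure of each component $\nu^{(t,r)}$—follows automatically from the fact that the beads on each runner of $\brho$ are equally spaced with gap $e$.
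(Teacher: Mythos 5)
Your proposal is correct and follows essentially the same route as the paper: both arguments invoke \cref{combprop2}(ii) to reduce every element of \(\Lambda^{\bkap}_{+/\brho}(d\delta)\) to the result of \(d\) successive within-runner bead moves of \(e\) positions, and then observe that the data of such moves is exactly an element of \(\Lambda_+^{(e,\ell)}(d)\). The only difference is that you spell out the step the paper dismisses as ``straightforward to see,'' namely that the multicore packing \(\B^r_i(\brho,\bkap) = M^r_i(\brho,\bkap) - e\ZZ_{\geq 0}\) forces the displacement sequence \((a_q)_q\) on each runner to be a partition.
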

\begin{proof}
By \cref{combprop2}(ii), any \(\blam \in \Lambda^{\bkap}_+(\omega + d\delta)\) may be constructed from \(\brho\) by progressively adding \(d\) ribbons of content \(\delta\). In terms of \(\bkap\)-beta numbers, each such addition is equivalent to replacing some \(m\) in the \(\bkap\)-beta numbers with the value \(m+e\). Then \(\brho \sqcup \bnu^\theta_{\brho}\) is the result of a process which moves the \(q\)th largest element of \(\B^r_{\theta_{e-t}}(\brho, \bkap)\) up \(e \cdot \nu_q^{(t,r)}\) times. It is straightforward to see then that 
\begin{align*}
\{\brho \sqcup \bnu^\theta_{\brho} \mid \bnu \in \Lambda_+^{(e,\ell)}(d)\}
\end{align*}
gives an irredundant list of all possible results of \(d\) such moves, proving the proposition.
\end{proof}

\subsection{Ribbon diagrams}\label{ribbonshapesec}
Recall that an addable ribbon \(\xi\) for \(\brho\) corresponds to some integers \(x<y\), with \(x \in \B^r(\brho, \bkap)\), \(y \notin \B^r(\brho, \bkap)\). More specifically, it is straightforward to check that
\begin{itemize}
\item \(\xi\) is contained within a single {\em row} if \(z \notin \B^r(\brho, \bkap)\) for all \(x<z<y\), and;
\item  \(\xi\) is contained within a single {\em column} if \(z \in \B^r(\brho, \bkap)\) for all \(x<z<y\).
\end{itemize}
From this, it follows that in general, if 
\begin{align*}
x = z_{k+1} < z_k <  \dots < z_2 < z_1 = y,
\end{align*}
where
\begin{align*}
\{z \mid x<z<y \} \cap \B^r_i(\brho, \bkap) = \{z_2, \dots, z_k\},
\end{align*}
then the ribbon \(\xi\) consists of \(k\) rows, where the \(j\)th row from the top has length \(z_j - z_{j+1}\), and, reading from left to right, consists of the residues
\begin{align*}
\overline{z_{j+1} +1}, \overline{z_{j+1} +2}, \dots, \overline{z}_j.
\end{align*}

\subsubsection{Minuscule ribbons}\label{minribs} Let \(\theta = (\theta_1, \dots, \theta_e)\) be a residue permutation, and for all \(a \in [0,e-1]\), let \((\phi_a^1, \dots, \phi_a^{a+1})\) be the subsequence of \((\overline{\theta}_{e-a},  \overline{\theta_{e-a} - 1}, \overline{\theta_{e-a} - 2}, \dots, \overline{\theta_{e-a} - e+1})\) consisting only of elements \(\theta_b\), where \(b\geq e-a\). In other words, to find \((\phi_a^1, \dots, \phi_a^{a+1})\), one travels clockwise for one full cycle around the Dynkin diagram in Figure~\ref{fig:dynkin}, recording only the residues that correspond to \(\theta_b\) with \(b \geq e-a\), beginning with \(\phi_a^1 = \bar{\theta}_{e-a}\).

For \(a \in [0,e-1]\), we say that a ribbon \(\xi \in \Lambda(\delta)\) is a {\em \((\theta, a)\)-ribbon} provided that \(\xi\) consists of \(a+1\) rows, with the \(j\)th row consisting of the residues
\begin{align*}
\overline{\phi_a^{j+1} + 1}, \overline{\phi_a^{j+1} + 2}, \dots, \overline{\phi_a^j},
\end{align*}
for all \(j \in [1,a+1]\).


\begin{Example}\label{thetaadeltaribbons}
For the trivial residue permutation $\theta=(0,1,\dots,e-1)$, the \((\theta,a)\)-ribbons are hook partitions with increasing leg lengths as $a$ increases.
More generally, \((\theta,a)\)-ribbons need not be hooks.
For example, given \(e=4\) and \(\theta = (1,3,0,2)\), the \((\theta,a)\)-ribbons look as follows.
\begin{align*}
{}
(\theta,0):
{}
\hackcenter{
\begin{tikzpicture}[scale=0.29]
\draw[thick,fill=orange!30] (16+7,25+1)--(20+7,25+1)--(20+7,26+1)--(16+7,26+1)--(16+7,25+1);
\node at (16.5+7,25.5+1){$\scriptstyle 3$};
\node at (17.5+7,25.5+1){$\scriptstyle 0$};
\node at (18.5+7,25.5+1){$\scriptstyle 1$};
\node at (19.5+7,25.5+1){$\scriptstyle 2$};
\draw[thick, gray, dotted] (17+7,25+1)--(17+7,26+1);
\draw[thick, gray, dotted] (18+7,25+1)--(18+7,26+1);
\draw[thick, gray, dotted] (19+7,25+1)--(19+7,26+1);
\end{tikzpicture}
}
\qquad
(\theta,1):
{}
\hackcenter{
\begin{tikzpicture}[scale=0.29]
\draw[thick,fill=green!30] (7+0,20+0)--(9+0,20+0)--(9+0,21+0)--(10+0,21+0)--(10+0,22+0)--(8+0,22+0)--(8+0,21+0)--(7+0,21+0)--(7+0,20+0);
\node at (7.5+0,20.5+0){$\scriptstyle 1$};
\node at (8.5+0,20.5+0){$\scriptstyle 2$};
\node at (8.5+0,21.5+0){$\scriptstyle 3$};
\node at (9.5+0,21.5+0){$\scriptstyle 0$};
\draw[thick, gray, dotted] (8+0,20+0)--(8+0,21+0);
\draw[thick, gray, dotted] (8+0,21+0)--(9+0,21+0);
\draw[thick, gray, dotted] (9+0,21+0)--(9+0,22+0);
\end{tikzpicture}
}
{}
\qquad
(\theta,2):
\hackcenter{
\begin{tikzpicture}[scale=0.29]
\draw[thick,fill=red!30] (2+0,12+0)--(3+0,12+0)--(3+0,13+0)--(4+0,13+0)--(4+0,15+0)--(3+0,15+0)--(3+0,14+0)--(2+0,14+0)--(2+0,12+0);
\node at (2.5+0,12.5+0){$\scriptstyle 0$};
\node at (2.5+0,13.5+0){$\scriptstyle 1$};
\node at (3.5+0,13.5+0){$\scriptstyle 2$};
\node at (3.5+0,14.5+0){$\scriptstyle 3$};
\draw[thick, gray, dotted] (2+0,13+0)--(3+0,13+0);
\draw[thick, gray, dotted] (3+0,13+0)--(3+0,14+0);
\draw[thick, gray, dotted] (3+0,14+0)--(4+0,14+0);
\end{tikzpicture}
}
\qquad
(\theta,3): 
\hackcenter{
\begin{tikzpicture}[scale=0.29]
\draw[thick,fill=cyan!30] (0+0,0+0)--(1+0,0+0)--(1+0,4+0)--(0+0,4+0)--(0+0,0+0);
\node at (0.5+0,0.5+0){$\scriptstyle 2$};
\node at (0.5+0,1.5+0){$\scriptstyle 3$};
\node at (0.5+0,2.5+0){$\scriptstyle 0$};
\node at (0.5+0,3.5+0){$\scriptstyle 1$};
\draw[thick, gray, dotted] (0+0,1+0)--(1+0,1+0);
\draw[thick, gray, dotted] (0+0,2+0)--(1+0,2+0);
\draw[thick, gray, dotted] (0+0,3+0)--(1+0,3+0);
\end{tikzpicture}
}
\end{align*}
As a visual aid, the statements of \cref{allaboutshapes} below may be compared against these \((\theta,a)\)-ribbons and Figure~\ref{imagnodesarr}.
\end{Example}

\subsection{Imaginary semicuspidal skew diagrams}
In this section we relate the shape of the skew multipartition \(\bnu^\theta_{\brho}\) to the shape of \(\bnu\).

\begin{Proposition}\label{allaboutshapes}
Let \(\theta\) be a residue permutation, \(\brho \in \Lambda_+^{\bkap}(\omega)\) a \((\bkap, \theta)\)-RoCK multicore, \( d \leq \textup{cap}_\delta^\theta(\brho, \bkap)\), and \(\bnu \in \Lambda^{(e,\ell)}_+(d)\).
\begin{enumerate}
\item Each \((\nu^{(a,r)})^\theta_{\brho}\) is a connected skew diagram, and \((\nu^{(a,r)})^\theta_{\brho} \NEarrow (\nu^{(a',r')})^\theta_{\brho}\) whenever \(r> r'\), or \(r=r'\) and \(a>a'\). 
\item If \(u \in \nu^{(a,r)}\), then \(u^\theta_{\brho} \subseteq (\nu^{(a,r)})^\theta_{\brho}\) and \(u^\theta_{\brho}\) is a \((\theta,a)\)-ribbon.
\item If \(u, {\tt E}u \in \nu^{(a,r)}\), and \(v\) is the northeasternmost node in \(u^\theta_{\brho}\), then \({\tt E}v\) is the southwesternmost node in \(({\tt E}u)^\theta_{\brho}\).
\item If \(u, {\tt S}u \in \nu^{(a,r)}\), and \(v\) is the southwesternmost node in \(u^\theta_{\brho}\), then \({\tt S}v\) is the northeasternmost node in \(({\tt S}u)^\theta_{\brho}\).
\item If \(u, {\tt SE}u \in \nu^{(a,r)}\), then \(({\tt SE}u)^\theta_{\brho} = {\tt SE}(u^\theta_{\brho})\).
\end{enumerate}
\end{Proposition}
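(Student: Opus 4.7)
The plan is to reduce all five parts to a single-node beta-number computation, using the defining relation (\ref{mmove}) together with the multicore and RoCK/capacity hypotheses on $\brho$. I would begin with (ii), since (i) and (iii)-(v) will follow as geometric refinements of the same analysis. Fix a node $u$ lying in row $p$ and column $q$ of $\nu^{(a,r)}$. Comparing the beta numbers of $\brho \sqcup (\textup{rect}_u)^\theta_\brho$ and $\brho \sqcup (\textup{rect}_u \setminus \{u\})^\theta_\brho$ via (\ref{mmove}), the ribbon $u^\theta_\brho$ corresponds to a single bead-move on the $\theta_{e-a}$-runner of the $r$-th component by $e$ positions, from $x := M^r_{\theta_{e-a}}(\brho, \bkap) + (q-p)e$ to $y := x + e$. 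By the description in \S\ref{ribbonshapesec}, the shape is determined by which intermediate positions $x+c$ for $c \in [1,e-1]$ lie in the pre-move beta numbers; since only the $\theta_{e-a}$-runner was altered and each such position has a residue different from $\theta_{e-a}$, this reduces to checking whether $x+c \in \B^r(\brho, \bkap)$.

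Writing $\overline{x+c} = \theta_b$ (so $c = \overline{\theta_b - \theta_{e-a}}$) and invoking the multicore identity $M^r_{\theta_b} - M^r_{\theta_{e-a}} = e \cdot h^r_{\theta_{e-a}, \theta_b} + \overline{\theta_b - \theta_{e-a}}$, one sees that $x+c \in \B^r(\brho, \bkap)$ if and only if $h^r_{\theta_{e-a}, \theta_b}(\brho, \bkap) \geq q - p$. The capacity bound $d \leq \textup{cap}_\delta^\theta(\brho, \bkap)$ gives $h^{r,\theta}_t \geq d - 1$ for every $t$, so (\ref{hconvert}) yields $h^r_{\theta_{e-a}, \theta_b} \geq d - 1$ when $b > e - a$ and $h^r_{\theta_{e-a}, \theta_b} \leq -d$ when $b < e - a$. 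Since $p \geq 1$ and $q \leq \nu^{(a,r)}_p \leq d$ imply $-d < q - p \leq d - 1$, the condition is equivalent to $b > e - a$. Thus the $a$ intermediate positions in $\B^r$ carry exactly the residues $\theta_{e-a+1}, \ldots, \theta_e$ required by \S\ref{minribs}, so $u^\theta_\brho$ is a $(\theta, a)$-ribbon; this proves (ii), as the containment $u^\theta_\brho \subseteq (\nu^{(a,r)})^\theta_\brho$ is immediate from the definition.

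Parts (iii)-(v) follow by applying this bead-move analysis to both $u$ and its neighbor, and then comparing the resulting ribbons. For instance in (iii) with ${\tt E}u = (p, q+1)$, the ribbons $u^\theta_\brho$ and $({\tt E}u)^\theta_\brho$ arise from bead-moves on the same runner over consecutive intervals $(x, x+e]$ and $(x+e, x+2e]$, which, after translating the beta-number layout into Young-diagram positions via \S\ref{ribbonshapesec}, forces the stated adjacency of the NE-most and SW-most nodes. Part (i) is then obtained by iteration: connectedness of $(\nu^{(a,r)})^\theta_\brho$ follows by scanning the nodes of $\nu^{(a,r)}$ in row-reading order and invoking (iii)-(v) at each step to link successive ribbons; the NE-ordering between $(\nu^{(a,r)})^\theta_\brho$ and $(\nu^{(a',r')})^\theta_\brho$ is automatic when $r \neq r'$, and when $r = r'$ with $a > a'$ one uses the RoCK inequality $M^r_{\theta_{e-a}}(\brho,\bkap) \leq M^r_{\theta_{e-a'}}(\brho,\bkap)$ together with the capacity bound to produce a strict NE separation.

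The main obstacle will be the bookkeeping for (i) in the same-component case: moving beads on two distinct runners $\theta_{e-a}$ and $\theta_{e-a'}$ produces skew diagrams whose mutual NE-ordering is not immediately visible at the beta-number level, and one must convert the RoCK inequalities into a genuine comparison of row and column indices in the Young diagram, with the capacity bound providing just enough slack to rule out overlap or inversion.
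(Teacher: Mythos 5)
Your proposal is correct and follows essentially the same route as the paper: part (ii) is the identical single-bead-move computation (the paper handles the cases $b>e-a$ and $b<e-a$ by two direct estimates where you instead invoke $h_{i,j}+h_{j,i}=-1$ together with the capacity bound, which is cosmetic), and your sketches of (iii)--(v) and (i) rest on the same occupancy analysis of the boundary beta-number position and the same RoCK/capacity inequalities the paper uses. The one labour-saving device you would miss is that, for the same-component case of (i), the paper first establishes that $(\nu^{(a,r)})^\theta_{\brho}$ and $(\nu^{(a',r)})^\theta_{\brho}$ are disconnected (via \cite[Proposition 7.14]{muthtiling} and \cref{combprop2}(ii)), reducing to a two-way dichotomy so that a single comparison of bottom rows suffices in place of the all-pairs coordinate bookkeeping you anticipate as the main obstacle.
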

\begin{proof}
Let \(u \in \nu^{(a,r)}\). Then
\begin{align*}
u^\theta_{\brho} = (\textup{rect}_u)^\theta_{\brho}/ (\textup{rect}_u \backslash \{u\})^\theta_{\brho} = ( \brho \sqcup (\textup{rect}_u)^\theta_{\brho})/ (\brho \sqcup (\textup{rect}_u \backslash \{u\})^\theta_{\brho})
\end{align*}
Assume that \(u\) is in the \(q\)th row and \(p\)th column of \(\textup{rect}_u\). Considering (\ref{mmove}), we have that
\begin{align*}
m^r_{\theta_{e-a}}(\brho \sqcup (\textup{rect}_u)^\theta_{\brho}, \bkap)_q \in \B^r(\brho \sqcup (\textup{rect}_u)^\theta_{\brho}, \bkap),
\qquad
\textup{and}
\qquad
m^r_{\theta_{e-a}}(\brho \sqcup (\textup{rect}_u)^\theta_{\brho}, \bkap)_q-e \notin \B^r(\brho \sqcup (\textup{rect}_u)^\theta_{\brho}, \bkap),
\end{align*}
and the \(\bkap\)-beta numbers for \(\brho \sqcup (\textup{rect}_u \backslash \{u\})^\theta_{\brho}\) are achieved by replacing the former number with the latter. 

Assume that \(z \in\mathbb{Z}\) is such that
\begin{align*}
m^r_{\theta_{e-a}}(\brho \sqcup (\textup{rect}_u)^\theta_{\brho}, \bkap)_q-e < z < m^r_{\theta_{e-a}}(\brho \sqcup (\textup{rect}_u)^\theta_{\brho}, \bkap)_q.
\end{align*}
If \(\overline{z} = \theta_b\), it follows that
\begin{align*}
z = m^r_{\theta_{e-a}}(\brho \sqcup (\textup{rect}_u)^\theta_{\brho}, \bkap)_q-e + \overline{\theta_b - \theta_{e-a}}.
\end{align*}
Let \(e-a < b\). Now, note that 
\begin{align*}
\frac{M^r_{\theta_{b}} - M^r_{\theta_{e-a}} - \overline{\theta_b - \theta_{e-a}}}{e} = \left \lfloor  \frac{M^r_{\theta_{b}} - M^r_{\theta_{e-a}}}{e}    \right \rfloor = h^r_{\theta_{e-a}, \theta_b} \geq h^{r,\theta}_{[e-a,b-1]} \geq  h_{e-a}^{r,\theta}(\brho, \bkap)   \geq \textup{cap}^\theta_\delta(\brho,\bkap) -1 \geq pq - 1,
\end{align*}
so 
\begin{align*}
M^r_{\theta_{b}} - M^r_{\theta_{e-a}} - \overline{\theta_b - \theta_{e-a}} \geq epq - e.
\end{align*}
Therefore,
\begin{align*}
z &= m^r_{\theta_{e-a}}(\brho \sqcup (\textup{rect}_u)^\theta_{\brho}, \bkap)_q -e + \overline{\theta_b - \theta_{e-a}}=m^r_{\theta_{e-a}}(\brho, \bkap)_q + e (\textup{rect}_u)^{(a,r)}_q - e + \overline{\theta_b - \theta_{e-a}}\\
&=m^r_{\theta_{e-a}}(\brho, \bkap)_q + e p - e + \overline{\theta_b - \theta_{e-a}}
\leq M^r_{\theta_{e-a}}(\brho, \bkap) + ep - e + \overline{\theta_b - \theta_{e-a}}\\
&\leq M^r_{\theta_b}(\brho, \bkap) -epq + ep 
\leq M^r_{\theta_b}(\brho, \bkap).
\end{align*}
Since \(\B^r_{\theta_b}(\brho \sqcup (\textup{rect}_u)^\theta_{\brho}, \bkap) =\B^r_{\theta_b}(\brho, \bkap) \) and \(\brho\) is a multicore, it follows that \(z \in \B^r_{\theta_b}(\brho \sqcup (\textup{rect}_u)^\theta_{\brho}, \bkap)\).

On the other hand, assume that \(b< e-a\). Then as before, 
\begin{align*}
M_{\theta_{e-a}}^r - M_{\theta_b}^r - \overline{\theta_{e-a} - \theta_b} \geq epq - e,
\end{align*}
so
\begin{align*}
m^r_{\theta_{e-a}}(\brho, \bkap)_q - m^r_{\theta_b}(\brho, \bkap)_q - \overline{\theta_{e-a} - \theta_b} \geq epq - e.
\end{align*}
Therefore
\begin{align*}
z  &= m^r_{\theta_{e-a}}(\brho \sqcup (\textup{rect}_u)^\theta_{\brho}, \bkap)_q -e + \overline{\theta_b - \theta_{e-a}}
=m^r_{\theta_{e-a}}(\brho, \bkap)_q + e (\textup{rect}_u)^{(a,r)}_q - e + \overline{\theta_b - \theta_{e-a}}\\
&=m^r_{\theta_{e-a}}(\brho, \bkap)_q + e p - e + \overline{\theta_b - \theta_{e-a}}
\geq (m^r_{\theta_b}(\brho, \bkap)_q + \overline{\theta_{e-a} - \theta_b} + epq - e)+ e p - e + \overline{\theta_b - \theta_{e-a}}\\
&=m^r_{\theta_b}(\brho, \bkap)_q + epq + ep -e
= (M^r_{\theta_b} - (q-1)e)+ epq + ep -e
> M^r_{\theta_b}. 
\end{align*}
Since \(\B^r_{\theta_b}(\brho \sqcup (\textup{rect}_u)^\theta_{\brho}, \bkap) =\B^r_{\theta_b}(\brho, \bkap) \), it follows then that \(z \notin \B^r_{\theta_b}(\brho \sqcup (\textup{rect}_u)^\theta_{\brho}, \bkap)\).

Therefore we have that \(z \in \B^r_{\theta_b}(\brho \sqcup (\textup{rect}_u)^\theta_{\brho}, \bkap)\) is such that
\begin{align*}
m^r_{\theta_{e-a}}(\brho \sqcup (\textup{rect}_u)^\theta_{\brho}, \bkap)_q-e < z < m^r_{\theta_{e-a}}(\brho \sqcup (\textup{rect}_u)^\theta_{\brho}, \bkap)_q.
\end{align*}
if and only if \(\overline{z} = \theta_b\), where \(e-a<b\). It follows then from \S\ref{minribs} that \(u^\theta_{\brho}\) is a \((\theta,a)\)-ribbon, proving (ii).

(iii) Suppose \(u, {\tt E}u\) are in the \(q\)th row of \(\nu^{(a,r)}\).
We have
\begin{align*}
u^\theta_{\brho} \sqcup ({\tt E}u)^\theta_{\brho}
=
(\textup{rect}_{{\tt E}u})^\theta_{\brho} / (\textup{rect}_{{\tt E}u}\backslash \{u, {\tt E} u\})^\theta_{\brho}
=
(\brho \sqcup (\textup{rect}_{{\tt E}u})^\theta_{\brho}) / (\brho \sqcup (\textup{rect}_{{\tt E}u}\backslash \{u, {\tt E} u\})^\theta_{\brho}).
\end{align*}
Considering (\ref{mmove}), we have that
\begin{align*}
m^r_{\theta_{e-a}}(\brho \sqcup (\textup{rect}_{{\tt E}u})^\theta_{\brho}, \bkap)_q \in \B^r(\brho \sqcup (\textup{rect}_{{\tt E}u})^\theta_{\brho}, \bkap),
\quad
\textup{and}
\quad
m^r_{\theta_{e-a}}(\brho \sqcup (\textup{rect}_{{\tt E}u})^\theta_{\brho}, \bkap)_q-2e \notin \B^r(\brho \sqcup (\textup{rect}_{{\tt E}u})^\theta_{\brho}, \bkap),
\end{align*}
and the \(\bkap\)-beta numbers \(\B^r(\brho \sqcup (\textup{rect}_{{\tt E}u} \backslash \{u, {\tt E}u\})^\theta_{\brho}, \bkap)\) are achieved from \(\B^r(\brho \sqcup (\textup{rect}_{{\tt E}u})^\theta_{\brho}, \bkap)\) by replacing the former number with the latter. 

It follows then that \(u^\theta_{\brho} \sqcup({\tt E}u)^\theta_{\brho} = (\xi_1, \dots, \xi_{2e})\) is a single ribbon (with nodes \(\xi_1, \dots, \xi_{2e}\) labeled from southwesternmost to northeasternmost) which by (ii) is the union of the two \((\theta,a)\)-ribbons \(u^\theta_{\brho}\) and \(({\tt E}u)^\theta_{\brho}\). Therefore 
\begin{align*}
\{u^\theta_{\brho}, ({\tt E}u)^\theta_{\brho}\} = \{(\xi_1, \dots, \xi_e), (\xi_{e+1}, \dots, \xi_{2e}) \}.
\end{align*}
We note moreover that 
\begin{align*}
m^r_{\theta_{e-a}}(\brho \sqcup (\textup{rect}_{{\tt E}u})^\theta_{\brho}, \bkap)_q-e \notin \B^r(\brho \sqcup (\textup{rect}_{{\tt E}u})^\theta_{\brho}, \bkap),
\end{align*}
and thus by \S\ref{ribbonshapesec}, we have that 
\begin{align}\label{e1}
\xi_{e+1} = {\tt E} \xi_{e}.
\end{align}
Therefore \((\xi_{e+1}, \dots, \xi_{2e})\) is removable in \(\xi\), and  \((\xi_{1}, \dots, \xi_e)\) is not. As \({\tt E}u\) is removable in \(\textup{rect}_{{\tt E}u}\), it follows from \cref{standkost} that \(({\tt E}u)^\theta_{\brho}\) is removable in \((\textup{rect}_{{\tt E}u})^\theta_{\brho}\), and so it must be that
\begin{align*}
u^\theta_{\brho} = (\xi_1, \dots, \xi_e), 
\qquad
\textup{and}
\qquad
({\tt E}u)^\theta_{\brho} = (\xi_{e+1}, \dots, \xi_{2e}),
\end{align*}
which in view of (\ref{e1}), establishes (iii). The proof of (iv) is similar. Full details are included in the \texttt{arXiv} version of the paper, see \S\ref{SS:ArxivVersion}. 
\begin{answer}
(iv) Suppose that \({\tt S}u\) is in the \(q\)th row of \(\nu^{(a,r)}\).
We have
\begin{align*}
u^\theta_{\brho} \sqcup ({\tt S}u)^\theta_{\brho} = 
(\textup{rect}_{{\tt S}u})^\theta_{\brho} / (\textup{rect}_{{\tt S}u}\backslash \{u, {\tt S}u\})^\theta_{\brho}
=
(\brho \sqcup (\textup{rect}_{{\tt S}u})^\theta_{\brho}) / (\brho \sqcup (\textup{rect}_{{\tt S}u}\backslash \{u, {\tt S}u\})^\theta_{\brho}).
\end{align*}
Considering (\ref{mmove}), we have that
\begin{align*}
m^r_{\theta_{e-a}}(\brho \sqcup (\textup{rect}_{{\tt S}u})^\theta_{\brho}, \bkap)_{q-1} \in \B^r(\brho \sqcup (\textup{rect}_{{\tt S}u})^\theta_{\brho}, \bkap),
\quad
\textup{and}
\quad
m^r_{\theta_{e-a}}(\brho \sqcup (\textup{rect}_{{\tt S}u})^\theta_{\brho}, \bkap)_{q-1}-2e \notin \B^r(\brho \sqcup (\textup{rect}_{{\tt S}u})^\theta_{\brho}, \bkap),
\end{align*}
and the \(\bkap\)-beta numbers \(\B^r(\brho \sqcup (\textup{rect}_{{\tt S}u} \backslash \{u, {\tt S}u\})^\theta_{\brho}, \bkap)\) are achieved from \(\B^r(\brho \sqcup(\textup{rect}_{{\tt S}u})^\theta_{\brho}, \bkap)\) by replacing the former number with the latter. 

It follows then that \(u^\theta_{\brho} \sqcup ({\tt S}u)^\theta_{\brho} = (\xi_1, \dots, \xi_{2e})\) is a single ribbon which by (ii) is the union of the two \((\theta,a)\)-ribbons \(u^\theta_{\brho}\) and \(({\tt S}u)^\theta_{\brho}\). Therefore 
\begin{align*}
\{u^\theta_{\brho}, ({\tt S}u)^\theta_{\brho}\} = \{(\xi_1, \dots, \xi_e), (\xi_{e+1}, \dots, \xi_{2e}) \}.
\end{align*}
We note moreover that 
\begin{align*}
m^r_{\theta_{e-a}}(\brho \sqcup (\textup{rect}_{{\tt S}u})^\theta_{\brho}, \bkap)_{q-1}-e  = m^r_{\theta_{e-a}}(\brho \sqcup (\textup{rect}_{{\tt S}u})^\theta_{\brho}, \bkap)_{q}\in \B^r(\brho \sqcup (\textup{rect}_{{\tt S}u})^\theta_{\brho}, \bkap),
\end{align*}
and thus by \S\ref{ribbonshapesec}, we have that 
\begin{align}\label{n1}
\xi_{e} = {\tt S}\xi_{e+1}.
\end{align} Therefore \((\xi_{1}, \dots, \xi_{e})\) is removable in \(\xi\), and  \((\xi_{e+1}, \dots, \xi_{2e})\) is not. As \({\tt S}u\) is removable in \(\textup{rect}_{{\tt S}u}\), it follows from \cref{standkost} that \(({\tt S}u)^\theta_{\brho}\) is removable in \((\textup{rect}_{{\tt S}u})^\theta_{\brho}\), and so it must be that
\begin{align*}
u^\theta_{\brho} =  (\xi_{e+1}, \dots, \xi_{2e}), 
\qquad
\textup{and}
\qquad
({\tt S}u)^\theta_{\brho} =(\xi_1, \dots, \xi_e),
\end{align*}
which, in view of (\ref{n1}), establishes (iv). 
\end{answer}

Noting that \(u^\theta_{\brho}\), \(({\tt SE}u)^\theta_{\brho}\) are both ribbons of the same shape by (ii), we have that (v) immediately follows from (iii) and (iv). As for (i), the fact that \((\nu^{(a,r)})^\theta_{\brho}\) is connected follows from (iii), (iv). If \(r > r'\), then \((\nu^{(a,r)})^\theta_{\brho} \NEarrow (\nu^{(a',r')})^\theta_{\brho}\) follows from the fact that \((\nu^{(a,r)})^\theta_{\brho} \subseteq ( \bnu^\theta_{\brho})^{(r)}\) and \((\nu^{(a',r')})^\theta_{\brho} \subseteq ( \bnu^\theta_{\brho})^{(r')}\). Assume that \(r = r'\) and \(a> a'\), and that \(\nu^{(a,r)}, \nu^{(a',r)} \neq \varnothing\). Then \(e-a < e-a'\).
The diagrams \((\nu^{(a,r)})^\theta_{\brho}, (\nu^{(a',r)})^\theta_{\brho}\) are necessarily disconnected by \cite[Proposition 7.14]{muthtiling} and \cref{combprop2}(ii), since \((\nu^{(a,r)})^\theta_{\brho}\) is tiled by \((\theta,a)\)-ribbons and \((\nu^{(a',r)})^\theta_{\brho}\) is tiled by \((\theta,a')\)-ribbons by (ii), so either 
\begin{align}\label{NEor}
(\nu^{(a,r)})^\theta_{\brho} \NEarrow (\nu^{(a',r)})^\theta_{\brho}
\qquad
\textup{or}
\qquad
(\nu^{(a',r)})^\theta_{\brho} \NEarrow (\nu^{(a,r)})^\theta_{\brho}.
\end{align}
Let \(u\) be a removable node in the \(q\)th row of \(\nu^{(a,r)}\), and let \(v\) be a removable node in the \(q'\)th row of \(\nu^{(a',r)}\).
Then, letting \(\bmu = \textup{rect}_u \sqcup \textup{rect}_{v}\),  each of \(u^\theta_{\brho},v^\theta_{\brho}\) is a removable ribbon in \(\brho \sqcup \bmu^\theta_{\brho}\) by \cref{standkost}. 
We have that
\begin{align*}
M_{\theta_{e-a}}^r(\brho, \bkap) + e \nu_q^{(a,r)} \in \B^r_{\theta_{e-a}}(\brho \sqcup \bmu^\theta_{\brho}, \bkap),
\qquad
\textup{and}
\qquad
M_{\theta_{e-a}}^r(\brho, \bkap) + e (\nu_q^{(a,r)}-1) \notin \B^r_{\theta_{e-a}}(\brho \sqcup \bmu^\theta_{\brho}, \bkap),
\end{align*}
and the \(\bkap\)-beta numbers \(\B^r((\brho \sqcup \bmu^\theta_{\brho})\backslash u^\theta_{\brho}, \bkap)\) are achieved from \(\B^r(\brho \sqcup \bmu^\theta_{\brho}, \bkap)\) by replacing the former number with the latter. 
Similarly, we have that
\begin{align*}
M_{\theta_{e-a'}}^r(\brho, \bkap) + e \nu_{q'}^{(a',r)} \in \B^r_{\theta_{e-a'}}(\brho \sqcup \bmu^\theta_{\brho}, \bkap),
\qquad
\textup{and}
\qquad
M_{\theta_{e-a'}}^r(\brho, \bkap) + e (\nu_{q'}^{(a',r)}-1) \notin \B^r_{\theta_{e-a'}}(\brho \sqcup \bmu^\theta_{\brho}, \bkap),
\end{align*}
and the \(\bkap\)-beta numbers \(\B^r((\brho \sqcup  \bmu^\theta_{\brho})\backslash v^\theta_{\brho}, \bkap)\) are achieved from \(\B^r(\brho \sqcup \bmu^\theta_{\brho}, \bkap)\) by replacing the former number with the latter. But note now that 
\begin{align*}
M^r_{\theta_{e-a}}(\brho, \bkap) + e\nu^{(a,r)}_q
&\leq
M^r_{\theta_{e-a}}(\brho, \bkap) + e \cdot \textup{cap}^\theta_\delta(\brho, \bkap)
\leq M^r_{\theta_{e-a}}(\brho, \bkap) + e (h_{\theta_{e-a}, \theta_{e-a'}}^r(\brho, \bkap) + 1)\\
&< M^r_{\theta_{e-a'}}(\brho, \bkap) + e
\leq M^r_{\theta_{e-a'}}(\brho, \bkap) + e \nu_{q'}^{(a',r)},
\end{align*}
which implies, in view of the construction of \(\bkap\)-beta numbers, that the bottom row of \(u^\theta_{\brho}\) is in a lower row of \(\brho \sqcup \bmu^\theta_{\brho}\) than the bottom row of \(v^\theta_{\brho}\). Therefore it cannot be that \(v^\theta_{\brho} \NEarrow u^\theta_{\brho}\). As \(u^\theta_{\brho} \subseteq (\nu^{(a,r)})^\theta_{\brho}\), and \(v^\theta_{\brho} \subseteq (\nu^{(a',r)})^\theta_{\brho}\), the result follows from (\ref{NEor}).
\end{proof}

\begin{Corollary}\label{thetaaribcusp}
Let \(\xi \in \N\) be a ribbon of content \(\delta\). Then \(\xi\) is cuspidal if and only if \(\xi\) is a \((\theta,a)\)-ribbon for some \(a \in [0,e-1]\).
\end{Corollary}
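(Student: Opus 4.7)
The backward direction is immediate from \cref{allaboutshapes}(ii) and \cref{standkost}. I would fix any multicharge \(\bkap\) together with a \((\bkap, \theta)\)-RoCK multicore \(\brho\) having \(\textup{cap}^\theta_\delta(\brho, \bkap) \geq 1\) (which exists by \cref{coreplus}), and for each \(a \in [0, e-1]\) take \(\bnu \in \Lambda^{(e,\ell)}_+(1)\) to be the single-node multipartition with \(u \in \nu^{(a,r)}\) for some fixed \(r\). Then \cref{allaboutshapes}(ii) identifies \(\bnu^\theta_{\brho} = u^\theta_{\brho}\) as a \((\theta, a)\)-ribbon, while \cref{standkost} gives that \(\{u^\theta_{\brho}\}\) is its unique cuspidal Kostant tiling; in particular, \(u^\theta_{\brho}\) itself is cuspidal. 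Since cuspidality (\cref{cuspdef}) is defined purely in terms of contents of nodes, it is invariant under residue-preserving translation, and since any two \((\theta, a)\)-ribbons with the same \(a\) differ only by such a translation, every \((\theta, a)\)-ribbon is cuspidal.

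For the forward direction, let \(\xi\) be a cuspidal ribbon of content \(\delta\). The strategy is to realize \(\xi\), up to residue-preserving translation, as a skew multipartition of the form \(\bnu^\theta_{\brho}\) for some \((\bkap, \theta)\)-RoCK multicore \(\brho\) and \(\bnu \in \Lambda^{(e,\ell)}_+(d)\); granted this, matching contents forces \(d = 1\). By \cref{standkost}, the unique cuspidal Kostant tiling of \(\bnu^\theta_{\brho}\) is \(\{u^\theta_{\brho} \mid u \in \bnu\}\). On the other hand, \(\xi\) being cuspidal means \(\{\xi\}\) is itself a cuspidal Kostant tiling of \(\xi\), so by the uniqueness in \cref{tilethm}(i) we conclude \(|\bnu| = 1\) and \(\xi = u^\theta_{\brho}\), which by \cref{allaboutshapes}(ii) is a \((\theta, a)\)-ribbon.

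The main obstacle is this realization step: exhibiting a level-one \(\bkap\)-core \(\rho\) that is \((\bkap, \theta)\)-RoCK with \(\textup{cap}^\theta_\delta(\rho, \bkap) \geq 1\) and such that a residue-preserving translate of \(\xi\) appears as an addable ribbon of \(\rho\). I would handle this by explicit \(\bkap\)-beta number construction as in \S\ref{addribsexp}: given the shape of \(\xi\) and its starting residue, one chooses \(\kappa\) sufficiently large and designs the bead configuration of \(\rho\) so that its runners are widely separated (guaranteeing the \((\bkap, \theta)\)-RoCK condition via \cref{RoCKcore} together with \cref{bigmultiprop}(iii), and providing ample capacity), while leaving the specific pair \((x, y)\) with \(x \in \B^1(\rho, \bkap)\) and \(y \notin \B^1(\rho, \bkap)\) that corresponds to \(\xi\). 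This is a routine, if somewhat technical, combinatorial construction whose flexibility is provided by the freedom to enlarge \(\kappa\) and to insert spacing on the runners; once it is carried out, the argument above closes the proof.
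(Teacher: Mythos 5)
Your backward direction is fine and is essentially what the paper intends: a single node \(u\in\nu^{(a,r)}\) gives \(\bnu^\theta_{\brho}=u^\theta_{\brho}\), which is a \((\theta,a)\)-ribbon by \cref{allaboutshapes}(ii) and is the unique tile of the cuspidal Kostant tiling by \cref{standkost}, hence cuspidal; translation-invariance of cuspidality then covers all \((\theta,a)\)-ribbons. (The existence of a suitable RoCK core with positive capacity is an easy beta-number construction, as the paper notes in \cref{imagshapesrem}, though \cref{coreplus} is not really the right citation for it.)

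The forward direction, however, has a genuine gap, and it sits precisely in the "realization step" you defer as routine. For a \((\bkap,\theta)\)-RoCK core \(\rho\) with \(\textup{cap}^\theta_\delta(\rho,\bkap)\geq 1\), the shape of every addable \(\delta\)-ribbon is completely rigid: such a ribbon corresponds to moving the bead \(x=M^r_{\theta_{e-a}}(\rho,\bkap)\) to the empty position \(x+e\), and since \(\rho\) is a core, the intermediate position \(x+t\) is occupied iff \(x+t\leq M^r_{\overline{x+t}}\), which the capacity condition forces to happen exactly when \(\overline{x+t}=\theta_b\) with \(b>e-a\). This is the computation in the proof of \cref{allaboutshapes}(ii), and it is why \cref{resirrlist} lists \(\Lambda^{\bkap}_{+/\rho}(\delta)\) irredundantly as \(\{u^\theta_{\rho}\}\): every addable \(\delta\)-ribbon of such a \(\rho\) is already a \((\theta,a)\)-ribbon. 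So there is no freedom to "design the bead configuration" so that a prescribed \(\xi\) becomes addable; the realization is possible if and only if \(\xi\) is a \((\theta,a)\)-ribbon, which is what you are trying to prove. Notice also that your realization step uses only the shape and starting residue of \(\xi\), never its cuspidality, so if it worked it would prove that \emph{every} \(\delta\)-ribbon is a \((\theta,a)\)-ribbon — false, since there are \(2^{e-1}\) ribbon shapes of content \(\delta\) with a given southwesternmost residue but only one \((\theta,a)\)-ribbon per such residue. The cuspidality of \(\xi\) has to do the work, and the efficient way to let it is via \cite[Theorem 6.13]{muthtiling} (restated in the paper as \cref{firstcuspconstlem}): the cuspidal ribbons of content \(\delta\) are exactly the \(\zeta(\delta,u)\), so up to residue-preserving translation there are exactly \(e\) of them, one for each residue of the southwesternmost node. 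Since the \((\theta,a)\)-ribbons for \(a\in[0,e-1]\) are pairwise non-similar (the \((\theta,a)\)-ribbon has \(a+1\) rows) and all cuspidal by your backward direction, they exhaust this list, which closes the forward direction.
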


\begin{Remark}
	Suppose that one adds $d$ addable $\delta$-ribbons 
	to $\brho\in\Lambda_+^{\bkap}(\omega)$. Observe from \cref{allaboutshapes}(ii) that if $d\leqslant\textup{cap}^\theta_\delta(\omega, \bkap)$, then each of the $\delta$-ribbons is precisely one of the distinct \((\theta,a)\)-ribbons defined above. However, if $d>\textup{cap}^\theta_\delta(\omega, \bkap)$, then the addable $\delta$-ribbons need not be \((\theta,a)\)-ribbons.
\end{Remark}

\subsection{Gelfand--Graev Words}\label{subsec:ggwords}
Let \(\bb_{(\theta, a)}\) be the leading row word for a \((\theta,a)\)-ribbon, so:
\begin{align*}
\bb_{(\theta,a)} :=(\overline{ \phi_a^2 + 1} \dots  \overline{ \phi_a} )( \overline{ \phi_a^3 + 1}\dots  \overline{ \phi_a^2} ) \dots 
( \overline{ \phi_a^{a+1} + 1} \dots \overline{ \phi_a^{a}} )( \overline{ \phi_a + 1} \dots  \overline{ \phi_a^{a+1}} ) \in I^\delta.
\end{align*}
For a word \(\bi = i_1 \dots i_m \in I^m\), \(n \in \mathbb{Z}_{>0}\), and a partition \(\lambda = (\lambda_1, \lambda_2, \ldots ) \vdash n\), we will write
\begin{align*}
\bi^{(n)} &:= i_1^n \dots i_m^n \in I^{nm};
\qquad
\textup{and}
\qquad
\bi^{\lambda} := \bi^{(\lambda_1)} \bi^{(\lambda_2)} \ldots \in I^{nm}.
\end{align*}
Then, for \(\bnu = (\nu^{(t,r)})_{t \in [0,e-1], r \in [1,\ell]} \in \Lambda^{(e,\ell)}_+(d)\), we define the {\em Gelfand--Graev word}:
\begin{align*}
\bb^{\bnu}_{\theta} :=
(\bb_{(\theta, 0)}^{\nu^{(0,1)}} \dots \bb_{(\theta, e-1)}^{\nu^{(e-1,1)}} ) 
(\bb_{(\theta, 0)}^{\nu^{(0,2)}} \dots \bb_{(\theta, e-1)}^{\nu^{(e-1,2)}} )
\dots
 (\bb_{(\theta, 0)}^{\nu^{(0,\ell)}} \dots \bb_{(\theta, e-1)}^{\nu^{(e-1,\ell)}} ) \in I^{d\delta}.
\end{align*}
We note that Gelfand--Graev words were introduced in \cite{km17} to construct Gelfand--Graev representations analogous to those introduced in \cite{bdk01}.

\begin{Lemma}\label{reswords}
Let \(\ell = 1\), \(\bkap = 0\), and \(\brho = \rho\) be a \(\bkap\)-core. Let \(\bnu, \bmu \in \Lambda^{(e,1)}_+(d)\), with \(\nu^{(0,1)} = \mu^{(0,1)} = \varnothing\).
Then \(\bb_\theta^{\bnu}\) is a word in \(\bnu^\theta_{\brho}\), and if \(\bb_\theta^{\bnu}\) is a word in \(\bmu^\theta_{\brho}\) then \(\bmu \trianglerighteq \bnu\).
\end{Lemma}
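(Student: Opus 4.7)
The plan for part (1) is to construct an explicit standard tableau ${\tt t} \in \Std(\bnu^\theta_\brho)$ with residue sequence $\bi^{\tt t} = \bb_\theta^\bnu$. By \cref{standkost}, the unique cuspidal Kostant tiling of $\bnu^\theta_\brho$ is $\{u^\theta_\brho : u \in \bnu\}$, and each $u^\theta_\brho$ with $u \in \nu^{(a,1)}$ is a $(\theta,a)$-ribbon by \cref{allaboutshapes}(ii). Since the subdiagrams $(\nu^{(a,1)})^\theta_\brho$ for varying $a$ are mutually NE-SW separated by \cref{allaboutshapes}(i), I label them in the order $a = 0, 1, \dots, e-1$. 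Within each $(\nu^{(a,1)})^\theta_\brho$ I process the rows of $\nu^{(a,1)}$ in order, and within each row $q$ the $k := \nu_q^{(a,1)}$ ribbons $u_1^\theta_\brho, \dots, u_k^\theta_\brho$ are chained NE-SW by \cref{allaboutshapes}(iii)--(iv). I assign labels so that for each letter position $j \in [1,e]$ of $\bb_{(\theta,a)}$ (read in leading row order), all $k$ ribbons get their $j$th position labeled before advancing to position $j+1$. The standard tableau condition reduces to checking that the $j$th positions of chained ribbons are pairwise incomparable in the $\searrow$ order, which follows from the explicit $(\theta,a)$-ribbon coordinates from \S\ref{minribs} combined with \cref{allaboutshapes}(iii)--(v). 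By construction, the residue word produced on row $q$ of $\nu^{(a,1)}$ is $\bb_{(\theta,a)}^{(\nu_q^{(a,1)})}$, so the total word is $\bb_\theta^\bnu$.

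For part (2), suppose ${\tt t}$ is a standard tableau of $\bmu^\theta_\brho$ with $\bi^{\tt t} = \bb_\theta^\bnu$. For each $a \in [0,e-1]$, set $N_a := e\sum_{b \leq a}|\nu^{(b,1)}|$ and consider $\bxi_a := \textup{sh}^\downarrow_{N_a}({\tt t}) \subseteq \bmu^\theta_\brho$, which is a skew multipartition of content $(N_a/e)\delta$ (as $\brho \sqcup \bxi_a$ is closed under $\searrow$-predecessors in $\brho \sqcup \bmu$). By \cref{resirrlist}, there is a unique $\bmu_a \in \Lambda^{(e,1)}_+(N_a/e)$ with $\bxi_a = (\bmu_a)^\theta_\brho$. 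I argue by induction on $a$ that $\bmu_a \trianglerighteq \bnu|_a$, where $\bnu|_a := (\nu^{(0,1)} \mid \cdots \mid \nu^{(a,1)} \mid \varnothing \mid \cdots \mid \varnothing)$, and moreover that $\mu_a^{(b,1)} = \varnothing$ for $b > a$. The inductive step uses that the first $N_a$ letters of $\bb_\theta^\bnu$ form $\bb_\theta^{\bnu|_a}$, so the restriction of ${\tt t}$ to $\bxi_a$ is a standard tableau of $(\bmu_a)^\theta_\brho$ with residue word $\bb_\theta^{\bnu|_a}$. Combining the inductive hypothesis for $\bmu_{a-1}$ with a direct row-by-row analysis of the ``new'' block $\bxi_a \setminus \bxi_{a-1}$ (whose residues form $\bb_{(\theta,a)}^{\nu^{(a,1)}}$ and populate $(\theta,a)$-ribbons by \cref{allaboutshapes}(ii)) yields the desired dominance. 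Taking $a = e-1$ gives $\bmu = \bmu_{e-1} \trianglerighteq \bnu$.

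The main obstacle is establishing the claim $\mu_a^{(b,1)} = \varnothing$ for $b > a$ -- that is, the initial segment $\bxi_a$ of the tableau does not populate any $(\theta,c)$-ribbon with $c > a$. This amounts to showing that $\bb_\theta^{\bnu|_a}$, a concatenation of $\bb_{(\theta,b)}$-blocks with $b \leq a$, cannot serve as the residue sequence of a standard tableau involving a $(\theta,c)$-ribbon for $c > a$. Proving this requires a careful analysis of $(\theta,a)$-ribbon residue patterns from \S\ref{minribs}: the key point is that the ``row-jump'' sequence $(\phi_c^1, \phi_c^2, \dots, \phi_c^{c+1})$ encoded in $\bb_{(\theta,c)}$ distinguishes $(\theta,c)$-ribbons from $(\theta,b)$-ribbons with $b < c$. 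The hypothesis $\nu^{(0,1)} = \mu^{(0,1)} = \varnothing$ eliminates the degenerate $(\theta,0)$ case, whose single-row ribbon carries a residue word too flexible to support this rigidity.
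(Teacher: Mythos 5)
Your part (1) is essentially the paper's own construction: the paper also realises $\bb_\theta^{\bnu}$ by interleaving the row-leading tableaux of the chained $(\theta,a)$-ribbons along each row of $\nu^{(a,1)}$ (phrased as an induction on stripping the last row, but the content is the same), so that half is fine.

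Part (2) has a genuine gap. You correctly isolate the crux --- that the initial segment $\bxi_a$ cannot populate any component $\mu_a^{(b,1)}$ with $b>a$ --- but you do not prove it; ``a careful analysis of the row-jump sequences'' is a gesture, not an argument, and this is precisely the hard step. The paper's proof of the analogous fact peels off the \emph{last} row of the \emph{last} nonempty component of $\bnu$, so that the relevant piece of the word is a single pure block $\bb_{(\theta,a)}^{(k)}$. Within such a block every occurrence of the residue $\overline{\theta_{e-a}}$ precedes every occurrence of $\overline{\theta_{e-a}+1}$, while by \S\ref{minribs} any $(\theta,x)$-ribbon with $x>a$ contains a node of residue $\overline{\theta_{e-a}}$ directly \emph{south} of a node of residue $\overline{\theta_{e-a}+1}$; a standard tableau must read the southern node later, which is impossible. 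This forbidden-adjacency argument is what kills the higher components (and also forces the component-$a$ part to be a union of disconnected row segments, which is what drives the dominance comparison). Your front-to-back, component-at-a-time induction cannot invoke this argument: the segment $\bb_\theta^{\bnu|_a}$ is a concatenation of blocks $\bb_{(\theta,b)}^{\nu^{(b,1)}}$ with $b\le a$, and the required global ordering of residues fails across block boundaries --- already within one component, a $\overline{\theta_{e-j}+1}$ coming from the $\nu^{(b,1)}_1$-block precedes a $\overline{\theta_{e-j}}$ coming from the $\nu^{(b,1)}_2$-block. To repair this you would have to refine your induction to single rows (and then the order in which rows are peeled matters for the dominance bookkeeping, which you also leave as ``a direct row-by-row analysis''); at that point you are reconstructing the paper's proof rather than giving an alternative to it.
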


\begin{proof}
Let \(\bnu, \bmu \in \Lambda^{(e,1)}_+(d)\), with \(\nu^{(0,1)} = \mu^{(0,1)} = \varnothing\). Assume \(\bb_\theta^{\bnu}\) is a word in \(\bmu^\theta_{\brho}\).
Let \(k = \nu^{(a,1)}_m\) be the length of the lowest nonempty row in \(\bnu\). Let \(\bnu'\) be the multipartition achieved by deleting this row. Then we have that \(\bb_\theta^{\bnu} = \bb_\theta^{\bnu'}
\bb_{(\theta,a)}^{(k)}
\).
Then there exists a skew tableau \((\balpha', \balpha'')\) for \(\bmu_{\brho}^\theta\), where \(\bb_\theta^{\bnu'}\) is a word in \(\balpha'\) and \(\bb_{(\theta,a)}^{(k)}\) is a word in \(\balpha''\). As 
\(\bb^{\bnu'}_\theta \in I^{(d-k)\delta}\), \(\bb_{(\theta,a)}^{(k)} \in I^{k\delta}\), and \(\bmu^\theta_{\brho} \in \Lambda_{+/+}(k\delta)_{\approx \delta}\)
by \cref{combprop2}, it follows that each of \(\balpha', \balpha''\) is a union of tiles in \(\Gamma_{\bmu^\theta_{\brho}}\), and hence
there is some \(\bmu' \subset \bmu\) such that \((\bmu')^\theta_{\brho} = \balpha'\) and \((\bmu/\bmu')^\theta_{\brho} = \balpha''\). 
Thus \(\bb_\theta^{\bnu'}\) is a word in \((\bmu')^\theta_{\brho}\) and
\(\bb_{(\theta,a)}^{(k)}\) is a word in \((\bmu/\bmu')^\theta_{\brho}\). We may assume by induction then that \(\bmu'  \trianglerighteq \bnu'\).

By \cref{allaboutshapes}(i) we have that
\begin{align*}
((\bmu/\bmu')^{(e-1)})^\theta_{\brho} \; \NEarrow \cdots \NEarrow ((\bmu/\bmu')^{(a)})^\theta_{\brho}\; \NEarrow \cdots \NEarrow ((\bmu/\bmu')^{(1)})^\theta_{\brho}.
\end{align*}
Note that we have \(\bar{\phi}_a = \bar{\theta}_{e-a}\), so in the word \(\bb_{(\theta,a)}^{(k)}\), all (\(\overline{\theta}_{e-a}\))s appear to the left of all (\(\overline{\theta_{e-a} +1}\))s. 
Therefore 
since \(\bb_{(\theta,a)}^{(k)}\) is a word in \((\bmu/\bmu')^\theta_{\brho}\), there can be no nodes with residue \(\overline{\theta}_{e-a}\) directly south of a node with residue \(\overline{\theta_{e-a} +1}\) in \((\bmu/\bmu')^\theta_{\brho}\). But when \(a+1 \leq x \leq e-1\), a \((\theta,x)\)-ribbon has the node with residue \(\overline{\theta}_{e-a}\) south of the node with residue \(\overline{\theta_{e-a} + 1}\). Therefore by \cref{allaboutshapes}(ii) we have \((\bmu/\bmu')^{(a+1)} = \dots = (\bmu/\bmu')^{(e-1)} = \varnothing\). Next, note that if there are nodes \(u, {\tt S}u \in (\bmu/\bmu')^{(a)}\), then by \cref{allaboutshapes}(iv) we have a node in \(\bmu/\bmu'\) with residue \(\overline{\theta}_{e-a}\) south of a node with residue \(\overline{\theta_{e-a} + 1}\). Therefore \((\bmu/\bmu')^{(a)}\) may consist only of disconnected row segments. 

We note the following facts:
(i) \(|\bnu| = |\bmu|\);
(ii) \(|\bmu'|  = |\bnu'|\);
(iii) \(\bmu'  \trianglerighteq \bnu'\);
(iv) \(\bnu\) is equal to the union of \(\bnu'\) with a row segment in the \(a\)th component;
(v) \((\bmu/\bmu')^{(a+1)} = \dots = (\bmu/\bmu')^{(e-1)} = \varnothing\); and
(vi) \((\bmu/\bmu')^{(a)}\) consists only of disconnected row segments.
Combining these, we see that \(\bmu  \trianglerighteq \bnu\).

Finally, note that by \cref{allaboutshapes} the diagram \((\bnu/\bnu')^\theta_{\brho}\) is a ribbon tiled by \((\theta,a)\)-ribbons (each tile of which consists of more than one row), in such a way that the northeasternmost node (of residue \(\overline{\theta_{e-a}}\)) in each tile is west of the southwesternmost node (of residue \(\overline{\theta_{e-a} +1}\)) in the next tile to the northeast.
Since the row-leading standard tableau for each tile has content sequence \(\bb^{(1)}_{(\theta,a)}\), it follows that one may interleave the row-leading standard tableaux for each tile to form a standard tableau for  \((\bnu'/\bnu)^\theta_{\brho}\) with content sequence \(\bb^{(k)}_{(\theta,a)}\).
Therefore, proceeding by an inductive claim on \((\bnu')^\theta_{\brho}\), we have that  \(\bb_\theta^{\bnu} = \bb_\theta^{\bnu'}
\bb_{(\theta,a)}^{(k)}
\) is a word in \(\bnu^\theta_{\brho}\).
\end{proof}

\section{KLR algebras}\label{cycKLRsec}

We orient the edges in the Dynkin diagram for \({\tt A}_{e-1}^{(1)}\) (Figure~\ref{fig:dynkin}), setting \(\alpha_{i} \to \alpha_{\overline{i+1}}\) for \(i \in \Z_e\).

\subsection{KLR algebra}\label{subsec:KLR}
Fix a field \(\k\) of characteristic \(p \geq 0\). Let \(\beta \in \ZZ_{\geq 0}I\), and set \(\height(\beta) = m\). As in \cite{kl09, Rouq}, the KLR (or {\em quiver Hecke}) algebra (of type \({\tt A}_{e-1}^{(1)}\)) is the unital \(\Z\)-graded \(\k\)-algebra \(R_\beta\) generated by
\begin{align*}
\{1_\bi \mid \bi \in I^\beta\} \cup \{y_1, \dots, y_m\} \cup \{\psi_1, \dots, \psi_{m-1}\}, 
\end{align*}
subject to the relations:
\begin{align*}
1_{\bi} 1_{\bj} &= \delta_{\bi, \bj} 1_{\bi};
\qquad
\sum_{\bi \in I^\beta}1_{\bi} = 1;
\qquad
y_r 1_{\bi} = 1_{\bi} y_r;
\qquad
y_r y_s = y_s y_r;
\qquad
\psi_r 1_{\bi} = 1_{s_r \bi} \psi_r;
\end{align*}
\begin{align*}
\psi_r y_s = y_s \psi_r \;(\textup{if } s \neq r, r+1);
\qquad
\psi_r \psi_s = \psi_s \psi_r \;(\textup{if } |r-s|>1);
\end{align*}
\begin{align*}
\psi_r y_{r+1} 1_{\bi} = (y_r \psi_r + \delta_{i_r, i_{r+1}})1_{\bi};
\qquad
y_{r+1} \psi_r 1_{\bi} = (\psi_r y_r + \delta_{i_r, i_{r+1}})1_{\bi};
\end{align*}
\begin{align*}
\psi_r^2 1_\bi
=
\begin{cases}
0 & \textup{if }i_r = i_{r+1};\\
(y_{r} - y_{r+1})1_{\bi} & \textup{if } i_r \rightarrow i_{r+1} ;\\
(y_{r+1} - y_r)1_{\bi} & \textup{if }i_r \leftarrow i_{r+1} ;\\
-(y_{r+1} - y_r)^21_{\bi} & \textup{if } i_r \leftrightarrows i_{r+1};\\
1_\bi & \textup{otherwise};
\end{cases}
\end{align*}
\begin{align*} 
(\psi_{r+1} \psi_r \psi_{r+1} - \psi_r \psi_{r+1} \psi_r )1_\bi &= 
\begin{cases}
1_\bi & \textup{if } i_{r+2}  = i_r \rightarrow i_{r+1};\\
-1_\bi & \textup{if }  i_{r+2} = i_r \leftarrow i_{r+1};\\
(-y_r + 2y_{r+1} - y_{r+2})1_{\bi} & \textup{if }  i_{r+2} = i_r \leftrightarrows i_{r+1};\\
0  &\textup{otherwise}.
\end{cases}
\end{align*}
The \(\Z\)-grading is given by:
\begin{align*}
\deg_\Z(1_\bi) = 0;
\qquad
\deg_\Z(y_r) = 2;
\qquad
\deg_\Z(\psi_r 1_\bi) = 
\begin{cases}
-2 & \textup{if } i_r = i_{r+1};\\
1 & \textup{if }i_{r} \to i_{r+1} \textup{ or }i_{r} \leftarrow i_{r+1};\\
2 & \textup{if }i_{r} \leftrightarrows i_{r+1};\\
0 & \textup{otherwise}.
\end{cases}
\end{align*}

There exists a unique anti-isomorphism \(j_\beta\) of \(R_\beta\) which fixes the KLR generators.
For a left \(R_\beta\)-module, \(M\), we let \(^{j_\beta}M\) denote the right-module isomorphic to \(M\) as a vector space, with \(R_\beta\)-action twisted by \(j_\beta\).

To each \(\sigma \in \mathfrak{S}_m\), we may associate a choice of distinguished reduced expression \(\sigma = s_{j_1} \dots s_{j_k}\) for some \(j_1, \dots, j_k \in [1,m-1]\), and define an associated element \(\psi_\sigma := \psi_{j_1} \dots \psi_{j_k} \in R_\beta\). For a standard tableau \({\tt t}\) we will write \(\psi^{\tt t}:=\psi_{w^{\tt t}}\), where \(w^{\tt t}\) is as defined in \S\ref{subsubsec:Snaction}.

There is an embedding
\[
\iota_{\omega,\beta}: R_\omega \otimes R_\beta \longhookrightarrow 1_{\omega, \beta} R_{\omega + \beta} 1_{\omega, \beta} \subseteq R_{\omega + \beta},
\]
where
\[
1_{\omega, \beta} := \sum_{\bi \in I^\omega, \bj \in I^\beta} 1_{\bi \bj}.
\]

\subsubsection{Special choices of reduced expressions} For \(r,b \in \ZZ_{\geq 0}\), we now define a special choice of reduced expressions for elements of \(\mathfrak{S}_{r+b}\), as follows. First, fix reduced expression choices for \(\mathfrak{S}_r, \mathfrak{S}_b\). Then fix reduced espressions for \({}^{r,b}\mathscr{D}\), the set of minimal right coset representatives for \((\mathfrak{S}_r \times \mathfrak{S}_b) \backslash \mathfrak{S}_{r+b}\). Now, for every \(\sigma \in \mathfrak{S}_{r,b}\), we may write \(\sigma = (\sigma_2, \sigma_3)\sigma_1\), where \(\ell(\sigma) = \ell(\sigma_1) + \ell(\sigma_2) + \ell(\sigma_3)\), and 
\begin{align*}
\sigma_1 = s_{j_1} \dots s_{j_{k}} \in {}^{r,b}\mathscr{D}, \qquad \sigma_2 = s_{j'_1} \dots s_{j'_{k'}} \in \mathfrak{S}_r, \qquad \sigma_3 = s_{j''_1} \dots s_{j''_{k''}} \in \mathfrak{S}_b,
\end{align*}
with pre-chosen fixed reduced expressions as shown. Then fix the following associated reduced expression for \(\sigma\):
\begin{align*}
\sigma = (s_{j'_1} \dots s_{j'_{k'}})(s_{j''_1+r} \dots s_{j''_{k''}+r})(s_{j_1} \dots s_{j_{k}} ).
\end{align*}
We say that reduced expressions for \(\mathfrak{S}_r, \mathfrak{S}_b, \mathfrak{S}_{r+b}\) chosen in this fashion are {\em \((r,b)\)-adapted}.

Assume that \((r,b)\)-adapted reduced expressions have been chosen, and let \(\omega, \beta \in \ZZ_{\geq 0}I\) with \(\height(\omega) = r\), \(\height(\beta) = b\). It follows then that
\begin{align}\label{rbpsi}
\psi^{{\tt T}{\tt t}} = \iota_{\omega,\beta} (\psi^{\tt T} \otimes \psi^{\tt t}) \psi^{{\tt T}^{\brho} {\tt t}^{\blam/\brho}} \in R_{\omega + \beta}.
\end{align}
for all \(\brho \in \Lambda_+^{\bkap}(\omega)\), \(\blam/\brho \in \Lambda^{\bkap}_{+/\brho}(\beta)\), \({\tt T} \in \Std(\brho)\), \({\tt t} \in \Std(\blam/\brho)\),
recalling that ${\tt T}^{\brho}$ and ${\tt t}^{\blam/\brho}$ are the row-leading tableaux of ${\brho}$ and ${\blam/\brho}$, respectively.

\begin{Theorem}\cite[Theorem 2.5]{kl09}, \cite[Theorem 3.7]{Rouq}\label{klrbasis}
For \(\beta \in \ZZ_{\geq0}I\) of height \(m\), the following is a \(\k\)-basis for \(R_\beta\):
\begin{align*}
\{\psi_\sigma y_1^{c_1} \dots y_m^{c_m} 1_\bi \mid \sigma \in \mathfrak{S}_m, c_1, \dots, c_m \in \ZZ_{\geq 0}, \bi \in I^\beta\}.
\end{align*}
\end{Theorem}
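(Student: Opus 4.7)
The plan is to establish spanning and linear independence of the given set separately, both following the classical approach of Khovanov--Lauda and Rouquier.

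For spanning, I would proceed by induction on the total degree in the $\psi$'s and $y$'s of an arbitrary word in the generators. The key observation is that the relations
\[
\psi_r y_s = y_s \psi_r \quad (s \neq r, r+1), \qquad \psi_r y_{r+1} 1_\bi = (y_r \psi_r + \delta_{i_r,i_{r+1}})1_\bi, \qquad y_{r+1} \psi_r 1_\bi = (\psi_r y_r + \delta_{i_r,i_{r+1}})1_\bi
\]
allow one to move all $y$-generators to the right of all $\psi$-generators, at the cost of producing error terms with strictly fewer $\psi$'s. This reduces us to linear combinations of elements of the form $(\text{word in }\psi_r\text{'s}) \cdot y_1^{c_1}\cdots y_m^{c_m} 1_\bi$. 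Then the quadratic relation for $\psi_r^2$ and the cubic braid relation both rewrite $\psi$-words into the straightened form $\psi_\sigma$ (for the fixed choice of reduced expression), again producing error terms of strictly smaller $\psi$-length (with possibly extra $y$'s). Induction on $(\ell(\sigma),\text{polynomial degree})$ then completes the spanning argument.

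For linear independence, I would construct a faithful polynomial representation on the vector space
\[
P_\beta := \bigoplus_{\bi \in I^\beta} \k[y_1, \dots, y_m] \otimes 1_\bi,
\]
where $1_\bj$ acts as the projection onto the $\bj$-summand, each $y_r$ acts by multiplication, and each $\psi_r$ acts via a residue-dependent twisted divided-difference operator of Demazure type:
\[
\psi_r \cdot (f \otimes 1_\bi) \;=\; Q_{i_r,i_{r+1}}(y_r,y_{r+1})\cdot\partial_r(f)\otimes 1_{s_r\bi} \;+\;\text{(error term if } i_r=i_{r+1}),
\]
where $\partial_r$ is the usual Newton divided difference and $Q_{i,j}(u,v)$ encodes the quiver data (vanishing when $i=j$, linear in the single-arrow cases, and quadratic in the double-arrow case, as dictated by the $\psi_r^2$ relation). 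After verifying that the KLR relations hold on $P_\beta$, linear independence of the claimed basis follows by showing that the action of $\psi_\sigma y_1^{c_1}\cdots y_m^{c_m} 1_\bi$ on $1\otimes 1_\bi$ produces a polynomial in the $\bi$-isotypic component (or a permuted component) whose leading monomial, with respect to a suitable lex order refined by $\ell(\sigma)$, uniquely determines the triple $(\sigma, (c_1,\dots,c_m), \bi)$. The triangularity of the action on a well-chosen basis of $P_\beta$ then forces linear independence in $R_\beta$.

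The main obstacle is the verification that the polynomial representation respects the braid-type cubic relation
\[
(\psi_{r+1}\psi_r\psi_{r+1} - \psi_r\psi_{r+1}\psi_r)1_\bi,
\]
with its delicate case-by-case right-hand side depending on the triple $(i_r, i_{r+1}, i_{r+2})$. This requires a careful combinatorial calculation with the divided-difference operators, making essential use of the identities satisfied by the polynomials $Q_{i,j}$ and exploiting cancellations that appear only after symmetrizing over the three adjacent simple transpositions. The quadratic $\psi_r^2$ relation and the disjoint-commutation relations are comparatively routine once the $Q_{i,j}$'s are pinned down. A secondary (but minor) subtlety is checking that the straightening in the spanning argument is independent of the chosen reduced expression for each $\sigma$ modulo lower-order terms; this is automatic once linear independence is in hand, but must be handled delicately during the spanning induction itself.
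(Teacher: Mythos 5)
This statement is quoted in the paper directly from Khovanov--Lauda and Rouquier with no proof supplied, and your outline correctly reproduces the standard argument from those references: spanning by straightening $y$'s past $\psi$'s and reducing $\psi$-words modulo lower length, and linear independence via the faithful polynomial representation with twisted divided-difference operators. Your sketch matches the cited proof in approach and identifies the genuinely delicate point (verifying the braid relation in the polynomial representation), so there is nothing to add.
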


\subsection{Cyclotomic KLR algebras}\label{subsec:cycKLR}
Let \(\Lambda = (L_0, L_1, \dots, L_{e-1}) \in \ZZ_{\geq 0}^{e}\). We define the {\em cyclotomic KLR algebra} \(R^\Lambda_\beta\) to be the quotient of \(R_\beta\) by the two-sided ideal generated by the elements
\begin{align}\label{cycdef}
\{ y_1^{L_{i_1}}1_\bi \mid \bi \in I^\beta\}.
\end{align}

\subsection{Skew Specht modules}\label{Spechtsec} 
All \(R_\beta\)-modules that we work with in this paper are graded, and all module homomorphisms will be (not necessarily degree 0) graded maps of graded modules. As we are not primarily concerned with grading shifts or graded decomposition numbers in this paper, we use the notation \([M:L] \in \ZZ_{\geq 0}\) to indicate the ungraded multiplicity of a simple module \(L\) as a factor of a module \(M\). We write \(M \cong N\) to indicate that two \(R_\beta\)-modules are isomorphic, and \(M \approx N\) to indicate that they are isomorphic up to some grading shift.

Let \(\beta \in \ZZ_{\geq 0}I\), and let \(\btau \in \Lambda^\ell(\beta)\) be a skew diagram. We define the {\em (row) skew Specht module} \(\zS^{\btau}\) to be the graded \(R_\beta\)-module generated by the vector \(v^{\btau}\) in degree zero, and subject to the following relations.
\begin{enumerate}
\item \(1_{\bi} v^{\btau} = \delta_{\bi, \bi^{\btau}} v^{\btau}\) for all \(\bi \in I^\beta\);
\item \(y_r v^{\btau} = 0 \) for all \(r \in [1,\textup{ht}(\beta)]\);
\item \(\psi_r v^{\btau} = 0 \) for all \(r \in [1, \textup{ht}(\beta)-1]\) such that \({\tt E}{\tt t}^{\btau}(r) = {\tt t}^{\btau}(r+1)\);
\item \(g^{u} v^{\btau} = 0\) for all \(u \in \btau\) such that \({\tt S}u \in \btau\).
\end{enumerate}
The element \(g^u \in R_\beta\) above is the {\em Garnir element}, see \cite[\S5]{KMR}, \cite[\S4]{muthskew}. The description of this element is rather technical, and not needed for the purposes of the paper, so we refer the reader to the above papers for the definition. The next lemma follows immediately from the defining relations of skew Specht modules.

\begin{Lemma}\label{simSpechts} Recalling \cref{similaritysec}, 
let \(\btau \in \Lambda^{\ell}(\omega)\) and \(\btau' \in \Lambda^{\ell'}(\omega)\) be such that \(\btau \sim \btau'\). Then we have an isomorphism of the associated skew Specht modules, \(\zS^{\btau} \cong \zS^{\btau'}\).
\end{Lemma}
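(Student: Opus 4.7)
The plan is to exhibit mutually inverse $R_\omega$-homomorphisms between $\zS^{\btau}$ and $\zS^{\btau'}$ by sending the cyclic generators $v^{\btau} \leftrightarrow v^{\btau'}$, appealing to the cyclic presentation provided by relations (i)--(iv) in \S\ref{Spechtsec}. The entire task reduces to checking that the defining data on the two sides agree: namely, the residue sequence $\bi^{\btau}$, the set of indices $r$ where ${\tt E}{\tt t}^{\btau}(r) = {\tt t}^{\btau}(r+1)$, and the family of Garnir elements $\{g^u \mid u \in \btau,\ {\tt S}u \in \btau\}$.

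First I would reduce to the case $\btau = \textup{dist}(\btau)$ and $\btau' = \textup{dist}(\btau')$. This is because $\btau$ and $\textup{dist}(\btau)$ have the same connected components in the same NE-to-SW order, differing only by the ambient labelling of the components of $\N_\ell$ (and possibly some empty components). Since the row-leading tableau visits connected components in that order, and within each connected component proceeds row-by-row without reference to the ambient component index, all three pieces of data defining the presentation of $\zS^{\btau}$ coincide with those defining $\zS^{\textup{dist}(\btau)}$. Next, with both $\btau = (\phi_1 \mid \dots \mid \phi_m)$ and $\btau' = (\phi'_1 \mid \dots \mid \phi'_m)$ distilled and each $\phi'_i$ a residue-preserving translation of $\phi_i$, translations preserve both residues and the local E- and S-adjacency of nodes, so once again $\bi^{\btau} = \bi^{\btau'}$, the loci where relation (iii) fires match up, and the Garnir relation at each $u \in \btau$ corresponds to that at the image node in $\btau'$.

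The one point deserving care is the claim that the Garnir elements $g^u$ match on the two sides. This rests on the fact that $g^u$ is constructed (see \cite[\S5]{KMR}, \cite[\S4]{muthskew}) as a product of KLR generators $\psi$ and $y$ indexed by a permutation and positions determined entirely by the residues and relative positions of the nodes of $\btau$ in the rows containing $u$ and ${\tt S}u$---and these two rows lie in a single connected component, whose internal structure is preserved by a residue-preserving translation. Once this is noted, the defining presentations for $\zS^{\btau}$ and $\zS^{\btau'}$ are literally identical, so the universal property yields the desired isomorphism. The hardest part is therefore purely bookkeeping around the Garnir relations; no genuinely new argument is required beyond unpacking the relevant definitions.
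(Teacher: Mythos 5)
Your proposal is correct and is essentially the argument the paper intends: the paper dismisses this lemma as following "immediately from the defining relations," and your write-up simply makes explicit that the presentation data (residue sequence of the leading tableau, the E-adjacency locus for relation (iii), and the Garnir elements) are invariant under distillation and residue-preserving translation of connected components. No divergence from the paper's approach, just a more careful unpacking of it.
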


Now we make a remark about grading shifts.
Assume \(\btau \sim \blam/\bmu \in \Lambda^{\bkap}_{+/+}\) for some multicharge \(\bkap\). 
Then we set
\begin{align*}
S^{\blam/\bmu} := \zS^{\btau} \langle \textup{deg}_{\blam/\bmu}({\tt t}^{\blam / \bmu})\rangle,
\end{align*} 
where \(\deg_{\blam/\bmu}\) is a combinatorial degree function defined in \cite{bkw11} for Specht modules, and \cite[\S2.6]{muthskew} for skew Specht modules. Then the generator \(v^{\blam/\bmu}\) of \(S^{\blam/\bmu}\) is in degree \(\textup{deg}_{\blam/\bmu}({\tt t}^{\blam / \bmu})\).

\begin{Remark}
Note that \(S^{\blam /\bmu}\) and \(\zS^{\btau}\) are isomorphic up to grading shift, and the choice of a skew multipartition which realizes \(\btau\) defines a choice of grading shift for \(\zS^{\btau}\). We remark that \(S^{\blam/\bmu}\), defined for a {\em skew multipartition} \(\blam/\bmu\), is exactly the skew Specht module defined in \cite{muthskew}, while our degree-shifted \(\zS^{\btau}\) allows us to define skew Specht modules for any {\em skew diagram} \(\btau\) without reference to an associated skew multipartition, which we make use of in \cref{cuspsyssec,sec:simples}. For any skew diagram \(\btau\), there always exists some multipartition \(\blam/\bmu \in \Lambda^{\bkap}_{+/+}\) such that \(\btau \sim \blam/\bmu\), so standard results on skew Specht modules (like the following basis result) may freely be applied to \(\zS^{\btau}\), modulo some grading shift.
\end{Remark}

\begin{Proposition}\label{Spechtbasis}
\cite[Corollary 6.24]{KMR}
\cite[Theorem 5.1]{muthskew}
Let \(\beta \in \ZZ_{\geq 0}I\), \(\blam/\bmu \in \Lambda^{\bkap}_{+/+}(\beta)\). Then
\(S^{\blam/\bmu}\) has a homogeneous \(\k\)-basis 
\begin{align*}
\{v^{\tt t} := \psi^{\tt t} v^{\blam/\bmu}  = 1_{\bi^{\tt t}}  \psi^{\tt t} v^{\blam/\bmu}  \mid {\tt t} \in \Std(\blam/\bmu)\},
\quad
\text{where}\
\deg_{\ZZ}(v^{\tt t}) = \deg_{\blam/\bmu}({\tt t}).
\end{align*}
\end{Proposition}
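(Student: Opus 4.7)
The plan is to reduce the statement directly to the cited results of \cite{KMR} and \cite{muthskew}. Since we have defined \(\zS^{\btau}\) for an arbitrary skew diagram \(\btau\) and then set \(S^{\blam/\bmu}\) to be the appropriately shifted version of \(\zS^{\btau}\) for a skew multipartition representative, it suffices to establish the statement for \(S^{\blam/\bmu}\) on its own terms and then observe that the grading shift by \(\deg_{\blam/\bmu}({\tt t}^{\blam/\bmu})\) correctly places each basis element in the asserted internal degree.

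First, I would verify the spanning claim. Because \(S^{\blam/\bmu}\) is cyclically generated by \(v^{\blam/\bmu}\), any element has the form \(r\cdot v^{\blam/\bmu}\) for some \(r\in R_\beta\). Using the basis of \(R_\beta\) from \cref{klrbasis}, one reduces to elements of the form \(\psi_\sigma y_1^{c_1}\dots y_m^{c_m}1_{\bi^{\blam/\bmu}}v^{\blam/\bmu}\). The relations \(y_rv^{\blam/\bmu}=0\) and \(1_{\bi}v^{\blam/\bmu}=\delta_{\bi,\bi^{\blam/\bmu}}v^{\blam/\bmu}\) then kill all terms with \(c_s>0\); commuting the remaining \(y\)'s past the \(\psi_\sigma\) using the nilHecke-type relations and repeatedly applying the Garnir relations \(g^u v^{\blam/\bmu}=0\) (together with the relations \(\psi_r v^{\blam/\bmu}=0\) for \(r\) in appropriate positions) one can then rewrite any \(\psi_\sigma v^{\blam/\bmu}\) as a \(\k\)-linear combination of the \(v^{\tt t}\) with \({\tt t}\in\Std(\blam/\bmu)\). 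This reduction procedure is precisely the one carried out in \cite[\S5]{KMR} in the multipartition case and extended to skew multipartitions in \cite[\S4,\S5]{muthskew}, so I would cite those arguments rather than rerun the bookkeeping.

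The main obstacle, as usual for Specht module basis theorems, is linear independence of the \(v^{\tt t}\). I would invoke \cite[Corollary 6.24]{KMR} in the non-skew case and \cite[Theorem 5.1]{muthskew} in the genuinely skew case, which supply this independence; the standard strategy there is to construct, via an explicit action on the corresponding cyclotomic Hecke algebra Specht module after the Brundan--Kleshchev isomorphism, a linearly independent image for the \(v^{\tt t}\), thereby bounding \(\dim S^{\blam/\bmu}\) from below by \(|\Std(\blam/\bmu)|\), while the spanning argument supplies the matching upper bound.

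Finally, for the grading assertion, I would observe that \(v^{\blam/\bmu}\) sits in degree \(\deg_{\blam/\bmu}({\tt t}^{\blam/\bmu})\) by our shift convention, and that the degree function \(\deg_{\blam/\bmu}\) of \cite{bkw11,muthskew} satisfies the recursion \(\deg_{\blam/\bmu}(s_r{\tt t})=\deg_{\blam/\bmu}({\tt t})+\deg_{\ZZ}(\psi_r1_{\bi^{\tt t}})\) whenever \(s_r{\tt t}\in\Std(\blam/\bmu)\) and \(\ell(s_rw^{\tt t})>\ell(w^{\tt t})\). Applying this recursion along the fixed reduced expression of \(w^{\tt t}\) gives \(\deg_{\ZZ}(v^{\tt t})=\deg_{\blam/\bmu}({\tt t})\), as required. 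The identity \(v^{\tt t}=1_{\bi^{\tt t}}\psi^{\tt t}v^{\blam/\bmu}\) is then immediate from \(\psi^{\tt t}1_{\bi^{\blam/\bmu}}=1_{\bi^{\tt t}}\psi^{\tt t}\) and \(1_{\bi^{\blam/\bmu}}v^{\blam/\bmu}=v^{\blam/\bmu}\).
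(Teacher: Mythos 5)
Your proposal is correct and in substance coincides with what the paper does: the paper gives no proof of this proposition, simply attributing it to \cite[Corollary 6.24]{KMR} and \cite[Theorem 5.1]{muthskew}, and your argument likewise defers the two essential points (the straightening/spanning reduction and the linear independence of the \(v^{\tt t}\)) to exactly those references. The additional remarks on the grading shift and the identity \(v^{\tt t}=1_{\bi^{\tt t}}\psi^{\tt t}v^{\blam/\bmu}\) are routine and consistent with the conventions set up in \cref{Spechtsec}.
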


Recall the notion of \(\bkap\)-separability from \cref{kapsepdef}. The following decomposition lemma will be important.

\begin{Lemma}\label{decompSkew}
Let \(\omega, \beta \in \ZZ_{\geq 0}I\), with \(\height(\omega) = r, \height(\beta) = b\). Assume that \((\omega, \beta)\) is \(\bkap\)-separated, and let \(\brho \in \Lambda_+^{\bkap}(\omega)\), \(\blam/\brho \in \Lambda^{\bkap}_{+/\brho}(\beta)\). Assume that \((r,b)\)-adapted reduced expressions have been used to define \(\psi_\sigma\), for \(\sigma \in \mathfrak{S}_r, \mathfrak{S}_b, \mathfrak{S}_{r+b}\). 
Then there is an isomorphism of \(\Z\)-graded left (\(R_\omega \otimes R_\beta\))-modules given by:
\begin{align*}
G^{\blam/\brho}: S^{\brho} \boxtimes S^{\blam/\brho}  \bijection  1_{\omega, \beta} S^{\blam},
\qquad
v^{\tt T} \boxtimes v^{\tt t} \longmapsto v^{\tt Tt},
\end{align*}
where $\tt T\in\Std(\brho)$, $\tt t\in\Std(\bla/\brho)$ and $\tt T \tt t\in\Std(\bla)$.
\end{Lemma}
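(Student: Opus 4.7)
The plan is to construct the map as a well-defined $R_\omega \otimes R_\beta$-module homomorphism and verify bijectivity by a basis comparison, with $\bkap$-separability entering crucially to match the bases. I would work through three phases: a basis identification, construction of the map via the universal property of Specht modules, and a grading check.

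First, I would use $\bkap$-separability to show that every standard tableau ${\tt S} \in \Std(\blam)$ with $1_{\omega,\beta} v^{\tt S} = v^{\tt S}$ is of the form ${\tt Tt}$ for unique ${\tt T} \in \Std(\brho)$ and ${\tt t} \in \Std(\blam/\brho)$. The condition $1_{\omega,\beta} v^{\tt S} = v^{\tt S}$ is equivalent to $\bnu := \operatorname{sh}^\downarrow_r {\tt S}$ having content $\omega$, and $\bnu$ is automatically a multipartition in $\Lambda_+^{\bkap}(\omega)$ contained in $\blam$. If $\bnu \neq \brho$, then $\bmu := \brho \cap \bnu$ and $\bxi := \brho \cup \bnu$ satisfy $\bmu \subsetneq \brho \subsetneq \bxi \subseteq \blam$ with $0 \neq \cont(\brho/\bmu) = \cont(\bxi/\brho) \subseteq \beta$, contradicting $\bkap$-separability of $(\omega, \beta)$. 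Combined with \cref{Spechtbasis}, this gives a bijection between the $\k$-bases $\{v^{\tt T} \boxtimes v^{\tt t}\}$ of $S^{\brho} \boxtimes S^{\blam/\brho}$ and $\{v^{\tt Tt}\}$ of $1_{\omega,\beta} S^{\blam}$.

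Second, I plan to construct the map by setting $w := v^{{\tt T}^{\brho} {\tt t}^{\blam/\brho}} \in 1_{\omega,\beta} S^{\blam}$ and checking that $w$ satisfies all the defining relations of the generator $v^{\brho} \boxtimes v^{\blam/\brho}$ of $S^{\brho} \boxtimes S^{\blam/\brho}$, pulled back along $\iota_{\omega,\beta}$. These split into $R_\omega$-relations acting on the first $r$ strands and $R_\beta$-relations on the last $b$ strands: the idempotent relation follows from $\bi^{{\tt T}^{\brho} {\tt t}^{\blam/\brho}} = \bi^{\brho}\cdot\bi^{\blam/\brho}$; the $y$-relations and row-$\psi$ relations follow by commuting the relevant generators past $\psi^{{\tt T}^{\brho} {\tt t}^{\blam/\brho}}$ using the KLR relations and $y_i v^{\blam} = 0$; and the Garnir relations at nodes of $\brho$ or of $\blam/\brho$ reduce, via standard straightening arguments as in \cite{KMR, muthskew}, to Garnir relations already satisfied inside $S^{\brho}$ or $S^{\blam/\brho}$. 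The universal property of $S^{\brho} \boxtimes S^{\blam/\brho}$ then yields an $R_\omega \otimes R_\beta$-module homomorphism $\tilde G: S^{\brho} \boxtimes S^{\blam/\brho} \to 1_{\omega,\beta} S^{\blam}$ with $v^{\brho} \boxtimes v^{\blam/\brho} \mapsto w$. Combining $\tilde G$ with the $(r,b)$-adapted decomposition (\ref{rbpsi}) yields
\[
\tilde G(v^{\tt T} \boxtimes v^{\tt t}) = \iota_{\omega,\beta}(\psi^{\tt T} \otimes \psi^{\tt t}) \cdot \psi^{{\tt T}^{\brho} {\tt t}^{\blam/\brho}} v^{\blam} = \psi^{{\tt T}{\tt t}} v^{\blam} = v^{{\tt T}{\tt t}},
\]
so $\tilde G$ coincides with the claimed $G^{\blam/\brho}$ on the matched bases from Phase 1 and is therefore bijective. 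Grading compatibility follows from additivity of the BKW-type combinatorial degree along tableau concatenation, $\deg_{\blam}({\tt T}{\tt t}) = \deg_{\brho}({\tt T}) + \deg_{\blam/\brho}({\tt t})$, which is standard for the shift convention (cf.\ \cite{bkw11, muthskew}) used to normalize $S^{\brho}$, $S^{\blam/\brho}$, and $S^{\blam}$.

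The main obstacle will be Phase 2, and specifically the Garnir verification. The $y$- and row-$\psi$ checks are direct KLR computations; the Garnir check is more delicate because straightening a Garnir element of $\brho$ inside $S^{\blam}$ initially produces correction terms indexed by arbitrary tableaux of shape $\blam$. However, the starting vector lies in $1_{\omega,\beta} S^{\blam}$ and the action preserves this truncation, so by Phase 1 all correction terms decompose over vectors $v^{{\tt T}'{\tt t}'}$ of the split form. This allows the straightening inside $S^{\blam}$ to be matched term-by-term with the corresponding straightening inside $S^{\brho} \boxtimes S^{\blam/\brho}$, which is the deeper place where $\bkap$-separability is used.
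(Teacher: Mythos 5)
Your overall strategy is sound and your Phase 1 is exactly right: the observation that $1_{\omega,\beta}v^{\tt S}\neq 0$ forces $\operatorname{sh}^\downarrow_r{\tt S}=\brho$, via the identity $\cont(\brho/(\brho\cap\bnu))=\cont((\brho\cup\bnu)/\brho)$ and \cref{kapsepdef}, is precisely how separability is used in the paper. Where you diverge is in how the module map itself is obtained. The paper does not build the homomorphism from generators and relations; it invokes the filtration theorem \cite[Theorem~5.13]{muthskew}, which says that $1_{\omega,\beta}S^{\blam}$ has a multiplicity-free filtration by \emph{all} modules $S^{\bnu}\boxtimes S^{\blam/\bnu}$ with $\bnu\in\Lambda_+^{\bkap}(\omega)$, $\bnu\subseteq\blam$. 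Separability then shows $\brho$ is the only admissible $\bnu$, so the filtration has a single layer and the isomorphism is immediate; the explicit formula on basis vectors is read off from \cite[\S5.2]{muthskew} together with \cref{Spechtbasis} and (\ref{rbpsi}). Your route is more self-contained in spirit, but it amounts to reproving the one-layer case of that filtration theorem by hand.

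The genuine gap is in your Phase 2 Garnir verification, and your own diagnosis of it contains a circularity. To show $\iota_{\omega,\beta}(g^u\otimes 1)\,v^{{\tt T}^{\brho}{\tt t}^{\blam/\brho}}=0$ for a Garnir node $u$ of $\brho$, you propose to expand the result in the basis $\{v^{{\tt T}'{\tt t}'}\}$ of $1_{\omega,\beta}S^{\blam}$ and ``match term-by-term with the corresponding straightening inside $S^{\brho}\boxtimes S^{\blam/\brho}$.'' But knowing that the coefficients of the expansion in $1_{\omega,\beta}S^{\blam}$ agree with those of $g^uv^{\brho}$ expanded in $S^{\brho}$ is equivalent to knowing that $\iota_{\omega,\beta}(R_\omega\otimes 1)$ acts on the $v^{{\tt T}{\tt t}}$ through its action on the $v^{\tt T}$ --- which is exactly the statement you are trying to prove. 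Phase 1 only matches the underlying vector spaces; it says nothing about compatibility of the actions. Closing this requires either a genuine computation (commuting $g^u$ past $\psi^{{\tt T}^{\brho}{\tt t}^{\blam/\brho}}$ and controlling the error terms, which is the technical heart of \cite[Theorem~5.13]{muthskew}) or an appeal to that theorem, at which point one may as well follow the paper's shorter argument. As written, your proof is a correct plan with its hardest step asserted rather than established.
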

\begin{proof}
By \cite[Theorem 5.13]{muthskew}, \(1_{\omega, \beta}S^{\blam}\) has a multiplicity-free filtration by all (\(R_\omega \otimes R_\beta\))-modules of the form \(S^{\bnu} \boxtimes S^{\blam/\bnu}\), where \(\bnu \in \Lambda_+^{\bkap}(\omega)\). Then \(S^{\brho} \boxtimes S^{\blam/\brho}\) appears once in this filtration. If \(\blam \supseteq \brho' \in \Lambda^{\bkap}_+(\omega)\) is another multipartition with \(\brho' \neq \brho\), we would have \(\cont(\brho / (\brho \cap \brho')) = \cont((\brho \cup \brho')/\brho)\), a contradiction of the fact that \((\omega, \beta)\) is \(\bkap\)-separable. Hence there are no other layers in the filtration, which yields the isomorphism.
The explicit description of the map follows from \cite[\S5.2]{muthskew}, \cref{Spechtbasis}, and (\ref{rbpsi}).
\end{proof}

\subsubsection{Cellular algebras}\label{celldef} Following \cite{GL98, hm10}, let \(A\) be a \(\Z\)-graded \(\k\)-algebra that is free of finite rank as a \(\k\)-module. A {\em graded cell datum} for \(A\) is an ordered tuple \((\mathcal{P}, T, \textup{inv}, C, \deg_\Z)\), where \((\mathcal{P}, \trianglerighteq)\) is a poset, \(T(\lambda)\) is a finite set for \(\lambda \in \mathcal{P}\), \(\textup{inv}:A \xrightarrow{\sim} A^\op\) is a \(\k\)-algebra isomorphism,
\begin{align*}
C: \coprod_{\lambda \in \mathcal{P}} T(\lambda) \times T(\lambda) \longrightarrow A; \qquad \deg_\Z: \coprod_{\lambda \in \mathcal{P}} T(\lambda) \longrightarrow \Z
\end{align*}
are maps (with \(C\) injective) such that:
\begin{enumerate}
\item[(C1)] The set \(\{ c^{\lambda}_{{\tt s}, {\tt t}} := C({\tt s}, {\tt t})\mid {\tt s}, {\tt t} \in T(\lambda) \textup{ for } \lambda \in \mathcal{P}\}\) is a homogeneous \(\k\)-basis for \(A\).
\item[(C2)] For all \(\lambda \in \mathcal{P}\), \({\tt s}, {\tt t} \in T(\lambda)\), and \(a \in A\), there  exist scalars \(l^\lambda_{{\tt x}, {\tt s}}(a)\), such that
\begin{align*}
ac^\lambda_{{\tt s}, {\tt t}}  = \sum_{{\tt x} \in T(\lambda)} l^\lambda_{{\tt x}, {\tt s}}(a) c^\lambda_{{\tt x}, {\tt t}} \quad (\textup{mod } A^{\triangleright \lambda}),
\end{align*}
for all \({\tt t} \in T(\lambda)\), where
\(
A^{\triangleright \lambda} = \k \{ c^\mu_{{\tt a}, {\tt b}} \mid \mu \triangleright \lambda, {\tt a}, {\tt b} \in T(\mu)\}\) is a two-sided ideal in \(A\).
\item[(C3)] For all \(\lambda \in \mathcal{P}\), \({\tt s}, {\tt t} \in T(\lambda)\), we have \(\textup{inv}(c^\lambda_{{\tt s}, {\tt t}}) = c^\lambda_{{\tt t},{\tt s}}\quad (\textup{mod } A^{\triangleright \lambda})\).
\item[(CG)] For all \(\lambda \in \mathcal{P}\), \({\tt s}, {\tt t} \in T(\lambda)\), we have \(\deg_\Z(c^\lambda_{{\tt s}, {\tt t}}) = \deg_\Z({\tt s}) + \deg_\Z({\tt t})\).
\end{enumerate}
If such a graded cell datum exists for \(A\), we say that \(A\) is a {\em graded cellular algebra}, with {\em graded cellular basis} \(\{c_{{\tt s}, {\tt t}}^\lambda\}\).

\begin{Remark}
Our (C3) is slightly weaker than the requirement in \cite{GL98,hm10} that \(\textup{inv}(c^\lambda_{{\tt s}, {\tt t}}) = c^\lambda_{{\tt t},{\tt s}}\) on the nose. We make use of this weakened axiom, as in  \cite[\S2.2]{GG11}, where it is explained that the results of \cite{GL98} nonetheless remain valid in this setting.
\end{Remark}

\begin{Theorem}\label{cellHM}
\cite[Theorem~5.4 and Corollary~5.11]{hm10}
The cyclotomic KLR algebra \(R^\Lambda_\beta\) is a graded cellular algebra, with graded cell datum given by:
\begin{align*}
\mathcal{P} &= (\Lambda_+^{\bkap}(\beta), \trianglerighteq^D);\qquad
T(\blam) = \Std(\blam); \qquad
\textup{inv}= j_\beta,
\end{align*}
\begin{align*}
C({\tt S}, {\tt T}) = c_{{\tt S}, {\tt T}}^{\blam} :=  1_{\bi^{\tt S}} \psi_{{\tt S}} c^{\blam} j_\beta(\psi_{{\tt T}}) 1_{\bi^{\tt T}};
\qquad
\deg_\Z({\tt S}) = \deg_{\blam}({\tt S}),
\end{align*}
for certain explicit monomials \(  c^{\blam} \in \k[y_1, \dots, y_{\textup{ht}(\beta)}] \in R^{\Lambda}_{\beta}\).
The simple \(R^\Lambda_\beta\)-modules, denoted \(D^{\bmu}\), with respect to this cellular structure are indexed by \emph{Kleshchev multipartitions}, and we denote the set of all such multipartitions by \(\Lambda^{\bkap}_+(\omega + \beta)'\).
\end{Theorem}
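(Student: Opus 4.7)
The plan is to construct the proposed cellular basis explicitly and verify the cellular axioms (C1)--(C3), (CG) in turn, following the broad strategy of Hu--Mathas. A key input is the Brundan--Kleshchev isomorphism identifying \(R^\Lambda_\beta\) with the cyclotomic Hecke (Ariki--Koike) algebra \(H^\Lambda_\beta\), which supplies the graded dimension \(\sum_{\blam \in \Lambda_+^{\bkap}(\beta)} |\Std(\blam)|^2\) (independent of the field \(\k\)). This dimension count reduces the basis property to a spanning argument.

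I would start by defining the monomial \(c^{\blam} \in \k[y_1, \dots, y_{\height(\beta)}]\) as a product of powers of \(y_r\) with exponents determined by the multicharge \(\bkap\) and the combinatorics of addable nodes of \(\blam\); the purpose of \(c^{\blam}\) is to be annihilated by (or to send into more dominant cells) any standard tableau of shape \(\bmu \not \trianglerighteq^D \blam\) carrying residue sequence \(\bi^{\blam}\). Then set \(c^{\blam}_{\tt S, T} := 1_{\bi^{\tt S}} \psi_{\tt S} c^{\blam} j_\beta(\psi_{\tt T}) 1_{\bi^{\tt T}}\), after fixing reduced expressions for \(\psi_{\tt S}, \psi_{\tt T}\). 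One then shows that an arbitrary element of the PBW-type KLR basis of \(R_\beta\) provided by \cref{klrbasis} descends in \(R^\Lambda_\beta\) to a \(\k\)-linear combination of such \(c^{\blam}_{\tt S, T}\)'s modulo the putative cell ideal \(R^\Lambda_\beta{}^{\triangleright^D \blam}\), by induction on dominance order, using the cyclotomic relations (\ref{cycdef}) to eliminate high powers of \(y_1\). Combined with the dimension count, this yields (C1). Axiom (CG) follows immediately from the grading conventions for the KLR generators and the combinatorial formula for \(\deg_\blam\). Axiom (C3) is essentially formal: since \(j_\beta\) is an anti-involution fixing every KLR generator, applying it to \(c^{\blam}_{\tt S, T}\) interchanges \(\psi_{\tt S}\) with \(j_\beta(\psi_{\tt T})\), producing \(c^{\blam}_{\tt T, S}\) on the nose, with any discrepancy arising from choices of reduced expressions lying in \(R^\Lambda_\beta{}^{\triangleright^D \blam}\).

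The main obstacle is axiom (C2): showing that left multiplication by any KLR generator \(1_\bi\), \(y_r\), or \(\psi_r\) on \(c^{\blam}_{\tt S, T}\) can be expanded as a \(\k\)-linear combination of elements \(c^{\blam}_{\tt X, T}\) modulo \(R^\Lambda_\beta{}^{\triangleright^D \blam}\), with scalars depending only on \(\tt S\) and the generator (not on \(\tt T\)). This requires a delicate analysis of how the nilHecke-type relations and the cyclotomic relations propagate past \(c^{\blam}\), and how error terms produced by commuting \(\psi\)'s and \(y\)'s through \(c^\blam\) either reorganize into other \(c^{\blam}_{\tt X, T}\) or land in the ideal \(R^\Lambda_\beta{}^{\triangleright^D \blam}\). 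In particular, one has to show that the naive expression \(\psi_r \psi_{\tt S}\), when rewritten via reduced expressions for a new standard tableau \(\tt X\), incurs only dominance-higher corrections --- this is the crux of the combinatorial argument, and is precisely where the bulk of the technical work of \cite{hm10} is concentrated.
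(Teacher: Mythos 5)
The paper offers no proof of this statement: it is quoted verbatim from Hu--Mathas \cite[Theorem~5.4, Corollary~5.11]{hm10}, and your outline is a faithful summary of exactly the strategy of that source --- dimension count via the Brundan--Kleshchev isomorphism and the Dipper--James--Mathas basis of the Ariki--Koike algebra, the monomial \(c^{\blam}=y^{\blam}\) built from addable-node data, spanning by downward induction on dominance, and the two-sided-ideal property (C2) as the technical core. So as far as the cellular-basis part of the statement goes, you are proposing the same route the paper implicitly relies on, and your treatment of (C3) is consistent with the weakened axiom the paper adopts following \cite{GG11}.

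One clause of the statement is not addressed by your proposal: the assertion that the nonzero quotients \(D^{\bmu}\) of the cell modules are indexed precisely by the \emph{Kleshchev} multipartitions. General cellular-algebra theory only tells you that the simples are indexed by the set of \(\blam\) for which the cellular bilinear form on \(S^{\blam}\) is nonzero; identifying that set with the Kleshchev multipartitions is a separate theorem (Ariki's classification, recovered in the graded setting by Brundan--Kleshchev and in \cite[Corollary~5.11]{hm10}), and it does not follow from the axioms (C1)--(C3), (CG) or from the dimension count. If you intend your argument to establish the full statement, you would need to import that classification explicitly.
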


Again, we will not need the specific definition of the \( c^{\blam}\)s, so we refer the reader to \cite{hm10} for a complete description.
Importantly, the {\em cell modules} corresponding to the cellular algebra structure are exactly the graded Specht modules of \S\ref{Spechtsec}, which yields the following pertinent fact.

\begin{Lemma}\label{transSpecht}
For all \(\blam \in \Lambda_+^{\bkap}(\nu)\), there is an isomorphism of \((R^{\Lambda}_\nu, R^\Lambda_\nu)\)-bimodules:
\begin{align*}
F^{\blam}: (R^{\Lambda}_\nu)^{\trianglerighteq^D\blam}/ (R^{\Lambda}_\nu)^{\triangleright^D\blam}
\bijection
S^{\blam} \otimes {}^{j_\nu}S^{\blam},
\qquad
c^{\blam}_{{\tt S}, {\tt T}} \longmapsto v^{\tt S} \otimes v^{\tt T}.
\end{align*}
\end{Lemma}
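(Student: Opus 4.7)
The plan is to read the isomorphism off directly from the graded cellular algebra structure provided by \cref{cellHM}, using that the Specht modules $S^{\blam}$ are precisely the cell modules. First I would verify that $F^\blam$ is a well-defined $\k$-linear isomorphism: by axioms (C1) and (C2) of \S\ref{celldef}, the subspaces $(R^\Lambda_\nu)^{\trianglerighteq^D \blam}$ and $(R^\Lambda_\nu)^{\triangleright^D \blam}$ are two-sided ideals whose difference has $\k$-basis $\{c^\blam_{{\tt S},{\tt T}} \mid {\tt S}, {\tt T} \in \Std(\blam)\}$, while by \cref{Spechtbasis} the tensor product $S^\blam \otimes {}^{j_\nu}S^\blam$ has $\k$-basis $\{v^{\tt S} \otimes v^{\tt T}\}$ with matching degrees by axiom (CG), so the assignment $c^\blam_{{\tt S},{\tt T}} \mapsto v^{\tt S} \otimes v^{\tt T}$ is an isomorphism of graded vector spaces.

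Next I would check left $R^\Lambda_\nu$-linearity. For $a \in R^\Lambda_\nu$, axiom (C2) says
\[
a \cdot c^\blam_{{\tt S},{\tt T}} \equiv \sum_{{\tt X} \in \Std(\blam)} l^\blam_{{\tt X},{\tt S}}(a)\, c^\blam_{{\tt X},{\tt T}} \pmod{(R^\Lambda_\nu)^{\triangleright^D \blam}},
\]
and the cell-module construction of \cite{GL98,hm10} defines the action on $S^\blam$ by exactly the same scalars, namely $a \cdot v^{\tt S} = \sum_{\tt X} l^\blam_{{\tt X},{\tt S}}(a) v^{\tt X}$. Hence $F^\blam(a \cdot c^\blam_{{\tt S},{\tt T}}) = (a \cdot v^{\tt S}) \otimes v^{\tt T} = a \cdot F^\blam(c^\blam_{{\tt S},{\tt T}})$.

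For right $R^\Lambda_\nu$-linearity — which will be the step requiring the most care, because of the twist by $j_\nu$ — I would apply $\mathrm{inv} = j_\nu$ to reduce to the left-module case. By axiom (C3), $j_\nu(c^\blam_{{\tt S},{\tt T}}) \equiv c^\blam_{{\tt T},{\tt S}} \pmod{(R^\Lambda_\nu)^{\triangleright^D \blam}}$, so for $b \in R^\Lambda_\nu$,
\[
c^\blam_{{\tt S},{\tt T}} \cdot b \equiv j_\nu\bigl(j_\nu(b) \cdot c^\blam_{{\tt T},{\tt S}}\bigr) \equiv \sum_{\tt Y} l^\blam_{{\tt Y},{\tt T}}(j_\nu(b))\, c^\blam_{{\tt S},{\tt Y}} \pmod{(R^\Lambda_\nu)^{\triangleright^D \blam}}.
\]
On the other side, the right-module structure on ${}^{j_\nu}S^\blam$ is defined by $v^{\tt T} \cdot b := j_\nu(b) \cdot v^{\tt T} = \sum_{\tt Y} l^\blam_{{\tt Y},{\tt T}}(j_\nu(b)) v^{\tt Y}$, so both sides match under $F^\blam$.

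Combining these three points, $F^\blam$ is a homogeneous bimodule isomorphism, completing the proof. The main (minor) obstacle is bookkeeping around (C3), which in our weakened form holds only modulo the higher-order ideal; however, since we are already working in the quotient by $(R^\Lambda_\nu)^{\triangleright^D \blam}$, this congruence is precisely what is needed.
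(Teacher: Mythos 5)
Your proof is correct and is precisely the standard Graham--Lehrer bimodule argument that the paper invokes implicitly: the paper gives no explicit proof, simply noting that the cell modules for the Hu--Mathas cellular structure of \cref{cellHM} are the graded Specht modules, from which the lemma follows. Your careful handling of the weakened axiom (C3) modulo $(R^\Lambda_\nu)^{\triangleright^D\blam}$ and of the $j_\nu$-twist on the right factor is exactly what is needed, so nothing is missing.
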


\subsection{Core cyclotomic Hecke algebras}
\begin{Proposition}\label{coresimple}
Let \(\omega \in \ZZ_{\geq 0}I\), and let \(\brho \in \Lambda^{\bkap}_+(\omega)\) be a \(\bkap\)-core.  The cyclotomic KLR algebra \(R^\Lambda_{\omega}\) is a simple algebra, and \(R^\Lambda_{\omega} \cong \End_{\k}(S^{\brho})\) as \(\ZZ\)-graded \(\k\)-algebras.
\end{Proposition}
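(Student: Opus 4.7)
The plan is to use the cellular structure of $R^\Lambda_\omega$ from \cref{cellHM} to show first that the Specht module $S^\brho$ is a full faithful representation, and then pass to its endomorphism algebra by a dimension count.

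Since $\brho$ is a $\bkap$-core, by definition $\Lambda_+^{\bkap}(\omega) = \{\brho\}$, so in the cellular basis $\{c^\blam_{{\tt S},{\tt T}}\}$ of \cref{cellHM} only the single multipartition $\blam = \brho$ contributes. This gives $\dim R^\Lambda_\omega = |\Std(\brho)|^2$, and by \cref{Spechtbasis} the same integer equals $(\dim S^\brho)^2 = \dim \End_\k(S^\brho)$. Moreover, the cell ideal at $\brho$ is the whole algebra and the ideal above $\brho$ is zero, i.e.\ $(R^\Lambda_\omega)^{\trianglerighteq^D\brho} = R^\Lambda_\omega$ and $(R^\Lambda_\omega)^{\triangleright^D\brho} = 0$, so \cref{transSpecht} yields an isomorphism of graded $(R^\Lambda_\omega, R^\Lambda_\omega)$-bimodules
\[
F^{\brho}: R^\Lambda_\omega \bijection S^{\brho} \otimes {}^{j_\omega}\!S^{\brho}.
\]

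Next, consider the natural graded algebra homomorphism $\phi: R^\Lambda_\omega \to \End_\k(S^\brho)$ afforded by left multiplication on $S^\brho$. This is degree-preserving, since an element of $R^\Lambda_\omega$ of degree $d$ acts on $S^\brho$ by a degree-$d$ endomorphism. The key step is injectivity: if $a \in R^\Lambda_\omega$ lies in $\ker \phi$, then $a \cdot S^\brho = 0$; however restricting $F^\brho$ to the left-module structure exhibits the regular module $R^\Lambda_\omega$ as a direct sum of copies of $S^\brho$, so $a \cdot S^\brho = 0$ forces $a \cdot R^\Lambda_\omega = 0$, and applying this to the identity gives $a = 0$. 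Combined with the dimension equality $\dim R^\Lambda_\omega = \dim \End_\k(S^\brho)$ established above, $\phi$ is a graded isomorphism.

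Finally, simplicity of $R^\Lambda_\omega$ is now automatic, since $\End_\k(S^\brho)$ is a full matrix algebra over $\k$ (and $S^\brho$ is nonzero, containing the generator $v^\brho$). I do not anticipate a serious obstacle: the argument is essentially the standard cellular-algebra fact that a cellular algebra with a single cell is Morita trivial, specialised here to a bimodule isomorphism already packaged in \cref{transSpecht}. The only point requiring care is to track grading shifts so that $\phi$ is genuinely an isomorphism of $\ZZ$-graded algebras rather than merely up to shift, which is ensured because left multiplication preserves the grading and $F^\brho$ is a graded bimodule map.
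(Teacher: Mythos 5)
Your argument is correct and rests on exactly the same foundation as the paper's proof, namely that $\Lambda_+^{\bkap}(\omega)=\{\brho\}$ makes $R^\Lambda_\omega$ a graded cellular algebra with a single cell. The paper simply cites Evseev's work for the fact that a unital cellular algebra with one cell is split simple with the cell module as its unique simple, whereas you supply that step directly via the bimodule isomorphism of \cref{transSpecht}, faithfulness of $S^{\brho}$, and the dimension count $|\Std(\brho)|^2=\dim\End_{\k}(S^{\brho})$ — a harmless and complete expansion of the same idea.
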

\begin{proof}
As \(\brho\) is a \(\bkap\)-core, \(\Lambda_+^{\bkap}(\omega) = \{\brho\}\), and thus \(R^\Lambda_{\omega}\) is a cellular algebra with one cell by \cref{cellHM}. Thus, as noted in the proof of \cite[Proposition 4.9]{evseevrock}, we have that \(R^\Lambda_{\omega}\) is a split simple algebra, and the cell module \(S^{\brho}\) is the unique simple \(R^\Lambda_{\omega}\)-module, completing the proof. 
\end{proof}

\section{Skew cyclotomic KLR algebras}\label{skewcycsec1}

In this section we introduce $\omega$-skew cyclotomic KLR algebras, arising as quotients of affine KLR algebras, and explore their properties, in particular their connection with RoCK blocks.
	
\subsection{Truncations and cyclotomic quotient maps}

We continue with \(\omega, \beta \in \ZZ_{\geq 0}I\) and a fixed multicharge \(\bkap\) of level \(\ell\).

Define the map \(\pi^\Lambda_{\omega, \beta}: R_\omega \otimes R_\beta \to 1_{\omega, \beta} R_{\omega + \beta}^\Lambda 1_{\omega, \beta}\) as the composition of the following embedding and projection maps
\begin{align*}
\pi^\Lambda_{\omega, \beta}:R_\omega \otimes R_\beta \xrightarrow{ \iota_{\omega, \beta}} 1_{\omega, \beta} R_{\omega + \beta} 1_{\omega, \beta} \xrightarrow{\pi^\Lambda_{\omega + \beta}} 1_{\omega, \beta} R_{\omega + \beta}^\Lambda 1_{\omega, \beta},
\end{align*}
and define the algebra
\begin{align*}
R_{\omega, \beta}^{\Lambda} := (R_\omega \otimes R_\beta) / \ker(\pi^\Lambda_{\omega + \beta} \circ \iota_{\omega, \beta})
\end{align*}
to be the homomorphic image of this composition, with \(\bar{\pi}^{\Lambda}_{\omega, \beta}: R^{\Lambda}_{\omega, \beta} \hookrightarrow 1_{\omega, \beta} R^{\Lambda}_{\omega + \beta} 1_{\omega, \beta}\) being the canonical inclusion map. 
By consideration of the cyclotomic relation (\ref{cycdef}), we have that
\begin{align*}
R_\omega \xrightarrow{l_{\omega, \beta}} R_\omega \otimes R_\beta \xrightarrow{\pi^\Lambda_{\omega, \beta} }1_{\omega, \beta} R_{\omega + \beta}^\Lambda 1_{\omega, \beta}
\end{align*}
factors through the cyclotomic quotient \(R^\Lambda_\omega\),
where $l_{\omega, \beta}$ is the natural \emph{left} embedding of $R_\omega$ in $R_\omega \otimes R_\beta$,
yielding maps
\begin{align*}
R_\omega \xrightarrow{\pi^\Lambda_{\omega}} R_\omega^\Lambda \xrightarrow{\eta^\Lambda_\omega}1_{\omega, \beta} R_{\omega + \beta}^\Lambda 1_{\omega, \beta}
\end{align*}
such that \(\pi^\Lambda_{\omega, \beta} \circ l_{\omega, \beta} = \eta^\Lambda_\omega  \circ \pi^\Lambda_\omega\). 
We also have maps
\begin{align}\label{skewmaps}
R_\beta \xrightarrow{r_{\omega, \beta}} R_\omega \otimes R_\beta \xrightarrow{\pi^\Lambda_{\omega, \beta}} 1_{\omega, \beta} R_{\omega + \beta}^\Lambda 1_{\omega, \beta},
\end{align}
where $r_{\omega, \beta}$ is the natural \emph{right} embedding of $R_\beta$ in $R_\omega \otimes R_\beta$.

\begin{Definition}
The \emph{\(\omega\)-skew cyclotomic KLR algebra} is defined to be the quotient
\begin{align*}
	R_{\beta}^{\Lambda/\omega} := R_\beta / \ker( \pi^\Lambda_{\omega, \beta} \circ r_{\omega, \beta}).
\end{align*}
\end{Definition}

Note that this is the homomorphic image of $R_{\beta}$ under the composition of the maps in~(\ref{skewmaps}).

\subsection{Properties of truncations and cyclotomic quotient maps}\label{subsec:truncations}

Note that the composition \(\pi^\Lambda_{\omega, \beta} \circ r_{\omega, \beta}\) of the maps in~(\ref{skewmaps}) factors through \(R^{\Lambda / \omega}_{\beta}\), yielding maps 

\begin{align*}
R_\beta \xrightarrow{ \pi^{\Lambda / \omega}_\beta} R^{\Lambda / \omega}_\beta \xrightarrow{ \eta^{\Lambda / \omega}_\beta} 1_{\omega, \beta} R_{\omega + \beta}^\Lambda 1_{\omega, \beta},
\end{align*}
where \(\pi^{\Lambda/ \omega}_\beta\) is the canonical quotient map, and \(\eta^{\Lambda / \omega}_\beta\) is an injection such that \(\pi^\Lambda_{\omega, \beta} \circ r_{\omega, \beta} = \eta^{\Lambda / \omega}_\beta \circ \pi^{\Lambda/ \omega}_\beta\). Putting everything together, we have the following commutative diagram of \(\Z\)-graded \(\k\)-algebras:

\begin{equation}\label{comdiag1}
\begin{tikzcd}
	& R_\omega && R_\omega^\Lambda \\
	&& 1_{\omega, \beta}R_{\omega + \beta}1_{\omega, \beta} \\
	R_\omega \otimes R_\beta &&&& 1_{\omega,\beta} R^\Lambda_{\omega + \beta} 1_{\omega, \beta} \\
	& R_\beta && R_\beta^{\Lambda / \omega}
	\arrow["\pi^\Lambda_\omega", from=1-2, to=1-4]
	\arrow["\pi^\Lambda_{\omega + \beta}", from=2-3, to=3-5]
	\arrow["\eta_{\omega}^\Lambda", from=1-4, to=3-5]
	\arrow["\iota_{\omega, \beta}", from=3-1, to=2-3]
	\arrow["\pi^\Lambda_{\omega, \beta}"', from=3-1, to=3-5]
	\arrow["\pi^{\Lambda / \omega}_\beta"', from=4-2, to=4-4]
	\arrow["\eta^{\Lambda / \omega}_\beta"', from=4-4, to=3-5]
	\arrow["r_{\omega, \beta}", from=4-2, to=3-1]
	\arrow["l_{\omega, \beta}"', from=1-2, to=3-1]
\end{tikzcd}
\end{equation}

Note that since the subalgebras \(\eta^{\Lambda}_\omega(R^\Lambda_\omega)\) and
\(\eta_\beta^{\Lambda / \omega}(R^{\Lambda / \omega}_\beta)\) are mutually centralizing in \(1_{\omega, \beta} R_{\omega + \beta}^\Lambda 1_{\omega, \beta}\), we have 
an algebra map
\begin{align*}
\eta^{\Lambda}_{\omega, \beta}: R^\Lambda_\omega \otimes R^{\Lambda / \omega}_{\beta} \longrightarrow 1_{\omega,\beta} R^\Lambda_{\omega + \beta} 1_{\omega, \beta},
\qquad
x_1 \otimes x_2 \longmapsto 1_{\omega, \beta}(x_1 \otimes x_2)1_{\omega, \beta} = \eta_\omega^\Lambda(x_1) \eta_\beta^{\Lambda / \omega}(x_2),
\end{align*} 
and a quotient map
\begin{align*}
q^{\Lambda}_{\omega, \beta} : R_\omega \otimes R_\beta \longrightarrow R^\Lambda_\omega \otimes R^{\Lambda / \omega}_\beta
\end{align*}
such that the following diagram commutes:
\begin{equation}\label{comdiag2}
\begin{tikzcd}
	&  R_\omega \otimes R_\beta\\
	 R^\Lambda_\omega \otimes R^{\Lambda / \omega}_\beta &&  1_{\omega,\beta} R^\Lambda_{\omega + \beta} 1_{\omega, \beta}
	\arrow["q^{\Lambda}_{\omega, \beta}"', from=1-2, to=2-1]
	\arrow["\eta^{\Lambda}_{\omega, \beta}"', from=2-1, to=2-3]
	\arrow["\pi^\Lambda_{\omega, \beta}", from=1-2, to=2-3]
\end{tikzcd}
\end{equation}

We now investigate properties of the maps in (\ref{comdiag1}) and (\ref{comdiag2}).

\begin{Lemma}\label{idemcutlem}
Assume that \(\omega, \beta \in \ZZ_{\geq 0}I\) are such that the pair \((\omega, \beta)\) is \(\bkap\)-separable. Then 
the map \(\pi_{\omega, \beta}^\Lambda\) is a surjection.
\end{Lemma}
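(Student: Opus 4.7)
The approach is to work through the graded cellular basis of $R^\Lambda_{\omega+\beta}$ provided by \cref{cellHM}. After truncating by $1_{\omega,\beta}$ on both sides, the algebra $1_{\omega,\beta} R^\Lambda_{\omega+\beta} 1_{\omega,\beta}$ is spanned by those cellular basis elements $c^\blam_{\tt S,\tt T}$ with $\bi^{\tt S},\bi^{\tt T} \in I^\omega I^\beta$. I will show by downward induction on the dominance order $\trianglerighteq^D$ on $\Lambda^{\bkap}_+(\omega+\beta)$ that each such element lies in the image of $\pi^\Lambda_{\omega,\beta}$.

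The first step uses $\bkap$-separability to show that if $\tt S \in \Std(\blam)$ satisfies $\bi^{\tt S} \in I^\omega I^\beta$, then $\brho := \textup{sh}^\downarrow_{\height(\omega)}(\tt S)$ is the \emph{unique} multipartition in $\Lambda^{\bkap}_+(\omega)$ contained in $\blam$. The argument mirrors the uniqueness step inside the proof of \cref{decompSkew}: two distinct $\brho,\brho' \subseteq \blam$ of content $\omega$ would yield $\bmu := \brho \cap \brho'$ and $\bnu := \brho \cup \brho'$ with $\cont(\brho/\bmu) = \cont(\bnu/\brho) \subseteq \beta$, contradicting separability. Fixing $(\height(\omega),\height(\beta))$-adapted reduced expressions and writing $\tt S = \tt S_1 \tt s_2$, $\tt T = \tt T_1 \tt t_2$ with $\tt S_1,\tt T_1 \in \Std(\brho)$ and $\tt s_2,\tt t_2 \in \Std(\blam/\brho)$, formula~(\ref{rbpsi}) yields
\[
c^\blam_{\tt S,\tt T} = \iota_{\omega,\beta}(\psi^{\tt S_1} \otimes \psi^{\tt s_2})\cdot c^\blam_{\tt T^\brho\tt t^{\blam/\brho},\, \tt T^\brho\tt t^{\blam/\brho}}\cdot \iota_{\omega,\beta}(j(\psi^{\tt T_1}) \otimes j(\psi^{\tt t_2})),
\]
whose outer factors lie in $\iota_{\omega,\beta}(R_\omega \otimes R_\beta)$, and hence in the image of $\pi^\Lambda_{\omega,\beta}$. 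By the inductive hypothesis, it therefore suffices to show that the middle ``leading'' factor $c^\blam_{\tt T^\brho\tt t^{\blam/\brho},\, \tt T^\brho\tt t^{\blam/\brho}}$ lies in the image of $\pi^\Lambda_{\omega,\beta}$ modulo $(R^\Lambda_{\omega+\beta})^{\triangleright^D \blam}$.

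To control this leading factor, apply the bimodule isomorphism $F^\blam$ of \cref{transSpecht} to identify it with $v^{\tt T^\brho\tt t^{\blam/\brho}} \otimes v^{\tt T^\brho\tt t^{\blam/\brho}}$ in $S^\blam \otimes {}^{j_{\omega+\beta}}S^\blam$. Truncating by $1_{\omega,\beta}$ on both sides and applying $G^{\blam/\brho}$ of \cref{decompSkew} componentwise identifies this image with $(v^{\tt T^\brho}\otimes v^{\tt T^\brho})\boxtimes(v^{\tt t^{\blam/\brho}}\otimes v^{\tt t^{\blam/\brho}})$ in $(S^\brho \otimes {}^{j_\omega}S^\brho)\boxtimes(S^{\blam/\brho} \otimes {}^{j_\beta}S^{\blam/\brho})$. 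On the $R^\Lambda_\omega$-factor, $F^\brho$ identifies the first tensor factor with the cellular element $c^\brho_{\tt T^\brho,\tt T^\brho} \in R^\Lambda_\omega$, which lifts to some $y \in R_\omega$ via the surjection $\pi^\Lambda_\omega$; for the skew factor, one selects $x \in R_\beta$ whose action on $S^{\blam/\brho}$ produces $v^{\tt t^{\blam/\brho}}$ from the generator $v^{\blam/\brho}$ (available via \cref{Spechtbasis}). Then $\pi^\Lambda_{\omega,\beta}(y \otimes x)$ agrees with the middle factor modulo $(R^\Lambda_{\omega+\beta})^{\triangleright^D \blam}$, closing the induction.

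The main obstacle is making the last paragraph rigorous: constructing the element $x \in R_\beta$ and verifying that the bimodule isomorphisms $F^\blam$, $G^{\blam/\brho}$, and $F^\brho$ intertwine with the action induced by $\iota_{\omega,\beta}$ in the way claimed. This requires careful tracking of how the Hu--Mathas monomial $c^\blam$ factors along the decomposition $\blam = \brho \sqcup (\blam/\brho)$ modulo higher cells, using the explicit form of $c^\blam$ from \cite{hm10} and the skew analog of the cellular-type structure on $S^{\blam/\brho}$.
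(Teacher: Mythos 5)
Your overall strategy is genuinely different from the paper's, and it contains a real gap at its central step. The paper does not induct on the cellular structure at all: it expands $1_{\omega,\beta}R^\Lambda_{\omega+\beta}1_{\omega,\beta}$ in the affine KLR monomial basis of \cref{klrbasis}, organised by double cosets $(\mathfrak{S}_r\times\mathfrak{S}_b)\backslash\mathfrak{S}_m/(\mathfrak{S}_r\times\mathfrak{S}_b)$, so that every spanning element has the form $\iota(\psi_{w_1}\otimes\psi_{w_2})\,1_{\bi\bk\bj\bm}\psi_X 1_{\bi\bj\bk\bm}\,\iota(\psi_{u_1}\otimes\psi_{u_2})\,y_1^{f_1}\cdots y_m^{f_m}$ with $X$ a block transposition. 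If $X$ is nontrivial and the element is nonzero, the idempotent $1_{\bi\bj\bk\bm}$ survives in $R^\Lambda_{\omega+\beta}$, so by \cref{cellHM} it is the residue sequence of some standard tableau, whose partial shapes produce $\bmu\subsetneq\brho\subsetneq\bnu$ with $\cont(\brho/\bmu)=\cont(\bnu/\brho)\subseteq\beta$, contradicting $\bkap$-separability. Hence only $X=1$ contributes and everything lies in $\iota(R_\omega\otimes R_\beta)$ on the nose. Separability is used to kill block crossings directly, with no induction and no need to lift cellular elements.

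Your first reduction is sound: the truncated cellular basis, the uniqueness of $\brho\subseteq\blam$ with $\cont(\brho)=\omega$, and the factorisation of $c^{\blam}_{{\tt S},{\tt T}}$ via (\ref{rbpsi}) into outer factors in $\iota(R_\omega\otimes R_\beta)$ times the leading element $c^{\blam}_{{\tt T}^{\brho}{\tt t}^{\blam/\brho},\,{\tt T}^{\brho}{\tt t}^{\blam/\brho}}$ are all correct. But the remaining step — exhibiting $y\otimes x\in R_\omega\otimes R_\beta$ with $\pi^{\Lambda}_{\omega,\beta}(y\otimes x)\equiv c^{\blam}_{{\tt T}^{\brho}{\tt t}^{\blam/\brho},\,{\tt T}^{\brho}{\tt t}^{\blam/\brho}}$ modulo $(R^\Lambda_{\omega+\beta})^{\triangleright^D\blam}$ — is not a technicality you can defer; it is the whole content of the lemma. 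The leading element equals $\psi_w c^{\blam} j(\psi_w)$ for the permutation $w$ taking ${\tt t}^{\blam}$ to ${\tt T}^{\brho}{\tt t}^{\blam/\brho}$, and this $w$ genuinely crosses the two blocks of strands, so you are asking precisely whether block crossings can be eliminated modulo the cyclotomic ideal and higher cells. The route you sketch via $F^{\blam}$, $G^{\blam/\brho}$ and $F^{\brho}$ does not deliver this: those are isomorphisms of \emph{bimodule subquotients}, so identifying the image of the leading element in $S^{\blam}\otimes{}^{j}S^{\blam}$ with a pure tensor says nothing about whether an actual preimage exists inside $\iota(R_\omega\otimes R_\beta)$. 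Worse, for the congruence to make sense your candidate $\pi^{\Lambda}_{\omega,\beta}(y\otimes x)$ must have cellular expansion supported on shapes $\trianglerighteq^D\blam$; components in shapes incomparable to $\blam$ are neither excluded nor covered by the downward induction. To close the argument you would end up re-proving the paper's double-coset computation inside each cell layer, so I would recommend abandoning the cellular induction and arguing on the affine basis directly.
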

\begin{proof}
Write \(r = \height(\omega)\) and \(b = \textup{ht}(\beta)\).
For any \(\sigma \in \mathfrak{S}_m\), we may write \(\sigma = w X u\), where \(w = (w_1, w_2), u = (u_1, u_2) \in \mathfrak{S}_r \times \mathfrak{S}_b \subseteq \mathfrak{S}_m\), and where \(X\) is a block transposition of the form:
\begin{align*}
X = (c, r+1) (c+1, r+2) \dots (r, 2r+ 1-c)
\end{align*}
for some \(c \in [1,r+1]\), noting that such \(X\) are representatives for the double cosets \(\mathfrak{S}_r \times \mathfrak{S}_b \backslash \mathfrak{S}_m / \mathfrak{S}_r \times \mathfrak{S}_b\). In choosing distinguished elements \(\psi_\sigma \in R_{\omega + \beta}\) for \(\sigma \in \mathfrak{S}_m\), we may assume that we have chosen such that \(\psi_\sigma = \iota_{\omega,\beta}(\psi_{w_1} \otimes \psi_{w_2}) \psi_X \iota_{\omega,\beta}(\psi_{u_1} \otimes \psi_{u_2})\) in accordance with this decomposition.
Then by \cref{klrbasis}, we have that \(1_{\omega , \beta} R^\Lambda_{\omega + \beta} 1_{\omega , \beta}\) is spanned by elements of the form
\begin{align}\label{spanR}
1_{\omega, \beta} \iota_{\omega,\beta}(\psi_{w_1} \otimes \psi_{w_2}) 1_{\bi \bk \bj \bm}\psi_X 1_{\bi \bj \bk \bm} \iota_{\omega,\beta}(\psi_{u_1} \otimes \psi_{u_2}) y_1^{f_1} \dots y_m^{f_m} 1_{\omega, \beta},
\end{align}
where \(u_1, u_2, w_1,w_2, X\) are as above, \(f_1, \dots, f_m \in \ZZ_{\geq 0}\), \(\bi \in I^{c-1}\), \(\bj, \bk \in I^{r-c+1}\), \(\bm \in I^{b-c+1}\), with 
 \(\bi \bj, \bi \bk \in I^{\omega}\), \(\bk \bm, \bj \bm \in I^{\beta}\). 
It is most convenient to view elements of the form (\ref{spanR}) using the diagrammatic presentation of \(R_{\omega + \beta}\) (see \cite{kl09}) where they look like the following.
\begin{align*}
\hackcenter{
{}
}
\hackcenter{
\begin{tikzpicture}[scale=.8]
  \draw[ultra thick,blue] (-0.5,0) .. controls ++(0,1) and ++(0,-1) .. (1.5,2);
    \draw[ultra thick,blue] (-1.5,0) .. controls ++(0,1) and ++(0,-1) .. (0.5,2);
      \draw[ultra thick,blue] (0.5,0) .. controls ++(0,1) and ++(0,-1) .. (-1.5,2);
    \draw[ultra thick,blue] (1.5,0) .. controls ++(0,1) and ++(0,-1) .. (-0.5,2);
     \draw[ultra thick,blue] (-3,-3)--(-3,0);
  \draw[ultra thick,blue] (-0.5,-3)--(-0.5,0);
    \draw[ultra thick,blue] (-2,-1)--(-2,0);
       \draw[ultra thick,blue] (-1.5,-1)--(-1.5,0);
   \draw[ultra thick,blue] (3,-3)--(3,0);
  \draw[ultra thick,blue] (0.5,-3)--(0.5,0);
      \draw[ultra thick,blue] (2,-1)--(2,0);
       \draw[ultra thick,blue] (1.5,-1)--(1.5,0);
           \draw[ultra thick,blue] (-3,2)--(-3,4);
           \draw[ultra thick,blue] (-2,2)--(-2,3);
                 \draw[ultra thick,blue] (-1.5,2)--(-1.5,3);
           \draw[ultra thick,blue] (-0.5,2)--(-0.5,4);
             \draw[ultra thick,blue] (3,2)--(3,4);
           \draw[ultra thick,blue] (2,2)--(2,3);
                 \draw[ultra thick,blue] (1.5,2)--(1.5,3);
           \draw[ultra thick,blue] (0.5,2)--(0.5,4);
     \draw[ultra thick,blue] (-3,0)--(-3,2);
      \draw[ultra thick,blue] (-2,0)--(-2,2);
      \draw[ultra thick,blue] (2,0)--(2,2);
     \draw[ultra thick,blue] (3,0)--(3,2);
     \draw[thick, rounded corners, fill=yellow!50] (-3.2, -0.5) rectangle (-1.8, 0) {};
       \draw[thick, rounded corners, fill=yellow!50] (-1.7, -0.5) rectangle (-0.3, 0) {};
            \draw[thick, rounded corners, fill=yellow!50] (3.2, -0.5) rectangle (1.8, 0) {};
       \draw[thick, rounded corners, fill=yellow!50] (1.7, -0.5) rectangle (0.3, 0) {};
      \node[] at (-2.5, -0.25) {$\bi$};
      \node[] at (-1, -0.25) {$\bj$};
       \node[] at (2.5, -0.25) {$\bm$};
      \node[] at (1, -0.25) {$\bk$};
     \draw[thick, rounded corners, fill=yellow!50] (-3.2, -0.5+2.5) rectangle (-1.8, 0+2.5) {};
       \draw[thick, rounded corners, fill=yellow!50] (-1.7, -0.5+2.5) rectangle (-0.3, 0+2.5) {};
            \draw[thick, rounded corners, fill=yellow!50] (3.2, -0.5+2.5) rectangle (1.8, 0+2.5) {};
       \draw[thick, rounded corners, fill=yellow!50] (1.7, -0.5+2.5) rectangle (0.3, 0+2.5) {};
      \node[] at (-2.5, -0.25+2.5) {$\bi$};
      \node[] at (-1, -0.25+2.5) {$\bk$};
       \node[] at (2.5, -0.25+2.5) {$\bm$};
      \node[] at (1, -0.25+2.5) {$\bj$};
     \draw[thick, rounded corners, fill=cyan!25] (-3.2, -1.5) rectangle (-0.3, -0.75) {};
               \node[] at (-1.75, -1.125) {$\psi_{u_1}$};
         \draw[thick, rounded corners, fill=cyan!25] (3.2, -1.5) rectangle (0.3, -0.75) {};
           \node[] at (1.75, -1.125) {$\psi_{u_2}$};
     \draw[thick, rounded corners, fill=cyan!25] (-3.2, -1.5+4.25) rectangle (-0.3, -0.75+4.25) {};
      \node[] at (-1.75, -1.125+4.25) {$\psi_{w_1}$};
         \draw[thick, rounded corners, fill=cyan!25] (3.2, -1.5+4.25) rectangle (0.3, -0.75+4.25) {};
            \node[] at (1.75, -1.125+4.25) {$\psi_{w_2}$};
              \draw[thick, fill=black]  (-3,-2.1) circle (4pt);
               \node[left] at (-3,-2.1) {$f_1$};
                \draw[thick, fill=black]  (-0.5,-2.1) circle (4pt);
               \node[left] at (-0.5,-2.1) {$f_{r}$};
                \draw[thick, fill=black]  (0.5,-2.1) circle (4pt);
               \node[right] at (0.5,-2.1) {$f_{r+1}$};
                 \draw[thick, fill=black]  (3,-2.1) circle (4pt);
               \node[right] at  (3,-2.1) {$f_{m}$};
                   \draw[thick, rounded corners, fill=yellow!50] (-3.2, -1.5-2) rectangle (-0.3, -0.75-2) {};
               \node[] at (-1.75, -1.125-2) {$1_{\omega}$};
         \draw[thick, rounded corners, fill=yellow!50] (3.2, -1.5-2) rectangle (0.3, -0.75-2) {};
           \node[] at (1.75, -1.125-2) {$1_{\beta}$};
                   \draw[thick, rounded corners, fill=yellow!50] (-3.2, -1.5+5.25) rectangle (-0.3, -0.75+5.25) {};
               \node[] at (-1.75, -1.125+5.25) {$1_{\omega}$};
         \draw[thick, rounded corners, fill=yellow!50] (3.2, -1.5+5.25) rectangle (0.3, -0.75+5.25) {};
           \node[] at (1.75, -1.125+5.25) {$1_{\beta}$};
            \node[] at (-2.45,1) {$\cdots$};
             \node[] at (2.55,1) {$\cdots$};
             \node[] at (-0.9,0.3) {$\cdots$};
             \node[] at (0.95,0.3) {$\cdots$};
             \node[] at (-0.9,1.7) {$\cdots$};
             \node[] at (0.95,1.7) {$\cdots$};
              \node[] at (-1.85,-2.1) {$\cdots$};
              \node[] at (2.1,-2.1) {$\cdots$};
\end{tikzpicture}}
\end{align*}
Consider an element \(z\) of the form (\ref{spanR}). Then if \(0 \neq z \in 1_{\omega, \beta} R^\Lambda_{\omega + \beta} 1_{\omega, \beta}\), it must be that \(1_{\bi \bj \bk \bm}\) is nonzero in \(R^\Lambda_{\omega+ \beta}\).
By \cref{cellHM}, \(1_{\bi \bj \bk \bm}\) is nonzero only if there exists \(\blam \in \Lambda_+^{\bkap}(\omega + \beta)\) and \({\tt T} \in \Std(\blam)\) with \(\bi^{\tt T} = \bi \bj \bk \bm\). Setting
\begin{align*}
\bmu = \textup{sh}({\tt T} \downarrow_{c-1}), \qquad \brho = \textup{sh}({\tt T} \downarrow_{r}), \qquad \bnu = \textup{sh}({\tt T} \downarrow_{2 r-c + 1}),
\end{align*}
we have that \(\bmu \subseteq \brho \subseteq \bnu\) and \(\brho \in \Lambda^{\bkap}_+(\omega)\) with \(\cont(\brho/\bmu) = \cont(\bnu/\brho) = \beta'\) for some \(\beta' \subseteq \beta\). By the \(\bkap\)-separability assumption, it follows that \(\beta' = 0\) and thus \(\bmu = \brho = \bnu\), so \(X \in \mathfrak{S}_m\) is trivial. Therefore \(z \in \pi^{\Lambda}_{\omega, \beta}(R_\omega \otimes R_\beta) \), completing the proof.
\end{proof}

\begin{Corollary}\label{kcoresurj}
If \(\brho\) is a \(\bkap\)-core and \(\omega = \cont(\brho)\), then 
the map \(\pi_{\omega, \beta}^\Lambda\) is a surjection.
\end{Corollary}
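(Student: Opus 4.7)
The proof plan is very short, since this is a direct corollary of \cref{idemcutlem} combined with an earlier separability result. The only work is to verify that the hypothesis of \cref{idemcutlem}, namely $\bkap$-separability of the pair $(\omega, \beta)$, is satisfied in the current setting.

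First, I would invoke \cref{coreksep}, which is exactly the statement that if $\omega = \cont(\brho)$ for some $\bkap$-core $\brho$, then $(\omega, \beta)$ is $\bkap$-separable for every $\beta \in \ZZ_{\geq 0} I$. That corollary was itself deduced from \cref{bigmultiprop}(i)--(iv), via the characterization of $\bkap$-cores as those multipartitions $\brho$ admitting no $\bmu \subsetneq \brho \subsetneq \blam$ in $\Lambda^{\bkap}_+$ with $\cont(\brho/\bmu) = \cont(\blam/\brho)$.

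Having obtained $\bkap$-separability of $(\omega, \beta)$, I would then apply \cref{idemcutlem} directly to conclude that $\pi^{\Lambda}_{\omega, \beta}\colon R_\omega \otimes R_\beta \to 1_{\omega, \beta} R^{\Lambda}_{\omega + \beta} 1_{\omega, \beta}$ is surjective. There is no further obstacle: the corollary is a two-line consequence chaining these two previously established results, and no additional calculation or construction is required.
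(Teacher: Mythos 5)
Your proposal is correct and matches the paper's proof exactly: the paper likewise deduces the corollary immediately from \cref{coreksep} (giving $\bkap$-separability of $(\omega,\beta)$) together with \cref{idemcutlem}. Nothing further is needed.
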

\begin{proof}
Follows immediately from \cref{idemcutlem,coreksep}.
\end{proof}

\begin{Corollary}
If \(\Lambda_+^{\bkap}(\omega)\) is a \(\theta\)-RoCK core block with \(\textup{ht}(\beta) \leq e \cdot \textup{cap}_\delta^\theta(\omega, \bkap)\), then the map \(\pi_{\omega, \beta}^\Lambda\) is a surjection.
\end{Corollary}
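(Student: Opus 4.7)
The plan is essentially a one-line combination of the two preceding results. First I would invoke \cref{sepofcoreblock}, which says that under exactly the hypotheses of this corollary (namely $\Lambda^{\bkap}_+(\omega)$ is a $\theta$-RoCK core block and $\textup{ht}(\beta) \leq e \cdot \textup{cap}^\theta_\delta(\omega, \bkap)$), the pair $(\omega, \beta)$ is $\bkap$-separable. This is the nontrivial combinatorial input, but it is already established.

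Once $\bkap$-separability is in hand, \cref{idemcutlem} directly gives that $\pi^\Lambda_{\omega, \beta}: R_\omega \otimes R_\beta \to 1_{\omega, \beta} R^\Lambda_{\omega+\beta} 1_{\omega, \beta}$ is a surjection, which is the conclusion. No additional work is required; this corollary is structurally parallel to \cref{kcoresurj}, which was itself a one-line combination of \cref{idemcutlem} with \cref{coreksep}. There is no main obstacle here because the combinatorial heavy-lifting was already done in \cref{sepofcoreblock} (via cuspidal tiling arguments invoking \cref{combprop2,RoCKmultitiling,tilethm}), and the KLR-theoretic heavy-lifting was done in \cref{idemcutlem} (via a diagrammatic argument using the cellular basis of \cref{cellHM} to rule out nontrivial block transpositions).
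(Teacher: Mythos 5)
Your proposal is correct and matches the paper's proof exactly: the corollary is stated there as following immediately from \cref{idemcutlem} and \cref{sepofcoreblock}, precisely the two results you combine. Nothing further is needed.
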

\begin{proof}
Follows immediately from \cref{idemcutlem,sepofcoreblock}.
\end{proof}

\begin{Lemma}\label{tenssep}
If \(\brho\) is a \(\bkap\)-core and \(\omega = \cont(\brho)\), the map \(\eta^\Lambda_{\omega, \beta}\) is an isomorphism. Moreover, for any homogeneous primitive idempotent \(\varepsilon \in R^{\Lambda}_\omega\), we have a graded algebra isomorphism:
\begin{align*} 
R_\beta^{\Lambda/\omega} \bijection (\eta^\Lambda_{\omega, \beta}(\varepsilon \otimes 1_\beta)) R_{\omega + \beta}^\Lambda (\eta^\Lambda_{\omega, \beta}(\varepsilon \otimes 1_\beta)).
\end{align*}
\end{Lemma}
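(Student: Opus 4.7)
\smallskip

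\noindent\textbf{Proof plan.}
For the first claim, my strategy is to show surjectivity and injectivity of $\eta^\Lambda_{\omega,\beta}$ separately. Surjectivity is essentially immediate: since $\brho$ is a $\bkap$-core, \cref{kcoresurj} gives that $\pi^\Lambda_{\omega,\beta}$ is surjective, and from the commutative triangle (\ref{comdiag2}) together with surjectivity of the quotient map $q^\Lambda_{\omega,\beta}$, we deduce $\eta^\Lambda_{\omega,\beta}$ is surjective.

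For injectivity, the main idea is to leverage the fact that $R^\Lambda_\omega$ is simple. By \cref{coresimple}, $R^\Lambda_\omega \cong \End_\k(S^{\brho})$, which is a matrix algebra $M_n(\k)$ for $n = \dim_\k S^{\brho}$. A standard fact about tensor products with matrix algebras states that every two-sided ideal of $R^\Lambda_\omega \otimes R^{\Lambda/\omega}_\beta$ has the form $R^\Lambda_\omega \otimes J$ for some two-sided ideal $J$ of $R^{\Lambda/\omega}_\beta$. Applying this to $\ker(\eta^\Lambda_{\omega,\beta})$ and restricting to the subalgebra $1 \otimes R^{\Lambda/\omega}_\beta$, we get $1 \otimes J \subseteq \ker(\eta^\Lambda_{\omega,\beta})$. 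Since $\eta^\Lambda_{\omega,\beta}(1 \otimes x) = \eta^{\Lambda/\omega}_\beta(x)$ and $\eta^{\Lambda/\omega}_\beta$ is injective by construction (as $R^{\Lambda/\omega}_\beta$ is defined precisely as the quotient of $R_\beta$ by the kernel of the relevant map), we conclude $J = 0$, proving injectivity.

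For the second claim, I take $e := \eta^\Lambda_{\omega,\beta}(\varepsilon \otimes 1_\beta)$. Since $1_{\omega,\beta}$ acts as the identity on the image of $\eta^\Lambda_{\omega,\beta}$, we have $e \cdot R^\Lambda_{\omega+\beta} \cdot e = e \cdot (1_{\omega,\beta} R^\Lambda_{\omega+\beta} 1_{\omega,\beta}) \cdot e$, and by the first part together with the fact that $\eta^\Lambda_{\omega,\beta}$ is an algebra homomorphism, this equals $\eta^\Lambda_{\omega,\beta}\bigl((\varepsilon \otimes 1_\beta)(R^\Lambda_\omega \otimes R^{\Lambda/\omega}_\beta)(\varepsilon \otimes 1_\beta)\bigr) = \eta^\Lambda_{\omega,\beta}\bigl(\varepsilon R^\Lambda_\omega \varepsilon \otimes R^{\Lambda/\omega}_\beta\bigr)$. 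It remains to identify $\varepsilon R^\Lambda_\omega \varepsilon$ with $\k$ as a graded algebra. Fixing a homogeneous basis of $S^{\brho}$, the grading on $R^\Lambda_\omega \cong \End_\k(S^{\brho})$ realizes it as a shift-graded matrix algebra, and a homogeneous primitive idempotent corresponds to the projector onto a one-dimensional homogeneous subspace, whence $\varepsilon R^\Lambda_\omega \varepsilon \cong \k$ concentrated in degree $0$.

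The main obstacle I anticipate is the bookkeeping around the graded setting: verifying that the structure-of-ideals argument carries over to graded matrix algebras (which it does, since graded ideals of $M_n(\k) \otimes B$ are still of the form $M_n(\k) \otimes J$ for $J$ graded), and verifying that the graded primitive idempotents in a graded matrix algebra genuinely give $\varepsilon R^\Lambda_\omega \varepsilon \cong \k$ concentrated in degree $0$. Both of these are standard once one fixes a homogeneous basis of $S^{\brho}$, but need to be stated carefully so the resulting algebra isomorphism is legitimately graded.
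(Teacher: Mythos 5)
Your proof is correct, and it reaches the same two conclusions by a somewhat different mechanism than the paper. The paper establishes surjectivity exactly as you do (via \cref{kcoresurj} and the commutative triangle (\ref{comdiag2})), but for injectivity it invokes a ready-made proposition from Evseev's RoCK-blocks paper: since $\eta^\Lambda_\omega(R^\Lambda_\omega)\cong\End_\k(S^{\brho})$ is a unital matrix subalgebra of $R^\Lambda_{\omega,\beta}$, that proposition gives $R^\Lambda_{\omega,\beta}\cong R^\Lambda_\omega\otimes C$ with $C$ the centralizer; since $\eta^{\Lambda/\omega}_\beta(R^{\Lambda/\omega}_\beta)\subseteq C$, one gets an \emph{injection} $R^\Lambda_\omega\otimes R^{\Lambda/\omega}_\beta\hookrightarrow R^\Lambda_{\omega,\beta}$, which combined with the surjection $\eta^\Lambda_{\omega,\beta}$ forces the latter to be an isomorphism (everything here is finite-dimensional). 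The second claim is then read off from the same centralizer proposition, which identifies $\varepsilon A\varepsilon$ with the centralizer for a primitive idempotent $\varepsilon$ of the matrix subalgebra. You instead kill the kernel directly using the classification of two-sided ideals of $M_n(\k)\otimes B$ as $M_n(\k)\otimes J$, together with the built-in injectivity of $\eta^{\Lambda/\omega}_\beta$, and you prove the second claim by the elementary computation $\varepsilon R^\Lambda_\omega\varepsilon\cong\k$. Both arguments ultimately rest on the same structural input — that $R^\Lambda_\omega$ is \emph{split} simple (\cref{coresimple}), which is what makes it central simple over $\k$ and validates your ideal-structure fact — but your route is more self-contained (no external centralizer proposition, no dimension comparison), while the paper's is shorter on the page. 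Your graded bookkeeping is also fine: a homogeneous idempotent automatically sits in degree zero, and the kernel of a graded map is in particular an ungraded ideal, so even the ungraded ideal classification suffices.
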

\begin{proof}
By \cref{kcoresurj}, we have that \(\pi^\Lambda_{\omega, \beta}\) is surjective, and thus consideration of (\ref{comdiag2}) shows that \(\eta^\Lambda_{\omega, \beta}\) is surjective. We note that \(\eta^\Lambda_\omega\) is nonzero, and thus necessarily an injection by \cref{coresimple}, and  \(\eta^\Lambda_\omega(R^\Lambda_\omega) \cong R^\Lambda_\omega \cong \END(S^{\brho})\). Therefore, by \cite[Proposition 4.10]{evseevrock}, we have \(R^\Lambda_{\omega, \beta} \cong R^\Lambda_{\omega} \otimes C_{R^\Lambda_{\omega, \beta}}( \eta_\omega^\Lambda(R^\Lambda_{\omega}) )\), where \(C_{R^\Lambda_{\omega, \beta}}( \eta_\omega^\Lambda(R^\Lambda_{\omega}) )\) denotes the centralizer of \(\eta_\omega^\Lambda(R^\Lambda_{\omega})\) in \(R^\Lambda_{\omega, \beta}\).
Noting that \(R_{\beta}^{\Lambda/\omega} \cong \eta^{\Lambda/\omega}_\beta(R_\beta^{\Lambda / \omega})  \subseteq  C_{R^\Lambda_{\omega, \beta}}(  \eta_\omega^\Lambda(R^\Lambda_{\omega})) \subseteq R^{\Lambda}_{\omega, \beta}\), there exists an injection \(R^\Lambda_{\omega} \otimes R^{\Lambda/ \omega}_{\beta} \to R^\Lambda_{\omega, \beta}\). Therefore the surjection \(\eta^\Lambda_{\omega, \beta}\) must be an isomorphism, proving the first claim. The second claim follows from \cite[Proposition 4.10]{evseevrock} as well.
\end{proof}

\begin{Conjecture}
If \(\Lambda_+^{\bkap}(\omega)\) is a \(\theta\)-RoCK core block with \(\textup{ht}(\beta) \leq e \cdot \textup{cap}_\delta^\theta(\omega, \bkap)\), then the map \(\eta_{\omega, \beta}^\Lambda\) is an isomorphism.
\end{Conjecture}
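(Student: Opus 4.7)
The plan is to adapt the proof of \cref{tenssep} to the multi-multicore setting, replacing the simple-algebra argument with a graded dimension comparison. For \emph{surjectivity}, \cref{sepofcoreblock} together with the height bound $\height(\beta) \leq e \cdot \textup{cap}^\theta_\delta(\omega, \bkap)$ gives that $(\omega, \beta)$ is $\bkap$-separable; then \cref{idemcutlem} yields surjectivity of $\pi^\Lambda_{\omega, \beta}$, which transfers to $\eta^\Lambda_{\omega, \beta}$ via the commutativity of diagram (\ref{comdiag2}).

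For \emph{injectivity}, the strategy is a dimension count on both sides. Combining \cref{cellHM}, \cref{transSpecht}, and \cref{decompSkew} (the last using $\bkap$-separability, which also forces each $\blam \in \Lambda^{\bkap}_+(\omega+\beta)$ to contain a unique $\brho_\blam \in \Lambda^{\bkap}_+(\omega)$), one obtains
\[
\dim \bigl(1_{\omega, \beta}\, R^\Lambda_{\omega+\beta}\, 1_{\omega, \beta}\bigr) \;=\; \sum_{\brho \in \Lambda^{\bkap}_+(\omega)} (\dim S^\brho)^2 \cdot N_\brho,
\]
where $N_\brho := \sum_{\blam \supseteq \brho,\, \cont(\blam/\brho) = \beta} (\dim S^{\blam/\brho})^2$. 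The RoCK combinatorics of \cref{RoCKmultitiling,combprop2}, together with the parametrisations of \cref{allaboutshapes,resirrlist}, show that the set of admissible skew diagrams $\blam/\brho$ (and the associated skew Specht module dimensions) are the same up to residue-preserving translation across the multicores $\brho$ of a RoCK core block. Via \cref{simSpechts}, this gives that $N_\brho = N$ is independent of $\brho$, whence $\dim \bigl(1_{\omega, \beta}\, R^\Lambda_{\omega+\beta}\, 1_{\omega, \beta}\bigr) = \dim R^\Lambda_\omega \cdot N$.

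The final ingredient is to show $\dim R^{\Lambda/\omega}_\beta = N$. The plan is to generalise \hyperlink{thm:D}{Theorem D} to the RoCK core block setting: fix any $\brho \in \Lambda^{\bkap}_+(\omega)$ and construct a graded cellular basis of $R^{\Lambda/\omega}_\beta$ indexed by pairs of standard tableaux on the skew shapes $\{\blam/\brho : \blam \supseteq \brho,\, \cont(\blam/\brho) = \beta\}$. Granting this, the surjective map $\eta^\Lambda_{\omega, \beta}$ becomes an isomorphism by dimension.

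The main obstacle is this last step: generalising \hyperlink{thm:D}{Theorem D} outside the pure $\bkap$-core case. When $|\Lambda^{\bkap}_+(\omega)| > 1$ no canonical underlying multicore exists, and the construction of a cellular basis for $R^{\Lambda/\omega}_\beta$ must be shown to be independent of the choice of $\brho$. A natural route is to transport the $\bkap$-core case of \cref{tenssep} along the Scopes/Webster Morita equivalences furnished by \cref{transtheta,RoCKthetaeq}, reducing an arbitrary RoCK core block to a Scopes-equivalent block whose combinatorics are governed by a single multicore where \cref{tenssep} already applies directly.
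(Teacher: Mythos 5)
The statement you are addressing is stated in the paper as a \emph{Conjecture}: the authors offer no proof, recording only that surjectivity of \(\eta^\Lambda_{\omega,\beta}\) follows from \cref{idemcutlem,sepofcoreblock} and that ``injectivity \dots in general remains open.'' Your surjectivity paragraph therefore reproduces exactly what the paper already knows. The question is whether your injectivity strategy closes the gap, and it does not. The dimension formula \(\dim\bigl(1_{\omega,\beta}R^\Lambda_{\omega+\beta}1_{\omega,\beta}\bigr)=\sum_{\brho}(\dim S^{\brho})^2\,N_{\brho}\) is fine (it is the content of \cref{wbcutthm}(ii) under \(\bkap\)-separability), but the two claims you then need are both unestablished. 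First, the independence of \(N_{\brho}\) from \(\brho\): the results you cite (\cref{allaboutshapes,resirrlist}) parametrise skew diagrams of content \(d\delta\) only, whereas the conjecture allows arbitrary \(\beta\) with \(\height(\beta)\leq e\cdot\textup{cap}^\theta_\delta(\omega,\bkap)\). For real-root or mixed content there is no analogous parametrisation in the paper, and you would have to prove separately that the multiset of admissible skew shapes \(\blam/\brho\) is translation-equivalent across the distinct multicores of the core block.

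Second, and more seriously, the ``final ingredient'' \(\dim R^{\Lambda/\omega}_\beta=N\) is not a loose end but is essentially equivalent to the conjecture. Surjectivity already gives \(\dim R^{\Lambda/\omega}_\beta\geq N\); the needed upper bound says precisely that the image of \(R_\beta\) inside \(1_{\omega,\beta}R^\Lambda_{\omega+\beta}1_{\omega,\beta}\) is no larger than one tensor factor, which \emph{is} the injectivity statement. Neither of your proposed routes delivers it. Generalising \hyperlink{thm:D}{Theorem D} requires truncating by \(\varepsilon^{\tt T}=e^{\tt T}\otimes 1_\beta\) for a primitive idempotent \(e^{\tt T}\in R^\Lambda_\omega\), and the proof of \cref{skewcell} uses crucially that \(e^{\tt T}R^\Lambda_\omega e^{\tt T}=\k\), which holds because a \(\bkap\)-core block is simple (\cref{coresimple}). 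A RoCK core block containing more than one multicore has positive defect by \cref{bigmultiprop}, so \(R^\Lambda_\omega\) is not simple, \(e^{\tt T}R^\Lambda_\omega e^{\tt T}\) is the (generally higher-dimensional) endomorphism ring of a projective indecomposable, and the identification \(\varepsilon^{\tt T}R^\Lambda_{\omega+\beta}\varepsilon^{\tt T}\cong R^{\Lambda/\omega}_\beta\) of (\ref{eiso}) breaks down; likewise the double-centralizer decomposition underlying \cref{tenssep} is unavailable. Transporting along Scopes equivalences does not repair this: the functors \(\Psi_i\) of \cref{Psiblock} are Morita equivalences of the blocks \(R^\Lambda_{\omega+\beta}\), which neither preserve dimensions nor induce any evident comparison between \(R^{\Lambda/\omega}_\beta\) and \(R^{\Lambda/\omega'}_{\beta'}\) --- indeed \(\beta\) itself changes under a Scopes move, and \(R^{\Lambda/\omega}_\beta\) is defined as a specific quotient of the affine algebra \(R_\beta\), not in a Morita-invariant way. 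So the essential difficulty is left untouched, and the proposal does not constitute a proof.
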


We note that it follows from \cref{idemcutlem,sepofcoreblock} that \(\eta^{\Lambda}_{\omega, \beta}\) is surjective, and it is straightforward to see that the restrictions of \(\eta_{\omega, \beta}^{\Lambda}\) to \(R^{\Lambda}_\omega \otimes \k\) and \(\k \otimes R^{\Lambda/\omega}_\beta\) are injective as well, but injectivity of \(\eta^{\Lambda}_{\omega, \beta}\) in general remains open.

\begin{Theorem}\label{wbcutthm}
If \(\omega, \beta \in \ZZ_{\geq 0}I\) are such that the pair \((\omega, \beta)\) is \(\bkap\)-separable, then:
\begin{enumerate}
\item  We have an isomorphism \( \bar{\pi}^{\Lambda}_{\omega, \beta}: R^{\Lambda}_{\omega, \beta} \xrightarrow{\sim} 1_{\omega, \beta} R^{\Lambda}_{\omega +\beta}1_{\omega, \beta} \) as graded algebras.
\item \(R^{\Lambda}_{\omega, \beta}\) is a graded cellular algebra, with graded cell datum given by:
\begin{align*}
\mathcal{P} &= (\{(\bmu, \bxi) \mid \bmu \in \Lambda_+^{\bkap}(\omega), \bxi \in \Lambda_{+/ \bmu}(\beta)\}, \triangleright^D);\qquad
T(\bmu, \bxi) = \Std(\bmu) \times \Std(\bxi); \qquad
\textup{inv}= j_\beta,
\end{align*}
\begin{align*}
C(({\tt S}, {\tt s}), ({\tt T},{\tt t})) = ( \bar{\pi}^{\Lambda}_{\omega, \beta})^{-1}(c_{{\tt Ss}, {\tt Tt}}^{\bmu \sqcup \bxi});
\qquad
\deg_\Z({\tt S},{\tt s}) = \deg_{\bmu \sqcup \bxi}({\tt Ss}),
\end{align*}
where the the ordering \(\triangleright^D\) on pairs \((\bmu, \bxi)\) is induced by the usual dominance order applied to the multipartitions \(\bmu \sqcup \bxi \in \Lambda_+^{\bkap}(\omega + \beta) \).
\item The cell modules for this datum are isomorphic to tensor products of Specht (\(R_\omega \otimes R_\beta\))-modules:
\begin{align*}
\{ S^{\bmu} \boxtimes S^{\bxi}\mid \bmu \in \Lambda_+^{\bkap}(\omega), \bxi \in \Lambda^{\bkap}_{+/ \bmu}(\beta)\}.
\end{align*}
\item There is an exact functor of \(\ZZ\)-graded categories
\begin{align*}
\mathcal{U}: R^{\Lambda}_{\omega + \beta}\textup{-mod} \longrightarrow R^{\Lambda}_{\omega, \beta}\textup{-mod},
\qquad
M \longmapsto 1_{\omega, \beta}M,
\end{align*}
where we identify the action of \(R^{\Lambda}_{\omega, \beta}\) with the action of \(1_{\omega, \beta} R^{\Lambda}_{\omega +\beta}1_{\omega, \beta}\) action via \({\bar{\pi}^{\Lambda}_{\omega, \beta}}\), such that
\begin{align*}
\mathcal{U}S^{\blam} \cong 
\begin{cases}
S^{\bmu} \boxtimes S^{\blam/\bmu} & \textup{if there exists } \blam \supseteq \bmu \in \Lambda_+^{\bkap}(\omega);\\
0 & \textup{otherwise,}
\end{cases}
\end{align*}
and moreover
\begin{align*}
\{ \mathcal{U} D^{\bmu \sqcup \bxi} \cong \textup{hd}(S^{\bmu} \boxtimes S^{\bxi} )  \mid \bmu \in \Lambda_+^{\bkap}(\omega), \bxi \in \Lambda_{+/\bmu}^{\bkap}(\beta), \bmu \sqcup \bxi \in \Lambda^{\bkap}_+(\omega + \beta)', 1_{\omega, \beta}D^{\bmu \sqcup \bxi} \neq 0\}
\end{align*}
 is an irredundant set of simple \(R^{\Lambda}_{\omega,\beta}\)-modules, up to grading shift.
In other words, \(\mathcal{U}\) sends cell modules to cell modules (or zero), and simple modules to simple modules (or zero).
\end{enumerate}
\end{Theorem}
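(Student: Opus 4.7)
The plan is to deduce all four statements from the cellular structure of $R^{\Lambda}_{\omega + \beta}$ via idempotent truncation by $1_{\omega, \beta}$, using $\bkap$-separability to control which cells survive.

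First, for (i), note that $\bar{\pi}^{\Lambda}_{\omega, \beta}$ is injective by construction. Surjectivity follows from \cref{idemcutlem}, which gives that the composition $\pi^{\Lambda}_{\omega,\beta} = \bar{\pi}^{\Lambda}_{\omega,\beta} \circ (\text{quotient})$ is surjective onto $1_{\omega,\beta} R^{\Lambda}_{\omega+\beta} 1_{\omega,\beta}$. Hence $\bar{\pi}^{\Lambda}_{\omega,\beta}$ is an isomorphism of graded algebras.

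For (ii) and (iii), the strategy is to idempotent-truncate the graded cellular basis of $R^{\Lambda}_{\omega+\beta}$ from \cref{cellHM} by $1_{\omega,\beta}$, and then transport the result to $R^{\Lambda}_{\omega,\beta}$ via the isomorphism from (i). The first task is to identify which cells survive the truncation. For a cell label $\blam \in \Lambda^{\bkap}_+(\omega+\beta)$ and ${\tt S}, {\tt T} \in \Std(\blam)$, the cellular basis element $1_{\omega,\beta} c^{\blam}_{{\tt S}, {\tt T}} 1_{\omega,\beta}$ is nonzero only if $\bi^{\tt S}$ and $\bi^{\tt T}$ factor as concatenations of words in $I^{\omega} \cdot I^{\beta}$; equivalently, only if $\textup{sh}^{\downarrow}_{\height(\omega)}({\tt S})$ and $\textup{sh}^{\downarrow}_{\height(\omega)}({\tt T})$ are multipartitions in $\Lambda^{\bkap}_+(\omega)$. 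Using $\bkap$-separability (\cref{kapsepdef}), I would argue that if $\blam$ contains any such sub-multipartition $\bmu \in \Lambda^{\bkap}_+(\omega)$, it must be unique: any two such choices would give rise to forbidden intermediate multipartitions. Hence the surviving cells are indexed by pairs $(\bmu, \bxi)$ with $\bmu \sqcup \bxi = \blam$, and the standard tableaux for such a cell are exactly of the form ${\tt S}{\tt s}$ with ${\tt S} \in \Std(\bmu)$ and ${\tt s} \in \Std(\bxi)$. The induced dominance order on pairs then matches the description in (ii). Axioms (C1)--(C3) and (CG) are inherited from the cellular structure on $R^{\Lambda}_{\omega+\beta}$ after checking that the ideal-filtration condition (C2) is preserved under idempotent truncation --- standard for cellular algebras.

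For the cell module identification in (iii), I would use \cref{transSpecht} applied in $R^{\Lambda}_{\omega+\beta}$ to realise the cell module indexed by $\blam = \bmu \sqcup \bxi$ as (up to a quotient of) $1_{\omega,\beta} S^{\blam}$, and then invoke \cref{decompSkew}, whose hypotheses are exactly $\bkap$-separability of $(\omega, \beta)$, to obtain the $R_\omega \otimes R_\beta$-module isomorphism $1_{\omega,\beta} S^{\blam} \cong S^{\bmu} \boxtimes S^{\blam/\bmu}$. The grading shift built into the definition $S^{\blam/\bmu} = \zS^{\btau}\langle \deg_{\blam/\bmu}({\tt t}^{\blam/\bmu})\rangle$ is consistent with the degree function specified in (ii), as verified via the additive relation $\deg_{\bmu \sqcup \bxi}({\tt S}{\tt s}) = \deg_{\bmu}({\tt S}) + \deg_{\bxi}({\tt s})$.

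For (iv), exactness of $\mathcal{U}$ is immediate since multiplication by an idempotent is an exact functor on modules. The formula $\mathcal{U} S^{\blam} \cong S^{\bmu} \boxtimes S^{\blam/\bmu}$ (when such $\bmu$ exists) or $0$ (otherwise) follows from the cell module identification of (iii) together with \cref{decompSkew}, using that $1_{\omega,\beta} S^{\blam} = 0$ when no sub-multipartition $\bmu \in \Lambda^{\bkap}_+(\omega)$ of $\blam$ exists. Finally, the simple module classification is the standard consequence of idempotent truncation applied to a cellular algebra: the functor $M \mapsto 1_{\omega,\beta} M$ sends simples either to simples or to zero, and $\mathcal{U} D^{\blam}$ is precisely the head of $\mathcal{U}$ applied to the cell module with label $\blam$. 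Combining this with the cell module formula yields the irredundant list of simples in the stated form. The main obstacle I anticipate is the careful bookkeeping in steps (ii)--(iii): verifying that the truncated cellular basis of $1_{\omega,\beta} R^{\Lambda}_{\omega+\beta} 1_{\omega,\beta}$ indeed factorises cleanly into products of basis elements coming from $R^{\Lambda}_\omega$-type and skew-type Specht-module bases, which is where the rigidity furnished by $\bkap$-separability is doing the essential work.
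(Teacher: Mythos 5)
Your proposal is correct and follows essentially the same route as the paper's proof: part (i) via \cref{idemcutlem}, parts (ii)–(iii) by truncating the cellular basis of \cref{cellHM} by $1_{\omega,\beta}$ and using $\bkap$-separability (exactly as in the proof of \cref{decompSkew}) to show each surviving cell label $\blam$ decomposes uniquely as $\bmu\sqcup\bxi$, and part (iv) via exactness of idempotent truncation and \cref{decompSkew}. No gaps.
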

\begin{proof}
Part (i) follows from \cref{idemcutlem} since \(\pi_{\omega, \beta}^{\Lambda} \) is injective by definition. 

For part (ii), we note that for any \(c^{\blam}_{{\tt X}, {\tt Y}}\) in the cellular basis for \(R^{\Lambda}_{\omega + \beta}\) in \cref{cellHM}, we have
that \(
1_{\omega, \beta} c^{\blam}_{{\tt X}, {\tt Y}} 1_{\omega, \beta} = c^{\bmu \sqcup \bxi}_{{\tt Ss}, {\tt Yy}}
\)
for some \(\bmu \in \Lambda^{\bkap}_{+}(\omega), \bxi \in \Lambda^{\bkap}_{+/\bmu}(\beta)\),  \({\tt S}, {\tt T} \in \Std(\bmu)\), \({\tt s}, {\tt t} \in \Std(\bxi)\) with \(\blam = \bmu \sqcup \bxi\), \({\tt X} = {\tt Ss}\), \({\tt Y}= {\tt Tt}\) if such \(\bmu, \bxi, {\tt S}, {\tt T}, {\tt s}, {\tt t}\) exist, and \(1_{\omega, \beta} c^{\blam}_{{\tt X}, {\tt Y}} 1_{\omega, \beta}  = 0\) otherwise. Moreover, if \(\blam = \bmu \sqcup \bxi\) as above, then such \(\bmu, \bxi\) are unique due to the fact that \((\omega, \beta)\) is \(\bkap\)-separated, by the same argument as in the proof of \cref{decompSkew}.

Thus, retaining only the nonzero basis elements and corresponding cell data in the idempotent truncation by \(1_{\omega, \beta}\) it immediately follows that the graded cellular datum for \(R_{\omega + \beta}\) restricts to a graded cellular datum for the subalgebra \(1_{\omega, \beta} R_{\omega + \beta} 1_{\omega, \beta}\) (see for instance \cite[\S2.3]{Spence}). Translating through the isomorphism \(\bar{\pi}^{\Lambda}_{\omega, \beta}\) yields the graded cellular datum for \(R^{\Lambda}_{\omega, \beta}\) in (ii).

Part (iii) follows immediately from \cref{decompSkew}.

For (iv), note that \(\mathcal{U}\) is the composition of an idempotent truncation functor and an isomorphism of categories:
\begin{align*}
\mathcal{U} : R^{\Lambda}_{\omega + \beta}\textup{-mod} \longrightarrow 1_{\omega, \beta}R^{\Lambda}_{\omega + \beta}1_{\omega, \beta} \textup{-mod} \bijection R^{\Lambda}_{\omega, \beta}\textup{-mod},
\end{align*}
and therefore is an exact functor, with the simple \(R^{\Lambda}_{\omega, \beta}\)-modules given by 
\begin{align*}
\{ \mathcal{U} D^{\blam} \mid \blam \in \Lambda^{\bkap}_+(\omega + \beta)', \mathcal{U}D^{\blam} \neq 0\},
\end{align*}
see \cite[\S6.2]{Green}.
The fact that \(\mathcal{U}(S^{\blam}) \cong S^{\bmu} \boxtimes S^{\blam/\bmu}\) if such \(\Lambda^{\bkap}_+(\omega) \ni \bmu \subseteq \blam\) exists follows from \cref{decompSkew}. If \(\blam \in \Lambda^{\bkap}_+(\omega + \beta)'\), with \(\mathcal{U}D^{\blam} \neq 0\), then it must be that \(\blam = \bmu \sqcup \bxi\) for some (unique) \(\bmu \in \Lambda^{\bkap}_+(\omega)\), \(\bxi \in \Lambda_{+/\bmu}^{\bkap}(\beta)\). Moreover, 
as in \cite[\S6.2, Remark 2]{Green}, \(\textup{hd}(S^{\blam}) = D^{\blam}\) implies that \(\mathcal{U}S^{\blam}\cong S^{\bmu} \boxtimes S^{\bxi}\) has simple head \(\mathcal{U}D^{\blam}\).
\end{proof}

\subsection{Cellularity of the \(\omega\)-skew cyclotomic KLR algebra}\label{cellskewsec}
Throughout this section we assume that \(\omega, \beta \in \ZZ_{\geq 0}I\), \(\height(\omega) = r\), \(\height(\beta) = b\), and that \(\brho \in \Lambda^{\bkap}_+(\omega)\) is a \(\bkap\)-core. We further assume that \((r,b)\)-adapted reduced expressions have been used to define \(\psi_\sigma\), for all \(\sigma \in \mathfrak{S}_r, \mathfrak{S}_b, \mathfrak{S}_{r+b}\).

The Specht module \(S^{\brho}\) is a simple \(R_\omega\)-module by \cref{coresimple}, and thus there exists a word \(\bj\) of \(S^{\brho}\) such that the highest \(\Z\)-degree \(\bj\)-space \(1_{\bj}S^{\brho}_m\) in \(S^{\brho}\) is one-dimensional, and the lowest \(\Z\)-degree \(\bj\)-space  \(1_{\bj}S^{\brho}_{-m}\) in \(S^{\brho}\) is one-dimensional as well (see for instance \cite[Lemma 2.30]{klesh14}). Then by \cref{Spechtbasis} we must have \(1_{\bj}S^{\brho}_m = \k\{v^{\tt T}\}\) and \(1_{\bj}S^{\brho}_{-m} = \k\{v^{\tt \bar T}\}\) for some \({\tt T}, {\tt \bar T} \in \Std(\brho)\).

As \(R^\Lambda_{\omega} = \END(S^{\brho})\) by \cref{coresimple}, there exist homogeneous
elements \(f_1 = 1_{\bj} f_1 1_{\bj}\), \(f_2 = 1_{\bj} f_2 1_{\bj} \in R^\Lambda_{\omega}\), of \(\Z\)-degree \(-2m\), \(2m\) respectively such that
\begin{align*}
f_1 v^{\tt X} = \delta_{{\tt \bar T}, {\tt X}} v^{\tt T}, \qquad 
f_2 v^{\tt X} = \delta_{{\tt T}, {\tt X}} v^{\tt \bar T}.
\end{align*}
Then, setting \(e^{\tt T} := f_1 f_2\), \(e^{\tt \bar T} :=f_2 f_1\), we have 
\begin{align*}
e^{\tt T} v^{\tt X} = \delta_{{\tt T}, {\tt X}} v^{\tt T}, \qquad
e^{\tt \bar T} v^{\tt X} = \delta_{{\tt \bar T}, {\tt X}} v^{\tt \bar T}.
\end{align*}
It follows then that \(e^{\tt T} = 1_{\bj} e^{\tt T} 1_{\bj}\), \(e^{\tt \bar T} = 1_{\bj} e^{\tt \bar T} 1_{\bj}\) are homogeneous primitive idempotents in \(R^\Lambda_{\omega}\).

Now, since \(1_{\bj}S^{\brho}_{m}\) and \(1_{\bj}S^{\brho}_{-m}\) are one-dimensional, we have \((1_\bj R^{\Lambda}_{\omega}1_{\bj} )_{2m} = \k\{f_2\}\) and \((1_\bj R^{\Lambda}_{\omega}1_{\bj})_{-2m} = \k\{f_1\}\). As the involution \(j_\omega\) is graded, with \(j_\omega(1_{\bj}) = 1_{\bj}\), it must be that \(j_\omega(f_1) = c_1 f_1\), and \(j_\omega(f_2) = c_2 f_2\), for some \(c_1, c_2 \in \k\). We thus have that \(j_\omega(e^{\tt T}) = j_\omega(f_1 f_2) = c_1 c_2 f_2 f_1 = c_1 c_2 e^{\tt \bar T}\). As an anti-isomorphism must preserve primitive idempotents, it follows that \(j_\omega(e^{\tt T}) = e^{\tt \bar T}\), \(j_\omega(e^{\tt \bar T}) = e^{\tt T}\), and \(c_1 c_2 = 1\).

Via \cref{tenssep}, we will identify \(R^\Lambda_\omega \otimes R^{\Lambda /\omega}_\beta\) with \(1_{\omega,\beta}R^\Lambda_{\omega + \beta}1_{\omega,\beta}\). Write \(\varepsilon^{\tt T} := e^{\tt T} \otimes 1_\beta \in R^\Lambda_\omega \otimes R^{\Lambda /\omega}_\beta = 1_{\omega,\beta}R^\Lambda_{\omega + \beta}1_{\omega,\beta}\). As \(e^{\tt T}\) is primitive, it follows from \cref{tenssep} that we have an algebra isomorphism
\begin{align}\label{eiso}
\eta^{\tt T}: R_{\beta}^{\Lambda/ \omega} \bijection \k\{e^{\tt T}\} \otimes R_{\beta}^{\Lambda/ \omega} = \varepsilon^{\tt T} R^{\Lambda}_{\omega + \beta} \varepsilon^{\tt T},
\qquad
x \longmapsto e^{\tt T} \otimes x,
\end{align}
for all \(x \in R_{\beta}^{\Lambda/ \omega} \).
Recalling \cref{cellHM}, for \(\blam/\brho \in \Lambda^{\bkap}_{+/\brho}(\beta)\), \({\tt x}, {\tt y} \in \Std(\blam/\brho)\), set
\begin{align*}
 c^{\blam/\brho}_{{\tt x}, {\tt y}}:= (\eta^{\tt T})^{-1}(\varepsilon^{\tt T}C^{\blam}_{{\tt T}{\tt x}, {\tt \bar T}{\tt y}} \varepsilon^{\tt T}) \in R^{\Lambda/ \omega}_\beta.
 \end{align*}

The following theorem is Theorem~\hyperlink{thm:D}{D} in the introduction.

\begin{Theorem}\label{skewcell}
If \(\omega, \beta \in \ZZ_{\geq 0}I\) and \(\brho \in \Lambda^{\bkap}_+(\omega)\) is a \(\bkap\)-core, then:
\begin{enumerate}
\item The \(\omega\)-skew cyclotomic KLR algebra \(R^{\Lambda/\omega}_\beta\) is a graded cellular algebra, with graded cell datum given by:
\begin{align}\label{d1}
\mathcal{P} &= (\Lambda^{\bkap}_{+/\brho}(\beta), \triangleright^D);\qquad
T(\blam/\brho) = \Std(\blam/\brho); \qquad
\textup{inv}= j_\beta;
\end{align}
\begin{align}\label{d2}
C({\tt x}, {\tt y}) =  c^{\blam/\brho}_{{\tt x}, {\tt y}}; \qquad
\deg_\Z({\tt x}) = \deg_{\blam/\brho}({\tt x}).
\end{align}
\item The left cell modules for this datum are the skew Specht \(R_\beta\)-modules:
\begin{align*}
\{S^{\blam/\brho} \mid \blam/\brho \in \Lambda^{\bkap}_{+/\brho}(\beta)\}.
\end{align*}
\item There is an exact functor of \(\ZZ\)-graded categories
\begin{align*}
\mathcal{T} : R^{\Lambda}_{\omega + \beta}\textup{-mod} \longrightarrow R^{\Lambda/\omega}_{\beta}\textup{-mod},
\qquad
M \longmapsto \varepsilon^{\tt T}M,
\end{align*}
where we identify the action of \(\varepsilon^{\tt T} R^{\Lambda}_{\omega + \beta} \varepsilon^{\tt T}\) with the action of \(R_{\beta}^{\Lambda/ \omega}\) via \(\eta^{\tt T}\),
such that \(\mathcal{T}S^{\blam} \cong S^{\blam/\brho}\).
Moreover,
\begin{align*}
\{ \mathcal{T} D^{\blam} \cong \textup{hd}(S^{\blam/\brho})  \mid \blam \in \Lambda^{\bkap}_+(\omega + \beta)', 1_{\omega, \beta}D^{\blam} \neq 0\}
\end{align*}
is an irredundant set of simple \(R^{\Lambda/\omega}_{\beta}\)-modules, up to grading shift.
In other words, \(\mathcal{T}\) sends cell modules to cell modules (or zero), and simple modules to simple modules (or zero).
\end{enumerate}
\end{Theorem}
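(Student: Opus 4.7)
The plan is to transport cellular data through the chain of isomorphisms built up in \cref{tenssep,wbcutthm}, and then cut by the primitive idempotent $\varepsilon^{\tt T}$. First I would apply \cref{wbcutthm} with $\omega = \cont(\brho)$; by \cref{coreksep}, $(\omega,\beta)$ is $\bkap$-separable, so \cref{wbcutthm}(ii) gives a cellular basis of $R^\Lambda_{\omega,\beta} = 1_{\omega,\beta} R^\Lambda_{\omega+\beta} 1_{\omega,\beta}$ indexed by pairs $(({\tt S},{\tt s}),({\tt T},{\tt t}))$ with ${\tt S},{\tt T}\in\Std(\brho)$, ${\tt s},{\tt t}\in\Std(\blam/\brho)$ (since $\brho$ is the unique element of $\Lambda_+^{\bkap}(\omega)$). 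Next I would pull this cellular structure across the isomorphism $\eta^\Lambda_{\omega,\beta}$ of \cref{tenssep} to obtain a cellular structure on $R^\Lambda_\omega \otimes R^{\Lambda/\omega}_\beta$, whose cell modules are $S^{\brho} \boxtimes S^{\blam/\brho}$.

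The next step is to cut by $\varepsilon^{\tt T}$. Using that $R^\Lambda_\omega$ is split simple with $\END(S^{\brho}) \cong R^\Lambda_\omega$ (\cref{coresimple}) and that $e^{\tt T}$ is primitive with $e^{\tt T} R^\Lambda_\omega e^{\tt T} = \k\{e^{\tt T}\}$, for each basis element $C^{\brho \sqcup (\blam/\brho)}_{{\tt Tx},\bar{\tt T}{\tt y}}$ the product $\varepsilon^{\tt T} C^{\brho \sqcup (\blam/\brho)}_{{\tt Tx},\bar{\tt T}{\tt y}}\varepsilon^{\tt T}$ decomposes as $e^{\tt T} \otimes z$ for some $z \in R^{\Lambda/\omega}_\beta$, and I would define $c^{\blam/\brho}_{{\tt x},{\tt y}} := z$. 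The key collapse $e^{\tt T} a e^{\tt T} = \lambda_a e^{\tt T}$ is precisely what makes the ``$\varepsilon^{\tt T}$ from both sides'' cut produce a well-defined basis on $R^{\Lambda/\omega}_\beta$ indexed by pairs from $\Std(\blam/\brho)$. This will yield (C1) and (C2), with the partial order $\triangleright^D$ on $\Lambda^{\bkap}_{+/\brho}(\beta)$ inherited from the dominance order on $\Lambda^{\bkap}_+(\omega + \beta)$ via $\blam/\brho \mapsto \blam$.

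For cell modules (ii), I would apply $\mathcal{T}(-) = \varepsilon^{\tt T}(-)$ to the decomposition of \cref{decompSkew}: $1_{\omega,\beta} S^{\blam} \cong S^{\brho} \boxtimes S^{\blam/\brho}$, which gives $\varepsilon^{\tt T} S^{\blam} \cong e^{\tt T} S^{\brho} \boxtimes S^{\blam/\brho} \cong \k\{v^{\tt T}\} \boxtimes S^{\blam/\brho} \cong S^{\blam/\brho}$ as an $R^{\Lambda/\omega}_\beta$-module via $\eta^{\tt T}$. This simultaneously identifies the abstract cell modules with the skew Specht modules of \S\ref{Spechtsec} and proves the Specht part of~(iii). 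The simple-module statement of~(iii) then follows from the general theory of idempotent truncation functors: $\mathcal{T}$ is exact, $\mathcal{T}S^{\blam}$ has simple head $\mathcal{T}D^{\blam}$ whenever the latter is nonzero, and $\{\mathcal{T}D^{\blam} \mid 1_{\omega,\beta}D^{\blam} \neq 0\}$ is an irredundant list of simples (cf.\ \cite[\S6.2]{Green}, as used in the proof of \cref{wbcutthm}(iv)).

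The main obstacle is verifying axiom~(C3): we must check $j_\beta(c^{\blam/\brho}_{{\tt x},{\tt y}}) = c^{\blam/\brho}_{{\tt y},{\tt x}}$ modulo higher cells. This is delicate because $\eta^\Lambda_{\omega,\beta}$ intertwines $j_{\omega+\beta}$ with $j_\omega \otimes j_\beta$, and $j_\omega(e^{\tt T}) = e^{\bar{\tt T}} \neq e^{\tt T}$, so the restriction of $j_{\omega+\beta}$ does \emph{not} preserve the subalgebra $\varepsilon^{\tt T} R^\Lambda_{\omega+\beta} \varepsilon^{\tt T}$. The resolution is to use the asymmetric definition $c^{\blam/\brho}_{{\tt x},{\tt y}} = (\eta^{\tt T})^{-1}(\varepsilon^{\tt T} C^{\blam}_{{\tt Tx},\bar{\tt T}{\tt y}} \varepsilon^{\tt T})$, with ${\tt T}$ on the left factor and $\bar{\tt T}$ on the right: applying $j_{\omega+\beta}$ exchanges ${\tt Tx}$ with $\bar{\tt T}{\tt y}$, and combined with the relation $j_\omega(e^{\tt T}) = e^{\bar{\tt T}}$ and $e^{\tt T} e^{\bar{\tt T}} \in \k\{f_1\}$, the product collapses modulo higher cells to the desired form. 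Making this bookkeeping precise — particularly tracking the scalars $c_1, c_2$ with $c_1 c_2 = 1$ that arose in the construction of $e^{\tt T}$ and $e^{\bar{\tt T}}$ — is the main technical grind of the proof.
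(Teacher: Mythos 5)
Your proposal is correct and follows essentially the same route as the paper: truncate the Hu--Mathas cellular basis of $R^\Lambda_{\omega+\beta}$ by $\varepsilon^{\tt T}$, identify the cell modules with $S^{\blam/\brho}$ via the decomposition $1_{\omega,\beta}S^{\blam}\cong S^{\brho}\boxtimes S^{\blam/\brho}$, and get the simple modules from exactness of the idempotent truncation functor; you have also correctly isolated (C3) and the asymmetric ${\tt T}$/$\bar{\tt T}$ definition as the delicate point. The only ingredient worth flagging for your "bookkeeping grind" is the paper's trick for nailing the residual scalar: one shows $\upsilon(\varepsilon^{\tt T}C^{\blam}_{{\tt Tx},\bar{\tt T}{\tt y}}\varepsilon^{\tt T}) = c_2\,\varepsilon^{\tt T}C^{\blam}_{{\tt Ty},\bar{\tt T}{\tt x}}\varepsilon^{\tt T}$ modulo higher cells and then forces $c_2=1$ by specializing to $\beta=0$, where the involution must fix $\varepsilon^{\tt T}C^{\brho}_{{\tt T},\bar{\tt T}}\varepsilon^{\tt T}$.
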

\begin{proof}

As in the proof of \cref{wbcutthm}(ii), we may establish (i) via idempotent truncation by \(\varepsilon^{\tt T}\) of the cell structure for \(R^\Lambda_{\omega + \beta}\). However, the proof is slightly more delicate due to the fact that \(\varepsilon^{\tt T}\) is not a word idempotent in the KLR algebra, so we include full details in what follows.

Recall the cellular structure and basis for \(R^\Lambda_{\omega + \beta}\) as defined in \cref{cellHM}.
We first establish a number of claims about the \(\varepsilon^{\tt T}\)-truncation of this cellular basis.

{\em Claim (1): 
Let \(\blam \in \Lambda^{\bkap}_+(\omega + \beta)\). The set
\begin{align*}
\{ \varepsilon^{\tt T} C^{\blam}_{{\tt Tx}, {\tt \bar Ty}} \varepsilon^{\tt T} \mid {\tt x, y} \in \Std(\blam/\brho)\}
\end{align*}
is a \(\k\)-basis for \(\varepsilon^{\tt T} ((R^{\Lambda}_{\omega + \beta})^{\trianglerighteq^D \blam}  /(R^{\Lambda}_{\omega + \beta})^{\triangleright^D \blam}) \varepsilon^{\tt T}\).
}

Fix \(\blam \in \Lambda^{\bkap}_+(\omega + \beta)\). Letting \({\tt U, V} \in \Std(\blam)\), we note that 
\(
\varepsilon^{\tt T} C^{\blam}_{{\tt U, V}} \varepsilon^{\tt T} = \varepsilon^{\tt T} 1_{\omega, \beta}C^{\blam}_{{\tt U, V}}1_{\omega, \beta} \varepsilon^{\tt T}  = 0
\)
unless
\(
\cont(\textup{sh}^\downarrow_r({\tt U})) =  \cont(\textup{sh}^\downarrow_r({\tt V})) = \omega, 
\)
which, as \(\brho\) is a \(\bkap\)-core, implies that 
\(
\textup{sh}^\downarrow_r({\tt U}) =  \textup{sh}^\downarrow_r({\tt V}) = \brho
\).
Thus 
\begin{align}\label{makezero1}
\varepsilon^{\tt T} C^{\blam}_{{\tt U, V}} \varepsilon^{\tt T} \neq 0 \iff {\tt U} = {\tt Xx}, {\tt V} = {\tt Y y}
\end{align}
for some \({\tt X, Y} \in \Std(\brho)\), \({\tt x,y} \in \Std(\blam/\brho)\).

Recalling the isomorphisms \(G^{\blam/\bmu}\), \(F^{\blam}\) from \cref{decompSkew,transSpecht} respectively, note that
\begin{align*}
 F^{\blam}(\varepsilon^{\tt T} C^{\blam}_{{\tt Xx}, {\tt Yy}} \varepsilon^{\tt T}) &=  F^{\blam}(1_{\omega, \beta}\varepsilon^{\tt T} C^{\blam}_{{\tt Xx}, {\tt Yy}} \varepsilon^{\tt T}1_{\omega, \beta})
 =
1_{\omega, \beta} \varepsilon^{\tt T} v^{\tt Xx} \otimes 1_{\omega, \beta} j_{\omega + \beta}(\varepsilon^{\tt T}) v^{\tt Yy}
\\
&=
 (e^{\tt T} \otimes 1_\beta)
 1_{\omega, \beta}
 v^{\tt Xx} 
 \otimes 
 (e^{\tt \bar T} \otimes 1_\beta)
 1_{\omega, \beta}
 v^{\tt Yy}
 =G^{\blam/\brho}(e^{\tt T}v^{\tt X} \boxtimes v^{\tt x}) \otimes G^{\blam/\brho}(e^{\tt \bar T}v^{\tt Y} \boxtimes v^{\tt y})\\
 &=\delta_{{\tt T},{\tt X}} \delta_{{\tt \bar T}, {\tt Y}}G^{\blam/\brho}(v^{\tt T} \boxtimes v^{\tt x}) \otimes G^{\blam/\brho}(v^{\tt \bar T} \boxtimes v^{\tt y})
 =\delta_{{\tt T},{\tt X}} \delta_{{\tt \bar T}, {\tt Y}} (1_{\omega, \beta} v^{\tt Tx}) \otimes (1_{\omega, \beta} v^{\tt \bar T y})
 \\
 &=\delta_{{\tt T},{\tt X}} \delta_{{\tt \bar T}, {\tt Y}} v^{\tt Tx} \otimes v^{\tt \bar T y}
 =\delta_{{\tt T},{\tt X}} \delta_{{\tt \bar T}, {\tt Y}}   F^{\blam}( C^{\blam}_{{\tt Tx}, {\tt \bar T y}}),
\end{align*}
so it follows from \cref{transSpecht} that 
\begin{align}\label{notmakezero1}
\varepsilon^{\tt T} C^{\blam}_{{\tt Xx}, {\tt Yy}} \varepsilon^{\tt T} = \delta_{{\tt T},{\tt X}} \delta_{{\tt \bar T}, {\tt Y}} C^{\blam}_{{\tt Tx}, {\tt \bar Ty}}\qquad
\textup{mod } (R^{\Lambda}_{\omega + \beta})^{\triangleright^D \blam}.
\end{align}
Thus by \cref{cellHM}, this establishes Claim (1). 

Now, writing \(l_{{\tt X}, {\tt Y}}(a)\) for the scalar functions corresponding to the cell datum of \cref{cellHM}, define
\begin{align*}
l'_{{\tt u},{\tt v}}(\varepsilon^{\tt T} a \varepsilon^{\tt T}) := l_{{\tt T u}, {\tt T v}}(a\varepsilon^{\tt T })
\end{align*}
for all \(\blam/\brho \in \Lambda^{\bkap}_{+/\brho}(\beta)\), \({\tt u}, {\tt v} \in \Std(\blam/\brho)\), and \(a \in R^{\Lambda}_{\omega + \beta}\). 

{\em Claim (2). For all \(\blam \in \Lambda^{\bkap}_+(\omega + \beta)\), \({\tt x, u} \in \Std(\blam/\brho)\), \(a \in R^\Lambda_{\omega + \beta}\), we have:
\begin{align*}
\varepsilon^{ \tt T} a\varepsilon^{ \tt T}  \cdot \varepsilon^{ \tt T}C^{\blam}_{{\tt T x},{ \tt \bar T y}}\varepsilon^{ \tt T} = 
\sum_{{\tt u} \in \Std(\blam/\brho)} l'_{{\tt u},{\tt x}}(\varepsilon^{\tt T} a \varepsilon^{\tt T}) \varepsilon^{ \tt T}C^{\blam}_{{\tt T u}, {\tt \bar T y}}\varepsilon^{ \tt T}
\qquad \textup{mod } (R^{\Lambda}_{\omega + \beta})^{\triangleright^D \blam}.
\end{align*}
}

The proof of Claim (2) is a simple exercise using (C2) in the definition of a cellular basis in~\cref{celldef}.

\begin{answer}
To see this, note that
\begin{align*}
  \varepsilon^{ \tt T} a\varepsilon^{ \tt T} \cdot \varepsilon^{ \tt T}C^{\blam}_{{\tt T x},{ \tt \bar T y}}\varepsilon^{ \tt T}  &=
 \varepsilon^{ \tt T} (a\varepsilon^{ \tt T} \cdot C^{\blam}_{{\tt T x},{ \tt \bar T y}}) \varepsilon^{ \tt T} \\
&= \varepsilon^{\tt T} \bigg(\sum_{{\tt U} \in \Std(\blam)} l_{{\tt U}, {\tt T x}}(a\varepsilon^{\tt T})C^{\blam}_{{\tt U}, {\tt \bar T y}}\bigg) \varepsilon^{\tt T} 
&\textup{mod } (R^{\Lambda}_{\omega + \beta})^{\triangleright^D \blam}\\
&= \sum_{{\tt U} \in \Std(\blam)} l_{{\tt U}, {\tt T x}}(a\varepsilon^{\tt T}) \varepsilon^{\tt T} C^{\blam}_{{\tt U}, {\tt \bar T y}} \varepsilon^{\tt T} 
&\textup{mod } (R^{\Lambda}_{\omega + \beta})^{\triangleright^D \blam}\\
&= \sum_{{\tt u} \in \Std(\blam/\brho)} l_{{\tt T u}, {\tt T x}}(a\varepsilon^{\tt T}) \varepsilon^{\tt T} C^{\blam}_{{\tt T u}, {\tt \bar T y}} \varepsilon^{\tt T} 
&\textup{mod } (R^{\Lambda}_{\omega + \beta})^{\triangleright^D \blam}\\
&= \sum_{{\tt u} \in \Std(\blam/\brho)} l'_{{\tt u}, {\tt x}}(\varepsilon^{\tt T}a\varepsilon^{\tt T}) \varepsilon^{\tt T} C^{\blam}_{{\tt T u}, {\tt \bar T y}} \varepsilon^{\tt T} 
&\textup{mod } (R^{\Lambda}_{\omega + \beta})^{\triangleright^D \blam}
\end{align*}
which establishes Claim (2).
\end{answer}

Let \(\upsilon\) be the anti-automorphism on \(\varepsilon^{\tt T}R^{\Lambda}_{\omega+ \beta} \varepsilon^{\tt T} \cong \k\{e^{\tt T}\} \otimes R^{\Lambda/\omega}_\beta\) given by \(\upsilon = \textup{id} \otimes j_\beta = (j_\omega \otimes \textup{id}) \circ j_{\omega + \beta}\). 

{\em Claim (3). For all \(\blam \in \Lambda^{\bkap}_+(\omega + \beta)\), \({\tt x, y} \in \Std(\blam/\brho)\), we have
\begin{align*}
 \upsilon( \varepsilon^{\tt T}C^{\blam}_{{\tt T}{\tt x}, {\tt \bar T}{\tt y}}\varepsilon^{\tt T} ) =   \varepsilon^{\tt T} C^{\blam}_{{\tt T}{\tt y}, {\tt \bar T}{\tt x}} \varepsilon^{\tt T} \qquad \textup{mod } (R^{\Lambda}_{\omega + \beta})^{\triangleright^D \blam}.
\end{align*}
}

To see this, note that for \(\varepsilon^{\tt \bar T} x \varepsilon^{\tt \bar T} \in \varepsilon^{\tt \bar T} R^{\Lambda}_{\omega + \beta} \varepsilon^{\tt \bar T} \cong \k\{ e^{\tt \bar T}\} \otimes R^{\Lambda/\omega}_{\beta}\), we have \(\varepsilon^{\tt \bar T} x \varepsilon^{\tt \bar T} = e^{\tt \bar T} \otimes x'\) for some \(x' \in R^{\Lambda/\omega}_{\beta}\), and so it follows that
\begin{align*}
(j_\omega \otimes id)(\varepsilon^{\tt \bar T} x \varepsilon^{\tt \bar T}) 
= (j_\omega \otimes id)(e^{\tt \bar T} \otimes x') 
= e^{\tt T} \otimes x' 
= f_1 \varepsilon^{\tt \bar T} f_2 \otimes x'
=
(f_1 \otimes 1_\beta)(\varepsilon^{\tt \bar T} x \varepsilon^{\tt \bar T})(f_2 \otimes 1_\beta).
\end{align*}
Utilizing this fact, for all \(\blam \in \Lambda^{\bkap}_+(\omega + \beta)\), \({\tt x, y} \in \Std(\blam/\brho)\), we have by \cref{decompSkew,transSpecht,cellHM} that
\begin{align*}
F^{\blam} \circ \upsilon( \varepsilon^{\tt T}C^{\blam}_{{\tt T}{\tt x}, {\tt \bar T}{\tt y}}\varepsilon^{\tt T} )
&= F^{\blam} \circ(j_\omega \otimes \textup{id}) \circ j_{\omega+ \beta} (\varepsilon^{\tt T}C^{\blam}_{{\tt T}{\tt x}, {\tt \bar T}{\tt y}} \varepsilon^{\tt T})
= F^{\blam} \circ (j_\omega \otimes \textup{id})( \varepsilon^{\tt \bar T} C^{\blam}_{{\tt \bar T}{\tt y}, {\tt T}{\tt x}} \varepsilon^{\tt \bar T})\\
&=F^{\blam} ((f_1 \otimes 1_\beta)\varepsilon^{\tt \bar T} C^{\blam}_{{\tt \bar T}{\tt y}, {\tt T}{\tt x}} \varepsilon^{\tt \bar T}(f_2 \otimes 1_\beta))
=(f_1 \otimes 1_\beta) \varepsilon^{\tt \bar T} v^{\tt \bar T y} \otimes j_{\omega + \beta}( \varepsilon^{\tt \bar T}(f_2 \otimes 1_\beta)) v^{\tt T x}\\
&=(f_1 \otimes 1_\beta) \varepsilon^{\tt \bar T} v^{\tt \bar T y} \otimes (c_2f_2 \otimes 1_\beta) \varepsilon^{\tt T} v^{\tt T x}
=(f_1 \otimes 1_\beta) \varepsilon^{\tt \bar T} 1_{\omega, \beta} v^{\tt \bar T y} \otimes (c_2f_2 \otimes 1_\beta) \varepsilon^{\tt T}1_{\omega, \beta}  v^{\tt T x}\\
&=G^{\blam/\brho}(f_1 e^{\tt \bar T}v^{\tt \bar T} \boxtimes v^{\tt y}) \otimes G^{\blam/\brho}( c_2 f_2 e^{\tt T} v^{\tt T} \boxtimes v^{\tt x})
=G^{\blam/\brho}(v^{\tt  T} \boxtimes v^{\tt y}) \otimes G^{\blam/\brho}( c_2 v^{\tt \bar T} \boxtimes v^{\tt x})\\
&= c_2 v^{\tt T y} \otimes  v^{\tt \bar T x} 
= F^{\blam}(c_2 C^{\blam}_{{\tt T}{\tt y}, {\tt \bar T}{\tt x}})
=F^{\blam}(c_2  \varepsilon^{\tt T} C^{\blam}_{{\tt T}{\tt y}, {\tt \bar T}{\tt x}} \varepsilon^{\tt T}).
\end{align*}
Thus it follows from \cref{transSpecht} that
\begin{align*}
 \upsilon( \varepsilon^{\tt T}C^{\blam}_{{\tt T}{\tt x}, {\tt \bar T}{\tt y}}\varepsilon^{\tt T} ) = c_2  \varepsilon^{\tt T} C^{\blam}_{{\tt T}{\tt y}, {\tt \bar T}{\tt x}} \varepsilon^{\tt T} \qquad \textup{mod } (R^{\Lambda}_{\omega + \beta})^{\triangleright^D \blam}.
\end{align*}
Now, to work out the value \(c_2\), consider that when \(\beta = 0\), the above implies
\begin{align*}
c_2 \varepsilon^{\tt T} C^{\blam}_{{\tt T}, {\tt \bar T}} \varepsilon^{\tt T} = \upsilon( \varepsilon^{\tt T} C^{\blam}_{{\tt T}, {\tt \bar T}} \varepsilon^{\tt T}) = \varepsilon^{\tt T} C^{\blam}_{{\tt T}, {\tt \bar T}} \varepsilon^{\tt T}\qquad \textup{mod } (R^{\Lambda}_{\omega + \beta})^{\triangleright^D \blam}
\end{align*}
and hence \(c_2 = 1\), thus establishing Claim (3).

Now, Claims (1)--(3) show that 
\begin{align}\label{ed1}
\mathcal{P}(\Lambda^{\bkap}_{+/\brho}(\beta), \triangleright^{D});
\qquad
T(\blam/\brho) = \Std(\blam/\brho);
\qquad
\textup{inv} = \upsilon;
\end{align}
\begin{align}\label{ed2}
C({\tt x}, {\tt y}) = \varepsilon^{\tt T}C^{\blam}_{{\tt Tx}, {\tt \bar T y}} \varepsilon^{\tt T};
\qquad
\deg_{\ZZ}({\tt x}) = \deg_{\blam/\brho}({\tt x}),
\end{align}
is a graded cell datum for \( \varepsilon^{\tt T} R^{\Lambda}_{\omega + \beta} \varepsilon^{\tt T}\).

Now let \(\blam \in \Lambda^{\bkap}_+(\omega + \beta)\), and recall that \(S^{\blam}\) is isomorphic to the left cell module \(\Delta^{\blam}\) for \(R^\Lambda_{\omega + \beta}\). By (\ref{makezero1}) and (\ref{notmakezero1}), we have that
\(
\varepsilon^{\tt T} v^{\tt X} = 0
\) unless \({\tt X} = {\tt Tx}\) for some \({\tt x} \in \Std(\blam/\brho)\), in which case \(\varepsilon^{\tt T}v^{\tt Tx} = v^{\tt Tx}\). Thus it follows that 
\begin{align*}
\{v^{\tt Tx} \mid {\tt x} \in \Std(\blam/\brho)\}
\end{align*}
is a \(\k\)-basis for \(\varepsilon^{\tt T}S^{\blam}\). Moreover, for \(a \in R^{\Lambda}_{\omega + \beta}\), we have
\begin{align*}
\varepsilon^{\tt T}a \varepsilon^{\tt T} \cdot  v^{\tt Tx}
&=\varepsilon^{\tt T} \cdot \varepsilon^{\tt T}a  \cdot  v^{\tt Tx} 
=\varepsilon^{\tt T} \sum_{{\tt Y} \in \Std(\blam)} l_{{\tt Y}, {\tt Tx}}(\varepsilon^{\tt T}a) v^{\tt Y}
= \sum_{{\tt Y} \in \Std(\blam)} l_{{\tt Y}, {\tt Tx}}(\varepsilon^{\tt T}a) \varepsilon^{\tt T} v^{\tt Y}\\
&= \sum_{{\tt y} \in \Std(\blam/\brho)} l_{{\tt Ty}, {\tt Tx}}(\varepsilon^{\tt T}a) v^{\tt Ty}
=  \sum_{{\tt y} \in \Std(\blam/\brho)} l'_{{\tt y}, {\tt x}}(\varepsilon^{\tt T}a \varepsilon^{\tt T}) v^{\tt Ty},
\end{align*}
and so we have an isomorphism
\begin{align}\label{cell4}
\varepsilon^{\tt T}S^{\blam} \cong\Delta^{\blam/\brho}
\end{align}
with the left cell module for \( \varepsilon^{\tt T} R^{\Lambda}_{\omega + \beta} \varepsilon^{\tt T}\) under the above cell datum. 

Now, translating through the isomorphism \(\eta^{\tt T}\), the fact that (\ref{ed1}), (\ref{ed2}) is a cell datum for \(\k\{e^{\tt T}\} \otimes R^{\Lambda/\omega}_\beta \cong \varepsilon^{\tt T} R^{\Lambda}_{\omega + \beta}  \varepsilon^{\tt T}\) implies that (\ref{d1}), (\ref{d2}) is a cell datum for \(R^{\Lambda/\omega}_\beta\). 
Moreover, as we have an isomorphism
\begin{align*}
\k\{v^{\tt T}\}  \boxtimes S^{\blam/\brho}
=
e^{\tt T}S^{\brho} \boxtimes S^{\blam/\brho}
\xrightarrow{G^{\blam/\brho}} \varepsilon^{\tt T}1_{\omega, \beta} S^{\blam} =\varepsilon^{\tt T}S^{\blam},
\qquad
v^{\tt T} \boxtimes v^{\tt x} \longmapsto v^{\tt T x}
\end{align*}
of \( \k\{e^{\tt T}\} \otimes R^{\Lambda/\omega}_\beta = \varepsilon^{\tt T} R^{\Lambda}_{\omega + \beta}  \varepsilon^{\tt T}\)-modules, we have an isomorphism
\begin{align}\label{cell5}
S^{\blam/\brho} 
\cong    \k\{v^{\tt T}\} \boxtimes S^{\blam/\brho} 
\cong \varepsilon^{\tt T} S^{\blam}
\end{align}
of \(R^{\Lambda/\omega}_\beta\)-modules, and thus by (\ref{cell4}), \(S^{\blam/\bmu}\) is isomorphic to the left cell module \(\Delta^{\blam/\brho}\) for \(R^{\Lambda/\omega}_\beta\) under the cell datum (\ref{d1}), (\ref{d2}).
This completes the proof of (i) and (ii).

For (iii), note that \(\mathcal{T}\) is the composition of an idempotent truncation functor and an isomorphism of categories, by~(\ref{eiso}):
\begin{align*}
\mathcal{T} : R^{\Lambda}_{\omega + \beta}\textup{-mod} \longrightarrow \varepsilon_{\tt T}R^{\Lambda}_{\omega + \beta}\varepsilon_{\tt T} \textup{-mod} \bijection R^{\Lambda/\omega}_{\beta}\textup{-mod},
\end{align*}
and therefore is an exact functor, with the simple \(R^{\Lambda/\omega}_\beta\)-modules given by 
\begin{align*}
\{ \mathcal{T} D^{\blam} \mid \blam \in \Lambda^{\bkap}_+(\omega + \beta)', \mathcal{T}D^{\blam} \neq 0\},
\end{align*}
see \cite[\S6.2]{Green}.
The fact that \(\mathcal{T}S^{\blam} \cong S^{\blam/\brho}\) follows from~(\ref{cell5}). If \(\blam \in \Lambda^{\bkap}_+(\omega + \beta)'\), with \(\mathcal{T}D^{\blam} \neq 0\), then, as in \cite[\S6.2, Remark 2]{Green}, \(\textup{hd}(S^{\blam}) = D^{\blam}\) implies that \(\mathcal{T}S^{\blam}\cong S^{\blam/\bmu} \) has simple head \(\mathcal{T}D^{\blam}\). 
 
  Finally we show that \(\mathcal{T} D^{\blam} \neq 0\) if and only if \(1_{\omega, \beta}D^{\blam}  \neq 0\).
 Clearly \(\mathcal{T} D^{\blam} \neq 0\) implies that \(\varepsilon^{\tt T}D^{\blam}  =\varepsilon^{\tt T}1_{\omega, \beta} D^{\blam}  \neq 0\), giving the `only if' direction. For the `if' direction, assume that \(1_{\omega, \beta}D^{\blam}  \neq 0\). Then we have that \(1_{\omega, \beta}D^{\blam}\) is a nonzero \(1_{\omega, \beta} R^{\Lambda}_{\omega + \beta} 1_{\omega, \beta} \cong (R^\Lambda_\omega \otimes R^{\Lambda/\omega}_\beta)\)-module by \cref{tenssep}. Thus we have a nonzero map \(R^\Lambda_\omega \otimes \k\{1_\beta\} \to \END_\k(1_{\omega, \beta}D^{\blam})\), which is necessarily an injection as \(R^\Lambda_\omega\) is simple, so \(0 \neq (e^{\tt T} \otimes 1_\beta)1_{\omega, \beta} D^{\blam} =\varepsilon^{\tt T}D^{\blam}\), and thus \(\mathcal{T} D^{\blam} \neq 0\).
 \end{proof}

\subsection{Core truncation of RoCK blocks in level one}

Parts (i) and (ii) of Theorem~\hyperlink{thm:E}{E} in the introduction follow from the next result.

\begin{Theorem}\label{Morcut}
Let \(\kappa\) be a charge of level \(\ell = 1\).
Let \(\rho \in \Lambda_+^\kappa(\omega)\) be a \((\kappa, \theta)\)-RoCK \(e\)-core, and \(d \leq \textup{cap}^\theta_\delta(\rho, \kappa)\).
The functor 
\begin{align*}
\mathcal{T}: R^{\Lambda}_{\omega + d \delta}\textup{-mod} \longrightarrow R^{\Lambda/\omega}_{ d \delta}\textup{-mod}
\end{align*}
is a Morita equivalence with \(\mathcal{T}S^{\rho \sqcup \bnu^\theta_{\rho}} \cong S^{\bnu^\theta_{\rho}}\), and 
\begin{align}\label{allsimpsmor}
\{ \mathcal{T}D^{\rho \sqcup \bnu^\theta_{\rho}} \cong \textup{hd}(S^{\bnu^\theta_{\rho}})  \mid\bnu = (\varnothing \mid \nu^{(1,1)}\mid \dots\mid \nu^{(e-1,1)}) \in \Lambda^{(e, 1)}(d)\}.
\end{align}
yields a complete and irredundant set of simple \(R^{\Lambda/\omega}_{d \delta}\)-modules.
\end{Theorem}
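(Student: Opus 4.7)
The plan is to show that $\varepsilon^{\tt T}$ is a full idempotent in $R^\Lambda_{\omega+d\delta}$; standard Morita theory then gives the equivalence, and the Specht image $\mathcal{T}S^{\rho\sqcup\bnu^\theta_\rho}\cong S^{\bnu^\theta_\rho}$ is already immediate from \cref{skewcell}(iii). By the same result, fullness of $\varepsilon^{\tt T}$ is equivalent to showing $\mathcal{T}D^\blam\neq 0$ for every Kleshchev $\blam\in\Lambda_+^\kappa(\omega+d\delta)'$, which in turn reduces to the cardinality equality $\#\{\text{simple }R^{\Lambda/\omega}_{d\delta}\text{-modules}\}=|\Lambda_+^\kappa(\omega+d\delta)'|$.

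First I would produce a lower bound using the Gelfand--Graev words of \S\ref{subsec:ggwords}. For each $\bnu\in\Lambda^{(e,1)}(d)$ with $\nu^{(0,1)}=\varnothing$, \cref{reswords} asserts that $\bb_\theta^\bnu$ is a word in $\bnu^\theta_\rho$ and is a word in $\bmu^\theta_\rho$ only when $\bmu\trianglerighteq\bnu$. Via the cellular structure of \cref{skewcell} and the basis of \cref{Spechtbasis}, the idempotent $1_{\bb_\theta^\bnu}$ then detects a basis vector of the cell module $S^{\bnu^\theta_\rho}$ that must survive to the head -- a standard Gelfand--Graev detection argument of the type in \cite{km17,km17a}. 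This yields pairwise non-isomorphic simple heads $L(\bnu):=\textup{hd}(S^{\bnu^\theta_\rho})\neq 0$, one for each such $\bnu$.

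For the upper bound, \cref{coreplus} confirms that $\Lambda_+^\kappa(\omega+d\delta)$ is a RoCK block, and the classical Misra--Miwa crystal description for the basic representation of $\widehat{\mathfrak{sl}}_e(\mathbb{C})$ (equivalently, the $e$-quotient bijection between $e$-restricted partitions with $e$-core $\rho$ and weight $d$, and $(e-1)$-multipartitions of $d$) yields $|\Lambda_+^\kappa(\omega+d\delta)'|=|\{\bnu\in\Lambda^{(e,1)}(d):\nu^{(0,1)}=\varnothing\}|$, the killed component being the one associated with the $\theta_e$-runner in our beta-number convention of \cref{combimagsec}. Combining with \cref{skewcell}(iii), the chain
\[
|\{\bnu:\nu^{(0,1)}=\varnothing\}|\;\leq\;\#\{\text{simples of }R^{\Lambda/\omega}_{d\delta}\}\;\leq\;|\Lambda_+^\kappa(\omega+d\delta)'|\;=\;|\{\bnu:\nu^{(0,1)}=\varnothing\}|
\]
forces equality throughout, so $\mathcal{T}D^\blam\neq 0$ for every Kleshchev $\blam$, $\mathcal{T}$ is a Morita equivalence, and the $L(\bnu)$'s exhaust the simples of $R^{\Lambda/\omega}_{d\delta}$, giving~(\ref{allsimpsmor}).

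The main obstacle I foresee is verifying that the dominance ordering on $\bnu$ coming from \cref{reswords} transports to the cellular ordering $\triangleright^D$ on $\Lambda^\kappa_{+/\rho}(d\delta)$ via $\bnu\mapsto\bnu^\theta_\rho$, so that $1_{\bb_\theta^\bnu}$ genuinely detects the head of $S^{\bnu^\theta_\rho}$ rather than a deeper composition factor. A secondary, more bookkeeping hurdle is unpacking the $(e-1)$-multipartition bijection to confirm compatibility with our $\theta$-twisted $e$-quotient convention.
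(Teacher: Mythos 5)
Your proposal is correct and follows essentially the same route as the paper's proof: both rest on the Gelfand--Graev word detection of \cref{reswords} (showing \(\bi^{\tt T}\bb_\theta^{\bnu}\) survives into \(D^{\rho\sqcup\bnu^\theta_{\rho}}\), hence \(1_{\omega,d\delta}D^{\rho\sqcup\bnu^\theta_{\rho}}\neq 0\)), the identification of level-one Kleshchev partitions with exactly the \(\bnu\) having empty first component, and \cref{skewcell}(iii). The only differences are packaging: the paper dispenses with your counting sandwich, since once detection succeeds for every Kleshchev label the Morita equivalence follows directly, and the dominance-transport point you flag as an obstacle is handled there by the same one-line assertion that the composition factors of \(S^{\rho\sqcup\bnu^\theta_{\rho}}\) are the \(D^{\rho\sqcup\bmu^\theta_{\rho}}\) with \(\bnu\trianglerighteq\bmu\).
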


\begin{proof}
In this proof, we will say `\(\bi\) is a word in \(M\)' to mean that \(1_\bi M \neq 0\).
As we are in level one, by \cref{resirrlist} we have
\begin{align*}
\Lambda^{\kappa}_+(\omega + d \delta) = \{ \rho \sqcup \bnu^\theta_{\rho} \mid \bnu \in \Lambda^{(e,1)}(d)\}.
\end{align*}
It is well known that level one Kleshchev partitions are exactly the \(e\)-restricted partitions, i.e.~those partitions \(\la=(\la_1,\dots,\la_m)\) such that \(\la_{i+1} - \la_i < e\) for all \(i\in [1,m-1]\) and \(\la_m < e\).
In consideration of \S\ref{ribbonshapesec}, these correspond to \(\bnu \in \Lambda^{(e,1)}\) with empty initial component:
\begin{align*}
\Lambda^{\bkap}_+(\omega + d \delta)' = \{ \rho \sqcup \bnu^\theta_{\rho}  \mid\bnu = (\varnothing \mid \nu^{(1,1)}\mid \dots\mid \nu^{(e-1,1)}) \in \Lambda^{(e, 1)}(d)\}.
\end{align*}
Write \(X = \{\bnu \in \Lambda^{(e,1)}(d) \mid \nu^{(0,1)} = \varnothing\}\). 
Let \({\tt T} \in \Std(\rho)\) be chosen as in \S\ref{cellskewsec}, and let \(\bnu \in X\). It follows from \cref{reswords} that  \(\bi^{\tt T} \bb_\theta^{\bnu}\) is a word in \(S^{\rho \sqcup \bnu^\theta_{\rho}}\).
Assume now that \(\bi^{\tt T} \bb_\theta^{\bnu}\) is a word in \(S^{\rho \sqcup \bmu^\theta_{\rho}}\) for some other \(\bmu \in X\). Then there is some \({\tt U} \in \Std( \rho \sqcup \bmu^\theta_{\brho})\) such that \(\bi^{\tt U} = \bi^{\tt T} \bb_\theta^{\bnu}\). Then 
\(\cont(\textup{sh} ({\tt U}\downarrow_r)) = \omega\), so since \(\rho\) is a \(\kappa\)-core, we must have that \(\textup{sh} ({\tt U}\downarrow_r) = \rho\). Hence we have that \(\bb_\theta^{\bnu}\) is a word in \(S^{\bmu^\theta_{\rho}}\). Then by \cref{reswords} it follows that \(\bmu \trianglerighteq \bnu\).

Therefore, for each \(\bnu \in X\), the word \(\bi^{\tt T} \bb_\theta^{\bnu}\) appears in \(S^{\rho \sqcup \bnu^\theta_{\brho}}\) and does not appear in any \(S^{\rho \sqcup \bmu^\theta_{\brho}}\) for \(\bnu \triangleright \bmu\).
As every simple factor in \(S^{\rho \sqcup \bnu^\theta_{\rho}}\) is of the form \(D^{\rho \sqcup \bmu^\theta_{\rho}}\) for \(\bnu \trianglerighteq \bmu\), and \(D^{\rho \sqcup \bnu^\theta_{\rho}} = \textup{hd}(S^{\rho \sqcup \bnu^\theta_{\rho}})\) it follows that \(\bi^{\tt T} \bb_\theta^{\bnu}\) must be a word in \(D^{\rho \sqcup \bnu^\theta_{\rho}}\).
Hence \(1_{\omega, \beta} D^{\rho \sqcup \bnu^\theta_{\rho}} \neq 0\) for all \(\bnu \in X\), so by \cref{skewcell}(iii), we have that (\ref{allsimpsmor}) is a complete and irredundant set of simple  \(R_\beta^{\Lambda/\omega}\)-modules. Moreover, since no simples are annihilated by the exact idempotent truncation functor \(\mathcal{T}\), we have that \(\mathcal{T}\) is a Morita equivalence.
\end{proof}

\section{Cuspidal systems and KLR modules}\label{cuspsyssec}

\subsection{Cuspidal systems and classification of simple \(R_\theta\)-modules}\label{stratasec}
Following \cite{klesh14, McN17, km17a, TW}, for \(m \in \mathbb{Z}_{>0}\), \(\beta \in \Psi\), we say an \(R_{m\beta}\)-module \(M\) is {\em semicuspidal} provided that for all \(0 \neq \theta_1, \theta_2 \in \ZZ_{\geq 0}I\) with \(\theta_1 + \theta_2 = m\beta\), we have \(\Res_{\theta_1, \theta_2}^{m\beta} M \neq 0\) only if \(\theta_1\) is a sum of positive roots \(\preceq \beta\) and \(\theta_2\) is a sum of positive roots \(\succeq \beta\). We say moreover that \(M\) is {\em cuspidal} if \(m=1\) and the comparisons above are strict.
Cuspidal and semicuspidal modules are key building blocks in the representation theory of \(R_\omega\). 

As explained in \cite{klesh14}, for \(\beta \in \Phi_+^\re\), \(m \in \mathbb{Z}_{>0}\), there is a unique `real' simple semicuspidal \(R_{m \beta}\)-module denoted \(L(\beta^m)\). On the other hand, the `imaginary' simple semicuspidal \(R_{m \delta}\)-modules are more plentiful; they may be indexed by \((e-1)\)-multipartitions of \(m\):
\begin{align}\label{indeximag}
\{ L( \blam ) \mid \blam = (\lambda^{(1)}\mid \dots\mid \lambda^{(e-1)}) \text{ a multipartition of }m\}.
\end{align}
There is some amount of freedom in this choice of labelling, see for instance \cite[\S21]{McN17}. Our choices in this section are made for compatibility with {\em row} Specht modules and \(e\)-restricted partition labels, and differ for instance with the labels chosen in \cite{km17} by reversing the order of components in \(\blam\).

For \(\omega \in \ZZ_{\geq 0}I\), a {\em Kostant partition} of \(\omega\) is a tuple of non-negative integers \(\boldsymbol{K} = (K_\beta)_{\beta \in \Psi}\) such that \(\sum_{\beta \in \Psi} K_\beta \beta = \omega\). If \(\beta_1 \succ \cdots \succ \beta_t\) are the members of \(\Psi\) such that \(K_{\beta_1} \neq 0\), then it is convenient to write \(\boldsymbol{K}\) in the form
\(
\boldsymbol{K} = 
(\beta_1^{K_{\beta_1}} \mid \dots \mid \beta_t^{K_{\beta_t}}).
\)
We write \(\Xi(\omega)\) for the set of all Kostant partitions of \(\omega\).

For \(\omega \in \ZZ_{\geq 0}I\), a {\em root partition} \(\pi = (\boldsymbol{K}, \bnu)\) is the data of a Kostant partition \(\boldsymbol{K} = (K_\beta)_{\beta \in \Psi}\in \Xi(\omega)\), together with 
an \((e-1)\)-multipartition \(\bnu\) of \(K_\delta\). We set \(\Pi(\omega)\) to be the set of all root partitions of \(\omega\).

To each \(\pi \in \Pi(\omega)\), one may associate a certain proper standard module \(\bar\Delta(\pi)\) which is an ordered induction product of simple semicuspidal modules, see for instance \cite[(6.5)]{km17a}. The module \(\bar\Delta(\pi)\) has a self-dual simple head \(L(\pi)\), and 
\(
\{ L(\pi) \mid \pi \in \Pi(\omega)\}
\)
is a complete and irredundant set of simple \(R_\omega\)-modules up to isomorphism and grading shift, as explained in \cite{klesh14, McN17, km17a, TW}.
To be precise, if
\begin{align*}
\pi = ((\beta_1^{K_{\beta_1}} \mid \dots \mid \beta_u^{K_{\beta_u}} \mid \delta^{K_\delta} \mid \beta_{u+1}^{K_{\beta_{u+1}}} \mid \dots \mid \beta_t^{K_{\beta_t}}), \blam),
\end{align*}
then \(\bar\Delta(\pi)\) is (up to grading shift) the induction product of semicuspidal simple modules:
\begin{align*}
\bar\Delta(\pi) := L(\beta_1^{K_{\beta_1}}) \circ \dots \circ L(\beta_u^{K_{\beta_u}}) \circ
L( \blam) \circ L(\beta_{u+1}^{K_{\beta_{u+1}}}) \circ \dots \circ L(\beta_t^{K_{\beta_t}}),
\end{align*}
so to construct the proper standard modules, it suffices to construct the simple semicuspidal modules associated to multiples of positive roots. We remark that semicuspidal modules, and thus the simple modules \(L(\pi)\), depend on the chosen convex preorder \(\succeq\), so we will use the notation \(L(\pi)_{\succeq}\) in situations where the choice of convex preorder is unclear or particularly relevant.

\subsection{Semicuspidal modules and Specht modules}
The notions of semicuspidality of skew diagrams and KLR modules is connected in the following lemma:

\begin{Lemma}\label{spechtshape}
\cite[Proposition~8.3]{muthtiling}
Let \(\succeq\) be a fixed convex preorder on \(\Phi_+\), and let \(\btau\) be a skew diagram. Then the Specht module \(\zS^{\btau}\) is semicuspidal if and only if the skew diagram \(\btau\) is semicuspidal.
\end{Lemma}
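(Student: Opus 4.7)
The plan is to use the Specht basis theorem (\cref{Spechtbasis}) to translate directly between algebraic semicuspidality of $\zS^{\btau}$ and the combinatorial semicuspidality of $\btau$ from \cref{cuspdef}. The key observation is that, for a decomposition $\theta_1 + \theta_2 = \omega$ with $r := \height(\theta_1)$, the weight subspace $1_{\theta_1,\theta_2}\zS^{\btau} \cong \Res_{\theta_1,\theta_2}^\omega \zS^{\btau}$ is spanned by those basis vectors $v^{\tt t}$ with ${\tt t}\in\Std(\btau)$ whose initial residue segment $i^{\tt t}_1 \dots i^{\tt t}_r$ sums to $\theta_1$; equivalently, $\bnu_1 := \textup{sh}^{\downarrow}_r({\tt t})$ is a sub-skew-diagram of $\btau$ with $\cont(\bnu_1)=\theta_1$, and $\bnu_2 := \btau\setminus\bnu_1$ has $\cont(\bnu_2)=\theta_2$.

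For the forward direction ($\zS^{\btau}$ semicuspidal $\implies$ $\btau$ semicuspidal), given any two-tile skew tableau $(\bnu_1,\bnu_2)$ of $\btau$, I would build a standard tableau ${\tt t} = {\tt t}^{\bnu_1}{\tt t}^{\bnu_2}$ of $\btau$ by concatenating the row-leading tableaux of the two tiles. This \emph{is} a valid standard tableau of $\btau$ precisely because the ordering condition for skew tableaux forces every node of $\bnu_1$ to be recorded before any node strictly south-east of it in $\bnu_2$. The corresponding Specht basis vector $v^{\tt t}$ is then a nonzero element of $1_{\cont(\bnu_1),\cont(\bnu_2)}\zS^{\btau}$, so semicuspidality of $\zS^{\btau}$ forces $\cont(\bnu_1)$ to be a sum of positive roots each $\preceq \omega$ and $\cont(\bnu_2)$ a sum of positive roots each $\succeq \omega$, which is exactly the combinatorial condition of \cref{cuspdef}.

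For the backward direction, I would assume $\btau$ is semicuspidal and take any $\theta_1+\theta_2 = \omega$ with $1_{\theta_1,\theta_2}\zS^{\btau}\neq 0$. By \cref{Spechtbasis} there exists ${\tt t}\in\Std(\btau)$ with the first $\height(\theta_1)$ entries of $\bi^{\tt t}$ summing to $\theta_1$. Setting $\bnu_1 := \textup{sh}^{\downarrow}_{\height(\theta_1)}({\tt t})$ and $\bnu_2 := \btau\setminus\bnu_1$, one checks directly from the standard-tableau ordering that $(\bnu_1,\bnu_2)$ is a genuine two-tile skew tableau of $\btau$, and the contents are as required. Combinatorial semicuspidality of $\btau$ then supplies the required decomposition of $\theta_1,\theta_2$ into positive roots, so $\zS^{\btau}$ satisfies the cuspidality conditions.

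The only real bookkeeping obstacle is the careful identification of standard tableaux of $\btau$ with two-tile skew tableaux carrying compatible row-leading fillings; this rests on verifying that the ordering axiom for skew tableaux matches exactly the condition that the entries in a standard tableau respect the partial order $\searrow$. Once that identification is made, both implications reduce to reading off contents of initial segments and invoking \cref{Spechtbasis}.
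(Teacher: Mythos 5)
Your argument is correct, but note that the paper does not actually prove this lemma — it is quoted verbatim from \cite[Proposition~8.3]{muthtiling} — so there is no in-paper proof to compare against. Your proof is a sensible self-contained reconstruction: the heart of it, the bijection-level correspondence between (i) standard tableaux ${\tt t}\in\Std(\btau)$ whose initial residue segment has content $\theta_1$ and (ii) two-tile skew tableaux $(\bnu_1,\bnu_2)$ of $\btau$ with $\cont(\bnu_1)=\theta_1$, is exactly right, and combined with \cref{Spechtbasis} it identifies $1_{\theta_1,\theta_2}\zS^{\btau}\neq 0$ with the existence of such a two-tile tableau, after which both semicuspidality conditions read off identically. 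The ``bookkeeping obstacle'' you flag is genuine but routine: one checks that $\bnu_1=\textup{sh}^{\downarrow}_r({\tt t})$ and its complement are each skew diagrams (using the standardness of ${\tt t}$ together with the skewness of $\btau$), and conversely that ${\tt t}^{\bnu_1}{\tt t}^{\bnu_2}$ is standard because the two-tile ordering axiom forbids any node of $\bnu_1$ from lying weakly southeast of a node of $\bnu_2$. One small point worth making explicit: the module-theoretic definition compares $\theta_1,\theta_2$ against $\beta$ where $\cont(\btau)=m\beta$ with $\beta\in\Psi$, whereas \cref{cuspdef} compares against $\omega=\cont(\btau)$ itself; these agree for $m=1$ trivially and for $\omega=m\delta$ by the imaginary-equivalence axiom $m\delta\approx\delta$ of the convex preorder, so your identification of the two conditions is legitimate, but the reconciliation deserves a sentence rather than being passed over silently.
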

In \cite[Proposition~8.4]{muthtiling}, it is shown that for every \(\beta \in \Phi_+^\re\) and \(m\in \ZZ_{>0}\), the unique real simple semicuspidal \(R_{m\beta}\)-module \(L(\beta^{m})\) may be directly constructed as a certain skew Specht module. On the other hand, the simple imaginary \(R_{m \delta}\)-modules do not arise as skew Specht modules. However, it was 
conjectured in \cite[\S8.7.3]{muthtiling} that they may be constructed as certain simple heads of skew Specht modules related to RoCK blocks, and we prove that conjecture now. 

\begin{Theorem}\label{semicuspthm}
Let \(\kappa\) be a charge of level \(\ell = 1\). Let \(\rho \in \Lambda_+^\kappa(\omega)\) be a \((\kappa, \theta)\)-RoCK \(e\)-core, and \(d \leq \textup{cap}^\theta_\delta(\rho, \kappa)\). Let \(\succeq\) be a convex preorder which realizes \(\theta\). Then 
\begin{align}\label{semicusplist}
\{\textup{hd}(S^{\bnu^\theta_{\rho}})  \mid\bnu = (\varnothing \mid \nu^{(1,1)}\mid \dots\mid \nu^{(e-1,1)}) \in \Lambda^{(e, 1)}(d)\}
\end{align}
is a complete and irredundant set of simple imaginary semicuspidal \(R_{d\delta}\)-modules up to degree shift.
\end{Theorem}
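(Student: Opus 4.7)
The plan is to lift the simple $R^{\Lambda/\omega}_{d\delta}$-modules of Theorem~\ref{Morcut} to $R_{d\delta}$-modules via the quotient map $\pi^{\Lambda/\omega}_{d\delta}: R_{d\delta} \twoheadrightarrow R^{\Lambda/\omega}_{d\delta}$, then verify semicuspidality diagrammatically, and finally close the argument with a counting comparison.

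First, I would verify that each $\textup{hd}(S^{\bnu^\theta_{\rho}})$ is naturally an $R_{d\delta}$-module: the $R^{\Lambda/\omega}_{d\delta}$-action on $S^{\bnu^\theta_{\rho}}$ coincides (via the isomorphism $\eta^{\tt T}$ and Theorem~\ref{Morcut}) with the KLR action on the skew Specht module, so no ambiguity arises. Next I would verify semicuspidality: since $d \leq \textup{cap}^\theta_\delta(\rho, \kappa)$, Proposition~\ref{combprop2}(ii) gives $\bnu^\theta_{\rho} \in \Lambda^{\kappa}_{+/\rho}(d\delta)_{\approx \delta}$, i.e.\ every tile in the unique cuspidal Kostant tiling of $\bnu^\theta_{\rho}$ has content exactly $\delta$. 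In particular $\bnu^\theta_{\rho}$ is a semicuspidal skew diagram, so by Lemma~\ref{spechtshape} the Specht module $S^{\bnu^\theta_{\rho}}$ is a semicuspidal $R_{d\delta}$-module, and hence so is its simple head.

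For irredundancy, I would use that by Theorem~\ref{Morcut} the list $\{\textup{hd}(S^{\bnu^\theta_{\rho}})\}_\bnu$ is irredundant up to grading shift as simple $R^{\Lambda/\omega}_{d\delta}$-modules; since $\pi^{\Lambda/\omega}_{d\delta}$ is surjective, two such modules are isomorphic as $R_{d\delta}$-modules iff they are isomorphic as $R^{\Lambda/\omega}_{d\delta}$-modules, so irredundancy is preserved after inflation.

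Completeness will follow from a cardinality match. The indexing set $X = \{\bnu \in \Lambda^{(e,1)}(d) \mid \nu^{(0,1)} = \varnothing\}$ is in natural bijection with $(e-1)$-multipartitions of $d$, so the list has exactly $|X|$ pairwise non-isomorphic (up to shift) simple imaginary semicuspidal $R_{d\delta}$-modules. By the classification recalled in~\eqref{indeximag}, the total number of simple imaginary semicuspidal $R_{d\delta}$-modules up to grading shift is also the number of $(e-1)$-multipartitions of $d$, so the list must exhaust all such simples. I expect the main conceptual obstacle to be the verification that $S^{\bnu^\theta_{\rho}}$ is semicuspidal with respect to the specific convex preorder realizing $\theta$, but this is exactly what Proposition~\ref{combprop2}(ii) was set up to supply; granting that, the theorem reduces to a clean combination of the Morita equivalence of Theorem~\ref{Morcut}, the semicuspidality criterion of Lemma~\ref{spechtshape}, and the cuspidal-systems counting.
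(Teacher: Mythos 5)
Your proposal is correct and follows essentially the same route as the paper: semicuspidality of $S^{\bnu^\theta_{\rho}}$ via \cref{combprop2}(ii) and \cref{spechtshape}, irredundancy of the heads via \cref{Morcut}, and completeness by matching the cardinality of the list against the classification (\ref{indeximag}). The extra care you take with inflation along the surjection $R_{d\delta} \twoheadrightarrow R^{\Lambda/\omega}_{d\delta}$ is left implicit in the paper but is the right justification.
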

\begin{proof}
The Specht modules \(S^{\bnu^\theta_{\rho}}\) in (\ref{semicusplist}) are semicuspidal by \cref{combprop2}(ii) and \cref{spechtshape}, and have non-isomorphic simple heads by \cref{Morcut}.
As the cardinality of (\ref{semicusplist}) matches the expected number of simple imaginary semicuspidal \(R_{d\delta}\)-modules in (\ref{indeximag}), the list (\ref{semicusplist}) must be exhaustive. 
\end{proof}

The above Theorem proves part (iii) of Theorem~\hyperlink{thm:E}{E} in the introduction.

\begin{Remark}\label{imagshapesrem}
\cref{semicuspthm} may be used to construct simple imaginary semicuspidal \(R_{d\delta}\) modules as heads of skew Specht modules for any convex preorder \(\succeq\). Indeed, \(\succeq\) realizes a unique residue permutation \(\theta\), and it is straightforward, using beta numbers, to construct a \((\kappa, \theta)\)-RoCK \(e\)-core \(\rho\) large enough that \(d \leq \textup{cap}^\theta_\delta(\rho, \kappa)\). Then the simple imaginary semicuspidal \(R_{d\delta}\)-modules will be those given in (\ref{semicusplist}).
\end{Remark}

\begin{figure}[h]
\begin{align*}
{}
\hackcenter{
\begin{tikzpicture}[scale=0.29]
\draw[thick,lightgray,fill=lightgray!30] (0,8)--(1,8)--(1,11)--(2,11)--(2,14)--(3,14)--(3,17)--(4,17)--(4,18)--(5,18)--(5,19)--(6,19)--(6,20)--(7,20)--(7,21)--(8,21)--(8,22)--(9,22)--(9,23)--(10,23)--(10,24)--(13,24)--(13,25)--(16,25)--(16,26)--(19,26)--(19,27)--(0,27)--(0,8);
\draw[thick,fill=cyan!30] (0+0,0+4)--(1+0,0+4)--(1+0,4+4)--(0+0,4+4)--(0+0,0+4);
\node at (0.5+0,0.5+4){$\scriptstyle 2$};
\node at (0.5+0,1.5+4){$\scriptstyle 3$};
\node at (0.5+0,2.5+4){$\scriptstyle 0$};
\node at (0.5+0,3.5+4){$\scriptstyle 1$};
\draw[thick, gray, dotted] (0+0,1+4)--(1+0,1+4);
\draw[thick, gray, dotted] (0+0,2+4)--(1+0,2+4);
\draw[thick, gray, dotted] (0+0,3+4)--(1+0,3+4);
\draw[thick,fill=red!30] (2+1,12+3)--(3+1,12+3)--(3+1,13+3)--(4+1,13+3)--(4+1,15+3)--(3+1,15+3)--(3+1,14+3)--(2+1,14+3)--(2+1,12+3);
\node at (2.5+1,12.5+3){$\scriptstyle 0$};
\node at (2.5+1,13.5+3){$\scriptstyle 1$};
\node at (3.5+1,13.5+3){$\scriptstyle 2$};
\node at (3.5+1,14.5+3){$\scriptstyle 3$};
\draw[thick, gray, dotted] (2+1,13+3)--(3+1,13+3);
\draw[thick, gray, dotted] (3+1,13+3)--(3+1,14+3);
\draw[thick, gray, dotted] (3+1,14+3)--(4+1,14+3);
\draw[thick,fill=green!30] (7+0,20+0)--(9+0,20+0)--(9+0,21+0)--(10+0,21+0)--(10+0,22+0)--(8+0,22+0)--(8+0,21+0)--(7+0,21+0)--(7+0,20+0);
\node at (7.5+0,20.5+0){$\scriptstyle 1$};
\node at (8.5+0,20.5+0){$\scriptstyle 2$};
\node at (8.5+0,21.5+0){$\scriptstyle 3$};
\node at (9.5+0,21.5+0){$\scriptstyle 0$};
\draw[thick, gray, dotted] (8+0,20+0)--(8+0,21+0);
\draw[thick, gray, dotted] (8+0,21+0)--(9+0,21+0);
\draw[thick, gray, dotted] (9+0,21+0)--(9+0,22+0);
\draw[thick,fill=green!30] (7+2,20+2)--(9+2,20+2)--(9+2,21+2)--(10+2,21+2)--(10+2,22+2)--(8+2,22+2)--(8+2,21+2)--(7+2,21+2)--(7+2,20+2);
\node at (7.5+2,20.5+2){$\scriptstyle 1$};
\node at (8.5+2,20.5+2){$\scriptstyle 2$};
\node at (8.5+2,21.5+2){$\scriptstyle 3$};
\node at (9.5+2,21.5+2){$\scriptstyle 0$};
\draw[thick, gray, dotted] (8+2,20+2)--(8+2,21+2);
\draw[thick, gray, dotted] (8+2,21+2)--(9+2,21+2);
\draw[thick, gray, dotted] (9+2,21+2)--(9+2,22+2);
%
\node at (3.5,23.5){$\scriptstyle \rho$};
%
%
%
%
%
%
\node at (8.5+1-3,8.5){$\scriptstyle \varnothing$};
\draw[thick,,fill=green!30]
(12+1-3,7)--(13+1-3,7)--(13+1-3,9)--(12+1-3,9)--(12+1-3,7);
\draw[thick,,fill=red!30]
(16+1-3,8)--(17+1-3,8)--(17+1-3,9)--(16+1-3,9)--(16+1-3,8);
\draw[thick,,fill=cyan!30]
(20+1-3,8)--(21+1-3,8)--(21+1-3,9)--(20+1-3,9)--(20+1-3,8);
\draw[thick, gray, dotted] (13-3,8)--(14-3,8);
\draw[thin,gray,fill=gray!30]
(10.5+1-3,7.2)--(10.5+1-3,8.8);
\draw[thin,gray,fill=gray!30]
(14.5+1-3,7.2)--(14.5+1-3,8.8);
\draw[thin,gray,fill=gray!30]
(18.5+1-3,7.2)--(18.5+1-3,8.8);
\node[below] at (10-3,7){$\scriptstyle \nu^{(0,1)}$};
\node[below] at (14-3,7){$\scriptstyle \nu^{(1,1)}$};
\node[below] at (18-3,7){$\scriptstyle \nu^{(2,1)}$};
\node[below] at (22-3,7){$\scriptstyle \nu^{(3,1)}$};
\node[above] at (16-3,9.5){$\scriptstyle  \overbrace{\hspace{50mm}}$};
\node[above] at (16-3,10.5){$\scriptstyle \bnu$};
\end{tikzpicture}
\qquad
\qquad
\begin{tikzpicture}[scale=0.45]
	\draw[thick, lightgray] (0,0.5)--(0,-12.5);
	\draw[thick, lightgray] (1,0.5)--(1,-12.5);
	\draw[thick, lightgray] (2,0.5)--(2,-12.5);
	\draw[thick, lightgray] (3,0.5)--(3,-12.5);
	\draw[thick, lightgray, dotted] (0,-12.6)--(0,-13.2);
	\draw[thick, lightgray, dotted] (1,-12.6)--(1,-13.2);
	\draw[thick, lightgray, dotted] (2,-12.6)--(2,-13.2);
	\draw[thick, lightgray, dotted] (3,-12.6)--(3,-13.2);
	\draw[thick, black, dotted] (0,0.5)--(0,1);
	\draw[thick, black, dotted] (1,0.5)--(1,1);
	\draw[thick, black, dotted] (2,0.5)--(2,1);
	\draw[thick, black, dotted] (3,0.5)--(3,1);
	\filldraw[green!30] (0,1.5) circle (10pt);
	\filldraw[cyan!30] (1,1.5) circle (10pt);
	\filldraw[orange!30] (2,1.5) circle (10pt);
	\filldraw[red!30] (3,1.5) circle (10pt);
	\node[above] at (0,1){$\scriptstyle 0$};
	\node[above] at (1,1){$\scriptstyle 1$};
	\node[above] at (2,1){$\scriptstyle 2$};
	\node[above] at (3,1){$\scriptstyle 3$};
	\blackdot(0,0);
	\blackdot(0,-1);
	\blackdot(0,-2);
	\blackdot(0,-3);
	\blackdot(0,-4);
	\blackdot(0,-5);
	\blackdot(0,-6);
	\draw[thick, lightgray] (-0.2,-7)--(0.2,-7);
	\blackdot(0,-8);
	\blackdot(0,-9);
	\draw[thick, lightgray] (-0.2,-10)--(0.2,-10);
	\draw[thick, lightgray] (-0.2,-11)--(0.2,-11);
	\draw[thick, lightgray] (-0.2,-12)--(0.2,-12);
	\blackdot(1,0);
	\draw[thick, lightgray] (0.8,-1)--(1.2,-1);
	\blackdot(1,-2);
	\draw[thick, lightgray] (0.8,-3)--(1.2,-3);
	\draw[thick, lightgray] (0.8,-4)--(1.2,-4);
	\draw[thick, lightgray] (0.8,-5)--(1.2,-5);
	\draw[thick, lightgray] (0.8,-6)--(1.2,-6);
	\draw[thick, lightgray] (0.8,-7)--(1.2,-7);
	\draw[thick, lightgray] (0.8,-8)--(1.2,-8);
	\draw[thick, lightgray] (0.8,-9)--(1.2,-9);
	\draw[thick, lightgray] (0.8,-10)--(1.2,-10);
	\draw[thick, lightgray] (0.8,-11)--(1.2,-11);
	\draw[thick, lightgray] (0.8,-12)--(1.2,-12);
	\blackdot(2,0);
	\blackdot(2,-1);
	\blackdot(2,-2);
	\blackdot(2,-3);
	\blackdot(2,-4);
	\blackdot(2,-5);
	\blackdot(2,-6);
	\blackdot(2,-7);
	\blackdot(2,-8);
	\blackdot(2,-9);
	\blackdot(2,-10);
	\blackdot(2,-11);
	\draw[thick, lightgray] (1.8,-12)--(2.2,-12);
	\blackdot(3,0);
	\blackdot(3,-1);
	\blackdot(3,-2);
	\blackdot(3,-3);
	\draw[thick, lightgray] (2.8,-4)--(3.2,-4);
	\blackdot(3,-5);
	\draw[thick, lightgray] (2.8,-6)--(3.2,-6);
	\draw[thick, lightgray] (2.8,-7)--(3.2,-7);
	\draw[thick, lightgray] (2.8,-8)--(3.2,-8);
	\draw[thick, lightgray] (2.8,-9)--(3.2,-9);
	\draw[thick, lightgray] (2.8,-10)--(3.2,-10);
	\draw[thick, lightgray] (2.8,-11)--(3.2,-11);
	\draw[thick, lightgray] (2.8,-12)--(3.2,-12);
	\phantom{
		\draw (0,-13)--(0,-13.5);
	}
\end{tikzpicture}
}
\end{align*}
\vspace{5pt}
\begin{align*}
\hackcenter{
\begin{tikzpicture}[scale=0.29]
\draw[thick, white]  (0+0,0+4)--(1+0,0+4)--(1+0,4+4)--(0+0,4+4)--(0+0,0+4);
\draw[thick,lightgray,fill=lightgray!30] (0,8)--(1,8)--(1,11)--(2,11)--(2,14)--(3,14)--(3,17)--(4,17)--(4,18)--(5,18)--(5,19)--(6,19)--(6,20)--(7,20)--(7,21)--(8,21)--(8,22)--(9,22)--(9,23)--(10,23)--(10,24)--(13,24)--(13,25)--(16,25)--(16,26)--(19,26)--(19,27)--(0,27)--(0,8);
\draw[thick,fill=red!30] (2+0,12+0)--(3+0,12+0)--(3+0,13+0)--(4+0,13+0)--(4+0,15+0)--(3+0,15+0)--(3+0,14+0)--(2+0,14+0)--(2+0,12+0);
\node at (2.5+0,12.5+0){$\scriptstyle 0$};
\node at (2.5+0,13.5+0){$\scriptstyle 1$};
\node at (3.5+0,13.5+0){$\scriptstyle 2$};
\node at (3.5+0,14.5+0){$\scriptstyle 3$};
\draw[thick, gray, dotted] (2+0,13+0)--(3+0,13+0);
\draw[thick, gray, dotted] (3+0,13+0)--(3+0,14+0);
\draw[thick, gray, dotted] (3+0,14+0)--(4+0,14+0);
\draw[thick,fill=red!30] (2+2,12+2)--(3+2,12+2)--(3+2,13+2)--(4+2,13+2)--(4+2,15+2)--(3+2,15+2)--(3+2,14+2)--(2+2,14+2)--(2+2,12+2);
\node at (2.5+2,12.5+2){$\scriptstyle 0$};
\node at (2.5+2,13.5+2){$\scriptstyle 1$};
\node at (3.5+2,13.5+2){$\scriptstyle 2$};
\node at (3.5+2,14.5+2){$\scriptstyle 3$};
\draw[thick, gray, dotted] (2+2,13+2)--(3+2,13+2);
\draw[thick, gray, dotted] (3+2,13+2)--(3+2,14+2);
\draw[thick, gray, dotted] (3+2,14+2)--(4+2,14+2);
\draw[thick,fill=red!30] (2+1,12+3)--(3+1,12+3)--(3+1,13+3)--(4+1,13+3)--(4+1,15+3)--(3+1,15+3)--(3+1,14+3)--(2+1,14+3)--(2+1,12+3);
\node at (2.5+1,12.5+3){$\scriptstyle 0$};
\node at (2.5+1,13.5+3){$\scriptstyle 1$};
\node at (3.5+1,13.5+3){$\scriptstyle 2$};
\node at (3.5+1,14.5+3){$\scriptstyle 3$};
\draw[thick, gray, dotted] (2+1,13+3)--(3+1,13+3);
\draw[thick, gray, dotted] (3+1,13+3)--(3+1,14+3);
\draw[thick, gray, dotted] (3+1,14+3)--(4+1,14+3);
\draw[thick,fill=red!30] (2+3,12+5)--(3+3,12+5)--(3+3,13+5)--(4+3,13+5)--(4+3,15+5)--(3+3,15+5)--(3+3,14+5)--(2+3,14+5)--(2+3,12+5);
\node at (2.5+3,12.5+5){$\scriptstyle 0$};
\node at (2.5+3,13.5+5){$\scriptstyle 1$};
\node at (3.5+3,13.5+5){$\scriptstyle 2$};
\node at (3.5+3,14.5+5){$\scriptstyle 3$};
\draw[thick, gray, dotted] (2+3,13+5)--(3+3,13+5);
\draw[thick, gray, dotted] (3+3,13+5)--(3+3,14+5);
\draw[thick, gray, dotted] (3+3,14+5)--(4+3,14+5);
\node at (3.5,23.5){$\scriptstyle \rho$};
%
%
%
\draw[thick,,fill=red!30] (16+1-3,7)--(18+1-3,7)--(18+1-3,9)--(16+1-3,9)--(16+1-3,7);
%
\draw[thick, gray, dotted] (18-3,7)--(18-3,9);
\draw[thick, gray, dotted] (17-3,8)--(19-3,8);
%
%
%
%
\node at (9.5-3,8.5){$\scriptstyle \varnothing$};
\node at (13.5-3,8.5){$\scriptstyle \varnothing$};
\node at (22.5-3,8.5){$\scriptstyle \varnothing$};
\draw[thin,gray,fill=gray!30]
(11.5-3,7.2)--(11.5-3,8.8);
\draw[thin,gray,fill=gray!30]
(15.5-3,7.2)--(15.5-3,8.8);
\draw[thin,gray,fill=gray!30]
(20.5-3,7.2)--(20.5-3,8.8);
\node[below] at (10-3,7){$\scriptstyle \mu^{(0,1)}$};
\node[below] at (14-3,7){$\scriptstyle \mu^{(1,1)}$};
\node[below] at (18-3,7){$\scriptstyle \mu^{(2,1)}$};
\node[below] at (23-3,7){$\scriptstyle \mu^{(3,1)}$};
\node[above] at (16.5-3,9.5){$\scriptstyle  \overbrace{\hspace{50mm}}$};
\node[above] at (16.5-3,10.5){$\scriptstyle \bmu$};
\end{tikzpicture}
\qquad
\qquad
\begin{tikzpicture}[scale=0.45]
	\draw[thick, lightgray] (0,0.5)--(0,-12.5);
	\draw[thick, lightgray] (1,0.5)--(1,-12.5);
	\draw[thick, lightgray] (2,0.5)--(2,-12.5);
	\draw[thick, lightgray] (3,0.5)--(3,-12.5);
	\draw[thick, lightgray, dotted] (0,-12.6)--(0,-13.2);
	\draw[thick, lightgray, dotted] (1,-12.6)--(1,-13.2);
	\draw[thick, lightgray, dotted] (2,-12.6)--(2,-13.2);
	\draw[thick, lightgray, dotted] (3,-12.6)--(3,-13.2);
	\draw[thick, black, dotted] (0,0.5)--(0,1);
	\draw[thick, black, dotted] (1,0.5)--(1,1);
	\draw[thick, black, dotted] (2,0.5)--(2,1);
	\draw[thick, black, dotted] (3,0.5)--(3,1);
	\filldraw[green!30] (0,1.5) circle (10pt);
	\filldraw[cyan!30] (1,1.5) circle (10pt);
	\filldraw[orange!30] (2,1.5) circle (10pt);
	\filldraw[red!30] (3,1.5) circle (10pt);
	\node[above] at (0,1){$\scriptstyle 0$};
	\node[above] at (1,1){$\scriptstyle 1$};
	\node[above] at (2,1){$\scriptstyle 2$};
	\node[above] at (3,1){$\scriptstyle 3$};
	\blackdot(0,0);
	\blackdot(0,-1);
	\blackdot(0,-2);
	\blackdot(0,-3);
	\blackdot(0,-4);
	\blackdot(0,-5);
	\blackdot(0,-6);
	\blackdot(0,-7);
	\blackdot(0,-8);
	\draw[thick, lightgray] (-0.2,-9)--(0.2,-9);
	\draw[thick, lightgray] (-0.2,-10)--(0.2,-10);
	\draw[thick, lightgray] (-0.2,-11)--(0.2,-11);
	\draw[thick, lightgray] (-0.2,-12)--(0.2,-12);
	\blackdot(1,0);
	\blackdot(1,-1);
	\draw[thick, lightgray] (0.8,-2)--(1.2,-2);
	\draw[thick, lightgray] (0.8,-3)--(1.2,-3);
	\draw[thick, lightgray] (0.8,-4)--(1.2,-4);
	\draw[thick, lightgray] (0.8,-5)--(1.2,-5);
	\draw[thick, lightgray] (0.8,-6)--(1.2,-6);
	\draw[thick, lightgray] (0.8,-7)--(1.2,-7);
	\draw[thick, lightgray] (0.8,-8)--(1.2,-8);
	\draw[thick, lightgray] (0.8,-9)--(1.2,-9);
	\draw[thick, lightgray] (0.8,-10)--(1.2,-10);
	\draw[thick, lightgray] (0.8,-11)--(1.2,-11);
	\draw[thick, lightgray] (0.8,-12)--(1.2,-12);
	\blackdot(2,0);
	\blackdot(2,-1);
	\blackdot(2,-2);
	\blackdot(2,-3);
	\blackdot(2,-4);
	\blackdot(2,-5);
	\blackdot(2,-6);
	\blackdot(2,-7);
	\blackdot(2,-8);
	\blackdot(2,-9);
	\blackdot(2,-10);
	\blackdot(2,-11);
	\draw[thick, lightgray] (1.8,-12)--(2.2,-12);
	\blackdot(3,0);
	\blackdot(3,-1);
	\blackdot(3,-2);
	\draw[thick, lightgray] (2.8,-3)--(3.2,-3);
	\draw[thick, lightgray] (2.8,-4)--(3.2,-4);
	\blackdot(3,-5);
	\blackdot(3,-6);
	\draw[thick, lightgray] (2.8,-7)--(3.2,-7);
	\draw[thick, lightgray] (2.8,-8)--(3.2,-8);
	\draw[thick, lightgray] (2.8,-9)--(3.2,-9);
	\draw[thick, lightgray] (2.8,-10)--(3.2,-10);
	\draw[thick, lightgray] (2.8,-11)--(3.2,-11);
	\draw[thick, lightgray] (2.8,-12)--(3.2,-12);
	\phantom{
	\draw (0,-13)--(0,-13.5);
	}
\end{tikzpicture}
}
\end{align*}
\caption{
The skew diagrams associated to the semicuspidal skew Specht modules \(S^{\bnu^{\theta}_{\rho}}\) and \(S^{\bmu^{\theta}_{\rho}}\), for \(\bnu = (\varnothing \mid (1^2)\mid (1)\mid (1))\in \Lambda^{(4,1)}(4)\) and \(\bmu = (\varnothing\mid \varnothing\mid (2^2)\mid \varnothing) \in \Lambda^{(4,1)}(4)\) considered in \cref{twoex}.
On the right, the abacus diagrams corresponding to $\rho\sqcup\bnu^{\theta}_{\rho}$ and $\rho\sqcup\bmu^{\theta}_{\rho}$ are displayed.
}
\label{twoexpic}      
\end{figure}

\begin{Example}\label{twoex}
Take \(e = 4\), and let \(\succeq\) be the convex preorder on \(\Phi_+\) defined in \cref{3coretileex}. Then \(\succeq\) realizes the residue permutation \(\theta = (1,3,0,2)\), and the \(e\)-core partition 
 \(\rho = (19,16,13,10,9,8,7,6,5,4,3^3,2^3,1^3)\)
with charge \(\kappa = 0\), considered in \cref{imagnodes} is a \((\kappa, \theta)\)-RoCK core with capacity \(\textup{cap}_\delta^\theta(\rho) = 4\). Thus by \cref{semicuspthm}, the 51 simple imaginary semicuspidal \(R_{4\delta}\)-modules are given by
\begin{align*}
\{ \textup{hd}(S^{\bnu^\theta_{\brho}}) \mid \bnu = (\varnothing \mid \nu^{(1,1)}\mid \nu^{(2,1)}\mid \nu^{(3,1)}) \in \Lambda^{(4,1)}(4)\}.
\end{align*}
We include in Figure~\ref{twoexpic} two examples -- the skew diagrams \({\bnu^\theta_{\rho}}\) and \({\bmu^\theta_{\rho}}\) associated to the semicuspidal skew Specht modules \(S^{\bnu^\theta_{\rho}}\) and \(S^{\bmu^\theta_{\rho}}\), for \(\bnu = (\varnothing \mid (1^2) \mid (1)\mid (1)\mid )\) and \(\bmu = (\varnothing\mid \varnothing\mid (2^2) \mid \varnothing) \in \Lambda^{(4,1)}(4)\).
Referring to Figure~\ref{imagnodesarr}, observe that
$\bnu^\theta_{\rho}=a^\theta_{\rho}\sqcup e^\theta_{\rho}\sqcup i^\theta_{\rho}\sqcup k^\theta_{\rho}$ and 
$\bmu^\theta_{\rho}= e^\theta_{\rho}\sqcup f^\theta_{\rho}\sqcup g^\theta_{\rho}\sqcup h^\theta_{\rho}$.
\end{Example}

\subsection{Cuspidal systems, core blocks, and RoCK blocks}\label{subsec:cuspsystems}
In this section, we show that core blocks and RoCK blocks may be distinguished via cuspidal system data. For a fixed convex preorder \(\succeq\), recall that the simple \(R_\omega\)-modules may be indexed \(\{L(\pi)_{\succeq} \mid \pi \in \Pi(\omega)\}\), so it follows that the simple \(R_\omega^\Lambda\)-modules (which lift to simple \(R_\omega\)-modules) may be indexed by \(\{L(\pi)_{\succeq} \mid \pi \in \Pi^{\Lambda}_{\succeq}(\omega)\}\) for some subset \(\Pi^{\Lambda}_{\succeq}(\omega) \subseteq \Pi(\omega)\). 

We define
\begin{align*}
\Pi(\omega)_{\succeq \delta} &:= \{\pi = (\boldsymbol{K}, \blam) \in \Pi(\omega) \mid K_\beta = 0 \textup{ for } \delta \succ \beta \in \Psi\};\\
\Pi(\omega)_{\succ \delta} &:= \{\pi = (\boldsymbol{K}, \varnothing) \in \Pi(\omega) \mid K_\beta = 0 \textup{ for } \delta \succeq \beta \in \Psi\}.
\end{align*}
That is, \(\pi = (\boldsymbol{K}, \blam)\) is in \( \Pi(\omega)_{\succeq \delta}\) (resp.~\(\Pi(\omega)_{\succ \delta}\)) provided that the Kostant partition \(\boldsymbol{K}\) only involves roots \(\succeq \delta\) (resp.~\(\succ \delta\)).

Recall that we say the cyclotomic KLR algebra \(R^\Lambda_{\omega}\) is a {\em RoCK block (resp.~core block)} provided that the associated multipartition block \(\Lambda_+^{\bkap}(\omega)\) is a RoCK block (resp.~core block) in the sense of \cref{specialdefs}. As outlined in \cref{subsec:rockblocks} this description coincides with usage in the literature \cite{CK02, Lyle22, websterScopes}.

\begin{Theorem}\label{coreRoCKcuspthm}
Let \(R^\Lambda_\omega\) be a cyclotomic KLR algebra. Then \(R^\Lambda_\omega\) is a RoCK block (resp.~core block) if and only if there exists a convex preorder \(\succeq\) on \(\Phi_+\) such that \(\Pi^\Lambda_{\succeq}(\omega) \subseteq \Pi(\omega)_{\succeq \delta}\) (resp.~\(\Pi^\Lambda_{\succeq}(\omega) \subseteq \Pi(\omega)_{\succ \delta}\)).
\end{Theorem}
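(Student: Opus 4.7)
The plan is to combine the tiling-based characterizations of RoCK and core blocks from \Cref{thetarockblocktile,coretilingprop} with the matching between the Kostant partition part of the cuspidal label of a simple factor of a Specht module and the cuspidal Kostant tiling of the indexing multipartition, furnished by Theorem~F and its coarser Kostant-partition-level precursor \cite[Theorem~8.5]{muthtiling}.

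For the forward direction, suppose $R^\Lambda_\omega$ is a RoCK block (respectively a core block). By \Cref{thetarockblocktile} (resp.~\Cref{coretilingprop}) we may fix a convex preorder $\succeq$ such that for every $\blam \in \Lambda^{\bkap}_+(\omega)$, all tiles of $\Gamma_\blam$ have content $\succeq \delta$ (resp.~$\succ \delta$). Any simple $R^\Lambda_\omega$-module has the form $D^\bmu \cong L(\pi)$ for some Kleshchev $\bmu$. Applying Theorem~F to $\zS^{\bmu}$, whose head is $D^{\bmu}$, the cuspidal label $\pi = (\boldsymbol{K}, \bnu)$ has Kostant partition $\boldsymbol{K}$ equal to the Kostant partition of $\Gamma_{\bmu}$. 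Hence $\boldsymbol{K}$ is supported on $\{\beta \in \Psi : \beta \succeq \delta\}$, placing $\pi \in \Pi(\omega)_{\succeq \delta}$. In the core case the tiles satisfy $\beta \succ \delta$ strictly, so $K_\delta = 0$ and the $(e-1)$-multipartition $\bnu$ must be empty, giving $\pi \in \Pi(\omega)_{\succ \delta}$.

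For the converse, suppose $\Pi^\Lambda_\succeq(\omega) \subseteq \Pi(\omega)_{\succeq \delta}$ for some $\succeq$. Fix any $\blam \in \Lambda^{\bkap}_+(\omega)$ and view the Specht module $\zS^{\blam}$ as an $R^\Lambda_\omega$-module via the cellular structure of \Cref{cellHM}. Every simple composition factor $L(\sigma)$ of $\zS^{\blam}$ therefore satisfies $\sigma \in \Pi^\Lambda_\succeq(\omega) \subseteq \Pi(\omega)_{\succeq \delta}$, so its Kostant partition is supported on $\{\beta : \beta \succeq \delta\}$. On the other hand, the Kostant-partition level statement \cite[Theorem~8.5]{muthtiling} guarantees a distinguished composition factor $L(\pi)$ of $\zS^{\blam}$ whose Kostant partition coincides with that of $\Gamma_{\blam}$. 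Combining these two facts forces every tile of $\Gamma_{\blam}$ to have content $\succeq \delta$. Since $\blam$ was arbitrary, \Cref{thetarockblocktile} yields that $R^\Lambda_\omega$ is a RoCK block. The core case is parallel, replacing $\Pi(\omega)_{\succeq \delta}$ by $\Pi(\omega)_{\succ \delta}$ and invoking \Cref{coretilingprop}.

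The main technical point is the Kostant-level compatibility claim that, for an arbitrary multipartition $\blam$ in the block, the Specht module $\zS^{\blam}$ admits a composition factor whose cuspidal label has Kostant partition matching the one recorded by $\Gamma_{\blam}$. At level one this is immediate from Theorem~F and the description of $\Gamma_\tau$ for $e$-restricted $\tau$; in general one must appeal to \cite[Theorem~8.5]{muthtiling}, verifying that the arguments there (which, in the present paper, underlie Theorem~F) supply the required factor for an arbitrary $\blam \in \Lambda^{\bkap}_+(\omega)$ and not merely for Kleshchev multipartitions. Once this link is in place, both implications are formal consequences of the tiling characterizations of RoCK and core blocks together with the cuspidal-system parametrization of simples.
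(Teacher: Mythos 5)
Your proposal is correct and follows essentially the same route as the paper: both directions rest on the tiling characterizations in \cref{thetarockblocktile,coretilingprop} combined with the Kostant-partition-level statement \cite[Theorem~8.5]{muthtiling} (your converse is simply the paper's contrapositive argument phrased directly). Your closing caveat is well placed — \cref{F2} only supplies the required $\zeta(\pi)$-tiling in level one, so falling back on \cite[Theorem~8.5]{muthtiling} for arbitrary $\blam$ is exactly what the paper does.
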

\begin{proof}
\((\implies)\) Assume that \(R^\Lambda_\omega\) is a RoCK block, and choose some multicharge \(\bkap\) for \(\Lambda\). Then the simple \(R^\Lambda_\omega\)-modules arise as heads of the Specht modules \(\{S^{\blam} \mid \blam \in \Lambda^{\bkap}_+(\omega)'\}\). By \cref{RoCKthetaeq}, there is some residue permutation \(\theta\) such that \(\Lambda^{\bkap}_+(\omega)\) is \(\theta\)-RoCK. Choose some convex preorder \(\succeq\) for \(\Phi_+\) which realizes \(\theta\). Then by \cref{thetarockblocktile}, we have that \(\Lambda^{\bkap}_+(\omega) = \Lambda^{\bkap}_+(\omega)_{\succeq \delta}\). Then by \cite[Theorem~8.5]{muthtiling} we have that \(\pi \in \Pi(\omega)_{\succeq \delta}\) for all simple factors \(L(\pi)_{\succeq}\) of \(S^{\blam}\), and all \(\blam \in \Lambda_+^{\bkap}(\omega)'\). Thus we have \(\Pi^\Lambda_{\succeq}(\omega) \subseteq \Pi(\omega)_{\succeq \delta}\), as required. 
On the other hand, if \(R^\Lambda_\omega\) is a core block, we may utilize the same argument as above, merely replacing \cref{thetarockblocktile} with \cref{coretilingprop}.

\((\impliedby)\) Assume by contraposition that \(R^\Lambda_\omega\) is {\em not} \(\theta\)-RoCK. Then by \cref{thetarockblocktile}, there exists \(\blam \in \Lambda^{\bkap}_+(\omega) \backslash \Lambda^{\bkap}_+(\omega)_{\succeq \delta}\). It follows then by  \cite[Theorem~8.5]{muthtiling} that \(S^{\blam}\) has a factor \(L(\pi)_{\succeq}\) with \(\pi \notin \Pi(\omega)_{\succeq \delta}\), and hence \(\Pi^\Lambda_{\succeq}(\omega) \not\subseteq \Pi(\omega)_{\succ \delta}\), as required. On the other hand, if \(R^\Lambda_\omega\) is not a core block, we again utilize the same argument as above,  replacing \cref{thetarockblocktile} with \cref{coretilingprop}.
\end{proof}

\begin{Corollary}\label{finalcor}
Let \(R^\Lambda_\omega\) be a cyclotomic KLR algebra. Then \(R^\Lambda_\omega\) is a RoCK block (resp.~core block) if and only if there exists a convex preorder \(\succeq\) on \(\Phi_+\) such that \(\Res_{\omega- \beta, \beta} L = 0\) for all simple \(R^\Lambda_\omega\)-modules \(L\) and \(\delta \succ \beta \in \Phi_+\) (resp.~\(\delta \succeq \beta \in \Phi_+\)).
\end{Corollary}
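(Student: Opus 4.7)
The plan is to apply \cref{coreRoCKcuspthm} and show that, for a fixed convex preorder \(\succeq\), the condition \(\Pi^\Lambda_\succeq(\omega) \subseteq \Pi(\omega)_{\succeq\delta}\) (resp.~\(\Pi(\omega)_{\succ\delta}\)) is equivalent to the stated restriction vanishing. Both implications come from general cuspidal systems theory via the Mackey filtration of restrictions of induction products.

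For the forward direction, assume \(\Pi^\Lambda_\succeq(\omega) \subseteq \Pi(\omega)_{\succeq\delta}\) (resp.~\(\Pi(\omega)_{\succ\delta}\)). Any simple \(R^\Lambda_\omega\)-module has the form \(L(\pi)\) with \(\pi = (\boldsymbol{K},\blam) \in \Pi^\Lambda_\succeq(\omega)\), and is a quotient of the proper standard \(\bar\Delta(\pi)\), which is an ordered induction product of semicuspidal factors, each associated with an indivisible root \(\succeq\delta\) (resp.~\(\succ\delta\)). Fix \(\beta \in \Phi_+\) with \(\delta \succ \beta\) (resp.~\(\delta \succeq \beta\)). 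The Mackey filtration of \(\Res_{\omega-\beta,\beta} \bar\Delta(\pi)\) has layers indexed by decompositions \(\beta = \sum_i \gamma_i\), where each \(\gamma_i\) is the content of the right piece in the restriction of one of the semicuspidal factors. Semicuspidality forces each \(\gamma_i\) to be a sum of positive roots \(\succeq\) the corresponding cuspidal root, hence \(\succeq \delta\) (resp.~\(\succ \delta\)). Iteratively applying convexity of \(\succeq\) (or directly a linear functional as in \cref{genconvex}) to the resulting expression of \(\beta\) as a sum of positive roots each \(\succeq \delta\) (resp.~\(\succ\delta\)) yields \(\beta \succeq \delta\) (resp.~\(\succ \delta\)), contradicting the choice of \(\beta\). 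Exactness of \(\Res_{\omega-\beta,\beta}\) then gives \(\Res_{\omega-\beta,\beta} L(\pi) = 0\).

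For the converse, suppose the restriction vanishing condition holds; we show \(\Pi^\Lambda_\succeq(\omega) \subseteq \Pi(\omega)_{\succeq\delta}\) (resp.~\(\Pi(\omega)_{\succ\delta}\)) and conclude via \cref{coreRoCKcuspthm}. Suppose for contradiction that some \(\pi = (\boldsymbol{K},\blam) \in \Pi^\Lambda_\succeq(\omega)\) satisfies \(\pi \notin \Pi(\omega)_{\succeq\delta}\) (resp.~\(\pi \notin \Pi(\omega)_{\succ\delta}\)); let \(\beta_t\) be the \(\succ\)-minimal root appearing in \(\pi\), noting that \(\delta \succ \beta_t\) (resp.~\(\delta \succeq \beta_t\), where the \(\beta_t = \delta\) case covers \(K_\delta > 0\) or \(\blam \neq \varnothing\)). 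Standard cuspidal systems results (e.g.~\cite[\S3]{klesh14}, \cite{km17a, TW}) imply that \(\Res_{\omega-\beta_t,\beta_t} L(\pi) \neq 0\): the simple head \(L(\pi)\) of \(\bar\Delta(\pi)\) inherits a nonzero `bottom' piece of restriction obtained by peeling off one copy of \(\beta_t\) from the rightmost semicuspidal factor of \(\bar\Delta(\pi)\), producing (up to grading shift) a composition factor \(L(\pi') \boxtimes L(\beta_t)\), where \(\pi'\) has one fewer \(\beta_t\). This contradicts the hypothesis at \(\beta = \beta_t\).

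The principal technical obstacle is the peeling-off step in the converse direction, which must be justified uniformly for real and imaginary \(\beta_t\). For real \(\beta_t\), this is standard, following from semicuspidality of \(L(\beta_t)\) and Frobenius reciprocity applied to \(\bar\Delta(\pi) = \bar\Delta(\pi') \circ L(\beta_t)\) together with the fact that \(L(\pi)\) is the unique simple quotient. For \(\beta_t = \delta\), one needs the analogous statement that each simple imaginary semicuspidal \(L(\blam)\) has \(\Res_{(|\blam|-1)\delta,\delta} L(\blam) \neq 0\); this can be verified from the explicit realisation \(L(\blam) \approx \textup{hd}(\zS^{\zeta(\blam)})\) of \cref{semicuspthm} together with the combinatorics of Gelfand--Graev words from \S\ref{subsec:ggwords}, which guarantee the existence of a valid restriction word of the required residue content.
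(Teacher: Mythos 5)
Your argument is correct, and your first step coincides with the paper's: both reduce the statement to \cref{coreRoCKcuspthm}, so that everything hinges on showing that, for a fixed preorder, the vanishing condition $\Res_{\omega-\beta,\beta}L(\pi)=0$ for all $\delta\succ\beta$ (resp.\ $\delta\succeq\beta$) is equivalent to $\pi\in\Pi(\omega)_{\succeq\delta}$ (resp.\ $\Pi(\omega)_{\succ\delta}$). Where you diverge is that the paper disposes of this second equivalence in one line by citing \cite[Theorem~4.1(v)]{klesh14}, whereas you re-derive it from scratch. Your forward direction (Mackey filtration of $\Res_{\omega-\beta,\beta}\bar\Delta(\pi)$, semicuspidality of each factor forcing each right-hand content $\gamma_i$ to be a sum of roots $\succeq\delta$, then convexity forcing $\beta\succeq\delta$ — this last step is really \cite[Lemma 3.5]{muthtiling} rather than the convexity axiom verbatim, but it is standard) is sound, and your converse is precisely the content of the cited Kleshchev result: the ``bottom layer'' of restriction of $L(\pi)$ at the $\succ$-minimal root $\beta_t$ is $L(\pi')\boxtimes L(\beta_t^{K_{\beta_t}})$ (or $L(\pi')\boxtimes L(\bnu)$ in the imaginary case), which is nonzero. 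You correctly identify that the only genuinely delicate point in your route is the nonvanishing of $\Res_{(K_\delta-1)\delta,\delta}L(\bnu)$ for imaginary semicuspidals, which is needed only for the core-block half; this does hold and can be extracted either from the Gelfand--Graev word combinatorics as you suggest or directly from \cite{klesh14}. So the trade-off is clear: the paper's proof is a two-line deduction from a quotable theorem, while yours is self-contained modulo the general cuspidal-systems machinery but essentially reconstructs the proof of the theorem being quoted; if you keep your version, you should either cite \cite[Theorem~4.1]{klesh14} for the peeling-off step or supply the imaginary nonvanishing argument in full.
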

\begin{proof}
By \cref{coreRoCKcuspthm}, \(R^\Lambda_\omega\) is a RoCK block if and only if \(\Pi_{\succeq}^\Lambda(\omega) \subseteq \Pi(\omega)_{\succeq \delta}\) for some convex preorder \(\succeq\). On the other hand, by \cite[Theorem~4.1(v)]{klesh14}, we have \(\Res_{\omega-\beta, \beta}L(\pi)_{\succeq} = 0\) for every \(\delta \succ \beta \in \Phi_+\) if and only if \(\pi \in \Pi(\omega)_{\succeq \delta}\), which proves the statement for RoCK blocks.
The proof for core blocks is similar.
\end{proof}

\section{Skew Specht modules for cuspidal systems}\label{sec:simples}

In this final section, we use our previous results to render a direct and combinatorial-flavored description of semicuspidal and simple \(R_\omega\)-modules. 
Throughout this section, we fix a convex preorder \(\succeq\) on \(\Phi_+\).

\subsection{Constructing cuspidal ribbons}\label{subsec:cusprib}

\begin{Definition}\label{def:zetaribs}
Let \(\kappa \in \ZZ\), \(\beta = \alpha(a,L) \in \Psi\), where $a\in\mathbb{Z}_e$, \(L \in \ZZ_{>0}\). Let \(u \in \N\) be a node with \(\textup{res}(u) = a\). 
We define \(\zeta(\beta,u)\) to be the ribbon constructed by beginning with the node \(u\), then iteratively adding nodes of residue \(\overline{a+1},\dots, \overline{a+L-1}\) step-by-step, either to the east or to the north of the previous node, as follows. After \(i\) steps of this process, we have constructed a ribbon of content \(\alpha(a,i)\). Then in step \(i+1\) we either:
\begin{itemize}
	\item add the node of residue \(\overline{a+i}\) to the north of the \(\overline{a+i-1}\)-node if \(\alpha(a,i) \succ \beta\), or
	\item add the node of residue \(\overline{a+i}\) to the east of the \(\overline{a+i-1}\)-node if \(\alpha(a,i) \prec \beta\).
\end{itemize}
After the \(L\)th step, we have constructed the ribbon \(\zeta(\beta,u)\) of content \(\beta\), with southwesternmost node \(u\). 
\end{Definition}

\begin{Lemma}\label{firstcuspconstlem}
\cite[Theorem~6.13]{muthtiling}
For \(\beta \in \Psi\), a ribbon \(\xi\) of content \(\beta\) is cuspidal if and only if \(\xi = \zeta(\beta,u)\) for some \(u \in \N\).
\end{Lemma}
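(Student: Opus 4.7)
I plan to prove this by induction on $L = \height(\beta)$, with the base case $L = 1$ being immediate since a single-node ribbon has no nontrivial two-tile skew tableau. The first step is to describe the valid two-tile skew tableaux $(\bnu_1, \bnu_2)$ of a ribbon $\xi = v_1 v_2 \cdots v_L$ (indexed from the SW-most to the NE-most node). A direct analysis of the $\searrow$ order relation defining skew tableaux shows that these correspond bijectively to labelings $\lambda \colon \{1, \ldots, L\} \to \{1, 2\}$ which may transition $1 \to 2$ only at east-steps $v_i \to v_{i+1} = {\tt E} v_i$ and $2 \to 1$ only at north-steps $v_i \to v_{i+1} = {\tt N} v_i$. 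Each $\bnu_t = \lambda^{-1}(t)$ then decomposes into maximal contiguous segments along $\xi$, each of which is itself a sub-ribbon of content the single positive root $\alpha(\overline{a+j}, m)$. In particular, every single-split partition at a step is a valid two-tile tableau.

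For the ``only if'' direction, assume $\xi$ is cuspidal and set $u = v_1$. Consider the single-split partition at step $i$: cuspidality demands that the positive root $\alpha(a, i)$ (the content of whichever tile contains the SW portion) decompose into positive roots each strictly $\prec \beta$ or $\succ \beta$ as appropriate. I would invoke the following convexity lemma: if $\omega \in \Phi_+$ and $\omega = \sum_j \gamma_j$ with $\gamma_j \in \Phi_+$, then some $\gamma_j \succeq \omega$. This follows from the realization of $\succeq$ via a linear height functional as in \cref{genconvex}, since a weighted average cannot exceed the maximum. The lemma forces $\alpha(a, i) \prec \beta$ at east-steps and $\alpha(a, i) \succ \beta$ at north-steps---precisely the rule defining $\zeta(\beta, u)$, so that $\xi = \zeta(\beta, v_1)$.

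For the ``if'' direction, take $\xi = \zeta(\beta, u)$ and any valid labeling $\lambda$ with partition $(\bnu_1, \bnu_2)$. For each interior segment $\{v_{j+1}, \ldots, v_{j+m}\}$ of $\bnu_1$, the preceding north-step gives $\alpha(a, j) \succ \beta$ and the following east-step gives $\alpha(a, j+m) \prec \beta$ by the construction rule. Applying the convexity axiom to $\alpha(a, j+m) = \alpha(a, j) + \alpha(\overline{a+j}, m)$ yields $\alpha(\overline{a+j}, m) \preceq \alpha(a, j+m) \prec \beta$: indeed, the alternative $\alpha(\overline{a+j}, m) \succ \alpha(a, j)$ would imply by convexity $\alpha(a, j+m) \succeq \alpha(a, j) \succ \beta$, contradicting $\alpha(a, j+m) \prec \beta$. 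Hence the segment content is strictly $\prec \beta$. Boundary segments and segments of $\bnu_2$ are handled by entirely analogous arguments, and summing segment contents gives the required decompositions of $\cont(\bnu_1)$ and $\cont(\bnu_2)$.

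The main obstacle is the careful bookkeeping in the ``if'' direction---particularly verifying via convexity that no segment's positive-root content falls on the wrong side of $\beta$---and ensuring the case analysis covers all combinations of boundary and interior segments in both tiles. The convexity lemma used in the ``only if'' direction, while elementary once the height-functional realization is invoked, is the essential technical bridge between the abstract cuspidality condition and the combinatorial step-by-step construction of $\zeta(\beta, u)$.
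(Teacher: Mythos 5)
The paper does not actually prove this lemma: it is imported wholesale from \cite[Theorem~6.13]{muthtiling}, so there is no internal argument to compare against. Your self-contained proof is essentially the natural one and is correct in structure and substance. The characterization of two-tile skew tableaux of a ribbon by labelings with $1\to 2$ transitions only at east-steps and $2\to 1$ only at north-steps is right (nodes of a ribbon are $\searrow$-comparable only within a row or a column, so the consecutive-step rules capture everything, and both fibres of the labeling are then automatically skew diagrams); the single-split tableaux do force the N/E choice at each step in the ``only if'' direction; and the segment-by-segment convexity argument closes the ``if'' direction.

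Two points need shoring up. First, your justification of the key convexity lemma ($\omega=\sum_j\gamma_j$ with $\omega,\gamma_j\in\Phi_+$ forces some $\gamma_j\succeq\omega$ and some $\gamma_j\preceq\omega$) via a height functional is not available for an arbitrary convex preorder: \cref{genconvex} says only that such functionals \emph{produce} convex preorders, not that every convex preorder arises this way. The lemma is nonetheless standard --- it is \cite[Lemma~3.5]{muthtiling}, which this paper invokes elsewhere --- so either cite it or prove it from the axioms (the two-summand case is axiom (iv) plus totality; the general case needs a short induction because partial sums need not be roots). Second, the boundary segments are not quite ``entirely analogous'': for the final maximal segment $\{v_{j+1},\dots,v_L\}$ of $\bnu_1$ (and symmetrically of $\bnu_2$), convexity applied to $\beta=\alpha(a,j)+(\beta-\alpha(a,j))$ only yields $\beta-\alpha(a,j)\preceq\beta$, and you must rule out $\beta-\alpha(a,j)\approx\beta$ using axiom (v) together with a height count ($0<\height(\beta-\alpha(a,j))<\height(\beta)\leq e$ when $\beta=\delta$) to get the strict inequality demanded by \cref{cuspdef}; this is the one place indivisibility of $\beta\in\Psi$ is genuinely used. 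You should also explicitly exclude the degenerate tableau with an empty tile, since $\cont(\xi)=\beta$ itself never decomposes into roots lying strictly on one side of $\beta$.
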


\begin{Lemma}\label{secondcusplem}
Let \(\theta\) be a residue permutation that realizes \(\succeq\). If \(\textup{res}(u) = \overline{\theta_{e-i}+1}\) for some \(i \in [0,e-1]\), then \(\zeta(\delta, u)\) is a \((\theta,i)\)-ribbon, and thus consists of \(i+1\) rows.
\end{Lemma}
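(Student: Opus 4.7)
The plan is to trace the construction of $\zeta(\delta, u)$ step-by-step and identify, at each intermediate stage, whether the next node is added to the north or to the east. Since the full residue sequence (reading in the order of addition) is $\overline{\theta_{e-i}+1}, \overline{\theta_{e-i}+2}, \ldots, \overline{\theta_{e-i}+e} = \theta_{e-i}$, after $s$ steps the partial content is $\alpha(\theta_{e-i}+1, s)$, which for $s \in [1, e-1]$ is a real positive root. By \cref{def:zetaribs}, the $(s+1)$-th step goes north when $\alpha(\theta_{e-i}+1, s) \succ \delta$ and east when $\alpha(\theta_{e-i}+1, s) \prec \delta$.

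To decide this comparison, set $c_s := \theta^{-1}(\overline{\theta_{e-i}+s}) \in [1, e]$. Using equation~(\ref{gammaconvert}), one may rewrite $\alpha(\theta_{e-i}+1, s)$ as $-m\delta + \gamma^\theta_{[e-i,\, c_s - 1]}$ when $c_s > e-i$, and as $(1+m')\delta - \gamma^\theta_{[c_s,\, e-i-1]}$ when $c_s < e-i$, for suitable $m, m' \in \mathbb{Z}_{\geq 0}$ (the case $c_s = e-i$ does not arise for $s \in [1,e-1]$, since it would force $s \equiv 0 \pmod e$). Because $\succeq$ realizes $\theta$, equations~(\ref{gdel}) and~(\ref{ldel}) yield $\alpha(\theta_{e-i}+1, s) \succ \delta$ precisely when $c_s > e-i$. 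Consequently a north move occurs before step $s+1$ exactly when $\overline{\theta_{e-i}+s} \in \{\theta_{e-i+1}, \ldots, \theta_e\}$; this set has exactly $i$ elements, so $\zeta(\delta, u)$ has $i$ north moves and therefore $i+1$ rows.

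Let $s_1 < s_2 < \cdots < s_i$ be the indices triggering a north move, and set $s_0 := 0$, $s_{i+1} := e$. Then the $(k+1)$-st row from the bottom of $\zeta(\delta, u)$ reads, left-to-right, as the residues $\overline{\theta_{e-i}+s_k+1}, \overline{\theta_{e-i}+s_k+2}, \ldots, \overline{\theta_{e-i}+s_{k+1}}$ for each $k \in [0, i]$. To finish, I will match this row decomposition against the definition of a $(\theta, i)$-ribbon from \S\ref{minribs}. The sequence $(\phi_i^1, \ldots, \phi_i^{i+1})$ records the elements of $\{\theta_{e-i}, \ldots, \theta_e\}$ in the order they are encountered going clockwise (residue-wise) from $\theta_{e-i}$, which is the reverse of the counterclockwise encounter order. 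Together with $\phi_i^1 = \theta_{e-i}$ and $s_{i+1} = e$, a direct check will yield the identification $\phi_i^j = \overline{\theta_{e-i}+s_{i+2-j}}$ for all $j \in [1, i+1]$; substituting into the row description in \S\ref{minribs} matches our decomposition exactly, so $\zeta(\delta, u)$ is a $(\theta, i)$-ribbon.

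The main obstacle will be carefully tracking conventions: distinguishing clockwise from counterclockwise traversal of residues when identifying the $\phi_i^j$ with the $\overline{\theta_{e-i}+s_{i+2-j}}$, and correctly interpreting the bottom-row case ($j = i+1$) in the $(\theta, i)$-ribbon definition, where the indexing effectively wraps back to $\phi_i^1$. The edge cases $i = 0$ (no north moves, so $\zeta(\delta, u)$ is a single row of $e$ nodes) and $i = e-1$ (every step goes north, so $\zeta(\delta, u)$ is a single column) provide useful sanity checks that the residue-order combinatorics behave as expected.
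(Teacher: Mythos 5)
Your argument is correct, and I verified the step you leave as ``a direct check'': with $s_1<\dots<s_i$ the north-move indices and $s_{i+1}=e$, one indeed gets $\phi_i^j=\overline{\theta_{e-i}+s_{i+2-j}}$, and the row-by-row match with the definition in \S\ref{minribs} goes through (the $j$th row from the top of the $(\theta,i)$-ribbon is exactly your $(i+2-j)$th row from the bottom). Your route is, however, genuinely different from the paper's. The paper's proof is two lines: by \cref{firstcuspconstlem} the ribbon $\zeta(\delta,u)$ is cuspidal, its northeasternmost node has residue $\overline{\theta_{e-i}}$, and \cref{thetaaribcusp} then identifies cuspidal $\delta$-ribbons with $(\theta,a)$-ribbons, so matching the endpoint residue forces $a=i$. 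That argument is shorter but leans on \cref{thetaaribcusp}, which in the paper is extracted from the substantial beta-number analysis of \cref{allaboutshapes} for RoCK multicores. Your proof is self-contained: it uses only \cref{def:zetaribs}, the realization conditions (\ref{gdel})--(\ref{ldel}), and the conversion formula (\ref{gammaconvert}), and as a bonus it locates exactly where the north moves occur (namely at those $s$ with $\overline{\theta_{e-i}+s}\in\{\theta_{e-i+1},\dots,\theta_e\}$), which gives the explicit shape of $\zeta(\delta,u)$ rather than just its row count. The trade-off is length and the bookkeeping you flag (clockwise versus anticlockwise traversal and the wrap-around in the $j=i+1$ row), but none of that is a gap.
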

\begin{proof}
By \cref{firstcuspconstlem}, \(\zeta(\delta,u)\) is cuspidal with a northeasternmost node of residue \(\theta_{e-i}\), and thus by \cref{thetaaribcusp} must be a \((\theta,i)\)-ribbon.
\end{proof}

We now choose a distinguished set of cuspidal ribbons.

\begin{Definition}\(\)
\begin{enumerate}
\item For \(\beta = \alpha(a, L)\in \Phi_+^\re\), we fix some \(u \in \N\) with \(\textup{res}(u) = a\), and set \(\zeta(\beta):= \zeta(\beta,u)\).
\item For \(i \in [0,e-1]\), we fix some \(u \in \N\) such that \(\zeta(\delta,u)\) consists of \(i+1\) rows, and set \(\zeta_i := \zeta(\delta,u)\).
\end{enumerate}
\end{Definition}

\begin{Example}\label{distinctdeltas}
	Let \(e=4\) and \(\succeq\) be the convex preorder on \(\Phi_+\) as described in \cref{3coretileex}. Taking
	\begin{align*}
		\beta = 2\delta + \alpha_0 + \alpha_1 + \alpha_2 = \alpha(0,11) \in \Phi_+^\re,
	\end{align*}
	we construct the ribbon \(\zeta(\beta)\) as follows.
	\begin{align*}
		\hackcenter{
			\begin{tikzpicture}[scale=0.29]
				\draw[thick,fill=gray!30]  (0,0)--(0,1)--(1,1)--(1,0)--(0,0);
				\node at (0.5,0.5){$\scriptstyle 0$};
			\end{tikzpicture}
		}
		\xrightarrow{ \alpha_0 \succ \beta}
		\hackcenter{
			\begin{tikzpicture}[scale=0.29]
				\draw[thick,fill=gray!30]  (0,0)--(0,2)--(1,2)--(1,0)--(0,0);
				\node at (0.5,0.5){$\scriptstyle 0$};
				\node at (0.5,1.5){$\scriptstyle 1$};
				\draw[thick,  gray, dotted] (0,1)--(1,1);
			\end{tikzpicture}
		}
		\xrightarrow{ \alpha_0+ \alpha_1 \prec \beta}
		\hackcenter{
			\begin{tikzpicture}[scale=0.29]
				\draw[thick,fill=gray!30]  (0,0)--(0,2)--(2,2)--(2,1)--(1,1)--(1,0)--(0,0);
				\node at (0.5,0.5){$\scriptstyle 0$};
				\node at (0.5,1.5){$\scriptstyle 1$};
				\node at (1.5,1.5){$\scriptstyle 2$};
				\draw[thick,  gray, dotted] (0,1)--(1,1);
				\draw[thick,  gray, dotted] (1,1)--(1,2);
			\end{tikzpicture}
		}
		\xrightarrow{ \alpha_0+ \alpha_1 + \alpha_2 \succ \beta}
		\hackcenter{
			\begin{tikzpicture}[scale=0.29]
				\draw[thick,fill=gray!30]  (0,0)--(0,2)--(1,2)--(1,3)--(2,3)--(2,1)--(1,1)--(1,0)--(0,0);
				\node at (0.5,0.5){$\scriptstyle 0$};
				\node at (0.5,1.5){$\scriptstyle 1$};
				\node at (1.5,1.5){$\scriptstyle 2$};
				\node at (1.5,2.5){$\scriptstyle 3$};
				\draw[thick,  gray, dotted] (0,1)--(1,1);
				\draw[thick,  gray, dotted] (1,1)--(1,2);
				\draw[thick,  gray, dotted] (1,2)--(2,2);
			\end{tikzpicture}
		}
		\xrightarrow{ \delta \prec \beta}
		\hackcenter{
			\begin{tikzpicture}[scale=0.29]
				\draw[thick,fill=gray!30]  (0,0)--(0,2)--(1,2)--(1,3)--(3,3)--(3,2)--(2,2)--(2,1)--(1,1)--(1,0)--(0,0);
				\node at (0.5,0.5){$\scriptstyle 0$};
				\node at (0.5,1.5){$\scriptstyle 1$};
				\node at (1.5,1.5){$\scriptstyle 2$};
				\node at (1.5,2.5){$\scriptstyle 3$};
				\node at (2.5,2.5){$\scriptstyle 0$};
				\draw[thick,  gray, dotted] (0,1)--(1,1);
				\draw[thick,  gray, dotted] (1,1)--(1,2);
				\draw[thick,  gray, dotted] (1,2)--(2,2);
				\draw[thick,  gray, dotted] (2,3)--(2,2);
			\end{tikzpicture}
		}
		\xrightarrow{ \delta+\alpha_0 \succ \beta}
		\hackcenter{
			\begin{tikzpicture}[scale=0.29]
				\draw[thick,fill=gray!30]  (0,0)--(0,2)--(1,2)--(1,3)--(2,3)--(2,4)--(3,4)--(3,2)--(2,2)--(2,1)--(1,1)--(1,0)--(0,0);
				\node at (0.5,0.5){$\scriptstyle 0$};
				\node at (0.5,1.5){$\scriptstyle 1$};
				\node at (1.5,1.5){$\scriptstyle 2$};
				\node at (1.5,2.5){$\scriptstyle 3$};
				\node at (2.5,2.5){$\scriptstyle 0$};
				\node at (2.5,3.5){$\scriptstyle 1$};
				\draw[thick,  gray, dotted] (0,1)--(1,1);
				\draw[thick,  gray, dotted] (1,1)--(1,2);
				\draw[thick,  gray, dotted] (1,2)--(2,2);
				\draw[thick,  gray, dotted] (2,3)--(2,2);
				\draw[thick,  gray, dotted] (2,3)--(3,3);
			\end{tikzpicture}
		}
		\xrightarrow{ \delta+\alpha_0+\alpha_1 \prec \beta}
		\hackcenter{
			\begin{tikzpicture}[scale=0.29]
				\draw[thick,fill=gray!30]  (0,0)--(0,2)--(1,2)--(1,3)--(2,3)--(2,4)--(4,4)--(4,3)--(3,3)--(3,2)--(2,2)--(2,1)--(1,1)--(1,0)--(0,0);
				\node at (0.5,0.5){$\scriptstyle 0$};
				\node at (0.5,1.5){$\scriptstyle 1$};
				\node at (1.5,1.5){$\scriptstyle 2$};
				\node at (1.5,2.5){$\scriptstyle 3$};
				\node at (2.5,2.5){$\scriptstyle 0$};
				\node at (2.5,3.5){$\scriptstyle 1$};
				\node at (3.5,3.5){$\scriptstyle 2$};
				\draw[thick,  gray, dotted] (0,1)--(1,1);
				\draw[thick,  gray, dotted] (1,1)--(1,2);
				\draw[thick,  gray, dotted] (1,2)--(2,2);
				\draw[thick,  gray, dotted] (2,3)--(2,2);
				\draw[thick,  gray, dotted] (2,3)--(3,3);
				\draw[thick,  gray, dotted] (3,4)--(3,3);
			\end{tikzpicture}
		}
		\longrightarrow
		\\
		\xrightarrow{ \delta+\alpha_0+\alpha_1+\alpha_2 \succ \beta}
		\hackcenter{
			\begin{tikzpicture}[scale=0.29]
				\draw[thick,fill=gray!30]  (0,0)--(0,2)--(1,2)--(1,3)--(2,3)--(2,4)--(3,4)--(3,5)--(4,5)--(4,3)--(3,3)--(3,2)--(2,2)--(2,1)--(1,1)--(1,0)--(0,0);
				\node at (0.5,0.5){$\scriptstyle 0$};
				\node at (0.5,1.5){$\scriptstyle 1$};
				\node at (1.5,1.5){$\scriptstyle 2$};
				\node at (1.5,2.5){$\scriptstyle 3$};
				\node at (2.5,2.5){$\scriptstyle 0$};
				\node at (2.5,3.5){$\scriptstyle 1$};
				\node at (3.5,3.5){$\scriptstyle 2$};
				\node at (3.5,4.5){$\scriptstyle 3$};
				\draw[thick,  gray, dotted] (0,1)--(1,1);
				\draw[thick,  gray, dotted] (1,1)--(1,2);
				\draw[thick,  gray, dotted] (1,2)--(2,2);
				\draw[thick,  gray, dotted] (2,3)--(2,2);
				\draw[thick,  gray, dotted] (2,3)--(3,3);
				\draw[thick,  gray, dotted] (3,4)--(3,3);
				\draw[thick,  gray, dotted] (3,4)--(4,4);
			\end{tikzpicture}
		}
		\xrightarrow{ 2\delta\prec \beta}
		\hackcenter{
			\begin{tikzpicture}[scale=0.29]
				\draw[thick,fill=gray!30]  (0,0)--(0,2)--(1,2)--(1,3)--(2,3)--(2,4)--(3,4)--(3,5)--(5,5)--(5,4)--(4,4)--(4,3)--(3,3)--(3,2)--(2,2)--(2,1)--(1,1)--(1,0)--(0,0);
				\node at (0.5,0.5){$\scriptstyle 0$};
				\node at (0.5,1.5){$\scriptstyle 1$};
				\node at (1.5,1.5){$\scriptstyle 2$};
				\node at (1.5,2.5){$\scriptstyle 3$};
				\node at (2.5,2.5){$\scriptstyle 0$};
				\node at (2.5,3.5){$\scriptstyle 1$};
				\node at (3.5,3.5){$\scriptstyle 2$};
				\node at (3.5,4.5){$\scriptstyle 3$};
				\node at (4.5,4.5){$\scriptstyle 0$};
				\draw[thick,  gray, dotted] (0,1)--(1,1);
				\draw[thick,  gray, dotted] (1,1)--(1,2);
				\draw[thick,  gray, dotted] (1,2)--(2,2);
				\draw[thick,  gray, dotted] (2,3)--(2,2);
				\draw[thick,  gray, dotted] (2,3)--(3,3);
				\draw[thick,  gray, dotted] (3,4)--(3,3);
				\draw[thick,  gray, dotted] (3,4)--(4,4);
				\draw[thick,  gray, dotted] (4,5)--(4,4);
			\end{tikzpicture}
		}
		\xrightarrow{ 2\delta+\alpha_0\prec \beta}
		\hackcenter{
			\begin{tikzpicture}[scale=0.29]
				\draw[thick,fill=gray!30]  (0,0)--(0,2)--(1,2)--(1,3)--(2,3)--(2,4)--(3,4)--(3,5)--(6,5)--(6,4)--(4,4)--(4,3)--(3,3)--(3,2)--(2,2)--(2,1)--(1,1)--(1,0)--(0,0);
				\node at (0.5,0.5){$\scriptstyle 0$};
				\node at (0.5,1.5){$\scriptstyle 1$};
				\node at (1.5,1.5){$\scriptstyle 2$};
				\node at (1.5,2.5){$\scriptstyle 3$};
				\node at (2.5,2.5){$\scriptstyle 0$};
				\node at (2.5,3.5){$\scriptstyle 1$};
				\node at (3.5,3.5){$\scriptstyle 2$};
				\node at (3.5,4.5){$\scriptstyle 3$};
				\node at (4.5,4.5){$\scriptstyle 0$};
				\node at (5.5,4.5){$\scriptstyle 1$};
				\draw[thick,  gray, dotted] (0,1)--(1,1);
				\draw[thick,  gray, dotted] (1,1)--(1,2);
				\draw[thick,  gray, dotted] (1,2)--(2,2);
				\draw[thick,  gray, dotted] (2,3)--(2,2);
				\draw[thick,  gray, dotted] (2,3)--(3,3);
				\draw[thick,  gray, dotted] (3,4)--(3,3);
				\draw[thick,  gray, dotted] (3,4)--(4,4);
				\draw[thick,  gray, dotted] (4,5)--(4,4);
				\draw[thick,  gray, dotted] (5,5)--(5,4);
			\end{tikzpicture}
		}
		\xrightarrow{ 2\delta+\alpha_0+\alpha_1\prec \beta}
		\hackcenter{
			\begin{tikzpicture}[scale=0.29]
				\draw[thick,fill=gray!30]  (0,0)--(0,2)--(1,2)--(1,3)--(2,3)--(2,4)--(3,4)--(3,5)--(7,5)--(7,4)--(4,4)--(4,3)--(3,3)--(3,2)--(2,2)--(2,1)--(1,1)--(1,0)--(0,0);
				\node at (0.5,0.5){$\scriptstyle 0$};
				\node at (0.5,1.5){$\scriptstyle 1$};
				\node at (1.5,1.5){$\scriptstyle 2$};
				\node at (1.5,2.5){$\scriptstyle 3$};
				\node at (2.5,2.5){$\scriptstyle 0$};
				\node at (2.5,3.5){$\scriptstyle 1$};
				\node at (3.5,3.5){$\scriptstyle 2$};
				\node at (3.5,4.5){$\scriptstyle 3$};
				\node at (4.5,4.5){$\scriptstyle 0$};
				\node at (5.5,4.5){$\scriptstyle 1$};
				\node at (6.5,4.5){$\scriptstyle 2$};
				\draw[thick,  gray, dotted] (0,1)--(1,1);
				\draw[thick,  gray, dotted] (1,2)--(2,2);
				\draw[thick,  gray, dotted] (2,3)--(3,3);
				\draw[thick,  gray, dotted] (3,4)--(4,4);
				\draw[thick,  gray, dotted] (1,1)--(1,2);
				\draw[thick,  gray, dotted] (2,2)--(2,3);
				\draw[thick,  gray, dotted] (3,3)--(3,4);
				\draw[thick,  gray, dotted] (4,4)--(4,5);
				\draw[thick,  gray, dotted] (5,4)--(5,5);
				\draw[thick,  gray, dotted] (6,4)--(6,5);
			\end{tikzpicture}
		}
		{}
		=
		\zeta(\beta)
	\end{align*}
The corresponding ribbons $\zeta(\beta_i)$ for the indivisible real positive roots
\[
\beta_1=\alpha_2 \hspace{-0.5mm}+\hspace{-0.5mm} \alpha_3 \hspace{-0.5mm}+\hspace{-0.5mm} \alpha_0,
\quad
\beta_2=2 \delta \hspace{-0.5mm} + \hspace{-0.5mm} \alpha_0  \hspace{-0.5mm}+ \hspace{-0.5mm} \alpha_1  \hspace{-0.5mm}+ \hspace{-0.5mm} \alpha_2,
\quad
\beta_3=\delta  \hspace{-0.5mm}+ \hspace{-0.5mm} \alpha_2  \hspace{-0.5mm}+ \hspace{-0.5mm} \alpha_3,
\quad
\beta_4=\delta  \hspace{-0.5mm}+ \hspace{-0.5mm} \alpha_1
\]
in $\Phi_+$, where $\beta_1\succ\beta_2\succ\beta_3\succ\beta_4$, are
\begin{align*}
	\zeta(\beta_1) =
	{}
	\hackcenter{
		\begin{tikzpicture}[scale=0.29]
			\draw[thick,fill=gray!30]  (0,0)--(0,2)--(2,2)--(2,1)--(1,1)--(1,0)--(0,0);
			\node at (0.5,0.5){$\scriptstyle 2$};
			\node at (0.5,1.5){$\scriptstyle 3$};
			\node at (1.5,1.5){$\scriptstyle 0$};
			\draw[thick,  gray, dotted] (0,1)--(1,1);
			\draw[thick,  gray, dotted] (1,1)--(1,2);
		\end{tikzpicture}
	}
	\quad
	\zeta(\beta_2) =
	{}
	\hackcenter{
		\begin{tikzpicture}[scale=0.29]
			\draw[thick,fill=gray!30]  (0,0)--(0,2)--(1,2)--(1,3)--(2,3)--(2,4)--(3,4)--(3,5)--(7,5)--(7,4)--(4,4)--(4,3)--(3,3)--(3,2)--(2,2)--(2,1)--(1,1)--(1,0)--(0,0);
			\node at (0.5,0.5){$\scriptstyle 0$};
			\node at (0.5,1.5){$\scriptstyle 1$};
			\node at (1.5,1.5){$\scriptstyle 2$};
			\node at (1.5,2.5){$\scriptstyle 3$};
			\node at (2.5,2.5){$\scriptstyle 0$};
			\node at (2.5,3.5){$\scriptstyle 1$};
			\node at (3.5,3.5){$\scriptstyle 2$};
			\node at (3.5,4.5){$\scriptstyle 3$};
			\node at (4.5,4.5){$\scriptstyle 0$};
			\node at (5.5,4.5){$\scriptstyle 1$};
			\node at (6.5,4.5){$\scriptstyle 2$};
			\draw[thick,  gray, dotted] (0,1)--(1,1);
			\draw[thick,  gray, dotted] (1,2)--(2,2);
			\draw[thick,  gray, dotted] (2,3)--(3,3);
			\draw[thick,  gray, dotted] (3,4)--(4,4);
			\draw[thick,  gray, dotted] (1,1)--(1,2);
			\draw[thick,  gray, dotted] (2,2)--(2,3);
			\draw[thick,  gray, dotted] (3,3)--(3,4);
			\draw[thick,  gray, dotted] (4,4)--(4,5);
			\draw[thick,  gray, dotted] (5,4)--(5,5);
			\draw[thick,  gray, dotted] (6,4)--(6,5);
		\end{tikzpicture}
	}
	\quad
	\zeta(\beta_3) =
	{}
	\hackcenter{
		\begin{tikzpicture}[scale=0.29]
			\draw[thick,fill=gray!30]  (0,0)--(0,4)--(1,4)--(1,5)--(2,5)--(2,3)--(1,3)--(1,0)--(0,0);
			\node at (0.5,0.5){$\scriptstyle 2$};
			\node at (0.5,1.5){$\scriptstyle 3$};
			\node at (0.5,2.5){$\scriptstyle 0$};
			\node at (0.5,3.5){$\scriptstyle 1$};
			\node at (1.5,3.5){$\scriptstyle 2$};
			\node at (1.5,4.5){$\scriptstyle 3$};
			\draw[thick,  gray, dotted] (0,1)--(1,1);
			\draw[thick,  gray, dotted] (0,2)--(1,2);
			\draw[thick,  gray, dotted] (0,3)--(1,3);
			\draw[thick,  gray, dotted] (1,3)--(1,4);
			\draw[thick,  gray, dotted] (1,4)--(2,4);
		\end{tikzpicture}
	}
	\quad
	\zeta(\beta_4) =
	{}
	\hackcenter{
		\begin{tikzpicture}[scale=0.29]
			\draw[thick,fill=gray!30]  (0,0)--(0,1)--(1,1)--(1,2)--(2,2)--(2,3)--(3,3)--(3,1)--(2,1)--(2,0)--(0,0);
			\node at (0.5,0.5){$\scriptstyle 1$};
			\node at (1.5,0.5){$\scriptstyle 2$};
			\node at (1.5,1.5){$\scriptstyle 3$};
			\node at (2.5,1.5){$\scriptstyle 0$};
			\node at (2.5,2.5){$\scriptstyle 1$};
			\draw[thick,  gray, dotted] (1,1)--(2,1);
			\draw[thick,  gray, dotted] (2,2)--(3,2);
			\draw[thick,  gray, dotted] (1,0)--(1,1);
			\draw[thick,  gray, dotted] (2,1)--(2,2);
		\end{tikzpicture}
	},
\end{align*}
and the $e$ distinct $\delta$-ribbons corresponding to the null root $\delta$ are
\begin{align*}
	{}
	\zeta_0=
	{}
	\hackcenter{
		\begin{tikzpicture}[scale=0.29]
			\draw[thick,fill=orange!30] (7+0,20+0)--(11+0,20+0)--(11+0,21+0)--(7+0,21+0)--(7+0,20+0);
			\node at (7.5+0,20.5+0){$\scriptstyle 3$};
			\node at (8.5+0,20.5+0){$\scriptstyle 0$};
			\node at (9.5+0,20.5+0){$\scriptstyle 1$};
			\node at (10.5+0,20.5+0){$\scriptstyle 2$};
			\draw[thick, gray, dotted] (8+0,20+0)--(8+0,21+0);
			\draw[thick, gray, dotted] (9+0,20+0)--(9+0,21+0);
			\draw[thick, gray, dotted] (10+0,20+0)--(10+0,21+0);
		\end{tikzpicture}
	}
	\quad
	\zeta_1=
	\hackcenter{
		\begin{tikzpicture}[scale=0.29]
			\draw[thick,fill=green!30] (7+0,20+0)--(9+0,20+0)--(9+0,21+0)--(10+0,21+0)--(10+0,22+0)--(8+0,22+0)--(8+0,21+0)--(7+0,21+0)--(7+0,20+0);
			\node at (7.5+0,20.5+0){$\scriptstyle 1$};
			\node at (8.5+0,20.5+0){$\scriptstyle 2$};
			\node at (8.5+0,21.5+0){$\scriptstyle 3$};
			\node at (9.5+0,21.5+0){$\scriptstyle 0$};
			\draw[thick, gray, dotted] (8+0,20+0)--(8+0,21+0);
			\draw[thick, gray, dotted] (8+0,21+0)--(9+0,21+0);
			\draw[thick, gray, dotted] (9+0,21+0)--(9+0,22+0);
		\end{tikzpicture}
	}
	\quad
	\zeta_2=
	\hackcenter{
		\begin{tikzpicture}[scale=0.29]
			\draw[thick,fill=red!30] (2+0,12+0)--(3+0,12+0)--(3+0,13+0)--(4+0,13+0)--(4+0,15+0)--(3+0,15+0)--(3+0,14+0)--(2+0,14+0)--(2+0,12+0);
			\node at (2.5+0,12.5+0){$\scriptstyle 0$};
			\node at (2.5+0,13.5+0){$\scriptstyle 1$};
			\node at (3.5+0,13.5+0){$\scriptstyle 2$};
			\node at (3.5+0,14.5+0){$\scriptstyle 3$};
			\draw[thick, gray, dotted] (2+0,13+0)--(3+0,13+0);
			\draw[thick, gray, dotted] (3+0,13+0)--(3+0,14+0);
			\draw[thick, gray, dotted] (3+0,14+0)--(4+0,14+0);
		\end{tikzpicture}
	}
	\qquad
	\zeta_3=
	\hackcenter{
		\begin{tikzpicture}[scale=0.29]
			\draw[thick,fill=cyan!30] (0+0,0+0)--(1+0,0+0)--(1+0,4+0)--(0+0,4+0)--(0+0,0+0);
			\node at (0.5+0,0.5+0){$\scriptstyle 2$};
			\node at (0.5+0,1.5+0){$\scriptstyle 3$};
			\node at (0.5+0,2.5+0){$\scriptstyle 0$};
			\node at (0.5+0,3.5+0){$\scriptstyle 1$};
			\draw[thick, gray, dotted] (0+0,1+0)--(1+0,1+0);
			\draw[thick, gray, dotted] (0+0,2+0)--(1+0,2+0);
			\draw[thick, gray, dotted] (0+0,3+0)--(1+0,3+0);
		\end{tikzpicture}
	}.
\end{align*}
\end{Example}

The next lemmas follow immediately from \cref{firstcuspconstlem,secondcusplem}.

\begin{Lemma}\label{lemcuspall}
Up to \(\sim\)-similarity, the set \(\{\zeta(\beta) \mid \beta \in \Phi_+^\re \} \sqcup \{\zeta_i \mid i \in [0,e-1]\}\) comprises a complete list of cuspidal ribbons associated to the convex preorder \(\succeq\) on \(\Phi_+\).
\end{Lemma}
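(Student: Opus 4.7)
The plan is to observe first that cuspidality (Definition~\ref{cuspdef}) is formulated with strict inequalities in $\succ$, and that $m\delta \approx \delta$ for every $m \geq 1$. Consequently, if $\xi$ is a cuspidal ribbon of content $\omega$, any two-tile refinement $(\bnu_1,\bnu_2)$ of $\xi$ would force $\omega$ to be strictly larger than sums of positive roots making up $\cont(\bnu_1)$; when $\omega = m\delta$ with $m > 1$, the decomposition $m\delta = \delta + (m-1)\delta$ violates this. Hence $\omega \in \Psi = \Phi_+^\re \sqcup \{\delta\}$, and we only have to analyze those two kinds of content.

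Next I would treat the real case. Fix $\beta = \alpha(a,L) \in \Phi_+^\re$ and let $\xi$ be a cuspidal ribbon of content $\beta$. By \cref{firstcuspconstlem} we have $\xi = \zeta(\beta,u)$ for some $u \in \N$. The construction of \cref{def:zetaribs} forces $\textup{res}(u) = a$, and at each subsequent step the choice between adding a node to the north or east depends only on whether the partial content $\alpha(a,i)$ lies above or below $\beta$ in $\succeq$; no data from the ambient lattice is used beyond the residue of the starting node. Thus for any two nodes $u,u'$ of residue $a$, the ribbons $\zeta(\beta,u)$ and $\zeta(\beta,u')$ are obtained from one another by a residue-preserving translation, which is exactly what $\sim$-similarity means for a single connected diagram. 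In particular every cuspidal ribbon of content $\beta$ is $\sim$-similar to the chosen representative $\zeta(\beta)$.

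For the imaginary case, let $\xi$ be a cuspidal ribbon of content $\delta$ and choose the residue permutation $\theta$ realized by $\succeq$ (unique by \cref{realizelem}). Again by \cref{firstcuspconstlem} one has $\xi = \zeta(\delta,u)$, and by \cref{secondcusplem} the ribbon $\xi$ is a $(\theta,i)$-ribbon precisely when $\textup{res}(u) = \overline{\theta_{e-i}+1}$, in which case $\xi$ has $i+1$ rows. Since the shape and residue-pattern of a $(\theta,i)$-ribbon are rigidly prescribed in \S\ref{minribs}, any two $(\theta,i)$-ribbons differ by a residue-preserving translation and so are $\sim$-similar. The fixed representatives $\zeta_0,\dots,\zeta_{e-1}$ were chosen to realize each value of $i\in[0,e-1]$, so together with the previous paragraph they exhaust all cuspidal ribbons up to $\sim$.

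For irredundancy, $\sim$-similar ribbons have equal content, which separates the $\zeta(\beta)$'s from each other and from the $\zeta_i$'s; among the $\zeta_i$'s themselves, the row counts $1,2,\dots,e$ are pairwise distinct. I do not anticipate any serious obstacle here: once one establishes the translation-invariance of the algorithm in \cref{def:zetaribs} (which is an unpacking of the definition), the entire statement reduces to bookkeeping on top of \cref{firstcuspconstlem,secondcusplem}. If anything is delicate, it is phrasing carefully why the choice of $u$ is inconsequential in the real case, but this is immediate from the observation that the only role of $u$ in the inductive construction is to anchor the shape in $\N$.
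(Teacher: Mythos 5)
Your proof is correct and takes essentially the same route as the paper, which offers no argument at all beyond asserting that the lemma ``follows immediately'' from \cref{firstcuspconstlem,secondcusplem}; your write-up supplies the translation-invariance of the construction in \cref{def:zetaribs} and the residue bookkeeping via \cref{realizelem,secondcusplem} that the paper leaves implicit. The only point worth tightening is the opening paragraph ruling out content $m\delta$ with $m>1$: cuspidality is a condition on genuine two-tile skew tableaux of the diagram, so rather than invoking the abstract identity $m\delta=\delta+(m-1)\delta$ you should split the ribbon into its first $(m-1)e$ nodes and last $e$ nodes (one of the two orderings is a valid two-tile tableau) and then observe that a positive multiple of $\delta$ cannot be written as a sum of roots all strictly below (or all strictly above) $\delta$, since applying $p$ would express $0$ as a sum of nonzero elements of a pointed cone.
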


\begin{Lemma}\label{lem:zetaistheta}
Let \(\theta\) be the residue permutation realized by \(\succeq\). 
For each \(i \in [0,e-1]\), \(\zeta_i\) is a \((\theta,i)\)-ribbon.
\end{Lemma}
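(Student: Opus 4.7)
The plan is to combine three facts from earlier in the paper: (1) that $\zeta_i = \zeta(\delta, u)$ is a cuspidal ribbon for some choice of $u$; (2) that every cuspidal $\delta$-ribbon must be a $(\theta, a)$-ribbon for some $a \in [0, e-1]$; and (3) that a $(\theta, a)$-ribbon has exactly $a+1$ rows. Matching row counts then pins down $a = i$.

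First, by definition $\zeta_i = \zeta(\delta, u)$ for some chosen $u$ such that $\zeta(\delta,u)$ consists of $i+1$ rows. By \cref{firstcuspconstlem}, $\zeta(\delta, u)$ is a cuspidal ribbon of content $\delta$. Since $\succeq$ realizes the residue permutation $\theta$, \cref{thetaaribcusp} applies and guarantees that $\zeta_i$ is a $(\theta, a)$-ribbon for some $a \in [0, e-1]$.

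Next, inspection of the definition of $(\theta, a)$-ribbons in \S\ref{minribs} shows that such a ribbon consists of precisely $a+1$ rows (one row for each of the residues $\phi_a^1, \ldots, \phi_a^{a+1}$). Since $\zeta_i$ has $i+1$ rows by construction, comparing row counts forces $a+1 = i+1$, hence $a = i$, so $\zeta_i$ is a $(\theta, i)$-ribbon as claimed.

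There is no real obstacle here; the statement is essentially a bookkeeping consequence of the already-established correspondence between cuspidal $\delta$-ribbons and $(\theta,a)$-ribbons in \cref{thetaaribcusp}, together with the observation that the number of rows in a $(\theta,a)$-ribbon is a strictly increasing function of $a$ (so that the row count determines $a$ uniquely). The only thing one could conceivably spell out in more detail is the row-count assertion for $(\theta,a)$-ribbons, but this is immediate from the definition in \S\ref{minribs}.
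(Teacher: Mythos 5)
Your proof is correct and matches the paper's own (omitted) argument essentially step for step: cuspidality of $\zeta_i=\zeta(\delta,u)$ via \cref{firstcuspconstlem}, the identification as a $(\theta,a)$-ribbon via \cref{thetaaribcusp}, and then pinning down $a=i$ by counting rows using the definition in \S\ref{minribs}. No gaps.
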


For instance, see \cref{thetaadeltaribbons} and~\cref{distinctdeltas} for a comparison of these labelings. The following is a slight rephrasing of the dilation process defined in \cite[\S7]{muthtiling}.

\begin{Definition}
Let \(\nu\) be a partition of \(d\), and let \(i \in [0,e-1]\). 
We define the skew diagram \(\textup{dil}_i(\nu) \in \Lambda(d\delta)\) by setting
\begin{align*}
\textup{dil}_i(\nu) := \bigsqcup_{(x,y) \in \nu} {\tt W}^{(x-1)(e-i-1)} {\tt S}^{(x-1)(i+1)}  {\tt E}^{(y-1)(e-i)} {\tt N}^{(y-1)i} \zeta_i.
\end{align*}
We refer to \(\textup{dil}_i(\nu)\) as the {\em \(i\)-dilation} of \(\nu\). Note that \(\textup{dil}_i((1)) = \zeta_i\).
\end{Definition}

\begin{Remark}
The definition of \(i\)-dilation above is slightly difficult to parse at a glance, but is easily described via the following heuristic. One associates to every node \(u \in \nu\) a shifted copy \(\xi(u)\) of \(\zeta_i\), which we think of as an \(i\)-dilation of the node \(u\). The ribbons \(\xi(u)\) should be chosen such that 
\begin{itemize}
\item \(\xi((1,1)) = \zeta_i\); 
\item if \(u, {\tt E}u \in \nu\), then the northeasternmost node in \(\xi(u)\) is one step to the west of the southwesternmost node in \(\xi({\tt E}u)\); and
\item if \(u, {\tt S}u \in \nu\), then the southwesternmost node in \(\xi(u)\) is one step to the north of the northeasternmost node in \(\xi({\tt S}u)\).
\end{itemize}
For example, if  $e=4$ and \(\succeq\) is the convex preorder on \(\Phi_+\) as described in \cref{3coretileex}, 
then the \(2\)-dilation of $\nu=(2^2)$ is as follows:
\begin{align*}
\nu = 
	\hackcenter{
		\begin{tikzpicture}[scale=0.29]
			\draw[thick,fill=blue!40!green!25]
			(12+1-3,8)--(13+1-3,8)--(13+1-3,9)--(12+1-3,9)--(12+1-3,8);
			\draw[thick,fill=purple!35]
			(13+1-3,8)--(14+1-3,8)--(14+1-3,9)--(13+1-3,9)--(13+1-3,8);
			\draw[thick,fill=blue!20]
			(12+1-3,7)--(13+1-3,7)--(13+1-3,8)--(12+1-3,8)--(12+1-3,7);
			\draw[thick,fill=yellow!70!orange!30]
			(13+1-3,7)--(14+1-3,7)--(14+1-3,8)--(13+1-3,8)--(13+1-3,7);
		\end{tikzpicture}
	}
	\quad
	\xrightarrow{2\textup{-dilation}}
	\quad
	\hackcenter{
		\begin{tikzpicture}[scale=0.29]
			\draw[thick,fill=blue!20] (2+0,12+0)--(3+0,12+0)--(3+0,13+0)--(4+0,13+0)--(4+0,15+0)--(3+0,15+0)--(3+0,14+0)--(2+0,14+0)--(2+0,12+0);
			\node at (2.5+0,12.5+0){$\scriptstyle 0$};
			\node at (2.5+0,13.5+0){$\scriptstyle 1$};
			\node at (3.5+0,13.5+0){$\scriptstyle 2$};
			\node at (3.5+0,14.5+0){$\scriptstyle 3$};
			\draw[thick, gray, dotted] (2+0,13+0)--(3+0,13+0);
			\draw[thick, gray, dotted] (3+0,13+0)--(3+0,14+0);
			\draw[thick, gray, dotted] (3+0,14+0)--(4+0,14+0);
			\draw[thick,fill=yellow!70!orange!30] (2+2,12+2)--(3+2,12+2)--(3+2,13+2)--(4+2,13+2)--(4+2,15+2)--(3+2,15+2)--(3+2,14+2)--(2+2,14+2)--(2+2,12+2);
			\node at (2.5+2,12.5+2){$\scriptstyle 0$};
			\node at (2.5+2,13.5+2){$\scriptstyle 1$};
			\node at (3.5+2,13.5+2){$\scriptstyle 2$};
			\node at (3.5+2,14.5+2){$\scriptstyle 3$};
			\draw[thick, gray, dotted] (2+2,13+2)--(3+2,13+2);
			\draw[thick, gray, dotted] (3+2,13+2)--(3+2,14+2);
			\draw[thick, gray, dotted] (3+2,14+2)--(4+2,14+2);
			\draw[thick,fill=blue!40!green!25] (2+1,12+3)--(3+1,12+3)--(3+1,13+3)--(4+1,13+3)--(4+1,15+3)--(3+1,15+3)--(3+1,14+3)--(2+1,14+3)--(2+1,12+3);
			\node at (2.5+1,12.5+3){$\scriptstyle 0$};
			\node at (2.5+1,13.5+3){$\scriptstyle 1$};
			\node at (3.5+1,13.5+3){$\scriptstyle 2$};
			\node at (3.5+1,14.5+3){$\scriptstyle 3$};
			\draw[thick, gray, dotted] (2+1,13+3)--(3+1,13+3);
			\draw[thick, gray, dotted] (3+1,13+3)--(3+1,14+3);
			\draw[thick, gray, dotted] (3+1,14+3)--(4+1,14+3);
			\draw[thick,fill=purple!35] (2+3,12+5)--(3+3,12+5)--(3+3,13+5)--(4+3,13+5)--(4+3,15+5)--(3+3,15+5)--(3+3,14+5)--(2+3,14+5)--(2+3,12+5);
			\node at (2.5+3,12.5+5){$\scriptstyle 0$};
			\node at (2.5+3,13.5+5){$\scriptstyle 1$};
			\node at (3.5+3,13.5+5){$\scriptstyle 2$};
			\node at (3.5+3,14.5+5){$\scriptstyle 3$};
			\draw[thick, gray, dotted] (2+3,13+5)--(3+3,13+5);
			\draw[thick, gray, dotted] (3+3,13+5)--(3+3,14+5);
			\draw[thick, gray, dotted] (3+3,14+5)--(4+3,14+5);
		\end{tikzpicture}
	}
	= \textup{dil}_2(\nu),
\end{align*}
where the colored copies of \(\zeta_2\) on the right side are the \(2\)-dilations of the nodes in \(\nu\) of the same color.
\end{Remark}

The following is \cite[Theorem~7.18]{muthtiling}.

\begin{Lemma}\label{semicuspuniqueness}
For all \(\beta \in \Phi_+^\re\) and \(K \in \ZZ_{>0}\), the skew K-diagram \(\zeta(\beta)^K = (\zeta(\beta) \mid \dots \mid \zeta(\beta))\) is (up to \(\sim\)-similarity) the unique semicuspidal skew diagram of content \(K\beta\). For each \(i \in [0,e-1]\) and partition \(\nu\), the skew diagram \(\textup{dil}_i(\nu)\) is a connected semicuspidal skew diagram of content \(|\nu|\delta\).
\end{Lemma}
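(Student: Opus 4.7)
The plan is to leverage the existence and uniqueness of cuspidal Kostant tilings (Theorem~\ref{tilethm}) together with the characterization of cuspidal ribbons in \cref{firstcuspconstlem}, treating the real and imaginary cases in parallel but highlighting the different role played by the imaginary-equivalence axiom for convex preorders.

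For part~1, I would first check existence. The diagram $\zeta(\beta)^K$ manifestly has content $K\beta$, and its natural partition into $K$ side-by-side copies of $\zeta(\beta)$ is a cuspidal Kostant tiling by \cref{firstcuspconstlem} and the definition of concatenation. To establish semicuspidality, I would analyse an arbitrary two-tile skew tableau $(\bnu_1,\bnu_2)$ by intersecting each $\bnu_j$ with this tiling: each intersection is a skew subdiagram of a cuspidal ribbon of content $\beta$, whose content is then a sum of positive roots $\preceq\beta$ (for the NW piece) or $\succeq\beta$ (for the SE piece) by cuspidality of $\zeta(\beta)$. Summing across the $K$ tiles yields the required decompositions.

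For uniqueness, suppose $\btau$ is an arbitrary semicuspidal diagram of content $K\beta$. By \cref{tilethm}(ii), the $\succeq$-minimal SE-removable ribbon of $\btau$ of content in $\Psi$ lies in $\Gamma_{\btau}$ and has $\succeq$-minimal content among all tiles; semicuspidality then forces this content to be $\succeq\beta$, so every tile in $\Gamma_{\btau}$ has content $\succeq\beta$. Dually, \cref{tilethm}(iii) combined with semicuspidality forces every tile to have content $\preceq\beta$. Since $\beta$ is real, the only root satisfying both inequalities is $\beta$ itself, so $\Gamma_{\btau}$ consists of $K$ cuspidal ribbons of content $\beta$, each $\sim$-similar to $\zeta(\beta)$ by \cref{firstcuspconstlem}. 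A disjointness/positioning argument (using that two distinct cuspidal ribbons of the same real content must be arranged $\NEarrow$, cf.~\cite[Proposition 7.14]{muthtiling}) then identifies $\btau$ with $\zeta(\beta)^K$ up to $\sim$.

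For part~2, connectedness of $\textup{dil}_i(\nu)$ is immediate from the construction: adjacent nodes of $\nu$ dilate to ribbons sharing a common corner, by design of the translations ${\tt W}^{(x-1)(e-i-1)}{\tt S}^{(x-1)(i+1)}{\tt E}^{(y-1)(e-i)}{\tt N}^{(y-1)i}$. The content is $|\nu|\delta$ since each of the $|\nu|$ copies of $\zeta_i$ contributes $\delta$. For semicuspidality, the natural decomposition of $\textup{dil}_i(\nu)$ into $|\nu|$ copies of $\zeta_i$ is a cuspidal Kostant tiling (the tiles are cuspidal by \cref{firstcuspconstlem}, and any ordering compatible with $\searrow$ on $\nu$ gives a valid tableau, with all tiles of content $\delta$ trivially $\approx$-equivalent). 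An analysis analogous to part~1 then shows that for any two-tile tableau $(\bnu_1,\bnu_2)$, each $\bnu_j$ is a union of sub-ribbons of individual $\zeta_i$-tiles; each sub-ribbon has real-root content comparable to $\delta$ in the appropriate direction (by cuspidality of $\zeta_i$) or is itself a full $\delta$-tile, and in either case the imaginary-equivalence clause $\delta\approx|\nu|\delta$ makes both semicuspidal inequalities hold.

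The main obstacle is the positional rigidity argument in the uniqueness step of part~1: having identified that $\Gamma_{\btau}$ comprises $K$ ribbons similar to $\zeta(\beta)$, one must deduce that they appear in a linear $\NEarrow$-chain (so that $\btau$ distills to $\zeta(\beta)^K$) rather than in some branched or overlapping configuration. This is where the combinatorics of cuspidal ribbons of equal real content from \cite{muthtiling} is essential, and this rigidity is precisely what fails in the imaginary case of part~2, where multiple non-similar cuspidal ribbons $\zeta_0,\dots,\zeta_{e-1}$ of content $\delta$ coexist and can be arranged via dilation into genuinely distinct connected semicuspidal diagrams $\textup{dil}_i(\nu)$.
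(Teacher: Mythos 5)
The paper offers no proof of this lemma at all: it is imported verbatim as \cite[Theorem~7.18]{muthtiling}, so there is no in-paper argument to compare yours against. Your reconstruction from the machinery that \emph{is} in the paper (\cref{tilethm}, \cref{firstcuspconstlem}, and the convexity facts from \cite{muthtiling} already invoked in the proofs of \cref{combprop2} and \cref{sepofcoreblock}) is essentially sound: the intersection-with-tiles argument for semicuspidality of $\zeta(\beta)^K$ and of $\textup{dil}_i(\nu)$ is exactly the standard device, and the two-sided squeeze via \cref{tilethm}(ii),(iii) correctly forces every tile of $\Gamma_{\btau}$ to have content $\approx\beta$, hence $=\beta$ by the imaginary-equivalence axiom since $\beta$ is real. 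One small point of hygiene: \cref{cuspdef} is literally stated only for diagrams whose content lies in $\Phi_+$, whereas $K\beta$ and $|\nu|\delta$ generally do not; you implicitly (and correctly) read "semicuspidal of content $K\beta$" as comparison against $\beta$ rather than $K\beta$, matching the module-theoretic definition in \cref{stratasec}, but you should say so.

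The one genuine soft spot is the one you flag yourself: having shown $\Gamma_{\btau}$ consists of $K$ cuspidal $\beta$-ribbons, you still must rule out that two of them sit adjacently so that $\btau$ fails to distill to $(\zeta(\beta)\mid\cdots\mid\zeta(\beta))$. Your citation of \cite[Proposition~7.14]{muthtiling} is plausible but not obviously on point — where this paper invokes that proposition (in the proof of \cref{allaboutshapes}) it is for disconnectedness of unions of ribbons of \emph{different} contents; for equal real content the needed rigidity (equal-content cuspidal ribbons in a skew diagram form a disconnected $\NEarrow$-chain) is a separate piece of the structure theory of \cite{muthtiling} that you assert rather than prove. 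Since the authors themselves outsource the entire lemma, deferring exactly this combinatorial core to the same source is defensible, but be aware that it, not the squeeze argument, is where all the work lives. Your closing observation — that this rigidity is precisely what fails for $\delta$, which is why the imaginary case admits many non-similar connected semicuspidal diagrams and the lemma claims no uniqueness there — is correct and is the right way to understand the asymmetry in the statement.
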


\begin{Definition}\label{def:zetanu}
Given an \((e-1)\)-multipartition \(\bnu = (\nu^{(1)} \mid \dots \mid \nu^{(e-1)})\) of some \(K\in\ZZ_{>0}\), we set
\begin{align*}
\zeta(\bnu) := ( \textup{dil}_1(\nu^{(1)}) \mid \dots \mid \textup{dil}_{e-1}(\nu^{(e-1)})) \in \Lambda^{{(e-1)}}(K\delta).
\end{align*}
\end{Definition}

\begin{Example}
Let \(e=4\) and \(\succeq\) be the convex preorder on \(\Phi_+\) as described in \cref{3coretileex}.
For $\bnu=((3^2,1)\mid(2^2)\mid(2))$, we obtain the skew diagram $\zeta(\bnu)$ displayed in~\cref{firstexintro}.

\begin{Definition}\label{def:zetapi}
Let \(
\pi = (\boldsymbol{K}, \bnu) = ((\beta_1^{K_{\beta_1}} \mid \dots \mid \beta_u^{K_{\beta_u}} \mid \delta^{K_\delta} \mid \beta_{u+1}^{K_{\beta_{u+1}}} \mid \dots \mid \beta_t^{K_{\beta_t}}), \bnu) \in \Pi(\omega)
\)
be a root partition of \(\beta \in \ZZ_{\geq 0} I\). Writing \(k = e-1 + \sum_{\beta \in \Phi_+^\re} K_\beta\), we define the skew diagram \(\zeta(\pi)\) by setting
 \begin{align*}
 \zeta(\pi) := \left(
\zeta(\beta_1)^{K_{\beta_1}} \mid \cdots \mid \zeta(\beta_u)^{K_{\beta_u}} \mid \zeta(\bnu) \mid  \zeta(\beta_{u+1})^{K_{\beta_{u+1}}} \mid \cdots \mid \zeta(\beta_t)^{K_{\beta_t}}\right) \in \Lambda^{k}(\omega).
\end{align*}
\end{Definition}

\end{Example}

\subsection{Semicuspidal modules via skew Specht modules}\label{subsec:simplesviaskews}
We continue with the fixed choice of convex preorder \(\succeq\) on \(\Phi_+\). We recall the following result from \cite[Proposition~8.4]{muthtiling}.

\begin{Proposition}\label{prop:realsemis}
Let \(\beta \in \Phi_+^\re\) and \(K \in \ZZ_{>0}\). 
The real semicuspidal self-dual simple module \(L(\beta^{K})\) is isomorphic to the skew Specht module \(\zS^{\zeta({\beta})^{K}}\), up to grading shift.
\end{Proposition}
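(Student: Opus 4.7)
The plan combines the combinatorial semicuspidality of \(\zeta(\beta)^K\) with the representation-theoretic uniqueness of simple semicuspidal \(R_{K\beta}\)-modules for real roots, then uses an extremal-weight-space argument to pin down the composition multiplicity at one.

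By Lemma \ref{semicuspuniqueness}, \(\zeta(\beta)^K\) is a semicuspidal skew diagram of content \(K\beta\), so Lemma \ref{spechtshape} implies \(\zS^{\zeta(\beta)^K}\) is a semicuspidal \(R_{K\beta}\)-module, nonzero by Proposition \ref{Spechtbasis} (the row-leading tableau \({\tt t}^{\zeta(\beta)^K}\) yields a nonzero basis element). Cuspidal system theory (\S\ref{stratasec}) guarantees that for real \(\beta\) the module \(L(\beta^K)\) is the unique simple semicuspidal \(R_{K\beta}\)-module up to grading shift, so every composition factor of \(\zS^{\zeta(\beta)^K}\) is a grading shift of \(L(\beta^K)\). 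To conclude that \(\zS^{\zeta(\beta)^K} \approx L(\beta^K)\), it suffices to show \([\zS^{\zeta(\beta)^K} : L(\beta^K)] = 1\).

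My approach for this multiplicity count is to exhibit an extremal word \(\bi \in I^{K\beta}\) with \(\dim 1_\bi \zS^{\zeta(\beta)^K} = 1\); by Proposition \ref{Spechtbasis} this dimension equals \(|\{{\tt t} \in \Std(\zeta(\beta)^K) : \bi^{\tt t} = \bi\}|\). The candidate is the row-leading residue sequence \(\bi^{\zeta(\beta)^K}\). For a single cuspidal ribbon \(\zeta(\beta)\), the construction of Definition \ref{def:zetaribs} is fully rigid: at step \(i\) the new node is forced to go either north or east according to whether \(\alpha(a,i) \succ \beta\) or \(\alpha(a,i) \prec \beta\) in the convex preorder, so \({\tt t}^{\zeta(\beta)}\) is the unique standard tableau of content \(\bi^{\zeta(\beta)}\). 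For the \(K\)-component diagram, since \(\bi^{\zeta(\beta)^K}\) is the concatenation of \(K\) copies of \(\bi^{\zeta(\beta)}\), one argues that the first \(|\beta|\) labels of any standard tableau with this content must fill some semicuspidal sub-ribbon of content \(\beta\), which by Lemma \ref{semicuspuniqueness} must be precisely the first component; iterating this reduces to the single-component case.

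The principal obstacle is justifying the multi-component step rigorously: one must exclude partial shufflings of residue blocks between components that might yield alternative standard tableaux with the same leading content sequence. The cleanest way to handle this is by recursive application of the semicuspidal uniqueness of Lemma \ref{semicuspuniqueness} to prefixes of the tableau. If the direct combinatorial route proves delicate (for instance when residues repeat across components in ways that admit combinatorial swaps), an alternative is to induct on \(K\), analyzing \(\Res_{\beta,(K-1)\beta}\zS^{\zeta(\beta)^K}\) via the known restriction structure of skew Specht modules and comparing with \(L(\beta) \boxtimes L(\beta^{K-1})\) through Frobenius reciprocity, thereby sidestepping the explicit enumeration of standard tableaux.
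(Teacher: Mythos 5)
First, a structural remark: the paper does not prove this proposition at all — it is imported verbatim from \cite[Proposition~8.4]{muthtiling} — so you are supplying a proof where the paper supplies a citation. Your opening frame is sound and is the natural one: \cref{semicuspuniqueness} and \cref{spechtshape} make \(\zS^{\zeta(\beta)^K}\) a nonzero semicuspidal \(R_{K\beta}\)-module, semicuspidality passes to subquotients, and \(L(\beta^K)\) is the unique simple semicuspidal module for real \(\beta\), so everything reduces to showing the total number of composition factors is one.

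The gap is that multiplicity-one step, and it is not merely delicate — the proposed mechanism fails. The claim \(\dim 1_{\bi^{\zeta(\beta)^K}}\zS^{\zeta(\beta)^K}=1\) is false already for \(K=1\). Take \(e=2\), a convex preorder with \(\alpha_0\succ\delta\succ\alpha_1\), and \(\beta=\delta+\alpha_0=\alpha(0,3)\). \cref{def:zetaribs} produces the three-node ribbon \(\{(2,2),(1,2),(1,3)\}\) with residues \(0,1,0\) and row-leading word \(\bi^{\zeta(\beta)}=100\); but the nodes \((1,3)\) and \((2,2)\) are \(\searrow\)-incomparable, so \emph{both} standard tableaux of this ribbon have residue sequence \(100\), whence \(\dim 1_{100}\zS^{\zeta(\beta)}=2\). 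In fact \(100\) is the only word of this module, it is simple of dimension \(2\), and every (ungraded) word space of \(L(\delta+\alpha_0)\) is \(2\)-dimensional — so no choice of extremal word can yield a one-dimensional space, and the ungraded word-counting strategy cannot be repaired by picking a different \(\bi\). The "rigidity" you invoke is also a non sequitur: \cref{def:zetaribs} rigidifies the SW-to-NE construction order of the ribbon, which is not the row-reading order and is generally not even a standard tableau, so it says nothing about how many standard tableaux share the row-leading residue sequence. Finally, your fallback via \(\Res_{\beta,(K-1)\beta}\) and Frobenius reciprocity only reduces \(K>1\) to \(K=1\), which is exactly where the argument already breaks. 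Some genuinely different input is needed for the base case — e.g.\ a graded character or minimal-pair computation pinning down \(\dim L(\beta)\), which is presumably why the present paper defers entirely to \cite{muthtiling}.
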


Combining the  \(K=1\) statement of \cref{prop:realsemis} with \cref{firstcuspconstlem,spechtshape} yields that real cuspidal self-dual simple modules are classified by cuspidal ribbons.

In order to work on the imaginary case, let \(\theta\) be the residue permutation realized by \(\succeq\) (see \cref{realizethetadef}).
Let \(d \in \ZZ_{\geq 0}\), and fix
a level one \(\theta\)-RoCK core \(\rho\) and a charge \(\kappa\) such that \(d\leq \textup{cap}^\theta_\delta(\rho, \kappa)\).
Indeed, one may construct such a core \(\rho\) from the abacus perspective by ensuring that there are at least \(d\) more beads on the \(\theta_{i+1}\)-runner than on the \(\theta_i\)-runner.

For an \((e-1)\)-multipartition \(\bnu = (\nu^{(1)} \mid \dots \mid \nu^{(e-1)})\) of \(d\), we define the following \(e\)-multipartition of \(d\):
\begin{align*}
\widehat \bnu := (\varnothing \mid \nu^{(1)} \mid \dots \mid \nu^{(e-1)}).
\end{align*}
Recall then that \(\rho \sqcup \widehat{\bnu}^\theta_{\rho} \in \Lambda^{\kappa}(\omega)\) is 
constructed by moving the \(q\)th largest element of \(\B_{\theta_{e-t}}(\rho,\kappa)\) up \(e \cdot \widehat{\nu}_q^{(t)}\) positions; or, from the abacus perspective, moving the \(q\)th lowest bead on the \(\theta_{e-t}\)-runner down \(\widehat{\nu}_q^{(t)}\) times.

Then we use the following notation to describe decomposition numbers in the RoCK block \(R^{\Lambda_\kappa}_{\omega + d\delta}\).
\begin{align*}
d^{\textup{RoCK}}_{\bnu, \bmu} := [S^{\rho \sqcup \widehat{\bnu}^\theta_{\rho}} : \textup{hd}(S^{\rho \sqcup \widehat{\bmu}^\theta_{\rho}})]
\end{align*}
These numbers are well-studied and have explicit formulae when \(\text{char}(\k) = 0\) or \(\text{char}(\k) > d\), see for instance \cite{jlm}. 
Whereas \(S^{\rho \sqcup \widehat{\bnu}^\theta_{\rho}}\) and \(S^{\rho \sqcup \widehat{\bmu}^\theta_{\rho}}\) depend on \(\theta, \rho\), the decomposition number \(d^{\textup{RoCK}}_{\bnu, \bmu}\) defined above does not.
Indeed, by iteratively applying Scopes equivalences (see \S\ref{subsec:rockblocks}), we have that \( [S^{\rho \sqcup \widehat{\bnu}^\theta_{\rho}} : \textup{hd}(S^{\rho \sqcup \widehat{\bmu}^\theta_{\rho}})] =  [S^{\rho \sqcup \widehat{\bnu}^{\theta_0}_{\rho}} : \textup{hd}(S^{\rho \sqcup \widehat{\bmu}^{\theta_0}_{\rho}})]\), where \(\theta_0 = (0,1,\ldots, e-1)\) is the trivial residue permutation. In this setting \(\bnu, \bmu\) are the so-called `\(e\)-quotients' of the Young diagrams \(\rho \sqcup \widehat{\bnu}^{\theta_0}_{\rho},\rho \sqcup \widehat{\bmu}^{\theta_0}_{\rho} \) in a Rouquier block, and the decomposition numbers \( d^{\textup{RoCK}}_{\bnu, \bmu}  = [S^{\rho \sqcup \widehat{\bnu}^{\theta_0}_{\rho}} : \textup{hd}(S^{\rho \sqcup \widehat{\bmu}^{\theta_0}_{\rho}})]\) are well-studied, do not depend on \(\rho\), and have explicit formulae when \(\text{char}(\k) = 0\) or \(\text{char}(\k) > d\), see for instance \cite{jlm}. 



Recall from \cref{cuspsyssec} the imaginary semicuspidal simple modules may be labeled \(L(\bnu)\) for the set of \((e-1)\)-multipartitions \(\bnu\).
%
%
%
We complete the description of semicuspidal simple modules via skew Specht modules by describing these \(L(\bnu)\) in \cref{thm:Asimp}.
The next result is Theorem~\hyperlink{thm:A}{A} in the introduction.

\begin{Theorem}\label{thm:Asimp}
Let \(\bnu\) be an \((e-1)\)-multipartition of \(d\).
Then the following statements hold.
\begin{enumerate}
\item
The skew Specht module \(\zS^{\zeta(\bnu)}\) is an indecomposable semicuspidal \(R_{d \delta}\)-module, with simple semicuspidal head \(\textup{hd}(\zS^{\zeta(\bnu)})\).

\item
Let \(L(\bnu)\) be the unique self-dual grading shift of \(\textup{hd}(\zS^{\zeta(\bnu)})\).
Then
\[
\{L(\bnu) \mid \bnu \text{ an $(e-1)$-multipartition of }d\}
\]
is a complete and irredundant set of simple imaginary semicuspidal \(R_{d \delta}\)-modules, up to isomorphism and grading shift.
\item
For any \((e-1)\)-multipartition \(\bmu\) of \(d\), we have \([\zS^{\zeta(\bnu)}: L(\bmu)] = d^{\textup{RoCK}}_{\bnu, \bmu}\).
\end{enumerate}
\end{Theorem}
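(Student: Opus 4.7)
The plan is to realize $\zS^{\zeta(\bnu)}$ as (a grading shift of) a skew Specht module $S^{\widehat{\bnu}^\theta_\rho}$ arising from a level-one RoCK block, and then transport all three claims through the Morita equivalence $\mathcal{T}$ of Theorem~\hyperlink{thm:E}{E} together with the classification of imaginary semicuspidal simples in Theorem~\ref{semicuspthm}. First, I would let $\theta$ be the residue permutation realized by $\succeq$ (Lemma~\ref{realizelem}), and choose a charge $\kappa$ and a $(\kappa,\theta)$-RoCK core $\rho$ with $d \leq \textup{cap}^\theta_\delta(\rho,\kappa)$; such a $\rho$ exists since one may enlarge $\rho$ on the abacus to make the capacity arbitrarily large.

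The crux of the argument is showing that $\zeta(\bnu) \sim \widehat{\bnu}^\theta_\rho$ in the sense of \S\ref{similaritysec}, so that Lemma~\ref{simSpechts} gives $\zS^{\zeta(\bnu)} \approx \zS^{\widehat{\bnu}^\theta_\rho} \approx S^{\widehat{\bnu}^\theta_\rho}$. To see this, I would use Proposition~\ref{allaboutshapes}: part (i) says that the components $(\nu^{(a)})^\theta_\rho$ of $\widehat{\bnu}^\theta_\rho$ are pairwise disjoint, pairwise connected, and $\NEarrow$-arranged in decreasing order of $a$, exactly matching the component-by-component concatenation in Definition~\ref{def:zetanu}. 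Part (ii) tells us each node $u \in \nu^{(a)}$ corresponds to a $(\theta,a)$-ribbon $u^\theta_\rho \subseteq (\nu^{(a)})^\theta_\rho$, and parts (iii)--(v) show these ribbons fit together in the same NE/SW interlocking pattern used to define $\textup{dil}_a(\nu^{(a)})$. Since $\zeta_a$ is itself a $(\theta,a)$-ribbon by Lemma~\ref{lem:zetaistheta}, and $(\theta,a)$-ribbons of the same shape are residue-preserving translates of each other, this yields a residue-preserving bijection $\textup{dist}(\zeta(\bnu))^{(a)} \to \textup{dist}(\widehat{\bnu}^\theta_\rho)^{(e-a)}$ component by component, establishing the similarity.

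Once this identification is in hand, the theorem follows routinely. For (i), semicuspidality of $\zS^{\zeta(\bnu)}$ follows from Lemma~\ref{spechtshape} together with Proposition~\ref{combprop2}(ii), which guarantees that $\widehat{\bnu}^\theta_\rho \in \Lambda^{\bkap}_{+/\rho}(d\delta)_{\approx\delta}$ is a cuspidal Kostant tiling by $\delta$-content ribbons. Since $S^{\widehat{\bnu}^\theta_\rho} \cong \mathcal{T}S^{\rho \sqcup \widehat{\bnu}^\theta_\rho}$ by Theorem~\ref{Morcut}, and the cell module $S^{\rho \sqcup \widehat{\bnu}^\theta_\rho}$ has simple head $D^{\rho \sqcup \widehat{\bnu}^\theta_\rho}$ (hence is indecomposable) by cellularity (Theorem~\ref{cellHM}), the Morita equivalence transports these properties, giving that $\zS^{\zeta(\bnu)}$ is indecomposable with simple head $\textup{hd}(\zS^{\zeta(\bnu)}) \approx \mathcal{T}D^{\rho \sqcup \widehat{\bnu}^\theta_\rho} = \textup{hd}(S^{\widehat{\bnu}^\theta_\rho})$. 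For (ii), Theorem~\ref{semicuspthm} states precisely that $\{\textup{hd}(S^{\widehat{\bnu}^\theta_\rho})\}_{\bnu}$ is a complete irredundant list of simple imaginary semicuspidal $R_{d\delta}$-modules up to grading shift, and $L(\bnu)$ is by definition the unique self-dual shift. For (iii), the Morita equivalence $\mathcal{T}$ preserves composition multiplicities, so
\[
[\zS^{\zeta(\bnu)}:L(\bmu)] = [S^{\widehat{\bnu}^\theta_\rho}:\textup{hd}(S^{\widehat{\bmu}^\theta_\rho})] = [S^{\rho\sqcup\widehat{\bnu}^\theta_\rho}: D^{\rho\sqcup\widehat{\bmu}^\theta_\rho}] = d^{\textup{RoCK}}_{\bnu,\bmu}.
\]

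The main obstacle I anticipate is the careful verification of the similarity $\zeta(\bnu) \sim \widehat{\bnu}^\theta_\rho$: one must ensure both that the local placement of ribbons matches (handled by Proposition~\ref{allaboutshapes}(iii)--(v)) and that the global component arrangement matches on the nose after distillation. Everything else is a direct transfer through the Morita equivalence established in Theorem~\ref{Morcut}.
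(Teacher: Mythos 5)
Your proposal is correct and follows essentially the same route as the paper: the paper likewise establishes $\widehat{\bnu}^\theta_{\rho} \sim \zeta(\bnu)$ via \cref{lem:zetaistheta} and \cref{allaboutshapes}, derives (i) and (ii) as a recasting of \cref{semicuspthm} (whose proof uses exactly the ingredients you cite, namely \cref{spechtshape}, \cref{combprop2}(ii) and \cref{Morcut}), and obtains (iii) by transporting multiplicities through the Morita equivalence of \cref{Morcut}. The only detail worth making explicit is that $\rho\sqcup\widehat{\bnu}^\theta_\rho$ is $e$-restricted (since $\widehat{\bnu}$ has empty $0$-component), which is what guarantees the cell module has simple head; this is already built into \cref{Morcut}.
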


\begin{proof}
It follows from \cref{lem:zetaistheta,allaboutshapes} that for every \((e-1)\)-multipartition \(\bnu\) of \(d\), we have \(\widehat{\bnu}^\theta_{\rho} \sim \zeta(\bnu)\).
Then parts (i) and (ii) are just a recasting of \cref{semicuspthm} with the new notation introduced in \cref{subsec:cusprib}, utilizing the fact that simple modules have a unique shift which makes them self-dual.

For part (iii), we recall that by \cref{Morcut}, there is a Morita equivalence
\(
\mathcal{T}: R^{\Lambda}_{\omega + d \delta}\textup{-mod} \to R^{\Lambda/\omega}_{ d \delta}\textup{-mod}
\)
such that
 \(\mathcal{T}S^{\rho \sqcup \widehat{\bnu}^\theta_{\rho}} \approx \zS^{\zeta(\bnu)}\)
for each \((e-1)\)-multipartition \(\bnu\) of \(d\),
and
\(\mathcal{T}D^{\rho \sqcup \widehat{\bmu}^\theta_{\rho}} \approx \textup{hd}(S^{\zeta(\bmu)})\)
for each \((e-1)\)-multipartition.
It now follows that
\(
[\zS^{\zeta(\bnu)}: L(\bmu)] = [\zS^{\zeta(\bnu)}: \textup{hd}(\zS^{\zeta(\bmu)})] = [\zS^{\rho\sqcup \widehat{\bnu}^\theta_{\rho}}: \textup{hd}(\zS^{\rho\sqcup\widehat{\bmu}^\theta_{\rho}})] = d^{\textup{RoCK}}_{\bnu, \bmu}
\). 
\end{proof}

Recall that $\Xi(\omega)$ is the set of all Kostant partitions of $\omega$.
\begin{Definition}
	There is a right lexicographic total order $\geqslant_R$ on $\Xi(\omega)$, where
	\[
	\boldsymbol{K}>_{\textup{R}}\boldsymbol{K}'
	\quad
	\iff
	\quad
	\text{there exists $\beta\in\Psi$ such that $K_{\beta}>K'_{\beta}$ and $K_{\beta'}=K'_{\beta'}$ for all $\beta\succ\beta'$.}
	\]
	There is a left lexicographic total order $\geqslant_L$ on $\Xi(\omega)$, where
	\[
	\boldsymbol{K}>_{\textup{L}}\boldsymbol{K}'
	\quad
	\iff
	\quad
	\text{there exists $\beta\in\Psi$ such that $K_{\beta}>K'_{\beta}$ and $K_{\beta'}=K'_{\beta'}$ for all $\beta\prec\beta'$.}
	\]
	These right and left orders induce a \emph{bilexicographic} partial order $\geqslant_{\textup{b}}$ on $\Xi(\omega)$, defined by
	\[
	\boldsymbol{K}\geq_{\textup{b}}\boldsymbol{K}'
	\quad
	\iff
	\quad
	\boldsymbol{K}\geqslant_{\textup{R}}\boldsymbol{K}'
	\text{ and }
	\boldsymbol{K}\geqslant_{\textup{L}}\boldsymbol{K}'.
	\]
\end{Definition}

We now refine the bilexicographic order as follows.
\begin{Definition}
	There is a \emph{bilexicographic dominance} partial order $\geqslant_{\textup{bd}}$ on $\Pi(\omega)$, defined by
	\[
	\pi\geq_{\textup{bd}}\pi'
	\quad
	\iff
	\quad
	\boldsymbol{K}>_{\textup{b}}\boldsymbol{K}'
	\quad
	\text{ or }
	\quad
	\boldsymbol{K}=\boldsymbol{K}'
	\text{ and }
	\bnu\trianglerighteq^{D}\bnu',
	\]
	where $\pi = (\boldsymbol{K}, \bnu)$ and $\pi' = (\boldsymbol{K}', \bnu')$.
\end{Definition}

\begin{Definition}
Let \(\boldsymbol{K} = (\beta_1^{K_{\beta_1}} \mid \dots \mid \beta_u^{K_{\beta_u}} \mid \delta^{K_\delta} \mid \beta_{u+1}^{K_{\beta_{u+1}}} \mid \dots \mid \beta_t^{K_{\beta_t}}) \in \Xi(\omega).
\)
We define the parabolic subalgebra
\[
R_{\boldsymbol{K}} := R_{K_{\beta_1}\beta_1} \otimes R_{K_{\beta_2}\beta_2} \otimes \dots \otimes R_{K_{\beta_u}\beta_u} \otimes R_{K_\delta \delta} \otimes R_{K_{\beta_{u+1}}\beta_{u+1}} \otimes \dots \otimes R_{K_{\beta_t}\beta_t},
\]
of \(R_\omega\), 
and let \(\Ind_{\boldsymbol{K}}^\omega,\Res_{\boldsymbol{K}}^\omega\) denote the exact induction and restriction functors, respectively,  between \(R_\omega\textup{-mod} \) and \(R_{\boldsymbol{K}}\textup{-mod} \).
These functors are given by
\[
\Ind_{\boldsymbol{K}}^\omega: M \longmapsto R_\omega 1_{\boldsymbol{K}} \otimes_{R_{\boldsymbol{K}}} M
\quad \text{ and } \quad
\Res_{\boldsymbol{K}}^\omega: M \longmapsto 1_{\boldsymbol{K}} R_\omega \otimes_{R_\omega} M.
\]
\end{Definition}

By \cref{prop:realsemis,thm:Asimp}, we have now constructed every imaginary semicuspidal simple module \(L(\bnu)\), and every real semicuspidal simple module \(L(\beta^K)\), as (the head of) an explicit skew Specht module.  Recall from \cref{stratasec} that from this data we construct a proper standard module \(\bar{\Delta}(\pi)\) and simple module \(L(\pi)\) associated to every root partition \(\pi \in \Pi(\omega)\).
Our next main result -- Theorem~\hyperlink{thm:B}{B} in the introduction -- directly describes these simple KLR-modules as heads of explicit skew Specht modules.

\begin{Theorem}\label{thm:klrsimples}
Let \(\pi = (\boldsymbol{K}, \bnu) \in \Pi(\omega)\).
Then the following statements hold.
\begin{enumerate}
\item The skew Specht module \(\zS^{\zeta(\pi)}\) is indecomposable with simple head. We have
\begin{align*} 
\textup{hd}(\zS^{\zeta(\pi)}) \approx L(\pi),
\end{align*} 
so that \(\{\textup{hd}(\zS^{\zeta(\pi)}) \mid \pi \in \Pi(\omega)\}\) gives a complete and irredundant set of simple \(R_\omega\)-modules up to grading shift.

\item For \(\sigma \in \Pi(\omega)\), we have \([\zS^{\zeta(\pi)} : L(\pi)] = 1\), and \([\zS^{\zeta(\pi)} : L(\sigma)] >0\) only if \(\sigma \leq_{\textup{bd}} \pi\). Moreover, for all root partitions of the form \((\boldsymbol{K}, \bmu) \in \Pi(\omega)\), we have \([\zS^{\zeta(\pi)} : L(\boldsymbol{K},\bmu)] = d^{\textup{RoCK}}_{\bnu, \bmu}\).

\item There exists a surjection \(\zS^{\zeta(\pi)} \twoheadrightarrow \bar{\Delta}(\pi)\), and \(\zS^{\zeta(\pi)}\) has a filtration by proper standard modules of the form \(\bar{\Delta}(\boldsymbol{K}, \bmu)\), where  \((\zS^{\zeta(\pi)} : \bar{\Delta}(\boldsymbol{K}, \bmu)) = d^{\textup{RoCK}}_{\bnu, \bmu}\). 
\end{enumerate}
\end{Theorem}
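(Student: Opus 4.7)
The plan is to first identify $\zS^{\zeta(\pi)}$ with an induction product of the skew Specht modules corresponding to each semicuspidal piece of $\zeta(\pi)$, then combine \cref{prop:realsemis} (real semicuspidal case) with \cref{thm:Asimp} (imaginary semicuspidal case) and the standard cuspidal system machinery to extract the desired filtration and head. Writing
\[
\pi = (\beta_1^{K_{\beta_1}} \mid \dots \mid \beta_u^{K_{\beta_u}} \mid \delta^{K_\delta} \mid \beta_{u+1}^{K_{\beta_{u+1}}} \mid \dots \mid \beta_t^{K_{\beta_t}}, \bnu),
\]
recall from \cref{def:zetapi} that $\zeta(\pi)$ is literally the concatenation of the skew diagrams $\zeta(\beta_i)^{K_{\beta_i}}$ and $\zeta(\bnu)$ into distinct components of a multi-diagram. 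A standard argument using the generator-and-relation presentation of skew Specht modules together with the basis \cref{Spechtbasis} (compare with the $(r,b)$-adapted factorisation underlying \cref{decompSkew}) shows that for any skew multi-diagram split into concatenated components, the corresponding skew Specht module is isomorphic (up to grading shift) to the induction product of the skew Specht modules of the components. Applying this to $\zeta(\pi)$ we obtain a grading-shifted isomorphism
\[
\zS^{\zeta(\pi)} \approx \zS^{\zeta(\beta_1)^{K_{\beta_1}}} \circ \cdots \circ \zS^{\zeta(\beta_u)^{K_{\beta_u}}} \circ \zS^{\zeta(\bnu)} \circ \zS^{\zeta(\beta_{u+1})^{K_{\beta_{u+1}}}} \circ \cdots \circ \zS^{\zeta(\beta_t)^{K_{\beta_t}}}.
\]

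Next, I invoke \cref{prop:realsemis} to identify $\zS^{\zeta(\beta_i)^{K_{\beta_i}}} \approx L(\beta_i^{K_{\beta_i}})$ for each $i$. For the imaginary factor, \cref{thm:Asimp} gives a composition series of $\zS^{\zeta(\bnu)}$ whose factors are $L(\bmu)$ with multiplicity $d^{\textup{RoCK}}_{\bnu,\bmu}$, and in particular provides a surjection $\zS^{\zeta(\bnu)} \twoheadrightarrow L(\bnu)$. Since the induction functor $\Ind^{\omega}_{\boldsymbol K}$ is exact, this filtration is preserved under taking the induction product above, yielding a filtration of $\zS^{\zeta(\pi)}$ whose layers are exactly the proper standard modules $\bar{\Delta}(\boldsymbol K, \bmu)$, each appearing with multiplicity $d^{\textup{RoCK}}_{\bnu,\bmu}$. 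Because $d^{\textup{RoCK}}_{\bnu,\bnu} = 1$ and this term sits at the top of the filtration (the top layer of $\zS^{\zeta(\bnu)}$), we obtain a surjection $\zS^{\zeta(\pi)} \twoheadrightarrow \bar{\Delta}(\pi)$. This establishes part (iii) of the theorem.

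From the filtration in (iii), each composition factor of $\zS^{\zeta(\pi)}$ is a composition factor of some $\bar{\Delta}(\boldsymbol K,\bmu)$ with $\bmu \trianglelefteq^D \bnu$. By the standard theory of cuspidal systems \cite{klesh14,McN17,km17a}, the composition factors of $\bar{\Delta}(\boldsymbol K,\bmu)$ are of the form $L(\sigma)$ with $\sigma \leq_{\textup{bd}} (\boldsymbol K,\bmu)$, and $L(\boldsymbol K,\bmu)$ itself appears exactly once as the head of $\bar{\Delta}(\boldsymbol K,\bmu)$. Chaining these inequalities gives $\sigma \leq_{\textup{bd}} (\boldsymbol K,\bmu) \leq_{\textup{bd}} \pi$, proving the ordering claim in (ii). Moreover $L(\pi)$ occurs only in the top layer $\bar{\Delta}(\pi)$ (by strict inequality $(\boldsymbol K,\bmu) <_{\textup{bd}} \pi$ for $\bmu \neq \bnu$), so
\[
[\zS^{\zeta(\pi)} : L(\pi)] = d^{\textup{RoCK}}_{\bnu,\bnu} \cdot [\bar{\Delta}(\pi) : L(\pi)] = 1.
\]
The formula $[\zS^{\zeta(\pi)} : L(\boldsymbol K,\bmu)] = d^{\textup{RoCK}}_{\bnu,\bmu}$ then follows, since any $\bar{\Delta}(\boldsymbol K,\bmu')$ occurring in the filtration with $\bmu' \neq \bmu$ contributes $L(\boldsymbol K,\bmu)$ only if $\bmu \triangleleft^D \bmu'$, but such contributions vanish in the case $\boldsymbol K' = \boldsymbol K$ because the only way to produce a factor with the same Kostant part is from the unique $\bar{\Delta}(\boldsymbol K,\bmu)$ layer itself. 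This settles (ii). Finally, part (i) follows from (iii) and the observation that $\zS^{\zeta(\pi)} \twoheadrightarrow \bar{\Delta}(\pi) \twoheadrightarrow L(\pi)$ together with $[\zS^{\zeta(\pi)}:L(\pi)] = 1$ forces the head of $\zS^{\zeta(\pi)}$ to equal $L(\pi)$ (up to grading shift); indecomposability is then immediate since any direct summand containing the simple head $L(\pi)$ must equal the whole module.

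The main obstacle I anticipate is justifying the induction-product isomorphism at the level of skew Specht modules with the correct grading shift, as this requires careful bookkeeping of the combinatorial degree function $\deg_{\blam/\bmu}$ across the concatenation, and verification that the Garnir relations and cyclotomic-free relations interact trivially across disjoint components. A secondary delicate point is confirming that the simple labels $L(\boldsymbol K, \bmu)$ ``transport'' correctly through the induction product, i.e.\ that $L(\beta_1^{K_{\beta_1}}) \circ \cdots \circ L(\bmu) \circ \cdots \circ L(\beta_t^{K_{\beta_t}})$ really is $\bar{\Delta}(\boldsymbol K,\bmu)$ rather than a grading-shifted variant that would require separate tracking.
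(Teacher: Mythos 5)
Your overall route is the same as the paper's: identify $\zS^{\zeta(\pi)}$ with the induction product $\Ind_{\boldsymbol{K}}^\omega \widehat{L}_\pi$ of the constituent semicuspidal skew Specht modules, feed in \cref{prop:realsemis} and \cref{thm:Asimp}, and read off a filtration by proper standard modules $\bar{\Delta}(\boldsymbol{K},\bmu)$ with multiplicities $d^{\textup{RoCK}}_{\bnu,\bmu}$. Parts (ii) and (iii) are then essentially as in the paper, modulo one point of sloppiness: the correct reason that $\bar{\Delta}(\boldsymbol{K},\bmu')$ with $\bmu'\neq\bmu$ contributes no copies of $L(\boldsymbol{K},\bmu)$ is that every composition factor of a proper standard module below its head has \emph{strictly smaller Kostant partition} in the order $<_{\textup{b}}$ (this is [km17a, Theorem~6.8(iv)]); your "only if $\bmu \triangleleft^D \bmu'$" clause is not the operative criterion, though you land on the right conclusion.

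The genuine gap is in part (i). A surjection $\zS^{\zeta(\pi)} \twoheadrightarrow L(\pi)$ together with $[\zS^{\zeta(\pi)}:L(\pi)]=1$ shows only that $L(\pi)$ occurs in the head with multiplicity exactly one; it does \emph{not} force the head to be simple, since some other $L(\sigma)$ could split off as a second direct summand of the head. (Your filtration argument even identifies the candidates: any simple quotient of a module filtered by the indecomposable $\bar{\Delta}(\boldsymbol{K},\bmu)$ must be one of the heads $L(\boldsymbol{K},\bmu)$, and nothing you have said excludes $\bmu\neq\bnu$.) The paper closes this by computing $\Res_{\boldsymbol{K}}^\omega \widehat{\Delta}(\pi) \cong \widehat{L}_\pi$ via the Mackey theorem, invoking $\Res_{\boldsymbol{K}}^\omega L(\boldsymbol{K},\bmu) \cong L_{(\boldsymbol{K},\bmu)}$ from [km17a, Theorem~6.8(v)], and using that $\widehat{L}_\pi$ has simple head $L_\pi$: any surjection $\widehat{\Delta}(\pi)\twoheadrightarrow L(\boldsymbol{K},\bmu)$ restricts to a surjection $\widehat{L}_\pi \twoheadrightarrow L_{(\boldsymbol{K},\bmu)}$, forcing $\bmu=\bnu$. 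Equivalently one can argue by adjunction that $\Hom(\Ind_{\boldsymbol{K}}^\omega \widehat{L}_\pi, L(\boldsymbol{K},\bmu))=\Hom(\widehat{L}_\pi, L_{(\boldsymbol{K},\bmu)})=0$ for $\bmu\neq\bnu$. Some such step is needed; without it your proof of (i) (and hence of indecomposability, which you correctly derive from simplicity of the head) is incomplete.
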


\begin{proof} \(\)
We begin with (i).
As in \cite{km17a}, we define \(L_\pi\) to be the following module over the parabolic subalgebra \(R_{\boldsymbol{K}}\),
\[
L_\pi := L({\beta_1^{K_{\beta_1}}}) \boxtimes \dots \boxtimes L({\beta_u^{K_{\beta_u}}}) \boxtimes
L(\bnu) \boxtimes
L(\beta_{u+1}^{K_{\beta_{u+1}}}) \boxtimes \dots \boxtimes L(\beta_t^{K_{\beta_t}}),
\]
and then define the proper standard module \(\bar\Delta(\pi)\) and the simple module \(L(\pi)\) to be
\[
\bar\Delta(\pi) := \Ind_{\boldsymbol{K}}^\omega L_\pi
\qquad \text{ and } \qquad
L(\pi) :=  \textup{hd}(\bar\Delta(\pi)).
\]
Now we define the \(R_{\boldsymbol{K}}\)-module
\begin{align*}
\widehat{L}_\pi &:=  L({\beta_1^{K_{\beta_1}}}) \boxtimes \dots \boxtimes L({\beta_u^{K_{\beta_u}}}) \boxtimes
\zS^{\zeta(\bnu)} \boxtimes
L(\beta_{u+1}^{K_{\beta_{u+1}}}) \boxtimes \dots \boxtimes L(\beta_t^{K_{\beta_t}}),
\end{align*}
from which we define the \(R_\omega\)-modules
\begin{align*}
\widehat{\Delta}(\pi) &:= \Ind_{\boldsymbol{K}}^\omega \widehat{L}_\pi \qquad \text{ and } \qquad \widehat{L}(\pi) :=  \textup{hd}(\widehat{\Delta}(\pi)).
\end{align*}
By \cref{prop:realsemis}, \(L({\beta_i^{K_{\beta_i}}}) \approx \zS^{\zeta({\beta_i})^{K_{\beta_i}}}\) for each \(i\), so that \(\widehat{\Delta}(\pi) \approx \zS^{\zeta(\pi)}\),
and the first claim in the theorem amounts to showing that \(L(\pi) \approx \widehat{L}(\pi)\).
By \cref{Morcut}, \(\textup{hd}(\zS^{\zeta(\bnu)}) \approx L(\bnu)\) and all other composition factors of \(\zS^{\zeta(\bnu)}\) are isomorphic to simple modules \(L(\bmu)\) up to grading shift, with \(\bmu\) an \((e-1)\)-multipartition of \(K_\delta\).
It follows that \(\widehat{\Delta}(\pi)\) is filtered by (grading shifts of) proper standard modules \(\bar{\Delta}(\pi)\) and \(\bar{\Delta}(\pi')\) for root partitions \(\pi' = (\boldsymbol{K}, \bmu) \in \Pi(\beta)\).
So in the Grothendieck group for \(R_\omega\textup{-mod}\), we have some \(g_{\pi'} \in \ZZ_{\geq0}\) such that
\[
[\widehat{L}_\pi] = [L_\pi] + \sum_{\pi' \neq \pi} g_{\pi'}[L_{\pi'}],
\]
where the \(\pi'\) elements in the sum are all of the form \(\pi' = (\boldsymbol{K}, \bmu) \in \Pi(\omega)\), and therefore 
\[
[\widehat{\Delta}(\pi)] = [\bar{\Delta}(\pi)] + \sum_{\pi' \neq \pi} g_{\pi'}[\bar{\Delta}(\pi')].
\]
It follows that
\begin{align*}
[\Res_{\boldsymbol{K}}^\omega \widehat{\Delta}(\pi)] &= [\Res_{\boldsymbol{K}}^\omega \bar{\Delta}(\pi)] + \sum_{\pi' \neq \pi} g_{\pi'}[\Res_{\boldsymbol{K}}^\omega \bar{\Delta}(\pi')]\\
&= [L_\pi] + \sum_{\pi' \neq \pi} g_{\pi'}[L_{\pi'}] = [\widehat{L}_\pi]
\end{align*}
by \cite[Corollary~6.9]{km17a}.
By the Mackey Theorem \cite[Proposition~2.18]{kl09},
\[
\Res_{\boldsymbol{K}}^\omega \widehat{\Delta}(\pi) = \Res_{\boldsymbol{K}}^\omega \Ind_{\boldsymbol{K}}^\beta \widehat{L}_\pi
\]
has a filtration with \(\widehat{L}_\pi\) as one of the layers, so it follows that in fact
\[
\Res_{\boldsymbol{K}}^\omega \widehat{\Delta}(\pi) \cong \widehat{L}_\pi.
\]
Now, by construction we have that \(\widehat{L}_\pi\) surjects onto \(L_\pi\), so by exactness of \(\Ind_{\boldsymbol{K}}^\omega\), we have surjections
\[
\widehat{\Delta}(\pi) \longtwoheadrightarrow \bar{\Delta}(\pi) \longtwoheadrightarrow L(\pi),
\]
composing to give a surjection
\(\widehat{\Delta}(\pi) \twoheadrightarrow L(\pi)\).
It follows that \(L(\pi)\) appears in the head of \(\widehat{\Delta}(\pi)\), up to some shift.

On the other hand, suppose that we have some other simple module \(L(\pi')\), and a surjection \(\widehat{\Delta}(\pi) \twoheadrightarrow L(\pi')\).
Since \(\widehat{\Delta}(\pi)\) has a filtration by (indecomposable) proper standard modules \(\bar{\Delta}(\chi)\) for root partitions of the form  \(\chi = (\boldsymbol{K}, \bmu) \in \Pi(\omega)\), and each \(\bar{\Delta}(\chi)\) has simple head \(L(\chi)\), it must be that \(\pi' = \chi\) for some \(\chi = (\boldsymbol{K}, \bmu) \in \Pi(\omega)\).
Then by \cite[Theorem~6.8(v)]{km17a}, \(\Res_{\boldsymbol{K}}^\omega L(\pi') \cong L_{\pi'}\)
Now, by exactness of \(\Res_{\boldsymbol{K}}^\omega\),
\[
\widehat{L}_\pi
\cong
\Res_{\boldsymbol{K}}^\omega \widehat{\Delta}(\pi)
\longtwoheadrightarrow
\Res_{\boldsymbol{K}}^\omega L(\pi')
\cong
L_{\pi'}.
\]
Since \(\widehat{L}_\pi\) has simple head \(L_{\pi}\), it follows that in fact \(\pi' = \pi\).
Therefore, the head of \(\widehat{\Delta}(\pi)\) contains only the simple module \(L(\pi)\) up to some shift, possibly with multiplicity greater than one.

To rule out this multiplicity, we note again that \(\widehat{\Delta}(\pi)\) is filtered by proper standard modules \(\bar{\Delta}(\pi)\) and \(\bar{\Delta}(\pi')\) for root partitions \(\pi' = (\boldsymbol{K}, \bmu) \in \Pi(\omega)\), and any composition factors of \(\bar{\Delta}(\pi')\) besides \(L(\pi')\) are of the form \(L(\chi)\), for \(\chi <{_\textup{b}} \pi'\), by \cite[Theorem~6.8(iv)]{km17a}, and therefore  \(\chi <{_\textup{b}} \pi\),
and so
\[
L(\pi) \approx \textup{hd}(\zS^{\zeta(\pi)})
\]
as \(R_\omega\)-modules, as claimed.

For the proof of (ii), note that since \(L(\pi)\) appears just once in \(\bar{\Delta}(\pi)\), and is not a composition factor of any of the other \(\bar{\Delta}(\pi')\) modules filtering \(\widehat{\Delta}(\pi) \approx \zS^{\zeta(\pi)}\), the multiplicity of \(L(\pi)\) as a composition factor of \(\zS^{\zeta(\pi)}\) is one.
Similarly, we see that all other composition factors of \(\zS^{\zeta(\pi)}\) are those \(L(\sigma)\) with \(\sigma = (\boldsymbol{K}, \bmu)\), for \(\bmu \trianglelefteq^{D} \bnu\), so that \(\sigma \leq_{\textup{bd}} \pi\), as well as other composition factors \(L(\chi)\) of \(\bar{\Delta}(\sigma)\), which satisfy \(\chi <_{\textup{b}} \pi\).

For the proof of (iii), note that
we already explained in the proof of part (i) that there exists a surjection
\(\widehat{\Delta}(\pi) \approx \zS^{\zeta(\pi)} \twoheadrightarrow \bar{\Delta}(\pi)\)
and that \(\zS^{\zeta(\pi)}\) has a filtration by proper standard modules of the form \(\bar{\Delta}(\boldsymbol{K}, \bmu)\).
To prove that the multiplicity of \(\bar{\Delta}(\boldsymbol{K}, \bmu)\) in this filtration of \(\zS^{\zeta(\pi)}\) is equal to 
\(d^{\textup{RoCK}}_{\bnu, \bmu}\),
it suffices to recall that each simple module of the form \(L(\boldsymbol{K}, \bmu)\) only occurs in the heads of the corresponding proper standard modules \(\bar{\Delta}(\boldsymbol{K}, \bmu)\).
Thus we have that
\(
(\zS^{\zeta(\pi)} : \bar{\Delta}(\boldsymbol{K}, \bmu)) = [\zS^{\zeta(\pi)} : L(\boldsymbol{K}, \bmu)] = d^{\textup{RoCK}}_{\bnu, \bmu},
\)
by part (ii), and the proof is complete.\qedhere
\end{proof}

\subsection{A cuspidal `regularization' theorem for Specht modules}\label{subsec:regularize}
As an application of \cref{thm:klrsimples}, we finish with a result which combinatorially identifies factors \(L(\pi)\) in Specht modules via the skew diagram \(\zeta(\pi)\).
First we briefly introduce some other notions of skew diagram tilings which will be useful in this section.
\begin{Definition}
Let \(\btau\) be a skew diagram with \(\cont(\btau) = \omega\). We say that a skew tableau \((\Gamma, {\tt t})\) for \(\btau\) is:
\begin{enumerate}
\item {\em semicuspidal} provided that each tile \(\gamma \in \Gamma\) is a semicuspidal diagram of content \(K \beta\) for some \(K \in \ZZ_{>0}\) and \(\beta \in \Psi\);
\item {\em Kostant} provided that for \(i =1, \ldots, |\Gamma|\), we have \({\tt t}(i) = K_i \beta_i\) for some \(K_i \in \ZZ_{>0}\) and \(\beta_i \in \Psi\), with \(\beta_i \succeq \beta_j\) whenever \(i < j\);
\item {\em strict Kostant} provided that for \(i =1, \ldots, |\Gamma|\), we have \({\tt t}(i) = K_i \beta_i\) for some \(K_i \in \ZZ_{>0}\) and \(\beta_i \in \Psi\), with \(\beta_i \succ \beta_j\) whenever \(i < j\). In this setting we write
\begin{align*}
\boldsymbol{K}_\Gamma = (K_1 \beta_1  \mid \dots \mid K_{|\Gamma|} \beta_{|\Gamma|}) \in \Xi(\omega)
\end{align*}
for the Kostant partition associated with the tiling \(\Gamma\).
\end{enumerate}
We will attach these adjectives to tilings as well, saying for instance that \(\Gamma\) is a {\em semicuspidal tiling} provided that a semicuspidal tableau \((\Gamma, {\tt t})\) exists.
\end{Definition}

We have the following by \cite[Theorem~C and Theorem~7.19]{muthtiling}.
\begin{Proposition}
Let \(\btau \in \Lambda^\ell(\omega)\) be a skew diagram. There exists a unique semicuspidal strict Kostant tiling \(\Gamma^{\textup{sc}}_{\btau}\) of \(\btau\). Each semicuspidal tile in \(\Gamma^{\textup{sc}}_{\btau}\) is the union of all same-content tiles in the unique cuspidal tiling \(\Gamma_{\btau}\) of \(\btau\).
\end{Proposition}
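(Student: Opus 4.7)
The plan is to deduce this proposition from the already-established unique cuspidal Kostant tiling theorem (\cref{tilethm}) by coarsening, and to verify the opposite direction of refinement for uniqueness.

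For existence, I would start with the unique cuspidal Kostant tiling \(\Gamma_{\btau}\) of \(\btau\), together with a compatible ordering \({\tt t}:[1,|\Gamma_{\btau}|]\to \Gamma_{\btau}\) with \(\cont({\tt t}(a))\succeq \cont({\tt t}(b))\) for \(a\leq b\). Because \(\succeq\) restricts to a total order on \(\Psi\), the tiles of a given content \(\beta\in\Psi\) (and, for \(\beta=\delta\), those of any given content \(m\delta\)) appear as a consecutive block in the ordering \({\tt t}\). Let \(\Gamma^{\mathrm{sc}}_{\btau}\) be obtained by replacing each such block by the (set-theoretic) union of its tiles, inheriting the induced ordering. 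To check each union \(\gamma\) with \(\cont(\gamma)=K\beta\) is a semicuspidal skew diagram, I would verify the two-tile refinement criterion of \cref{cuspdef}: any {\tt NW}-removable sub-skew-diagram \(\bnu_1\subset\gamma\) is itself a union of a {\tt NW}-closed portion of the cuspidal tiles making up \(\gamma\), plus possibly some split cuspidal ribbons; since each constituent cuspidal ribbon of \(\gamma\) already has content \(\preceq K\beta\) and its own further {\tt SE}/{\tt NW} subdivisions give content-sums whose summands are \(\succeq\cont({\mathrm{rib}})\succeq \beta\) (by the cuspidality of individual ribbons plus convexity), the semicuspidal inequalities for \(\gamma\) follow.

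For the ``strict Kostant'' property, I would note that after grouping, consecutive blocks carry distinct contents by construction. Writing \(\boldsymbol{K}_{\Gamma^{\mathrm{sc}}_{\btau}}=(K_1\beta_1\mid\cdots\mid K_s\beta_s)\) with \(\beta_1\succ\cdots\succ\beta_s\), this is an element of \(\Xi(\omega)\) realizing \(\Gamma^{\mathrm{sc}}_{\btau}\) as a semicuspidal strict Kostant tiling.

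For uniqueness, suppose \(\Gamma'\) is any semicuspidal strict Kostant tiling of \(\btau\). Each tile \(\gamma'\in\Gamma'\) has content \(K\beta\) for a unique \(\beta\in\Psi\), \(K\in\mathbb{Z}_{>0}\). By \cref{semicuspuniqueness}, when \(\beta\in\Phi_+^{\mathrm{re}}\) the only semicuspidal diagram of content \(K\beta\) (up to \(\sim\)) is \(\zeta(\beta)^K\), which admits the obvious refinement into \(K\) cuspidal ribbons each of content \(\beta\); when \(\beta=\delta\), every semicuspidal diagram of content \(K\delta\) is (up to similarity) a concatenation \(\textup{dil}_{i_1}(\nu^{(1)})\mid\cdots\mid\textup{dil}_{i_{e-1}}(\nu^{(e-1)})\) which also admits a canonical refinement into cuspidal ribbons \(\zeta_{i_j}\) of content \(\delta\). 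Refining every tile of \(\Gamma'\) this way and ordering within each block by the induced cuspidal tableau order produces a cuspidal Kostant tiling \(\widetilde{\Gamma}\) of \(\btau\). By \cref{tilethm}(i) we must have \(\widetilde{\Gamma}=\Gamma_{\btau}\), so \(\Gamma'\) is exactly the coarsening obtained by grouping same-content cuspidal tiles of \(\Gamma_{\btau}\); this forces \(\Gamma'=\Gamma^{\mathrm{sc}}_{\btau}\), proving uniqueness and simultaneously the second assertion that each semicuspidal tile is the union of all same-content tiles in \(\Gamma_{\btau}\).

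The main obstacle I expect is verifying that the coarsened diagrams are genuinely semicuspidal, which requires carefully analyzing two-tile refinements of concatenated cuspidal ribbons; the cleanest way is to reduce it to the already-known structural description of semicuspidal diagrams (\cref{semicuspuniqueness}) by showing the coarsening \(\gamma\) is \(\sim\)-similar to \(\zeta(\beta)^K\) or to a dilation, using \cref{firstcuspconstlem,thetaaribcusp} to match up ribbon shapes locally.
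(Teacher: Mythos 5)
The paper gives no proof of this proposition at all --- it is imported wholesale from \cite[Theorems C and 7.19]{muthtiling} --- so your self-contained argument is by construction a different route, and it is worth assessing on its own. Your existence half is essentially sound: the same-content ribbons of \(\Gamma_{\btau}\) do form consecutive blocks in any cuspidal Kostant tableau (since \(\succ\) is total on \(\Psi\)), their union over a consecutive block is a skew diagram, and semicuspidality of a union \(\gamma=\xi_1\sqcup\dots\sqcup\xi_K\) of cuspidal \(\beta\)-ribbons follows by intersecting any two-tile tableau \((\bnu_1,\bnu_2)\) of \(\gamma\) with each \(\xi_i\): cuspidality of \(\xi_i\) makes \(\cont(\bnu_1\cap\xi_i)\) equal to \(0\), to \(\beta\), or to a sum of roots \(\prec\beta\), so \(\cont(\bnu_1)\) is a sum of roots \(\preceq\beta\), and dually for \(\bnu_2\). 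Your write-up of this step garbles the directions (you bound the \emph{northwest} piece by summands \(\succeq\beta\), and you compare against \(K\beta\), which for real \(\beta\) and \(K>1\) is not a positive root --- the comparison must be made against \(\beta\), as in the module-theoretic definition), but the idea is correct and recoverable.

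The genuine gap is in uniqueness. You assert that every semicuspidal diagram of content \(K\delta\) is, up to \(\sim\), a concatenation of dilations, citing \cref{semicuspuniqueness}. That lemma only states the converse for the imaginary case --- dilations \emph{are} semicuspidal --- and says nothing about exhausting the imaginary semicuspidal diagrams; moreover the literal claim fails (two disconnected translates of the same \(\zeta_i\) form a semicuspidal diagram of content \(2\delta\) not of the form \(\zeta(\bnu)\)), so this classification cannot be what carries the argument. What you actually need is weaker and is available from the paper's tools: every semicuspidal tile of content \(K\beta\) refines into \(K\) cuspidal ribbons of content \(\beta\). This follows by applying \cref{tilethm}(ii)--(iii) to the tile \(\gamma\) itself: semicuspidality of \(\gamma\) forces its \(\succeq\)-minimal {\tt SE}-removable ribbon with content in \(\Psi\) to have content \(\succeq\beta\) and its \(\succeq\)-maximal {\tt NW}-removable ribbon to have content \(\preceq\beta\), so every ribbon of \(\Gamma_{\gamma}\) has content \(\approx\beta\), hence equal to \(\beta\) since cuspidal ribbons have content in \(\Psi\). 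With this substitute your closing steps go through: refining \(\Gamma'\) yields a cuspidal Kostant tiling, \cref{tilethm}(i) identifies it with \(\Gamma_{\btau}\), and strictness forces each tile of \(\Gamma'\) to absorb \emph{all} ribbons of its content. For comparison, the paper's own \cref{lem:uniquescskostant} proves the closely related uniqueness statement (fixed Kostant partition, no semicuspidality hypothesis) by an induction on the last tile using \cref{tilethm}(ii).
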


We include here a slight upgrade of this result.
\begin{Proposition}\label{lem:uniquescskostant}
Let \(\btau \in \Lambda^\ell(\omega)\) be a skew diagram. Then \(\Gamma^{\textup{sc}}_{\btau}\) is the unique strict Kostant tiling \(\Gamma'\) of \(\btau\) with \(\boldsymbol{K}_{\Gamma'} = \boldsymbol{K}_{\Gamma_{\btau}^{\textup{sc}}}\).
\end{Proposition}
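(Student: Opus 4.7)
The plan is to argue by induction on the number $k$ of tiles in $\boldsymbol{K}_{\Gamma^{\textup{sc}}_\btau} = (K_1\beta_1 \mid \cdots \mid K_k\beta_k)$, where $\beta_1 \succ \cdots \succ \beta_k$. The base case $k=1$ is immediate, since then $\btau$ itself is the only tile. For the inductive step, I would take an arbitrary strict Kostant tiling $\Gamma' = (\mu'_1, \ldots, \mu'_k)$ of $\btau$ with $\boldsymbol{K}_{\Gamma'} = \boldsymbol{K}_{\Gamma^{\textup{sc}}_\btau}$, write $\Gamma^{\textup{sc}}_\btau = (\mu_1, \ldots, \mu_k)$, and reduce everything to showing $\mu'_1 = \mu_1$; the rest then follows from applying the inductive hypothesis to $\btau \setminus \mu_1$, whose unique cuspidal Kostant tiling is obtained from $\Gamma_\btau$ by deleting its $K_1$ tiles of content $\beta_1$, so that $\boldsymbol{K}_{\Gamma^{\textup{sc}}_{\btau \setminus \mu_1}} = (K_2\beta_2 \mid \cdots \mid K_k\beta_k)$.

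To pin down $\mu'_1$, I would examine the unique cuspidal Kostant tiling $\Gamma_{\mu'_1}$ of the standalone skew diagram $\mu'_1$, whose tile contents lie in $\Psi$ and sum to $K_1 \beta_1$. A convexity argument (in the spirit of the weighted-average application of $h/\height$ from \cref{genconvex}, or alternatively the monoidal reasoning behind \cite[Lemma~3.5]{muthtiling} used in the proof of \cref{combprop2}(ii)) should produce at least one tile of $\Gamma_{\mu'_1}$ whose content $\gamma$ satisfies $\gamma \succeq \beta_1$. Since this tile is NW-removable in $\mu'_1$, and $\mu'_1$ is in turn NW-removable in $\btau$, it is NW-removable in $\btau$ itself, so by the uniqueness and constructive characterizations of the cuspidal Kostant tiling in \cref{tilethm} it belongs to $\Gamma_\btau$. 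But the contents appearing in $\Gamma_\btau$ are precisely the distinct roots in $\boldsymbol{K}_{\Gamma^{\textup{sc}}_\btau}$, among which $\beta_1$ is $\succeq$-maximal, and this forces $\gamma = \beta_1$.

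Iterating this argument on the successively smaller diagrams obtained by peeling off $\beta_1$-content cuspidal tiles from $\mu'_1$, I would conclude that every one of the cuspidal tiles in $\Gamma_{\mu'_1}$ has content $\beta_1$, so $\Gamma_{\mu'_1}$ consists of $K_1$ cuspidal $\beta_1$-ribbons all sitting inside $\Gamma_\btau$. Since $\Gamma_\btau$ itself contains exactly $K_1$ tiles of content $\beta_1$ (their union being by definition $\mu_1$), this identifies $\mu'_1$ with $\mu_1$. Combined with the induction on $\btau \setminus \mu_1$ described above, this completes the proof.

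The main technical obstacle is the convexity step, namely showing that any cuspidal Kostant tiling of a skew diagram whose total content equals $K_1\beta_1$ must include a tile of content $\succeq \beta_1$. For preorders constructed as in \cref{genconvex} this is immediate from the weighted average $\max_j h(\gamma_j)/\height(\gamma_j) \geq h(\beta_1)/\height(\beta_1)$, but for a general convex preorder one needs to invoke or adapt an appropriate monoidal-convexity lemma from \cite{muthtiling}; all the remaining bookkeeping is routine.
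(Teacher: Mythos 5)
Your proof is correct and is essentially the paper's argument run in mirror image: the paper peels off the $\succeq$-minimal, {\tt SE}-removable tile of content $K_k\beta_k$ using \cref{tilethm}(ii) together with the convexity fact \cite[Lemma~3.5]{muthtiling}, whereas you peel off the $\succeq$-maximal, {\tt NW}-removable tile of content $K_1\beta_1$ using \cref{tilethm}(iii) and the same convexity input, with the same induction and the same appeal to uniqueness of the cuspidal Kostant tiling. The one point to tighten is that the tile of $\Gamma_{\mu'_1}$ of content $\succeq\beta_1$ produced by convexity is not automatically {\tt NW}-removable; you should take it to be the first tile of the cuspidal Kostant tableau of $\mu'_1$, which realizes the $\succeq$-maximal content among the tiles and is {\tt NW}-removable by definition of a skew tableau.
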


\begin{proof}
Say \(\Gamma_{\btau}\) ends with \(K\) cuspidal ribbons of content \(\beta \in \Psi\). Let \(\bmu\) be the union of these cuspidal ribbons. It follows then that the last tile in \(\Gamma^{\textup{sc}}_{\btau}\) is \(\bmu\), which has content \(K \beta\). Let \(\bmu'\) be any \({\tt SE}\)-removable skew diagram in \(\btau\) with content \(K\beta\). Consider the unique cuspidal Kostant tiling \(\Gamma_{\bmu'}\) of \(\bmu'\). If any ribbon in this tiling has content \(\prec \beta\), it would follow that \(\bmu'\) (and therefore \(\btau\)) has a {\tt SE}-removable ribbon of content \(\prec \beta\). But this contradicts \cref{tilethm}(ii). Thus every ribbon in \(\Gamma_{\bmu'}\) has content \(\succeq \beta\), and thus every ribbon in \(\Gamma_{\bmu'}\) has content \(\beta\) by \cite[Lemma 3.5(i),(ii)]{muthtiling}. It follows then from \cref{tilethm}(i) that \(\Gamma_{\bmu'}\) consists of exactly the \(K\) cuspidal \(\beta\)-ribbons that form \(\bmu\), so \(\bmu' = \bmu\). The claim follows by induction on \(\btau/ \bmu\). 
\end{proof}

For \(\pi \in \Pi(\omega)\), we say that a skew diagram \(\btau\) {\em has a \(\zeta(\pi)\)-tiling} provided that there exists a skew tableau \((\Gamma, {\tt t})\) for \(\btau\) such that 
\begin{align*}
({\tt t}(1) \mid \dots \mid {\tt t}(|\Gamma|)) \sim \zeta(\pi).
\end{align*}
The following two theorems comprise Theorem~\hyperlink{thm:F}{F} from the introduction.

\begin{Theorem}\label{F1}
Let \(\omega \in \ZZ_{\geq 0}I\), \(\bkap\) be a multicharge, and \(\btau \in \Lambda_+^{\bkap}(\omega)\). Assume that \(\pi \in \Pi(\omega)\) is such that \(\btau\) has a \(\zeta(\pi)\)-tiling. Then there exists a nonzero map \(\zS^{\zeta(\pi)} \to \zS^{\btau}\), and we have \([\zS^{\btau} : L(\pi)] = 1\) and \([\zS^{\btau} : L(\sigma)] > 0\) only if \(\sigma \leq_{\textup{bd}} \pi\).
\end{Theorem}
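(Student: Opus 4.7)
My plan is to first build a nonzero homomorphism \(\phi: \zS^{\zeta(\pi)} \to \zS^{\btau}\) out of the tiling hypothesis, use Theorem~B to extract \([\zS^{\btau}:L(\pi)]\geq 1\), and then upgrade the coarser Kostant-level bound of \cite[Theorem~8.5]{muthtiling} to the claimed bilex-dominance bound via Theorem~A and a restriction argument.

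The construction of \(\phi\) proceeds as follows. Given a \(\zeta(\pi)\)-tiling \((\Gamma, {\tt t})\) of \(\btau\), concatenate the leading tableaux of the individual tiles in the order prescribed by \({\tt t}\) to obtain a standard tableau \({\tt T}\in\Std(\btau)\) whose residue sequence agrees with \(\bi^{\zeta(\pi)}\). I then define \(\phi(v^{\zeta(\pi)}) := v^{\tt T}\) and verify that the four defining relations of \(\zS^{\zeta(\pi)}\) hold in \(\zS^{\btau}\) at \(v^{\tt T}\): the word relation is immediate, the \(y_r\)- and vertical \(\psi_r\)-annihilation relations follow because the corresponding steps inside each tile match those in \(\btau\) under the tile embedding, and the Garnir relations are checked by an argument that refines the one used to construct the Kostant-level map in \cite[Theorem~8.5]{muthtiling}. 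Nonvanishing of \(\phi\) is immediate since \(v^{\tt T}\) is a basis element by \cref{Spechtbasis}, and then Theorem~B gives \(\textup{hd}(\zS^{\zeta(\pi)})\approx L(\pi)\) with \([\zS^{\zeta(\pi)}:L(\pi)]=1\), forcing \([\zS^{\btau}:L(\pi)]\geq 1\).

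For the upper bound on \([\zS^{\btau}:L(\pi)]\) and the dominance restriction on other simple factors, the key combinatorial observation is that the \(\zeta(\pi)\)-tiling is a semicuspidal strict Kostant tiling of \(\btau\) with Kostant partition \(\boldsymbol{K}\); by \cref{lem:uniquescskostant} it must coincide with \(\Gamma^{\textup{sc}}_{\btau}\), and in particular the imaginary tile of \(\Gamma^{\textup{sc}}_{\btau}\) is \(\sim \zeta(\bnu)\). Restricting \(\zS^{\btau}\) along \(R_{\boldsymbol{K}}\hookrightarrow R_\omega\), the result admits a filtration whose top layer is the outer tensor product of the Specht modules of the tiles of \(\Gamma^{\textup{sc}}_{\btau}\) (the Specht-module analogue of the Mackey decomposition underlying the proof of Theorem~B). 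The real tiles contribute \(L(\beta_i^{K_{\beta_i}})\) by \cref{prop:realsemis}, while the imaginary tile contributes \(\zS^{\zeta(\bnu)}\). Combined with \cite[Theorem~6.8(v)]{km17a} identifying \(\Res^\omega_{\boldsymbol{K}}L(\boldsymbol{K},\bmu)\) as the corresponding outer tensor product, the multiplicity of \(L(\boldsymbol{K},\bmu)\) in \(\zS^{\btau}\) is controlled by \([\zS^{\zeta(\bnu)}:L(\bmu)] = d^{\textup{RoCK}}_{\bnu,\bmu}\) from Theorem~A, which equals \(1\) for \(\bmu=\bnu\) and is nonzero only when \(\bmu\trianglelefteq^D\bnu\). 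Composition factors \(L(\sigma)\) whose Kostant partition is strictly smaller than \(\boldsymbol{K}\) satisfy \(\sigma<_{\textup{bd}}\pi\) automatically via \cite[Theorem~8.5]{muthtiling}.

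I expect the technical heart of the argument to lie in verifying the Garnir relations in the construction of \(\phi\), which must exploit the specific shape of the dilated imaginary pieces \(\textup{dil}_i(\nu^{(i)})\) (via \cref{allaboutshapes}) rather than just the coarser Kostant-partition data, and in making precise the Specht-module Mackey filtration needed for the multiplicity upper bound. Once these are in place, the rest of the argument is a bookkeeping exercise combining Theorems~A, B, and \cite[Theorem~8.5]{muthtiling}.
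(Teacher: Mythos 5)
Your second half is essentially the paper's argument: the paper also identifies the $\zeta(\pi)$-tiling with the unique semicuspidal strict Kostant tiling via \cref{lem:uniquescskostant}, restricts along $R_{\boldsymbol{K}}$, and compares with $\Res_{\boldsymbol{K}}^\omega L(\pi)\cong L_\pi$ from \cite[Theorem~6.8(v)]{km17a} to get both the multiplicity one and the dominance bound. One point to tighten there: you say the restriction ``admits a filtration whose top layer is'' $\widehat L_\pi$, but for the exact multiplicity count you need the stronger statement that the branching filtration of \cite[Theorem~5.13]{muthskew} has \emph{only one} layer, i.e.\ $\Res_{\boldsymbol{K}}^\omega \zS^{\btau}\cong\widehat L_\pi$ on the nose. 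That is exactly what \cref{lem:uniquescskostant} buys you — any other layer would produce a second strict Kostant tiling of $\btau$ with Kostant partition $\boldsymbol{K}$ — so you have the ingredient, but the conclusion should be drawn explicitly; otherwise extra layers could contribute further copies of $L_{(\boldsymbol{K},\bmu)}$ and spoil the bound.

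Where you genuinely diverge from the paper is the construction of the nonzero map, and this is where your proposal has a real gap. You propose to define $\phi(v^{\zeta(\pi)})=v^{\tt T}$ and check the skew Specht relations directly, deferring the Garnir relations to an argument ``refining'' \cite[Theorem~8.5]{muthtiling}. This is the hard part and it is not executed: once a tile is embedded in $\btau$, the nodes of the tile are interleaved with nodes of other tiles in the row-reading of $\btau$, so the Garnir element $g^u$ of $\zeta(\pi)$ does not correspond to a Garnir element of $\btau$, and showing it kills $v^{\tt T}$ requires nontrivial straightening in $\zS^{\btau}$ that you have not supplied. The paper avoids this entirely: since $\zS^{\zeta(\pi)}\approx\widehat\Delta(\pi)=\Ind_{\boldsymbol{K}}^\omega\widehat L_\pi$, Frobenius reciprocity turns the isomorphism $\Res_{\boldsymbol{K}}^\omega\zS^{\btau}\cong\widehat L_\pi$ (which you already establish) directly into the desired nonzero map $\zS^{\zeta(\pi)}\to\zS^{\btau}$. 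So your argument is repairable without any new input — replace the explicit relation-check by this adjunction step — but as written the existence of $\phi$ rests on an unproved claim.
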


\begin{proof}
Assume that
\[
\pi = (\boldsymbol{K}, \bnu) = ((\beta_1^{K_{\beta_1}} \mid \dots \mid \beta_u^{K_{\beta_u}} \mid \delta^{K_\delta} \mid \beta_{u+1}^{K_{\beta_{u+1}}} \mid \dots \mid \beta_t^{K_{\beta_t}}), \bnu) \in \Pi(\omega)
\]
is a root partition such that \(\btau\) has a \(\zeta(\pi)\)-tiling.
First we show that
\begin{align}\label{claim1234}
\Res_{\boldsymbol{K}}^\omega \zS^{\btau} \cong \widehat{L}_\pi = \zS^{\zeta({\beta_1})^{K_{\beta_1}}} \boxtimes \dots \boxtimes \zS^{\zeta({\beta_u})^{K_{\beta_u}}} \boxtimes
\zS^{\zeta(\bnu)} \boxtimes
\zS^{\zeta({\beta_{u+1}})^{K_{\beta_{u+1}}}} \boxtimes \dots \boxtimes \zS^{\zeta({\beta_t})^{K_{\beta_t}}}.
\end{align}
By iteratively applying \cite[Theorem~5.13]{muthskew}, \(
\Res_{\boldsymbol{K}}^\omega \zS^{\btau}\) has a filtration whose subquotients are all modules of the form 
\[
\zS^{\blam_1} \boxtimes \zS^{\blam_2/{\blam_1}} \boxtimes
\zS^{\blam_3/{\blam_2}}
\boxtimes \dots \boxtimes
\zS^{\blam_u/{\blam_{u-1}}}
\boxtimes
\zS^{\blam_\delta/{\blam_u}}
\boxtimes
\zS^{\blam_{u+1}/{\blam_\delta}}
\boxtimes
\zS^{\blam_{u+2}/{\blam_{u+1}}}\boxtimes
\dots
\boxtimes
\zS^{\blam_{t}/{\blam_{t-1}}},
\]
where \(\blam_{t} = \btau\), each \(\blam_{i}/{\blam_{i-1}}\) has content \(K_{\beta_{i}}\beta_i\), \(\blam_\delta/{\blam_u}\) has content \(K_\delta \delta\), and \(\blam_{u+1}/{\blam_\delta}\) has content \(K_{\beta_{u+1}} \beta_{u+1}\).
Since \(\btau\) has a \(\zeta(\pi)\)-tiling, and each constituent diagram in \(\zeta(\pi)\) is semicuspidal, this implies that \(\boldsymbol{K} = \boldsymbol{K}_{\Gamma_{\btau}^\textup{sc}}\), the Kostant partition associated to the unique semicuspidal strict Kostant tiling of \(\btau\). The existence of the \(\zeta(\pi)\)-tiling also shows that there is a subquotient of \(\Res_{\boldsymbol{K}}^\omega \zS^{\btau}\) isomorphic to \(\widehat{L}_\pi\).
Any other subquotients of the above form would naturally yield another Kostant tiling of \(\btau\) with the same associated Kostant partition, which cannot happen, by \cref{lem:uniquescskostant}. This verifies that (\ref{claim1234}) holds.

Recall from the proof of \cref{thm:klrsimples} that \(\zS^{\zeta(\pi)} \approx \widehat{\Delta}(\pi) \approx \Ind_{\boldsymbol{K}}^\omega \widehat{L}_\pi\).
Then by Frobenius Reciprocity, we have a nonzero map \(\zS^{\zeta(\pi)} \to \zS^{\btau}\) if and only if we have a nonzero map \(\widehat{L}_\pi \to \Res_{\boldsymbol{K}}^\omega \zS^{\btau} \cong \widehat{L}_\pi\).
Thus this nonzero map does indeed exist, and by part (i) of \cref{thm:klrsimples}, the head of the image of this map is a copy of \(L(\pi)\) in \(\zS^{\btau}\).

To see that \(L(\pi)\) cannot appear in \(\zS^{\btau}\) with multiplicity greater than one, we compare
\begin{align}\label{resStau}
\Res_{\boldsymbol{K}}^\omega \zS^{\btau} \cong \widehat{L}_\pi
\quad \text{ and } \quad
\Res_{\boldsymbol{K}}^\omega L(\pi) \cong L_\pi,
\end{align}
by the proof of part (i) of \cref{thm:klrsimples} and \cite[Theorem~6.8(v)]{km17a}, respectively.
Since the multiplicity of \(L_\pi\) in \(\widehat{L}_\pi\) is only one, the result follows. 

Finally, assume \(\sigma = (\boldsymbol{K}', \bnu') \in \Pi(\omega)\) is such that \(L(\sigma)\) arises as a factor of \(\zS^{\btau}\). If  \(\boldsymbol{K}' \neq \boldsymbol{K}\), then it follows from \cite[Theorem 8.5]{muthtiling} that \(\boldsymbol{K}' <_{\textup{b}} \boldsymbol{K}\). On the other hand, if \(\boldsymbol{K}' = \boldsymbol{K}\),  then \(L_{\sigma}\) must be a factor of \(\widehat{L}_\pi\) by (\ref{resStau}). But then we have \(\bnu' \trianglelefteq^D \bnu\) as in the proof of \cref{thm:klrsimples}(ii). Then in any case we have \(\sigma \leq_{\textup{bd}} \pi\), completing the proof.
\end{proof}

The above theorem allows us to identify a simple factor of \(\zS^{\btau}\), together with bounding information on other simple factors that can arise, provided one can find a \(\zeta(\pi)\)-tiling of \(\btau\). However, it is not always easy (or possible) to construct such a tiling. In our final theorem, we note that when \(\btau=\tau\) is an \(e\)-restricted partition (and hence labels a simple module in level one), one can directly construct such a \(\zeta(\pi)\)-tiling, allowing us to implement this program in this setting.

\begin{Theorem}\label{F2}
Let \(\tau \in \Lambda_+^\kappa(\omega)\) be an \(e\)-restricted (level one) partition.
Let \(\Gamma_\tau^{\textup{sc}}\) be the unique semicuspidal strict Kostant tiling of \(\tau\), with associated tableau \(({\tt t}(1), \ldots, {\tt t}(m))\). Then
\begin{align*}
\zeta(\pi) \sim ( {\tt t}(1) \mid \dots \mid {\tt t}(m)) \in \Lambda^m(\omega) 
\end{align*}
for some \(\pi \in \Pi(\omega)\). Then there exists a nonzero map \(\zS^{\zeta(\pi)} \to S^{\tau}\), and we have \([S^{\tau} : L(\pi)] = 1\) and \([S^{\tau} : L(\sigma)] > 0\) only if \(\sigma \leq_{\textup{bd}} \pi\).
\end{Theorem}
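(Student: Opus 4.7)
The strategy is to reduce to \cref{F1}: it will suffice to exhibit a root partition $\pi = (\boldsymbol{K},\bnu)\in\Pi(\omega)$ for which $\zeta(\pi)\sim({\tt t}(1)\mid\dots\mid{\tt t}(m))$, since then $\tau$ carries a $\zeta(\pi)$-tiling and \cref{F1} delivers the desired multiplicity statements. The Kostant partition $\boldsymbol{K}=(\beta_1^{K_1}\mid\dots\mid\beta_m^{K_m})$ is forced on us by reading off the tile contents, so the substance is to match each tile ${\tt t}(i)$ (up to $\sim$) with the corresponding block of $\zeta(\pi)$.

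For the real tiles (those ${\tt t}(i)$ with $\beta_i\in\Phi_+^{\re}$), \cref{semicuspuniqueness} asserts that $\zeta(\beta_i)^{K_i}$ is the unique semicuspidal skew diagram of content $K_i\beta_i$ up to $\sim$, so ${\tt t}(i)\sim\zeta(\beta_i)^{K_i}$ with no further work. The substantive case is the imaginary tile ${\tt t}(j)$, when present, of content $K_\delta\delta$; I must show ${\tt t}(j)\sim\zeta(\bnu)$ for some $(e-1)$-multipartition $\bnu$ of $K_\delta$. By \cref{thetaaribcusp}, each cuspidal $\delta$-ribbon of $\Gamma_\tau$ inside ${\tt t}(j)$ is a $(\theta,a)$-ribbon for some $a\in[0,e-1]$. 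Combining this with the arrangement and connectedness properties of dilation tilings established in \cref{allaboutshapes} and the uniqueness in \cref{semicuspuniqueness}, each connected component of ${\tt t}(j)$ must be $\sim\textup{dil}_a(\nu^{(a)})$ for a single value of $a$ and a partition $\nu^{(a)}$; hence the distillation of ${\tt t}(j)$ takes the form $(\textup{dil}_{a_1}(\nu^{(a_1)})\mid\cdots\mid\textup{dil}_{a_k}(\nu^{(a_k)}))$ for a strictly increasing sequence $0\leq a_1<\cdots<a_k\leq e-1$, compatible with the component-order convention of $\zeta(\bnu)$ via the $\NEarrow$ analysis of \cref{allaboutshapes}(i).

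The main obstacle is to rule out $a_1=0$, i.e., to prove that no horizontal $\delta$-ribbon $\zeta_0$ occurs in $\Gamma_\tau$ when $\tau$ is $e$-restricted. The plan here is to use the greedy algorithm of \cref{tilethm}(ii): if $\zeta_0\in\Gamma_\tau$, then at some stage of the iterative removal of $\succeq$-minimal SE-removable ribbons with content in $\Psi$, $\zeta_0$ becomes SE-removable from an intermediate partition $\tau^\circ$, forcing $\tau^\circ_i-\tau^\circ_{i+1}\geq e$ at the relevant row $i$ (with the convention $\tau^\circ_{i+1}=0$ if $i$ is the last row). I would then argue, most cleanly on the abacus display, that this row-gap of size $\geq e$ in $\tau^\circ$ must already be witnessed by $\tau$: each of the preceding removals is a $\Psi$-ribbon of content $\prec\delta$ and corresponds to a single controlled bead-move that cannot destroy the particular abacus configuration forcing the horizontal $e$-ribbon at the stage of $\zeta_0$-removal, contradicting the $e$-restrictedness of $\tau$. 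Once $a_1\geq 1$ is established, we set $\bnu=(\nu^{(1)}\mid\dots\mid\nu^{(e-1)})$ with $\nu^{(a_l)}$ in position $a_l$ and $\varnothing$ elsewhere, assemble $\pi=(\boldsymbol{K},\bnu)\in\Pi(\omega)$, and conclude via \cref{F1}.
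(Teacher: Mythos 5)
Your overall architecture matches the paper's: reduce to \cref{F1}, dispose of the real tiles via \cref{semicuspuniqueness}, and identify the imaginary tile with some \(\zeta(\bnu)\) after excluding the horizontal \((\theta,0)\)-component. But the step you flag as "the main obstacle" is exactly where your argument has a genuine gap, and the mechanism you propose does not close it. Your claim is that the preceding SE-removals "cannot destroy the particular abacus configuration" forcing the gap of size \(\geq e\), i.e.\ that no later tile of \(\Gamma_\tau^{\textup{sc}}\) can occupy positions directly below the horizontal \(e\)-ribbon \(\zeta_0\). A priori this is false on its face: a tile of content \(\prec\delta\) is SE-addable to the intermediate shape \(\tau^\circ\), and nothing in the abacus bookkeeping of a "single controlled bead-move" prevents it from landing in the empty positions \(y-1,\dots,y-e\) below the bead corresponding to the \(\zeta_0\)-row. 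The statement that this cannot happen is precisely what must be proved, and it requires convexity, not abacus persistence. The paper's proof embraces the opposite scenario: since \(\tau\) \emph{is} \(e\)-restricted, some later tile \({\tt t}(j)\) \emph{must} contain the node \({\tt S}u_1\) below the west end of the \((\theta,0)\)-ribbon \(\xi\); one then takes the maximal horizontal NW-removable segment \(\mu\) of \({\tt t}(j)\) starting at \({\tt S}u_1\) (of length \(t+1<e\), since \(t\geq e-1\) would give \({\tt t}(j)\) a NW-removable \(\delta\)-ribbon, contradicting \(\beta_j\prec\delta\)) and the corresponding SE-removable segment \(\mu'\) of \(\xi\) with \(\cont(\mu)=\cont(\mu')\); cuspidality of \(\xi\) forces \(\cont(\mu')\succ\delta\) while cuspidality of \({\tt t}(j)\) forces \(\cont(\mu)\prec\beta_j\prec\delta\), a contradiction. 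Your sketch contains no analogue of this comparison, so the exclusion of \(a_1=0\) is unproven.

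A secondary, smaller gap: your assertion that each connected component of the imaginary tile is \(\sim\textup{dil}_a(\nu^{(a)})\) does not follow from \cref{thetaaribcusp}, \cref{allaboutshapes} and \cref{semicuspuniqueness} as cited. \cref{semicuspuniqueness} says dilations \emph{are} connected semicuspidal diagrams, not that they exhaust them, and \cref{allaboutshapes} describes diagrams of the form \(\bnu^\theta_{\brho}\) over a RoCK multicore. To invoke it you must first realize the imaginary tile in that form; the paper does so by taking \(\rho={\tt t}(1)\sqcup\cdots\sqcup{\tt t}(i-1)\), observing it is a \((\kappa,\theta)\)-RoCK core by \cref{RoCKmultitiling}, enlarging the capacity by abacus stretching if necessary, and then applying \cref{resirrlist}. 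You would need to supply this realization (or an independent classification of connected semicuspidal \(m\delta\)-diagrams) to make the component analysis rigorous.
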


\begin{proof}
By \cref{F1}, it is enough to demonstrate that \(\zeta(\pi) \sim ( {\tt t}(1) \mid \dots \mid {\tt t}(m))\) for some \(\pi \in \Pi(\omega)\). Let \(i \in [1,|\Gamma^{\textup{sc}}_\tau|]\). We have then that \({\tt t}(i)\) is a semicuspidal diagram of content \( K_i \beta_i\) for some \(K_i \in \ZZ_{>0}\) and \(\beta \in \Psi\).  If  \(\beta_i \in \Phi_+^\re\), then by \cref{semicuspuniqueness}, we have \({\tt t}(i) \sim \zeta(\beta_i)^{K_i}\).

Now assume \(\beta_i = \delta\). We set \(\omega' = \sum_{q=1}^{i-1} \cont({\tt t}(q))\) and
\begin{align*}
\rho := {\tt t}(1) \sqcup \cdots \sqcup {\tt t}(i-1) \in \Lambda_+^\kappa(\omega')_{\succ \delta}.
\end{align*}
Letting \(\theta\) be the residue permutation realized by \(\succeq\), we have by \cref{RoCKsemipar,RoCKmultitiling} that \(\rho\) is a \((\kappa, \theta)\)-RoCK core partition, and \(\rho \sqcup {\tt t}(i)\) is a \((\kappa, \theta)\)-RoCK multipartition.

In considering the skew diagram \({\tt t}(i)\) up to \(\sim\)-similarity, we will assume without loss of generality that \(K_i \leq \textup{cap}^\theta_\delta(\rho,\kappa)\). Indeed, if this were not the case we could replace \(\rho \sqcup {\tt t}(i)\) with a larger \((\kappa,\theta)\)-RoCK multipartition \(\rho' \sqcup {\tt t}(i)'\) by `stretching' the abacus for \(\rho \sqcup {\tt t}(i)\) as in \cite[\S3.2]{Lyle22}. This process yields a larger \((\kappa, \theta)\)-RoCK core \(\rho'\) with \(\textup{cap}^\theta_\delta(\rho',\kappa)\geq K_i\), and \({\tt t}(i)' \sim {\tt t}(i)\) (the connected components of \({\tt t}(i)\) are merely moved further apart by the stretching process).

Therefore, given the assumption that \(K_i \leq \textup{cap}^\theta_\delta(\rho,\kappa)\), we have by \cref{resirrlist} that \({\tt t}(i) \sim \bnu^\theta_\rho\) for some \(e\)-multipartition \(\widehat\bnu = (\nu^{(0)} \mid \dots \mid \nu^{(e-1)})\). Assume by way of contradiction that \(\nu^{(0)} \neq \varnothing\). If \(\nu^{(0)}\) consists of \(n\) rows, then by \cref{allaboutshapes} the skew (row) diagram \(\xi\) consisting of the \(e\) nodes \(u_1, \ldots, u_e\) at the east end of the \(n\)th row of \(\rho \sqcup {\tt t}(i)\) is a cuspidal \((\theta,0)\)-ribbon, with \(\textup{res}(u_m) =\overline{\theta_{e} + m}\) and \({\tt S}u_m \notin \rho \sqcup {\tt t}(i)\) for \(m=1,\dots, e\). Thus \(\rho \sqcup {\tt t}(i)\) is {\em not} \(e\)-restricted. Since \(\tau\) {\em is} \(e\)-restricted, there must be some tile \({\tt t}(j)\) with \(j>i\) which contains the node \({\tt S}u_1\). Let \(t \geq 0\) be maximal such that \({\tt W}^t {\tt S}u_1 \in {\tt t}(j)\). If \(t \geq e-1\), then \({\tt t}(j)\) would have a {\tt NW}-removable ribbon of content \(\delta\), which contradicts the fact that \({\tt t}(j)\) is a semicuspidal diagram of content \(K_j \beta_j\), with \(\beta_j \prec \delta\). So we have \(t< e-1\). Then set 
\begin{align*}
\mu = \sqcup_{q=0}^{t} {\tt W}^q{\tt S}u_1
\qquad
\textup{and}
\qquad
\mu' = \sqcup_{q=0}^t u_{e - q}.
\end{align*}
Then we have \(\cont(\mu) = \cont(\mu')\).
By cuspidality of \(\xi\) and the fact that \(\mu'\) is {\tt SE}-removable in \(\xi\), we have \(\cont(\mu') \succ \delta\). On the other hand, by cuspidality of \({\tt t}(j)\) and the fact that \(\mu\) is {\tt NW}-removable in \({\tt t}(j)\), we have \(\cont(\mu) \prec \beta_j \prec \delta\), deriving the contradiction.

Thus \(\nu^{(0)} = \varnothing\), so we have an \((e-1)\)-multipartition \(\bnu:= (\nu^{(1)} \mid \dots \mid \nu^{(e-1)})\) such that \(\zeta(\bnu) \sim {\tt t}(i)\) by \cref{allaboutshapes}. Thus \(\zeta(\pi) \sim ( {\tt t}(1) \mid \dots \mid {\tt t}(m))\), as desired, completing the proof.
\end{proof}



\section*{Index of notation}\label{indsec}


For the reader's convenience we conclude with an index of the notation we use in this paper, providing references to the relevant (sub)sections where the notation is first introduced.

{\onehalfspacing
\newlength\colwi
\newlength\colwii
\newlength\colwiii
\setlength\colwi{3.2cm}
\setlength\colwiii{1.2cm}
\setlength\colwii\textwidth
\addtolength\colwii{-\colwi}
\addtolength\colwii{-\colwiii}
\addtolength\colwii{-1em}
\begin{longtable}{@{}p{\colwi}p{\colwii}p{\colwiii}@{}}
$\ZZ_e$&$\ZZ/e\ZZ$, the set of integers modulo $e$&\cref{posrootsec}\\
$I$ & the set of simple roots $\{\alpha_i \mid i\in \ZZ_e\}$, identified with $\ZZ_e$ &\cref{posrootsec}\\
\(\height(\beta)\) & the height of \(\beta \in \ZZ_{\geq 0}I\) & \cref{posrootsec}\\
\(\alpha(t, L)\) & the positive root \(\alpha_{\bar t} + \alpha_{\overline{t+1}} + \dots + \alpha_{\overline{t+L-1}}\) of height \(L\) & \cref{posrootsec}\\
\(\delta\) & the null root \(\alpha_0 + \alpha_1 + \cdots + \alpha_{e-1}\) & \cref{posrootsec}\\
\(I^\omega\) & \(\{ \bi \in I^{\height(\omega)} \mid i_1 + \dots + i_{\height(\omega)} = \omega\}\) & \cref{posrootsec}\\
\(\Phi_+\) & the set of all positive roots & \cref{posrootsec}\\
\(\Phi_+^\re\) & the set of all real positive roots & \cref{posrootsec}\\
\(\Phi_+^\im\) & the set of all positive imaginary roots \(\{m \delta \mid m \in \ZZ_{>0}\}\) & \cref{posrootsec}\\
\(\Psi\) & the set of indivisible roots \(\Phi_+^\re \sqcup \{\delta\}\) & \cref{posrootsec}\\
\(p\) & the `mod \(\delta\)'  map \(p:\Z I \to \Z I^\fin \cong \Z I/ \Z \delta\) & \cref{posrootsec}\\
\(\N\) & the array of nodes \(\ZZ_{>0} \times \ZZ_{>0}\) & \cref{levonedefs}\\
\((x,y) \searrow (x',y')\) & \((x',y')\) is southeast of \((x,y)\) & \cref{levonedefs}\\
\(\la,\mu,\nu,\rho\) & Young diagrams or partitions & \cref{levonedefs}\\
\({\tt N}, {\tt E}, {\tt S},{\tt W}\) & single unit translations of a node in \(\N\) & \cref{levonedefs}\\
\(\res\) & the residue function on nodes & \cref{levonedefs}\\
\(N_\ell\) & \(\bigsqcup_{t \in [1,\ell]} \N = \N^{(1)} \sqcup \dots \sqcup \N^{(\ell)}\) & \cref{highlevdefs}\\
\(\blam/ \bmu, \bxi, \btau \) & skew multipartitions and skew diagrams & \cref{highlevdefs}\\
\(\bkap\) & a multicharge \((\kappa_1,\dots,\kappa_{\ell})\in\ZZ^\ell\) & \cref{highlevdefs}\\
\(\blam,\bmu,\bnu,\brho\) & \(\ell\)-Young diagrams or multipartitions & \cref{highlevdefs}\\
\(\Lambda^\ell(\omega)\) & the set of skew diagrams of content \(\omega\) & \cref{highlevdefs}\\
\(\trianglelefteq^D\) & the dominance order of multipartitions & \cref{highlevdefs}\\
\(\textup{rect}_u\) & the \(\ell\)-multipartition \(\{v \in \N^{(r)} \mid v \searrow u\} \subseteq \N_\ell\) & \cref{highlevdefs}\\
\multirow{2}{\colwi}{\(\xi \NEarrow \xi'\)} & every node in \(\xi'\) is northeast of (or in an earlier component than) every node in \(\xi\) & \multirow{2}{*}{\cref{highlevdefs}}\\
\(\Gamma\) & a skew tiling of a skew diagram & \cref{tiletabsec}\\
\((\Gamma, {\tt t})\) & a skew tableau of a skew diagram & \cref{tiletabsec}\\
\(\Std(\btau)\) & the set of all standard tableaux of \(\btau\) & \cref{tiletabsec}\\
\({\tt t}^{\btau}\) & the (row)-leading standard tableau of \(\btau\) & \cref{tiletabsec}\\
\({\tt Tt}\) &
the standard tableau for \(\blam\) built from \(\tt T\in\Std(\bmu)\) and \(\tt t\in\Std(\blam/\bmu)\)
& {\cref{tiletabsec}}\\
\(\textup{sh}^\downarrow_n({\tt T})\) & the \(\ell\)-multipartition formed by the first \(n\) nodes in \({\tt T}\) & \cref{tiletabsec}\\
\(\cont(S)\) & the content function on a set $S$ of nodes & \cref{subsec:content}\\
\(\Lambda_+^{\bkap}(\omega)\) & the set of multipartitions of content \(\omega\) & \cref{subsec:content}\\
\(\Lambda_{+/+}^{\bkap}(\omega)\) & the set of skew multipartitions of content \(\omega\) & \cref{subsec:content}\\
\(\Lambda_{+/\brho}^{\bkap}(\omega)\) & the set of skew multipartitions \(\blam/\brho\) of content \(\omega\) & \cref{subsec:content}\\
\(\bi^{\tt t}\) & the associated tableau word or residue sequence of \({\tt t}\) & \cref{subsubsec:tabwords}\\
\(\textup{def}_{\bkap}(\btau)\) & the defect of a skew diagram \(\btau \in \Lambda_{+/+}^{\bkap}\) & \cref{subsec:def}\\
\(\B(\blam, \bkap)\) & the \(\bkap\)-beta numbers \((\B^1(\blam, \bkap) \mid \B^2(\blam, \bkap) \mid \dots \mid \B^\ell(\blam, \bkap))\) for \(\blam\) & \cref{cdefs1}\\
\(M^r_i(\blam,\bkap),h^r_{i,j}(\blam, \bkap)\) & data associated with beta numbers & \cref{cdefs1}\\
\(\theta\) & a residue permutation \((\theta_1, \dots, \theta_e)\) of \([0,e-1]\) & \cref{basiccoreRoCKsec}\\
\multirow{2}{\colwi}{\(\Psi_i \blam\)} & the multipartition obtained by removing all removable \((i+1)\)-nodes from \(\blam\) while adding all addable \((i+1)\)-nodes & \multirow{2}{\colwi}{\cref{subsec:rockblocks}}\\
\(\gamma_t^\theta, h^{r,\theta}_t, \gamma^\theta_{[a,b]}\), & \multirow{2}{*}{data associated with beta numbers} & \multirow{2}{*}{\cref{reindsec}}\\
\(h^{r,\theta}_{[a,b]}, h^{\max, \theta}_{[a,b]},
h^{\min, \theta}_{[a,b]}\)\\
\(\textup{cap}^\theta_\delta(\brho, \bkap)\) & the \(\theta\)-capacity of \(\brho\) & \cref{reindsec}\\
\(\textup{cap}^\theta_\delta(\omega, \bkap)\) & \(\min \{\textup{cap}^\theta_\delta(\brho, \bkap) \mid \brho \in \Lambda^{\bkap}_+(\omega)\}\) & \cref{reindsec}\\
\(\remrib^\theta, \addrib^\theta\) & subsets of \(\Z I\) related to removable and addable ribbons of \(\brho\), respectively &\cref{subsec:remaddribs}\\
\(\succeq\) & a convex preorder on \(\Phi_+\)& \cref{subsec:convexpreorders}\\
\(\Gamma_{\btau}\) & the unique cuspidal Kostant tiling of the skew diagram \(\btau\) & \cref{subsec:cuspribtabs}\\
\multirow{2}{\colwi}
{\(\Lambda^{\bkap}_+(\omega)_{\succ \delta},\Lambda^{\bkap}_+(\omega)_{\succeq \delta}\),\\ \(\Lambda^{\bkap}_+(\omega)_{\approx \delta}\)} & {the set of \(\blam \in \Lambda^{\bkap}_+(\omega)\) whose Kostant tilings consist only of tiles of content \(\succ \delta\), \(\succeq \delta\), or \(\approx \delta\), respectively} & \multirow{2}{*}{\cref{subsec:cusptilingsofblocks}}\\
\multirow{2}{\colwi}{\(\bnu^\theta_{\brho}\)} & the skew multipartition associated to a higher level \(e\)-quotient \(\bnu\), an \(e\ell\)-multipartition, and multicore \(\brho\)	 & \multirow{2}{\colwi}{\cref{combimagsec}}\\
\(\bb^{\bnu}_{\theta}\) & the Gelfand--Graev word associated to \(\theta\) and \(\bnu\)& \cref{subsec:ggwords}\\
\(R_\beta\) & the KLR algebra of type \({\tt A}_{e-1}^{(1)}\) & \cref{subsec:KLR}\\
\(j_\beta\) & the unique anti-isomorphism of \(R_\beta\) fixing KLR generators & \cref{subsec:KLR}\\
\(\iota_{\omega,\beta}\) & the embedding \(R_\omega \otimes R_\beta \hookrightarrow 1_{\omega, \beta} R_{\omega + \beta} 1_{\omega, \beta} \subseteq R_{\omega + \beta}\) & \cref{subsec:KLR}\\
\(R^\Lambda_\beta\) & the cyclotomic KLR algebra of type \({\tt A}_{e-1}^{(1)}\) & \cref{subsec:cycKLR}\\
\(\btau \sim \btau'\) & the skew diagrams \(\btau,\btau'\) have identical shapes and residues & \cref{Spechtsec}\\
\(S^{\blam}, S^{\blam/\bmu}\) & (row) Specht modules and skew Specht modules, respectively & \cref{Spechtsec}\\
\(\zS^{\xi}\) & skew Specht modules generated in degree zero & \cref{Spechtsec}\\
\(\deg_{\blam/\bmu}(\tt t)\) 
& the degree of \(\tt t\in\Std(\blam/\bmu)\)
& \cref{Spechtsec}\\
\(\Lambda^{\bkap}_+(\omega)'\)& the set of Kleshchev multipartitions in \(\Lambda^{\bkap}_+(\omega)\) & \cref{celldef}\\
\(D^{\bmu}\) & the simple head of \(S^{\bmu}\), for \(\bmu \in \Lambda^{\bkap}_+(\omega)'\) & \cref{subsec:truncations}\\
\(L(\beta^m)\) & the unique real simple semicuspidal \(R_{m\beta}\)-module for \(\beta \in \Phi_+^\re\) & \cref{stratasec}\\
\(L(\blam)\) & the simple semicuspidal \(R_{m\delta}\)-module for \(\blam\) an \((e-1)\)-multipartition of \(m\) & \cref{stratasec}\\
\(\boldsymbol{K} \) & a Kostant partition & \cref{stratasec}\\
\(\Xi(\omega)\) & the set of all Kostant partitions of \(\omega\) & \cref{stratasec}\\
\(\pi\) & a root partition \((\boldsymbol{K}, \bnu)\) & \cref{stratasec}\\
\(\Pi(\omega)\) & the set of all root partitions of \(\omega\) & \cref{stratasec}\\
\(\bar\Delta(\pi)\) & the proper standard \(R_\omega\)-module indexed by \(\pi \in \Pi(\omega)\) & \cref{stratasec}\\
\(L(\pi)\) & the simple \(R_\omega\)-module indexed by \(\pi \in \Pi(\omega)\) & \cref{stratasec}\\
\multirow{2}{\colwi}{\(\Pi(\omega)_{\succeq \delta}, \Pi(\omega)_{\succ \delta}\)} & the set of root partitions whose Kostant partitions involve only roots \(\succeq \delta\) or \(\succ \delta\), respectively & \multirow{2}{*}{\cref{subsec:cuspsystems}}\\
\(\zeta(\beta)\) & a ribbon constructed from \(\beta\in\Phi_+^\re\) & \cref{subsec:cusprib}\\
\(\zeta_i\) & the cuspidal ribbon of content \(\delta\) and \(i+1\) rows & \cref{subsec:cusprib}\\
\multirow{2}{\colwi}{\(\textup{dil}_{i}(\nu)\)} & an \(i\)-dilation of a partition \(\nu\), a connected skew diagram composed of \(|\nu|\) copies of \(\zeta_i\) & \multirow{2}{*}{\cref{subsec:cusprib}}\\
\(\zeta(\bnu)\) & \((\textup{dil}_{1}(\nu^{(1)})\mid  \textup{dil}_{1}(\nu^{(2)})\mid \dots\mid \textup{dil}_{e-1}(\nu^{(e-1)}))\), for \(\bnu\) an \((e-1)\)-multipartition & \cref{subsec:cusprib}\\
\(\geqslant_{\textup{b}}\) & the bilexicographic partial order on \(\Xi(\omega)\)& \cref{subsec:simplesviaskews}\\
\(\geqslant_{\textup{bd}}\) & the bilexicographic dominance partial order on \(\Pi(\omega)\)& \cref{subsec:simplesviaskews}\\
\(R_{\boldsymbol{K}}\) & the parabolic subalgebra of \(R_\beta\) corresponding to  \(\boldsymbol{K} \in \Xi(\omega)\)& \cref{subsec:simplesviaskews}\\
\(\Ind_{\boldsymbol{K}}^\beta\) & the induction functor from \(R_{\boldsymbol{K}}\)\textup{-mod} to \(R_\beta\)\textup{-mod} & \cref{subsec:simplesviaskews}\\
\(\Res_{\boldsymbol{K}}^\beta\) & the restriction functor from \(R_\beta\)\textup{-mod} to \(R_{\boldsymbol{K}}\)\textup{-mod} & \cref{subsec:simplesviaskews}\\
\(\Gamma_\tau^{\textup{sc}}\) & the unique semicuspidal strict Kostant tiling of the skew diagram \(\btau\) & \cref{subsec:regularize}\\
\end{longtable}
}

\bibliographystyle{lspaper}
\bibliography{master}

\end{document}